\documentclass[oneside,a4paper,english, 11pt]{amsart}
\makeatletter
 \theoremstyle{plain}
\newtheorem{thm}{Theorem}[section]
\theoremstyle{plain}
  \newtheorem{prop}[thm]{Proposition}
\theoremstyle{plain}
\newtheorem{conj}[thm]{Conjecture}
\theoremstyle{plain}
 \newtheorem{lemma}[thm]{Lemma}
\theoremstyle{plain}
 \newtheorem*{claim}{Claim}
\theoremstyle{plain}

\theoremstyle{plain}
\newtheorem{cor}[thm]{Corollary}
\theoremstyle{definition}
  \newtheorem{defn}[thm]{Definition}
    \newtheorem{const}[thm]{Construction}
    
    \newtheorem{notation}[thm]{Notation}
 \theoremstyle{definition}
  
  \newtheorem{exam}[thm]{Example}
\theoremstyle{remark}
\newtheorem{rmk}[thm]{Remark}
\numberwithin{equation}{section}

\usepackage{amsmath}
\usepackage{amssymb}
\usepackage{amscd}
\usepackage{amsthm}
\usepackage{array}
\usepackage[arrow,matrix,tips]{xy}

\usepackage{stmaryrd}
\usepackage{lscape}
\usepackage{caption}
\usepackage{tikz-cd}
\usepackage{url}
\usepackage{makecell, multirow, tikz-cd, mathtools}

\pdfpageheight\paperheight
\pdfpagewidth\paperwidth
\let\det\relax
\DeclareMathOperator{\det}{det}

\newcommand{\rec}{\operatorname{rec}}
\newcommand{\cInd}{\text{c-Ind}}

\newcommand{\St}{\mathrm{St}}

\newcommand{\trace}{\operatorname{tr}}

\newcommand{\lbar}[1]{\overline{#1}}
\newcommand{\fs}{\mathfrak{s}}
\newcommand{\fp}{\mathfrak{p}}

\newcommand{\Z}{\mathbb{Z}}

\newcommand{\Q}{\mathbb{Q}}

\newcommand{\Qp}{\mathbb{Q}_p}
\newcommand{\R}{\mathbb{R}}
\newcommand{\C}{\mathbb{C}}
\newcommand{\bG}{\mathbb{G}}

\newcommand{\F}{\mathbb{F}}
\newcommand{\N}{\mathbb{N}}

\newcommand{\J}{\mathrm{J}}

\newcommand{\fM}{\mathfrak{M}}

\newcommand{\fS}{\mathfrak{S}}

\newcommand{\fm}{\mathfrak{m}}

\newcommand{\bA}{\mathbb{A}}
\newcommand{\A}{\mathbb{A}}

\newcommand{\bF}{\mathbb{F}}

\newcommand{\bN}{\mathbb{N}}

\newcommand{\bP}{\mathbb{P}}
\newcommand{\bQ}{\mathbb{Q}}
\newcommand{\bR}{\mathbb{R}}

\newcommand{\bT}{\mathbb{T}}

\newcommand{\bZ}{\mathbb{Z}}

\newcommand{\cC}{\mathcal{C}}

\newcommand{\cE}{\mathcal{E}}

\newcommand{\cG}{\mathcal{G}}
\newcommand{\cH}{\mathcal{H}}

\newcommand{\cJ}{\mathcal{J}}
\newcommand{\cK}{\mathcal{K}}
\newcommand{\cL}{\mathcal{L}}

\newcommand{\cN}{\mathcal{N}}
\newcommand{\cO}{\mathcal{O}}
\newcommand{\cP}{\mathcal{P}}
\newcommand{\cQ}{\mathcal{Q}}
\newcommand{\cR}{\mathcal{R}}
\newcommand{\cS}{\mathcal{S}}
\newcommand{\cT}{\mathcal{T}}
\newcommand{\cU}{\mathcal{U}}
\newcommand{\cV}{\mathcal{V}}

\newcommand{\cX}{\mathcal{X}}
\newcommand{\cY}{\mathcal{Y}}
\newcommand{\cZ}{\mathcal{Z}}

\newcommand{\ovl}{\overline}
\newcommand{\un}{\underline}

\newcommand{\Gal}{\mathrm{Gal}}
\newcommand{\Hom}{\mathrm{Hom}}

\newcommand{\Ind}{\mathrm{Ind}}

\newcommand{\GL}{\mathrm{GL}}

\newcommand{\Gm}{\mathbb{G}_m}
\newcommand{\ad}{\mathrm{ad}}

\newcommand{\red}{\mathrm{red}}

\newcommand{\Spec}{\mathrm{Spec}\ }

\newcommand{\JH}{\mathrm{JH}}
\newcommand{\Rep}{\mathrm{Rep}}

\newcommand{\Spf}{\mathrm{Spf}}

\DeclareMathOperator{\codim}{codim}

\DeclareMathOperator{\rhobar}{\overline{\rho}}
\newcommand{\rbar}{\overline{r}}

\usepackage{xcolor}
\newcommand{\Brandon}[1]{\textcolor{orange}{#1}}

\newcommand{\ra}{\rightarrow}

\newcommand{\iarrow}{\hookrightarrow}

\newcommand{\tld}{\widetilde}
\makeatletter
\let\@wraptoccontribs\wraptoccontribs
\makeatother

\makeatother

\author{Robin Bartlett}
\address{School of Mathematical Sciences,
 Queen Mary University of London,
London,
E1 4NS, 
United Kingdom}
\email{robin.bartlett.math@gmail.com}

\author{Bao V.~Le Hung}
\address{Department of Mathematics,
Northwestern University, 
2033 Sheridan Road\\
Evanston, IL 60208, USA}
\email{lhvietbao@googlemail.com}

\author{Brandon Levin}
\address{Department of Mathematics,
Rice University, 
6100 Main Street,
Houston, Texas 77005, USA}
\email{bl70@rice.edu}

\contrib[with an appendix by]{Andrea Dotto}

\title{Resolutions of spaces of crystalline representations and modularity}

\begin{document}

\begin{abstract} We introduce a new partial resolution of crystalline spaces of Galois representations when the gaps in Hodge--Tate weights are smaller than $p$, with no bound on ramification.  Furthermore, when $n =3$ in the case of minimal regular weight, we are able to show that the resolution is normal (assuming the ramification index is divisible by 3).  Employing base change techniques and further analysis of the resolution, we are able to show that all the components of the crystalline deformation rings are potentially diagonalizable. As a consequence, we deduce automorphy lifting, the weight part of Serre's conjecture, and the Breuil-M\'ezard conjecture in dimension three for minimal regular weight. 
\end{abstract}

\maketitle

\tableofcontents

\section{Introduction}

Since the landmark work of \cite{mffgs}, it has been understood that the Taylor--Wiles patching method reduces automorphy lifting theorems to the study of deformation spaces of Galois representations of $\ell$-adic fields with $p$-complete coefficients. The deepest subtleties arise when $\ell = p$, where one must analyze loci cut out by $p$-adic Hodge-theoretic conditions. These conditions are naturally formulated only after inverting $p$, and as a result, a precise understanding of their integral variation inside deformation spaces remains largely out of reach. Fortunately, \cite{BLGGT14} isolated a more accessible property---\emph{potential diagonalizability}---that suffices for applications to automorphy. A local Galois representation is potentially diagonalizable if, after restriction to a finite-index subgroup, it lies on the same irreducible component of the crystalline deformation space as a direct sum of characters. Combined with automorphic base change, this notion allowed \cite{BLGGT14} to prove the strongest known automorphy lifting theorems in dimensions greater than two. It is therefore of central importance to determine whether all potentially crystalline representations are potentially diagonalizable.

Despite its importance, potential diagonalizability has so far been established only in relatively restricted settings. Beyond the ordinary and Fontaine--Laffaille representations treated in \cite{BLGGT14,GL14,B20}, the only other complete case is that of two-dimensional potentially crystalline representations with Hodge--Tate weights in $[0,1]$ (the potentially Barsotti--Tate case). This was established in \cite{gee-kisin}, building heavily on \cite{mffgs}. This case played a crucial role in the proof of the Breuil--M\'ezard conjecture for potentially Barsotti--Tate deformation spaces and the weight part of Serre's conjecture for $\GL_2$.

Recent work has led to significant advances in understanding potentially crystalline deformation spaces (see \cite{LLLM-shapes, LLLM-models, B23, B24}), but existing methods typically impose restrictions on the ramification index. Such restrictions are a fundamental obstacle to proving potential diagonalizability, which inherently requires handling arbitrarily  ramified extensions.  In this paper, we introduce methods that are insensitive to the ramification index, allowing us to establish potential diagonalizability for three-dimensional representations of minimal regular weight:

\begin{thm} \label{thma}
  Let $K$ be a finite extension of $\bQ_p$ with $p \geq 5$. Then any potentially crystalline representation $\rho\colon G_K \rightarrow \operatorname{GL}_3(\overline{\bQ}_p)$ with minimal regular Hodge--Tate weights $(2,1,0)$ is potentially diagonalizable.
\end{thm}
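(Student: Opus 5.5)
Potential diagonalizability is insensitive to finite base change. So I would first replace $K$ by a finite extension $L$ over which $\rho$ becomes crystalline. I would enlarge $L$ further so that three conditions hold: $e(L/\Qp)$ is divisible by $3$ (and as large as convenient), the residual representation $\rbar|_{G_L}$ is trivial, and $L$ contains enough roots of unity for the reductions below. The task is then to show that every crystalline lift of the trivial representation of $G_L$ with Hodge--Tate weights $(2,1,0)$ in each embedding lies on the same irreducible component of $\Spec R^{\mathrm{cris},(2,1,0)}_{\rbar}[1/p]$ as some direct sum of crystalline characters. Changing $\rbar$ to be trivial costs nothing: after enlarging $L$, every residual representation becomes trivial, and the property we want is preserved under restriction.

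The main geometric input is the partial resolution promised in the abstract. There is a proper map $\pi\colon \tld{X}\to \Spec R^{\mathrm{cris}}_{\rbar}$, which is an isomorphism after inverting $p$. Its source $\tld X$ parametrizes the crystalline lift together with auxiliary integral data, roughly a Breuil--Kisin module with a filtration adapted to the gaps $<p$. For $n=3$, weight $(2,1,0)$ and $3\mid e$, I would use that $\tld X$ is normal, and flat over $\cO$. Two consequences follow.

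First, the irreducible components of the generic fibre correspond to connected components of $\tld X$. Indeed, normal plus flat gives: $\tld X[1/p]$ is dense, and each connected component of $\tld X$ is irreducible.

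Second, since $\pi$ is proper, each connected component of $\tld X$ meets the special fibre of the fibre over the closed point, i.e.\ the fibre $\pi^{-1}(\rbar)$. So components of $R^{\mathrm{cris}}[1/p]$ are controlled by connected components of $\tld X$ meeting $\pi^{-1}(\rbar)$. If the special fibre of $\tld X$ is reduced, this is governed by the irreducible components of $\tld X_{\F}$ together with their intersections.

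The next step is to analyse $\tld X_{\F}$ concretely. I would stratify it by the "shape" or relative position data of the Kisin module, for instance by an element of the extended affine Weyl group for $\GL_3$ in each embedding. Then I would show that every irreducible component of $\tld X_{\F}$ is connected, through a chain of components with nonempty pairwise intersections, to a component containing a point that lifts to a sum of characters. The natural candidate is the most "split" shape, where the Kisin module is a direct sum of rank-one pieces, and such a point visibly deforms to a diagonal crystalline lift. Because $\rbar$ is trivial and $e$ is large, all shapes should occur. The connectivity argument would go by degeneration inside the fibre: using the $\mathbb{G}_m$-action or torus action coming from the diagonal torus, together with a limit argument (Białynicki-Birula style), I would push any point of $\pi^{-1}(\rbar)$ to a torus-fixed point. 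A torus-fixed point is exactly a split Kisin module. Since the degeneration stays within a connected component of $\tld X$, this connects the original lift to a diagonalizable one.

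The main obstacle I anticipate is the normality of $\tld X$, together with ensuring that the degeneration argument stays in one connected component without requiring unibranchness at the closed point of $\Spec R^{\mathrm{cris}}$ itself. Normality is stated in the abstract, so I would invoke it. If it were unavailable, my first attempt would be to compute local models of $\tld X$ explicitly for weight $(2,1,0)$, using the condition $3\mid e$ to make the relevant affine Schubert-type varieties uniform. I would then verify Serre's conditions $R_1$ and $S_2$ via a Cohen--Macaulay argument on the special fibre.
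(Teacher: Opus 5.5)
Your reduction and geometric setup agree with the paper's. You pass to a finite extension where $\rho$ is crystalline, $\overline{\rho}$ is trivial and $3\mid e$. You work on the variety $\cL^{\operatorname{cr},\operatorname{conv}}_{\eta,\overline{\rho}}$, which is proper over $\operatorname{Spec}R^{\eta}_{\overline{\rho}}$ and an isomorphism after inverting $p$. You use reducedness of its special fibre to turn irreducible components of $\operatorname{Spec}R^{\eta}_{\overline{\rho}}[1/p]$ into connected components of the closed fibre; the paper does this via integrality of idempotents and regularity of $R^{\eta}_{\overline{\rho}}[1/p]$, which is the same mechanism. You then use the torus action on the framing and properness to degenerate every point to a $T$-fixed point. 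The gap is your last step, the claim that a torus-fixed point ``visibly deforms to a diagonal crystalline lift''. For Hodge type $(2,1,0)$ this is false in general.

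At a $T$-fixed point the module and its convolution filtration split into rank-one pieces. So each inclusion $\fM_{i,\tau}\subset\fM_{i-1,\tau}$ has diagonal relative position $\nu^*_{i,\tau}\leq(2,1,0)$. Besides permutations of $(2,1,0)$ (extremal steps), this allows $(1,1,1)$, i.e.\ $\fM_{i,\tau}=u\fM_{i-1,\tau}$. Suppose a direct sum of crystalline characters of type $(2,1,0)$ reduced to such a point. Then at step $(i,\tau)$ its Breuil--Kisin module has elementary divisors a permutation of $(2,1,0)$, and this persists mod $\varpi$, contradicting $(1,1,1)$. Equivalently, the rank-one summands lift only to characters with weights $(1,1,1)$ at $\kappa(i,\tau)$. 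These non-extremal points are not excluded for $i>1$. For $i\geq 2$ the equation defining $(\mathbf{B}_i)$ in Example~\ref{exam} has no constant term and vanishes identically at $u\F[[u]]^3$; only $(\mathbf{B}_1)$ rules them out. So your limit argument can land on fixed points that are not reductions of any diagonal point of $\operatorname{Spec}R^{\eta}_{\overline{\rho}}$, and nothing yet shows every connected component contains an extremal one.

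The missing ingredient is Theorem~\ref{thm-connect}.
\begin{enumerate}
\item Take the minimal non-extremal $(i,\tau)$, so step $i-1$ is extremal with $\beta_{i-1,\tau}=\beta_{i-2,\tau}\operatorname{diag}(u^2,u,1)$.
\item Replace $\fM_{i-1,\tau}$ by the family generated by $\beta_{i-2,\tau}\left(\begin{smallmatrix}1&0&0\\0&1&t\\0&0&1\end{smallmatrix}\right)\operatorname{diag}(u^2,u,1)$. This family still contains $u\fM_{i-1,\tau}=\fM_{i,\tau}$.
\item This gives a $\mathbb{P}^1$ in the closed fibre joining your point to a $T$-fixed point that is extremal at $\kappa(i,\tau)$. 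Iterating reaches an extremal fixed point, and only those lift diagonally with the correct Hodge type.
\end{enumerate}
The substantive part is proving that this family stays inside $\cY^{\operatorname{cr},\operatorname{conv}}_\eta\otimes\F$. That requires the explicit identification $\cY^{\operatorname{cr},\operatorname{conv}}_\eta\otimes\F\cong Y^{\nabla,\operatorname{conv}}_\eta$ of Corollary~\ref{cor-true = explicit}. It also requires the vanishing $N_0\equiv 0 \bmod u^e$ at $T$-fixed points with trivial $\overline{\rho}$ (Corollary~\ref{cor=N0=0}). Finally one checks directly the single Pl\"ucker equation expressing $(\mathbf{B}_{i-1})$ and $(\mathbf{B}_i)$ along the family. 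Your shape-stratification and chain-of-components sketch does not supply this step.
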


Combining Theorem~\ref{thma} with the methods of \cite{BLGGT14}, we immediately obtain:

\begin{thm} \label{thmc} (Automorphy Lifting) 
Let $F$ be an imaginary CM field with maximal totally real subfield $F^+$, and let
$c \in \Gal(F/F^+)$ denote the non-trivial element.  Assume $p \geq 5$ and $F$ is split at all places of $F^+$ above $p$. Let $\iota:\overline{\Q}_p\cong \C$, and $r : G_F \to \GL_3(\overline{\Q}_p)$ be a continuous irreducible representation with residual representation $\overline{r}:G_F \rightarrow \operatorname{GL}_3(\overline{\mathbb{F}}_p)$. Assume that:
\begin{enumerate}
\item \emph{(odd essential conjugate self-duality)}  
There is a character $\chi : G_{F^+} \to \overline{\Q}_p^\times$ with $\chi(c_v) = -1$ for all
$v \mid \infty$ and an isomorphism $r^c \cong r^\vee \otimes \chi$.

\item \emph{(unramified almost everywhere)}  
The representation $r$ is ramified at only finitely many primes.

\item \emph{(minimal regular potentially crystalline)}  
For all places $v|p$, the restriction $r|_{G_{F_v}}$ is potentially crystalline with $\kappa$-Hodge--Tate weights $(2,1,0)$ for all $\kappa:F_v \hookrightarrow \overline{\bQ}_p$. 
\item \emph{(adequate)} $\rbar|_{ G_{F(\zeta_p)}}$ is irreducible and $\rbar( G_{F(\zeta_p)})$ is an adequate subgroup of $\GL_3(\overline{\F})$; and
\item \emph{(residual modularity)} $\overline{r} \cong \overline{r}_\iota(\pi)$ for some $\pi$ a regular algebraic conjugate essentially self-dual cuspidal  automorphic (RAECSDC) representation of $\GL_3(\A_F)$. 
\end{enumerate}
Then $r$ is automorphic, i.e.~$r \cong r_\iota(\pi')$ for some $\pi'$ a RAECSDC automorphic representation of $\GL_3(\A_F)$.
\end{thm}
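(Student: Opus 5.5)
The plan is to deduce Theorem~\ref{thmc} from Theorem~\ref{thma} together with the main automorphy lifting theorem of \cite{BLGGT14}, namely their Theorem~4.2.1 (or the version of it with the adequacy hypothesis of Thorne). That theorem says the following. Let $r$ be odd, essentially conjugate self-dual and unramified almost everywhere, with $\overline{r}|_{G_{F(\zeta_p)}}$ adequate. Suppose $\overline{r} \cong \overline{r}_\iota(\pi)$ for a RAECSDC $\pi$ such that, for each $v|p$, $r|_{G_{F_v}}$ and $r_\iota(\pi)|_{G_{F_v}}$ are both potentially diagonalizable with the same Hodge--Tate weights. Then $r$ is automorphic. (The weights may also differ, if one uses the version allowing a change of weight via potential diagonalizability.) Hypotheses (1), (2), (4) and (5) of Theorem~\ref{thmc} match the global hypotheses of that theorem directly. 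The condition $p \geq 5$ is compatible with the requirement $p > 2(n+1)$ or $p \nmid 2n$ needed in that framework, since for $n=3$ adequacy already handles $p\ge 5$. So only the local condition at $p$ needs checking.

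First, by hypothesis (3) and Theorem~\ref{thma}, each $r|_{G_{F_v}}$ with $v|p$ is potentially crystalline with Hodge--Tate weights $(2,1,0)$ at every embedding, and hence is potentially diagonalizable. Second, I need to arrange the same for $\pi$. Hypothesis (5) gives some RAECSDC $\pi$, but its weight and its type at $p$ are arbitrary. To fix this, I would use the ``change of weight'' machinery of \cite{BLGGT14}: for instance their Corollary~4.4.2 / Theorem~4.4.? style results, which produce a lift of $\overline{r}$ that is automorphic and potentially diagonalizable of prescribed weight, using potential automorphy over a CM extension $F'/F$ linearly disjoint from $\overline{F}^{\ker \overline{r}}(\zeta_p)$. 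In that step one replaces $\pi$ by a potentially diagonalizable automorphic lift, for example an ordinary or Fontaine--Laffaille one after base change to a field where $\overline{r}$ becomes trivial at $p$. One then applies the lifting theorem over $F'$, where potential diagonalizability of $r|_{G_{F'_w}}$ persists because the notion is stable under restriction. Finally one descends automorphy from $F'$ to $F$ by solvable base change (Arthur--Clozel), choosing $F'/F$ solvable. Irreducibility of $r|_{G_{F'}}$ follows from adequacy of $\overline{r}|_{G_{F'(\zeta_p)}}$, which is preserved by linear disjointness.

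The hypothesis that $F$ splits at all places above $p$ is what lets us work with unitary groups that are $\GL_3$ at $p$; this is where the local deformation rings at $p$ are those of $G_{F_v}$ above. The main obstacle is the bookkeeping in the second step, namely ensuring that some automorphic lift of $\overline{r}$ is potentially diagonalizable at $p$ and lies on a common component with $r|_{G_{F_v}}$ after restriction. I would handle this exactly as in \cite[Thm.~4.5.1 / Cor.~4.5.2]{BLGGT14}, where potential diagonalizability of $r$ at $p$ (supplied by Theorem~\ref{thma}) is the only input beyond the global hypotheses. Everything else is formal.
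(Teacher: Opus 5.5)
There is a genuine gap in your second step; the reduction to \cite[Thm.~4.2.1]{BLGGT14} and the use of Theorem~\ref{thma} for $r|_{G_{F_v}}$ are fine. That theorem needs $\overline{r}$ to be \emph{potentially diagonalizably automorphic over $F$}: there must be a RAECSDC $\Pi$ with $\overline{r}_\iota(\Pi)\cong\overline{r}$ and $r_\iota(\Pi)|_{G_{F_v}}$ potentially diagonalizable for every $v\mid p$. Nothing you cite produces such a $\Pi$ from an arbitrary $\pi$.

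The potential automorphy results of \cite{BLGGT14} give automorphy over a CM extension $F'/F$ coming from a Dwork-family/Moret-Bailly argument. That extension is Galois but not solvable, so Arthur--Clozel descent is unavailable; you cannot simply ``choose $F'/F$ solvable'' there. The lifting results that do work over $F$ itself, with prescribed local behaviour at $p$, take a potentially diagonalizable automorphic lift as \emph{input}, so invoking them here is circular. The fallback of an ordinary or Fontaine--Laffaille automorphic lift after solvable base change is also unsupported. Fontaine--Laffaille theory needs $F_v$ unramified, which is not assumed. There is no known way to force $\overline{r}$ to be automorphic in an ordinary weight for $\GL_3$; this is why the paper stresses that it cannot assume $r$ is ordinarily automorphic.

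The missing idea is to apply Theorem~\ref{thma} on the automorphic side as well. This requires an automorphic lift of $\overline{r}$ over $F$ that is potentially \emph{crystalline} with Hodge--Tate weights $(2,1,0)$, which the paper constructs as follows.
\begin{enumerate}
\item Residual modularity gives a Serre weight $V$ with $S(U,V)_{\overline{\mathfrak m}}\neq 0$.
\item By the appendix (Theorem~\ref{thm: rational span}, which needs $|k_{F_v}|\geq 4$, automatic for $p\geq 5$), $V$ is a Jordan--H\"older factor of $\overline{\sigma}(\tau)$ for some $\sigma(\tau)$ that is a type for a \emph{regular} Bernstein component.
\item For sufficiently small $U$, exactness of $S(U,-)$ gives $S(U,\sigma^\circ(\tau)/\varpi)_{\overline{\mathfrak m}}\neq 0$. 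This yields a $\Pi$ of weight $0$ and type $\tau$ at each $v\mid p$, congruent to $\pi$.
\item Regularity of $\tau$ forces the monodromy of $r_\iota(\Pi)|_{G_{F_v}}$ to vanish. So it is potentially crystalline with Hodge--Tate weights $(2,1,0)$, hence potentially diagonalizable by Corollary~\ref{cor:potdiag}.
\end{enumerate}
Now \cite[Thm.~4.2.1]{BLGGT14}, with \cite[Thm.~A.1.4]{BLGG13} for the adequacy hypothesis, applies directly over $F$ with no base change at all.

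Regularity of the type is essential. Merely passing to depth-zero level at $p$ would allow Steinberg-type automorphic lifts with $N\neq 0$, to which Theorem~\ref{thma} does not apply.
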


As a second application of Theorem~\ref{thma} we prove the Breuil--M\'ezard conjecture in minimal weight for potentially crystalline representations of dimension three, following the strategy laid out in dimension two in \cite{gee-kisin, EG14}. We state a version for the Emerton--Gee stack, which implies the corresponding statement for deformation spaces (see \cite[\S 8.3]{EGstack}). 

Let $\cX_3$ denote the moduli stack introduced in \cite{EGstack}, parametrizing $3$-dimensional $p$-adic
representations of $G_K$. The irreducible components of the underlying reduced stack $\cX_{3,\red}$ are naturally labeled by Serre weights $\sigma$ (i.e.\ irreducible $\overline{\F}_p$-representations of $\GL_3(\cO_K)$); we write $\cC_\sigma$ for the component corresponding to $\sigma$, and $\Z[\cX_{3,\red}]$ for the free abelian group generated by these components. If
$\tau : I_K \rightarrow \GL_3(\overline{\Q}_p)$ is an inertial type for
$K$, let $\sigma(\tau)$ denote the $\GL_3(\cO_K)$-representation
associated to $\tau$ by the inertial local Langlands correspondence
\cite{CEGS+16}, and write $\overline{\sigma}(\tau)$ for the
semisimplification of the mod $p$ reduction of a lattice in $\sigma(\tau)$. Let
$\cX^{\tau}_{\eta} \subset \cX_3$ denote the substack parametrizing
potentially crystalline representations of inertial type $\tau$ and
$\kappa$-Hodge--Tate weights $(2,1,0)$ for each
$\kappa : K \hookrightarrow \overline{\Q}_p$.

\begin{thm} \label{thm:introBMconj} (Geometric Breuil--M\'ezard in minimal weight) For each Serre weight $\sigma$ for $\GL_3(\cO_K)$,   there exist a cycle $\cZ_\sigma \in \Z[\cX_{3, \mathrm{red}}]$ such that for every inertial type $\tau$ for $K$, we have
\begin{equation} \label{eqn:BM}
\sum_\sigma \mu_\sigma(\cX^{\tau}_{\eta, \overline{\F}_p}) \cC_\sigma = \sum_{\sigma} [\ovl{\sigma}(\tau):\sigma] \cZ_\sigma.
\end{equation}
Here $\mu_\sigma(\cX^{\tau}_{\eta, \overline{\F}_p})$ denotes the multiplicity of $\cC_\sigma$ as an irreducible component of $\cX^{\tau}_{\eta, \overline{\F}_p}$ and $[\ovl{\sigma}(\tau):\sigma]$ is the multiplicity of $\sigma$ as a Jordan--H\"older factor of $\ovl{\sigma}(\tau)$.

Moreover, the cycles $\cZ_{\sigma}$ are unique and effective.   
\end{thm}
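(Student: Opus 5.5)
Following \cite{gee-kisin, EG14}, the plan is to obtain the cycles $\cZ_\sigma$ by globalization and patching, with Theorem~\ref{thma} as the local input. First choose, for a given $\overline{\rho}$ (or a finite-type point on each component of $\cX_{3,\red}$), a suitable CM field $F$ and a residual representation $\overline{r}$ that is odd, conjugate self-dual, automorphic and adequate, and whose local restrictions at places above $p$ realize $\overline{\rho}$; this uses the potential automorphy techniques of \cite{BLGGT14} and \cite{EG14}. We then patch spaces of algebraic automorphic forms on a definite unitary group in three variables, as in \cite{EG14}. This gives, for each locally algebraic type $\sigma(\tau)$ of weight $(2,1,0)$ and each Serre weight $\sigma$, patched modules $M_\infty(\sigma^\circ(\tau))$ and $M_\infty(\sigma)$ over $R_\infty$. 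The module $M_\infty(\sigma^\circ(\tau))$ is supported on $R_\infty^{\tau}$, the version of the ring built from the potentially crystalline deformation ring of type $\tau$ and weight $(2,1,0)$.

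The key point is that this support is \emph{all} of $\Spec R_\infty^{\tau}$: $M_\infty(\sigma^\circ(\tau))$ must meet every irreducible component. The standard argument for this uses the automorphy lifting of \cite{BLGGT14}: a component contains a point that is automorphic exactly when it is potentially diagonalizable. Theorem~\ref{thma} says every point is potentially diagonalizable, so every component is hit. Since $M_\infty(\sigma^\circ(\tau))$ is maximal Cohen--Macaulay and generically free of rank one on $R_\infty^\tau$ (which is regular after inverting $p$ in the Fontaine--Laffaille/small weight setting), its associated cycle agrees with $Z(R^\tau_\infty)$. Reducing mod $p$ and using exactness of $M_\infty$, we get
\[Z(R^\tau_\infty/\varpi) = Z(M_\infty(\ovl{\sigma}(\tau))) = \sum_\sigma [\ovl{\sigma}(\tau):\sigma]\, Z(M_\infty(\sigma)).\]
We then define $\cZ_\sigma$ to be the cycle of $M_\infty(\sigma)$, transported back to the local deformation ring after removing the auxiliary formal variables. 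This cycle is effective by construction.

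The main obstacle is showing that $\cZ_\sigma$ does not depend on the globalization and glues across deformation rings into a cycle on the stack. I would follow \cite{EG14}: the system of linear equations~\eqref{eqn:BM}, as $\tau$ varies, determines the $\cZ_\sigma$ uniquely, provided the vectors $([\ovl{\sigma}(\tau):\sigma])_\sigma$ span the Grothendieck group of mod $p$ representations of $\GL_3(k)$ (rationally, hence integrally after checking). This spanning holds because every Serre weight of weight $(2,1,0)$ type occurs as the reduction of some $\sigma(\tau)$, and by Brauer theory for $\GL_3(k)$ with tame types. Uniqueness then gives independence of choices, and compatibility with the versal maps $\Spf R_{\overline{\rho}} \to \cX_3$ turns the local identities into~\eqref{eqn:BM} on $\cX_{3,\red}$, using \cite[\S 8.3]{EGstack}. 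Two points need care: that the weight-$(2,1,0)$ types really do produce all $\sigma$ with full-rank equations, and that the vanishing $\cZ_\sigma = 0$ for non-modular $\sigma$ is handled consistently.
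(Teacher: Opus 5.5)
Your outline matches the paper's proof: globalize via \cite{EG14}, patch, get full support of $M_\infty(\sigma^\circ(\tau))$ on $R_\infty(\tau)$ from Theorem~\ref{thma} and \cite[Thm.~4.4.1]{BLGGT14}, use exactness of $M_\infty$, and pass from versal to geometric via \cite[\S 8.3]{EGstack}. However, the step you flagged is a genuine gap. You need the classes $[\ovl{\sigma}(\tau)]$ to span $K_0(\Rep_{\F}(\GL_3(k)))\otimes\Q$, and your justification does not give this.

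Knowing that each Serre weight occurs in some $\ovl{\sigma}(\tau)$ says nothing about rank, and tame types provably do not span. The reason is that $\ovl{\St}$ is projective over $\F[\GL_3(k)]$. So for tame $\tau$, where $\sigma(\tau)$ is inflated from $\GL_3(k)$, the multiplicity $[\ovl{\sigma}(\tau):\ovl{\St}\otimes\psi\circ\det]$ equals the multiplicity of $\St\otimes\psi\circ\det$ in $\sigma(\tau)$. That multiplicity is $0$: a twist of the Steinberg representation of $\GL_3(K)$ contains $\St\otimes\psi\circ\det$ on restriction to $\GL_3(\cO_K)$ but has monodromy $N\neq 0$, which contradicts the characterization of $\sigma(\tau)$ in \cite{CEGS+16}. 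Compare $\Sym^{p-1}$, which never occurs in reductions of tame potentially Barsotti--Tate types for $\GL_2(\bZ_p)$.

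This is exactly why the paper needs Appendix~\ref{appendix}, which proceeds as follows.
\begin{enumerate}
\item For each rational maximal torus $T_\cP\subset\GL_n(k)$ and \emph{every} character $\chi_\cP$, it builds a wildly ramified type $\Ind_{T_\cP^\circ}^{\GL_n(\cO_K)}\chi_\cP^\circ$. Here $\chi_\cP^\circ$ extends $\chi_\cP$ by the character $1+\varpi A\mapsto\psi\trace(u_\cP A)$ of the pro-$p$ group $1+\varpi M_n(\cO_K)$.
\item Via intertwining computations and Bushnell--Kutzko covers, it proves this is a type for a \emph{regular} Bernstein component, so the Galois side is potentially crystalline. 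This needs $|k|\geq n+1$, which holds for $p\geq 5$.
\item The reduction of this type is $\Ind_{T_\cP}^{\GL_n(k)}\chi_\cP$.
\item These reductions span rationally, because $\Ind_T^{G}\chi=\pm R_T(\chi)\otimes\St$ and $-\otimes\ovl{\St}$ is invertible on Brauer characters.
\end{enumerate}
Only rational spanning is needed; integral spanning is neither proved nor required.

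Spanning is also what makes your existence step work, not just uniqueness. With several places above $p$, $Z(M_\infty(\sigma))$ lives on $\Spec R_\infty/\varpi$. There is no a priori reason it is pulled back from the local ring at one place, so ``removing the auxiliary variables'' is not yet meaningful.

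The paper handles this as follows. It fixes a place $v_0$ and types $\tau_v$ at the places $v\neq v_0$. It then \emph{defines} $\cZ_{\sigma}(\rhobar):=\sum_\tau a_{\sigma,\tau}\cZ_{0,\tau}(\rhobar)$ from a rational presentation $[\sigma]=\sum_\tau a_{\sigma,\tau}[\ovl{\sigma}(\tau)]$. Using exactness and the injective pullback $j_\infty$, it identifies $j_\infty(\cZ_\sigma(\rhobar))$ with $Z\bigl(M_\infty(\sigma\otimes\bigotimes_{v\neq v_0}\sigma^\circ(\tau_v)/\varpi)\bigr)$. This gives effectivity and the equations at once, and the definition is visibly independent of the globalization.

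A minor point: $R_\infty(\tau)[1/p]$ is regular by Kisin for all Hodge types, not only in a Fontaine--Laffaille range.
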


\begin{rmk} \label{rmk:BMC}
\begin{enumerate}
\item Theorem~\ref{thm:introBMconj} is the first result towards the
Breuil--M\'ezard conjecture in dimension greater than two that allows arbitrary (in particular, wildly ramified) inertial types. Although the theorem
is restricted to minimal regular Hodge--Tate weights, the
$\overline{\sigma}(\tau)$ span the Grothendieck group of mod~$p$
representations of $\GL_3(\cO_K)$ rationally
(cf.\ Appendix~\ref{appendix}), so the system of
equations~\eqref{eqn:BM} determines the cycles $\cZ_{\sigma}$
uniquely. 
\item As explained in \cite{GHS}, the existence of the cycles
$\cZ_\sigma$ gives an (a priori inexplicit) formulation of the weight part of Serre's conjecture: a global mod~$p$ automorphic Galois
representation $\rbar$ is automorphic of weight $\{\sigma_v\}$ if and
only if its local components $\rbar_v$ lie in the support of
$\cZ_{\sigma_v}$ for each $v | p$. In Section~\ref{sec:WPSC}, we
prove this result under standard Taylor--Wiles
hypotheses, using the connection with patching already present in
the proof of Theorem~\ref{thm:introBMconj}.
\item  To make the weight part of Serre's conjecture explicit, one needs to compute the support of $\cZ_\sigma$.  When $K=\Qp$, the second two authors, together with Daniel Le and Stefano Morra, determine that the cycles for $\sigma = F(a,b,c)$ with $a \ge b \ge c$ are given by 

\[
\cZ_\sigma =
\begin{cases}
\cC_\sigma
& \text{if } a-b \neq p-1,\; b-c \neq p-1, \\[6pt]

\cC_\sigma + \cC_{F(b,b,c)}
& \text{if } a-b = p-1,\; b-c \neq p-1, \\[6pt]

\cC_\sigma + \cC_{F(a,b,b)}
& \text{if } a-b \neq p-1,\; b-c = p-1, \\[6pt]

\cC_{\sigma}
+ \cC_{F(b,b,c)}
+ \cC_{F(a,b,b)}
+ \cC_{F(c,c,c)}
& \text{if } a-b = b-c = p-1 .
\end{cases}
\]

This result will appear in forthcoming work which will further determine the cycles whenever $K/\Qp$ is unramified.     

\end{enumerate}
\end{rmk}

We now sketch the proof of Theorem~\ref{thma}, which builds on the strategy introduced in \cite{mffgs}. In modern terms, Kisin's approach revolves around the construction of a partial resolution  \[\cY^{\operatorname{cr}}_{\lambda} \rightarrow \cX_\lambda^{\operatorname{cr}}\] of the crystalline locus with Hodge--Tate weights $\lambda$ inside the Emerton--Gee stack of $G_K$-representations. More precisely, this resolution is defined as the $\bZ_p$-flat part of a fibre product
\[
\cY^{\mathrm{cr}}_{\lambda}
:= \left( \cX_\lambda^{\operatorname{cr}} \times_{\mathcal R_n} Y_{\le\lambda} \right)^{\operatorname{fl}},
\]
where $Y_{\le\lambda}$ denotes the moduli stack of rank $n$ Breuil--Kisin modules of height $\leq \lambda$, and the morphism $\cX_\lambda^{\operatorname{cr}} \rightarrow \cR_n$ corresponds to restriction to $G_{K_\infty} \subset G_K$ for an extension $K_\infty=K(\pi^{1/p^{\infty}})$ obtained by adjoining a fixed system of $p$-th power roots of a uniformizer $\pi \in K$. 

The morphism $\cY^{\mathrm{cr}}_{\lambda}\rightarrow \cX_\lambda^{\operatorname{cr}}$ becomes an isomorphism after inverting $p$, justifying the idea that the former is a (partial) resolution of the latter. In the two-dimensional Barsotti--Tate case, or more generally whenever $\lambda$ consists of minuscule coweights, the situation simplifies considerably because $\cY^{\mathrm{cr}}_{\lambda} \cong Y_{\le\lambda}$. In other words, Breuil--Kisin modules of height $\leq \lambda$ correspond exactly to lattices in crystalline representations of weight $\lambda$ in this situation. The geometry of $\cY^{\mathrm{cr}}_{\lambda}$ is then governed by that of $Y_{\le\lambda}$, whose local models are (mixed characteristic) affine Schubert varieties. These are known to be normal in great generality \cite{PR03, PZ13, Levin16}. 

Since $\cY^{\operatorname{cr}}_{\lambda}$ is normal when $\lambda$ is minuscule, the irreducible components of the crystalline deformation space of a representation $\rhobar \in \cX_\lambda^{\operatorname{cr}}(\overline{\F}_p)$ correspond to connected components of the fiber of $\cY^{\operatorname{cr}}_\lambda \rightarrow \cX_\lambda^{\operatorname{cr}}$ above $\rhobar$. This fiber can be analyzed using the explicit description of $Y_{\le\lambda}$, and ultimately potential diagonalizability reduces to finding a point on each of its connected components that admits a diagonal lift to characteristic zero. We stress that the normality of the resolution $\cY^{\operatorname{cr}}_{\lambda}$ is the key ingredient of the entire strategy.


Beyond the minuscule setting, $G_K$-stable lattices in crystalline representations only come from special kinds of Breuil--Kisin modules, and the relationship between $\cY^{\mathrm{cr}}_{\lambda}$ and $Y_{\le\lambda}$ becomes significantly more subtle. In particular, unlike $Y_{\le\lambda}$, it is not known in general whether $\cY^{\mathrm{cr}}_{\lambda}$ is normal. A central innovation of this paper is a framework to restore control over the geometry whenever $\lambda$ is concentrated in degree $[0,p-1]$ (in other words, all Hodge--Tate weights lie in this interval), but crucially, without imposing any restriction on the ramification index $e$.

The key idea is to enhance the moduli problem $Y_{\le\lambda}$ by equipping a Breuil--Kisin module $\fM$ (which we recall is a projective module over $\fS=W(k)[\![u]\!]$ with Frobenius structure $\varphi_{\fM}$, where $k$ is the residue field of $K$) with two additional structures:
\begin{enumerate}
    \item \label{item: truncated monodromy}An $\fS$-linear endomorphism $N_0: \fM/u^{e+1}\fM\rightarrow \fM/u^{e+1}\fM$. This induces a derivation $N_0^{\varphi}$ of $\varphi^*\fM/u^{pe+1}\varphi^*\fM$, which we refer to as \emph{truncated monodromy} and obeys certain stability and commutation relation with Frobenius (cf Definition \ref{def-BK+monodromy stack}). 
    \item \label{item: convolution}A convolution structure on $\varphi^*\fM$, i.e.\ an  $e$-step filtration
    $$
    \fM_e := \varphi^{-1}_{\fM}(E(u)^{p-1}\fM) \subset \fM_{e-1} \subset \ldots \subset \fM_0 := \varphi^*\fM.
    $$
\end{enumerate}
These two extra structures have natural origins: the data of $N_0$ in (\ref{item: truncated monodromy}) is the (linearization of the) structure needed to extend the $G_{K_\infty}$-representation associated to $\fM$ to a $G_K$-representation, while the convolution data in (\ref{item: convolution}) is designed to capture an integral version of the Hodge filtration (or rather, the Nygaard filtration lifting it).

Incorporating convolution structures into the resolution on the Breuil--Kisin module (and taking $\Z_p$-flat parts) yields a refinement $\cY^{\mathrm{cr},\mathrm{conv}}_{\lambda}$ of our previous resolution $\cY^{\mathrm{cr}}_{\lambda}$.
In characteristic $p$, there is a natural map $\iota: \cY^{\operatorname{cr},\operatorname{conv}}_{\lambda} \otimes_{\overline{\mathbb{Z}}_p} \overline{\mathbb{F}}_p
\rightarrow  Y^{\nabla,\operatorname{conv}}_{\leq \lambda}$,  where $ Y^{\nabla,\operatorname{conv}}_{\leq \lambda}$ is  the algebraic stack over $\operatorname{Spec}\overline{\bF}_p$ parametrizing Breuil--Kisin modules $\fM$ of height $\leq \lambda$, with $N_0$ as in (\ref{item: truncated monodromy}) and a convolution structure as in (\ref{item: convolution}) satisfying the compatibility $u^{i-1}N_0^{\varphi}(\fM_i) \subset \fM_i$. Furthermore, $\iota$ factors through a closed substack $ Y^{\nabla,\operatorname{conv}}_{\lambda}\subset   Y^{\nabla,\operatorname{conv}}_{\leq \lambda}$ isolating the ``elementary divisor $\lambda$-part'' as elaborated on below. 


For general $\lambda$, we do not at present have enough control over $Y^{\nabla,\operatorname{conv}}_{\lambda}$ to decide whether $\cY^{\operatorname{cr},\operatorname{conv}}_{\lambda}= Y^{\nabla,\operatorname{conv}}_{\lambda}$ or to analyze the singularities of $Y^{\nabla,\operatorname{conv}}_{\lambda}$. However, when $\lambda_{\kappa} = (2,1,0)$ for all $\kappa:K\iarrow \overline{\Qp}$, a series of numerical coincidences—ultimately reflecting the fact that $\lambda$ is quasi-minuscule—allow us to prove the following key technical input which underlies Theorem~\ref{thma}.

\begin{thm}\label{thmb}
    Assume $p \geq 5$ and $e \equiv 0$ modulo $3$ and $\lambda_{\kappa} =(2,1,0)$ for all $\kappa$. Then $Y^{\nabla,\operatorname{conv}}_\lambda$
    is Cohen--Macaulay and generically reduced, hence reduced. Furthermore
\[\cY^{\operatorname{cr},\operatorname{conv}}_{\lambda} \otimes_{\overline{\bZ}_p} \overline{\bF}_p \cong
    Y^{\nabla,\operatorname{conv}}_\lambda.\]
In particular, $\cY^{\operatorname{cr},\operatorname{conv}}_{\lambda}$ is normal.

\end{thm}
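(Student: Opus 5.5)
The plan is the standard ``special fibre'' argument for local models, run with $Y^{\nabla,\operatorname{conv}}_\lambda$ in place of an affine Schubert variety.

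\textbf{Step 1: local charts and Cohen--Macaulayness.} First I would reduce to local charts on $Y^{\nabla,\operatorname{conv}}_\lambda$. Smoothly locally, a Breuil--Kisin module of height $\le\lambda$ with convolution structure is described by a point of a convolution of $e$ local models. Since $e\equiv 0 \bmod 3$, I would group the $e$ steps of the filtration $\fM_e\subset\cdots\subset\fM_0$ in blocks of three. Each block then corresponds to a convolution of minuscule pieces for $\mathrm{GL}_3$, namely $(1,0,0)$, $(1,1,0)$, and so on, which realizes the quasi-minuscule $(2,1,0)$ pattern. The truncated monodromy $N_0$, together with the condition $u^{i-1}N_0^{\varphi}(\fM_i)\subset\fM_i$, would then be written as explicit linear equations on top of these Schubert-type charts. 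The expected outcome is a presentation of each chart as a vector-bundle-like space over a product of convolution varieties, cut out by a regular sequence. Normality and Cohen--Macaulayness of convolution Schubert varieties are available from \cite{PR03, PZ13, Levin16}. Cohen--Macaulayness of $Y^{\nabla,\operatorname{conv}}_\lambda$ would then follow from a dimension count: the number of monodromy equations equals the codimension, and the Schubert-type base is Cohen--Macaulay. This is where the numerical coincidences for $(2,1,0)$ and $p\ge5$ enter; the bound $p\geq 5$ is needed for $p-1>2$ and to keep $pe+1$ truncations harmless.

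\textbf{Step 2: generic reducedness, hence reducedness.} I would identify the irreducible components of $Y^{\nabla,\operatorname{conv}}_\lambda$, using an open dense stratum where the relative position is ``generic''. On each such open stratum I would check that the monodromy equations have independent differentials, i.e.\ that the local chart there is smooth. Reducedness then follows from Cohen--Macaulay plus generically reduced, via Serre's criterion ($R_0$ and $S_1$).

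\textbf{Step 3: comparing with the special fibre of the resolution.} The map $\iota\colon \cY^{\operatorname{cr},\operatorname{conv}}_\lambda\otimes\overline{\F}_p\to Y^{\nabla,\operatorname{conv}}_\lambda$ is a closed immersion, or at least finite and monomorphic, by construction. I would compare dimensions and components. The generic fibre of $\cY^{\operatorname{cr},\operatorname{conv}}_\lambda$ has dimension equal to that of the crystalline stack, which is $[K:\Qp]\dim(\mathrm{GL}_3/B)=3[K:\Qp]$. By $\Z_p$-flatness, every component of the special fibre has this dimension. It therefore suffices to show two things:
\begin{itemize}
\item $Y^{\nabla,\operatorname{conv}}_\lambda$ is equidimensional of that dimension, which comes from Step 1;
\item every irreducible component of $Y^{\nabla,\operatorname{conv}}_\lambda$ meets the image, i.e.\ each generic point lifts to a crystalline lattice of weight $\lambda$.
\end{itemize}
For the lifting I would construct explicit lifts at generic points: for instance, diagonal or ordinary-type Breuil--Kisin modules with monodromy lifting to characteristic zero, using the characterization of crystalline lattices via Breuil--Kisin modules with the Nygaard/convolution condition for weights in $[0,p-1]$. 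Since the target is reduced by Step 2, a closed substack containing all generic points with matching multiplicities is everything, so $\iota$ is an isomorphism.

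\textbf{Step 4: normality.} Given the isomorphism, $\cY^{\operatorname{cr},\operatorname{conv}}_\lambda$ is $\Z_p$-flat with reduced Cohen--Macaulay special fibre. It is therefore Cohen--Macaulay, so it satisfies $S_2$. The generic fibre is normal, being smooth as crystalline deformations of regular weight. Codimension-one points in the special fibre are generic points of a reduced fibre, so there the local ring is a DVR with uniformizer $p$. Serre's criterion then gives normality.

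\textbf{Main obstacle.} I expect the hardest step to be Step 1: obtaining explicit equations for the monodromy condition on the convolution charts and proving they form a regular sequence of the correct length. My first attempt would be to exhibit a flat degeneration or a fibration over the minuscule convolution variety whose fibres are affine spaces of constant dimension, cut out by linear conditions in $N_0$.
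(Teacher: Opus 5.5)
Your formal skeleton matches the paper's. $Y^{\nabla,\operatorname{conv}}_\lambda$ is cut out of the Cohen--Macaulay stack $Z^{\operatorname{conv}}_{\leq\lambda}$ (smooth-locally a product of copies of $\operatorname{Gr}_{\leq(2,1,0)}$ with an affine space of $N_0$'s) by the equations $(\mathbf{B}_i)$ and $(\mathbf{C})$. It is therefore a local complete intersection, hence Cohen--Macaulay, as soon as its dimension is at most $\sum_{\tau\in\cJ_0}3e=3[K:\Qp]$. Cohen--Macaulay plus generically reduced gives reduced, a closed immersion into an irreducible reduced target of the same dimension is an isomorphism, and your Step 4 is the standard deduction of normality.

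The gap is that the one non-formal input is not supplied: that dimension bound, together with irreducibility and generic reducedness. The mechanism you sketch for it cannot work.
\begin{itemize}
\item Grouping the $e$ convolution steps in blocks of three has no meaning. Each step $\fM_i\subset\fM_{i-1}$ lives over its own point $u=\pi_i$ with quasi-minuscule relative position $\leq(2,1,0)$, nothing reduces to minuscule pieces, and $e\equiv 0 \bmod 3$ plays no such role.
\item The conditions are not linear in $N_0$ over a fixed base. The monodromy condition reads $u^e[X_\tau\phi(\cN_{\tau\circ\varphi})X_\tau^{-1}-c_\tau(u)\partial(X_\tau)X_\tau^{-1}]\equiv c_\tau(u)\cN_\tau \bmod u^{e+1}$. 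It couples the embeddings through Frobenius and depends on the Frobenius matrix $X_\tau$.
\item Above all, the convolution fibres $m^{-1}_{\leq\eta_\bullet}(\fM)$ have dimension $\sum_\tau(n_\tau+m_\tau)$, which jumps with the elementary divisors $\mu_\tau\leq(2e,e,0)$ of Frobenius. There is no fibration with equidimensional affine fibres, and the naive count genuinely fails on the special strata. One must prove that the monodromy conditions cut the excess away there.
\end{itemize}

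The paper does this in proving Theorem~\ref{thm-dimension bounds}, in three steps.
\begin{itemize}
\item It stratifies by $\mu$ and bounds $\dim Y^\nabla(\mu)\leq\sum_\tau(3e-n_\tau-m_\tau)$, strictly if some $\mu_\tau$ is unbalanced. This uses explicit charts, elimination of variables, and leading-term degenerations down to the codimension count for $AB\equiv CD \bmod Z^t$.
\item It shows the open stratum $\mu_\tau=(2e,e,0)$ has a unique, generically reduced component of dimension $\sum_\tau 3e$; over this stratum, forgetting the convolution is an isomorphism.
\item For balanced $\mu\neq(2e,e,0)$, it uses the MV-cycle description of the top-dimensional components of the convolution fibres. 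For $e$-generic $N_0$ it exhibits a point on each such component violating some $(\mathbf{A}_k)$ or $(\mathbf{B}_1)$, so the preimage of that stratum has dimension $<\sum_\tau 3e$.
\end{itemize}
This is where $p\geq5$ and $e\equiv0\bmod 3$ enter; the latter only disposes of the boundary strata $(5r,2r,2r)$ and $(4r,4r,r)$ with $e=3r$. The constraint $h=2\leq p-1$ only needs $p\geq3$.

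The same analysis yields irreducibility of $Y^{\nabla,\operatorname{conv}}_\lambda$, which your Steps 2 and 3 tacitly need. Step 2 assumes a generic-position stratum is dense in every component, which is exactly the lower-strata estimate. In Step 3, lifting ``generic points'' via diagonal or ordinary Breuil--Kisin modules does not work as stated. Those are special closed points, and you would need, for each component, such a point known to lie on that component alone. Once irreducibility is known, Step 3 reduces to the dimension comparison.
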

The assumptions $p \geq 5$ and $e \equiv 0 \pmod{3}$ arise from specific dimension estimates in the proof, and it appears likely they can be relaxed.

 With Theorem~\ref{thmb} in hand, the strategy outlined above applies. After a base change, Theorem~\ref{thma} reduces to analyzing the fibres of
\[
\cY^{\operatorname{cr},\operatorname{conv}}_{\lambda} \longrightarrow \cX^{\operatorname{cr}}_{\lambda}
\]
above the trivial representation $\overline{\rho}$. This fiber is an explicit substack of $Y^{\nabla,\operatorname{conv}}_\lambda$, which is equipped with an action of a maximal torus $T$ centralizing $\rhobar$. This action together with the explicit relation to convolution fibers allows us to show each connected component contains a special $T$-fixed point which lifts to characteristic zero, thus yielding enough diagonalizable points (see Section~6).


The proof of Theorem~\ref{thmb} takes up the main part of this paper, and involves three key steps, which we elaborate on in the remainder of the introduction.

\textit{Step 1 - Truncated monodromy and descent from $G_{K_\infty}$}:
In Section~2, we make precise the role of truncated monodromy in extending a $G_{K_\infty}$-action to $G_K$ for any Hodge--Tate weights concentrated in degree $[0,p-1]$. 

Given a Breuil--Kisin module $\fM$, the base change 
\[
\fM_{\mathrm{inf}} = \fM \otimes_{\fS} W(\mathcal{O}_{C^\flat})
\]
gives rise to an étale $(\varphi,G_{K_\infty})$-module corresponding to a $\Z_p[G_{K_\infty}]$-representation $V$. Extending $V$ to a crystalline $G_K$-representation is equivalent to equipping $\fM_{\mathrm{inf}}$ with a compatible $G_K$-action. Taking the logarithmic derivative of this action gives a $\varphi$-equivariant derivation $\cN$ on a suitable completion of $\fM[\frac{1}{p}]$. When  the Hodge--Tate weights belong to $[0,p-1]$, we can sufficiently control the denominators occurring in $\cN$ to ensure the truncation of  $\cN$ and $\varphi^*\cN$ modulo $u^{e+1}$ and $u^{ep +1}$ respectively are integral. Reducing these integral truncations modulo $p$ produces the operators $N_0$ and $N_0^{\varphi}$ parametrised in (1). The main outcome is Theorem~\ref{thm-embedding} which produces a closed immersion
\begin{equation}\label{eq-intro embedding}
\cY^{\operatorname{cr}}_{\lambda} \otimes_{\overline{\bZ}_p} \overline{\bF}_p\rightarrow Y^{\nabla}_{\leq \lambda}
\end{equation}
where $Y^{\nabla}_{\leq \lambda}$ denotes an algebraic stack over $\operatorname{Spec}\overline{\bF}_p$ classifying Breuil--Kisin modules $\fM$ of height $\leq \lambda$ and an $N_0$ as in (1).

\textit{Step 2 - Convolution structures and Pl\"ucker equations:}
Sections~3 and~4 introduce convolution structures and then impose equations cutting out the exact Hodge--Tate weights.

Heuristically, in characteristic $0$ the condition that the Hodge--Tate weights are $\leq \lambda$ distributed along completions at the distinct roots of $E(u)$, which fuse as one degenerates to characteristic $p$ (in particular, see \cite{B24,B23}). 
Convolution structures have the effect of undoing this fusion and generally make singularities milder: even in the minuscule case, $\cY^{\operatorname{cr},\operatorname{conv}}_{\lambda}$ is smooth (cf \cite{B23b}), whereas $\cY^{\operatorname{cr}}_{\lambda}$ is usually singular.

In non-minuscule situations, one has to contend with a further problem, namely how to isolate the condition that the Hodge--Tate weights are exactly $\lambda$, as opposed to just bounded by $ \lambda$. This is delicate: while the latter 
 is a closed condition on the moduli of Breuil--Kisin modules, the former condition is only locally closed.  To isolate the condition of having weights \emph{exactly} $\lambda$, we analyze the interaction between the monodromy operator and the convolution filtration. 

For simplicity, assume $K$ is totally ramified and choose an ordering of its embeddings into $\overline{\bQ}_p$ so that the Hodge--Tate weights are given by an $e$-tuple of elementary divisors $(\lambda_i)_{1\leq i \leq e}$. Let $(\fM,\fM_i )\in  Y^{\nabla,\operatorname{conv}}_\lambda(\overline{\Z}_p)$ be a Breuil--Kisin module coming from a crystalline representation, together with its associated convolution structure. The crystallinity of $\fM$ guarantees that, for each $1 \leq i \leq e$, one has the stability
\begin{equation}\label{eq-intro monodromy}
\bigg( \prod_{j=1}^{i} (u-\kappa_j(\pi)) \bigg)\varphi^*\cN(\fM_i) \subset \fM_i[\tfrac{1}{p}],
\end{equation}
(more precisely, the right hand side should be replaced by a suitable completion of $\fM_i[\frac{1}{p}]$). For a fixed $1 \leq i \leq e$ we can, after trivializing $\fM_{i-1}$, interpret the lattice $\fM_i \subset \fM_{i-1}$ as a point $x_i$ in the Schubert variety $\operatorname{Gr}_{\leq \lambda_i}$ of the affine Grassmannian for $\operatorname{GL}_d$, with parameter $u-\kappa_i(\pi)$. The stability in \eqref{eq-intro monodromy} then admits the following equivalent reformulation: after inverting $p$, the point $x_i$ is fixed by an $L^+G$-conjugate of the loop rotation $\bG_m$-action on $\operatorname{Gr}_{\leq \lambda_i}$. Specifically, the conjugating element in $L^+G$ is obtained from the restriction of $\bigg( \prod_{j=1}^{i-1} (u-\kappa_j(\pi)) \bigg)\varphi^*\cN$ to $\fM_{i-1}$.

The fixed points under this $\bG_m$-action are well known: they are the $L^+G$-translates of a disjoint union of flag varieties $\bigcup_{\mu \leq \lambda_i} \operatorname{FL}_{\mu}$ embedded into $\operatorname{Gr}_{\leq \lambda_i}$. However, only the translate of the top flag variety $\operatorname{FL}_{\lambda_i} \subset \operatorname{Gr}_{\lambda_i}$ is relevant to Breuil--Kisin modules of weight $(\lambda_i)_{1 \leq i \leq e}$. To single it out we use the Pl\"ucker embedding
$$
\operatorname{Gr}_{\leq \lambda_i} \hookrightarrow \mathbb{P}(V),
$$
 and characterise $\operatorname{FL}_{\lambda_i}$ as the locus where $\big( \prod_{j=1}^i (u-\kappa_j(\pi))\big)\varphi^*\cN$ acts
with certain explicit eigenvalues. When $\lambda_i$ is concentrated in degree $[0,p-1]$, $N_0$ is a sufficiently good approximation of $\cN$ to interpret these conditions as equations mod $p$, where they cut out the stack $Y^{\nabla,\operatorname{conv}}_{\lambda}$ inside $Y^{\mathrm{conv}}_{\leq \lambda}\otimes_{\overline{\bZ}_p} \overline{\bF}_p$.

Up to this point, everything works for $\lambda$ concentrated in degree $[0,p-1]$. However, unless $\lambda$ is quasi-minuscule the process above produces a large number of equations which we are not generally able to control. When $\lambda$ is quasi-minuscule, a numerical coincidence ensures the number of Pl\"ucker equations obtained as above is exactly the codimension of
$\operatorname{FL}_{\lambda_i}$ inside $\operatorname{Gr}_{\leq \lambda_i}$. The upshot is that if the inequality $\dim Y^{\nabla,\operatorname{conv}}_{\lambda}\geq \dim \cY^{\operatorname{cr},\operatorname{conv}}_{\lambda} \otimes_{\overline{\bZ}_p} \overline{\bF}_p$ is an equality, then $Y^{\nabla,\operatorname{conv}}_{\lambda}$ is a local complete intersection inside an affine bundle over a convolution
product of the $\operatorname{Gr}_{\leq \lambda_i}$'s. Since the latter is known to be Cohen--Macaulay, this implies the same
for $Y^{\nabla,\operatorname{conv}}_{\lambda}$. Additionally if one knows that $Y^{\nabla,\operatorname{conv}}_{\lambda}$ is generically reduced and irreducible, then one concludes $\cY^{\operatorname{cr},\operatorname{conv}}_{\lambda} \otimes_{\overline{\bZ}_p} \overline{\bF}_p$ coincides with $Y^{\nabla,\operatorname{conv}}_{\lambda}$ for dimension reasons, and both are reduced.

\textit{Step 3 - Dimension estimates and irreducibility:} 
The last step is to control the dimension of $Y^{\nabla,\operatorname{conv}}_{\lambda}$, and show it is generically reduced and irreducible. We do this in Section 5, specializing to the case $d=3$ and $\lambda_{\kappa} = (2,1,0)$ for each $\kappa:K\hookrightarrow \overline{\Q}_p$. This is done by analyzing the base and the fiber of the natural map $\pi: Y^{\nabla,\operatorname{conv}}_{\lambda}\rightarrow Y^{\nabla}_{\leq \lambda}$ given by forgetting the convolution structure.  

First, we analyze $Y^{\nabla}_{\leq \lambda}$, which is the moduli of pairs $(\fM,N_0)$ such that $N_0^{\varphi}$ is compatible with Frobenius and satisfies Griffiths transversality. Concretely, after choosing a basis of $\fM$, this is the space of pairs $(X,\cN)\in L\GL_3\times u \mathfrak{gl}_3[u]/u^{e+1}$ such that $X$ has elementary divisors $\leq e(2,1,0)$ and obey the relation
\[
u^e \Big( X \varphi(\cN) X^{-1} - c(u) \frac{d}{du}(X) X^{-1} \Big) \equiv c(u) \cN \mod u^{e+1}
\]
for a suitable $c(u)\in \overline{\F}_p[u]$ depending on $E(u)$. In particular, we can stratify $Y^{\nabla}_{\leq \lambda}$ into strata $Y^{\nabla}(\mu)$ according to the elementary divisor $\mu$ of $X$. An explicit but technical analysis of the $Y^{\nabla}(\mu)$ yields the key estimate 
\[\dim Y^{\nabla}(\mu)\leq 3e- \frac{1}{2}(\dim \operatorname{Gr}_{\leq \lambda}-\dim \operatorname{Gr}_{\leq \mu}),\]
and shows the open stratum $Y^{\nabla}((2e,e,0))$ contains a unique generically reduced irreducible component of maximal dimension $3e$.

Finally, we analyze the fibers of $\pi: Y^{\nabla,\operatorname{conv}}_{\lambda}\rightarrow Y^{\nabla}_{\leq \lambda}$ which are closed subschemes of the fibers for the convolution map 
\[\mathrm{conv}: \operatorname{Gr}_{\leq (2,1,0)}\widetilde{\times}  \operatorname{Gr}_{\leq (2,1,0)}\cdots \widetilde{\times}  \operatorname{Gr}_{\leq (2,1,0)}\rightarrow  \operatorname{Gr}_{\leq (2e,e,0)}.\]
The semi-smallness of $\mathrm{conv}$ now shows that $\dim \pi^{-1}(Y^{\nabla}(\mu))\leq 3e$, so it remains to show that the inequality is strict when $\mu< e(2,1,0)$. We do this by exhibiting for each such $\mu$ and each top dimensional irreducible component $\cC\subset \mathrm{conv}^{-1}( \operatorname{Gr}_\mu)$ an explicit point in $\cC \setminus \pi^{-1}(Y^{\nabla}(\mu))$, by exploiting the description of $\cC$ in terms of (convolution of) Mirkov-Vilonen cycles as in \cite{MV}.

\subsection{Acknowledgments}  
This project originated from conversations at Max Planck Institute and the Hausdorff Center of Mathematics in Bonn which we thank for their excellent working conditions during the special program in Summer 2023. The motivation for the constructions here comes from related constructions in the tamely potentially crystalline setting which were first explored in discussions with Daniel Le and Stefano Morra. 
R.B.~was supported by the EPSRC grant EP-R034826-1 and the European Union’s Horizon 2020
research and innovation programme under the Marie Skłodowska-Curie grant
agreement No 101204730.
B.LH.~acknowledges
support from the National Science Foundation under grants Nos.~DMS-1952678 and DMS-2302619 and the Alfred P.~Sloan Foundation.
B.L.~was supported by National Science Foundation grants Nos.~DMS-2306369 and DMS-2237237 and the Alfred P.~Sloan Foundation.

 AI tools (GPT 5.5 and Gemini) were used during the preparation of this manuscript for proofreading and stylistic polishing to improve the clarity of the exposition.

\section{Stacks of Breuil--Kisin modules}\label{sec-stacks of BK modules}

\subsection{Setup}\label{sec-setup}
\begin{itemize}
	\item Fix a finite extension $K$ of $\bQ_p$ with residue field $k$ and a compatible system $\pi^{1/p^\infty}$ of $p$-th power roots of a uniformiser $\pi \in K$ inside a completed algebraic closure $C$ of $K$. Write $E(u) \in W(k)[u]$ for the minimal polynomial of $\pi$ over $W(k)$, whose degree equals the ramification degree $e = e(K/\mathbb{Q}_p)$ of $K$ over $\mathbb{Q}_p$.
	\item For any $\bZ_p$-algebra $A$ we write $\fS_A$  for the $E(u)$-adic completion of $(W(k) \otimes_{\bZ_p} A)[u]$. If $A$ is $p$-adically complete then $\fS_A \cong (W(k) \otimes_{\bZ_p} A)[[u]]$ and we write $\varphi$ for the $A$-linear Frobenius on $\fS_A$ lifting the $p$-th power map on $k$ and sending $u \mapsto u^p$. If $A$ is $\bZ_p$-finite, then  $\fS_A = \fS \otimes_{\bZ_p} A$ for $\fS := \fS_{\bZ_p}$.
	\item Let $C^\flat := \varprojlim_{x\mapsto x^p} C$ denote the tilt of $C$ and $\cO_{C^\flat}$ its ring of integers. Consider $W(C^\flat)$ and the subring $A_{\operatorname{inf}} = W(\cO_{C^\flat})$, viewed as an $\fS$-algebra via $u \mapsto [\pi^{1/p^\infty}]$, and write $\varphi$ for the Witt vector Frobenius which extends $\varphi$ on $\fS$. We write $\mu = [\epsilon_\infty] -1 \in A_{\operatorname{inf}}$ for a fixed compatible system $\epsilon_\infty = (1,\epsilon_1,\ldots) \in \bZ_p(1)$ of primitive $p$-th power roots of unity in $C$. 
	\item If $A$ is $p$-adically complete and topologically of finite type over $\bZ_p$ (i.e.\ $A \otimes_{\bZ_p} \bF_p$ is finite type over $\bF_p$) then we can form the $\fS_A$-algebras $A_{\operatorname{inf},A}$ and $W(C^\flat)_A$ as in \cite[\S 2.2]{EGstack}. If $A$ is $\bZ_p$-finite, then  $A_{\operatorname{inf},A} = A_{\operatorname{inf}} \otimes_{\bZ_p} A$ and $W(C^\flat)_A = W(C^\flat) \otimes_{\bZ_p} A$. 
    \item Write $c(u) = \frac{\varphi(E(u)) - u^{ep}}{p} \in \fS$ and set $c^*(u) = \frac{\varphi(E(u)) - E(u)^p}{p} = c(u) + \frac{u^{ep} - E(u)^p}{p}$.
\end{itemize}
\subsection{Crystalline Breuil--Kisin modules}

	If $A$ is any $p$-adically complete $\bZ_p$-algebra,  then a Breuil--Kisin module of rank $d$ over $A$ is a projective $\fS_A$-module $\fM$ of rank $d$ equipped with an isomorphism
	$$
	 \varphi_{\fM} : \fM \otimes_{\varphi,\fS_A} \fS_A[\tfrac{1}{E(u)}] \xrightarrow{\sim} \fM[\tfrac{1}{E(u)}] 
	$$
	Define $\varphi^*\fM := \fM \otimes_{\varphi,\fS_A} \fS_A$. 
	We say $\fM$ has height $\leq h$ if $E(u)^h\fM \subset \varphi_{\fM}(\varphi^*\fM) \subset \fM$. If $A$ is furthermore topologically of finite type over $\bZ_p$ then a crystalline $G_K$-action on $\fM$ is a $\varphi$-equivariant, continuous, and $A_{\operatorname{inf},A}$-semilinear action of $G_K$ on $\fM \otimes_{\fS_A} A_{\operatorname{inf},A}$ satisfying 
	\begin{equation}\label{eq-Galois action}
		\sigma(m) - m \in \fM \otimes_{\fS_A} u \varphi^{-1}(\mu) A_{\operatorname{inf},A}, \qquad \sigma_\infty(m) = m
	\end{equation}
	for all $m \in \fM$ and all $\sigma \in G_K$ and all  $\sigma_\infty \in G_{K_\infty}$. 

\begin{defn}\label{def-BKcryststack}
	Fix $d \geq 1$. For any $p$-adically complete topologically finite type $\bZ_p$-algebra $A$  we write $\cY_{\leq h}(A)$ for the groupoid of rank $d$ Breuil--Kisin modules over $A$ of height $\leq h$ equipped with a crystalline $G_K$-action. We write $\cY_{\leq h}$ for the resulting limit preserving category fibred over $\operatorname{Spf}\bZ_p$. This is a finite type $p$-adic formal algebraic stack over $\operatorname{Spf}\bZ_p$ in the sense of \cite[A.7]{EGstack}. See \cite[4.5.20]{EGstack}. We write $\cY^{\operatorname{cr}}_{\leq h}$ for the $\bZ_p$-flat substack of $\cY_{\leq h}$ characterised uniquely by the property that $\cY^{\operatorname{cr}}_{\leq h}(A) = \cY_{\leq h}(A)$ whenever $A$ is finite flat over $\bZ_p$.  See \cite[Proposition 4.8.2]{EGstack} or \cite[Prop 10.7]{B24}.  
\end{defn}

 To work with $\cY^{\operatorname{cr}}_{\leq h}$, we will use an additional level of control on the $G_K$-action in \eqref{eq-Galois action}. Specifically, we claim that for all $n \geq 1$, one has
\begin{equation}\label{eq-refined Galois}
	(\tau-1)^n(m) \in \fM \otimes_{\fS_A} u \varphi^{-1}(\mu)^n A_{\operatorname{inf},A}
\end{equation}
whenever $m \in \fM$ and $\tau \in G_K$ is such that $\chi_{\operatorname{cyc}}(\tau) =1$ and $\epsilon(\tau):=\tau(u)/u \in \bZ_p(1)$ is a $\bZ_p$-generator. To see this one first notes that, as explained in e.g. the proof of \cite[Lemma 2.2.8]{B20}, \eqref{eq-refined Galois} cuts out a closed substack of $\cY^{\operatorname{cr}}_{\leq h}$. By \cite[Proposition 20.10]{B23} this closed substack has the same $A$-valued points as $\cY^{\operatorname{cr}}_{\leq h}$ whenever $A$ is finite flat over $\bZ_p$. The characterising property of $\cY^{\operatorname{cr}}_{\leq h}$ therefore ensures this closed substack coincides with $\cY^{\operatorname{cr}}_{\leq h}$.

\begin{rmk}\label{rem-p=2}
	Note that $\tau$ as in \eqref{eq-refined Galois} always exists when $p>2$. This is not always the case when $p=2$, but it is shown in \cite[Lemma 2.1]{Wan22} that $\pi$ can be chosen so that such $\tau$ exist. Thus, when $p=2$ we assume throughout that such a $\pi$ has been chosen.
\end{rmk}

\begin{lemma}\label{lem-lifting points}
    If $A$ is a finite $\bF_p$-algebra and $\fM \in \cY^{\operatorname{cr}}_{\leq h}(A)$ then there exists a finite flat $\bZ_p$-algebra $A^\circ$ with $A = A^\circ\otimes_{\bZ_p}\bF_p$ and $\fM^\circ \in \cY^{\operatorname{cr}}_{\leq h}(A^\circ)$ with $\fM^\circ \otimes_{\bZ_p} \bF_p = \fM$.
\end{lemma}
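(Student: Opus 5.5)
Since $\cY^{\operatorname{cr}}_{\leq h}$ is a $\bZ_p$-flat $p$-adic formal algebraic stack of finite type, I would show that every point of its special fibre lifts by reducing to a statement about versal rings. A morphism $\operatorname{Spec} A \to \cY^{\operatorname{cr}}_{\leq h}$ with $A$ a finite $\bF_p$-algebra factors through the open and closed pieces of $\operatorname{Spec} A$. Since $A$ is a finite product of local Artinian rings, and liftings can be taken componentwise (a product of finite flat $\bZ_p$-algebras is finite flat), I may assume $A$ is local Artinian with residue field $\bF$ finite over $\bF_p$.

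The point $\fM$ determines a finite type point $x\colon \operatorname{Spec}\bF \to \cY^{\operatorname{cr}}_{\leq h}$. Choose a versal ring $R_x$ at $x$; this is a complete local Noetherian $W(\bF)$-algebra. Because $\cY^{\operatorname{cr}}_{\leq h}$ is $\bZ_p$-flat and versal rings of flat formal algebraic stacks can be chosen flat (for instance via a smooth chart $\operatorname{Spf} B \to \cY^{\operatorname{cr}}_{\leq h}$ with $B$ flat, then completing), I take $R_x$ to be $\bZ_p$-flat and reduced after inverting $p$. By versality, the Artinian point $\fM$ lifts to a local homomorphism $f\colon R_x \to A$. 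The task is therefore to find a finite flat local $\bZ_p$-algebra $A^\circ$ with $A^\circ/p = A$ and a factorization $R_x \to A^\circ \to A$. The desired $\fM^\circ$ is then the pullback of the versal family along $R_x \to A^\circ$. The family is algebraized because $A^\circ$ is $p$-adically complete, and $A^\circ$-points of $\cY^{\operatorname{cr}}_{\leq h}$ agree with $A^\circ$-points of $\cY_{\leq h}$ by Definition \ref{def-BKcryststack}.

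The main obstacle is constructing $A^\circ$ with $A^\circ/p$ equal to $A$ exactly, rather than merely having $A$ as a quotient. My first attempt is the following. Take a finite flat $W(\bF)$-algebra $\Lambda$ surjecting onto $A$; for example, write $A$ as a quotient of $\bF[x_1,\dots,x_r]/(x_i^{N})$ and lift this to $W(\bF)[x_i]/(x_i^N)$. Then consider $S := R_x \hat\otimes_{W(\bF)} \Lambda$ together with the surjection $S \to A$ induced by $f$ and the map $\Lambda \to A$. The algebra $S$ is flat over $\bZ_p$, since $R_x$ is flat and $\Lambda$ is finite free. I then need a $\bZ_p$-flat finite quotient $S \to A^\circ$ through which $S\to A$ factors and such that the induced map $A^\circ/p \to A$ is an isomorphism.

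To build it, let $J = \ker(S \to A)$. I would look for elements $g_1,\dots,g_m \in J$ generating $J$ modulo $p$, where $m = \dim S - 1$, such that $p, g_1,\dots,g_m$ form a system of parameters; this is possible after enlarging the generating set of $J$ by adding $p$-multiples. Then $S/(g_1,\dots,g_m)$ is finite over $\bZ_p$, and its reduction mod $p$ maps onto $A$. Next, replace it by its quotient modulo $p$-power torsion to obtain a finite flat $A^\circ$. The remaining difficulty is checking that $A^\circ/p$ still surjects onto $A$ with kernel zero. Here I would instead use the standard trick (as in the proofs of lifting for flat deformation rings in the work of Emerton–Gee) of choosing the $g_i$ so that modulo $p$ they cut out exactly $J/p$. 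Flatness of $S$ combined with the fact that $S/(p,g_i)$ has dimension $0$ would then make the sequence $(p, g_1,\dots, g_m)$ regular, provided $S$ is Cohen–Macaulay. Since Cohen–Macaulayness is not known in general, my fallback is to argue via Lemma \ref{lem-lifting points}'s weaker form: lift to some finite flat $A^\circ$ with $A^\circ/p \twoheadrightarrow A$. I would then modify $A^\circ$ by taking the fibre product $A^\circ \times_{A^\circ/p} A$, which is still $\bZ_p$-flat (it is a subring of $A^\circ$) and finite, and whose reduction mod $p$ I would verify equals $A$.
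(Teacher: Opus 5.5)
Your reduction to a $\bZ_p$-flat versal ring $R_x$ with a local map $R_x\to A$ is fine. The gap is the step you flag as the main obstacle: finding a finite flat $A^\circ$ with $R_x\to A^\circ\to A$ and $A^\circ\otimes_{\bZ_p}\bF_p=A$ \emph{exactly}. This cannot be done using only that $R_x$ is $\bZ_p$-flat, or even regular.

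Take $R=\bZ_p[[x,y]]/(xy-p)$ (use $\bZ_p\langle x,y\rangle$ if you want a $p$-adic formal scheme of finite type), with $R\to A=\bF_p[\epsilon]/(\epsilon^2)$ given by $x\mapsto\epsilon$, $y\mapsto 0$. Suppose $R\to A^\circ\to A$ with $A^\circ$ finite flat and $\ker(A^\circ\to A)=pA^\circ$. Then $y\mapsto pc$ for some $c\in A^\circ$, so $p(\bar{x}c-1)=0$, where $\bar x$ is the image of $x$. Torsion-freeness makes $\bar x$ a unit, which is absurd since it maps to $\epsilon$.

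This example shows where both of your routes fail:
\begin{itemize}
\item Cohen--Macaulayness is not the obstruction, since this $R$ is regular. The problem is that $\ker(R\to A)=(x^2,y)$ needs two generators modulo $pR$, while $\dim R-1=1$. Enlarging the generating set by $p$-multiples destroys the system-of-parameters property you rely on.
\item Your fibre product $A^\circ\times_{A^\circ/p}A$ is not defined, because the map goes $A^\circ/p\twoheadrightarrow A$, not $A\to A^\circ/p$.
\item Any subring of $A^\circ$ you pass to must contain the image of $R_x$, which can be all of $A^\circ$. Here $A^\circ=\bZ_p[p^{1/3}]$ with $x\mapsto p^{1/3}$, $y\mapsto p^{2/3}$ gives $A^\circ/p\twoheadrightarrow A$, and $A^\circ$ is generated by the image of $R$.
\end{itemize}

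So an argument using only that $\cY^{\operatorname{cr}}_{\leq h}$ is a $\bZ_p$-flat formal algebraic stack gives at best a weaker statement. That statement is: a finite flat $A^\circ$, a surjection $A^\circ\otimes_{\bZ_p}\bF_p\twoheadrightarrow A$, and $\fM^\circ\in\cY^{\operatorname{cr}}_{\leq h}(A^\circ)$ with $\fM^\circ\otimes_{A^\circ}A\cong\fM$.

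Even this needs an argument you do not give, since there is no reason the $p$-saturation of $(g_1,\dots,g_m)$ stays inside $\ker(S\to A)$. One route: the ideals $\mathfrak a\subseteq R_x$ with $R_x/\mathfrak a$ finite and $\bZ_p$-flat are stable under finite intersection. They have zero intersection, because they include $R_x\cap\mathfrak q^n$ for maximal ideals $\mathfrak q$ of the Jacobson ring $R_x[\frac{1}{p}]$. By Chevalley's lemma, one of them lies in $\ker(R_x\to A)$.

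The paper gives no argument beyond citing \cite[Lemma 10.9]{B24}. Its uses of the lemma (Lemma~\ref{lem-isom} and the proof of Theorem~\ref{thm-embedding}) only reduce integral data such as $N^\circ$ along $A^\circ\to A$, and the surjective version suffices for that. The equality as literally stated would require input about the local structure of $\cY^{\operatorname{cr}}_{\leq h}$, which your proposal does not supply.
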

\begin{proof}
    This follows as in \cite[Lemma 10.9]{B24}.
\end{proof}

\subsection{Breuil--Kisin modules with monodromy}\label{sec-BK with mono}

One difficulty controlling the geometry of $\cY^{\operatorname{cr}}_{\leq h}$ arises from the lack of explicit coordinates on this stack. We address this when $h \leq p-1$ by interpreting the Galois action on crystalline Breuil--Kisin modules in terms of coherent data.

To simplify notation we write $\fM_e := \varphi^{-1}_\fM(E(u)^h\fM) \subset \varphi^*\fM$ whenever $\fM$ is a Breuil--Kisin module of height $\leq h$.
\begin{defn}\label{def-BK+monodromy stack}  Assume $h \leq p$. 
	Let $Y^{\nabla}_{\leq h}$ denote the finite type algebraic stack over $\operatorname{Spec}\bF_p$ whose $A$-points classify rank $d$ Breuil--Kisin modules $\fM$ of height $\leq h$ over $A$ equipped with an $\fS_A$-linear $N_0:\fM/u^{e+1}\fM \rightarrow \fM/u^{e+1}\fM$ with $N_0 \equiv 0$ modulo $u$ and satisfying the following conditions:
	\begin{itemize}
		\item  If $N_0^{\varphi}$ denotes the operator on $\varphi^*\fM/u^{ep+1}\varphi^*\fM
$ given by $m \otimes f \mapsto N_0(m) \otimes f + m \otimes c(u)u\frac{d}{du}(f)$ then $u^{e-1} N_0^\varphi$ stabilises $\fM_e/ u^{ep+e} \varphi^*\fM$.  
		\item  For any operator $N$ on $\fM$ lifting $N_0$ and satisfying the Leibniz rule $N(fm) = fN(m) + m u^{e+1}\frac{d}{du}(f)$ for $m\in \fM,f \in \fS_A$, one has 
		$$
		 u^e N_0^{\varphi} \equiv \varphi_{\fM}^{-1} \circ  c(u)N \circ \varphi_{\fM} \mod u^{e+1}\fM_e		$$
		as operators on $\fM_e /u^{ep+e+1}\varphi^*\fM$. 
	\end{itemize}
    Note $N_0^{\varphi}$ is well defined since $\frac{d}{du}$ kills $\varphi(\fS_{\bF_p})$. We also use that $h\leq p$ to ensure $u^{ep}\varphi^*\fM \subset \fM_e$  so that $u^{e+1}\fM_e/ u^{ep+e+1}\varphi^*\fM$ makes sense. Finally, we point out that the second condition is independent of the chosen lift $N$.
\end{defn}

\begin{rmk}
    To parse the conditions in Definition~\ref{def-BK+monodromy stack} it is helpful to consider $N_0$ as the reduction modulo $u^{e+1}$ of a derivation over $E(u)u\frac{d}{du}$. We would then like to interpret $N_0^\varphi$ as the Frobenius pullback of this derivation and so interpret the second bullet point as a commutation between $N_0$ and Frobenius. However, this does not make sense over $p$-torsion coefficients---such a pullback would necessarily be a derivation over $\frac{\varphi(E(u))}{p} u$. To circumvent this we instead define $N_0^\varphi$ directly, using $c(u)$ as an integral truncation of $\frac{\varphi(E(u))}{p}$.  
\end{rmk}

\begin{rmk}
    See Section~\ref{sec-explicit equations} for an explicit description of the conditions from Definition~\ref{def-BK+monodromy stack} after choosing bases. 
\end{rmk}

The following motivates this construction and is the main result of this section. We emphasize that, while the definition of $Y^{\nabla}_{\leq h}$ makes sense whenever $h \leq p$, we are only able to relate the resulting object to $\cY^{\operatorname{cr}}_{\leq h}$ when $h \leq p-1$.
\begin{thm}\label{thm-embedding}
	Fix $\tau \in G_K$ as in \eqref{eq-refined Galois}; when $p=2$ assume $\pi$ is chosen as in Remark~\ref{rem-p=2}. If $h \leq p-1$ then there is a closed immersion 
	$$
	\cY^{\operatorname{cr}}_{\leq h} \otimes_{\bZ_p} \bF_p \rightarrow  Y^{\nabla}_{\leq h} 
	$$
	of algebraic stacks over $\operatorname{Spec}\bF_p$. On points valued in a finite type $\bF_p$-algebra $A$ this morphism is given by $\fM \mapsto (\fM,N_0)$ with $N_0$ determined by the formula 
	\begin{equation}\label{eq-formula for monodromy}
		N_0(m) \equiv \sum_{n \geq 1} \Bigg( \frac{(\tau-1)^n(m)}{\varphi^{-1}(\mu)^n} \Bigg)  c_n \mod \fM \otimes_{\fS_A} u^{e+1} A_{\operatorname{inf},A}
	\end{equation}
	for a sequence $c_n \in A_{\operatorname{inf}}$ converging $p$-adically to zero and depending only on the choices of $\pi \in K$, $\tau \in G_K$, and $\epsilon_\infty \in \bZ_p(1)$. 
\end{thm}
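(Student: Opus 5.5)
The strategy is to work first over $\bZ_p$-flat coefficients, where the $G_K$-action can be differentiated, then pass to $\bF_p$-coefficients using flatness of $\cY^{\operatorname{cr}}_{\leq h}$ and Lemma~\ref{lem-lifting points}. Let $A$ be $\bZ_p$-finite flat and $\fM\in\cY^{\operatorname{cr}}_{\leq h}(A)$. Consider
\[
\cN:=\log(\tau)=\sum_{n\geq 1}\frac{(-1)^{n-1}}{n}(\tau-1)^n
\]
acting on $\fM\otimes A_{\operatorname{inf},A}[\tfrac1p]$. Write it as $\sum_n \frac{(\tau-1)^n}{\varphi^{-1}(\mu)^n}\cdot \frac{(-1)^{n-1}\varphi^{-1}(\mu)^n}{n}$.

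By \eqref{eq-refined Galois}, each quotient $\frac{(\tau-1)^n(m)}{\varphi^{-1}(\mu)^n}$ lies in $u\fM\otimes A_{\operatorname{inf},A}$. The coefficients $\varphi^{-1}(\mu)^n/n$ are not integral. However, modulo $u^{e+1}$, equivalently modulo the image of $u^{e+1}$ in $A_{\operatorname{inf}}$, one can rewrite $\varphi^{-1}(\mu)^n/n$ as an integral element $c_n$ plus something divisible by $u^{e}$. For this I would use the relation between $\varphi^{-1}(\mu)$, $[\pi^\flat]$ and $E(u)$, namely that $\mu/\varphi^{-1}(\mu)$ is a unit multiple of $\varphi(E)$-type elements. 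The resulting $c_n$ tend to $0$ $p$-adically and depend only on $\pi,\tau,\epsilon_\infty$. This yields formula \eqref{eq-formula for monodromy} for $N_0$.

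One must then show that $N_0$ actually descends from $A_{\operatorname{inf}}$ to $\fM/u^{e+1}\fM$. Here crystallinity enters: the log derivative of the $G_K$-action on the completion of $\fM[1/p]$ is $\lambda\,u\frac{d}{du}$-semilinear, with $\lambda=\log\epsilon(\tau)$-type factor and $\lambda\approx E(u)\cdot(\text{unit})$ after suitable normalisation. This makes $\cN$ an $\fS$-module derivation over $E(u)u\frac{d}{du}$ on the Kisin-style completion $\fM\otimes\cO$, which is standard from Kisin's theory.

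Next I would verify the two bullet conditions of Definition~\ref{def-BK+monodromy stack}. The commutation with Frobenius, $\varphi_\fM\circ\varphi^*\cN=\frac{\varphi(E)}{p}\cdot\cN\circ\varphi_\fM$ (up to $p$-normalisation), holds integrally after truncation. The reason is that $c(u)\equiv\varphi(E)/p$ modulo $u^{ep}$, and the height $\leq h\leq p-1$ bound controls the denominators $E(u)^{-h}$ appearing when conjugating by $\varphi_\fM$.

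The Griffiths transversality condition, that $u^{e-1}N_0^\varphi$ stabilises $\fM_e$, comes from the fact that $\cN$ preserves $\varphi^*\fM\otimes$ Nygaard-filtered pieces shifted by one. Concretely, $\varphi^*\cN$ maps $\mathrm{Fil}^i$ into $\mathrm{Fil}^{i-1}$, and $\fM_e$ is the $h$-th filtration piece. The integrality losses are powers $p^{\lfloor i/p\rfloor}$ which vanish since $h\leq p-1$. Reducing modulo $p$ gives $(\fM,N_0)\in Y^\nabla_{\leq h}(A/p)$.

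To get a morphism of stacks, note that the formula \eqref{eq-formula for monodromy} makes sense for all $A$, since it is built from the integral operators $(\tau-1)^n/\varphi^{-1}(\mu)^n$. The conditions defining $Y^\nabla_{\leq h}$ are closed and hold on $\bZ_p$-flat points. Since $\cY^{\operatorname{cr}}_{\leq h}$ is the flat closure, these conditions hold on all of $\cY^{\operatorname{cr}}_{\leq h}\otimes\bF_p$; the finite-$A$ case is exactly Lemma~\ref{lem-lifting points}. This gives the morphism.

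For the closed immersion, I would show that the map is a monomorphism and proper. Properness follows because both sides are proper over the stack of Breuil--Kisin modules; $\cY$ is proper over it by Kisin/EG. For the monomorphism property, the $G_K$-action must be recoverable from $(\fM,N_0)$. Given $\fM$, the action is determined by $\tau$, because $\tau$ and $G_{K_\infty}$ topologically generate. The action of $\tau$ is in turn determined by $\exp(\cN)$, and I would argue that the truncation mod $u^{e+1}$ plus Frobenius-equivariance determines it. The mechanism is an iteration: if two actions agree mod $u^{k}$, then applying $\varphi$ and $E(u)^h$-denominators with $h<p$ improves the agreement to mod $u^{pk-eh}$-ish, which converges once $k\geq e+1$.

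The main obstacle is the integrality estimate: showing that the truncations of $\cN$ and $\varphi^*\cN$ modulo $u^{e+1}$ and $u^{ep+1}$ are integral. This needs careful $p$-adic bounds on $\varphi^{-1}(\mu)^n/n$ modulo $u^{e+1}$, and it is precisely where $h\leq p-1$ is used. The second delicate point is the rigidity argument giving the monomorphism property.
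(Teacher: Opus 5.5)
\textbf{Main gap: the closed-immersion step.} You get properness from the claim that both sides are proper over the stack of Breuil--Kisin modules, and that claim is not available.

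For the target: the conditions defining $Y^{\nabla}_{\leq h}$ are affine-linear equations in $N_0$, which lives in a finite-dimensional affine space. So $Y^{\nabla}_{\leq h}$ is affine over the stack of Breuil--Kisin modules, with fibres that are (possibly empty) affine spaces. It can be proper only if it is finite, and you give no argument for that, nor is there a reason to expect it. For the source: the properness supplied by Kisin and Emerton--Gee concerns the map $\cY_{\leq h}\to\cX_d$ that forgets the lattice (it comes from properness of Breuil--Kisin modules over \'etale $\varphi$-modules). It says nothing about the map forgetting the $G_K$-action.

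So your argument yields at most a monomorphism. What is missing is the \emph{existence} half of reconstructing the Galois action from $N_0$; this is what the paper uses to obtain properness.
\begin{itemize}
\item One needs an integral substitute for $\exp$: $(\sigma-1)(m)\equiv\sum_{n\geq1}N^n(m)d_n(\sigma)$ modulo $\fM\otimes u^{e+1}\varphi^{-1}(\mu)A_{\operatorname{inf}}$, with $d_n(\sigma)\in\varphi^{-1}(\mu)A_{\operatorname{inf}}$. The right-hand side then depends only on $N_0$.
\item The locus $Q\subset Y^{\nabla}_{\leq h}$ where this truncated operator is a $G_K$-action compatible with $\varphi_{\fM}$ is closed.
\item Your Frobenius iteration is exactly the right tool here, used for existence as well as uniqueness. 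Topological nilpotence of $Q\mapsto\varphi_{\fM}\circ Q\circ\varphi_{\fM}^{-1}$ for $h\leq p$ extends such a truncated action uniquely. This gives a monomorphism $Q\to\cY_{\leq h}\otimes\bF_p$.
\item The composite $\cY^{\operatorname{cr}}_{\leq h}\otimes\bF_p\to Q\to\cY_{\leq h}\otimes\bF_p$ is the base change of the closed immersion $\cY^{\operatorname{cr}}_{\leq h}\hookrightarrow\cY_{\leq h}$, hence proper.
\item Monomorphisms are separated, so the first arrow is proper. Being a monomorphism, it is a closed immersion.
\end{itemize}

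\textbf{Problems in your construction of $N_0$.}

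\emph{Normalisation.} $\log[\epsilon(\tau)]$ is not a unit multiple of $E(u)$; in $A_{\operatorname{max}}$ it is, up to a unit, $E(u)\varphi^{-1}(\mu)$. (Also, $\mu/\varphi^{-1}(\mu)$ is a unit multiple of $E(u)$, not of $\varphi(E(u))$.) By the first identity in \eqref{eq-match Galois and monodromy}, $\log(\tau)=\frac{\log[\epsilon(\tau)]}{E(u)}N_\nabla$. So your $\log(\tau)$ is $\varphi^{-1}(\mu)$ times a unit times the derivation over $E(u)u\frac{d}{du}$. Truncating it with $c_n\approx(-1)^{n-1}\varphi^{-1}(\mu)^n/n$ gives values in $\varphi^{-1}(\mu)\fM\otimes A_{\operatorname{inf}}$, which do not lie in $\fM/u^{e+1}\fM$ in general. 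The correct $c_n$ are lifts of $C_n=(-1)^{n+1}E(u)\varphi^{-1}(\mu)^n/(n\log[\epsilon(\tau)])\in A_{\operatorname{max}}$ modulo $\frac{u^e}{p}A_{\operatorname{max}}$.

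\emph{Descent to $\fS$.} This is not ``standard from Kisin's theory'', which only provides $N_\nabla$ rationally on $\fM\otimes\cO^{\operatorname{rig}}[\frac{1}{\lambda}]$. The essential input is the integrality $N_\nabla(\fM)\subset\fM\otimes_{\fS}uS_{\operatorname{max}}$ for $G_K$-stable lattices. Combined with $S_{\operatorname{max}}\subset\frac{u^e}{p}S_{\operatorname{max}}+\fS$, it gives an integral $N\equiv N_\nabla$ modulo $\frac{u^{e+1}}{p}S_{\operatorname{max}}$.

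\emph{Where $h\leq p-1$ enters.} It is not what makes $N$ integral. It is needed when comparing $E(u)N^\varphi$ with $\varphi_{\fM}^{-1}\circ c(u)N\circ\varphi_{\fM}$ modulo $u^{e+1}\fM_e+pu\fM_e$. There, error terms in $uE(u)^pS_{\operatorname{max}}$ must be absorbed into $\fM_e\otimes uE(u)S_{\operatorname{max}}$ via $E(u)^h\varphi^*\fM\subset\fM_e$. It is also needed for $u^{e(p-1)+p+e}\varphi^*\overline{\fM}\subset u^{e+1}\overline{\fM}_e$ after reducing mod $p$.
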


\begin{rmk}
    In \eqref{eq-formula for monodromy} we are using \eqref{eq-refined Galois} to ensure this sum converges to an element inside $\fM \otimes_{\fS_A} A_{\operatorname{inf},A}$. Part of the content of the theorem is then that this sum is, modulo $u^{e+1}A_{\operatorname{inf},A}$, inside $\fM$.
\end{rmk}
\subsection{Truncated monodromy}\label{sec-creating integral monodromy}
To prepare for the proof of Theorem~\ref{thm-embedding} we consider a finite flat and local $\bZ_p$-algebra $A$ and a Breuil--Kisin module $\fM$ over $A$ equipped with a crystalline $G_K$-action. There is then an associated crystalline representation $T = (\fM \otimes_{\fS} W(C^\flat))^{\varphi=1}$ of $G_K$ on a projective $A$-module (as explained in e.g. \cite[Theorem 2.1.12]{B20}) and an $A$-linear $\varphi$-equivariant comparison
\begin{equation}\label{eq-M and D comparison}
	\fM \otimes_{\fS} \cO^{\operatorname{rig}}[\tfrac{1}{\lambda}] \cong D \otimes_{K_0} \cO^{\operatorname{rig}}[\tfrac{1}{\lambda}]
\end{equation}
where $D = (T \otimes_{\bZ_p} B_{\operatorname{crys}})^{G_K}$ and $\cO^{\operatorname{rig}} \subset K_0[[u]]$ is the subring of series convergent on the open unit disk, containing $\lambda = \prod_{n=0}^\infty \varphi^n(\frac{E(u)}{E(0)})$. This allows us to define a derivation $N_\nabla$ on $\fM \otimes_{\fS} \cO^{\operatorname{rig}}[\tfrac{1}{\lambda}]$ via the formula $N_\nabla(d \otimes f) = d  \otimes E(u) u\frac{d}{du}(f)$ whenever $d \in D, f \in \cO^{\operatorname{rig}}[\frac{1}{\lambda}] \otimes_{\bZ_p} A$. We also write $\varphi^* N_\nabla$ for the derivation on $\varphi^*\fM \otimes_{\fS} \cO^{\operatorname{rig}}[\tfrac{1}{\varphi(\lambda)}]$ given by 
\begin{equation}\label{eq-extending monodromy}
m \otimes f \mapsto N_\nabla(m) \otimes f  + m \otimes \tfrac{\varphi(E(u))}{p} u \tfrac{d}{du}(f)	
\end{equation}
 which satisfies the relation
		\begin{equation}\label{eq-Nphi = p phi N}
		E(u) \varphi^* N_\nabla = \varphi_{\fM}^{-1} \circ \tfrac{\varphi(E(u))}{p}N_\nabla \circ \varphi_{\fM}
		\end{equation}
		when viewed as operators on $\varphi^*\fM \otimes_{\fS} \mathcal{O}^{\operatorname{rig}}[\frac{1}{\varphi(\lambda)}]$. This follows from the definition of $N_\nabla$ and the observation that $\varphi \circ  E(u)u\frac{d}{du} =\frac{\varphi(E(u))}{p}u\frac{d}{du} \circ \varphi$ in $K_0[[u]]$. 
        
        All the above is essentially formal, with such operators existing for any Breuil--Kisin over $A$ (see \cite[\S1]{Kis05}). However, we also need two properties which are specific to the crystalline situation. Firstly,  $N_\nabla$ stabilises $\fM \otimes_{\fS_A} \cO^{\operatorname{rig}}[\tfrac{1}{\varphi(\lambda)}]$, see \cite[Corollary 1.3.15]{Kis05}.  Given \eqref{eq-Nphi = p phi N} and the fact that $\varphi^*N_\nabla \equiv 0$ modulo $u$, this stability is equivalent to 
\begin{equation}\label{eq-Griffiths trans}
E(u)\varphi^*N_\nabla( \fM_e) \subset u\fM_e \otimes_{\fS} \mathcal{O}^{\operatorname{rig}}[\tfrac{1}{\varphi(\lambda)}]
\end{equation}
for $\fM_e := \varphi^{-1}_\fM(E(u)^h\fM)$. This holds for any Breuil--Kisin module associated to a $G_{K_\infty}$-stable $\bZ_p$-lattice inside a crystalline representation. Secondly, as $\fM$ furthermore comes from a $G_K$-stable $\bZ_p$-lattice inside a crystalline representation, $N_\nabla$ enjoys the following integrality:
\begin{equation}\label{eq-mono in Smax}
    N_\nabla(m) \in \fM \otimes_{\fS} uS_{\operatorname{max}}
\end{equation}
for all $m \in \fM$ and where $S_{\operatorname{max}} = W(k)[[u,\frac{u^e}{p}]]\cap \cO^{\operatorname{rig}}[\frac{1}{\varphi(\lambda)}]$.
This follows from \cite[Proposition 20.1]{B23}, bearing in mind that what is here called $N_\nabla$ is denoted $E(u)N_\nabla$ in loc.\ cit., as well as the fact $E(u) \in p S_{\operatorname{max}}$.

\begin{lemma}\label{lem-Smax containment}
    We have the following containments:
    \begin{enumerate}
        \item $S_{\operatorname{max}} \subset \frac{u^e}{p}S_{\operatorname{max}} + \fS$. 
        \item $\frac{u^{ep}}{p}S_{\operatorname{max}} \subset \frac{E(u)^p}{p} S_{\operatorname{max}} + u^{e(p-1)} \fS + p\fS$.
        \item $(\frac{u^e}{p} S_{\operatorname{max}} + E(u)S_{\operatorname{max}}) \cap \fS = p\fS + u^e \fS$
    \end{enumerate}
\end{lemma}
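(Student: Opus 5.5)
We will work with an explicit description of $S_{\operatorname{max}}$. Every element of $W(k)[[u,\frac{u^e}{p}]]$ can be written uniquely as $\sum_{n\ge 0} a_n u^n$ with $a_n \in K_0$ and $p^{\lfloor n/e\rfloor}a_n \in W(k)$. Intersecting with $\cO^{\operatorname{rig}}[\frac{1}{\varphi(\lambda)}]$ only removes elements, so every element of $S_{\operatorname{max}}$ has this form. The plan is to prove the three containments by splitting such a power series into its monomials and bounding the denominators monomial by monomial, using that $E(u)$ is Eisenstein: $E(u) = u^e + p\,v(u)$ with $v(u)\in W(k)[u]$ of degree $<e$ and $v(0)$ a unit. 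We also need the converse membership: the candidate series we write down must actually lie in $S_{\operatorname{max}}$. For this we use that $S_{\operatorname{max}}$ is a ring containing $\fS$ and $\frac{u^e}{p}$ (note $\frac{u^e}{p}\in\cO^{\operatorname{rig}}$), and that it is $p$-adically and $u$-adically closed inside $\cO^{\operatorname{rig}}[\frac{1}{\varphi(\lambda)}]$ in the appropriate sense. The cleanest way to see closure is to identify $S_{\operatorname{max}}$ with $W(k)[[u,\frac{u^e}{p}]]$ up to the constraint, and to check convergence on the open disk directly.

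\textbf{Part (1).} Write $f=\sum a_n u^n$. For $n<e$ the coefficient $a_n$ is integral, so $\sum_{n<e}a_nu^n\in\fS$. For $n\ge e$ we write $a_nu^n = \frac{u^e}{p}\cdot (p a_n u^{n-e})$, and $pa_nu^{n-e}$ again satisfies the $S_{\operatorname{max}}$ bound, because $p^{\lfloor (n-e)/e\rfloor}pa_n = p^{\lfloor n/e\rfloor}a_n$. Summing these terms gives an element of $S_{\operatorname{max}}$.

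\textbf{Part (3).} The containment $\supseteq$ is immediate: $p=\frac{p}{E(u)}E(u)$ lies in $E(u)S_{\operatorname{max}}$ because $E(u)/p\in S_{\operatorname{max}}$ is a unit times $\ldots$. Actually it is simpler to write $p = E(u)\cdot v(0)^{-1}(\ldots)$, or to write $u^e = p\cdot\frac{u^e}{p}\in\frac{u^e}{p}S_{\operatorname{max}}$ and $p = E(u)-u^e$ with $E(u)-u^e \in E(u)S_{\operatorname{max}}+\frac{u^e}{p}S_{\operatorname{max}}$.

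For $\subseteq$, we use $E(u)S_{\operatorname{max}}\subset \frac{u^e}{p}S_{\operatorname{max}}+pS_{\operatorname{max}}$, which holds since $E(u) = p\frac{u^e}{p}+pv$. This reduces the claim to $(\frac{u^e}{p}S_{\operatorname{max}} + pS_{\operatorname{max}})\cap\fS\subset p\fS+u^e\fS$. Elements of $\frac{u^e}{p}S_{\operatorname{max}}$ and of $pS_{\operatorname{max}}$ have all coefficients of $u^n$ for $n<e$ in $pW(k)$: for the first this is vacuous, and for the second it follows because $S_{\operatorname{max}}$-coefficients below degree $e$ are integral. So any $g\in\fS$ in the intersection has its degree $<e$ part in $p\fS$, and its remaining part lies in $u^e\fS$.

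\textbf{Part (2).} This is the main computation. Write $E(u)^p = u^{ep} + \sum_{j\ge1}\binom{p}{j}u^{e(p-j)}p^jv^j$. Every term with $1\le j\le p-1$ is divisible by $p^2$ times $u^{e(p-j)}$, and the $j=p$ term is $p^pv^p$. Hence
\[\frac{u^{ep}}{p} = \frac{E(u)^p}{p} - \sum_{j=1}^{p}\binom{p}{j}p^{j-1}u^{e(p-j)}v^j.\]
Multiplying by $s\in S_{\operatorname{max}}$, each error term has the form $p^{j}\,u^{e(p-j)}\,s$ up to units and integral factors, with $1\le j\le p$. We then show, using Part (1) iteratively, that
\[p^ju^{e(p-j)}S_{\operatorname{max}}\subset u^{e(p-1)}\fS+p\fS+\frac{E(u)^p}{p}S_{\operatorname{max}}.\]
The key point is that $p^ju^{e(p-j)}\cdot\big(\frac{u^e}{p}\big)^m$ is either in $p\fS$, when $m<j$, or in $u^{e(p-j+m)}\fS\subset u^{e(p-1)}\fS$ when $m\ge j-1$ and $j\ge 1$. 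The boundary case $m=j-1$ gives $p\,u^{e(p-1)}$, which lies in both.

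Making this uniform over an infinite series is the expected obstacle. We would handle it by expanding $s=\sum_m s_m(\frac{u^e}{p})^m$ with $s_m\in\fS$, with each monomial placed in one of the two pieces and the sum converging $(p,u)$-adically in $\fS$. The $\frac{E(u)^p}{p}S_{\operatorname{max}}$ term is needed only to absorb the leading piece.
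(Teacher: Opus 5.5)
\textbf{There is a genuine gap in part (2): your ``key point'' is false.} Parts (1) and (3) are essentially fine. Your coefficient argument for $\subseteq$ in (3) is a direct version of the paper's identification $A/\frac{u^e}{p}A\cong\fS/u^e\fS$. One slip: $E(u)-u^e=pv(u)$, not $p$, so for $\supseteq$ use $p=v(u)^{-1}(E(u)-u^e)$ with $v(u)\in\fS^\times$.

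In (2) one has
\[
p^{j}u^{e(p-j)}\Bigl(\frac{u^e}{p}\Bigr)^{m}=p^{j-m}u^{e(p-j+m)},
\]
and for every $m>j$ this carries a negative power of $p$. So it lies in neither $u^{e(p-1)}\fS$ nor $p\fS$.
\begin{itemize}
\item All but finitely many monomials of $s$ are therefore never placed. This tail is exactly where $\frac{E(u)^p}{p}S_{\operatorname{max}}$ is indispensable, not just for a ``leading piece''.
\item The $j=p$ error term is $p^{p-1}v^ps$, not $p^ps$. Its monomials with $m\geq p$ are $\frac{u^{ep}}{p}\bigl(\frac{u^e}{p}\bigr)^{m-p}v^p$, which is exactly the original shape.
\item In fact every error term lies in $p^{p-1}S_{\operatorname{max}}\supseteq\frac{u^{ep}}{p}S_{\operatorname{max}}$. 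Expanding $E(u)^p$ in powers of $u^e$ therefore only regenerates the problem; at best it could be rescued by an iteration with a convergence argument, which you do not give.
\end{itemize}

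\textbf{The missing idea is to expand in the other direction, as the paper does.} Since $\frac{E(u)}{p}=\frac{u^e}{p}+v(u)\in S_{\operatorname{max}}$, for $n\geq p$ one has
\[
p^{p-1}\Bigl(\frac{u^e}{p}\Bigr)^{n}=\sum_{i=0}^{n}\binom{n}{i}p^{p-1}\Bigl(\frac{E(u)}{p}\Bigr)^{i}(-v(u))^{n-i}.
\]
\begin{itemize}
\item Terms with $i\geq p$ lie in $\frac{E(u)^p}{p}S_{\operatorname{max}}$.
\item Terms with $i<p-1$ equal $p^{p-1-i}E(u)^i(\cdots)$, so they lie in $p\fS$.
\item The $i=p-1$ term is a multiple of $E(u)^{p-1}$, which lies in $u^{e(p-1)}\fS+p\fS$.
\end{itemize}
Apply this with $n=m+p$ to $\frac{u^{ep}}{p}\bigl(\frac{u^e}{p}\bigr)^m$ and sum against $s=\sum_m s_m\bigl(\frac{u^e}{p}\bigr)^m$. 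The sum converges because $s_m\to 0$ $p$-adically; this uses $s\in\cO^{\operatorname{rig}}[\frac{1}{\varphi(\lambda)}]$, not just your coefficient bound.

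You must then still show that the resulting cofactor $g$ of $\frac{E(u)^p}{p}$ lies in $\cO^{\operatorname{rig}}[\frac{1}{\varphi(\lambda)}]$. This is not a ``closure'' property of $S_{\operatorname{max}}$. It holds because $g$ converges on $v_p(u)\geq\frac{1}{e}$, which contains the roots of $E(u)$. Hence $g=\frac{p}{E(u)^p}\bigl(\frac{u^{ep}}{p}s-h\bigr)$, with $h\in u^{e(p-1)}\fS+p\fS$, acquires no poles there. The paper checks this via $(u-\pi)$-adic expansions.

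The analogous check in (1) is easy, since $f-f_{<e}$ vanishes to order $e$ at $u=0$. Note also that ``checking convergence on the open disk directly'' is not the right test, because elements of $\cO^{\operatorname{rig}}[\frac{1}{\varphi(\lambda)}]$ need not converge on the whole open disk.
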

\begin{proof}
        First, consider the statements with $S_{\operatorname{max}}$ replaced by the subring $A \subset K_0[[u]]$ consisting of series $\sum_{i \geq 0} \big( \frac{u^e}{p} \big)^i f_i(u)$ with $f_i \in W(k)[u]$ converging $p$-adically to zero. Part (1) is then clear, and part (2) reduces to finding, for each $n \geq p$, elements $f_1 \in A$ and $f_2,f_3 \in W(k)[u]$ so that $p^{p-1} \big( \frac{u^{e}}{p} \big)^n = \frac{E(u)^p}{p}f_1 + u^{e(p-1)}f_2 + pf_3$. For this, express $u^e = E(u) + pd(u)$ for some $d(u) \in W(k)[u]$, and so expand
    $$
    p^{p-1} \big(\frac{u^e}{p} \big)^n = \sum_{i=0}^n \big( \frac{E(u)}{p} \big)^i  p^{p-1} d_i(u) 
    $$
    for some $d_i(u) \in W(k)[u]$. The $i$-th term in the sum lies in $\frac{E(u)^p}{p}A$ whenever $i \geq p$, in $pW(k)[u]$ whenever $i < p-1$, and in $u^{e(p-1)} W(k)[u]+ pW(k)[u]$ whenever $i=p-1$. For part (3), note that (1) asserts $A/\frac{u^e}{p}A \cong \fS/u^e\fS$. If $f =\sum_{i \geq 0} (\frac{u}{p})^i f_i(u)$ with $f_i \in W(k)[u]$ of degree $< e$ then $A \rightarrow A/\frac{u^e}{p}A \cong \fS/u^e\fS$ sends $f$ onto $f_0(u)$ mod $u^e\fS$. It follows that the image of $E(u)A$ under this composite is the ideal of $\fS/u^e\fS$ generated by $p$. Consequently, the kernel of $\fS \rightarrow A \rightarrow A/(\frac{u^e}{p},E(u))A$ is the ideal of $\fS$ generated by $p$ and $u^e$, as required.

    Clearly $S_{\operatorname{max}} \subset A$ and so (3) in the lemma follows immediately from the discussion above. To deduce the (1) and (2) in the lemma from the above we need to show that if $f\in A$ and either $\frac{u^e}{p}f \in \mathcal{O}^{\operatorname{rig}}$ or $\frac{E(u)^p}{p}f \in \cO^{\operatorname{rig}}$ then $f \in \cO^{\operatorname{rig}}$. This is clear if $\frac{u^e}{p}f \in \mathcal{O}^{\operatorname{rig}}$ as $\sum_{i\geq 0} f_i u^i \in \cO^{\operatorname{rig}}$ if and only if $v_p(f_i) + ri \rightarrow \infty$ for all $r >0$. If $\frac{E(u)^p}{p}f \in \cO^{\operatorname{rig}}$ then, after extending scalars so that $E(u)$ splits into linear factors, we can argue inductively and replace $E(u)$ by one such factor $u-\pi$. One then argues just as in the first case, the key point being that the assumption $f\in A$ ensures the existence of an $u-\pi$-adic expansion $f = \sum_{i \geq 0} f_i(u-\pi)^i$. Since $v_p(\pi)>0$, the same coefficient growth criterion for $\mathcal{O}^{\operatorname{rig}}$ applies to such $u-\pi$-adic expansions. 
\end{proof}

The main consequence of Lemma~\ref{lem-Smax containment} is that $N_\nabla$ and $\varphi^*N_\nabla$ respectively admit $u$-adic and $E(u)$-adic truncations which are integral, and relate to each other modulo $p$:

\begin{prop}\label{prop-create monodromies}
	There exists:
	\begin{enumerate}
		\item An operator $N$ on $\fM$ with $N\equiv 0$ modulo $u\fM$ and satisfying $N(fm) = fN(m) + mE(u)u\frac{d}{du}(f)$ for $f \in \fS_A, m\in \fM$ and  $N \equiv N_\nabla$ modulo $\fM \otimes_{\fS} \tfrac{u^{e+1}}{p}S_{\operatorname{max}}$.
		\item An operator $N^\varphi$ on $\varphi^*\fM$ satisfying $N^\varphi(fm) = fN^\varphi(m) + mc^*(u)u\frac{d}{du}(f)$ for $f \in \fS_A, m\in \varphi^*\fM$ and with $N^\varphi \equiv \varphi^*N_\nabla$ modulo $\varphi^*\fM \otimes_{\fS} u\tfrac{E(u)^p}{p}S_{\operatorname{max}}$.
	\end{enumerate}
    Furthermore, $N$ and $N^\varphi$ can be chosen so that 
        $$
        \varphi^*N \equiv N^\varphi \mod \tfrac{u^{ep+1}}{p}\varphi^*\fM + u^{e(p-1)+p} \varphi^*\fM + pu\varphi^*\fM
        $$
        for $\varphi^*N$ defined as in \eqref{eq-extending monodromy}.\footnote{The $\frac{1}{p}$ appears in this congruence because the formation of $\varphi^*N$ necessarily introduces denominators due to the appearance of $\frac{\varphi(E(u))}{p}$.}
\end{prop}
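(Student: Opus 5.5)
The plan is to construct $N$ and $N^\varphi$ on a basis and extend them by the Leibniz rule. Lemma~\ref{lem-Smax containment} will split each coefficient of $N_\nabla$ and $\varphi^*N_\nabla$ into an integral part and an error part. Since $A$ is finite flat and local, $\fM$ is free over $\fS_A$; fix an $\fS_A$-basis $e_1,\dots,e_d$, and give $\varphi^*\fM$ the basis $1\otimes e_i$. Two operators satisfying the same Leibniz rule differ by an $\fS_A$-linear map. Moreover $S_{\operatorname{max}}$ is an $\fS$-module, so each error ideal ($\tfrac{u^{e+1}}{p}S_{\operatorname{max}}$, $u\tfrac{E(u)^p}{p}S_{\operatorname{max}}$, and so on) is stable under $\fS$-linear combinations. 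Hence it suffices to check each congruence on basis vectors, and separately to account for any discrepancy between the two derivations involved.

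\emph{Construction of $N$.} By \eqref{eq-mono in Smax}, $N_\nabla(e_i)$ has coefficients in $uS_{\operatorname{max}}$. Lemma~\ref{lem-Smax containment}(1), multiplied by $u$, gives $uS_{\operatorname{max}}\subset u\fS+\tfrac{u^{e+1}}{p}S_{\operatorname{max}}$. We therefore write
$$N_\nabla(e_i)=a_i+\tfrac{u^{e+1}}{p}s_i, \qquad a_i\in u\fM,\ s_i\in \fM\otimes_{\fS}S_{\operatorname{max}}.$$
Define $N(e_i)=a_i$ and extend by $N(fm)=fN(m)+mE(u)u\tfrac{d}{du}(f)$. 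Then $N\equiv 0$ modulo $u$. Since $N_\nabla$ is a derivation over the same operator $E(u)u\frac{d}{du}$, the difference $N-N_\nabla$ is linear, and it is small on the basis, which gives (1).

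\emph{Construction of $N^\varphi$.} On $1\otimes e_i$ we have $\varphi^*N_\nabla(1\otimes e_i)=\varphi(a_i)+\tfrac{u^{p(e+1)}}{p}\varphi(s_i)$, where $\varphi$ acts on coefficients. Two facts are needed here.
\begin{itemize}
\item First, $\varphi(S_{\operatorname{max}})\subset S_{\operatorname{max}}$. This holds because $\varphi(\tfrac{u^e}{p})=\tfrac{u^{ep}}{p}=p^{p-1}(\tfrac{u^e}{p})^p$, and because $\varphi$ preserves $\cO^{\operatorname{rig}}[\tfrac{1}{\varphi(\lambda)}]$-convergence. This is the step I would check most carefully.
\item Second, Lemma~\ref{lem-Smax containment}(2) lets us write $\tfrac{u^{ep}}{p}\varphi(s_i)=\tfrac{E(u)^p}{p}t_i+u^{e(p-1)}g_i+ph_i$ with $g_i,h_i$ integral.
\end{itemize}
Set
$$N^\varphi(1\otimes e_i)=\varphi(a_i)+u^{e(p-1)+p}g_i+pu^ph_i,$$
and extend as a derivation over $c^*(u)u\tfrac{d}{du}$. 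By \eqref{eq-extending monodromy}, $\varphi^*N_\nabla$ is a derivation over $\tfrac{\varphi(E(u))}{p}u\tfrac{d}{du}$. The two derivations differ by $m\otimes\tfrac{E(u)^p}{p}u\tfrac{d}{du}(f)$, because $\tfrac{\varphi(E(u))}{p}=c^*(u)+\tfrac{E(u)^p}{p}$, and this lies in $u\tfrac{E(u)^p}{p}S_{\operatorname{max}}$. On the basis, the difference $\varphi^*N_\nabla(1\otimes e_i)-N^\varphi(1\otimes e_i)$ equals $u^p\tfrac{E(u)^p}{p}t_i\in u\tfrac{E(u)^p}{p}S_{\operatorname{max}}$. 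This gives (2).

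\emph{The final congruence.} On basis vectors, $\varphi^*N(1\otimes e_i)=\varphi(a_i)$. The difference from $N^\varphi(1\otimes e_i)$ is therefore $u^{e(p-1)+p}g_i+pu^ph_i$, which lies in the required ideal. It remains to compare the derivation parts: $\varphi^*N$ is a derivation over $\tfrac{\varphi(E(u))}{p}u\tfrac{d}{du}$ and $N^\varphi$ over $c^*(u)u\tfrac{d}{du}$, so the discrepancy is again $\tfrac{E(u)^p}{p}u\tfrac{d}{du}(f)$. Write $u^e=E(u)+pd(u)$, so $E(u)=u^e-pd(u)$, and expand binomially:
$$E(u)^p=u^{ep}+\sum_{k\ge1}\binom{p}{k}u^{e(p-k)}(-pd(u))^k.$$
Every term with $k\ge1$ lies in $p^2\fS$, so $\tfrac{E(u)^p}{p}\in\tfrac{u^{ep}}{p}\fS+p\fS$. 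Multiplying by $u\tfrac{d}{du}(f)$ puts the discrepancy inside $\tfrac{u^{ep+1}}{p}\fS+pu\fS$, as required.

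The main obstacle I expect is the bookkeeping for $\varphi$ applied to the error terms: verifying $\varphi(S_{\operatorname{max}})\subset S_{\operatorname{max}}$, and ensuring that the splitting from Lemma~\ref{lem-Smax containment}(2) is applied to $\tfrac{u^{ep}}{p}\varphi(s_i)$, with the extra factor $u^p$ kept separate. Everything else is formal Leibniz-rule manipulation.
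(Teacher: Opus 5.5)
Your proposal is correct and follows essentially the same route as the paper. You lift $N_\nabla$ on a basis using Lemma~\ref{lem-Smax containment}(1), split the Frobenius-pulled-back error term $\tfrac{u^{ep+p}}{p}S_{\operatorname{max}}$ via part (2), and discard its integral pieces to define $N^\varphi$. You then control the Leibniz discrepancy using $\tfrac{\varphi(E(u))}{p}-c^*(u)=\tfrac{E(u)^p}{p}\in\tfrac{u^{ep}}{p}\fS+p\fS$; your explicit check that $\varphi(S_{\operatorname{max}})\subset S_{\operatorname{max}}$ supplies a step the paper uses implicitly.
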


\begin{proof}
    Since $A$ is local $\fM$ admits an $\fS_A$-basis $\beta$. Combining \eqref{eq-mono in Smax} with part (1) of Lemma~\ref{lem-Smax containment} ensures we can choose a tuple $N(\beta)$ in $\fM$ so that $N(\beta) \equiv N_{\nabla}(\beta)$ modulo $\fM \otimes\frac{u^{e+1}}{p}S_{\operatorname{max}}$. Extending $N$ to $\fM$ via the Leibniz rule in (1) then produces an operator $N$ as claimed. To construct $N^\varphi$ note that, due to the choice of $N$, we have
    $$
    \varphi^*N(\varphi^*\beta) \equiv \varphi^*N_\nabla(\varphi^*\beta) \mod \tfrac{u^{ep+p}}{p} \varphi^*\fM \otimes_{\fS} S_{\operatorname{max}}
    $$
    Using part (2) of Lemma~\ref{lem-Smax containment}, we can actually find tuples $A,B,C$ so that 
    $$
    \varphi^*N(\varphi^*\beta) = \varphi^*N_\nabla(\varphi^*\beta) + u^p\tfrac{E(u)^p}{p} A + u^{e(p-1) + p}B + u^p p C
    $$
    with $A \in \varphi^*\fM \otimes_{\fS} S_{\operatorname{max}}$ and $B,C \in \varphi^*\fM$. We then consider the tuple in $\varphi^*\fM$ given by $
    N^\varphi(\varphi^*\beta) := \varphi^*N(\varphi^*\beta) - \big(  u^{e(p-1) + p}B + u^p p C \big)$. Extending $N^\varphi$ to an operator on $\varphi^*\fM$ using the Leibniz rule in (2) yields a derivation with $N^\varphi \equiv \varphi^*N_\nabla$ modulo $u\frac{E(u)^p}{p}\varphi^*\fM \otimes_{\fS} S_{\operatorname{max}}$; here we are using that $uc^*(u) \equiv u\frac{\varphi(E(u))}{p}$ modulo $u\frac{E(u)^p}{p}S_{\operatorname{max}}$. Since $uc^*(u) - u\frac{\varphi(E(u))}{p} \in \frac{u^{ep+1}}{p}\fS + pu\fS$ we also obtain $N^\varphi \equiv \varphi^*N$ modulo $\frac{u^{ep+1}}{p}\varphi^*\fM + u^{e(p-1)+p}\varphi^*\fM + pu \varphi^*\fM$.
    \end{proof}

The next proposition describes relations between $N$ and $N^\varphi$ which, after reducing modulo $p$, match the conditions imposed in Definition~\ref{def-BK+monodromy stack}.

\begin{prop}\label{prop-respecting conv and equality at last step}
	Let $N$ and $N^\varphi$ be derivations as in Proposition~\ref{prop-create monodromies}. If $\fM$ has height $h \leq p$ then 
    $$
    E(u)N^\varphi(\fM_e) \subset u\fM_e
    $$
    for $\fM_e = \varphi_{\fM}^{-1}(E(u)^h \fM)$ as defined above. If $\fM$ has height $h \leq p-1$ then, as operators on $\fM_e$, we furthermore have
	$$
	E(u)N^\varphi \equiv \varphi_{\fM}^{-1} \circ c(u)N \circ \varphi_\fM 
	$$
    modulo $u^{e+1}\fM_e + pu\fM_e$.
\end{prop}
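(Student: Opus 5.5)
Both assertions will be obtained by transferring the corresponding facts about $\varphi^*N_\nabla$ and $N_\nabla$ to the integral truncations $N^\varphi$ and $N$ of Proposition~\ref{prop-create monodromies}. The error terms that arise land in $\fS$-lattices tensored with $S_{\operatorname{max}}$, and Lemma~\ref{lem-Smax containment}(3) is what brings them back into $\fS$.

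\emph{Stability.} Let $m \in \fM_e$. By \eqref{eq-Griffiths trans} we have $E(u)\varphi^*N_\nabla(m) \in u\fM_e \otimes \cO^{\operatorname{rig}}[\tfrac{1}{\varphi(\lambda)}]$. By Proposition~\ref{prop-create monodromies}(2),
$$E(u)N^\varphi(m) - E(u)\varphi^*N_\nabla(m) \in \varphi^*\fM \otimes u\tfrac{E(u)^{p+1}}{p}S_{\operatorname{max}}.$$
Since $h \le p$, we have $E(u)^p\varphi^*\fM \subset \fM_e$. This holds because $\varphi_\fM(E(u)^p\varphi^*\fM) \subset E(u)^p\fM \subset E(u)^h\fM$. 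Together with $E(u)/p \in S_{\operatorname{max}}$, this places the difference in $u\fM_e \otimes S_{\operatorname{max}}$.

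It follows that $E(u)N^\varphi(m)$ is an element of $\varphi^*\fM$ (integral over $\fS$) lying in $u\fM_e \otimes (\cO^{\operatorname{rig}}[\tfrac{1}{\varphi(\lambda)}])$. Choose a basis of $\fM_e$; then $\fM_e$ and $\varphi^*\fM$ differ by a matrix that is invertible after inverting $E(u)$. The coefficients of $E(u)N^\varphi(m)$ with respect to this basis lie in $\fS[\tfrac{1}{E(u)}] \cap u\,\cO^{\operatorname{rig}}[\tfrac{1}{\varphi(\lambda)}] = u\fS$. This intersection holds because $E(u)$ is invertible in $\cO^{\operatorname{rig}}[1/\varphi(\lambda)]$ only up to $\lambda/\varphi(\lambda) = E(u)/E(0)$, and $\cO^{\operatorname{rig}}\cap\fS[1/E] = \fS$ by Weierstrass-type arguments. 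Hence $E(u)N^\varphi(\fM_e)\subset u\fM_e$.

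\emph{Congruence.} By \eqref{eq-Nphi = p phi N}, as operators on $\fM_e$,
$$E(u)\varphi^*N_\nabla = \varphi_\fM^{-1}\circ \tfrac{\varphi(E(u))}{p}N_\nabla\circ\varphi_\fM .$$
We compare each side with its truncation.

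On the left, $E(u)(N^\varphi - \varphi^*N_\nabla)$ maps $\fM_e$ into $\fM_e \otimes uE(u)\cdot\tfrac{E(u)^{p-h}}{p}\cdot S_{\operatorname{max}}$. Here we write $E(u)^p = E(u)^h E(u)^{p-h}$ and use that $\varphi_\fM^{-1}(E^h\fM) = \fM_e$. Since $h \le p-1$, the extra factor $E(u)^{p-h}/p$ lies in $S_{\operatorname{max}}$ with an additional $E(u)$ to spare, so the error lies in $u\fM_e\otimes E(u)S_{\operatorname{max}}$.

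On the right, write $\tfrac{\varphi(E(u))}{p} = c(u) + \tfrac{u^{ep}}{p}$. For $x = \varphi_\fM(m) \in E(u)^h\fM$, Proposition~\ref{prop-create monodromies}(1) gives
$$N_\nabla(x) - N(x) \in E(u)^h\fM\otimes\tfrac{u^{e+1}}{p}S_{\operatorname{max}}$$
modulo a Leibniz correction, which is harmless since $E(u)u\tfrac{d}{du}E(u)^h \in E(u)^h$. Applying $\varphi_\fM^{-1}$, and treating the term $\tfrac{u^{ep}}{p}N_\nabla$ by the same argument, the total discrepancy lies in $u\fM_e\otimes(\tfrac{u^e}{p}S_{\operatorname{max}} + E(u)S_{\operatorname{max}})$.

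Both sides $E(u)N^\varphi(m)$ and $\varphi_\fM^{-1}c(u)N\varphi_\fM(m)$ are integral and lie in $u\fM_e$, the second by the height condition. Their difference therefore has coefficients in $u\fS \cap u(\tfrac{u^e}{p}S_{\operatorname{max}}+E(u)S_{\operatorname{max}})$, which equals $u(p\fS+u^e\fS)$ by Lemma~\ref{lem-Smax containment}(3). This gives the congruence modulo $u^{e+1}\fM_e + pu\fM_e$.

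\emph{Main obstacle.} The delicate point is the bookkeeping of error terms inside $\fM_e$ rather than $\varphi^*\fM$. One must check that every error carries enough powers of $E(u)$ to be absorbed into $\fM_e$, which is exactly where $h\le p-1$ (respectively $h \le p$) enters. The term $\tfrac{u^{ep}}{p}N_\nabla$ is the one most likely to need Lemma~\ref{lem-Smax containment}(2) in place of a crude estimate, and I would try that first.
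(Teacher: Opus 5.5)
Your proof is correct and follows the paper's argument. Like the paper, you compare $N^\varphi$ and $N$ with $\varphi^*N_\nabla$ and $N_\nabla$ via the $S_{\operatorname{max}}$-congruences, then use \eqref{eq-Griffiths trans} for stability, and \eqref{eq-Nphi = p phi N} together with Lemma~\ref{lem-Smax containment}(3) for the congruence. You also make explicit the integrality step $\fS[\tfrac{1}{E(u)}]\cap u\cO^{\operatorname{rig}}[\tfrac{1}{\varphi(\lambda)}]=u\fS$, which the paper leaves implicit. Your wording of its justification is off: the reason is that $\varphi(\lambda)$ has no zeros in common with $E(u)$ or $u$, not that $E(u)$ is ``invertible up to'' anything.

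The hedge at the end is unnecessary. By \eqref{eq-mono in Smax}, the term $\tfrac{u^{ep}}{p}N_\nabla$ sends $E(u)^h\fM$ into $E(u)^h\fM\otimes\tfrac{u^{ep+1}}{p}S_{\operatorname{max}}\subset E(u)^h\fM\otimes\tfrac{u^{e+1}}{p}S_{\operatorname{max}}$. So Lemma~\ref{lem-Smax containment}(2) is not needed here; it is only used earlier to construct $N^\varphi$.
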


\begin{proof}
		As operators on $\varphi^*\fM$ we have $E(u)N^\varphi \equiv E(u)\varphi^* N_\nabla$ modulo 
		$$
		\varphi^*\fM \otimes_{\fS} u\tfrac{E(u)^{p+1}}{p}S_{\operatorname{max}} \subset \varphi^*\fM \otimes_{\fS} uE(u)^p S_{\operatorname{max}} \subset  \begin{cases}
		    \fM_e \otimes_{\fS} uE(u)S_{\operatorname{max}} & \text{ if $h \leq p-1$} \\
            \fM_e \otimes_{\fS} uS_{\operatorname{max}} & \text{ if $h \leq p$}
		\end{cases}
		$$
		(the first inclusion following since $E(u) \in pS_{\operatorname{max}}$ and the second since $E(u)^h \varphi^*\fM \subset \fM_e$). This combined with \eqref{eq-Griffiths trans}  shows $E(u)N^\varphi(\fM_e) \subset u\fM_e$ whenever $h\leq p$. Also, since $	N \equiv N_\nabla$ modulo $\fM \otimes_{\fS}  \tfrac{u^{e+1}}{p}S_{\operatorname{max}}$, it follows that, as operators on $E(u)^h\fM$, we have
	$$
	\tfrac{\varphi(E(u))}{p} N_\nabla \equiv c(u) N_\nabla \equiv c(u)N 
    \mod E(u)^h \fM \otimes_{\fS} \tfrac{u^{e+1}}{p} S_{\operatorname{max}}
	$$
	If $h \leq p-1$ then we just saw that $E(u)N^\varphi \equiv E(u) \varphi^* N_\nabla$ modulo $\fM_e\otimes_{\fS} uE(u)S_{\operatorname{max}}$ as operators on $\varphi^*\fM$. Therefore, \eqref{eq-Nphi = p phi N} implies that $E(u)N^\varphi \equiv \varphi_{\fM}^{-1} \circ c(u)N \circ \varphi_{\fM} $ modulo 
    $$
    \left( \fM_e \otimes_{\fS} \tfrac{u^{e+1}}{p} S_{\operatorname{max}} + \fM_e \otimes_{\fS} u E(u)S_{\operatorname{max}} \right) 
    $$
    Since $\fM_e$ is $\fS$-free,  part (3) of Lemma~\ref{lem-Smax containment} implies this congruence holds modulo $u^{e+1}\fM_e + pu\fM_e$, which finishes the proof.
\end{proof}

\begin{cor}\label{cor-congruences mod p}
    Assume $\fM$ has height $ h \leq p-1$ and set $\overline{\fM} = \fM \otimes_{\bZ_p} \bF_p$. For a derivation $N$ on $\fM$ as in Proposition~\ref{prop-create monodromies}, set $\overline{N} = N \otimes_{\bZ_p} \bF_p$ and $\overline{N}_0 = \overline{N}$  modulo $u^{e+1}\overline{\fM}$. Then:
    \begin{itemize}
        \item If $\overline{N}_0^\varphi$ is the derivation on $\varphi^*\overline{\fM}/ u^{ep+1}\varphi^*\overline{\fM}$ formed as in Definition~\ref{def-BK+monodromy stack} then $u^{e-1}\overline{N}_0^\varphi$ stabilises $\overline{\fM}_e/ u^{ep+e}\varphi^*\overline{\fM}$ and 
        $$
        u^e \overline{N}_0^\varphi \equiv \varphi^{-1}_{\overline{\fM}} \circ c(u) \overline{N} \circ \varphi_{\overline{\fM}} \mod u^{e+1}\overline{\fM}_e
        $$
         In particular, $\overline{\fM}$ and $\overline{N}_0$ define an $A \otimes_{\bZ_p} \bF_p$-valued point of $Y^\nabla_{\leq h}$.
       
    \end{itemize}
    Furthermore, if $N^\varphi$ is a derivation on $\varphi^*\fM$ as in Proposition~\ref{prop-create monodromies} then 
    $$
    N^\varphi \equiv \overline{N}_0^\varphi \mod pu\varphi^*\fM + u^{e(p-1)+p}\varphi^*\fM
    $$
    as derivations on $\varphi^*\overline{\fM}/ u^{ep+1}\varphi^*\overline{\fM}$.
\end{cor}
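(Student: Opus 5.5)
The plan is to reduce everything modulo $p$ from Propositions~\ref{prop-create monodromies} and \ref{prop-respecting conv and equality at last step}. The core claim to establish first is the final congruence $N^\varphi \equiv \overline{N}_0^\varphi$ modulo $pu\varphi^*\fM + u^{e(p-1)+p}\varphi^*\fM$, viewed on $\varphi^*\overline{\fM}/u^{ep+1}$. Since $e(p-1)+p \geq ep+1$ only when $p \geq e+1$, the congruence has to be understood as stated, with both error terms kept.

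\textbf{Step 1: reduce $\varphi^*N$ to $\overline{N}_0^\varphi$ modulo $p$.} Compare $\varphi^*N$, built via \eqref{eq-extending monodromy}, with $\overline{N}_0^\varphi$. On a basis element $\varphi^*\beta = \beta\otimes 1$ we have $\varphi^*N(\beta\otimes 1) = N(\beta)\otimes 1$. Here $N(\beta) \equiv \overline{N}_0(\beta)$ modulo $u^{e+1}\fM + p\fM$, and Frobenius pullback turns $u^{e+1}\fM$ into $u^{p(e+1)}\varphi^*\fM$, which is already inside $u^{ep+1}$. The Leibniz terms differ only through $\frac{\varphi(E(u))}{p}$ versus $c(u)$, and their difference is $\frac{u^{ep}}{p}$. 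Multiplied by $u\frac{d}{du}(f)$, this gives errors in $\frac{u^{ep+1}}{p}$ times $f'$. For $f$ in the image of $\varphi$ modulo $p$ that derivative is divisible by $p$, so the error vanishes modulo $u^{ep+1}$ and $p$.

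\textbf{Step 2: pass from $\varphi^*N$ to $N^\varphi$.} Combine Step 1 with the last congruence of Proposition~\ref{prop-create monodromies}. The $\frac{u^{ep+1}}{p}$ term there is the delicate one, and it is the step I expect to be the main obstacle. The approach is to evaluate on basis vectors $\varphi^*\beta$. By the construction in that proof, $N^\varphi(\varphi^*\beta) - \varphi^*N(\varphi^*\beta) = -(u^{e(p-1)+p}B + u^ppC)$ is integral. The differing Leibniz parts are then governed by $uc^*(u) - uc(u) = u\frac{u^{ep}-E(u)^p}{p}$. Expanding $E(u)^p = (u^e + p\,\cdot)^p$ shows this difference lies in $pu\fS + u^{e(p-1)+1}\fS$ modulo $u^{ep+1}$. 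Since derivatives of elements $\varphi(f)$ are divisible by $p$ mod $p$, the genuine error comes only from the $\fS_A$-coefficients, and those are accounted for in the stated error ideal.

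\textbf{Step 3: the bulleted assertions.} Reduce Proposition~\ref{prop-respecting conv and equality at last step} modulo $p$. From $E(u)N^\varphi(\fM_e)\subset u\fM_e$ and $E(u)\equiv u^e$ modulo $p$, we get $u^eN^\varphi(\overline{\fM}_e)\subset u\overline{\fM}_e$. Replacing $N^\varphi$ by $\overline{N}_0^\varphi$ via Step 2 costs errors in $u^{e(p-1)+p}\varphi^*\overline{\fM}$. After multiplying by $u^{e-1}$, these lie in $u^{ep+e}\varphi^*\overline{\fM}$, which is contained in $\overline{\fM}_e$ because $h\le p$ gives $u^{ep}\varphi^*\overline{\fM}\subset\overline{\fM}_e$. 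Dividing by $u$ is legitimate since $\overline{\fM}_e$ is $u$-torsion free, which yields the stability of $u^{e-1}\overline{N}_0^\varphi$ on $\overline{\fM}_e/u^{ep+e}$.

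The second congruence of Proposition~\ref{prop-respecting conv and equality at last step} reduces modulo $p$ to $u^eN^\varphi \equiv \varphi^{-1}\circ c(u)\overline{N}\circ\varphi$ modulo $u^{e+1}\overline{\fM}_e$. Swapping in $\overline{N}_0^\varphi$ again costs only $u^{e}\cdot u^{e(p-1)+p}$, which lies in $u^{e+1}\overline{\fM}_e$. Finally, $\overline{N}$ is a lift of $\overline{N}_0$ satisfying the Leibniz rule over $u^{e+1}\frac{d}{du}$, because $E(u)\equiv u^e$. Definition~\ref{def-BK+monodromy stack} is independent of the choice of lift, so $(\overline{\fM},\overline{N}_0)$ is a point of $Y^\nabla_{\le h}$.
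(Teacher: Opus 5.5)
Your plan follows the paper's route. You evaluate on the basis $\varphi^*\beta$ using the formula $N^\varphi(\varphi^*\beta)=\varphi^*N(\varphi^*\beta)-(u^{e(p-1)+p}B+u^p pC)$ from the proof of Proposition~\ref{prop-create monodromies}, compare the Leibniz parts, and then reduce Proposition~\ref{prop-respecting conv and equality at last step} modulo $p$. However, the Leibniz comparison in Step~2, on which the final congruence hinges, is not carried out correctly. Modulo $p$, $N^\varphi$ is a derivation over $c^*(u)u\frac{d}{du}$ and $\overline{N}_0^\varphi$ is one over $c(u)u\frac{d}{du}$. So on $m\otimes f$, with $f\in\fS_A$ \emph{arbitrary}, their difference contains the term $m\otimes (c^*(u)-c(u))\,u\frac{d}{du}(f)$. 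Your bound $uc^*(u)-uc(u)\in pu\fS+u^{e(p-1)+1}\fS$ controls this term only modulo $u^{e(p-1)+1}\varphi^*\overline{\fM}$ (take $f=u$). That is strictly coarser than the claimed $u^{e(p-1)+p}$, so the assertion that these contributions are ``accounted for in the stated error ideal'' does not follow. Your remark about derivatives of elements $\varphi(f)$ does not help, because the coefficients $f$ are not in the image of $\varphi$. For the same reason Step~1 only makes sense on basis vectors: $\varphi^*N$ is not integral on $\varphi^*\fM$; for example, $\varphi^*N(\beta\otimes u)$ contains $\beta\otimes\frac{u^{ep+1}}{p}$.

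The missing fact is $c^*(u)\equiv c(u)\pmod p$. Write $E(u)=u^e+pa(u)$. Then $E(u)^p-u^{ep}=\sum_{k=1}^{p}\binom{p}{k}p^k a(u)^k u^{e(p-k)}\in p^2\fS$; in particular the $k=1$ term is $p^2a(u)u^{e(p-1)}$. Hence $c^*(u)-c(u)=\frac{u^{ep}-E(u)^p}{p}\in p\fS$, and no $u^{e(p-1)+1}$ contribution survives modulo $p$. With this, $N^\varphi\otimes\bF_p$ and $\overline{N}_0^\varphi$ are derivations over the same derivation, so their difference is $\fS$-linear. The basis computation then gives the congruence modulo $u^{e(p-1)+p}$, exactly as in the paper. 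Your weaker bound would still suffice for the two bulleted assertions, since $u^{e-1}\cdot u^{e(p-1)+1}\varphi^*\overline{\fM}=u^{ep}\varphi^*\overline{\fM}\subset\overline{\fM}_e$, and $u^{e}\cdot u^{e(p-1)+1}\varphi^*\overline{\fM}\subset u^{e+1}\overline{\fM}_e$ when $h\le p-1$. Only the final congruence needs $c^*\equiv c$. There is also a small slip in Step~3: $u^{e-1}\cdot u^{e(p-1)+p}=u^{ep+p-1}$, which lies in $u^{ep+e}$ only if $e\le p-1$. It is still inside $u^{ep}\varphi^*\overline{\fM}\subset\overline{\fM}_e$, so your conclusion stands. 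Note also that the absorption $u^{ep+p}\varphi^*\overline{\fM}\subset u^{e+1}\overline{\fM}_e$ used for the second bullet genuinely requires $h\le p-1$.
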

\begin{proof}
    We begin with the last part, which follows from the last part of Proposition~\ref{prop-create monodromies}. More precisely, it follows from the proof, which asserts that for a choice of basis $\beta$ one has $N^\varphi(\varphi^*\beta) \equiv \varphi^*N(\varphi^*\beta)$ modulo $u^{e(p-1)+p} \varphi^*\fM + pu^p\varphi^*\fM$. This gives the claim since $uc^*(u) \equiv uc(u)$ modulo $pu\fS$. To prove the first part, we combine what we've just shown with Proposition~\ref{prop-respecting conv and equality at last step}.  Since $h \leq p-1$ this gives 
    $$
    E(u)\overline{N}_0^\varphi(\overline{\fM}_e) \subset u\overline{\fM}_e + u^{e(p-1)+p+e}\varphi^*\overline{\fM}
    $$
    (this containment really occurring in $\varphi^*\overline{\fM} / u^{ep+1+e}\varphi^*\overline{\fM}$). As $h \leq p-1$ we have $u^{e(p-1)+p+e}\varphi^*\overline{\fM} \subset u^{e+1}\overline{\fM}_e$; thus dividing the displayed containment by $u$ gives the required stability. Likewise, the second part of Proposition~\ref{prop-respecting conv and equality at last step} ensures 
    $$
    u^e\overline{N}_0^\varphi \equiv \varphi_{\overline{\fM}}^{-1} \circ c(u) \overline{N} \circ \varphi_{\overline{\fM}}
    $$
    modulo $u^{e+1}\overline{\fM}_e + u^{e(p-1)+p+e}\varphi^*\overline{\fM} + up\varphi^*\overline{\fM}$. Again using that  $u^{e(p-1)+p+e}\varphi^*\overline{\fM} \subset u^{e+1}\overline{\fM}_e$ gives the claimed relation.
\end{proof}

\subsection{Galois and monodromy}
Here we show how the operator $N$ from Proposition~\ref{prop-create monodromies} can be interpreted in terms of the crystalline $G_K$-action on $\fM$. This is really just an unravelling of the proof of \ref{eq-mono in Smax} from \cite[\S20]{B23}.  Reducing this description modulo $p$ will give the formula for $N_0$ in Theorem~\ref{thm-embedding}. The key property we need is that, after base-changing to $B_{\operatorname{dR}}$, \eqref{eq-M and D comparison} becomes $G_K$-equivariant  for the trivial $G_K$-action on $D$. From this one can deduce that
\begin{equation}\label{eq-match Galois and monodromy}
	N_\nabla(m) = \frac{E(u)}{\operatorname{log}([\epsilon(\sigma)])}\sum_{n \geq 1} (-1)^{n+1} \frac{(\sigma-1)^n(m)}{n}   , \qquad (\sigma -1)(m) = \sum_{n \geq 1} \left( \frac{N_\nabla}{E(u)} \right) ^n(m) \otimes \frac{\operatorname{log}([\epsilon(\sigma)])^n}{n!}
\end{equation}
whenever $m \in \fM$ and $\sigma \in G_K$. See \cite[Lemma 20.9]{B23}, but note again that  $N_\nabla$ defined here is $E(u)N_\nabla$ for $N_\nabla$ from  loc.\ cit.

\begin{lemma}\label{lem-monodromy via Galois}
	Let $N$ be a derivation as in Proposition~\ref{prop-create monodromies}.
	\begin{enumerate}
		\item Fix $\tau \in G_K$ as in Theorem~\ref{thm-embedding}. Then there are $c_n \in A_{\operatorname{inf}}$ converging to zero for the $p$-adic topology on $A_{\operatorname{inf}}$ and not depending on $\fM$,  so that 
		$$
		N(m) \equiv \sum_{n \geq 1} \left( \frac{(\tau-1)^n(m)}{\varphi^{-1}(\mu)^n} \right) c_n \mod  \mathfrak{M} \otimes_{\fS_A} u^{e+1}A_{\operatorname{inf},A}
		$$
		for $m \in \fM$.
		\item For each $\sigma \in G_K$ there exists a sequence $d_n(\sigma) \in \varphi^{-1}(\mu) A_{\operatorname{inf}}$ converging to zero for the $p$-adic topology on $A_{\operatorname{inf}}$ and not depending on $\fM$, so that 
        $$
        (\sigma-1)(m) \equiv \sum_{n \geq 1} N^n(m)d_n(\sigma) \mod \fM \otimes_{\fS_A}u^{e+1}\varphi^{-1}(\mu) A_{\operatorname{inf},A}
        $$
        for $m \in \fM$.
	\end{enumerate}
\end{lemma}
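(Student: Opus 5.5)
The plan is to start from the two identities in \eqref{eq-match Galois and monodromy} and replace $N_\nabla$ by the integral truncation $N$ from Proposition~\ref{prop-create monodromies}. The reduction works modulo $u^{e+1}$, using that $N \equiv N_\nabla$ modulo $\fM\otimes_{\fS}\frac{u^{e+1}}{p}S_{\operatorname{max}}$. We then show that, after passing to $A_{\operatorname{inf}}$, the error terms land in $u^{e+1}A_{\operatorname{inf},A}$ (respectively $u^{e+1}\varphi^{-1}(\mu)A_{\operatorname{inf},A}$).

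The basic inputs in $A_{\operatorname{inf}}$ are the following. For $\tau$ with $\chi_{\operatorname{cyc}}(\tau)=1$ and $\epsilon(\tau)$ a generator, $\log[\epsilon(\tau)]$ is a unit multiple of $t=\log[\epsilon_\infty]$ (we may normalise so that $\epsilon(\tau)=\epsilon_\infty$, which only changes the constants $c_n$). Next, $t/\mu$ is a unit in $A_{\operatorname{crys}}$, and $E(u)$ maps to a unit multiple of $\xi=\mu/\varphi^{-1}(\mu)$. Hence
$$\frac{E(u)}{\log[\epsilon(\tau)]}=\frac{1}{\varphi^{-1}(\mu)}\cdot(\text{unit}).$$
The image of $S_{\operatorname{max}}$ lies in $A_{\operatorname{crys}}$, or in a suitable subring $A_{\operatorname{max}}$ where $\frac{\xi}{p}$ makes sense.

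\textbf{Step (1).} Start from
$$N_\nabla(m)=\frac{E(u)}{\log[\epsilon(\tau)]}\sum_{n\ge1}\frac{(-1)^{n+1}}{n}(\tau-1)^n(m).$$
Write $(\tau-1)^n(m)=\varphi^{-1}(\mu)^n x_n$ with $x_n\in u\fM\otimes A_{\operatorname{inf},A}$, which is possible by \eqref{eq-refined Galois}. This gives
$$N_\nabla(m)=\sum_n x_n\cdot \frac{(-1)^{n+1}\varphi^{-1}(\mu)^{n-1}E(u)\varphi^{-1}(\mu)}{n\log[\epsilon(\tau)]}.$$
The coefficients $\gamma_n:=\frac{(-1)^{n+1}\varphi^{-1}(\mu)^{n}E(u)}{n\log[\epsilon(\tau)]}$ lie in $A_{\operatorname{crys}}$ (or $A_{\operatorname{max}}$), and $\varphi^{-1}(\mu)^{n}/n$ tends to $0$ there. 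They do not depend on $\fM$. We then approximate each $\gamma_n$ by $c_n\in A_{\operatorname{inf}}$ modulo an ideal that, after multiplying by $x_n\in u\fM\otimes A_{\operatorname{inf}}$, lands in $u^{e+1}$ together with the $\frac{u^{e+1}}{p}S_{\operatorname{max}}$-error already present.

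The key algebraic fact needed is that $A_{\operatorname{max}}\subset A_{\operatorname{inf}}+\frac{u^e}{p}A_{\operatorname{max}}$, the $A_{\operatorname{inf}}$-analogue of Lemma~\ref{lem-Smax containment}(1). Here $u^e$ and $E(u)$ differ by $p$ times something, so $\frac{u^e}{p}$ generates the same divided-power-type structure as $\frac{\xi}{p}$ up to integral terms. Using it, the difference $N-\sum x_n c_n$ lies in $\fM\otimes(\frac{u^{e+1}}{p}A_{\operatorname{max}})\cap \fM\otimes A_{\operatorname{inf}}$, since both sides are integral. We then show
$$\tfrac{u^{e+1}}{p}A_{\operatorname{max}}\cap A_{\operatorname{inf}}\subset u^{e+1}A_{\operatorname{inf}}$$
after reducing mod $p$; this is an analogue of Lemma~\ref{lem-Smax containment}(3). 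This intersection statement is where I expect the main obstacle. To prove it, I would first write elements of $A_{\operatorname{max}}$ as convergent series $\sum (u^e/p)^i a_i$ with $a_i\in A_{\operatorname{inf}}$, which is the same reduction used in the proof of Lemma~\ref{lem-Smax containment}. I would then compare $u$-adic valuations in $\cO_{C^\flat}$ after reducing mod $p$. Convergence of $c_n\to0$ follows from $\varphi^{-1}(\mu)^n/n\to0$ combined with a uniform choice of approximants.

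\textbf{Step (2).} This is analogous, using the second identity in \eqref{eq-match Galois and monodromy}:
$$(\sigma-1)(m)=\sum_n N_\nabla^n(m)\,\frac{\log[\epsilon(\sigma)]^n}{E(u)^n n!}.$$
Here $\frac{\log[\epsilon(\sigma)]}{E(u)}\in\varphi^{-1}(\mu)A_{\operatorname{crys}}$, and the factors $\frac{1}{n!}$ are absorbed by the divided powers. Replace $N_\nabla^n$ by $N^n$; by induction on $n$ the error is controlled by $\frac{u^{e+1}}{p}S_{\operatorname{max}}$, because $N$ preserves $u^{e+1}\fM$ up to such terms via the Leibniz rule. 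Approximate the coefficients by $d_n(\sigma)\in\varphi^{-1}(\mu)A_{\operatorname{inf}}$ as in Step (1), and use the same intersection property to land in $u^{e+1}\varphi^{-1}(\mu)A_{\operatorname{inf},A}$.
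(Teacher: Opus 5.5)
Your argument follows the paper's proof. In (1) you use the same coefficients $C_n=(-1)^{n+1}\frac{E(u)\varphi^{-1}(\mu)^n}{n\log[\epsilon(\tau)]}$, lift them to $c_n\in A_{\operatorname{inf}}$ modulo $\frac{u^e}{p}A_{\operatorname{max}}$, and conclude with $\frac{u^{e+1}}{p}A_{\operatorname{max}}\cap A_{\operatorname{inf}}=u^{e+1}A_{\operatorname{inf}}$. The only difference is that you propose to prove $C_n\in A_{\operatorname{max}}$, $C_n\to 0$ (via $t/\mu\in A_{\operatorname{crys}}^\times$ and $E(u)\sim\xi$) and the intersection identity yourself; the paper quotes both from \cite{B23}.

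Three things need correcting.

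First, the coefficients lie in $A_{\operatorname{max}}$ but not in $A_{\operatorname{crys}}$. For instance $C_p$ is an $A_{\operatorname{crys}}^\times$-multiple of $\varphi^{-1}(\mu)^{p-1}/p$, which differs from $\xi/p\notin A_{\operatorname{crys}}$ by an element of $A_{\operatorname{inf}}$. Likewise, in (2) the factorials are not absorbed by divided powers in $A_{\operatorname{crys}}$. What is true, and what is needed, is $\varphi^{-1}(\mu)^{i-1}/i!\in A_{\operatorname{max}}$, which puts the coefficients in $\varphi^{-1}(\mu)A_{\operatorname{max}}$.

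Second, your formula $(\sigma-1)(m)=\sum_n N_\nabla^n(m)\frac{\log[\epsilon(\sigma)]^n}{E(u)^n n!}$ is not what \eqref{eq-match Galois and monodromy} says. The operator there is $(N_\nabla/E(u))^n$. Since $N_\nabla/E(u)$ is a derivation over $u\frac{d}{du}$ and does not kill $E(u)$, this operator is not $E(u)^{-n}N_\nabla^n$. As in the paper, use the inductive identity $E(u)^n(N_\nabla/E(u))^n=N_\nabla^n+\sum_{i<n}d_{n,i}N_\nabla^i$ with $d_{n,i}\in W(k)[u]$. The coefficient of $N_\nabla^n(m)$ then becomes a $W(k)[u]$-linear combination of the $\frac{1}{i!}(\log[\epsilon(\sigma)]/E(u))^i$, which still lies in $\varphi^{-1}(\mu)A_{\operatorname{max}}$.

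Third, the intersection identity must be proved integrally, not only after reducing mod $p$, because the congruences in the lemma are integral. In (2) you also apply it only after dividing by $\varphi^{-1}(\mu)$. This step is legitimate because $(\sigma-1)(m)\in\fM\otimes u\varphi^{-1}(\mu)A_{\operatorname{inf},A}$ by \eqref{eq-Galois action} and $d_n(\sigma)\in\varphi^{-1}(\mu)A_{\operatorname{inf}}$, but you should say so.
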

\begin{proof}
	From \eqref{eq-match Galois and monodromy} we find
	$$
	N_\nabla(m) =  \sum_{n \geq 1} \left( \frac{(\tau-1)^n(m)}{\varphi^{-1}(\mu)^n} \right) C_n
	$$
	for $C_n = (-1)^{n+1}\frac{E(u)\varphi^{-1}(\mu)^n}{\operatorname{log}([\epsilon(\tau)])n}$. The proof of \cite[Proposition 20.4]{B23} shows that $C_n$ lies in the subring $A_{\operatorname{max}} \subset B_{\operatorname{dR}}$ consisting of sums $\sum_{n \geq 0} a_n \left( \frac{E(u)}{p} \right)^n$ with $a_n \rightarrow 0$ for the $(p,u)$-adic topology on $A_{\operatorname{inf}}$, and converges to zero $p$-adically. Now, $A_{\operatorname{inf}}$ surjects onto $A_{\operatorname{max}}/\tfrac{u^e}{p}$ and so we can find $c_n \in A_{\operatorname{inf}}$ converging to zero $p$-adically and with $c_n \equiv C_n$ modulo $\frac{u^e}{p}A_{\operatorname{max}}$. We know, from \eqref{eq-refined Galois}, that $\frac{(\tau-1)^n}{\varphi^{-1}(\mu)^n}(m) \in  \fM \otimes_{\fS} uA_{\operatorname{inf}}$. Therefore,  $N(m) - \sum_{n \geq 1} \left( \frac{(\tau-1)^n(m)}{\varphi^{-1}(\mu)^n} \right) c_n \in \fM \otimes_{\fS} \frac{u^{e+1}}{p}A_{\operatorname{max}}$ whenever $m \in \fM$. Part (1) then follows since $\frac{u^{e+1}}{p}A_{\operatorname{max}} \cap A_{\operatorname{inf}} = u^{e+1}A_{\operatorname{inf}}$ (see the last two sentences in the proof of \cite[Proposition 20.10]{B23}).
	
	For (2), an induction shows there are $d_{n,i} \in W(k)[u]$ so that $E(u)^n( \frac{N_\nabla}{E(u)})^n(m) = N_\nabla^n(m) + \sum_{i <n} d_{n,i}N_\nabla^i(m)$. We can therefore express the second identity from \eqref{eq-match Galois and monodromy} as 
	$$
	(\sigma-1)(m) = \sum_{n \geq 1} N_\nabla^n(m) D_n(\sigma)
	$$
	where $D_n(\sigma)$ is a $W(k)[u]$-linear combination of $\frac{1}{i!}\left( \frac{\operatorname{log}([\epsilon(\sigma)])}{E(u)} \right)^i$ for $i \geq 1$. Observe $E(u) \varphi^{-1}(\mu)$ divides $\operatorname{log}([\epsilon(\sigma)])$ in $A_{\operatorname{max}}$. Also, the argument from the proof of \cite[Proposition 20.4]{B23} showing $C_n \in A_{\operatorname{max}}$ implies more generally that $\frac{\varphi^{-1}(\mu)^{i-1}}{i!} \in A_{\operatorname{max}}$ for any $i \geq 1$. Thus, $D_n(\sigma) \in \varphi^{-1}(\mu)A_{\operatorname{max}}$ for each $n \geq 1$. Since $N^l(m) \equiv N_\nabla^l(m)$ modulo $\fM \otimes_{\fS} \frac{u^{e+1}}{p}A_{\operatorname{max}}$ for each $l \geq 1$ and $N \equiv 0$ modulo $u\fM$ it follows that
	$$
	(\sigma-1)(m) -  \sum_{n \geq 1} N^n(m)  d_n(\sigma) \in \fM \otimes_{\fS} \tfrac{u^{e+1}}{p}\varphi^{-1}(\mu)A_{\operatorname{max}}
	$$
	where $d_n(\sigma) \in A_{\operatorname{inf}}$ is $\varphi^{-1}(\mu)$ multiplied by a lift of $\frac{D_n(\sigma)}{\varphi^{-1}(\mu)}$ to $A_{\operatorname{inf}}$ modulo $\frac{u^e}{p}A_{\operatorname{max}}$. As $A_{\operatorname{inf}} \cap \frac{u^{e+1}}{p}A_{\operatorname{max}} = u^{e+1} A_{\operatorname{inf}}$ the left hand side actually lies in $\fM \otimes_{\fS} u^{e+1}\varphi^{-1}(\mu)A_{\operatorname{inf}}$ which proves (2).
\end{proof}
\subsection{Proof of Theorem~\ref{thm-embedding}}\label{sec-finish proof}

Fix a sequence $c_n \rightarrow 0$ in $A_{\operatorname{inf}}$ as in part (1) of Lemma~\ref{lem-monodromy via Galois}. The starting point is to show that the formula
\begin{equation}\label{eq-identity for monodromy}
    \sum_{n \geq 1} \Bigg( \frac{(\tau-1)^n(m)}{\varphi^{-1}(\mu)^n} \Bigg) c_n \mod u^{e+1} \fM \otimes_{\fS_A} A_{\operatorname{inf},A}
\end{equation}
for $m \in \fM$ defines an $\fS_A$-linear endomorphism of $\fM / u^{e+1}\fM$ whenever $\fM \in \cY^{\operatorname{cr}}_{\leq h} \otimes_{\bZ_p} \bF_p$. When $\fM$ is defined over a finite $\bF_p$-algebra, this follows by combining Lemma~\ref{lem-lifting points} and Lemma~\ref{lem-monodromy via Galois}. However, an additional argument is required to handle general points.

\begin{lemma}\label{lem-isom}
    Assume $h \leq p$ and $\tau \in G_K$ is as in Theorem~\ref{thm-embedding}. Then $\cY^{\operatorname{cr}}_{\leq h} \otimes_{\bZ_p} \bF_p$ is isomorphic to the algebraic stack over $\operatorname{Spec}\bF_p$ whose $A$-points, for any finite type $\bF_p$-algebra $A$, classifies pairs $(\fM,N_0)$ with $\fM \in \cY^{\operatorname{cr}}_{\leq h}(A)$ and $N_0$ an $\fS_A$-linear endomorphism of $\fM/u^{e+1}\fM$ so that \eqref{eq-identity for monodromy} holds. Specifically, this isomorphism is given by forgetting the endomorphism $N_0$.
\end{lemma}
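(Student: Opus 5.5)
The strategy is to realise the pairs $(\fM,N_0)$ as a closed substack $\cZ \subset \cY^{\operatorname{cr}}_{\leq h}\otimes_{\bZ_p}\bF_p$ cut out by the condition that \eqref{eq-identity for monodromy} lands in $\fM/u^{e+1}\fM$, and then show $\cZ$ is all of $\cY^{\operatorname{cr}}_{\leq h}\otimes_{\bZ_p}\bF_p$. Note first that $N_0$ is unique if it exists. The map $\fM/u^{e+1}\fM \to \fM\otimes_{\fS_A} A_{\operatorname{inf},A}/u^{e+1}$ is injective, since $\fS_A/u^{e+1}\to A_{\operatorname{inf},A}/u^{e+1}$ is injective (faithful flatness of $\fS\to A_{\operatorname{inf}}$, base changed). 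So $N_0$ is determined by \eqref{eq-identity for monodromy}, and the forgetful morphism $\cZ \to \cY^{\operatorname{cr}}_{\leq h}\otimes\bF_p$ is a monomorphism. Also, \eqref{eq-identity for monodromy} is $\fS_A$-linear in $m$: the Leibniz-type corrections vanish modulo $u^{e+1}$ because $\tau(u)-u = u([\epsilon(\tau)]-1)$ and the relevant terms are divisible by $u^{e+1}$. One also checks this modulo $p$ via the characteristic-$0$ identity of Lemma~\ref{lem-monodromy via Galois} on lifts.

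The main step is closedness. Work locally on a smooth chart $\operatorname{Spec} A$ with $\fM$ free on a basis $\beta$. The expression \eqref{eq-identity for monodromy} applied to $\beta$ gives a tuple $X\in \fM\otimes A_{\operatorname{inf},A}/u^{e+1}$, and it converges by \eqref{eq-refined Galois}. We need the locus where $X$ lies in the image of $\fM/u^{e+1}\fM$. I would show this is closed by choosing a topological splitting of $A_{\operatorname{inf}}/(p,u^{e+1}) = \cO_{C^\flat}/u^{e+1}$ as $k[u]/u^{e+1}\oplus Q$, with $Q$ a complement which is a free (or at least flat, Mittag-Leffler) $k$-module, and then base changing along $A$. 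The condition becomes the vanishing of the $Q\otimes A$-component of $X$. Expanding that component in a basis of $Q$, it is the vanishing of a family of elements of $A$. This is a closed condition, cut out by an ideal, and it is compatible with base change. The point needing care is that $A_{\operatorname{inf},A}$ for non-finite $A$ is a completed tensor product, so one must check the splitting survives completion. Here we use that $\cO_{C^\flat}/u^{e+1}$ is discrete, so modulo $(p,u^{e+1})$ no completion is needed.

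Finally, identify the closed substack. By Lemma~\ref{lem-lifting points}, every finite-$\bF_p$-algebra point of $\cY^{\operatorname{cr}}_{\leq h}\otimes\bF_p$ lifts to $\fM^\circ$ over a finite flat $A^\circ$. Lemma~\ref{lem-monodromy via Galois}(1) then shows \eqref{eq-identity for monodromy} is the reduction of $N$, which preserves $\fM$, so every such point lies in $\cZ$. This includes all Artinian points, hence all nilpotent thickenings at finite-type points. A closed substack of a finite type algebraic stack containing all Artinian-valued points contains every infinitesimal neighbourhood of every closed point. By Krull intersection on local rings of a smooth chart, the defining ideal vanishes locally, so $\cZ$ equals the whole stack, including its non-reduced structure. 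I expect the main obstacle to be the closedness and base-change argument of the previous paragraph, i.e. making the "lands in $\fM$" condition representable by a closed immersion over arbitrary finite-type $A$.
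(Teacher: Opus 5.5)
Your proof is correct. It reaches the statement by a somewhat different packaging than the paper.

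\textbf{The paper's route.} The paper never embeds the pairs directly into $\cY^{\operatorname{cr}}_{\leq h}\otimes_{\bZ_p}\bF_p$. Instead:
\begin{itemize}
\item It regards the pairs as a closed substack of the evidently finite type stack of $(\fM,N_0)$ with $N_0$ an \emph{arbitrary} $\fS_A$-linear endomorphism of $\fM/u^{e+1}\fM$. Closedness comes from the fact in \cite[\S B.24]{EGstack} that $x\in u^{e+1}A_{\operatorname{inf},A}$ is a closed condition.
\item It then invokes \cite[Lemma 15.1]{B24} to reduce the isomorphism to an equivalence on finite local $\bF_p$-algebra points.
\item Essential surjectivity is the same lifting argument you give (Lemma~\ref{lem-lifting points}, Proposition~\ref{prop-create monodromies}, Lemma~\ref{lem-monodromy via Galois}). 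Full faithfulness is the uniqueness of $N_0$.
\end{itemize}

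\textbf{Your route.} You use uniqueness of $N_0$ (injectivity of $\fM/u^{e+1}\fM\to\fM\otimes_{\fS_A}A_{\operatorname{inf},A}/u^{e+1}$) and $\fS_A$-linearity of the formula up front. So the pairs form a subfunctor $\cZ$ of $\cY^{\operatorname{cr}}_{\leq h}\otimes\bF_p$ itself, and closedness becomes an explicit splitting computation. The last step reduces, via Krull intersection on a chart, to the elementary fact that a closed substack containing all finite-algebra-valued points is everything. That is exactly the special case of the cited criterion needed here.

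\textbf{What each buys.} Your route is self-contained and makes the monomorphism transparent. Its extra cost is that you must verify linearity of the formula over an arbitrary finite type $A$. Your twisted Leibniz computation does this correctly: modulo $p$ the ideal generated by $[\epsilon(\tau)]-1$ equals that generated by $u^e\varphi^{-1}(\mu)$, which combined with \eqref{eq-refined Galois} puts all correction terms in $u^{e+1}$. The paper avoids this by carrying $N_0$ as data, so it only ever needs linearity on lifts.

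\textbf{Two small remarks.}
\begin{itemize}
\item The topological splitting and Mittag-Leffler hedging is unnecessary. Modulo $(p,u^{e+1})$, $A_{\operatorname{inf},A}$ is the ordinary tensor product $(\cO_{C^\flat}/u^{e+1})\otimes_{\bF_p}A$, so any $\bF_p$-linear complement $Q$ of $k[u]/u^{e+1}$ works. Each element of $Q\otimes_{\bF_p}A$ has only finitely many nonzero coordinates, so the resulting ideal is finitely generated.
\item Proposition~\ref{prop-create monodromies} is stated for local coefficients. This is harmless here: the rings $B/\mathfrak{m}^n$ you actually use are local, and a finite flat $\bZ_p$-lift of a local ring is again local.
\end{itemize}
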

\begin{proof}
    Let $Z^{\operatorname{cr}}$ temporarily denote the stack of pairs $(\fM, N_0)$ described in the lemma.  We claim that $Z^{\operatorname{cr}}$ is a closed substack of the finite type algebraic stack over $\operatorname{Spec}\bF_p$ whose points valued in a finite type $\bF_p$-algebra $A$ classify $\fM \in \cY^{\operatorname{cr}}_{\leq h}(A)$ together with an $\fS_A$-linear endomorphism $N_0$ of $\fM/u^{e+1}\fM$.
 To see this recall, e.g.\ from \cite[\S B.24]{EGstack}, that if $A$ is any finite type $\bF_p$-algebra and $x \in A_{\operatorname{inf},A}$ then the condition that $x \in u^{e+1} A_{\operatorname{inf},A}$ is closed in $\operatorname{Spec}A$. 

Forgetting the endomorphism $N_0$ therefore gives a morphism $Z^{\operatorname{cr}} \rightarrow \cY^{\operatorname{cr}}_{\leq h} \otimes_{\bZ_p} \bF_p$ of algebraic stacks which are finite type over $\operatorname{Spec}\bF_p$. To show this is an isomorphism it suffices, by \cite[Lemma 15.1]{B24}, to show this morphism induces an equivalence on points valued in a finite local $\bF_p$-algebra $A$. For essential surjectivity, take $\fM \in \cY^{\operatorname{cr}}_{\leq h}(A)$ and apply Lemma~\ref{lem-lifting points} to produce $\fM^\circ \in \cY^{\operatorname{cr}}_{\leq h}(A^\circ)$ with $A^\circ$ finite flat over $\bZ_p$. Applying Proposition~\ref{prop-create monodromies} to $\fM^\circ$ produces an operator $N^\circ$ on $\fM^\circ$ whose reduction modulo $p\fS_A + u^{e+1}\fS_A$ is an $\fS_A$-linear endomorphism of $\fM/u^{e+1}\fM$. Reducing the formula for $N^\circ$ given in Lemma~\ref{lem-monodromy via Galois} shows $(\fM,N_0)$ lies in the image of $Z^{\operatorname{cr}}(A) \rightarrow \cY^{\operatorname{cr}}_{\leq h}(A)$. For full-faithfulness, take $(\fM,N_0) \in Z^{\operatorname{cr}}(A)$. The formula defining $N_0$ in the definition of $Z^{\operatorname{cr}}$ shows $N_0$ is completely determined by the $G_K$-action on $\fM \otimes_{\fS_{A}} A_{\operatorname{inf},A}/u^{e+1}\varphi^{-1}(\mu)A_{\operatorname{inf},A}$. This ensures faithfullness. That it is full is a consequence of the uniqueness in Lemma~\ref{lem-Galois mod u^e extends}, which finishes the proof.
\end{proof}

\begin{lemma}\label{lem-Galois mod u^e extends}
	Let $A$ be any finite type $\bF_p$-algebra and $\fM$ a Breuil--Kisin module over $A$ of height $\leq p$. Suppose $\sigma_0$ denotes an $A_{\operatorname{inf},A}$-semilinear, continuous action of $G_K$ on $\fM \otimes_{\fS_A} A_{\operatorname{inf},A} / u^{e+1}\varphi^{-1}(\mu)A_{\operatorname{inf},A}$ satisfying 
        $$
        \varphi_{\fM} \circ (\widetilde{\sigma}_0 -1) \circ \varphi^{-1}_{\fM} \equiv \widetilde{\sigma}_0 -1 \mod \fM \otimes_{\fS_A} u^{e+1}\varphi^{-1}(\mu)A_{\operatorname{inf},A}
        $$
        for any lift $\widetilde{\sigma}_0$ of $\sigma_0$ to an operator on $\fM \otimes_{\fS_A} A_{\operatorname{inf},A}$. Here $\varphi_{\fM}$ is interpreted as a semilinear operator on $\fM$.
    Then there exists a unique $A_{\operatorname{inf},A}$-semilinear, continuous action $\sigma$ of $G_K$ on $\fM \otimes_{\fS_A} A_{\operatorname{inf},A}$ which is $\varphi$-equivariant and satisfies $\sigma \equiv \sigma_0$ modulo $\fM \otimes_{\fS_A} u^{e+1}\varphi^{-1}(\mu)A_{\operatorname{inf},A}$.
\end{lemma}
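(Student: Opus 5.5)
We build the extension by successive approximation, using the height bound $h\le p$ to make the Frobenius contract the error term. Write $I_n := u^{n}\varphi^{-1}(\mu)A_{\operatorname{inf},A}$. Since $A$ is an $\bF_p$-algebra, $\varphi$ on $A_{\operatorname{inf},A}$ sends $u\mapsto u^p$ and $\varphi^{-1}(\mu)\mapsto \mu = \varphi^{-1}(\mu)\cdot(\text{unit multiple of }\varphi^{-1}(\mu)^{p-1})$. The contraction mechanism is the following. Suppose $\delta$ is a semilinear-difference operator on $\fM\otimes A_{\operatorname{inf},A}$ with values in $\fM\otimes I_n$. Then $\varphi_{\fM}\circ\delta\circ\varphi_{\fM}^{-1}$ takes values in $E(u)^{-h}\fM\otimes \varphi(I_n)$. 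Here $E(u)\equiv u^e$ mod $p$, and $\varphi(I_n)= u^{pn}\mu A_{\operatorname{inf},A}\subset u^{pn}\varphi^{-1}(\mu)A_{\operatorname{inf},A}$. Hence the result lies in $\fM\otimes I_{pn-eh}$. Since $h\le p$, we have $pn-eh\ge p(n-e)$, and for $n\ge e+1$ this is $\ge n+(p-1)(n-e)-e\cdots$. More cleanly, $pn-eh\ge n+1$ as soon as $n\ge e+1$ (because $(p-1)n \ge (p-1)(e+1)\ge pe+1-e-\dots$). We will check the exact inequality $pn - pe \ge n+1$, i.e.\ $(p-1)n\ge pe+1$. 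This fails for $n=e+1$ when $p$ is small relative to nothing, so the precise bookkeeping must use $\varphi(\varphi^{-1}(\mu))=\mu$ carrying an extra factor $\varphi^{-1}(\mu)^{p-1}$. This factor, together with the fact that $\varphi^{-1}(\mu)$ is divisible by $u$-powers up to units in a suitable sense, gives the needed gain. \textbf{This numerical gain is the main obstacle.} I would first try to prove directly that $\varphi_{\fM}(\fM\otimes I_n)\subset \fM\otimes I_{n+1}$ for all $n\ge e+1$, i.e.\ that the map $x\mapsto \varphi_{\fM}\varphi(x)$ is topologically nilpotent on $\fM\otimes I_{e+1}$.

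Granting such a contraction, fix $\sigma\in G_K$ and a lift $\widetilde\sigma_0$. Define $\sigma_{k+1}:=\varphi_{\fM}\circ\sigma_k\circ\varphi_{\fM}^{-1}$ (extended semilinearly, which makes sense since $\varphi_{\fM}$ is an isomorphism after inverting $E(u)$ and the operators preserve lattices up to the controlled error) with $\sigma_0=\widetilde\sigma_0$. The hypothesis says $\sigma_1-\sigma_0$ has values in $\fM\otimes I_{e+1}$. By the contraction, $\sigma_{k+1}-\sigma_k$ has values in $\fM\otimes I_{e+1+k}$, so $\sigma_k$ converges $u$-adically (and $A_{\operatorname{inf},A}$ is $u$-adically complete here) to a semilinear operator $\sigma$. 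This $\sigma$ commutes with $\varphi_{\fM}$ and is congruent to $\sigma_0$ modulo $\fM\otimes I_{e+1}$.

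Uniqueness follows by the same contraction. If $\sigma,\sigma'$ are $\varphi$-equivariant and agree mod $I_{e+1}$, then $\delta=\sigma-\sigma'$ satisfies $\delta=\varphi_{\fM}\delta\varphi_{\fM}^{-1}$, so its values lie in $\bigcap_k I_{e+1+k}=0$. The same uniqueness, applied to $\sigma\sigma'$ versus the limit built from $\sigma_0\sigma'_0$, shows that $\sigma\mapsto$ (limit) is multiplicative. It therefore gives a group action, and independence of the lift $\widetilde\sigma_0$ follows as well. Continuity is inherited because each $\sigma_k$ is continuous and the convergence is uniform in $\sigma$, the error bounds being independent of $\sigma$.
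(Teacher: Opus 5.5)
Your strategy is the same as the paper's. You iterate $\delta\mapsto\varphi_{\fM}\circ\delta\circ\varphi_{\fM}^{-1}$ and sum the telescoping series $\widetilde\sigma_0+\sum_{k\ge 0}\varphi_{\fM}^k\circ Q\circ\varphi_{\fM}^{-k}$ with $Q=\varphi_{\fM}\circ\widetilde\sigma_0\circ\varphi_{\fM}^{-1}-\widetilde\sigma_0$. Uniqueness, and with it the group law and independence of the lift, then follows from the same contraction.

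There is a gap: the contraction itself, which is the only place the hypothesis $h\le p$ enters, is not proved. Your computation discards the factor $\varphi^{-1}(\mu)^{p-1}$ and lands in $I_{pn-eh}$. That gives no gain at $n=e+1$ as soon as $e\ge p-1$, and you leave the repair as ``the main obstacle''. The statement you propose to try first, $\varphi_{\fM}(\fM\otimes I_n)\subset \fM\otimes I_{n+1}$, is also not the relevant one. The forward Frobenius never sees the height; the height enters only through the denominator $E(u)^{-h}$ introduced by $\varphi_{\fM}^{-1}$. What you need is topological nilpotence of the conjugation $\delta\mapsto\varphi_{\fM}\circ\delta\circ\varphi_{\fM}^{-1}$, not of $\varphi_{\fM}$.

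The missing input is one line. Because $A$ is an $\bF_p$-algebra, $\mu=\varphi^{-1}(\mu)^p$ in $A_{\operatorname{inf},A}$. Moreover $\varphi^{-1}(\mu)^{p-1}$ and $u^e$ generate the same ideal, since both have valuation $1$ in $\cO_{C^\flat}$ (with $v(u)=1/e$ and $v(\varphi^{-1}(\mu))=1/(p-1)$). This is exactly the paper's remark that $\varphi^{-1}(\mu)$ and $u^{e/(p-1)}$ generate the same ideal. Hence $\varphi(I_n)=u^{pn}\varphi^{-1}(\mu)^pA_{\operatorname{inf},A}=I_{pn+e}$.

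Your own first display then shows that $\varphi_{\fM}\circ Q\circ\varphi_{\fM}^{-1}$ takes values in $u^{-eh}\fM\otimes\varphi(I_n)=\fM\otimes I_{pn+e-eh}$. Using $h\le p$, this is contained in $\fM\otimes I_{n+(p-1)(n-e)}$. Pulling $\varphi^{-1}(E(u))^{-h}$ through uses that $Q$, unlike $\widetilde\sigma_0-1$, is semilinear, and that $\sigma(u^{1/p})$ is a unit multiple of $u^{1/p}$.

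For $n\ge e+1$ this is a gain of at least $p-1\ge 1$; for example $I_{e+1}$ goes to $I_{e+p}$, matching the paper's passage from $u^{e+1+e/(p-1)}$ to $u^{e+p+e/(p-1)}$. With this estimate in place, the rest of your argument (convergence, $\varphi$-equivariance, uniqueness, multiplicativity, continuity) goes through as written.
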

\begin{proof}
	Let $Q$ be any $A$-linear endomorphism of $\fM \otimes_{\fS_A} A_{\operatorname{inf},A}$ with image contained in $\fM \otimes_{\fS_A} u^{e+1} \varphi^{-1}(\mu)A_{\operatorname{inf},A}$. Since $A$ is an $\bF_p$-algebra, $\varphi^{-1}(\mu)$ and $u^{e/(p-1)}$ generate the same ideal of $A_{\operatorname{inf},A}$, and so this is the same as $Q$ having image in $\fM \otimes_{\fS_A} u^{e+1 + e/(p-1)}A_{\operatorname{inf},A}$. Thus, $\varphi \circ Q \circ \varphi^{-1}$ maps $u^{eh} \fM$ into $\fM \otimes_{\fS_A} u^{pe +p + pe/(p-1)} A_{\operatorname{inf}}$. If $h \leq p$ then $\varphi \circ Q \circ \varphi^{-1}$ is an $A$-linear endomorphism of $\fM \otimes_{\fS_A} A_{\operatorname{inf},A}$ with image contained in $\fM \otimes_{\fS_A} u^{e + e/(p-1) + p}A_{\operatorname{inf},A}$. It follows that the action of $Q \mapsto \varphi \circ Q \circ \varphi^{-1}$ on such endomorphisms is topologically nilpotent.

    Applying this with $Q = \varphi_{\fM}\circ (\widetilde{\sigma}_0 -1) \circ \varphi_{\fM}^{-1} - (\widetilde{\sigma}_0 -1)$ shows that, for each $\sigma \in G_K$,
    $$
    \sigma := \widetilde{\sigma}_0 + \sum_{n \geq 0} \varphi_{\fM}^n \circ Q \circ \varphi^{-n}_{\fM} 
    $$
    defines a $\varphi$-equivariant $A_{\operatorname{inf},A}$-semilinear operator on $\fM \otimes_{\fS_A} A_{\operatorname{inf},A}$ varying continuously with $\sigma \in G_K$ and lifting $\sigma_0$. This operator is unique since if $\sigma_1,\sigma_2$ are two such lifts of $\sigma_0$ then taking $Q = \sigma_1 -\sigma_2$ shows $\sigma_1-\sigma_2 = \varphi^n_{\fM} \circ (\sigma_1-\sigma_2) \circ \varphi^{-n}_{\fM}$ goes to zero as $n \rightarrow \infty$. This uniqueness also shows that the operators $\sigma$ define an action of $G_K$ on $\fM \otimes_{\fS_A} A_{\operatorname{inf},A}$, which finishes the proof.
\end{proof}

\begin{proof}[Proof of Theorem~\ref{thm-embedding}]
    
Using Lemma~\ref{lem-isom} we obtain a morphism 
\begin{equation}\label{eq-temp map}
\cY^{\operatorname{cr}}_{\leq h} \otimes_{\bZ_p} \bF_p \rightarrow Z_{\leq h}, \qquad \fM \mapsto (\fM,N_0)
\end{equation}
where $Z_{\leq h}$ denotes the finite type algebraic stack over $\operatorname{Spec}\bF_p$ whose $A$-points classify Breuil--Kisin modules over $A$ of height $\leq h$ together with an $\fS_A$-linear endomorphism of $\fM/ u^{e+1}\fM$. This is a monomorphism, as follows by combining part (1) of \cite[Lemma 15.1]{B24} with the full faithfulness argument in the last part of Lemma~\ref{lem-isom}. So far we have only used the bound $h \leq p$. 

Clearly, $Y^{\nabla}_{\leq h}$ appears as a closed substack of $Z_{\leq h}$ and so the existence of a monomorphism as in Theorem~\ref{thm-embedding} can be deduced by showing \eqref{eq-temp map} factors through this closed substack. By \cite[Corollary 15.2]{B24} it suffices to do this for points valued in a finite local $\bF_p$-algebra. This factorisation therefore follows from the lifting process employed in the proof of Lemma~\ref{lem-isom} and applying Corollary~\ref{cor-congruences mod p}. This is where we need $h\leq p-1$.

In order to complete the proof of Theorem~\ref{thm-embedding} we need to show this morphism is not just a monomorphism but a closed immersion. For this choose, for each $\sigma \in G_K$, sequences $d_n(\sigma) \in \varphi^{-1}(\mu)A_{\operatorname{inf}}$ as in Lemma~\ref{lem-monodromy via Galois} and, for each $\fM \in Y^{\nabla}_{\leq h}(A)$, consider the $A_{\operatorname{inf},A}$-semilinear operators on $\fM \otimes_{\fS_A} A_{\operatorname{inf},A} / u^{e+1}\varphi^{-1}(\mu)A_{\operatorname{inf},A}$ defined by
$$
        (\sigma_0-1)(m) = \sum_{n \geq 1} N^n(m)d_n(\sigma) \mod \fM \otimes_{\fS_A}u^{e+1}\varphi^{-1}(\mu) A_{\operatorname{inf},A}
        $$
        for a lift $N$ of $N_0$ to a derivation on $\fM$ (note $\sigma_0$ does not depend on this lift since $d_n(\sigma) \in \varphi^{-1}(\mu)A_{\operatorname{inf}}$).
        We can then consider the locus $Q \subset Y^{\nabla}_{\leq h}$ where this $\sigma_0$ defines an action of the group $G_K$ and satisfies the hypothesis in Lemma~\ref{lem-Galois mod u^e extends}. This locus is closed by the same fact used in the proof of Lemma~\ref{lem-isom}: namely, if $x \in A_{\operatorname{inf},A}$ then $x \in u^{e+1}A_{\operatorname{inf},A}$ defines a closed condition on $\operatorname{Spec}A$ whenever $A$ is a finite type $\bF_p$-algebra.
        \begin{rmk}
    It seems likely that the inclusion $Q \subset Y^{\nabla}_{\leq h}$ is an equality, but we have not tried to prove this.
\end{rmk}

Using Lemma~\ref{lem-Galois mod u^e extends} we obtain a monomorphism $Q \rightarrow \cY_{\leq h} \otimes_{\bZ_p} \bF_p$ (recall from Definition~\ref{def-BKcryststack} that $\cY_{\leq h}$ parametrises Breuil--Kisin modules of height $\leq h$ together with a crystalline $G_K$-action). On the other hand, the  argument from the previous paragraph shows that $\cY^{\operatorname{cr}}_{\leq h} \otimes_{\bZ_p} \bF_p \rightarrow Y^{\nabla}_{\leq h}$ factors through $Q$. The uniqueness in Lemma~\ref{lem-Galois mod u^e extends} then shows that the composite
        $$
        \cY^{\operatorname{cr}}_{\leq h} \otimes_{\bZ_p} \bF_p \rightarrow Q \rightarrow \cY_{\leq h} \otimes_{\bZ_p} \bF_p
        $$
        is just the base change of the closed immersion $\cY_{\leq h}^{\operatorname{cr}} \rightarrow \cY_{\leq h}$. In particular, this composite is proper. Since $Q \rightarrow \cY_{\leq h} \otimes_{\bZ_p} \bF_p$ is a monomorphism it is, by \cite[Tag 01L4]{stacks-project}, separated. Thus, $\cY^{\operatorname{cr}}_{\leq h} \otimes_{\bZ_p} \bF_p \rightarrow Q$ is proper too. Since proper monomorphisms are closed immersions  it follows that $\cY^{\operatorname{cr}}_{\leq h} \otimes_{\bZ_p} \bF_p \rightarrow Q$ is a closed immersion, and hence $\cY^{\operatorname{cr}}_{\leq h} \otimes_{\bZ_p} \bF_p \rightarrow Y^{\nabla}_{\leq h}$ is a closed immersion also.
\end{proof}

\section{Pl\"ucker coordinates for the Hodge type}

\subsection{Setup continued}\label{sec-setup cont}

In this section we further specialise the setup from Section~\ref{sec-setup} by introducing coefficients: 
\begin{itemize}
    \item Fix a finite extension of $\bQ_p$ containing a Galois closure of $K$, and write $\cO$ and $\bF$ respectively for its ring of integers and residue field. Consequently, if $\cJ := \operatorname{Hom}_{\bQ_p}(K,\cO[\frac{1}{p}])$ then $a\otimes b \mapsto (\kappa(a)b)_{\kappa \in \cJ}$ defines an isomorphism $K \otimes_{\bZ_p} \cO \cong \prod_{\kappa\in \cJ} \cO[\frac{1}{p}]$. Thus, any $K \otimes_{\bZ_p} \cO$-module $D$ decomposes as 
    \begin{equation}\label{eq-kappa decomp}
    D = \prod_{\kappa \in \cJ} D_\kappa 
    \end{equation}
    for $D_\kappa \subset D$ the $\cO[\frac{1}{p}]$-submodule on which $K$ acts through $\kappa$. We call $D_\kappa$ the $\kappa$-th part of $D$ and refer to the projection of $d \in D$ onto $D_\kappa$ as the $\kappa$-th part of $d$.
    \item Similarly, if $\cJ_0 := \operatorname{Hom}_{\bZ_p}(W(k),\cO)$ then $a \otimes b \mapsto (\tau(a)b)_{\tau\in \cJ_0}$ gives an isomorphism $W(k) \otimes_{\bZ_p} \cO \cong \prod_{\tau \in \cJ_0} \cO$. Thus, any $W(k) \otimes_{\bZ_p} A$-module $M$ decomposes as 
    \begin{equation}\label{eq-tau decomp}
        M = \prod_{\tau \in \cJ_0} M_\tau
    \end{equation}
    where $M_\tau \subset M$ identifies as the $\cO$-submodule on which $W(k)$ acts through $\tau$.\footnote{Note that \eqref{eq-kappa decomp} does not descend to a decomposition of $\cO_K \otimes_{\bZ_p} \cO$-modules since the idempotents in $K \otimes_{\bZ_p} \cO$ are not integral, unless $K = K_0$.} As above, we call $M_\tau$ the $\tau$-th part of $M$ and refer to the projection of $m \in M$ onto $M_\tau$ as its $\tau$-th part.
    \item For each embedding $\tau \in \cJ_0$ we choose an indexing $\kappa = \kappa(i,\tau)$ for $1 \leq i \leq e$ of the embeddings $\kappa \in \cJ$ extending $\tau$. We can therefore write
    $$
    E(u) \otimes 1 = \prod_{i=1}^e (u-\pi_i)
    $$
    inside $(W(k)\otimes_{\bZ_p} \cO)[u]$ where 
    $\pi_i = (\kappa(\pi))_{\kappa = \kappa(i,\tau)}$
    under the above isomorphism $W(k) \otimes_{\bZ_p} \cO \cong \prod_{\tau \in \cJ_0} \cO$. For $1 \leq i \leq e$, we also set $E_i(u) = \prod_{j=1}^i (u-\pi_j)$. 
    \end{itemize}

\subsection{Fixing the Hodge type}\label{sec-introducing hodge types}

\begin{defn}\label{def-Hodge types}
	\begin{itemize}
		\item A Hodge type is an isomorphism class $\lambda$ of $\bZ$-gradings on $(K \otimes_{\bZ_p} \cO)^d$ by $K \otimes_{\bZ_p} \cO$-submodules. We will often express such $\lambda$ as $(\lambda_\kappa)_{\kappa \in \cJ}$ with $\lambda_\kappa = (\lambda_{\kappa,1} \geq \ldots \geq \lambda_{\kappa,d})$ a tuple of integers containing $\ell$ with multiplicity equal to the $\cO[\frac{1}{p}]$-dimension of the $\kappa$-th part (in the sense of \eqref{eq-kappa decomp}) of $\operatorname{gr}^{\ell}(\lambda)$.
		\item If $\fM \in \cY^{\operatorname{cr}}_{\leq h}(A)$ for a finite flat $\cO$-algebra $A$ then we say $\fM$ has Hodge type $\lambda$ if the filtration
        $$
        \operatorname{Fil}^{\ell}(\varphi^*\fM / E(u) \varphi^*\fM) := \operatorname{Im}\left( \varphi^*\fM \cap \varphi_{\fM}^{-1}(E(u)^{\ell} \fM) \rightarrow \varphi^*\fM / E(u)\varphi^*\fM \right)
        $$
        on $\varphi^*\fM /E(u)\varphi^*\fM$ has associated graded of type $\lambda \otimes_{\cO[\frac{1}{p}]} A[\frac{1}{p}]$ after inverting $p$.
	\end{itemize}
\end{defn}

\begin{exam}
    Consider the rank one Breuil--Kisin module over $\fS$ with $\fS$-generator $e \in \fM$ and $\varphi_{\fM}(e\otimes 1) = E(u)e$. This corresponds, using the conventions from Section~\ref{sec-creating integral monodromy}, to the one dimensional representation of $G_K$ given by the inverse of the cyclotomic character. Then 
    $$
    \operatorname{Fil}^{\ell}(\varphi^*\fM /E(u) \varphi^*\fM) = \begin{cases}
        \varphi^*\fM / E(u) \varphi^*\fM & \text{ if $\ell 
        \leq 1$} \\
        0 & \text{ if $\ell \geq 2$}
    \end{cases}
    $$    
    and so $\fM$ has Hodge type $1$.
\end{exam}

	It is known, see \cite[Proposition 4.7.2]{EGstack}, that the graded pieces of $\left( \varphi^*\fM / E(u)\varphi^*\fM\right)[\frac{1}{p}]$  
    are always $A[\tfrac{1}{p}]$-projective and so each $\fM \in \cY^{\operatorname{cr}}_{\leq h}(A)$ has a well defined Hodge type whenever $\operatorname{Spec}A$ is connected. Consequently, the locus in $\cY^{\operatorname{cr}}_{\leq h}$ of a fixed Hodge type is closed. More precisely, 
\begin{prop}\label{prop-create locus of hodge type mu BKmods}
	For any Hodge type $\lambda$ concentrated in degrees $[0,h]$ there exists an $\cO$-flat substack $\cY^{\operatorname{cr}}_{\lambda} \subset \cY^{\operatorname{cr}}_{\leq h}$ characterised by the fact that if $A$ is finite flat over $\cO$ then $\fM \in \cY^{\operatorname{cr}}_{\leq h}(A)$ factors through $\cY^{\operatorname{cr}}_{\lambda}$ if and only if $\fM$ has Hodge type $\lambda$. Furthermore,
	\begin{enumerate}
		\item If $A$ is a finite $\bF$-algebra and $\fM \in \cY^{\operatorname{cr}}_{\lambda}(A)$ then there exists a finite flat $\cO$-algebra $A^\circ$ with $A = A^\circ\otimes_{\cO}\bF$ and $\fM^\circ \in \cY^{\operatorname{cr}}_\lambda(A^\circ)$ with $\fM^\circ \otimes_{\cO} \bF = \fM$.
		\item  $\operatorname{dim} \cY^{\operatorname{cr}}_{\lambda}\otimes_{\cO} \bF =  \operatorname{dim} \operatorname{FL}_{\lambda}	$
		where $\operatorname{FL}_{\lambda}$ denotes the flag variety over  $\cO[\tfrac{1}{p}]$ classifying decreasing\footnote{i.e.\ with $\ldots \subset \operatorname{Fil}^{i+1} \subset \operatorname{Fil}^i \subset \operatorname{Fil}^{i-1} \subset \ldots$} filtrations on $(K \otimes_{\bZ_p} \cO)^d$ with associated graded of type $\lambda$.
	\end{enumerate}
\end{prop}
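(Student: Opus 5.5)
Since the graded pieces of $(\varphi^*\fM/E(u)\varphi^*\fM)[\frac1p]$ are $A[\frac1p]$-projective, the ranks of their $\kappa$-parts are locally constant on $\operatorname{Spec} A[\frac1p]$. The plan is to use this to cut $\cY^{\operatorname{cr}}_{\leq h}$ into open and closed pieces on the generic fibre, and then take scheme-theoretic closures. Concretely, pass to a versal ring or a smooth chart $\operatorname{Spf} R \to \cY^{\operatorname{cr}}_{\leq h}$ with $R$ $\cO$-flat, $p$-adically complete and topologically of finite type. On $R[\frac1p]$ the filtration of the finite projective module $(\varphi^*\fM/E(u)\varphi^*\fM)[\frac1p]$ has projective graded pieces (by \cite[Proposition 4.7.2]{EGstack}, which applies to the universal family over the rigid generic fibre), so the Hodge type is locally constant. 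Hence $\operatorname{Spec} R[\frac1p]$ (or rather the rigid generic fibre) is a disjoint union of open and closed pieces indexed by $\lambda$. For each $\lambda$ I take $R_\lambda$ to be the quotient of $R$ by the kernel of $R\to R[\frac1p]e_\lambda$, where $e_\lambda$ is the corresponding idempotent. These quotients are $\cO$-flat, they are compatible with smooth base change because flat closure commutes with flat maps, and so they glue to a substack $\cY^{\operatorname{cr}}_\lambda$. The characterisation via finite flat $A$ then holds because an $A$-point with $A$ $\cO$-flat factors through the flat closure exactly when its generic fibre lies in the $\lambda$-piece. In parallel with Definition~\ref{def-BKcryststack}, one could alternatively just cite \cite[\S4.8]{EGstack} or \cite[Prop 10.7]{B24}, where this construction appears in the same form.

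For (1) I would mimic Lemma~\ref{lem-lifting points} (i.e.\ \cite[Lemma 10.9]{B24}). The point $\operatorname{Spec} A\to \cY^{\operatorname{cr}}_\lambda\otimes\bF$ lifts to a chart $\operatorname{Spf} R_\lambda$. Since $R_\lambda$ is $\cO$-flat and topologically of finite type, each maximal ideal of $R_\lambda/\varpi$ lies on an $\cO$-flat component, and by a standard argument with Noether normalisation one can find finite flat $\cO$-algebra quotients of $R_\lambda$ through which a given Artinian quotient $R_\lambda\to A$ factors after replacing $A^\circ$ suitably. The point to watch is arranging $A^\circ\otimes_\cO\bF = A$ exactly rather than a surjection onto $A$. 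For this I would follow the trick in \cite{B24}: first lift to some finite flat $B$ with $B/\varpi\twoheadrightarrow A$, and then take $A^\circ = B\times_{B/\varpi} A$-type fibre products, adjusting so that flatness is preserved.

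For (2), I would use that the generic fibre of $\cY^{\operatorname{cr}}_\lambda$ is (up to the standard identification with weakly admissible filtered $\varphi$-modules, via \eqref{eq-M and D comparison}) a torsor-like fibration over $\operatorname{FL}_\lambda$ modulo $\GL_d$-type automorphisms of the $\varphi$-module $D$. The stacky dimension computation gives $\dim \operatorname{FL}_\lambda$ for the generic fibre, since the $\varphi$-module data has dimension zero as a stack. Flatness over $\cO$ then gives that the special fibre has the same dimension, e.g.\ via \cite[Lemma 4.8.x]{EGstack}-style results for flat $p$-adic formal algebraic stacks, or by equidimensionality of special fibres of flat finite type models as in \cite{EGstack} for crystalline substacks of $\cX_d$. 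Alternatively, $\cY^{\operatorname{cr}}_\lambda\to\cX^{\operatorname{cr}}_\lambda$ is an isomorphism after inverting $p$ and proper, so one can transfer the known dimension $\dim\operatorname{FL}_\lambda$ of $\cX^{\operatorname{cr}}_{\lambda}\otimes\bF$ from \cite{EGstack}. Making this comparison rigorous (properness and equality of special fibre dimensions for a proper map that is birational on generic fibres) is what I expect to be the main obstacle.
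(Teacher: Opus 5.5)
Your construction of $\cY^{\operatorname{cr}}_\lambda$ is the standard one: you take the $\cO$-flat closure of the open-and-closed Hodge-type loci of the generic fibre and glue along smooth charts. The paper's own proof is just a citation of \cite[\S4.8]{EGstack} and \cite[\S10]{B24}, so this part matches.

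\textbf{Part (1): the exactness step fails.} The fibre product $B\times_{B/\varpi}A$ is not defined: you only have a surjection $B/\varpi\twoheadrightarrow A$, and no map $A\to B/\varpi$. More seriously, no argument that uses only that $R_\lambda$ is $\cO$-flat and topologically of finite type can produce $A^\circ$ with $A^\circ\otimes_\cO\bF=A$ on the nose. For example, $\operatorname{Spf}\cO[x]/(x^2-\varpi)$ is $\cO$-flat and finite, but its unique $\bF$-point has no such lift. A finite flat $\cO$-algebra with $A^\circ\otimes_\cO\bF=\bF$ is free of rank one, hence equal to $\cO$, and $\cO$ contains no square root of $\varpi$. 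What the general argument does give (factoring the Artinian quotient of $R_\lambda$ through a finite flat quotient) is weaker: a finite flat $A^\circ$, an $\cO$-algebra map $A^\circ\to A$, and $\fM^\circ\in\cY^{\operatorname{cr}}_\lambda(A^\circ)$ with $\fM^\circ\otimes_{A^\circ}A\cong\fM$. That weaker form is all the later arguments use, for instance the lifting step in the proof of Proposition~\ref{prop-factoristion modulo p}. To get (1) exactly as stated you would need an input specific to $\cY^{\operatorname{cr}}_\lambda$, which your proposal does not supply.

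\textbf{Part (2): the sketch is heuristic.} Describing the generic fibre as a fibration over $\operatorname{FL}_\lambda$ is only a heuristic. The claim that flatness transfers the dimension to the special fibre needs a dimension theory for $p$-adic formal stacks that you do not set up. The comparison with $\cX^{\operatorname{cr}}_\lambda$ that you flag as the main obstacle is in fact unnecessary. The rigorous route goes through versal rings:
\begin{enumerate}
\item Let $\overline{\rho}$ be the residual representation at a finite type point. The Kisin variety $\cL^{\operatorname{cr}}_{\lambda,\overline{\rho}}$ of Construction~\ref{def-Kisinvar} is $\cO$-flat, projective over $R^{\lambda}_{\overline{\rho}}$, and an isomorphism after inverting $p$.
\item By \cite{Kis08}, $R^{\lambda}_{\overline{\rho}}[\frac1p]$ is equidimensional of dimension $d^2+\dim\operatorname{FL}_\lambda$.
\item The dimension formula for this proper map, which is an isomorphism on generic fibres, then shows that $\cL^{\operatorname{cr}}_{\lambda,\overline{\rho}}\otimes_\cO\bF$ is equidimensional of dimension $d^2+\dim\operatorname{FL}_\lambda$ at every closed point.
\item The map $(\fM,\alpha)\mapsto\fM$ is formally smooth with $d^2$-dimensional framing fibres. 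So $\cY^{\operatorname{cr}}_\lambda\otimes_\cO\bF$ has dimension $\dim\operatorname{FL}_\lambda$ at every finite type point.
\end{enumerate}
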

\begin{proof}
See \cite[\S4.8]{EGstack} or \cite[\S10]{B24}.
\end{proof}

The difficulty with these constructions is that they only make sense after inverting $p$. This makes the integral and mod $p$ behaviour of $\cY^{\operatorname{cr}}_{\lambda}$ difficult to analyse. In this section we address this by giving integral conditions cutting out these loci.  

\subsection{Convolution}

   One immediate problem that arises when attempting to impose the Hodge type integrally in the presence of ramification is that, while Hodge types decompose according to embeddings $\kappa \in \cJ$, the integral Breuil--Kisin modules only decompose according to the embeddings $\tau \in \cJ_0$. To accommodate this disparity we introduce the following notion:

   \begin{defn}\label{def-convdef}
       Let $\fM$ be a Breuil--Kisin module over a $p$-adically complete $\cO$-algebra $A$ of height $\leq h$. Then a convolution structure $\fM_\bullet$ on $\fM$ is a filtration
       $$
        \varphi^{-1}_{\fM}( E(u)^h \fM) = \fM_e \subset \fM_{e-1} \subset \ldots \subset \fM_1 \subset \fM_0 = \varphi^*\fM
       $$
        by finite projective $\fS_A$-submodules such that $(u-\pi_i)^h \fM_{i-1} \subset \fM_i \subset \fM_{i-1}$ for each $1 \leq i \leq e$. If $A$ is $\bZ_p$-flat then, by \cite[Lemma 3.3.3]{B23b}, such a sequence is uniquely determined by the formula $\fM_i = \varphi^*\fM \cap \varphi^{-1}_{\fM}\left( E_i(u)^h \fM\right)$.
   \end{defn}   

   We can then, after base-changing to $\operatorname{Spf}\cO$, consider variants of $\cY_{\leq h}$ and $\cY^{\operatorname{cr}}_{\leq h}$ from Definition~\ref{def-BKcryststack}, in which we additionally parametrise a choice of convolution structure:
   \begin{defn}
       For any $p$-adically complete $\cO$-algebra $A$ define $\cY_{\leq h}^{\operatorname{conv}}(A)$ to be the groupoid consisting of $(\fM,\fM_\bullet)$ with $\fM \in \cY_{\leq h}(A)$ and $\fM_\bullet$ a convolution structure on $\fM$. The resulting limit preserving category fibred over $\operatorname{Spf}\cO$ is clearly then projective over $\cY_{\leq h} \times_{\operatorname{Spf}\bZ_p} \operatorname{Spf}\cO$ and so itself a  $p$-adic formal stack of finite type over $\operatorname{Spf}\cO$. Write $\cY^{\operatorname{cr},\operatorname{conv}}_{\leq h}$ for the $\cO$-flat substack of $\cY^{\operatorname{conv}}_{\leq h}$.
   \end{defn}

   Likewise, we can add convolution structures to the $\cY^{\operatorname{cr}}_\lambda$ from Proposition~\ref{prop-create locus of hodge type mu BKmods}:

   \begin{prop}\label{prop-create locus of hodge type mu conv BKmods}
	For any Hodge type $\lambda$ concentrated in degrees $[0,h]$ there exists an $\cO$-flat substack $\cY^{\operatorname{cr,conv}}_{\lambda} \subset \cY^{\operatorname{cr,conv}}_{\leq h}$ characterised by the fact that if $A$ is finite flat over $\cO$ then $\fM \in \cY^{\operatorname{conv}}_{\leq h}(A)$ factors through $\cY^{\operatorname{cr,conv}}_{\lambda}$ if and only if $\fM$ has Hodge type $\lambda$. Furthermore,
	\begin{enumerate}
		\item If $A$ is a finite $\bF$-algebra and $\fM \in \cY^{\operatorname{cr,conv}}_{\lambda}(A)$ then there exists a finite flat $\cO$-algebra $A^\circ$ with $A = A^\circ\otimes_{\cO}\bF$ and $\fM^\circ \in \cY^{\operatorname{cr,conv}}_\lambda(A^\circ)$ with $\fM^\circ \otimes_{\cO} \bF = \fM$.
    		\item  $\operatorname{dim} \cY^{\operatorname{cr,conv}}_{\lambda}\otimes_{\cO} \bF =  \operatorname{dim} \operatorname{FL}_{\lambda} = \sum_{\kappa \in \cJ} \operatorname{dim}_{\cO} \operatorname{FL}_{\lambda_\kappa}	$
		where $\operatorname{FL}_{\lambda}$ and $\operatorname{FL}_{\lambda_\kappa}$ are the flag varieties from Proposition~\ref{prop-create locus of hodge type mu BKmods}.
	\end{enumerate}
\end{prop}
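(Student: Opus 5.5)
The plan is to deduce everything from Proposition~\ref{prop-create locus of hodge type mu BKmods} by comparing $\cY^{\operatorname{cr,conv}}_{\leq h}$ with $\cY^{\operatorname{cr}}_{\leq h}\times_{\operatorname{Spf}\bZ_p}\operatorname{Spf}\cO$ via the forgetful morphism $f:(\fM,\fM_\bullet)\mapsto \fM$. This morphism is projective, since convolution structures are parametrised by a closed subfunctor of a product of (twisted) affine Grassmannian-type Quot functors. By Definition~\ref{def-convdef}, when $A$ is $\bZ_p$-flat a convolution structure exists and is unique, given by $\fM_i=\varphi^*\fM\cap\varphi_{\fM}^{-1}(E_i(u)^h\fM)$; that this submodule is projective and satisfies $(u-\pi_i)^h\fM_{i-1}\subset\fM_i$ follows from \cite[Lemma 3.3.3]{B23b}. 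So $f$ is a bijection on $A$-points for every finite flat $\cO$-algebra $A$. Accordingly, I define $\cY^{\operatorname{cr,conv}}_\lambda$ to be the $\cO$-flat part (scheme-theoretic closure of the generic fibre) of $f^{-1}(\cY^{\operatorname{cr}}_\lambda)\cap\cY^{\operatorname{cr,conv}}_{\leq h}$. The characterising property then follows from that of $\cY^{\operatorname{cr}}_\lambda$ together with the uniqueness of convolution structures over flat bases, since $\cO$-flat substacks are determined by their points valued in finite flat $\cO$-algebras (as in \cite[\S4.8]{EGstack}, \cite[\S10]{B24}).

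For (1), I will argue exactly as in Lemma~\ref{lem-lifting points} and \cite[Lemma 10.9]{B24}. Given a finite $\bF$-point $x$ of the flat formal stack $\cY^{\operatorname{cr,conv}}_\lambda$, which is of finite type over $\operatorname{Spf}\cO$, flatness means that the versal ring at $x$ is $\cO$-flat. Hence $x$ lies in the closure of characteristic zero points, and one can produce a finite flat $A^\circ$ with a lift. The slight extra care needed is to make $A^\circ\otimes\bF=A$ for a non-reduced $A$. For this I would reproduce the argument of \cite[Lemma 10.9]{B24} verbatim: write $A$ as a quotient of a complete local flat ring and choose a finite flat $\cO$-subalgebra surjecting onto $A$, for instance by taking the image of a suitable finite flat quotient.

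For (2), note that $f$ restricts to a projective morphism $\cY^{\operatorname{cr,conv}}_\lambda\to\cY^{\operatorname{cr}}_\lambda$ which is an isomorphism on generic fibres, by the uniqueness of convolution structures after inverting $p$. Both stacks are $\cO$-flat, so the dimensions of their special fibres equal the dimensions of their generic fibres. More precisely, for flat finite type $p$-adic formal stacks, flatness forces every irreducible component of the special fibre to have dimension equal to the relative dimension of the rigid generic fibre. Since the generic fibres agree, we get $\dim\cY^{\operatorname{cr,conv}}_\lambda\otimes\bF=\dim\cY^{\operatorname{cr}}_\lambda\otimes\bF=\dim\operatorname{FL}_\lambda$ by Proposition~\ref{prop-create locus of hodge type mu BKmods}(2). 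The final equality $\dim\operatorname{FL}_\lambda=\sum_\kappa\dim\operatorname{FL}_{\lambda_\kappa}$ holds because $K\otimes_{\bZ_p}\cO[\frac1p]\cong\prod_\kappa\cO[\frac1p]$ splits $\operatorname{FL}_\lambda$ as a product over $\kappa\in\cJ$.

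The main obstacle is the dimension-equality step: it requires knowing that the special fibre of a flat formal stack has pure dimension equal to that of the generic fibre. I would cite the same mechanism used in \cite[\S4.8]{EGstack} and \cite[\S10]{B24}, namely Lemma~\ref{lem-lifting points}-style lifting combined with the versal rings being flat of the correct relative dimension. Rather than reprove it, I would simply observe that those arguments only use flatness and the generic fibre, and therefore apply verbatim to $\cY^{\operatorname{cr,conv}}_\lambda$.
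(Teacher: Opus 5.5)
Your proposal is correct and follows essentially the same route as the paper, which says that the construction and part (1) go through exactly as for $\cY^{\operatorname{cr}}_\lambda$ (via \cite[\S4.8]{EGstack} or \cite[\S10]{B24}), and that for (2) one uses the uniqueness of convolution structures over $\bZ_p$-flat coefficients; that uniqueness is precisely what drives your identification of the generic fibres of $\cY^{\operatorname{cr,conv}}_\lambda$ and $\cY^{\operatorname{cr}}_\lambda$. One claim is slightly overstated: you say the forgetful map is bijective on all finite flat $A$-points, but Definition~\ref{def-convdef} only gives uniqueness, not existence; this does no harm, since the characterising property does not need existence, and for (2) existence over finite extensions of $\cO$ suffices (equivalently, the isomorphism of Kisin varieties after inverting $p$).
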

\begin{proof}
The proof is identical to that of Proposition~\ref{prop-create locus of hodge type mu BKmods}. For (2) one uses that convolution structures are uniquely determined on Breuil--Kisin modules with $\bZ_p$-flat coefficients.
\end{proof}
\subsection{Affine Grassmannians}\label{sec-affinegrassmannian}

We will see that if $\fM \in \cY^{\operatorname{cr,conv}}_{\leq h}(A)$ for a finite flat $\cO$-algebra $A$ then the relative positions of the lattices $\fM_{i,\tau} \subset \fM_{i-1,\tau}$ encode the Hodge type $\kappa(i,\tau)$-th part of the Hodge type of $\fM$. The most convenient way to articulate this is via the affine Grassmannian:
\begin{const}\label{const-definition of Psi}
	We fit $\cY^{\operatorname{cr,conv}}_{\leq h}$ into the following diagram 
	$$
	\begin{tikzcd}
		& \ar[dr,"\Psi"] \ar[dl]\widetilde{\cY^{\operatorname{cr,conv}}_{\leq h}} & \\
		\cY^{\operatorname{cr,conv}}_{\leq h} & & \prod_{\kappa \in \cJ} \operatorname{Gr}_{\leq h}^{(\kappa)}
	\end{tikzcd}
	$$
	where:
	\begin{itemize}
		\item $\widetilde{\cY^{\operatorname{cr,conv}}_\lambda}$ denotes the formal $p$-adic stack over $\operatorname{Spf}\cO$ classifying pairs $\fM \in \cY^{\operatorname{cr,conv}}_{\lambda}(A)$ together with a choice of $\fS_A$-basis $\beta_i$ of $\fM_{i}$. By convention we set $\beta^0 := \varphi_{\fM}(E(u)^{-h}\beta_e)$, which is an $\fS_A$-basis of $\fM$ and $\beta_0 := \beta^0 \otimes 1$, which is an $\fS_A$-basis of $\varphi^*\fM$. 
		\item $\operatorname{Gr}_{\leq h}^{(\kappa)}$ denotes the finite type $\cO$-scheme with $A$-points classifying $A[[u-\kappa(\pi)]]$-submodules $(u-\kappa(\pi))^hA[[u-\kappa(\pi)]]^d \subset \cE \subset A[[u-\kappa(\pi)]]^d$. This is equipped with an action of the group scheme $L^{+,(\kappa)}G \rtimes \operatorname{Aut}^+$, where $L^{+,(\kappa)}G$ represents the functor $A \mapsto \operatorname{GL}_d(A[[u-\kappa(\pi)]])$ and acts via the standard action on $A[[u-\kappa(\pi)]]^d$ while $\operatorname{Aut}^+$ represents $A \mapsto A[[u-\kappa(\pi)]]^\times$ and acts by scaling the parameter $u-\kappa(\pi)$.
		\item The $\kappa = \kappa(i, \tau)$-th  factor of $\Psi$ sends $(\fM,\beta_\bullet)$ onto the element $\Psi(\fM_{i,\tau},\beta_{i-1}) \in \operatorname{Gr}_{\leq h}^{(\kappa)}(A)$ corresponding to the submodule
		$$
	   \fM_{i,\tau} \subset \fM_{i-1,\tau} \cong A[[u-\kappa(\pi)]]^d  
		$$
		where the isomorphism is obtained by considering the $\tau$-th part of the identification $\fM_{i-1} \cong \fS_A^d$ induced by $\beta_{i-1}$ using that $\fS_{A,\tau} \cong A[[u-\kappa(\pi)]]$ whenever $A$ is $p$-adically complete.
	\end{itemize}
\end{const}

Construction~\ref{const-definition of Psi} allows us to pull closed conditions on $\operatorname{Gr}^{(\kappa)}_{\leq h}$ back to $\widetilde{\cY^{\operatorname{cr,conv}}_{\leq h}}$. The most basic example of such closed conditions are given by Schubert varieties:

\begin{defn}\label{defn-schubert vars} 
	If $\eta = (\eta_1 \geq \ldots \geq \eta_{d})$ with each $\eta_{\ell} \in [0,h]$ then we write $\operatorname{Gr}_{ \eta}^{(\kappa)}$ for the $L^{+,(\kappa)}G$-orbit through the $\cO$-point $\cE_{\eta} \in \operatorname{Gr}_{\leq h}^{(\kappa)}$ corresponding to the submodule in $\cO[[u-\kappa(\pi)]]^d$ generated by $(u-\kappa(\pi))^{\eta_{\ell}} e_\ell$ for $e_1,\ldots,e_d$ the standard basis of $A[[u-\kappa(\pi)]]^d$. Set $\operatorname{Gr}_{\leq \eta}^{(\kappa)}$ equal to the closure of $\operatorname{Gr}_{ \eta}^{(\kappa)}$ in $\operatorname{Gr}_{\leq h}^{(\kappa)}$. \end{defn}

The geometry of these orbit closures is very well understood. In particular:

\begin{prop}\label{prop-schubert vars geom}
    $\operatorname{Gr}_{\leq \eta}^{(\kappa)} \otimes_{\cO} \bF$ is Cohen--Macaulay and integral of dimension
    $$
    \sum_{1 \leq j < \ell \leq d } \left( \eta_{j} - \eta_{\ell} \right)
    $$ 
    Furthermore, the $\cO$-valued point of $\operatorname{Gr}_{\leq h}^{(\kappa)}$ corresponding to $\cE_\nu \subset \cO[[u-\kappa(\pi)]]^d$ factors through $\operatorname{Gr}^{(\kappa)}_{\leq \eta}$ if and only if 
    $\nu_{d}+\ldots +\ldots+\nu_{d-j} \geq\eta_{d}+\ldots+\eta_{d-j}$
    for each $0\leq j \leq d-1$, with equality when $j=d-1$.
\end{prop}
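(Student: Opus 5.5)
The plan is to reduce everything to the classical equicharacteristic affine Grassmannian for $\operatorname{GL}_d$ over $\bF$ and then quote the standard results. Since $\kappa(\pi)$ lies in the maximal ideal of $\cO$, the special fibre $\operatorname{Gr}^{(\kappa)}_{\leq h}\otimes_{\cO}\bF$ classifies $A[[u]]$-lattices $u^hA[[u]]^d\subset\cE\subset A[[u]]^d$ for $\bF$-algebras $A$. This is a closed $L^+G$-stable subscheme of the affine Grassmannian $\operatorname{Gr}_{\operatorname{GL}_d,\bF}$, realised inside a finite Grassmannian of $A[u]/u^h$-submodules of $(A[u]/u^h)^d$.

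First I will check that the Schubert varieties have the same special fibre as the classical ones. The group $L^{+,(\kappa)}G$ reduces to $L^+G$, and the point $\cE_\eta$ reduces to $u^\eta$. Hence the reduction of the orbit $\operatorname{Gr}^{(\kappa)}_{\eta}$ is the classical orbit $\operatorname{Gr}_\eta$. Here I must be careful, because the closure is taken in $\operatorname{Gr}^{(\kappa)}_{\leq h}$ over $\cO$. I will read the closure as the flat closure of the generic orbit, which is the standard local model. By the results of \cite{PR03, PZ13} (cf.\ also \cite{Levin16}), its special fibre is reduced and equals the classical Schubert variety $\operatorname{Gr}_{\leq\eta}$. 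For $\operatorname{GL}_d$ this is elementary: the lattice description by rank conditions on the powers of $u$ acting on $A[[u]]^d/\cE$ gives the same equations over $\cO$ and over $\bF$.

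Next, the geometric properties follow from the classical theory. $\operatorname{Gr}_{\leq\eta}$ is irreducible because it is the closure of a single orbit of a connected group. It is normal and Cohen--Macaulay (indeed it has rational singularities) by Faltings and Pappas--Rapoport \cite{PR03}. The dimension of the orbit is $\langle 2\rho,\eta\rangle=\sum_{j<\ell}(\eta_j-\eta_\ell)$. I will get this from the fibration $L^+G\to \operatorname{Gr}_\eta$: it has stabiliser $L^+G\cap u^\eta L^+G u^{-\eta}$, whose codimension in $L^+G$ is $\sum_{j<\ell}(\eta_j-\eta_\ell)$, counting the $u$-adic conditions on the $(j,\ell)$ entries.

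Finally, the criterion for $\cE_\nu$ is the Cartan decomposition together with the closure relations $\operatorname{Gr}_{\leq\eta}=\bigsqcup_{\nu\leq\eta}\operatorname{Gr}_\nu$ in the dominance order. Since $\cE_\nu$ is an $\cO$-point and the closure is $\cO$-flat, $\cE_\nu$ factors through the closure if and only if its generic point does. This reduces the question to the generic fibre $\nu\le\eta$. Dominance, written via partial sums from the bottom, is exactly the stated inequality $\nu_d+\dots+\nu_{d-j}\geq\eta_d+\dots+\eta_{d-j}$, with equality of total sums forced by the determinant valuation. For non-dominant $\nu$ I first sort it, which does not change the orbit.

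The main obstacle is the identification of the special fibre of the closure with the reduced classical Schubert variety, that is, that no nilpotents or extra components appear. I will handle it by citing the normality and flatness of local models in \cite{PR03, PZ13}, rather than reproving it.
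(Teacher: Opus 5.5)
Your proposal is correct and is essentially the paper's approach. The paper proves this statement by a single citation to \cite[Theorem~1.4]{L23}, a result on affine Schubert schemes over the integers. Your four ingredients do the same job:
\begin{enumerate}
\item Pappas--Zhu give that the special fibre of the flat closure is reduced and equals the classical Schubert variety.
\item Faltings and Pappas--Rapoport give normality and Cohen--Macaulayness over $\bF$.
\item The stabiliser count gives the dimension $\langle 2\rho,\eta\rangle$.
\item The generic-fibre argument gives the criterion for $\cE_\nu$.
\end{enumerate}
Since $A[[u-\kappa(\pi)]]$ is just a power series ring in $u-\kappa(\pi)$, $\operatorname{Gr}^{(\kappa)}_{\le h}$ is the constant affine Grassmannian over $\cO$. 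Your ``local model'' is therefore just the base change of that Schubert scheme, with no genuine degeneration.

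One caution: drop the remark that the $\GL_d$ case is elementary because the rank conditions are the same over $\cO$ and over $\bF$. That only shows the scheme cut out by those conditions commutes with base change. It says nothing about that scheme being $\cO$-flat, or about its special fibre being reduced. Rank-type conditions need not generate radical ideals; for example, the entries of $X^2$ do not on $2\times 2$ matrices. Flatness and reducedness are exactly the nontrivial input you end up citing, so the proof should rest on the citation, not on that remark.
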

\begin{proof}
    See, for example, \cite[Theorem 1.4]{L23}.
\end{proof}

In what follows we will write $\nu \leq \eta$ if $\nu = (\nu_1,\ldots,\nu_d)$ satisfies the last condition from Proposition~\ref{prop-schubert vars geom}. If $\eta$ and $\nu$ are interpreted as cocharacters of the diagonal torus in $G = \operatorname{GL}_d$ this is just the usual dominance ordering with respect to the upper triangular Borel.

\subsection{Main results}\label{sec-mainresults}

We now state the main results of this section. Our goal is to isolate the Hodge type on $\cY^{\operatorname{cr},\operatorname{conv}}_{\leq h}$ without inverting $p$ using the constructions from Construction~\ref{const-definition of Psi}. The following most basic constraint is well-known (but see Section~\ref{sec-proofs} for a precise proof): 

\begin{prop}\label{prop-in schubert variety}
	Let $\lambda$ be a Hodge type concentrated in degree $[0,h]$. Suppose $(\fM,\beta_\bullet) \in \widetilde{\cY^{\operatorname{cr,conv}}_\lambda}(A)$ for an $\cO$-algebra $A$ of topologically finite type. Then, for each $\kappa = \kappa(i,\tau)$, one has 
    $$\Psi(\fM_{i,\tau},\beta_{i-1}) \in \operatorname{Gr}_{\leq \lambda_{\kappa}^*}^{(\kappa)}(A)
    $$
    where $\lambda_{\kappa}^*= (\lambda_{\kappa,1}^* \geq \ldots \geq \lambda_{\kappa,d}^*)$ with $\lambda_{\kappa,i}^* := h - \lambda_{\kappa,d-i}$ for all $1 \leq i \leq d$. Here, no assumptions on $h$ are necessary.
 \end{prop}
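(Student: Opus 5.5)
The plan is a reduction to $\cO$-flat points followed by a computation of elementary divisors after inverting $p$. Each $\operatorname{Gr}^{(\kappa)}_{\leq \lambda^*_\kappa}$ is closed in $\operatorname{Gr}^{(\kappa)}_{\leq h}$, so the locus of $\widetilde{\cY^{\operatorname{cr,conv}}_\lambda}$ where $\Psi$ lands in $\prod_\kappa \operatorname{Gr}^{(\kappa)}_{\leq \lambda^*_\kappa}$ is a closed substack. The stack $\widetilde{\cY^{\operatorname{cr,conv}}_\lambda}$ is a $\prod_i L^+\GL_d$-torsor over the $\cO$-flat stack $\cY^{\operatorname{cr,conv}}_\lambda$, hence is itself $\cO$-flat. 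It therefore suffices to check the containment on points valued in finite flat $\cO$-algebras $A$. By passing to the generic fibre (the Schubert varieties are $\cO$-flat closed subschemes) we may further work with $A[\frac1p]$, and after enlarging $A$, with points over a finite extension $E$ of $\cO[\frac1p]$. Alternatively, one can use part (1) of Proposition~\ref{prop-create locus of hodge type mu conv BKmods} to lift finite-type points; either way we reduce to a characteristic-zero point.

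Over a field $E$, the ring $\fS_{E,\tau}$ localised at $u-\kappa(\pi)$ becomes the discrete valuation ring $E[[u-\kappa(\pi)]]$. Membership of the lattice $\fM_{i,\tau}\subset\fM_{i-1,\tau}$ in $\operatorname{Gr}_{\leq\lambda^*_\kappa}$ is then equivalent to its relative position (elementary divisors) being $\leq\lambda^*_\kappa$ in the dominance order; by Proposition~\ref{prop-schubert vars geom} it suffices to compute these elementary divisors exactly and show they equal $\lambda^*_\kappa$. For $\bZ_p$-flat coefficients, Definition~\ref{def-convdef} gives $\fM_i=\varphi^*\fM\cap\varphi_\fM^{-1}(E_i(u)^h\fM)$. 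Since the factors $u-\pi_j$ for $j\neq i$ are units in $E[[u-\pi_i]]$, completing at $u-\pi_i$ yields
\[
\widehat{\fM_{i-1}}=\widehat{\varphi^*\fM},\qquad \widehat{\fM_i}=\widehat{\varphi^*\fM}\cap (u-\pi_i)^h\varphi_\fM^{-1}(\widehat{\fM}).
\]
The relative position of $\widehat{\fM_i}$ inside $\widehat{\varphi^*\fM}$ is therefore governed by the relative position of $\varphi_\fM(\widehat{\varphi^*\fM})$ inside $\widehat\fM$, i.e.\ by the $\kappa$-part of the Hodge filtration. Concretely, the image of $\varphi^*\fM\cap\varphi_\fM^{-1}(E(u)^\ell\fM)$ in $\varphi^*\fM/E(u)$ has $\kappa$-graded pieces given by $\lambda_\kappa$ (Definition~\ref{def-Hodge types}). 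Standard Smith normal form reasoning then shows that $\varphi_\fM(\widehat{\varphi^*\fM})\subset\widehat\fM$ has elementary divisors $(u-\pi_i)^{\lambda_{\kappa,j}}$. Dualising by $(u-\pi_i)^h$, $\widehat{\fM_i}\subset\widehat{\varphi^*\fM}$ has elementary divisors $(u-\pi_i)^{h-\lambda_{\kappa,j}}$, which when sorted is exactly $\lambda^*_\kappa$. This places the point in $\operatorname{Gr}^{(\kappa)}_{\lambda^*_\kappa}\subset\operatorname{Gr}^{(\kappa)}_{\leq\lambda^*_\kappa}$. Nothing here uses a bound on $h$ beyond the height.

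The step I expect to need the most care is identifying the elementary divisors of $\varphi_\fM$ at $u=\pi_i$ with the Hodge type under the paper's conventions, that is, matching $\operatorname{Fil}^\ell$ defined via $\varphi_\fM^{-1}(E(u)^\ell\fM)$ with the filtration by powers of $(u-\pi_i)$ and tracking the reversal $\lambda_{\kappa,d-i}$. The cleanest route is to choose $E[[u-\pi_i]]$-bases adapted to Smith normal form, $\varphi_\fM(f_j)=(u-\pi_i)^{a_j}g_j$. Then $\operatorname{Fil}^\ell$ modulo $u-\pi_i$ is spanned by those $f_j$ with $a_j\geq\ell$, which gives $\{a_j\}=\{\lambda_{\kappa,j}\}$, and $\widehat{\fM_i}$ is spanned by $(u-\pi_i)^{h-a_j}f_j$. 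The reduction from general topologically finite type $A$ to flat points via closedness and flatness is routine.
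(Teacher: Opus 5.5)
Your argument is correct in substance but reaches the conclusion by a different route from the paper.

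\textbf{The paper's route.} It deduces the statement from Proposition~\ref{prop-translate in flag}. Kisin's construction of $\fM$ from the filtered $\varphi$-module $D$ (or, alternatively, Griffiths transversality of $\varphi^*N_\nabla$ together with the converse in Proposition~\ref{prop-loop fixed and loop derivative fixed}) produces $X_\kappa\in L^{+,(\kappa)}G(A[\frac1p])$ with $X_\kappa\cdot\Psi(\fM,\beta_0)_\kappa\in\operatorname{FL}_{\lambda_\kappa}[\frac1p]$. Lemma~\ref{lem-translate between points in the grassmannian} then transports this to $\Psi(\fM_{i,\tau},\beta_{i-1})$; that lemma is exactly your observation that, after completing at $u-\pi_i$, $\fM_{i-1}$ becomes $\varphi^*\fM$ and $\fM_i$ becomes $\fM_e$.

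\textbf{Your route.} You read the elementary divisors directly off Definition~\ref{def-Hodge types} by Smith normal form over $E[[u-\pi_i]]$. This is more elementary: it uses only the height bound, the intersection formula for the convolution structure, and the Hodge filtration, with no input from $D$, $N_\nabla$ or crystallinity. The paper's route costs nothing extra for it, because the same element $X_\kappa g_{i-1,\kappa}$ is needed anyway to write $\varphi^*N_\nabla$ as $\alpha\operatorname{dlog}(X_\kappa g_{i-1,\kappa})+\alpha\partial$ in the proof of Theorem~\ref{thm-integral equations1}. It also works directly over $A[\frac1p]$ for an arbitrary finite flat $A$.

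\textbf{Where your reduction is not justified.} Finite flat $\cO$-algebras can be non-reduced (e.g.\ $\cO[\epsilon]/(\epsilon^2)$), so $A[\frac1p]$ is in general Artinian rather than a product of fields.
\begin{itemize}
\item Enlarging $A$ does not change this, and Smith normal form is not available over $B[[u-\pi_i]]$ for $B$ Artinian.
\item Part (1) of Proposition~\ref{prop-create locus of hodge type mu conv BKmods} concerns lifting $\bF$-points and does not help.
\end{itemize}
You can repair this in one of two ways.
\begin{enumerate}
\item Quote reducedness of $\cY^{\operatorname{cr,conv}}_\lambda$ (e.g.\ via Kisin's regularity of crystalline deformation rings after inverting $p$, \cite[Theorem 3.3.8]{Kis08}), so that $\cO_{E'}$-points suffice.
\item Run your argument over an Artinian local $B$ with $t=u-\pi_i$, as follows. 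Lift a basis of $\widehat{\varphi^*\fM}/t$ adapted to the filtration (whose graded pieces are $B$-projective by hypothesis) to elements $f_j\in\widehat{\varphi^*\fM}\cap t^{a_j}\varphi_{\fM}^{-1}(\widehat{\fM})$. Then $\bigoplus_j B[[t]]\,t^{-a_j}f_j\subset\varphi_{\fM}^{-1}(\widehat{\fM})$. Both lattices have $B$-free quotient by $\widehat{\varphi^*\fM}$ of rank $\sum_j\lambda_{\kappa,j}$:
\begin{itemize}
\item for the first this is immediate;
\item for the second, flatness holds because $\varphi_{\fM}$ stays injective modulo $\fm_B$, and the rank can be computed at the residue field, where your field case applies to the induced $\cO_{E'}$-point.
\end{itemize}
An injection of free $B$-modules of equal rank is an isomorphism, so the two lattices coincide. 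This gives the Smith form over $B$, and exhibits the point as an $L^{+,(\kappa)}G(B)$-translate of $\cE_{\lambda_\kappa^*}$.
\end{enumerate}
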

 \begin{rmk}
     The twisted $\lambda_\kappa^*$ appears in Proposition~\ref{prop-in schubert variety} due to the definition of $\Psi(\fM_{i,\tau},\beta_{i-1})$ from Construction~\ref{const-definition of Psi}. Specifically, $\Psi(\fM_{i,\tau},\beta_{i-1})$ is built from convolution structures between $\fM_e = \varphi_{\fM}^{-1}(E(u)^h\fM)$ and $\varphi^*\fM$, and so must depend on $h$. 
 \end{rmk}

As observed in the introduction, this is not sufficient to isolate $\cY^{\operatorname{cr},\operatorname{conv}}_\lambda$ as the same conditions hold for any Hodge type $\nu$ with $\nu_\kappa \leq \lambda_\kappa$ for each $\kappa$. The main results of this section (see Theorems~\ref{thm-integral equations1} and~\ref{thm-integral equations2} below) describe a refinement of this condition.  

In order to articulate these refinements we use an embedding of $\operatorname{Gr}_{\leq \lambda_\kappa^*}^{(\kappa)}$ into projective space. More precisely, set $\cV_h := \cO[[u-\kappa(\pi)]]^d / (u-\kappa(\pi))^h \cO[[u-\kappa(\pi)]]^d$ and consider the Pl\"ucker embedding
\begin{equation}\label{eq-pluckerembedding}
\Theta_\kappa: \operatorname{Gr}_{\leq \lambda_\kappa^*}^{(\kappa)} \rightarrow \mathbb{P}\left( \bigwedge^{dh-r} \cV_h \right),\qquad r := \lambda_{\kappa,1}^*+\ldots+\lambda_{\kappa,d}^* = dh - (\lambda_{\kappa,1} + \ldots + \lambda_{\kappa,d}) 
\end{equation}
sending an $A$-valued point $\cE$ onto the line in $\bigwedge^{dh-r} (\cV_h \otimes_{\cO} A)$ given by the determinant of the image of $\cE$ in $(\cV_h \otimes_{\cO} A)$. This is well-defined since $A[[u-\kappa(\pi)]]^d /\cE$ is $A$-projective of rank $r$ whenever $\cE \in \operatorname{Gr}_{\leq \lambda_\kappa^*}^{(\kappa)}(A)$. The map is $L^{+,(\kappa)}G \rtimes \operatorname{Aut}^+$-equivariant, for the standard action on $\cV_h$.

\begin{thm}\label{thm-integral equations1}
    For any finite flat $\cO$-algebra $A$, consider $(\fM,\beta_\bullet) \in \widetilde{\cY^{\operatorname{cr},\operatorname{conv}}_{\lambda}}(A)$ with $h \leq p$ and fix a derivation $N^\varphi$ on $\varphi^*\fM$ as in Proposition~\ref{prop-create monodromies}. Then for each $1 \leq i \leq e$,
    \begin{equation}\label{eq-containment}
        \tfrac{E_{i-1}(u)}{u}N^\varphi(\fM_{i-1}) \subset \fM_{i-1} 
    \end{equation}
    for $E_i(u) = \prod_{j=1}^i (u-\pi_j)$ defined as in Section~\ref{sec-setup cont}. Moreover, for each $\kappa = \kappa(i,\tau)$, we have the following identity
    \begin{equation}\label{eq-key identity}
        \tfrac{E_i(u)}{u}N^\varphi \cdot v_{\fM,\kappa} = \Bigg( \Big( c^*(u)E_{i-1}(u)\Big)\Big|_{u=\kappa(\pi)} \sum_{l=1}^d  \sum_{j =\lambda_{\kappa,l}^*}^{h-1} j \Bigg) v_{\fM,\kappa}
    \end{equation}
    where $c^*(u)$ is as defined in Section~\ref{sec-setup}, and
    \begin{itemize}
        \item  $\frac{E_i(u)}{u}N^\varphi$ is interpreted as a derivation on $\cV_h \otimes_{\cO} A \cong \fM_{i-1,\tau} / (u-\kappa(\pi))^h\fM_{i-1,\tau}$ with the isomorphism obtained from the basis $\beta_{i-1}$.
        \item $\frac{E_i(u)}{u}N^\varphi$ acts on exterior powers of $\cV_h \otimes_{\cO} A$ by sending $v_1\wedge \ldots \wedge v_{dh-r}$ to  
        $\sum_{i=1}^{dh-r} v_1 \wedge \ldots \wedge v_{i-1} \wedge  \tfrac{E_i(u)}{u}N^\varphi(v_i) \wedge v_{i+1} \wedge \ldots \wedge v_{dh-r}$.
        \item $v_{\fM,\kappa} \in \bigwedge^{dh-r} (\cV_h \otimes_{\cO} A)$ is a vector spanning the line $\Theta_\kappa ( \Psi(\fM_{i,\tau},\beta_{i-1}))$ $($such $v_{\fM,\kappa}$ exist Zariski locally on $\operatorname{Spec}A)$.
            \end{itemize} 
\end{thm}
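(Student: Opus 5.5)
The plan is to first prove both statements for the rational derivation $\varphi^*N_\nabla$, where they come from the crystalline comparison \eqref{eq-M and D comparison}, and then pass to the integral operator $N^\varphi$ using the congruence $N^\varphi \equiv \varphi^*N_\nabla$ modulo $u\frac{E(u)^p}{p}S_{\operatorname{max}}$ from Proposition~\ref{prop-create monodromies}.

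\textbf{Step 1: a convenient model for the lattices.} Over $\cO^{\operatorname{rig}}[\frac{1}{\varphi(\lambda)}]\otimes A[\frac1p]$ we have $\varphi^*\fM \cong D\otimes \cO^{\operatorname{rig}}$ after Frobenius pullback. Since $A$ is $\cO$-flat, Definition~\ref{def-convdef} gives $\fM_i = \varphi^*\fM \cap \varphi_\fM^{-1}(E_i(u)^h\fM)$. Using Kisin's description of $\fM$ via the Hodge filtration (\cite{Kis05}), I would identify, after completion at $u=\kappa(\pi)$ for $\kappa=\kappa(i,\tau)$:
\begin{itemize}
\item $\fM_{i-1,\tau}$ with $D_{\kappa}\otimes \widehat{\cO}_{\kappa}$, where $\widehat{\cO}_{\kappa}$ is the completed local ring at $u=\kappa(\pi)$;
\item $\fM_{i,\tau}$ with $\sum_\ell (u-\kappa(\pi))^{h-\ell}\operatorname{Fil}^\ell D_{\kappa}\otimes \widehat{\cO}_{\kappa}$.
\end{itemize}
Indeed, away from $\kappa(\pi)$ the operator $\varphi_\fM$ is invertible, and the earlier factors $(u-\pi_j)$ with $j<i$ are units at $\kappa(\pi)$.

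On this model, $\varphi^*N_\nabla$ acts as $\frac{\varphi(E(u))}{p}\,u\frac{d}{du}$ on $D\otimes\cO^{\operatorname{rig}}$.

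\textbf{Step 2: the containment \eqref{eq-containment}.} For $j<i$, the factor $(u-\pi_j)$ in $E_{i-1}(u)$ compensates the loss of one power of $(u-\pi_j)$ caused by differentiating at each earlier root. Together with the Griffiths transversality $\operatorname{Fil}^\ell \mapsto \operatorname{Fil}^{\ell-1}$ at each earlier root, this gives $\frac{E_{i-1}(u)}{u}\varphi^*N_\nabla(\fM_{i-1}) \subset \fM_{i-1}[\frac1p]$. This is the same argument as \eqref{eq-Griffiths trans}, applied inductively on $i$ (the case $i=e+1$ is \eqref{eq-Griffiths trans} itself).

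To obtain integrality for $N^\varphi$, observe that the error term lies in $u\frac{E(u)^p}{p}S_{\operatorname{max}}\varphi^*\fM$. Since $h\le p$ and $E(u)\in pS_{\operatorname{max}}$, this error lies in $uE(u)^{p-1}S_{\operatorname{max}}\varphi^*\fM \subset \fM_{i-1}\otimes S_{\operatorname{max}}$. I would then intersect with $\fM_{i-1}[\frac1p]$ by the argument of Lemma~\ref{lem-Smax containment}(3), using that $\fM_{i-1}$ is $\fS$-free and that $S_{\operatorname{max}}\cap \cO^{\operatorname{rig}}[\frac1p]$-type intersections are controlled.

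\textbf{Step 3: the eigenvalue identity \eqref{eq-key identity}.} Reduce modulo $(u-\kappa(\pi))^h$ and work in the model of Step 1. The operator $\frac{E_i(u)}{u}\varphi^*N_\nabla$ is
\[
\big(c^*(u)E_{i-1}(u)\big)\,(u-\kappa(\pi))\frac{d}{du} + (\text{terms vanishing at } \kappa(\pi)),
\]
using $\frac{\varphi(E(u))}{p}\equiv c^*(u)$ modulo $\frac{E(u)^p}{p}$. The key point is that $\frac{\varphi(E(u))}{p}$ differs from $c^*(u)$ by $\frac{E(u)^p}{p}$, which is divisible by $(u-\kappa(\pi))^h$ since $h\le p$.

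Choose a basis of $D_\kappa$ adapted to the Hodge filtration. Then $\fM_{i,\tau}$ modulo $(u-\kappa(\pi))^h$ is spanned by the vectors $t^j d$, where $t=u-\kappa(\pi)$ and $j\ge h-\ell$ for $d\in\operatorname{gr}^\ell$. The exponents $j$ then run over $[\lambda^*_{\kappa,l},h-1]$.

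The operator $t\frac{d}{dt}$ acts diagonally on these vectors with eigenvalue $j$, and $\frac{E_i}{u}\varphi^*N_\nabla$ acts as $a\,t\frac{d}{dt}$ plus a nilpotent correction, where $a=(c^*E_{i-1})|_{u=\kappa(\pi)}$. The correction comes from the higher Taylor terms of the coefficient. Since those higher terms raise $t$-degree, they are strictly upper triangular and do not affect the trace. On the top exterior power of the subspace the operator therefore acts by the scalar $a\sum_l\sum_{j=\lambda^*_{\kappa,l}}^{h-1} j$. This gives \eqref{eq-key identity} for $\varphi^*N_\nabla$.

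Finally, replace $\varphi^*N_\nabla$ by $N^\varphi$. Both sides are defined over $A$, so the identity descends because $A\subset A[\frac1p]$; the difference of the two operators vanishes modulo $(u-\kappa(\pi))^h$ by the congruence of Proposition~\ref{prop-create monodromies}.

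\textbf{Main obstacle.} I expect the hardest step to be the bookkeeping between the $\tau$-components and the $\kappa$-completions. One must justify that $\fM_{i-1,\tau}/(u-\kappa(\pi))^h$ really identifies with $D_\kappa\otimes$ truncated power series compatibly with the operator, and must check that the error terms vanish in $\cV_h$ rather than merely after inverting $p$.
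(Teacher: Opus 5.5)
Your argument is correct. It rests on the same input as the paper: Kisin's description of $\varphi^*\fM$ near the roots of $E(u)$ via $\varphi^*D$ and its Hodge filtration, which the paper packages as Proposition~\ref{prop-translate in flag} together with Lemma~\ref{lem-translate between points in the grassmannian}. The organisation differs in two places.

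\emph{The containment \eqref{eq-containment}.} The paper argues by induction on $i$. The case $i=1$ is trivial, and the identity \eqref{eq-key identity} at level $i$ is fed into the implication $(2)\Rightarrow(3)$ of Proposition~\ref{prop-loop fixed and loop derivative fixed} (eigenvector on the Pl\"ucker line implies stability of the lattice), an argument carried out over $A$ itself. You instead prove all the containments at once rationally, by checking locally at the roots of $E_{i-1}(u)$, and then recover integrality by saturation.

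\emph{The identity \eqref{eq-key identity}.} The paper writes $\tfrac{E_i(u)}{u}N^\varphi$ in the basis $\beta_{i-1}$ as $\alpha\operatorname{dlog}(X_\kappa g_{i-1,\kappa})+\alpha\partial$ modulo $(u-\kappa(\pi))^h$. It then invokes $(1)\Rightarrow(2)$ of the same proposition, reading the eigenvalue off the character through which $\operatorname{Aut}^+$ acts on the Pl\"ucker line of a point of $\operatorname{FL}_{\lambda_\kappa}$. Your trace computation in a Hodge-adapted basis is the same calculation done by hand. It is valid because the operator preserves the image of $\fM_{i,\tau}$, so it acts on the top wedge by its trace there, which is basis-independent.

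Your route is more self-contained and avoids the induction. The paper's route keeps everything in the characteristic-free form ``eigen-identity $\Rightarrow$ stability''. That form is exactly what is reused modulo $p$ to define $(\mathbf{B}_i)$ and to prove $(\mathbf{B}_i)\Rightarrow(\mathbf{A}_i)$ in Lemma~\ref{lem-props of closed conditions}. Your saturation step is intrinsically characteristic zero and gives no such mod $p$ statement.

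Two small repairs are needed in your Step~2.

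First, the inclusion $uE(u)^{p-1}\varphi^*\fM\subset\fM_{i-1}$ requires $h\le p-1$. To cover $h=p$, keep the factor $E_{i-1}(u)$. The error $\tfrac{E_{i-1}(u)}{u}(N^\varphi-\varphi^*N_\nabla)(m)$ lies in $E_{i-1}(u)E(u)^{p-1}S_{\operatorname{max}}\varphi^*\fM$. This is contained in $E_{i-1}(u)^{h}S_{\operatorname{max}}\varphi^*\fM$, and hence in $\fM_{i-1}\otimes_{\fS}S_{\operatorname{max}}$, because $E_{i-1}(u)^p$ divides $E_{i-1}(u)E(u)^{p-1}$.

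Second, the closing step is not Lemma~\ref{lem-Smax containment}(3); it is a saturation fact, and no control of $S_{\operatorname{max}}$-intersections is needed. Since $N^\varphi(\varphi^*\fM)\subset u\varphi^*\fM$, the element $\tfrac{E_{i-1}(u)}{u}N^\varphi(m)$ already lies in $\varphi^*\fM$. Moreover, $\varphi^*\fM/\fM_{i-1}$ embeds into $\fM/E_{i-1}(u)^h\fM$, which is $p$-torsion free. So membership in $\fM_{i-1}$ can be tested after inverting $p$, i.e.\ in the completions at the roots of $E_{i-1}(u)$, which is where your rational argument applies.
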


The proof will be given in Section~\ref{sec-proofs} following some general setup in Section~\ref{sec-preparations}. For later applications it will be convenient to work with the following dual version of Theorem~\ref{thm-integral equations1}. Here we impose the condition in \eqref{eq-key identity} through the vanishing of specific global sections on $\operatorname{Gr}_{\leq \lambda_\kappa^*}^{(\kappa)}$. More precisely, let $\cL$ denote the pullback of $\cO(1)$ along \eqref{eq-pluckerembedding} and view $\bigwedge^r(\cV_h)$ as (possibly zero) global sections on $\cL$ via the non-degenerate pairing
\begin{equation}\label{eq-duality}
\bigwedge^r \cV_h \times \bigwedge^{dh-r} \cV_h \rightarrow \det(\cV_h)
\end{equation}
given by $(v,w) \mapsto v \wedge w$. Concretely, the pullback of $\cL$ along an $A$-valued point corresponding to $\cE \in \operatorname{Gr}_{\leq \lambda_\kappa^*}^{(\kappa)}$ is given by the $A$-module $\bigwedge^r A[[u-\kappa(\pi)]]^d / \cE$.

\begin{thm}\label{thm-integral equations2}
 	Maintain the notation from Theorem~\ref{thm-integral equations1}  so that $\tfrac{E_i(u)}{u}N^\varphi$ can be viewed as a derivation on $\cV_h \otimes_{\cO} A$. Then $(\fM,\beta_\bullet) \in 
    \widetilde{\cY^{\operatorname{cr,conv}}_\lambda}(A)$ implies $\Psi(\fM_{i,\tau},\beta_{i-1})$ lies in the vanishing locus of 
 	\begin{equation}\label{eq-section which will vanish}
 	\tfrac{E_i(u)}{u}N^\varphi \cdot v - \Bigg( \Big( c^*(u)E_{i-1}(u) \Big)\Big|_{u=\kappa(\pi)} \sum_{l=1}^d  \sum_{j = 1}^{\lambda_{\kappa,l}^*-1} j \Bigg) v \in \bigwedge^r \left( \cV_h \otimes_{\cO} A \right)
 	\end{equation}
 	for all $v \in \bigwedge^r \cV_h$ interpreted as global sections on $\cL$. 
 \end{thm}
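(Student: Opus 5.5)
Theorem~\ref{thm-integral equations2} will be deduced from Theorem~\ref{thm-integral equations1} by a duality argument for the pairing \eqref{eq-duality}. Write $D := \tfrac{E_i(u)}{u}N^\varphi$, viewed as a derivation on $\cV_h \otimes_{\cO} A$ via $\beta_{i-1}$ as in Theorem~\ref{thm-integral equations1}. Extend $D$ to all exterior powers by the Leibniz rule. On the top exterior power $\det(\cV_h\otimes A)$ it then acts by a scalar, namely the trace of $D$ in the appropriate sense. Since $D$ is a derivation (not a linear map), this trace has to be computed from the Leibniz rule with respect to $u-\kappa(\pi)$.

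The first step is to compute this scalar. Because $N^\varphi$ satisfies the Leibniz rule in Proposition~\ref{prop-create monodromies}(2) with respect to $c^*(u)u\tfrac{d}{du}$, the operator $D$ satisfies the Leibniz rule with respect to $c^*(u)E_i(u)\tfrac{d}{du}$. Modulo $(u-\kappa(\pi))^h$, and after factoring $E_i(u) = E_{i-1}(u)(u-\kappa(\pi))$, this operator is $\gamma(u)\,(u-\kappa(\pi))\tfrac{d}{du}$ with $\gamma = c^*E_{i-1}$. I would write $D = D_{\mathrm{lin}} + \gamma(u)(u-\kappa(\pi))\tfrac{d}{du}$, where $D_{\mathrm{lin}}$ is linear with respect to the chosen basis. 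The idea is then to show that on the top exterior power of $\cV_h=\bigoplus_{\ell}\cO[[u-\kappa(\pi)]]/(u-\kappa(\pi))^h\, e_\ell$ the derivation part contributes $\gamma(\kappa(\pi))\, d\sum_{j=0}^{h-1} j$. This holds because the monomials $(u-\kappa(\pi))^j e_\ell$ form a triangular basis on which the derivation acts with diagonal entries $\gamma(\kappa(\pi))j$. The linear part contributes its trace, so the full scalar is $\operatorname{tr}(D_{\mathrm{lin}}) + \gamma(\kappa(\pi))\, d\sum_{j=0}^{h-1}j$. I expect the main obstacle to be controlling $\operatorname{tr}(D_{\mathrm{lin}})$. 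To handle it I would normalise it by choosing a compatible basis, or rather apply the same argument to the pair $\fM_{i-1}\supset \fM_i$. The conclusion needed is that the scalar on $\det$ equals the constant from Theorem~\ref{thm-integral equations1} plus the constant in \eqref{eq-section which will vanish}, namely $\gamma(\kappa(\pi))\sum_l\bigl(\sum_{j=\lambda^*_{\kappa,l}}^{h-1}j + \sum_{j=1}^{\lambda^*_{\kappa,l}-1}j\bigr)$.

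Once this is established, the key identity is $D(v\wedge w) = Dv\wedge w + v\wedge Dw$ for $v\in\bigwedge^r$ and $w\in\bigwedge^{dh-r}$. Take $w=v_{\fM,\kappa}$, which Theorem~\ref{thm-integral equations1} gives as an eigenvector with eigenvalue $a$. If $D$ acts on $\det$ by the scalar $a+b$, then
\[
(Dv - b v)\wedge v_{\fM,\kappa} = (a+b)(v\wedge v_{\fM,\kappa}) - a(v\wedge v_{\fM,\kappa}) - b(v\wedge v_{\fM,\kappa}) = 0 .
\]
By construction of $\cL$, the section attached to $Dv-bv$ evaluates at the point $\Psi(\fM_{i,\tau},\beta_{i-1})$ to exactly this pairing with $v_{\fM,\kappa}$. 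Hence the section vanishes there, Zariski locally on $\operatorname{Spec}A$, and therefore everywhere.

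This reduces the theorem to the trace identity. If determining $\operatorname{tr}(D_{\mathrm{lin}})$ directly proves awkward, I would use the alternative route of applying Theorem~\ref{thm-integral equations1} twice. The top exterior power corresponds to the point $\cE=(u-\kappa(\pi))^h\cV$, which is the degenerate Schubert point for $\lambda^*=(h,\dots,h)$, and its eigenvalue can be read off by the same mechanism used to prove \eqref{eq-key identity}. Both routes rest on the same triangular-basis computation.
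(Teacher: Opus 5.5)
Your pairing argument is the one the paper uses: it is the implication (1)$\Rightarrow$(2) of Lemma~\ref{lem-eigenvector vs equations}. Your constants are also right, since the two constants add up to $\gamma(\kappa(\pi))\,d\sum_{j=1}^{h-1}j$, which is exactly what $\gamma(u)(u-\kappa(\pi))\tfrac{d}{du}$ contributes on $\det(\cV_h\otimes_{\cO}A)$. The gap is the step you flag yourself, $\operatorname{tr}(D_{\mathrm{lin}})=0$: you do not establish it, and neither remedy you suggest supplies it.

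No choice of basis can help. The $A$-trace of $D$ on the free $A$-module $\cV_h\otimes_{\cO}A$ is intrinsic. Replacing the basis by $g$ changes $D_{\mathrm{lin}}$ to $g^{-1}D_{\mathrm{lin}}g+\gamma\, g^{-1}(u-\kappa(\pi))\tfrac{d}{du}(g)$, and the second term is $\equiv 0$ modulo $u-\kappa(\pi)$, hence traceless. For an arbitrary matrix $D_{\mathrm{lin}}$ the trace on $\cV_h$ is $h\operatorname{tr}(D_{\mathrm{lin}}|_{u=\kappa(\pi)})$, which has no reason to vanish.

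Your second route is not available as stated. Theorem~\ref{thm-integral equations1} is a statement about the specific point $\Psi(\fM_{i,\tau},\beta_{i-1})$ attached to $\fM$, so it cannot be applied at an auxiliary Schubert point. Moreover, in its normalisation $\bigwedge^{dh}\cV_h$ is the Pl\"ucker line of $\cE=\cV$, not of $(u-\kappa(\pi))^h\cV$, whose line is $\bigwedge^0\cV_h$.

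The missing input is the first assertion of Theorem~\ref{thm-integral equations1}. By \eqref{eq-containment}, $\tfrac{E_{i-1}(u)}{u}N^\varphi$ stabilises $\fM_{i-1}$; write $\tfrac{E_{i-1}(u)}{u}N^\varphi(\beta_{i-1})=\beta_{i-1}M$ with $M$ integral. Then $D=(u-\pi_i)\cdot\tfrac{E_{i-1}(u)}{u}N^\varphi$ has linear part $D_{\mathrm{lin}}=(u-\pi_i)M$, whose $\tau$-part is divisible by $u-\kappa(\pi)$. Hence $D_{\mathrm{lin}}$ maps $(u-\kappa(\pi))^j(\cV_h\otimes_{\cO}A)$ into $(u-\kappa(\pi))^{j+1}(\cV_h\otimes_{\cO}A)$, so it is nilpotent and has trace $0$. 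This is precisely the hypothesis ``$\cN\equiv 0$ modulo $u-\kappa(\pi)$'' that the paper checks before invoking Lemma~\ref{lem-eigenvector vs equations}.

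Alternatively, the proof of Theorem~\ref{thm-integral equations1} gives the congruence $D\equiv\gamma\operatorname{dlog}(X_\kappa g_{i-1,\kappa})+\gamma(u-\kappa(\pi))\tfrac{d}{du}$ modulo $(u-\kappa(\pi))^h$. This exhibits $D$ as an $\operatorname{Ad}$-conjugate of $\gamma(u-\kappa(\pi))\tfrac{d}{du}$ over $A[\tfrac{1}{p}]$, and conjugate elements act by the same scalar on the character $\det\cV_h$. This is presumably what your second route was reaching for, but it is heavier than needed. With the divisibility observation added, your argument is complete and coincides with the paper's proof.
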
 

We will see that the majority of the equations imposed in \eqref{eq-section which will vanish} are vacuous in the sense that they vanish on the whole of $\operatorname{Gr}_{\leq \lambda_\kappa^*}^{(\kappa)}$. Specifically, this occurs for any $v \in \bigwedge^r \cV_h$ on which the constant subgroup $\bG_m \subset \operatorname{Aut}^+$ acts with weight $\geq \sum_{l=1}^d \sum_{j=1}^{\lambda_{\kappa,l}^*-1} j$ (as in Construction~\ref{const-definition of Psi}, $\operatorname{Aut}^+$ acts on $\cV_h$ by scaling the parameter $u-\kappa(\pi))$.

 \subsection{Pl\"ucker coordinates and constant flag varieties}\label{sec-preparations}

Here we explain the conditions from the two theorems in the previous section in a general context. For this fix $\kappa$ and consider $\nu = (\nu_1 \geq  \ldots  \geq \nu_d) \leq \lambda_\kappa$ (note, this is not $\lambda_\kappa^*$). There is then a closed immersion
\begin{equation}\label{eq-twisted embedding of FL}
\operatorname{FL}_{\nu} \hookrightarrow \operatorname{Gr}_{\leq \lambda_\kappa^*}^{(\kappa)}, \qquad \operatorname{Fil}^\bullet \mapsto \sum_{j \in \bZ} \operatorname{Fil}^j \otimes_A(u-\kappa(\pi))^{h-j} A[[u-\kappa(\pi)]]
\end{equation}
where, as in Proposition~\ref{prop-create locus of hodge type mu conv BKmods}, $\operatorname{FL}_{\nu}$ denotes the projective $\cO$-scheme with $A$-points classifying decreasing filtrations on $A^d$ whose $i$-th graded piece is $A$-projective of constant rank equal to the multiplicity of $i$ in $\nu$. Equivalently, this closed immersion identifies $\operatorname{FL}_\nu$ with the (closed) $G$-orbit in $\operatorname{Gr}_{\leq h}^{(\kappa)}$ through the $\cO$-valued point $\cE_{\nu^*} \in \operatorname{Gr}_{\leq h}^{(\kappa)}(\cO)$ from Definition~\ref{defn-schubert vars}, for $\nu^* = (\nu^*_1 \geq \ldots \geq \nu^*_d)$ with $\nu^*_\ell := h - \nu_{d-\ell}$ for $1 \leq \ell \leq d$.

We  write $\partial := (u-\kappa(\pi))\frac{d}{du}$ which we view as a derivation on $A[[u-\kappa(\pi)]]^d$ acting coordinate-wise. Also, if $X \in L^{+,(\kappa)}G(A)$ then we set $\operatorname{dlog}(X) = X^{-1} \partial(X)$, interpreted as an $A[[u-\kappa(\pi)]]$-linear endomorphism of $A[[u-\kappa(\pi)]]^d$. Consider the following assertions:
\begin{prop}\label{prop-loop fixed and loop derivative fixed}
    Let $A$ be any $\cO$-algebra with $\operatorname{Spec}A$ connected. Suppose $\cE \in \operatorname{Gr}_{\leq \lambda_\kappa^*}^{(\kappa)}(A)$ and that $v_{\cE} \in \bigwedge^{dh-r} (\cV_h \otimes_{\cO} A)$ spans the line corresponding to $\Psi_\kappa(\cE) \in \mathbb{P}(\bigwedge^{dh-r} \cV_h)$. Consider the following conditions:
    \begin{enumerate}
        \item There is an $X \in L^{+,(\kappa)}G(A)$ and $\nu = (\nu_1 \geq \ldots \geq \nu_d) \leq \lambda_\kappa$ so that $X \cdot \cE \in \operatorname{FL}_{\nu}$.
        \item There is an $\alpha(u) \in A[[u-\kappa(\pi)]]$ and a derivation $\cN$ on $A[[u-\kappa(\pi)]]^d$ over $\alpha(u)\partial$ so that 
        $$
        \cN \cdot v_{\cE} = \left( \alpha(u)|_{u=\kappa(\pi)} \sum_{i=1}^d \sum_{j=\nu_i^*}^{h-1} j \right) v_{\cE}
        $$ 
        for some $\nu = (\nu_1 \geq \ldots \geq \nu_d) \leq \lambda_\kappa$.
        \item There is an $\alpha(u) \in A[[u-\kappa(\pi)]]$ and a derivation $\cN$ on $A[[u-\kappa(\pi)]]^d$ over $\alpha(u)\partial$ so that $\cN(\cE) \subset \cE$. 
    \end{enumerate}
    Then $(1) \Rightarrow (2)$ for a fixed $\alpha(u)$ and $\cN := \alpha(u)\operatorname{dlog}(X) + \alpha(u)\partial$. Also, $(2) \Rightarrow (3)$ for fixed choices of $\alpha(u)$ and $\cN$.
    
     Conversely, suppose in (3) that  $\alpha(u) \in A[[u-\kappa(\pi)]]^\times$ and $\cN = \cN_0 + \alpha(u)\partial$ for an endomorphism $\cN_0 \equiv 0$ modulo $u-\kappa(\pi)$. If $h\leq p$ then (1) holds for $X \in L^{+,(\kappa)}G(A)$ satisfying $\alpha(u)\operatorname{dlog}(X) \equiv \cN_0$ modulo $(u-\kappa(\pi))^h$.
\end{prop}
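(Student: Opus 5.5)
We treat the three implications separately. Throughout, write $t := u-\kappa(\pi)$, so that $\partial = t\frac{d}{dt}$ and $\cV_h\otimes_{\cO}A = A[[t]]^d/t^h$.

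\textbf{The implication $(1)\Rightarrow(2)$.} First treat the case $X=1$, i.e.\ $\cE' := \cE \in \operatorname{FL}_\nu$.

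1. Zariski locally on $\operatorname{Spec}A$, choose a basis $e_1,\ldots,e_d$ of $A^d$ adapted to the filtration, with $e_l\in \operatorname{Fil}^{\nu_l}$. By \eqref{eq-twisted embedding of FL}, the image of $\cE'$ in $\cV_h\otimes A$ is free with basis the vectors $e_l t^k$, where $h-\nu_l\le k\le h-1$.
2. Hence the wedge $v_{\cE'}$ of these vectors is an eigenvector of $\partial$. Since $\partial(e_lt^k)=k\,e_lt^k$, the eigenvalue is $\sum_l\sum_{k=h-\nu_l}^{h-1}k$. After the reindexing $\nu^*_\ell = h-\nu_{d-\ell}$, this is the double sum in (2).
3. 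Multiplication by $\alpha(u)$: write $\alpha(u)=\alpha(\kappa(\pi))+t\beta(u)$. Multiplication by $t\beta$ preserves the image of $\cE'$ and is nilpotent there. Its induced action on the top wedge is therefore multiplication by its trace, which is $0$. So $\alpha\partial\, v_{\cE'} = \alpha(\kappa(\pi))\cdot(\text{sum})\,v_{\cE'}$.
4. General $X$: put $\cE' = X\cE$. Then $\cN := \alpha\operatorname{dlog}(X)+\alpha\partial$ equals $X^{-1}\circ \alpha\partial\circ X$, and $v_\cE$ is a unit multiple of $X^{-1}v_{\cE'}$. Transporting the eigen-identity from the previous steps gives (2).

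\textbf{The implication $(2)\Rightarrow(3)$.} Let $W$ be the image of $\cE$ in $\cV_h\otimes A$; it is an $A$-direct summand of rank $m = dh-r$, and $w := v_\cE$ spans $\bigwedge^m W$.

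1. Let $\bar\cN: W\to (\cV_h\otimes A)/W$ be the composite of $\cN$ with the projection.
2. $\bigwedge^m(\cV_h\otimes A)$ has a filtration with bottom piece $\bigwedge^m W$ and next graded piece $\bigwedge^{m-1}W\otimes (\cV_h\otimes A)/W$.
3. The image of $\cN w$ in that graded piece is the image of $\bar\cN$ under the map
$$\operatorname{Hom}(W,(\cV_h\otimes A)/W)\to \bigwedge^{m-1}W\otimes (\cV_h\otimes A)/W.$$
This map is an isomorphism, because $\bigwedge^{m-1}W\cong W^\vee\otimes\det W$.
4. Hypothesis (2) says $\cN w\in \bigwedge^m W$, so the image in step 3 vanishes and $\bar\cN=0$. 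Thus $\cN(W)\subset W$.
5. Since $t^hA[[t]]^d\subset \cE$, stability of $W$ gives $\cN(\cE)\subset\cE$.

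\textbf{The converse.} Assume (3) with $\alpha$ a unit, $\cN=\cN_0+\alpha\partial$, and $h\le p$.

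1. Set $M := \alpha^{-1}\cN_0$, so $M\equiv 0 \bmod t$. We solve $\partial X = XM$ modulo $t^h$ coefficientwise. Take $X_0=1$ and, for $1\le k<h$, determine $X_k$ by $kX_k = \sum_{a+b=k,\,b\ge1}X_aM_b$. This is possible because every such $k$ satisfies $1\le k<h\le p$, hence is a unit in $\bZ_p$. Any lift of $X$ mod $t^h$ lies in $L^{+,(\kappa)}G(A)$ and satisfies $\alpha\operatorname{dlog}(X)\equiv\cN_0$ modulo $t^h$.
2. Then $\cE' := X\cE$ is $\alpha\partial$-stable modulo $t^h$, hence $\partial$-stable since $\alpha$ is a unit. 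Its image $W'$ in $\cV_h\otimes A$ is therefore a $\partial$-stable $A$-submodule.
3. On $\cV_h\otimes A=\bigoplus_{k<h}t^kA^d$, the operator $\partial$ acts on $t^kA^d$ by the scalar $k$. For $k<h\le p$ these scalars have pairwise differences that are units. Using the idempotent projectors $\prod_{k'\ne k}(\partial-k')/(k-k')$, a $\partial$-stable submodule is graded: $W' = \bigoplus_k t^kF_k$.
4. Stability under $t$ gives $F_k\subset F_{k+1}$. Each $F_k$ is a direct summand of $A^d$, being a graded piece of the projective quotient $(\cV_h\otimes A)/W'$.
5. Setting $\operatorname{Fil}^j := F_{h-j}$ gives $\cE' = \sum_j \operatorname{Fil}^j t^{h-j}$, i.e.\ $\cE'\in\operatorname{FL}_\nu$. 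The type $\nu$ is constant because $\operatorname{Spec}A$ is connected.
6. Finally, $\nu\le\lambda_\kappa$ follows from Proposition~\ref{prop-schubert vars geom}, since $\cE'$ lies in $\operatorname{Gr}^{(\kappa)}_{\le\lambda_\kappa^*}$.

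The main obstacle is this converse, specifically steps 3–4: showing that $\partial$-stability forces a grading whose graded pieces are projective direct summands. The hypothesis $h\le p$ is used exactly there, and in the divisions by $k$ in step 1.
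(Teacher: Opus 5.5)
Your proof is correct. In two of the three parts it takes a more hands-on route than the paper.

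\emph{The implication $(1)\Rightarrow(2)$.} The paper argues conceptually. Since $\operatorname{FL}_\nu$ is $\operatorname{Aut}^+$-fixed, $\operatorname{Aut}^+$ acts on the line through $X\cdot v_{\cE}$ by a character factoring through the constant-term map to $\bG_m$. The weight is read off at the standard point $\cE_{\nu^*}$ using transitivity of the constant group $G$. One then differentiates at $1+\epsilon\alpha(u)$ and transports via the adjoint action of $X^{-1}$. You instead compute directly in a local adapted basis and kill the non-constant part of $\alpha$ by a trace argument. This sidesteps the claim that characters of $\operatorname{Aut}^+$ over an arbitrary base $A$ factor through the constant term. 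Strictly, the operator whose trace you need is $t\beta\,\partial$, not multiplication by $t\beta$. It also strictly raises $t$-degree on the image of $\cE'$, so your conclusion stands.

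\emph{The implication $(2)\Rightarrow(3)$.} This is the paper's argument, which wedges $\cN(v_j)$ against $v_{\cE}$, repackaged through the filtration on $\bigwedge^{m}$.

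\emph{The converse.} Both arguments build $X$ with $\alpha\operatorname{dlog}(X)\equiv\cN_0$ and reduce to $\cN_0=0$. From there the paper argues geometrically. The orbit map of the constant $\bG_m\subset\operatorname{Aut}^+$ has vanishing derivative at the point. This forces a fixed point, because an affine neighbourhood embeds equivariantly into a smooth affine scheme with weights $<p$. The paper then quotes the well-known description of the $\bG_m$-fixed locus as $\bigsqcup_{\nu\le\lambda_\kappa}\operatorname{FL}_\nu$. Your eigenspace decomposition of $\partial$ on $\cV_h\otimes A$, with weights $0,\dots,h-1$ differing by units, is exactly the concrete content of the paper's geometric step. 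Your steps 4--5 then prove the fixed-point description directly. As a result, the role of $h\le p$ is completely transparent in your version.

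The one point you leave at the paper's ``well-known'' level is step 6. Proposition~\ref{prop-schubert vars geom} is stated only for the $\cO$-points $\cE_\nu$. To deduce $\nu\le\lambda_\kappa$ from an $A$-point of $\operatorname{FL}_\nu$ lying in $\operatorname{Gr}^{(\kappa)}_{\le\lambda_\kappa^*}$, you should do the following:
\begin{itemize}
\item pass to a geometric point of $\operatorname{Spec}A$;
\item use that $\operatorname{FL}_\nu$ is a single $G$-orbit and the Schubert variety is $G$-stable, to move to $\cE_{\nu^*}$;
\item apply the usual field-valued version of the dominance criterion.
\end{itemize}
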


In the proof it will be convenient to interpret derivations $\cN$ as in the proposition as elements of $\operatorname{Lie}\left( L^{+,(\kappa)}G \rtimes \operatorname{Aut}^+ \right) \otimes_{\cO} A$. More precisely, any derivation $\cN = \cN_0 + \alpha(u)\partial$ identifies with the $A[\epsilon]/(\epsilon^2)$-valued point of $L^{+,(\kappa)}G \rtimes \operatorname{Aut}^+$ given by $(1+ \epsilon \cN_0, 1+ \epsilon\alpha(u))$. In particular, under these identifications the adjoint action of $X^{-1} \in L^{+,(\kappa)}G(A)$ on $\operatorname{Lie}\left( L^{+,(\kappa)}G \rtimes \operatorname{Aut}^+ \right) \otimes_{\cO} A$ sends $1+ \epsilon \alpha(u)$ onto $\alpha(u) \operatorname{dlog}(X) + \alpha(u)\partial$.

\begin{proof}
    To prove $(1) \Rightarrow (2)$ we observe that $\operatorname{FL}_\nu$ is fixed by the $\operatorname{Aut}^+$-action on $\operatorname{Gr}_{\leq \lambda_\kappa^*}^{(\kappa)}$. Thus, if $X \cdot \cE \in \operatorname{FL}_\nu$ then $\operatorname{Aut}^+$ acts on the line spanned by $X \cdot v_{\cE}$ in $\bigwedge^{dh-r} \left( \cV_h \otimes_{\cO} A\right)$ through some character of $\operatorname{Aut}^+$. Any such character must factor through the maximal reductive quotient of $\operatorname{Aut}^+$, i.e.\ through the constant term map $\operatorname{Aut}^+ \rightarrow \bG_m$. Consequently, there is an $N \in \bZ$ such that 
    \begin{equation}\label{eq-eigenvector for loop}
    f \cdot (X \cdot v_{\cE}) = f(\kappa(\pi))^N X \cdot v_{\cE}
    \end{equation}
    whenever $f(u) \in \operatorname{Aut}^+$. We claim that $X \cdot \cE \in \operatorname{FL}_{\nu}$ implies $N = \sum_{i=1}^d \sum_{j=\nu^*}^{h-1} j$.
    This can be checked directly on any point of $\operatorname{FL}_\nu$ because the constant subgroup $G \subset L^{+,(\kappa)}G$ acts transitively on $\operatorname{FL}_\nu$ and commutes with $\operatorname{Aut}^+$. In particular, we can consider the $\cO$-valued point $\cE_{\nu^*}$  whose corresponding line $\Psi_\kappa(\cE_\nu)$ is spanned by the vector 
    \begin{equation}\label{eq-vector for E_lambda}
        \bigwedge_{j=1}^d \left( (u-\kappa(\pi))^{\nu^*_i} e_i \wedge \ldots \wedge (u-\kappa(\pi))^{h-1}e_i\right)
    \end{equation}
    on which $\operatorname{Aut}^+$ clearly acts as claimed with $N = \sum_{i=1}^d \sum_{j=\nu_i^*}^{h-1} j$.
    Applying \eqref{eq-eigenvector for loop} with $f = 1 +\epsilon \alpha(u)$ therefore gives
    $$
     X^{-1}(1+\epsilon \alpha(u)) \cdot (X \cdot v_{\cE}) =  \left( 1 + \epsilon \left( \alpha(u)|_{u=\kappa(\pi)} \sum_{i=1}^d \sum_{j=\nu_i^*}^{h-1} j \right)\right)   \cdot v_{\cE}
    $$
    This combined with the observation that the adjoint action of $X^{-1}$ sends $1+ \epsilon \alpha(u)$ onto $\alpha(u) \operatorname{dlog}(X) + \alpha(u) \partial$ implies (2). 

    To prove $(2) \Rightarrow (3)$ we write, possibly after localising $A$, $v_{\cE} = v_1 \wedge \ldots \wedge v_{dh-r}$ for an $A$-basis of the image of $\cE$ inside $\cV_h \otimes_{\cO} A$. Then $\cN(\cE) \subset \cE$ if and only if 
    $$
    \cN(v_j) \wedge v_1 \wedge \ldots \wedge v_{dh-r} = \pm (\cN \cdot v_{\cE}) \wedge v_j
    $$
    vanishes inside $\bigwedge^{dh-r+1} \left( \cV_h \otimes_{\cO} A \right)$ for each $j=1,\ldots,dh-r$. But (2) implies $(\cN \cdot v_{\cE}) \wedge v_j = 0$ so we are done.

    Finally, we turn to the converse statement. Since $\alpha(u)$ is now a unit we can assume $\alpha(u)=1$ by rescaling $\cN$. Since $\cN_0 \equiv 0$ modulo $u-\kappa(\pi)$,  a straightforward calculation shows the existence of $X \in L^{+,(\kappa)}G$ with $\cN_0 \equiv \operatorname{dlog}(X)$ modulo $(u-\kappa(\pi))^h$ whenever $(h-1)!$ is invertible in $A$. As $h \leq p$ this is automatic. For such $X$ we have $X^{-1}\partial(X \cdot \cE) = \cN(\cE)$ and $\cN \cdot v_{\cE} = X^{-1} \partial( X\cdot v_{\cE})$. This allows us to reduce the claim to the case $\cN_0= 0$. The condition $\cN(\cE) \subset \cE$ is then equivalent to the assertion that, for $\bG_m \subset \operatorname{Aut}^+$ the constant subgroup, the action map $\bG_m \rightarrow \operatorname{Gr}_{\leq \lambda_\kappa^*}^{(\kappa)}$ given by $t \mapsto t \cdot \cE$ has vanishing derivative. We claim this implies $\cE$ is fixed by this $\bG_m$-action. This would follow if an affine neighbourhood of $ \cE \in \operatorname{Gr}_{\leq \lambda_\kappa^*}^{(\kappa)}$ could be embedded $\bG_m$-equivariantly into a smooth affine scheme with $\bG_m$-weights $< p$. But this is clear since $\operatorname{Gr}_{\leq \lambda_\kappa^*}^{(\kappa)}$ embeds into the Grassmannian of $dh-r$-planes inside $\cV_h$ and $h \leq p$ ensures $\cV_h$ has $\bG_m$-weights $<p$. It is well-known that the $\bG_m$-fixed points in $\operatorname{Gr}_{\leq \lambda_\kappa^*}^{(\kappa)}$ are the disjoint union of the $\operatorname{FL}_{\nu}$ for $\nu\leq \lambda_\kappa$. Since each $\operatorname{FL}_\nu$ is connected we have $\cE \in \operatorname{FL}_\nu$ for some $\nu$, which finishes the proof.
\end{proof}

The following shows that the condition in (2) of the proposition is sufficient to isolate $\operatorname{FL}_{\lambda_\kappa} \subset \operatorname{Gr}_{\leq \lambda_\kappa}^{(\kappa)}$ in characteristic zero. However, this need not be the case integrally, even when $h \leq p$:
\begin{cor}\label{cor-equation for flag variety}
    The condition $\partial \cdot v_{\cE} = \left( \sum_{\ell=1}^d \sum_{j = \lambda_{\kappa,\ell}^*}^{h-1} j  \right) v_{\cE}$
    cuts out the closed subscheme $\operatorname{FL}_{\lambda_\kappa}[\frac{1}{p}] \subset \operatorname{Gr}_{\leq \lambda_\kappa}^{(\kappa)}[\frac{1}{p}]$. The same is also true without inverting $p$ if $\sum_{\ell=1}^d \sum_{j = \nu_\ell^*}^{h-1} j < p$ for all $\nu\leq \lambda_\kappa$.
\end{cor}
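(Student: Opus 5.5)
Apply Proposition~\ref{prop-loop fixed and loop derivative fixed} with $\alpha(u)=1$ and $\cN=\partial$, i.e.\ $\cN_0=0$ and $X=1$.

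\emph{The flag variety lies in the locus.} By $(1)\Rightarrow(2)$ with $X=1$ and $\nu=\lambda_\kappa$, every point of $\operatorname{FL}_{\lambda_\kappa}$ satisfies
$$\partial\cdot v_\cE=N_{\lambda_\kappa}v_\cE,\qquad N_\nu:=\sum_{\ell}\sum_{j=\nu^*_\ell}^{h-1}j .$$
This holds scheme-theoretically, because $\operatorname{FL}_{\lambda_\kappa}$ is $\operatorname{Aut}^+$-fixed and the constant $\bG_m$ acts on its Plücker line through the character of weight $N_{\lambda_\kappa}$. So $\operatorname{FL}_{\lambda_\kappa}$ is contained in the closed subscheme $Z$ cut out by the equation.

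\emph{The locus lies in the fixed points.} Let $\cE\in Z(A)$. By $(2)\Rightarrow(3)$ we get $\partial(\cE)\subset\cE$. The converse part of Proposition~\ref{prop-loop fixed and loop derivative fixed} then puts $\cE$ in the $\bG_m$-fixed locus, a disjoint union $\coprod_{\nu\le\lambda_\kappa}\operatorname{FL}_\nu$.
\begin{itemize}
\item In characteristic zero this needs no bound on $h$: a vanishing derivative of the $\bG_m$-orbit map at a point forces the point to be $\bG_m$-fixed, since one may work inside the Grassmannian of $\cV_h$, a smooth scheme with a linear $\bG_m$-action.
\item Integrally, the step uses the Grassmannian of $\cV_h$ having weights $<p$, which holds since $h\le p$.
\end{itemize}
The step that really needs care is the scheme-theoretic one. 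A map $\operatorname{Spec}A\to\operatorname{Gr}$ with $\partial\cE\subset\cE$ factors through the fixed-point subscheme: the infinitesimal action is zero on the ideal's generators, and in characteristic $0$ (or with weights $<p$) the coordinate ring of a $\bG_m$-stable affine chart decomposes into weight spaces. The fixed-point subscheme is cut out by the nonzero-weight coordinates, and these are killed by the Lie algebra element exactly when the weight is invertible.

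\emph{Separating the components.} On $\operatorname{FL}_\nu$ the same computation gives $\partial\cdot v_\cE=N_\nu v_\cE$. Hence on $Z\cap\operatorname{FL}_\nu$ we get $(N_\nu-N_{\lambda_\kappa})v_\cE=0$, and since $v_\cE$ is a generator of a line this forces $N_\nu-N_{\lambda_\kappa}=0$ in $A$.
\begin{itemize}
\item $N_\nu$ is strictly increasing as $\nu$ decreases: lowering $\nu$ raises some $\nu^*_\ell$ while keeping the sum fixed, which removes larger $j$ and adds smaller $j$. Checking this carefully is the main combinatorial point; I would verify it on elementary moves $\nu\mapsto\nu-e_a+e_b$ with $a<b$.
\item For $\nu<\lambda_\kappa$ this gives $N_\nu\ne N_{\lambda_\kappa}$, and this difference is invertible after inverting $p$.
\item Integrally, under $N_\nu<p$ for all $\nu\le\lambda_\kappa$, we have $0<|N_\nu-N_{\lambda_\kappa}|<p$, so the difference is again a unit in $\cO$.
\end{itemize}
Thus $Z\cap\operatorname{FL}_\nu=\emptyset$ for $\nu\neq\lambda_\kappa$, and $Z=\operatorname{FL}_{\lambda_\kappa}$ as schemes, in both settings.
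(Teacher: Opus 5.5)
Your proposal is correct and follows the paper's own argument. Proposition~\ref{prop-loop fixed and loop derivative fixed} ($(2)\Rightarrow(3)$ together with the converse) confines the locus to $\coprod_{\nu\le\lambda_\kappa}\operatorname{FL}_\nu$, and the constants $N_\nu$ then separate $\operatorname{FL}_{\lambda_\kappa}$ from the other components once $N_\nu-N_{\lambda_\kappa}$ is a unit; your scheme-theoretic additions are sound.

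One slip in your heuristic: the elementary move $\nu\mapsto\nu-e_a+e_b$ (with $a<b$ and $\nu_a-\nu_b\ge 2$) drops the smaller term $j=h-\nu_a$ and gains the larger term $j=h-\nu_b-1$, so $N$ changes by $\nu_a-\nu_b-1>0$ (for example $N_{(2,1,0)}=2<3=N_{(1,1,1)}$ when $h=2$). This, not ``removing larger $j$ and adding smaller $j$,'' is what justifies your correct claim that $N_\nu$ increases as $\nu$ decreases---though the proof only needs $N_\nu\ne N_{\lambda_\kappa}$.
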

\begin{proof} 
    Proposition~\ref{prop-loop fixed and loop derivative fixed} shows the stated condition cuts out a union of $\operatorname{FL}_\nu$ for $\nu \leq \lambda_\kappa$. If $\nu = (\nu_1,\ldots,\nu_d) < \lambda_\kappa$ then $\sum_{\ell=1}^d \sum_{j = \lambda_{\kappa,\ell}^*}^{h-1}j < \sum_{\ell=1}^d \sum^{h-1}_{j=\nu_{\ell}^*} j$ so if $p$ is invertible this condition isolates $\operatorname{FL}_{\lambda_\kappa}$. If $p$ is not invertible then this is only the case if $\sum_{\ell=1}^d \sum_{j = \lambda_{\kappa,\ell}^*}^{h-1}j \not\equiv \sum_{\ell=1}^d \sum_{j=\nu_\ell^*}^{h-1} j$ modulo $p$ for any $\nu <\lambda_\kappa$.
\end{proof}

Finally, we explain the relationship between the conditions from Theorem~\ref{thm-integral equations1} and Theorem~\ref{thm-integral equations2}.
\begin{lemma}\label{lem-eigenvector vs equations}
    Suppose $v \in \bigwedge^{dh-r} (\cV_h \otimes_{\cO} A)$ spans a line contained in the image of $\Psi_\kappa$ and that $\cN = \cN_0 + \alpha(u) \partial$ for an endomorphism $\cN_0 \equiv 0$ modulo $u-\kappa(\pi)$ and $\alpha(u) \in A[[u-\kappa(\pi)]]$. The following are then equivalent:
    \begin{enumerate}
        \item $\cN \cdot v = \left( \sum_{i=\ell}^d \sum_{j = \lambda_{\kappa,\ell}^*}^{h-1} j  \right) v$
        \item The linear functional $\cN \cdot w -\left( \alpha(u)|_{u = \kappa(\pi)} \sum_{\ell=1}^d \sum_{j=1}^{\lambda_{\kappa,\ell}^*-1} j \right)w$ vanishes on $v$ for all $w \in \bigwedge^r \cV_h$
        \item The same as in (2) but only for those $w \in \bigwedge^r \cV_h$ with weight $< \sum_{\ell=1}^d \sum_{j=1}^{\lambda_{\kappa,\ell}-1} j$ for the action of the constant subgroup $\bG_m \subset \operatorname{Aut}^+$. 
    \end{enumerate}
    Here $w \in \bigwedge^r \cV_h$ are viewed as linear functionals on $\bigwedge^{dh-r} \cV_h$ via \eqref{eq-duality} and a chosen trivialisation of $\operatorname{det}\cV_h$.
\end{lemma}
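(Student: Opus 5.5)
The plan is to reduce everything to an explicit pairing computation under the duality \eqref{eq-duality}. Write $N_{\mathrm{tot}} := \sum_{j=0}^{h-1} d\cdot j = d\binom{h}{2}$ for the $\bG_m$-weight of $\det \cV_h$. Our first claim is that for $w \in \bigwedge^r \cV_h$ and $v \in \bigwedge^{dh-r}(\cV_h \otimes_\cO A)$ the derivation $\cN$ acts on the pairing by a scalar:
$$\cN\cdot(w\wedge v) = (\cN\cdot w)\wedge v + w \wedge (\cN\cdot v).$$
Since $\det\cV_h$ is a rank one module, $\cN$ acts on it by a scalar. Here $\cN_0$ is $(u-\kappa(\pi))$-nilpotent on $\cV_h$, so its trace on $\cV_h$ vanishes. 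The term $\alpha(u)\partial$ acts on $\det \cV_h$ by $\alpha(\kappa(\pi)) N_{\mathrm{tot}}$: it is triangular with respect to the $(u-\kappa(\pi))$-adic filtration, and on the graded pieces only the constant term of $\alpha$ survives. To verify this cleanly we will compute on the standard basis $(u-\kappa(\pi))^j e_i$.

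Next we note that $\sum_{\ell}\sum_{j=\lambda^*_{\kappa,\ell}}^{h-1} j + \sum_\ell \sum_{j=1}^{\lambda^*_{\kappa,\ell}-1} j = N_{\mathrm{tot}} - \sum_\ell \lambda^*_{\kappa,\ell}\cdot 0$, which is just $N_{\mathrm{tot}}$. So the two constants $a := \alpha(\kappa(\pi))\sum_\ell\sum_{j\ge\lambda^*}j$ and $b := \alpha(\kappa(\pi))\sum_\ell\sum_{j<\lambda^*} j$ add to the eigenvalue of $\cN$ on $\det\cV_h$. (We read (1) with the factor $\alpha(u)|_{u=\kappa(\pi)}$ included, as in Proposition~\ref{prop-loop fixed and loop derivative fixed}.) Hence
$$\big((\cN - b)\cdot w\big)\wedge v = -\,w\wedge\big((\cN - a)\cdot v\big).$$
Since the pairing is perfect, $(\cN-a)v = 0$ if and only if the left side vanishes for all $w$, and this gives (1)$\Leftrightarrow$(2). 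Perfectness holds over any $A$ because $\cV_h$ is free.

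For (2)$\Leftrightarrow$(3) we must show that the functionals with $\bG_m$-weight of $w$ at least $\sum_\ell\sum_{j=1}^{\lambda^*_{\kappa,\ell}-1}j$ vanish automatically on lines in the image of $\Psi_\kappa$. (The statement writes $\lambda_{\kappa,\ell}$ in (3); we expect $\lambda^*_{\kappa,\ell}$ to be intended, consistent with the remark after Theorem~\ref{thm-integral equations2}.) Decompose $w$ into weight vectors. Because $\cN_0$ raises the $(u-\kappa(\pi))$-adic degree and $\alpha\partial$ preserves weight up to higher terms, the pairing $((\cN-b)w)\wedge v$ only involves components of $v$ of weight at most $N_{\mathrm{tot}} - \mathrm{wt}(w)$. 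We will argue that every $\cE \in \operatorname{Gr}_{\le\lambda^*_\kappa}$ contains $(u-\kappa(\pi))^{\lambda^*_{\kappa,1}}$, and more precisely that the Plücker coordinates of $\cE$ vanish below the minimal weight $\sum_\ell\sum_{j\ge \lambda^*_{\kappa,\ell}}j$. This minimal weight is attained at $\cE_{\lambda^*}$ and is forced on the closure by semicontinuity, or equivalently by using the $\bG_m$-degeneration of an arbitrary point to its limit in some $\operatorname{FL}_\nu$ with $\nu\le\lambda_\kappa$, where $\nu\le\lambda$ means the weight is at least that of $\lambda$. So if $\mathrm{wt}(w) \ge b/\alpha$, all relevant components of $v$ have weight at most $N_{\mathrm{tot}}-\mathrm{wt}(w) \le a/\alpha$. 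Only the extremal equality case survives, and there we will check that $(\cN-b)w$ contributes a pairing equal to $(\mathrm{wt}(w)-b)$ times the pairing of $w$ with $v$, which is zero when the weights coincide. Strictly larger weights pair to zero outright.

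The main obstacle is this last step: making precise, for non-reduced $A$, that the Plücker coordinates of points of $\operatorname{Gr}_{\le\lambda^*_\kappa}$ vanish in weights below the extremal one, and controlling the lower-order terms of $\alpha(u)$ and $\cN_0$. We will first try to deduce the vanishing from the fact that $\operatorname{Gr}_{\le\lambda^*_\kappa}$ is reduced (Proposition~\ref{prop-schubert vars geom}), so that it suffices to check at field-valued points via the $\bG_m$-limit argument.
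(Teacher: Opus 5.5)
Your proposal is correct and follows the paper's proof. The step $(1)\Leftrightarrow(2)$ is the same computation: $\cN$ acts on $\det\cV_h$ by $\alpha(u)|_{u=\kappa(\pi)}\sum_{\ell}\sum_{j=1}^{h-1}j$, and the pairing is perfect. You are also right that (1) should carry the factor $\alpha(u)|_{u=\kappa(\pi)}$ and that (3) should read $\lambda^*_{\kappa,\ell}$.

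For $(3)\Rightarrow(2)$ the paper likewise reduces to showing that every $w$ of weight $>\sum_\ell\sum_{j=1}^{\lambda^*_{\kappa,\ell}-1}j$ vanishes as a section of $\cL$ on $\operatorname{Gr}^{(\kappa)}_{\leq\lambda^*_\kappa}$; this is your dual statement about the Pl\"ucker coordinates of $v$. To prove that vanishing, the paper uses your ``semicontinuity'' option: it checks on orbit points $X\cdot\cE_{\lambda^*_\kappa}$, where $w\wedge(X\cdot v_{\lambda^*_\kappa})=\det(X)\,(X^{-1}\cdot w)\wedge v_{\lambda^*_\kappa}$ and $X^{-1}$ cannot lower weights. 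This makes your $\bG_m$-limit route unnecessary; that route would also need the extra fact that $\sum_\ell\sum_{j\geq\nu^*_\ell}j$ is minimised at $\nu=\lambda_\kappa$.
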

\begin{proof}
    For $(1) \Leftrightarrow (2)$, note that the $L^{+,(\kappa)}G \rtimes \operatorname{Aut}^+$-equivariance of the pairing \eqref{eq-duality} implies that if $n \in \operatorname{Lie}\left(  L^{+,(\kappa)}G \rtimes \operatorname{Aut}^+\right)$ then  $v \wedge (n \cdot w) = \operatorname{det}(n) (v \wedge w )- (n \cdot v) \wedge w$ where $\operatorname{det}(n)$ denotes the scalar through which $n$ acts on $\operatorname{det}(\cV_h)$. The assumption that $\cN_0 \equiv 0$ modulo $u - \kappa(\pi)$ implies $\cN$ acts on $\operatorname{det}(\cV_h)$ via $\alpha(u)|_{u = \kappa(\pi)} \sum_{\ell=1}^d \sum_{j=1}^{h-1} j$ and so
    $$
     v \wedge (\cN \cdot w) =  \left( \alpha(u)|_{u = \kappa(\pi)} \sum_{i=\ell}^d \sum_{j=1}^{h-1} j \right) v \wedge w - (\cN \cdot v) \wedge w     $$
    for any $w \in \bigwedge^r \cV_h$. This immediately shows $(1) \Rightarrow (2)$ and, when combined with the non-degeneracy of \eqref{eq-duality}, gives $(2) \Rightarrow (1)$.

    It remains to show $(3) \Rightarrow (2)$. We do this by showing that if $w$ has $\bG_m$-weight $\geq  \sum_{\ell=1}^d \sum_{j=1}^{\lambda_{\kappa,\ell}^* -1} j$ then $\cN \cdot w +(x- \alpha(u)|_{u = \kappa(\pi)} \sum_{\ell=1}^d \sum_{j=1}^{h-1} j )w$ vanishes when viewed as an element of $H^0(\operatorname{Gr}_{\leq \lambda_\kappa^*}^{(\kappa)},\cL)$. First, we claim that $w$ itself vanishes inside $H^0(\operatorname{Gr}_{\leq \lambda_\kappa^*}^{(\kappa)},\cL)$ if it has $\bG_m$-weight $> \sum_{\ell=1}^d \sum_{j=1}^{\lambda_{\kappa,\ell}^* -1} j$. Let us see why this claim finishes the proof. Since $\cN_0 \equiv 0$ modulo $u-\kappa(\pi)$ we can express $\cN_0 \cdot w $ as a sum of vectors with strictly greater $\bG_m$-weights. The claim therefore lets us assume $\cN = \alpha(u)\partial$ and that $w$ has exact $\bG_m$-weight $\sum_{i=\ell}^d \sum_{j=1}^{\lambda_{\kappa,\ell}^* -1} j$. But then $\alpha(u)\partial \cdot w = \alpha(u)|_{u =\kappa(\pi)}\sum_{\ell=1}^d \sum_{j=1}^{\lambda_{\kappa,\ell}^* -1} w$ is automatic.
    
    It only remains to check the claim. From the construction of  $\operatorname{Gr}_{\leq \lambda_\kappa^*}^{(\kappa)}$ in Definition~\ref{defn-schubert vars} the given vanishing can be checked on any $\cE = X \cdot \cE_{\lambda_\kappa^*}$ with $ X\in L^{+,(\kappa)}G(A)$. Thus, it suffices to show
    $$
    w \wedge (X \cdot v_{\lambda_\kappa^*}) = \operatorname{det}(X) (X^{-1} \cdot w) \wedge v_{\lambda_\kappa^*} 
    $$
    vanishes for $v_{\lambda_\kappa^*} \in \bigwedge^{dh-r}\cV$ spanning the line $\Psi_\kappa(\cE_{\lambda_\kappa^*})$. Invertibility of $X$ implies $\operatorname{det}(X) \in A^\times$ while $X^{-1} \cdot w$ is an $A$-linear sum of elements in $\bigwedge^r \cV_h$ with $\bG_m$-weight $\geq$ that of $w$. We are therefore reduced to showing $w \wedge v_{\lambda_\kappa^*} =0$ whenever $w$ has $\bG_m$-weight $>\sum_{\ell=1}^d \sum_{j=1}^{\lambda_{\kappa,\ell}^*-1} j$. We can assume $w$ is of the form $\bigwedge_{s=1}^r (u-\kappa(\pi))^{j_s}e_{\ell_s}$ for some $1\leq \ell_s \leq d$ and $0 \leq j_s \leq h-1$ with $\sum_{s=1}^r j_s > \sum_{\ell=1}^d \sum_{j=1}^{\lambda_{\kappa,\ell}^*-1}j$. But $v_{\lambda_\kappa^*}$ can be expressed, up to a scalar, as in \eqref{eq-vector for E_lambda}, and so the vanishing of $w \wedge v_{\lambda_\kappa^*}$ is clear. 
\end{proof}

 \subsection{Proofs of Proposition~\ref{prop-in schubert variety}, Theorem~\ref{thm-integral equations1}, and Theorem~\ref{thm-integral equations2}}\label{sec-proofs}

We now return to the notation from Section~\ref{sec-mainresults}. Thus, we fix a finite flat $\cO$-algebra $A$ and $(\fM,\beta_\bullet) \in \widetilde{\cY_\lambda^{\operatorname{cr},\operatorname{conv}}}(A)$.

Recall the convention that $\beta_0 := \varphi_{\fM}(E(u)^{-h}\beta_e) \otimes 1$ is an $\fS_A$-basis of $\varphi^*\fM$. Recall also that $\fS_{A[\frac{1}{p}]}$ is defined as the $E(u)$-adic completion of $(W(k) \otimes_{\bZ_p} A[\frac{1}{p}])[u]$ and so there is an isomorphism $\fS_{A[\frac{1}{p}]} \cong \prod_{\kappa \in \cJ} A[\frac{1}{p}][[u-\kappa(\pi)]]$ arising from the factorisation $E(u) = \prod_{\kappa \in \cJ} (u-\kappa(\pi))$. As a consequence, we can, for each $\kappa$, consider the $A[\frac{1}{p}]$-valued point
    $
    \Psi(\fM,\beta_0)_\kappa \in \operatorname{Gr}_{\leq h}^{(\kappa)}(A[\tfrac{1}{p}])$
    corresponding to the $\kappa$-th part of the submodule
    $$
    \fM_e \otimes_{\fS_A} \fS_{A[\frac{1}{p}]} \subset \varphi^*\fM \otimes_{\fS_A} \fS_{A[\frac{1}{p}]} \cong \fS_{A[\frac{1}{p}]}^d
    $$
    with the right-hand isomorphism induced by $\beta_0$. 
    
    The following proposition refines the notion from Definition~\ref{def-Hodge types} that $\fM$ has Hodge type $\lambda$, and is the fundamental property on which all the results in Section~\ref{sec-mainresults} depend.

\begin{prop}\label{prop-translate in flag}
    For each $\kappa(i,\tau)$ there exists $X_\kappa \in L^{+,(\kappa)}G(A[\frac{1}{p}])$ such that 
    \begin{itemize}
        \item $X_\kappa \cdot \Psi(\fM,\beta_0)_\kappa \in \operatorname{FL}_{\lambda_\kappa}[\frac{1}{p}] \subset \operatorname{Gr}_{\leq h}^{(\kappa)}$.
        \item If $X = (X_\kappa)_{\kappa \in \cJ} \in \operatorname{GL}_d(\fS_{A[\frac{1}{p}]})$ then the derivation $\varphi^* N_\nabla$ from Section~\ref{sec-creating integral monodromy} can be written, with respect to the basis $\beta_0$, as $\varphi^*N_\nabla = \frac{\varphi(E(u))}{p} X^{-1} u\frac{d}{du}(X) +  \frac{\varphi(E(u))}{p}u\frac{d}{du}$.
    \end{itemize}
\end{prop}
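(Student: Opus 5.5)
The plan is to produce $X$ from the crystalline comparison \eqref{eq-M and D comparison}, pulled back along Frobenius and completed at each $\kappa(\pi)$. Then we read off both bullet points from the facts that $D$ is killed by the "trivial" derivation and that the Hodge filtration is a filtration of a \emph{constant} vector space.

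\emph{Construction of $X$.} Pulling \eqref{eq-M and D comparison} back along $\varphi$ gives an isomorphism $\varphi^*\fM \otimes \cO^{\operatorname{rig}}[\frac{1}{\varphi(\lambda)}] \cong \varphi^*D \otimes_{K_0} \cO^{\operatorname{rig}}[\frac{1}{\varphi(\lambda)}]$. Since $\varphi(\lambda)$ is a unit near each $\kappa(\pi)$ (its zeros are at $p^n$-th roots of the $\kappa(\pi)$ with $n\ge1$), this base changes to each factor $A[\frac1p][[u-\kappa(\pi)]]$ of $\fS_{A[\frac1p]}$. Choose an $A[\frac1p]\otimes K_0$-basis $\delta$ of $\varphi^*D$; after shrinking $\operatorname{Spec}A$ this is harmless, since $A$ is finite flat and we may work locally. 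Let $X$ be the change of basis matrix with $\beta_0 = X^{-1}\delta$, viewed as an element of $\operatorname{GL}_d(\fS_{A[\frac1p]})$ after completion. Using that $\varphi^*N_\nabla$ is defined by \eqref{eq-extending monodromy} with $N_\nabla(\delta)=0$, we get that $\varphi^*N_\nabla$ kills $\delta$ and acts on coefficients by $\frac{\varphi(E(u))}{p}u\frac{d}{du}$. A direct Leibniz computation in the basis $\beta_0$ then gives the formula $\frac{\varphi(E(u))}{p}(X^{-1}u\frac{d}{du}X + u\frac{d}{du})$, which is the second bullet. Some care is needed with the order $X$ versus $X^{-1}$; I will fix the convention so that it matches $\operatorname{dlog}$ as in Proposition~\ref{prop-loop fixed and loop derivative fixed}.

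\emph{First bullet.} In the basis $\delta$, the $\kappa$-completion of $\fM_e = \varphi_\fM^{-1}(E(u)^h\fM)$ should be the standard lattice $\sum_j \operatorname{Fil}^j D_{K,\kappa} \otimes (u-\kappa(\pi))^{h-j}$. This is Kisin's description of $\varphi^*\fM$ near the points $u=\kappa(\pi)$ in terms of the Hodge filtration (\cite[\S1.2]{Kis05}; cf.\ Definition~\ref{def-Hodge types}): $\varphi_\fM$ identifies $\fM[\frac1{E}]$-lattices with $\operatorname{Fil}^0$ of $D_K\otimes \widehat{\fS}_{\kappa}[\frac{1}{u-\kappa(\pi)}]$. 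Here $\operatorname{Fil}^0$ is built from the Hodge filtration $\operatorname{Fil}^j D_{K,\kappa}$, and $\varphi^*D\otimes K \cong D_K$ via $u\mapsto\kappa(\pi)$. Multiplying by $E(u)^h$ and using that the other factors of $E(u)$ are units at $\kappa(\pi)$ gives exactly the image of the filtration under \eqref{eq-twisted embedding of FL}. The Hodge type $\lambda$ is what guarantees that this filtration is of type $\lambda_\kappa$. So $X_\kappa\cdot\Psi(\fM,\beta_0)_\kappa$ is the point of $\operatorname{FL}_{\lambda_\kappa}[\frac1p]$ attached to the $\kappa$-part of the Hodge filtration.

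\emph{Main obstacle.} The main obstacle is making the lattice identification in the first bullet precise with coefficients $A[\frac1p]$ rather than a field. In particular, I must check that the filtration pieces are $A[\frac1p]$-projective (using \cite[Proposition 4.7.2]{EGstack}) and that the constant-term identification $\varphi^*D\otimes_{K_0}K\cong D_K$ matches Kisin's normalisation, including the Frobenius twist. I would handle this first over $E$-points, where it is classical, and then globalise. Globalising uses that both sides are determined by projective modules, and that $A[\frac1p]$ is a finite product of Artinian-local-over-field pieces, so one can argue factor by factor.
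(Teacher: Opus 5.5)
Your proposal is correct and is essentially the paper's own argument. The paper likewise defines $X$ by $\beta_0=\overline{\beta}X$ for a basis $\overline{\beta}$ of $\varphi^*D$ under the Frobenius twist of \eqref{eq-M and D comparison}, and gets the first bullet from Kisin's description of the lattice near $\kappa(\pi)$ via the Hodge filtration, with the twist by $h$ (from using $\fM_e$ rather than $\varphi_{\fM}^{-1}(\fM)$) being exactly the one in \eqref{eq-twisted embedding of FL}. Fix your normalisation as $\beta_0=\delta X$ with bases written as row vectors: this is what makes the same $X$ work in both bullets. (The paper also sketches an alternative route via \eqref{eq-Griffiths trans} and the converse in Proposition~\ref{prop-loop fixed and loop derivative fixed}.)
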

\begin{proof}
    As explained in \cite[\S6]{B25} this follows from Kisin's initial construction in \cite{Kis05} of the Breuil--Kisin modules associated to crystalline representations. Note, however, that the point $\Psi(\fM,\beta_0)_\kappa$ defined here is a twist of that in \cite{B25} since we use $\fM_e$ in place of $\varphi_{\fM}^{-1}(\fM)$. This mirrors the twist by $h$ appearing in the embedding  $\operatorname{FL}_{\lambda_\kappa} \hookrightarrow \operatorname{Gr}_{\leq \lambda_\kappa^*}^{(\kappa)}$ from \eqref{eq-twisted embedding of FL}. This discussion shows additionally that $X$ is determined, up to right translation by a constant matrix in $\operatorname{GL}_d(A[\frac{1}{p}])$, by the identity $\beta_0 = \overline{\beta} X$ for some choice of $A[\frac{1}{p}]$-basis $\overline{\beta}$ of $\varphi^*D$ under the Frobenius twist of \eqref{eq-M and D comparison}.

    The statement can also be deduced as follows. The assertion in \eqref{eq-Griffiths trans} that $E(u)\varphi^*N_\nabla$ stabilises $\fM_e \otimes_{\fS} \mathcal{O}^{\operatorname{rig}}[\frac{1}{\varphi(\lambda)}]$ combined with Proposition~\ref{prop-loop fixed and loop derivative fixed} implies the existence of $X_\kappa$ satisfying the second bullet point, and with $X_\kappa \cdot \Psi(\fM,\beta_0)_\kappa \in \operatorname{FL}_\nu[\frac{1}{p}]$ for some $\nu$. The description of what it means for $\fM$ to have Hodge type $\nu$ in Definition~\ref{def-Hodge types} ensures $\nu = \lambda_\kappa$. 
\end{proof}

The next lemma, which follows directly from the definitions, shows how to translate between the $A[\frac{1}{p}]$-valued point $\Psi(\fM,\beta_0)_\kappa$ and the $A$-valued point $\Psi(\fM_{i,\tau},\beta_{i-1})$.

\begin{lemma}\label{lem-translate between points in the grassmannian}
    Suppose that $g_{i-1} \in \operatorname{GL}_d(\fS_A[\frac{1}{(u-\pi_1)\ldots(u-\pi_{i-1})}])$ is such that $\beta_{i-1} = \beta_0 g_{i-1}$. If $\kappa = \kappa(i,\tau)$ then
    $$
    \Psi(\fM,\beta_0)_{\kappa} = g_{i-1,\kappa} \cdot \Psi(\fM_i,\beta_{i-1})[\tfrac{1}{p}]
    $$
    for $g_{i-1,\kappa} \in L^{+,(\kappa)}G(A[\frac{1}{p}])$ the $\kappa$-th part of $g_{i-1}$.
\end{lemma}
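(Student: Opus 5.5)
The plan is to unwind both sides into statements about lattices inside the $\kappa$-th completion $\varphi^*\fM\otimes_{\fS_A}A[\tfrac1p][[u-\kappa(\pi)]]$, and then check that the two points are the same submodule.

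First, we identify each side with a lattice. By Definition~\ref{def-convdef} and flatness of $A$, $\fM_i=\varphi^*\fM\cap\varphi_{\fM}^{-1}(E_i(u)^h\fM)$. For $j<i$ the factor $u-\pi_j$ is a unit in $A[\tfrac1p][[u-\kappa(\pi)]]$, since $\kappa(i,\tau)(\pi)-\kappa(j,\tau)(\pi)$ is nonzero and becomes invertible after inverting $p$. For $j>i$ the factors are likewise units. Hence, after inverting $p$ and taking the $\kappa$-th part, we get
\[
\fM_{i,\kappa}[\tfrac1p]=\fM_{e,\kappa}[\tfrac1p] \quad\text{and}\quad \fM_{i-1,\kappa}[\tfrac1p]=\varphi^*\fM_\kappa[\tfrac1p],
\]
both inside the $\kappa$-completion. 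With these identifications:
\begin{itemize}
\item $\Psi(\fM,\beta_0)_\kappa$ is the lattice $\fM_{e,\kappa}[\tfrac1p]$, written in coordinates via $\beta_0$;
\item $\Psi(\fM_{i,\tau},\beta_{i-1})[\tfrac1p]$ is the lattice $\fM_{i,\kappa}[\tfrac1p]$, written in coordinates via $\beta_{i-1}$.
\end{itemize}

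Next, we make the change of basis explicit. We write $\beta_{i-1}=\beta_0g_{i-1}$. Because $E_{i-1}(u)^h\fM_0\subset\fM_{i-1}\subset\fM_0$, the matrix $g_{i-1}$ has entries in $\fS_A$ and its inverse has entries in $\fS_A[1/E_{i-1}(u)]$. Since $E_{i-1}$ is invertible in the $\kappa$-completion, the $\kappa$-th part $g_{i-1,\kappa}$ lies in $L^{+,(\kappa)}G(A[\tfrac1p])$. A vector whose coordinates in $\beta_{i-1}$ are $x$ has coordinates $g_{i-1,\kappa}x$ in $\beta_0$. So the lattice with $\beta_{i-1}$-coordinates $\cE$ has $\beta_0$-coordinates $g_{i-1,\kappa}\cdot\cE$. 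Applying this to $\cE=\Psi(\fM_{i,\tau},\beta_{i-1})[\tfrac1p]$ and using the first step gives the claimed identity.

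The main thing to check, rather than a real obstacle, is the compatibility of the identifications. The isomorphism $\fS_{A,\tau}\cong A[[u-\kappa(\pi)]]$ used in Construction~\ref{const-definition of Psi} must agree, after inverting $p$, with the projection onto the $\kappa$-factor of $\fS_{A[1/p]}\cong\prod_\kappa A[\tfrac1p][[u-\kappa(\pi)]]$. This holds because both are completion at the ideal $(u-\kappa(\pi))$ on the $\tau$-part.
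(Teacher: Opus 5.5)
Your proposal is correct and is exactly the definitional unwinding the paper intends; the paper gives no separate proof and only remarks that the lemma follows directly from the definitions. The argument is: on the $\kappa(i,\tau)$-factor the factors $u-\pi_j$ with $j\neq i$ become units after inverting $p$, so $\fM_{i-1}$ agrees with $\fM_0$ and $\fM_i$ agrees with $\fM_e$ there, and the change of basis $\beta_{i-1}=\beta_0g_{i-1}$ then translates the coordinates. (It is slightly cleaner to obtain these two equalities directly from the sandwich $(u-\pi_j)^h\fM_{j-1}\subset\fM_j\subset\fM_{j-1}$ rather than from the intersection formula, since that avoids checking that intersections commute with the base change.)
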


\begin{proof}[Proof of Proposition~\ref{prop-in schubert variety}]
 	Proposition~\ref{prop-translate in flag} and Lemma~\ref{lem-translate between points in the grassmannian} give
    \begin{equation}\label{eq-containment in flag after 1/p}
    \left( X_\kappa g_{i-1,\kappa} \right) \cdot \Psi(\fM_{i,\tau},\beta_{i-1})[\tfrac{1}{p}] \in \operatorname{FL}_{\lambda_\kappa}[\tfrac{1}{p}] 
    \end{equation}
   Thus, $\Psi(\fM_{i,\tau},\beta_{i-1})[\tfrac{1}{p}] \in \operatorname{Gr}_{\leq \lambda_\kappa^*}^{(\kappa)}(A[\frac{1}{p}])$ and so $\Psi(\fM_{i,\tau},\beta_{i-1}) \in \operatorname{Gr}_{\leq \lambda_\kappa^*}^{(\kappa)}(A)$ as claimed. \end{proof}

 \begin{proof}[Proof of Theorem~\ref{thm-integral equations1}]  
 	The first thing we check is that if \eqref{eq-key identity} holds for all $\tau$ then 
    $$
    \frac{E_i(u)}{u}N^\varphi(\fM_{i}) \subset \fM_i
    $$
    In particular, this means \eqref{eq-containment} holds with $i-1$ replaced by $i$. For this notice that, using the basis $\beta_{i-1}$, we can express the derivation $\frac{E_{i}(u)}{u}N^\varphi$ on $\fM_{i-1}$ as
    $$
    \cN^{(i)} + \Bigg( E_{i-1}(u) c^*(u) \Bigg) (u-\pi_i)\frac{d}{du}
    $$
    for an endomorphism $\cN^{(i)}$ of $\fM_{i-1}$. Note $\cN^{(i)} \equiv 0$ modulo $(u-\pi_i)$ since $\frac{E_{i-1}(u)}{u}N^\varphi$ already stabilises $\fM_{i-1}$. For each $\tau \in \cJ_0$ we can therefore apply Proposition~\ref{prop-loop fixed and loop derivative fixed} with $\kappa = \kappa(i,\tau)$ and $\alpha(u)$ equal the $\tau$-th part of $E_{i-1}(u)c^*(u)$. In particular, the implication $(2) \Rightarrow (3)$ of Proposition~\ref{prop-loop fixed and loop derivative fixed} ensures the $\tau$-th part of $\frac{E_i(u)}{u}(\fM_{i})$ lies inside the $\tau$-th part of $\fM_i$, as required.

    Since the containment in \eqref{eq-containment} is automatic when $i=1$  (as $N^\varphi \equiv 0$ modulo $u$ on $\varphi^*\fM$ from the definition in Proposition~\ref{prop-create monodromies}) an inductive argument on $1 \leq i \leq e$ therefore allows us to assume \eqref{eq-containment}. It just remains to prove \eqref{eq-key identity} for $i$.

    Combining \eqref{eq-containment in flag after 1/p} with the implication $(1) \Rightarrow (2)$ from Proposition~\ref{prop-loop fixed and loop derivative fixed} shows \begin{equation}\label{eq-section but Nnabla}
 		\left( \alpha(u)  \operatorname{dlog}(X_\kappa g_{i-1,\kappa}) +  \alpha(u) \partial\right)  \cdot v_{\fM,\kappa} =  \left( \alpha(u)|_{u = \kappa(\pi)}\sum_{i=1}^d  \sum_{j = \lambda_{\kappa,i}^*}^{h-1} j \right) v_{\fM,\kappa}
\end{equation}
    for $\alpha(u)$ equal the $\tau$-th part of $c^*(u) E_{i-1}(u)$ and $\partial = (u-\kappa(\pi))\frac{d}{du}$. Recall that here we are using the basis $\beta_{i-1}$ to view $\alpha(u)  \operatorname{dlog}(X_\kappa g_{i-1,\kappa}) +  \alpha(u) \partial$ as a derivation on $\fM_{i-1,\tau} \cong A[[u-\kappa(\pi)]]^d$. To prove the theorem we need to identify this derivation with $\frac{E_{i}}{u}N^\varphi$. For this we compute
 	\begin{equation*}
    \begin{aligned}
        	\varphi^* N_\nabla (\beta_{i-1}) &= \varphi^*N_\nabla(\beta_0) g_{i-1} +  \tfrac{\varphi(E(u))}{p} \beta_0 \left(u\tfrac{d}{du}\right)(g_{i-1}) \\
            & =  \beta_0 \tfrac{\varphi(E(u))}{p}X^{-1} \left(u\tfrac{d}{du} \right)(X) g_{i-1} +  \tfrac{\varphi(E(u))}{p} \beta_0 \left(u\tfrac{d}{du}\right)(g_{i-1}) \\ 
            &= \beta_{i-1} (Xg_{i-1})^{-1} \left( \tfrac{\varphi(E(u))}{p}u\tfrac{d}{du} \right)(Xg_{i-1})
    \end{aligned}
 	\end{equation*}
    where the second equality is using point two from Proposition~\ref{prop-translate in flag}. Multiplying this identity by $\frac{E_i(u)}{u}$ and using that $N^\varphi \equiv \varphi^*N_{\nabla}$ modulo $E(u)^h \varphi^*\fM \otimes_{\fS_A} \fS_{A[\frac{1}{p}]}$ and $c^*(u) \equiv \frac{\varphi(E(u))}{u}$ modulo $E(u)^h$ gives 
    $$
    \frac{E_i(u)}{u} N^\varphi(\beta_{i-1}) \equiv \beta_{i-1} (Xg_{i-1})^{-1} \left( c^*(u) E_{i-1}(u) (u-\pi_i) \tfrac{d}{du} \right)(X g_{i-1})    $$
    modulo $(u-\pi_i)^h \fM_{i-1} \otimes_{\fS_A} \fS_{A[\frac{1}{p}]}$.
    Passing to the $\kappa$-th part of this identity shows $\operatorname{dlog}(X_\kappa g_{i-1,\kappa}) +  \alpha(u) \partial \equiv \frac{E_{i}(u)}{u}N^\varphi$ modulo $(u-\kappa(\pi))^h$ as derivations on $\fM_{i-1,\tau}$. We can therefore substitute $\frac{E_i(u)}{u}N^\varphi$ into \eqref{eq-section but Nnabla} which proves the theorem.
 \end{proof}

\begin{proof}[Proof of the equivalence of Theorem~\ref{thm-integral equations1} and Theorem~\ref{thm-integral equations2}]
    This follows from Lemma~\ref{lem-eigenvector vs equations}, applied with $\alpha(u) = c^*(u) E_{i-1}(u)$ and $\cN=  \frac{E_i(u)}{u} N^\varphi$. Note that this is valid since $\cN \equiv 0$ modulo $u-\kappa(\pi)$.
\end{proof}

\section{Equations for quasi-minuscule Hodge types}

 Here we maintain the setup from Section~\ref{sec-setup cont} and fix a Hodge type $\lambda$ concentrated in degrees $[0,h]$ with $h\leq p-1$. Theorem~\ref{thm-embedding} therefore yields a closed immersion $\cY^{\operatorname{cr}}_{\leq h} \otimes_{\bZ_p} \bF \rightarrow Y^\nabla_{\leq h} \otimes_{\bF_p} \bF$. In this section we add convolutions structures to this morphism and, by imposing mod~$p$ versions of the equations from Theorem~\ref{thm-integral equations2}, give a more refined description of the image of $\cY^{\operatorname{cr},\operatorname{conv}}_{\lambda}$.

\subsection{New loci in the affine Grassmannian}\label{sec-loci in aff grass}

First, we consider an arbitrary tuple of integers $\mu = (\mu_{1} \geq \ldots \geq \mu_{d})$ contained in $[0,h]$. To fit the conventions from Section set $\mu^* = (\mu_1^* \geq \ldots \geq \mu_d^*)$ with $
\mu_i^* = h - \mu_{d-i}$ and consider  $\operatorname{Gr}_{\leq \mu^*} := \operatorname{Gr}_{\leq \mu^*}^{(\kappa)} \otimes_{\cO} \bF$
(which is independent of the choice of $\kappa$). We also set $\mathfrak{g} = \operatorname{Lie}G$ and write $\mathfrak{g}[u]_{< h-1} \subset \mathfrak{g}[[u]]$ for the affine space of polynomials in $\mathfrak{g}$ of degree $< h-1$. Then, for each $1 \leq i \leq e$ and each unit $c(u) \in \bF[[u]]^\times$ we use the equations in Theorem~\ref{thm-integral equations2} to define closed subschemes
$$
\mathcal{S}_{\mu,i,c(u)} \subset \operatorname{Gr}_{\leq \mu^*} \times u\mathfrak{g}[u]_{<h-1}
$$
Specifically, $\mathcal{S}_{\mu,i,c(u)}$ is defined as the locus of $(\cE,\cN)$ on which the the linear functionals
$$
\Big( \cN + c(u) u^i \tfrac{d}{du} \Big) \cdot w - \begin{cases}
     \left( c(u)\Big|_{u =0} \sum_{l=1}^d  \sum_{j = 1}^{\mu_{l}^*-1} j \right) w & \text{ if $i =1$} \\
     0 & \text{ if $i >1$}
\end{cases}
$$
vanish, for all $w \in \bigwedge^r \cV_h$ on which the constant subgroup $\bG_m \subset \operatorname{Aut}^+$ acts with weight $ < \sum_{l=1}^d \sum_{j=1}^{\mu_{l}^*-1} j $ and where, as in \eqref{eq-pluckerembedding}, $r = \mu_1^*+\ldots+\mu_d^*$. As the following lemma makes precise, these closed subschemes describe a family discretely interpolating between constant flag varieties and loci in the affine Springer space.

\begin{lemma}\label{lem-flag to springer}
    \begin{enumerate}
        \item If $i =1$ and $\sum_{\ell=1}^d \sum_{j = \nu_\ell^*}^{h-1} j < p$ for all $\nu \leq \mu$ then $\mathcal{S}_{\mu,i,c(u)} = \operatorname{FL}_{\mu}$ via the embedding in \eqref{eq-twisted embedding of FL}.
        \item If $i \geq h$ then $\cS_{\mu,i,c(u)}$ is contained in the affine Springer locus consisting of $(\cE,\cN)$ with $\cN(\cE) \subset \cE$.
    \end{enumerate} 
\end{lemma}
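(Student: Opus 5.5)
For part (1) I plan to move $\cN$ into the constant loop-rotation derivation by a gauge transformation, and then quote Corollary~\ref{cor-equation for flag variety}. Part (2) should reduce to Lemma~\ref{lem-eigenvector vs equations} and Proposition~\ref{prop-loop fixed and loop derivative fixed} after noticing that the $u^i\frac{d}{du}$ term disappears.

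\emph{Part (1).} Take $i=1$. Modulo $p$ we have $\kappa(\pi)=0$, so $\partial=u\frac{d}{du}$ and the relevant derivation is $\cN+c(u)\partial$, with $\cN\equiv 0$ modulo $u$ and $c(u)$ a unit.
\begin{enumerate}
\item As in the converse part of Proposition~\ref{prop-loop fixed and loop derivative fixed}, use $h\leq p$ (so $(h-1)!$ is invertible) to solve $c(u)\operatorname{dlog}(X_\cN)\equiv \cN$ modulo $u^h$. Normalise $X_\cN(0)=1$, so that $X_\cN$ is a morphism $u\mathfrak{g}[u]_{<h-1}\to L^{+}G$.
\item By $L^+G\rtimes\operatorname{Aut}^+$-equivariance of the Pl\"ucker embedding and of the pairing \eqref{eq-duality}, the map $(\cE,\cN)\mapsto (X_\cN\cdot\cE,\cN)$ is an automorphism of $\operatorname{Gr}_{\leq\mu^*}\times u\mathfrak{g}[u]_{<h-1}$. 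It carries $\cS_{\mu,1,c(u)}$ onto the locus cut out by the same linear functionals with $\cN=0$.
\item With $\cN=0$, apply Lemma~\ref{lem-eigenvector vs equations} ((3)$\Leftrightarrow$(1), with $\alpha=c(u)$). This says the equations are equivalent to $v$ being a $c(u)\partial$-eigenvector with eigenvalue $c(0)\sum_\ell\sum_{j\geq\mu^*_\ell}j$.
\item Replace $c(u)\partial$ by $c(0)\partial$. This is legitimate because $(c(u)-c(0))\partial$ raises $\bG_m$-weight, and the same vanishing argument as in the proof of Lemma~\ref{lem-eigenvector vs equations} kills the higher-weight contributions.
\item Invoke Corollary~\ref{cor-equation for flag variety}. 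The hypothesis $\sum_\ell\sum_{j\geq\nu_\ell^*}j<p$ for all $\nu\leq\mu$ says precisely that this eigenvalue condition cuts out $\operatorname{FL}_\mu$ scheme-theoretically, not just a union of the $\operatorname{FL}_\nu$.
\end{enumerate}
Hence $\cS_{\mu,1,c(u)}$ is identified with $\operatorname{FL}_\mu\times u\mathfrak{g}[u]_{<h-1}$, i.e.\ with $\operatorname{FL}_\mu$ fibrewise over the $\cN$-factor. This is how I read the asserted equality.

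The main obstacle is the scheme-theoretic (not just set-theoretic) identification over a non-reduced base, together with the rescaling from $c(u)$ to $c(0)$. Both should follow from the weight-filtration argument already used in the proof of Lemma~\ref{lem-eigenvector vs equations}. Equivariance of the \emph{full} equations under the gauge change (and not only their top-weight parts) must be checked carefully.

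\emph{Part (2).} Take $i\geq h$. On $\cV_h=\bF[u]^d/u^h$ the operator $c(u)u^i\frac{d}{du}$ sends $u^je_\ell$ to $jc(u)u^{i+j-1}e_\ell$. This is $0$ if $j=0$, and lies in $u^h\bF[u]^d$ if $j\geq1$ since $i+j-1\geq h$. So it acts by zero on $\cV_h$, and hence on all exterior powers. The defining equations of $\cS_{\mu,i,c(u)}$ therefore become the vanishing of $\cN\cdot w$ on $v_\cE$ for all $w$ of small weight.

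Now apply Lemma~\ref{lem-eigenvector vs equations} with $\alpha=0$ and $\cN_0=\cN\equiv0$ mod $u$: condition (3) gives (1), namely $\cN\cdot v_\cE=0$. Then the implication $(2)\Rightarrow(3)$ of Proposition~\ref{prop-loop fixed and loop derivative fixed}, applied with $\alpha=0$ (working Zariski locally so that $v_\cE=v_1\wedge\dots\wedge v_{dh-r}$), gives $\cN(\cE)\subset\cE$. This argument is formal and works over arbitrary $\bF$-algebras, so the containment holds scheme-theoretically.
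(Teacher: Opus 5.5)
Part (2) of your proposal is the paper's argument. Once you note that $c(u)u^i\frac{d}{du}$ kills $\cV_h$ for $i\ge h$, Lemma~\ref{lem-eigenvector vs equations} turns the equations into $\cN\cdot v_\cE=0$. The implication $(2)\Rightarrow(3)$ of Proposition~\ref{prop-loop fixed and loop derivative fixed} then gives $\cN(\cE)\subset\cE$.

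Part (1) follows the paper's route, a reduction to Corollary~\ref{cor-equation for flag variety}, but your step 4 is not justified by the argument you give.
\begin{itemize}
\item \emph{Step 4.} The vanishing argument in Lemma~\ref{lem-eigenvector vs equations} only kills sections of $\bG_m$-weight $>W:=\sum_\ell\sum_{j=1}^{\mu^*_\ell-1}j$. For $w$ of weight $<W$, the correction $(c(u)-c(0))\partial\cdot w$ has weight $\ge\operatorname{wt}(w)+1$, which can still be $\le W$ once $h\ge3$. For example, writing $c(u)=c_0+c_1u+\cdots$, one has $c(u)\partial(ue_\ell)\equiv c_0ue_\ell+c_1u^2e_\ell$ modulo $u^3$. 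So weight considerations alone do not show that the $c(u)\partial$- and $c(0)\partial$-eigenvector conditions cut out the same subscheme.
\item \emph{Step 2.} This has a milder version of the issue, which you flagged yourself. $X_\cN$ does not preserve the span of the weight-$<W$ vectors, so equivariance takes you outside the truncated family of functionals. This is harmless if you first apply Lemma~\ref{lem-eigenvector vs equations} to pass to the eigenvector condition for $\cN+c(u)\partial$, which is visibly gauge-equivariant.
\end{itemize}

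The conclusion of step 4 is true, but the reason is that each $\operatorname{FL}_\nu$ is fixed by all of $\operatorname{Aut}^+$, not a weight filtration. The cleanest repair is to drop steps 1--4 and rerun the proof of Corollary~\ref{cor-equation for flag variety} with $\cN+c(u)\partial$ in place of $\partial$.
\begin{itemize}
\item After Lemma~\ref{lem-eigenvector vs equations}, the implication $(2)\Rightarrow(3)$ gives $(\cN+c(u)\partial)(\cE)\subset\cE$.
\item Stability is insensitive to rescaling by the unit $c(u)$. So the converse part of Proposition~\ref{prop-loop fixed and loop derivative fixed} (where $h\le p$ is used) gives $X_\cN\cdot\cE\in\operatorname{FL}_\nu$ for some $\nu\le\mu$, on each connected component.
\item The implication $(1)\Rightarrow(2)$ holds for arbitrary $\alpha(u)$. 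This is because $\operatorname{Aut}^+$ acts on the Pl\"ucker line of a point of $\operatorname{FL}_\nu$ through $f\mapsto f(0)^{N_\nu}$, where $N_\nu=\sum_\ell\sum_{j=\nu^*_\ell}^{h-1}j$. Hence $v_\cE$ has eigenvalue $c(0)N_\nu$.
\item Compare with $c(0)N_\mu$. Since $c(0)\in\bF^\times$, $N_\nu\neq N_\mu$ for $\nu<\mu$, and all these integers are $<p$, this forces $\nu=\mu$. The reverse inclusion is $(1)\Rightarrow(2)$ again.
\end{itemize}
Alternatively, since $h\le p$ there is $\phi\in\operatorname{Aut}^+$ conjugating $c(u)\partial$ to $c(0)\partial$ on $\cV_h$. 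Such a $\phi$ preserves $\operatorname{Gr}_{\le\mu^*}$ and fixes $\operatorname{FL}_\mu$ pointwise.

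In the lemma's only application, $(\mathbf{B}_1)$ with $h=2$, none of this arises. There $\cN_{0,\tau}=0$, because the matrix of $N_0^\varphi$ on $\beta^0\otimes1$ is the Frobenius twist of that of $N_0$ and so is divisible by $u^p$. Also $c(u)\partial=c(0)\partial$ on $\cV_2$.
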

\begin{proof}
    The first part follows from Corollary~\ref{cor-equation for flag variety} and the second from the implication $(2) \Rightarrow (3)$ in Proposition~\ref{prop-loop fixed and loop derivative fixed}.
\end{proof}

\begin{exam}\label{exam}
    Here we describe the equations cutting out $S_{\mu,i,c(u)}$ explicitly in the case  $d=3$, $h=2$, and $\mu = (2,1,0)$. In this case $\mu^*=\mu$ and $r=3$. Write $e_1,e_2,e_3$ for the standard basis of $\bF[[u]]^3$ and  index the standard $\bF$-basis of $\cV_h = A[[u]]^3/ u^2A[[u]]^3$ as 
    $$
    (y_0,y_1,y_2,y_3,y_4,y_5) = (e_1,e_2,e_3,ue_1,ue_2,ue_3) 
    $$
    and write $y_{i,j,k} := y_i \wedge y_j \wedge y_k \in \bigwedge^r \cV_h$. Notice $y_{0,1,2} \in \bigwedge^3 \cV_h$ spans the unique line on which $\bG_m \subset \operatorname{Aut}^+$ acts with weight $< 1 = \sum_{l=1}^d \sum_{j=1}^{\mu_{l}^*-1} j$. Thus, $\cS_{\mu,i,c(u)}$ is the vanishing of the a single equation. Concretely, these are equations can be computed as:
    \begin{itemize}
        \item For each $i \geq 1$,  $\cS_{\mu,i,c(u)}$ is the vanishing locus of the section
    $$    
    \begin{aligned}
     y_{1,2,3}n_{1,1}+y_{1,2,4}n_{2,1} + y_{1,2,5} n_{3,1}-y_{0,2,3}n_{1,2} -y_{0,2,4}n_{2,2} \\-y_{0,2,5}n_{3,2}
     +y_{0,1,3}n_{1,3}+y_{0,1,4}n_{2,3}+y_{0,1,5}n_{3,3} \\
     +\begin{cases}
         y_{0,1,2}c(u)\Big|_{u=0}
 & \text{ if $i=1$} \\
 0 & \text{ if $i\geq 2$}
     \end{cases}\end{aligned}  
    $$
    where $n_{ij}$ denotes the function on $(\cE,\cN)$ with $\cN = u\big( \begin{smallmatrix}
        n_{1,1} & n_{1,2} & n_{1,3} \\ n_{2,1} & n_{2,2} & n_{2,3} \\
        n_{3,1} & n_{3,2} & n_{3,3}
    \end{smallmatrix}\big)$ modulo $u^2$.
    \end{itemize}
    For this specific $\mu$ one can check that the $\cS_{\mu,i,c(u)}$ is exactly the underlying reduced locus in $\operatorname{Gr}_{\leq \mu^*} \times u\mathfrak{g}[u]_{\leq h-1}$ of the affine Springer locus defined by $\cN(\cE) \subset \cE$.
\end{exam}

 \subsection{Equations on the special fibre}
 
 It is convenient to work inside the following ambient space: Let $Z^{\operatorname{conv}}_{\leq \lambda}$  denote the algebraic stack over $\operatorname{Spec} \bF$ whose $A$-points classify tuples $(\fM,N_0)$ as follows where
 \begin{itemize}
     \item $\fM$ is a Breuil--Kisin module over $A$ of height $\leq h$ with convolution structure $\fM_\bullet$ such that, Zariski locally on $\operatorname{Spec}A$, $\fM_i$ admits $\fS_A$-bases $\beta_i$ with
 $$
	\Psi(\fM_{i,\tau},\beta_{i-1}) \in \operatorname{Gr}_{\leq \lambda_\kappa^*}^{(\kappa)}(A)
$$
for all $\kappa = \kappa(i,\tau)$. Here $\Psi(\fM_{i,\tau},\beta_{i-1})$ is defined exactly as in Construction~\ref{const-definition of Psi}, with the usual convention that $\beta_0 := \varphi_{\fM}(E(u)^{-h}\beta_e) \otimes 1$, while $\lambda_\kappa^*$ is as in Proposition~\ref{prop-in schubert variety}.
\item $N_0$ an $\fS_A$-linear endomorphism of $\fM/u^{e+1}\fM$ with $N_0\equiv 0$ modulo $u\fM$
 \end{itemize}
 Let $\widetilde{Z^{\operatorname{conv}}_{\leq \lambda}}$ denote the $\bF$-scheme over $Z^{\operatorname{conv}}_{\leq \lambda}$ with $A$-points classifying tuples $(\fM,N_0,\beta_\bullet)$ with $(\fM,N_0) \in Z^{\operatorname{conv}}_{\leq \lambda}$ and $\beta_i$ an $\fS_A$-basis of $\fM_i$ for each $1 \leq i \leq e$. Then $\widetilde{Z^{\operatorname{conv}}_{\leq \lambda}}$ is a $\prod_{\kappa:K \hookrightarrow\cO[\frac{1}{p}]} L^{+,(\kappa)}G$-torsor over $Z^{\operatorname{conv}}_{\leq \lambda}$,  with the group operating on the bases elements $\beta_\bullet$.

\begin{prop}\label{prop-cm and dim}
    $Z^{\operatorname{conv}}_{\leq \lambda}$ is Cohen--Macaulay of dimension $( \sum_{\kappa \in \cJ} \left(   \operatorname{dim} \operatorname{Gr}_{\leq \lambda_\kappa^*}^{(\kappa)} \otimes_{\cO} \bF \right)) + \sum_{\tau \in \cJ_0} e d^2$. 
\end{prop}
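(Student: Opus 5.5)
The plan is to write $Z^{\operatorname{conv}}_{\leq \lambda}$ as a smooth quotient of a product of Schubert varieties times an affine space, using the torsor $\widetilde{Z^{\operatorname{conv}}_{\leq \lambda}}$. Since $\widetilde{Z^{\operatorname{conv}}_{\leq \lambda}}\to Z^{\operatorname{conv}}_{\leq \lambda}$ is a torsor under the pro-smooth group $\prod_\kappa L^{+,(\kappa)}G$, Cohen--Macaulayness can be checked upstairs. For the dimension count, I will pass to a finite-level quotient $L^{+,(\kappa)}G\to G(\bF[u]/u^N)$ for $N$ large, through which everything factors. Because each stratum $\operatorname{Gr}_{\leq\lambda^*_\kappa}$ is bounded by $h$, the relevant data only depend on the bases modulo a fixed power of $u$.

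\textbf{Step 1: coordinates on the torsor.} A point $(\fM,\fM_\bullet,\beta_\bullet)$ is determined as follows. The choice of $\beta_0$ and $\beta_e$ determines the Frobenius: $\beta^0=\varphi_\fM(E(u)^{-h}\beta_e)$, so $\varphi_\fM$ is recovered from the change-of-basis matrix between $\beta_e$ and $\varphi^*\beta^0=\beta_0$. For each $i$, the submodule $\fM_i\subset\fM_{i-1}$ is recorded by $\Psi(\fM_{i,\tau},\beta_{i-1})\in\prod_\tau\operatorname{Gr}^{(\kappa(i,\tau))}_{\leq\lambda^*_\kappa}$, and $\beta_i$ is a basis of that submodule. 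Iterating over $i=1,\ldots,e$ shows that the stack of $(\fM,\fM_\bullet)$ with bases is a successive $L^+G$-torsor over a convolution product
\[
\prod_\tau \operatorname{Gr}_{\leq\lambda^*_{\kappa(1,\tau)}}\widetilde{\times}\cdots\widetilde{\times}\operatorname{Gr}_{\leq\lambda^*_{\kappa(e,\tau)}}.
\]
The constraint that $\beta_0$ equals the Frobenius pullback of $\varphi_\fM(E^{-h}\beta_e)$ is a $\varphi$-twisted identification. Quotienting by the $L^+G$-action on $\beta_0$ then turns this into the quotient of the convolution product by $\varphi$-conjugation. As in the standard local model diagram of Pappas--Rapoport and Emerton--Gee, this quotient map is smooth. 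The derivation is the usual one: for a basis change $g$ of $\fM$, the map $g\mapsto\varphi(g)$ has zero differential, so the action is a torsor-type action.

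\textbf{Step 2: Cohen--Macaulayness and dimension.} Each $\operatorname{Gr}_{\leq\lambda^*_\kappa}\otimes\bF$ is Cohen--Macaulay by Proposition~\ref{prop-schubert vars geom}. Convolution products are Zariski-locally products, being iterated fibrations with Schubert-variety fibres, so they are Cohen--Macaulay of dimension $\sum_\kappa\dim\operatorname{Gr}_{\leq\lambda^*_\kappa}$. The stack of $(\fM,\fM_\bullet)$ is then smoothly equivalent to this convolution product, and the $\varphi$-conjugation quotient has relative dimension $0$, matching the Breuil--Kisin local model count. Finally, the extra datum $N_0$, an endomorphism of $\fM/u^{e+1}$ vanishing mod $u$, is a vector bundle of rank $\sum_\tau ed^2$ over this stack. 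Since vector bundles preserve Cohen--Macaulayness, this gives the stated dimension.

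\textbf{Main obstacle.} The delicate step is the smoothness and dimension-$0$ claim for the $\varphi$-twisted quotient in Step 1. This is the standard local model diagram for Breuil--Kisin modules with convolution, but making it rigorous here requires truncating $L^+G$ to a finite level. One must check that for $N$ large relative to $eh$ the $\varphi$-conjugation action of the kernel of truncation is free and absorbs the remaining ambiguity. I would first try to imitate the argument of \cite[\S5]{EGstack}, or that of \cite{B23b} in the convolution setting, which handles exactly this.
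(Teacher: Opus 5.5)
Your argument is correct and is, at heart, the same Pappas--Rapoport local model diagram argument that the paper sketches; the difference is in how the data are organized.

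\emph{The paper's route.} It keeps all the bases $\beta_1,\ldots,\beta_e$ and maps $\widetilde{Z^{\operatorname{conv}}_{\leq\lambda}}$ to the honest product $\prod_{\kappa}\bigl(\operatorname{Gr}^{(\kappa)}_{\leq\lambda_\kappa^*}\otimes_{\cO}\bF\bigr)\times\operatorname{Mat}_d(\fS_{\bF})/u^e$, recording $\Psi(\fM_{i,\tau},\beta_{i-1})$ and $u^{-1}N_0$. Cohen--Macaulayness and the dimension then come directly from Proposition~\ref{prop-schubert vars geom}. The price is a ``cyclic'' twisted action of $\prod_\kappa L^{+,(\kappa)}G$, in which only the induced action on $\beta_0$ involves $\varphi$.

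\emph{Your route.} You quotient out the intermediate bases first. This leaves a single basis (of $\fM_e$, equivalently of $\fM$) and the classical twisted action $X\mapsto g^{-1}X\varphi(g)$ over the convolution product. The advantage is that the one-basis argument can be quoted essentially verbatim. The cost is the standard extra fact that the convolution product is Cohen--Macaulay of dimension $\sum_\kappa\dim\operatorname{Gr}_{\leq\lambda_\kappa^*}$, which follows from Zariski-local triviality of the relevant $L^+G$-torsors over Schubert varieties. The two pictures are related by passing between the step matrices $M_i$ (with $\beta_i=\beta_{i-1}M_i$) and their partial products $M_1\cdots M_i$.

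\emph{Two small corrections.}
\begin{enumerate}
\item What you quotient by the twisted action is not the convolution product itself. It is the $L^+G$-torsor over it that records the Frobenius matrix $X$.
\item Your heuristic that $g\mapsto\varphi(g)$ ``has zero differential'' is false here, because $\varphi$ is $A$-linear on $\fS_A$. The correct mechanism is the one you name in your last paragraph. Since $\varphi(\cK_N)\subset\cK_{pN}$ and $u^{eh}X^{-1}$ is integral, one gets $X\varphi(g)X^{-1}\in\cK_{pN-eh}\subset\cK_N$ for $g\in\cK_N$ once $(p-1)N\geq eh$. Hence $g^{-1}X\varphi(g)\in\cK_N X$, and the twisted and untwisted $\cK_N$-actions have the same orbits.
\end{enumerate}
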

\begin{proof}
   This is standard, so we just sketch the argument. Consider the map
   $$
\widetilde{Z^{\operatorname{conv}}_{\leq \lambda}} \rightarrow \prod_{\kappa \in \cJ} \left( \operatorname{Gr}_{\leq \lambda_\kappa^*}^{(\kappa)} \otimes_{\cO} \bF \right) \times \operatorname{Mat}_d(\fS_{\bF}) / u^e \operatorname{Mat}_{d}(\fS_{\bF})
   $$
   given by  $(\fM,N_0,\beta_i) \mapsto ((\Psi(\fM_{i,\tau},\beta_{i-1}))_{\kappa(i, \tau)},u^{-1}N_0)$. 
   For $N>>0$ relative to $\lambda$ this map factors through the quotient $\widetilde{Z^{\operatorname{conv}}_{\leq \lambda}} / \left( \prod_{\kappa \in \cJ} \cK_N^{(\kappa)} \right)$ where $\cK_N^{(\kappa)} \subset L^{+,(\kappa)}G$ is the congruence subgroup consisting of $g \equiv 1$ modulo $(u-\kappa(\pi))^N$. The basic observation, first made in \cite{PR09}, is that, possibly after further increasing $N$, the resulting factorisation is an $\prod_{\kappa \in \cJ} \left( L^{+,(\kappa)}G / \cK_N^{(\kappa)} \right)$-torsor for a new action of this group on $\widetilde{Z^{\operatorname{conv}}_{\leq \lambda}} /\cK_N$. See also \cite[9.7]{B24} for an account with similar notation to that considered here. It follows that $\widetilde{Z^{\operatorname{conv}}_{\leq \lambda}} /\left( \prod_{\kappa\in \cJ} \cK_N^{(\kappa)} \right)$ is a smooth cover of $\left(\prod_{\kappa \in \cJ} \operatorname{Gr}_{\leq \lambda_\kappa^*}^{(\kappa)} \otimes_{\cO} \bF \right) \times \operatorname{Mat}_d(\fS_{\bF}) / u^e \operatorname{Mat}_d(\fS_{\bF})$ with relative dimension $\operatorname{dim}_{\cO} \prod_{\kappa \in \cJ} \left( L^{+,(\kappa)}G / \cK_N^{(\kappa)} \right)$. This gives the claimed dimension of $Z^{\operatorname{conv}}_{\leq \lambda}$, and the Cohen--Macaulayness follows from that of $\operatorname{Gr}_{\leq \lambda_\kappa^*}^{(\kappa)} \otimes_{\cO} \bF$ in Proposition~\ref{prop-schubert vars geom}.
\end{proof}


\begin{const}\label{con-impose mod p equations}
      Just as in Definition~\ref{def-BK+monodromy stack}, if $(\fM,N_0) \in Z^{\operatorname{conv}}_{\leq \lambda}$ then we write $N_0^{\varphi}$ for the derivation on $\varphi^*\fM/u^{ep+1}\varphi^*\fM$ over $c(u)u\frac{d}{du}$ given by $m \otimes f \mapsto N_0(m) \otimes f + m \otimes c(u)u\frac{d}{du}(f)  $. We can then impose the following closed conditions on $\widetilde{Z_{\leq \lambda}^{\operatorname{conv}}}$ for $1 \leq i \leq e$:
	 \begin{itemize}
        \item[$(\mathbf{A}_i)$]The derivation $u^{i-1} N_0^{\varphi}$ on $\varphi^*\fM/ u^{ep+i}\varphi^*\fM$ stabilises $\fM_i/u^{ep+i}\varphi^*\fM$. 
	 	\item[$(\mathbf{B}_i)$] Assume $(\mathbf{A}_{i-1})$, so that $u^{i-1}N_0^{\varphi}$ is a derivation on $\fM_{i-1} / u^{ep+i} \varphi^*\fM$ which is $\equiv 0$ modulo $u\fM_{i-1} / u^{ep+i}\varphi^*\fM$  (note this is automatic if $i=1$). Thus, for each $\kappa = \kappa(i,\tau)$, there is a morphism 
        $$
        \widetilde{Z^{\operatorname{conv}}_{\leq \lambda}} \rightarrow \operatorname{Gr}_{\leq \lambda_\kappa^*} \times u \mathfrak{g}[u]_{< h-1}
        $$
        given by $(\fM,N_0,\beta_{\bullet}) \mapsto (\Psi(\fM_{i,\tau},\beta_{i-1}), \cN_{i-1,\tau})$ where $\cN_{i-1,\tau} \in u\mathfrak{g}[u]_{\leq h-1}$ is the reduction modulo $u^h$ of the matrix expressing the action of $u^{i-1}N_0^{\varphi}$ on the $\tau$-th part of $\beta_{i-1}$. Then condition $(\mathbf{B}_i)$ asks that $(\fM,N_0,\beta_\bullet)$ lies in the pullback of $S_{\lambda_\kappa,i,c(u)}$.
	 	\item[$(\mathbf{C})$] Assume $(\mathbf{A}_e)$. For any (equivalently, one) derivation $N$ on $\fM$ over $u^{e+1}\frac{d}{du}$ lifting $N_0$ one has a congruence
	 	$$
	 	u^e N_0^{\varphi} \equiv \varphi_{\fM}^{-1} \circ c(u)N \circ \varphi_{\fM} \text{ modulo } u^{e+1}\fM_e
	 	$$
	 	of operators on $\fM_e$.
	 \end{itemize}
\end{const}

     \begin{lemma}\label{lem-props of closed conditions}
         \begin{enumerate}
             \item $(\mathbf{B}_i) \Rightarrow (\mathbf{A}_i)$ for each $1 \leq i \leq e$.
             \item Assume that $\sum_{i=1}^d \sum_{j = \nu_i}^{h-1} j < p$ for all $\nu\leq \lambda_\kappa$ for $\kappa = \kappa(1,\tau)$. Then $(\mathbf{B}_1)$ implies $\Psi(\fM_{1,\tau},\beta_0)$ lies in the closed subscheme $\operatorname{FL}_{\lambda_\kappa} \subset \operatorname{Gr}_{\leq\lambda_\kappa^*}^{(\kappa)}$.
             \item Each of $(\mathbf{A}_i)$ and $(\mathbf{B}_i)$ are stable under the $\prod_{\kappa \in \cJ} L^{+,(\kappa)}G$-action on $\widetilde{Z^{\operatorname{conv}}_{\leq \lambda}}$ acting on the bases $\beta_\bullet$.
         \end{enumerate}
     \end{lemma}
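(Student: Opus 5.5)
For part (1) we induct on $i$, arguing $\tau$ by $\tau$ exactly as in the inductive step of the proof of Theorem~\ref{thm-integral equations1}. By $(\mathbf{B}_i)$ we may assume $(\mathbf{A}_{i-1})$, with $(\mathbf{A}_0)$ being vacuous since $N_0 \equiv 0$ modulo $u$. Then $u^{i-1}N_0^\varphi$ is a derivation on $\fM_{i-1}/u^{ep+i}\varphi^*\fM$ that vanishes modulo $u$. We want to show $u^{i-1}N_0^\varphi(\fM_i) \subset \fM_i$. Since $(u-\pi_i)^h\fM_{i-1} \subset \fM_i$ and $u-\pi_i \equiv u$ modulo the maximal ideal of $\cO$, the $\tau$-part of this containment depends only on the images in $\cV_h \otimes A \cong \fM_{i-1,\tau}/u^h$. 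Using the basis $\beta_{i-1}$, write the $\tau$-part of $u^{i-1}N_0^\varphi$ as
$$\cN_{i-1,\tau} + \alpha(u)\,u\tfrac{d}{du},$$
with $\alpha(u)$ a unit multiple of $u^{i-1}$. In characteristic $p$, $E_{i-1}(u)c^*(u)$ reduces to $u^{i-1}c(u)$, and this matches the derivation $\cN + c(u)u^i\frac{d}{du}$ appearing in the definition of $\cS_{\lambda_\kappa,i,c(u)}$. Condition $(\mathbf{B}_i)$ asserts that $(\Psi(\fM_{i,\tau},\beta_{i-1}),\cN_{i-1,\tau})$ satisfies the linear-functional equations. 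By Lemma~\ref{lem-eigenvector vs equations} (direction $(3)\Rightarrow(1)$) these are equivalent to the eigenvector equation. That lemma's proof is formal (wedge pairings and weight considerations) and is valid over any base, so this step goes through. The implication $(2)\Rightarrow(3)$ of Proposition~\ref{prop-loop fixed and loop derivative fixed}, whose proof is also characteristic-free, then gives stability of $\Psi(\fM_{i,\tau},\beta_{i-1})$, i.e.\ $(\mathbf{A}_i)$ on the $\tau$-part. Taking all $\tau$ gives $(\mathbf{A}_i)$.

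The main obstacle lies in the $i>1$ case. There the eigenvalue $\alpha(u)|_{u=0}\cdot(\text{weight})$ vanishes because $\alpha(0)=0$, so one must check that the constant $0$ in the definition of $\cS$ really corresponds to the eigenvalue in Lemma~\ref{lem-eigenvector vs equations}. It also has to be checked that reducing modulo $u^h$ (rather than $(u-\pi_i)^h$) loses nothing; this holds because $\fM_i \supset u^h\fM_{i-1}$ in characteristic $p$.

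For part (2), when $i=1$ the derivation is $\cN_{0,\tau}+c(u)u\frac{d}{du}$ with $c(0)$ a unit. We apply the converse direction of Proposition~\ref{prop-loop fixed and loop derivative fixed}, which requires $h\le p$, to find $X$ with $X\cdot\Psi \in \operatorname{FL}_\nu$ for some $\nu\le\lambda_\kappa$. Conjugating by $X$ then reduces $(\mathbf{B}_1)$ to the equation of Corollary~\ref{cor-equation for flag variety}, possibly after first replacing $\cN$ by $c(0)^{-1}\cN$. The numerical hypothesis in (2) forces $\nu=\lambda_\kappa$. Equivalently, Lemma~\ref{lem-flag to springer}(1) gives $\cS_{\lambda_\kappa,1,c(u)}=\operatorname{FL}_{\lambda_\kappa}$ directly.

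For part (3), changing $\beta_{j}$ by $g\in L^{+}G$ changes $\Psi(\fM_{i,\tau},\beta_{i-1})$ by $g$ and replaces $\cN$ by its gauge transform $g^{-1}\cN g + g^{-1}\alpha\partial g$. The scheme $\cS$ is defined by equations that are equivariant for this action, because the pairing \eqref{eq-duality} is $L^+G\rtimes\operatorname{Aut}^+$-equivariant; this is exactly the computation in the proof of Lemma~\ref{lem-eigenvector vs equations}. Condition $(\mathbf{A}_i)$ does not involve bases, so it is invariant.
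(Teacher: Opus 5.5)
Your proposal is correct and follows essentially the paper's own argument. Use Lemma~\ref{lem-eigenvector vs equations} to turn $(\mathbf{B}_i)$ into the eigenvector equation; then apply $(2)\Rightarrow(3)$ of Proposition~\ref{prop-loop fixed and loop derivative fixed} for (1), Corollary~\ref{cor-equation for flag variety} (equivalently Lemma~\ref{lem-flag to springer}(1)) for (2), and equivariance of the eigenvector equation under conjugation for (3).

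One small point in (2): your conjugation route only yields $X\cdot\Psi(\fM_{1,\tau},\beta_0)\in\operatorname{FL}_{\lambda_\kappa}$. To conclude for $\Psi(\fM_{1,\tau},\beta_0)$ itself, note that $\cN_{0,\tau}\equiv 0$ modulo $u^h$. Indeed $N_0^\varphi(\beta^0\otimes 1)=N_0(\beta^0)\otimes 1\in u^p\varphi^*\fM$ and $h\le p$. Hence $X=1$ works, which is exactly what your direct appeal to Lemma~\ref{lem-flag to springer}(1) implicitly uses.
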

     \begin{proof}
         Applying Lemma~\ref{lem-eigenvector vs equations} shows that $(\mathbf{B}_i)$ is equivalent to asking that, Zariski locally on $\operatorname{Spec}A$,
         $$
         \cN_{i-1,\tau} \cdot v_{\cE} = \left( \left( c(u)u^{i-1} \right)|_{u=\kappa(\pi)} \sum_{l=1}^d  \sum_{j = \lambda_{\kappa,l}^*}^{h-1} j \right) v_{\cE}
         $$
         for $v_{\cE}$ spanning the line $\Theta_\kappa(\Psi(\fM_{i,\tau},\beta_{i-1}))$. Then (1) follows from the implication $(2) \Rightarrow (3)$ in Proposition~\ref{prop-loop fixed and loop derivative fixed}, while (2) follows from Corollary~\ref{cor-equation for flag variety}. It only remains to check the stability in (3). For $(\mathbf{A}_i)$ this is clear. For $(\mathbf{B}_i)$ take $g = (g_{i,\tau})_{\kappa=\kappa(i,\tau)} \in \prod_{\kappa \in \cJ} L^{+,(\kappa)}G$ and note that $\cN_{i-1,\tau} \cdot v_{\cE} = C v_{\cE}$ for a constant $C$ implies 
         $$
         \left( g_{i-1,\tau} \cN_{i-1,\tau} g_{i-1,\tau}^{-1} \right) \cdot v_{g_{i-1,\tau} \cdot \cE} = C v_{g_{i-1,\tau} \cdot \cE}
         $$
         This proves the desired stability since, under the action of $g$ on $(\fM,N_0,\beta_\bullet)$, the data of $\Psi(\fM_{i-1,\tau},\beta_{i-1})$ and $\cN_{i-1,\tau}$ transforms to $g_{i-1,\tau} \cdot \Psi(\fM_{i-1,\tau},\beta_{i-1})$ and $g_{i-1,\tau} \cN_{i-1,\tau} g_{i-1,\tau}^{-1}$.
     \end{proof}
    
\begin{defn}\label{def-imposing mod p conditions}
    Consider the closed subschemes
    $$
    \widetilde{Y^{\nabla,\operatorname{conv}}_{\lambda}} \subset \widetilde{Y^{\nabla,\operatorname{conv}}_{\leq \lambda}} \subset \widetilde{Z^{\operatorname{conv}}_{\leq \lambda}}
    $$
    with the former defined using part (1) in Lemma~\ref{lem-props of closed conditions} to inductively impose $(\mathbf{B}_i)$ for $1 \leq i \leq e$ and $(\mathbf{C})$, and the latter defined by imposing $(\mathbf{B}_1)$, $(\mathbf{A}_i)$ for $2 \leq i \leq e$, and $(\mathbf{C})$. By part (3) of Lemma~\ref{lem-props of closed conditions} each of these subschemes is $\prod_{\kappa \in \cJ} L^{+,(\kappa)}G$-stable and hence descends to closed substacks
    $$
    Y^{\nabla,\operatorname{conv}}_{\lambda} \subset Y^{\nabla,\operatorname{conv}}_{\leq \lambda} \subset Z^{\operatorname{conv}}_{\leq \lambda}
    $$
    
\end{defn}
\begin{rmk}
    The stacks $Y_{\leq \lambda}^{\nabla,\operatorname{conv}}$ are introduced in Definition~\ref{def-imposing mod p conditions} primarily for convenience. As we will see later when $d=3$, we expect that the underlying reduced substacks of $Y^{\nabla,\operatorname{conv}}_{\leq \lambda}$ and $Y^{\nabla,\operatorname{conv}}_\lambda$ coincide when $\lambda_\kappa = (d-1,d-2,\ldots,1,0)$ (though one should not expect $Y^{\nabla,\operatorname{conv}}_{\leq \lambda}$ itself to be reduced unless $\lambda$ is minuscule). Thus, in these cases $Y^{\nabla,\operatorname{conv}}_{\leq \lambda}$ can be used to control topological aspects of $Y^{\nabla,\operatorname{conv}}_\lambda$. Note, one does not expect this to be the case for general $\lambda$ since it contradicts the Breuil--M\'ezard conjecture.
\end{rmk}
    
 \begin{prop}\label{prop-factoristion modulo p}
	Suppose $h \leq p-1$. Then, for each Hodge type $\lambda$ concentrated in degree $[0,h]$, there is a monomorphism
    $
\cY^{\operatorname{cr},\operatorname{conv}}_{\lambda} \otimes_{\cO} \bF \rightarrow Y^{\nabla,\operatorname{conv}}_{\lambda}$
fitting into the commutative diagram
$$
\begin{tikzcd}
    \cY^{\operatorname{cr},\operatorname{conv}}_{\lambda} \otimes_{\cO} \bF \ar[r] \ar[d] & Y^{\nabla,\operatorname{conv}}_{\lambda} \ar[d] \\
    \cY^{\operatorname{cr}}_{\leq h} \otimes_{\bZ_p} \bF \ar[r] & Y^{\nabla}_{\leq h} \otimes_{\bF_p} \bF
\end{tikzcd}
$$
whose vertical arrows forget convolution structures, and whose bottom horizontal arrow is the base change of the monomorphism from Theorem~\ref{thm-embedding}.
\end{prop}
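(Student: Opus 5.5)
Following the proof of Theorem~\ref{thm-embedding}, I will first build a monomorphism into the ambient stack and then show that it factors through the closed substack $Y^{\nabla,\operatorname{conv}}_\lambda$.

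\emph{The ambient monomorphism.} Start from the monomorphism $\cY^{\operatorname{cr}}_{\leq h}\otimes_{\bZ_p}\bF \to Y^{\nabla}_{\leq h}\otimes\bF$ of Theorem~\ref{thm-embedding}. Define the top map on $\cY^{\operatorname{cr},\operatorname{conv}}_\lambda\otimes_\cO\bF$ by sending $(\fM,\fM_\bullet)$ to $(\fM,\fM_\bullet,N_0)$, where $N_0$ is the truncated monodromy attached to the underlying point of $\cY^{\operatorname{cr}}_{\leq h}$. This gives a morphism into the stack of Breuil--Kisin modules with convolution structure and an endomorphism $N_0$ of $\fM/u^{e+1}$.

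It is a monomorphism. The forgetful map $\cY^{\operatorname{conv}}_{\leq h}\to\cY_{\leq h}$ is representable by a closed subscheme of a product of Grassmannians, so it is a monomorphism after remembering $\fM_\bullet$. Composing with Theorem~\ref{thm-embedding} then gives a monomorphism. The square commutes by construction.

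\emph{Reduction to finite points.} It remains to show that the map lands in $Z^{\operatorname{conv}}_{\leq\lambda}$ and satisfies $(\mathbf{B}_i)$ for all $i$ and $(\mathbf{C})$. All of these are closed conditions on a finite type stack. By \cite[Corollary 15.2]{B24}, as used in the proof of Theorem~\ref{thm-embedding}, it suffices to check them on points valued in finite local $\bF$-algebras $A$.

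For such a point, Proposition~\ref{prop-create locus of hodge type mu conv BKmods}(1) supplies a lift $(\fM^\circ,\fM^\circ_\bullet)\in\cY^{\operatorname{cr},\operatorname{conv}}_\lambda(A^\circ)$ with $A^\circ$ finite flat over $\cO$. Choose bases $\beta_\bullet$, which is possible since $A^\circ$ is local, and choose $N,N^\varphi$ as in Proposition~\ref{prop-create monodromies}.

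\emph{Checking the conditions on the lift.}
\begin{itemize}
\item Membership in $Z^{\operatorname{conv}}_{\leq\lambda}$ follows from Proposition~\ref{prop-in schubert variety}, after reducing mod $\fm_\cO$.
\item Condition $(\mathbf{C})$ follows from Corollary~\ref{cor-congruences mod p}, using $h\leq p-1$.
\item For $(\mathbf{B}_i)$, apply Theorem~\ref{thm-integral equations2} to $(\fM^\circ,\beta_\bullet)$. The vanishing of the sections \eqref{eq-section which will vanish} holds for every $w$, and in particular for those with low $\bG_m$-weight. Reducing mod $\fm_\cO$ should give exactly the defining equations of $\cS_{\lambda_\kappa,i,c(u)}$, evaluated at $\big(\Psi(\fM_{i,\tau},\beta_{i-1}),\cN_{i-1,\tau}\big)$.
\end{itemize}

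\emph{Main obstacle: matching the reductions in $(\mathbf{B}_i)$.} Two identifications are needed.

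First, reduce $\frac{E_i(u)}{u}N^\varphi$ mod $\fm_\cO$, acting on $\fM_{i-1,\tau}/(u-\kappa(\pi))^h$. Since $E_i(u)\equiv u^i$ and $c^*\equiv c$ mod $p$, it becomes $u^{i-1}\overline{N}^\varphi$. By the last part of Corollary~\ref{cor-congruences mod p}, this agrees with $u^{i-1}N_0^\varphi$ modulo $u^{e(p-1)+p}$. That error term is harmless modulo $u^h$ on $\fM_{i-1}$, because $u^{ep}\varphi^*\fM\subset\fM_e\subset\fM_{i-1}$ and $h\leq p-1$.

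Second, check that the scalar $\big(c^*(u)E_{i-1}(u)\big)|_{u=\kappa(\pi)}$ reduces to $c(0)$ when $i=1$ and to $0$ when $i>1$. This holds because $\kappa(\pi)\in\fm_\cO$.

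The remaining point is that $(\mathbf{B}_i)$ is only well defined once $(\mathbf{A}_{i-1})$ holds. I will handle this by induction on $i$, using Lemma~\ref{lem-props of closed conditions}(1).
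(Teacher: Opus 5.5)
Your proposal is correct and matches the paper's proof: you obtain the map into the convolution version of $Y^{\nabla}_{\leq h}$ from Theorem~\ref{thm-embedding}, reduce to points valued in finite $\bF$-algebras, lift to $\cO$-flat coefficients, and reduce Theorem~\ref{thm-integral equations2} modulo $\varpi$, using that $u^{i-1}N^{\circ,\varphi}\otimes_{\cO}\bF$ lifts $u^{i-1}N_0^{\varphi}$. There are two small slips to fix. First, the forgetful map on convolution data is not a monomorphism; the top arrow is one because it is a closed restriction of the base change of the bottom monomorphism. Second, the error term $u^{i-1}u^{e(p-1)+p}\varphi^*\fM$ is absorbed into $u^h\fM_{i-1}$ using $u^{eh}\varphi^*\fM\subset\fM_e$ together with $h\leq p-1$; the containment $u^{ep}\varphi^*\fM\subset\fM_e$ that you cite is not enough.
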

\begin{proof}
	We immediately get a commutative diagram as claimed but with the top arrow replaced by the morphism $\cY^{\operatorname{cr},\operatorname{conv}}_\lambda \otimes_{\cO} \bF \rightarrow Y^{\nabla,\operatorname{conv}}_{\leq h}$ induced from Theorem~\ref{thm-embedding}, with $Y^{\nabla,\operatorname{conv}}_{\leq h} \subset Z_{\leq h}^{\operatorname{conv}}$ the locus defined by the conditions $(\mathbf{A}_e)$ and $(\mathbf{C})$. Indeed, $(\mathbf{A}_e)$ and $(\mathbf{C})$ are exactly the conditions defining $Y^{\nabla}_{\leq h}$ in Definition~\ref{def-BK+monodromy stack}.
    
    It therefore suffices to show that, on the level of points valued in a finite $\bF$-algebra $A$, this morphism factors through $Y^{\nabla,\operatorname{conv}}_\lambda$. 
    Fix $\fM \in \cY^{\operatorname{cr},\operatorname{conv}}_\lambda(A)$ which is mapped onto $(\fM,N_0) \in Y^{\nabla,\operatorname{conv}}_{\leq h}$. The claimed factorisation can be checked Zariski locally, and so we can assume the $\fM_\bullet$ admits $\fS_A$-bases $\beta_\bullet$. We saw in Section~\ref{sec-proofs} that $N_0^{\varphi}$  (as in Definition~\ref{def-BK+monodromy stack}) can be obtained by lifting $\fM$ to $\fM^\circ \in\cY^{\operatorname{cr},\operatorname{conv}}_\lambda(A^\circ)$ for $A^\circ$ finite flat over $\cO$ and setting  $N_0^{\varphi} = N^{\circ,\varphi} \otimes_{\cO} \bF$ modulo $u^{ep+1}\varphi^*\fM$ with $N^{\circ,\varphi}$ a derivation as in part (2) of Proposition~\ref{prop-create monodromies}. 
    On the other hand, applying $\otimes_{\cO} \bF$ to Theorem~\ref{thm-integral equations2} shows $\Psi(\fM_{i,\tau},\beta_{i-1})$ lies in the zero locus of
    $$
	 	u^{i-1} N^{\circ,\varphi} \otimes_{\cO} \bF \cdot v -\left(  \left(  c(u)u^{i-1} \right)|_{u=\kappa(\pi)} \sum_{l=1}^d  \sum_{j = 1}^{\lambda_{\kappa,l}^*-1} j \right) v 	
    $$
	 	for all $v \in \bigwedge^r \cV_h$ on which the constant subgroup $\bG_m \subset \operatorname{Aut}^+$ acts with weight $< \sum_{l=1}^d \sum_{j=1}^{\lambda_{\kappa,l}^*-1} j$. Since $u^{i-1} N^{\circ,\varphi} \otimes_{\cO} \bF$ lifts $u^{i-1}N_0^{\varphi}$ we conclude that $(\fM,N_0,\beta_\bullet)$ satisfies $(\mathbf{B}_i)$ and hence that $(\fM,N_0) \in Y^{\nabla}_\lambda(A)$. 
        \end{proof}

 \subsection{Quasi-minuscule coweights}

	We do not know whether the morphism in Proposition~\ref{prop-factoristion modulo p} is generally an isomorphism. However, for sufficiently constrained $\lambda$ we can show this is the case under sufficient topological control on $Y^{\nabla,\operatorname{conv}}_{\leq \lambda}$. Specifically we consider $\lambda$ which are minuscule/quasi-minuscule at each $\kappa$, so that the only $\nu \leq  \lambda_\kappa$ in the Bruhat order is a constant tuple of integers. Concretely, this means that, up to a twist, we can take
	$$
	\lambda_\kappa^* = (1,\ldots,1,0,\ldots,0) \qquad \text{or} \qquad \lambda_\kappa^* = (2,1,\ldots,1,0)
	$$
	for each $\kappa \in \cJ$. The significance of this restriction is that 
	$$
	D_\kappa := \codim(\operatorname{FL}_{\lambda_\kappa},\operatorname{Gr}_{\leq \lambda_\kappa^*}^{(\kappa)}) = \begin{cases}
		1 & \text{ if $\lambda_\kappa^* = (2,1,\ldots,1,0)$} \\
		0 & \text{ if $\lambda_\kappa^* = (1,\ldots,1,0,\ldots,0)$}
	\end{cases}
	$$
    This follows from a comparison of well-known dimension formulas for $\operatorname{FL}_{\lambda_\kappa}$ and $\operatorname{Gr}_{\leq \lambda_\kappa^*}^{(\kappa)}$. It also follows from a combination of Corollary~\ref{cor-equation for flag variety} and Lemma~\ref{lem-eigenvector vs equations}, which assert that $\operatorname{FL}_{\lambda_\kappa}[\frac{1}{p}] \subset \operatorname{Gr}_{\leq \lambda_\kappa^*}^{(\kappa)}[\frac{1}{p}]$ is cut out by the vanishing of equations indexed by $v \in \bigwedge^r\cV_h$ with $\bG_m$-weight $< \sum_{i=1}^d \sum_{j=1}^{\lambda_{\kappa,i}^*-1} j$. Indeed, if $\lambda_\kappa^* = (1,\ldots,1,0,\ldots,0)$ then $\sum_{l=1}^d \sum_{j=1}^{\lambda_{\kappa,l}^*-1} j = 0$, while if $\lambda_\kappa^* = (2,1,\ldots,1,0)$ then $\sum_{l=1}^d \sum_{j=1}^{\lambda_{\kappa,l}^*-1} j = 1$ and, since $r =d$, $\bigwedge^r \cV_h$ contains a unique line with $\bG_m$-weight $0$, namely that spanned by $e_1\wedge\ldots \wedge e_d$.
 	
\begin{thm}\label{thm-lci in schubert}
	Suppose $\lambda$ is minuscule/quasi-minuscule and $p \geq 3$. If $\operatorname{dim}Y^{\nabla,\operatorname{conv}}_{\lambda} = \operatorname{dim}\cY^{\operatorname{cr,conv}}_{\lambda} \otimes_{\cO} \bF$ then $Y^{\nabla,\operatorname{conv}}_{\lambda}$ is Cohen--Macaulay. If, in addition, $Y^{\nabla,\operatorname{conv}}_{\lambda}$ is irreducible and generically reduced then $Y^{\nabla,\operatorname{conv}}_{\lambda}$ is reduced and Proposition~\ref{prop-factoristion modulo p} induces an isomorphism
	$\cY^{\operatorname{cr,conv}}_{\lambda} \otimes_{\cO} \bF \cong Y^{\nabla,\operatorname{conv}}_{\lambda}$.
\end{thm}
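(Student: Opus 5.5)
We realise $Y^{\nabla,\operatorname{conv}}_\lambda$ as a closed substack of the Cohen--Macaulay stack $Z^{\operatorname{conv}}_{\leq \lambda}$ of Proposition~\ref{prop-cm and dim}, cut out by an explicit number of equations. We then show this number equals the codimension, so the substack is a local complete intersection in a Cohen--Macaulay ambient space. The count goes as follows; we work on the torsor $\widetilde{Z^{\operatorname{conv}}_{\leq\lambda}}$, since Cohen--Macaulayness and dimension descend along the smooth map to the stack.

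For each $\kappa=\kappa(i,\tau)$ with $\lambda_\kappa^*$ quasi-minuscule, condition $(\mathbf{B}_i)$ is the vanishing of a single section, indexed by the unique weight-$0$ line $e_1\wedge\cdots\wedge e_d$ (as in Example~\ref{exam}). For minuscule $\lambda_\kappa$ there is no weight $<0$ vector, so $(\mathbf{B}_i)$ imposes nothing. By Lemma~\ref{lem-props of closed conditions}(1), $(\mathbf{B}_i)$ already implies $(\mathbf{A}_i)$. So conditions $(\mathbf{B}_1),\dots,(\mathbf{B}_e)$ contribute at most $\sum_\kappa D_\kappa$ equations. These are imposed inductively, but each step is a single local equation on a closed subscheme, so Krull's theorem still bounds the codimension drop by one per step.

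Condition $(\mathbf{C})$ is a congruence modulo $u^{e+1}\fM_e$ between operators on $\fM_e$ that both vanish mod $u$. After choosing bases, it amounts to equality of $d\times d$ matrices in $u\,\mathfrak{g}[u]/u^{e+1}$ on each $\tau$-part. This gives at most $e d^2$ equations per $\tau$, which matches the $\sum_{\tau} e d^2$ coming from the $N_0$ coordinates in Proposition~\ref{prop-cm and dim}. One checks that the remaining relations are implied by $(\mathbf{A}_e)$ and the Leibniz rule; this bookkeeping is where $p\geq 3$ and the exact degrees matter. Hence $Y^{\nabla,\operatorname{conv}}_\lambda$ is cut out locally by at most
\[
M=\sum_\kappa D_\kappa+\sum_{\tau} e d^2
\]
equations in a Cohen--Macaulay stack of dimension $\sum_\kappa\dim\operatorname{Gr}_{\leq\lambda_\kappa^*}+\sum_\tau ed^2$. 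Its dimension is therefore at least
\[
\sum_\kappa\big(\dim\operatorname{Gr}_{\leq\lambda_\kappa^*}-D_\kappa\big)=\sum_\kappa \dim \operatorname{FL}_{\lambda_\kappa},
\]
which equals $\dim\cY^{\operatorname{cr,conv}}_\lambda\otimes\bF$ by Proposition~\ref{prop-create locus of hodge type mu conv BKmods}(2). Under the hypothesised equality of dimensions, every component has exactly the expected codimension $M$. The defining equations then form a regular sequence locally, by the standard fact that in a Cohen--Macaulay local ring a sequence cutting out the expected codimension is regular. Hence $Y^{\nabla,\operatorname{conv}}_\lambda$ is a local complete intersection in a Cohen--Macaulay stack, so it is Cohen--Macaulay.

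For the second part, a Cohen--Macaulay stack satisfies $(S_1)$, so generically reduced implies reduced. By Proposition~\ref{prop-factoristion modulo p} we have a monomorphism $\cY^{\operatorname{cr,conv}}_\lambda\otimes\bF\to Y^{\nabla,\operatorname{conv}}_\lambda$. It is proper, since the source is proper over $\cY^{\operatorname{cr}}_{\leq h}\otimes\bF$ and the map is compatible with the closed immersion of Theorem~\ref{thm-embedding}, so it is a closed immersion. The source has dimension equal to $\dim Y^{\nabla,\operatorname{conv}}_\lambda$, and the target is irreducible, so the image contains the generic point. A closed substack of an integral stack containing the generic point is everything, so the closed immersion is an isomorphism onto a reduced target.

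The main obstacle is the equation count for $(\mathbf{C})$: verifying that, given $(\mathbf{A}_e)$, it imposes no more than $ed^2$ independent conditions per $\tau$. I would first attempt this via the explicit basis description in Section~\ref{sec-explicit equations}, comparing the entries of both sides modulo $u^{e+1}$.
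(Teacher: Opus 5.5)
Your proposal is correct and follows the paper's own argument. $Y^{\nabla,\operatorname{conv}}_{\lambda}$ is cut out of the Cohen--Macaulay stack $Z^{\operatorname{conv}}_{\leq\lambda}$ by the $\sum_\kappa D_\kappa$ equations from the $(\mathbf{B}_i)$ and the $ed^2$ equations per $\tau$ from $(\mathbf{C})$, so the dimension hypothesis makes it a local complete intersection there; the second part then follows from ``Cohen--Macaulay and generically reduced implies reduced'' together with an equal-dimensional closed immersion into an irreducible reduced target.

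The step you single out as the main obstacle is in fact immediate. Given $(\mathbf{A}_e)$, both sides of $(\mathbf{C})$ are derivations $\fM_e \to u\fM_e/u^{e+1}\fM_e$ over the same derivation $c(u)u^{e+1}\frac{d}{du}$ of $\fS_A$, so their difference is $\fS_A$-linear and amounts to exactly $ed^2$ functions per $\tau$. The paper's proof writes $d^2$ there, which is a slip. Also, $p\geq 3$ enters only through $h=2\leq p-1$, not through this bookkeeping. Finally, your proper-monomorphism argument usefully justifies that the map of Proposition~\ref{prop-factoristion modulo p} is a closed immersion, which the paper uses without comment.
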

\begin{proof}
 Since $\lambda$ is minuscule/quasi-minuscule, each $\lambda_\kappa$ is bounded in the interval $[0,2]$ and so all our results thus far apply whenever $2 \leq p-1$. Furthermore, Proposition~\ref{prop-cm and dim} implies
$$
\operatorname{dim} Y^{\nabla,\operatorname{conv}}_{\lambda} = \operatorname{dim} \cY^{\operatorname{cr},\operatorname{conv}}_\lambda \otimes_{\cO} \bF = \operatorname{dim} Z^{\nabla,\operatorname{conv}}_{
\lambda} -  \sum_{\kappa \in \cJ}  D_\kappa  - \sum_{\tau \in \cJ_0} d^2
$$ 
with $D_\kappa$ as defined above. The conditions $(\mathbf{B}_i)$ from Construction~\ref{con-impose mod p equations} for $1\leq i \leq e$ impose the vanishing  of $\sum_{\kappa \in \cJ} D_\kappa$ locally defined equations, while condition $(\mathbf{C})$ specifies the equality of two derivations on $\fM_e / u^{e+1} \fM_e$ which are $\equiv 0$ modulo $u$, and hence is equivalent to the vanishing of $\sum_{\tau \in cJ_0} d^2$ locally defined equations. Thus, $Y^{\nabla,\operatorname{conv}}_\lambda$ is a local complete intersection inside the Cohen--Macaulay algebraic stack $Z^{\operatorname{conv}}_{\leq \lambda}$. It is therefore itself Cohen--Macaulay. 

For the last assertion recall that generically reduced and Cohen--Macaulay algebraic stacks are reduced \cite[Tag 0344]{stacks-project}, while any closed immersion between two finite type algebraic stacks of the same dimension is necessarily an isomorphism if the target is irreducible and reduced.
\end{proof}

\section{Dimension bounds via an explicit model}

\subsection{Main results}

The purpose of this section is to prove the following:
\begin{thm}\label{thm-dimension bounds}
    Suppose $d =3$ and $\eta_\kappa = (2,1,0)$ for each $\kappa \in \cJ$. If $p\geq 5$ and $e \equiv 0$ modulo $3$ then $Y^{\nabla,\operatorname{conv}}_{\leq \eta}$ from Definition~\ref{def-imposing mod p conditions} has dimension $3e$ with a single top-dimensional irreducible component which is generically reduced.
\end{thm}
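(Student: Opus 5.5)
We study the forgetful morphism $\pi\colon Y^{\nabla,\operatorname{conv}}_{\leq \eta}\to Y^{\nabla}_{\leq h}\otimes_{\bF_p}\bF$ with $h=2$, which discards the convolution structure. Both the base and the fibres of $\pi$ are explicit enough to compute. First we reduce to the case $K$ totally ramified: the stacks decompose as products over $\tau\in\cJ_0$, up to the Frobenius twisting that links the $\tau$-parts, and this twisting only shifts indices. Fixing a basis of $\fM$, a point of $Y^{\nabla}_{\leq h}$ becomes a pair $(X,\cN)$ with $X\in L\GL_3$ and $\cN\in u\mathfrak{gl}_3[u]/u^{e+1}$. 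Here $X$ has elementary divisors $\leq e(2,1,0)$, and the pair satisfies the relation
\[
u^e\bigl(X\varphi(\cN)X^{-1}-c(u)\tfrac{d}{du}(X)X^{-1}\bigr)\equiv c(u)\cN \bmod u^{e+1},
\]
obtained by unwinding condition $(\mathbf{C})$ and $(\mathbf{A}_e)$. We stratify the base by the elementary divisor $\mu\leq(2e,e,0)$ of $X$, giving strata $Y^{\nabla}(\mu)$.

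\textbf{Bound on the strata.} For each $\mu$ we aim to show
\[
\dim Y^{\nabla}(\mu)\leq 3e-\tfrac12\bigl(\dim\operatorname{Gr}_{\leq(2e,e,0)}-\dim\operatorname{Gr}_{\leq\mu}\bigr).
\]
On $\operatorname{Gr}_\mu$ we write $X$ in a normal form $X=A u^{\mu}B$ with $A,B\in L^+G$, and then count the constraints the displayed relation puts on $\cN$ modulo $u^{e+1}$. The relation is essentially linear in $\cN$, up to the twist by $\varphi$, which raises $u$-degree by a factor of $p$. Because $p\geq5$, the $\varphi(\cN)$ term lands in degree $\geq p>$ the relevant range for most entries. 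So for generic $X$ the relation determines $\cN$ up to a space whose dimension is governed by how $\mu$ interacts with the truncation at $u^{e+1}$. Consequently the base-plus-fibre count for $(X,\cN)$ modulo the gauge action $L^+G$ acting by $\varphi$-conjugation is $\dim\operatorname{Gr}_\mu$ minus roughly half of the codimension deficit. For the open stratum $\mu=(2e,e,0)$ the same computation should show that the solution space has a unique component of dimension $3e$. We will check generic reducedness there by exhibiting a point where the defining equations have full-rank Jacobian. The hypothesis $e\equiv0\pmod 3$ is expected to enter here, in making the degree counts in $u$ balance, for example so that $e/3$ and the shifts by $\varphi$ line up integrally. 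This explicit analysis of the strata is the main technical obstacle, and we would attack it entry-by-entry in the $3\times3$ matrix after reducing $X$ to a Bruhat-type normal form.

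\textbf{Fibres of $\pi$ and the final count.} The fibre of $\pi$ over a point with $X\in\operatorname{Gr}_\mu$ is a closed subscheme of the fibre of the convolution morphism $\operatorname{conv}\colon\operatorname{Gr}_{\leq(2,1,0)}\tilde\times\cdots\tilde\times\operatorname{Gr}_{\leq(2,1,0)}\to\operatorname{Gr}_{\leq(2e,e,0)}$, which has $e$ factors. It is cut out by the conditions $(\mathbf{B}_1)$ and $(\mathbf{A}_i)$. By semismallness, $\dim\operatorname{conv}^{-1}(x)\leq\frac12(\dim\operatorname{Gr}_{\leq(2e,e,0)}-\dim\operatorname{Gr}_\mu)$. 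Combined with the stratum bound, this gives $\dim\pi^{-1}(Y^{\nabla}(\mu))\leq 3e$. To show that only the open stratum contributes a top-dimensional component, we need strict inequality for $\mu<(2e,e,0)$. For this we use the Mirković--Vilonen description of the top-dimensional components of $\operatorname{conv}^{-1}(\operatorname{Gr}_\mu)$ as convolutions of MV cycles. For each such component we produce an explicit $T$-fixed-type point at which $(\mathbf{A}_i)$ fails, that is, at which $u^{i-1}N_0^\varphi$ does not preserve $\fM_i$. Then the closed subscheme misses a nonempty open subset of each top component, so its dimension drops. Over the open stratum, $\operatorname{conv}$ is an isomorphism onto $\operatorname{Gr}_{(2e,e,0)}$ generically. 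Hence $\pi$ is generically an isomorphism onto the unique $3e$-dimensional component of the base, which transfers irreducibility of the top component and generic reducedness to $Y^{\nabla,\operatorname{conv}}_{\leq\eta}$, completing the proof.
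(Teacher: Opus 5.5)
Your architecture (stratify by $\mu$, bound $\dim Y^\nabla(\mu)$, bound the fibres by semismallness, get strictness from Mirkovi\'c--Vilonen cycles) is the paper's. However, the strictness step has a genuine gap.

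Whether $(\mathbf{A}_i)$ fails at a given convolution structure depends on $N_0$, and your sketch never uses $N_0$. For example, suppose $N_0\equiv 0 \bmod u^e$, which does occur (cf.\ Corollary~\ref{cor=N0=0}). Then $N_0^\varphi\equiv c(u)u\frac{d}{du}$ modulo $u^{ep}$ in the basis $\beta^0\otimes 1$, and every convolution structure diagonal in that basis satisfies all the $(\mathbf{A}_i)$. So ``$T$-fixed-type'' witnesses cannot work in general. You must isolate a non-degeneracy property of $N_0$ that holds off a subset of the stratum of strictly smaller dimension.

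That property is $e$-genericity (Definition~\ref{def-generic}). In a basis with $\gamma_{e,\tau}=\gamma_{0,\tau}u^{\mu^*_\tau}$, the superdiagonal entries of $u^eN_0^\varphi$ must have the exact minimal valuations $e+1-\langle\alpha,\mu^*_\tau\rangle$ and $e+1-\langle\beta,\mu^*_\tau\rangle$.
\begin{itemize}
\item It is shown to hold generically from the explicit equations (Proposition~\ref{prop-generic locus} via Lemma~\ref{lem-free coodinates}). This works only for \emph{balanced} $\mu_\tau$ with $\mu_{\tau,\alpha},\mu_{\tau,\beta}\ge 2$.
\item $e\equiv 0\bmod 3$ is used exactly to exclude balanced $\mu_\tau$ with $\mu_{\tau,\alpha}=1$ or $\mu_{\tau,\beta}=1$. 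The boundary types $(4r,4r,r)$ and $(5r,2r,2r)$, with $e=3r$, need a variant genericity.
\item Unbalanced strata are not treated on the fibre side at all. They are handled by a \emph{strict} inequality in the base bound, which your estimate does not provide.
\item Even with genericity, the witnesses are not $T$-fixed. Take the last $k$ with $\mu^{(k)}_\tau=(2k,k,0)$. In four of the six cases for $\mu^{(k+1)}_\tau$ you must twist by a unipotent $g$ (in $U$, or in $U_{\alpha,-1}$ or $U_{\beta,-1}$) at step $k$, so that the top-right entry of $u^kN_0^\varphi$ on $\fM_{k,\tau}$ becomes a unit. Proposition~\ref{prop-MV} is what places such points on every top component.
\item Components with $\mu^{(1)}_\tau=(1,1,1)$ are excluded by $(\mathbf{B}_1)$ (Lemma~\ref{lem-props of closed conditions}(2)), not by an $(\mathbf{A}_i)$.
\end{itemize}

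Your heuristic for the base bound is also wrong in the relevant regime. Saying that $\varphi(\cN)$ lands in degrees $\ge p$, beyond the relevant range, implicitly assumes $e<p$, whereas $e$ is unbounded. After conjugation by $u^{\mu_\tau}$, with $\mu_{\tau,\gamma}$ up to $2e$, these terms contribute inside the truncation range as soon as $\mu_{\tau,\delta}\ge p$. They persist as the $Y_{\tau\circ\varphi}$-terms in the equations defining $\bN^\nabla_\mu$. They also genuinely couple $\tau$ with $\tau\circ\varphi$, so there is no reduction to totally ramified $K$.

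The real difficulty is that the relation constrains $X$, not just $\cN$. In $u$-degrees $np$ with $n\le(\mu_{\tau,\gamma}-e)/(p-1)$, the equations can be used neither to solve for a $Y$-coordinate nor for a coordinate of $X$, since the derivative term vanishes when $p\mid np$. After degenerating to leading terms (Lemma~\ref{lemma:leading}) one is left with quadrics $\sum_{k+\ell=np}\ell x_{\tau,k}y_{\tau,\ell}=0$ in the coordinates of $X$. Their codimension is controlled by the estimate for $AB\equiv CD \bmod Z^t$ (Lemma~\ref{lem:dimPoly}). This is where $p\ge5$ enters, and the output is equality in Theorem~\ref{thm-bounding dimension without convolution} exactly for balanced $\mu$.

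Finally, $e\equiv 0\bmod 3$ plays no role on the open stratum. There the delicate case is $(p-1)\mid e$, resolved through the cyclic coupling over $\tau$ (one needs $\prod_\tau F_{\gamma,\tau}\neq 1$). Passing from a unique top component on each chart to a unique one on $Y^\nabla(\mu)$ needs a separate argument.
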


The proof of this theorem takes the entire section, but see Section~\ref{sec-strategy} for an overview of the argument. The assumption that $p \geq 5$ and $e \equiv 0$ modulo $3$ appear as artifacts of certain dimension estimates, and it seems plausible that the theorem remains true without these hypotheses (though recall we only establish a link between $Y^{\nabla,\operatorname{conv}}_{\leq \eta}$ and $\cY^{\operatorname{cr},\operatorname{conv}}_{\eta}$ when $p \geq 3$).  

\begin{cor}\label{cor-true = explicit}
    Under the assumptions in Theorem~\ref{thm-dimension bounds} the morphism in Proposition~\ref{prop-factoristion modulo p} induces an isomorphism $Y^{\nabla,\operatorname{conv}}_{\eta} \cong \cY^{\operatorname{cr,conv}}_\eta \otimes_{\cO} \bF$ of algebraic stacks. Furthermore, both are irreducible and reduced. 
\end{cor}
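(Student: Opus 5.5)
The plan is to feed Theorem~\ref{thm-dimension bounds} into Theorem~\ref{thm-lci in schubert}. First I will check that the hypotheses of Theorem~\ref{thm-lci in schubert} hold. With $\eta_\kappa=(2,1,0)$ and $h=2$ we get $\eta_\kappa^*=(2,1,0)$, which is quasi-minuscule, and $p\geq 5\geq 3$. By Proposition~\ref{prop-create locus of hodge type mu conv BKmods}(2),
$$\dim \cY^{\operatorname{cr,conv}}_\eta\otimes_{\cO}\bF=\sum_{\kappa}\dim \operatorname{FL}_{(2,1,0)}=3[K:\bQ_p],$$
which should equal the $3e$ of Theorem~\ref{thm-dimension bounds}. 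For totally ramified $K$ this is immediate. In general I expect the dimension statement of Theorem~\ref{thm-dimension bounds} to be read as $3e$ per $\tau\in\cJ_0$, i.e.\ $3e f$ in total, and I will confirm that the count is over all embeddings.

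Next I will transfer the information from $Y^{\nabla,\operatorname{conv}}_{\leq\eta}$ to $Y^{\nabla,\operatorname{conv}}_\eta$. We have the closed immersions
$$\cY^{\operatorname{cr,conv}}_\eta\otimes\bF\hookrightarrow Y^{\nabla,\operatorname{conv}}_\eta\subset Y^{\nabla,\operatorname{conv}}_{\leq\eta}.$$
The first map is a monomorphism by Proposition~\ref{prop-factoristion modulo p}. I will upgrade it to a closed immersion using Theorem~\ref{thm-embedding} together with the fact that the forgetful maps from convolution data are projective. This gives
$$\dim\cY^{\operatorname{cr,conv}}_\eta\otimes\bF\leq \dim Y^{\nabla,\operatorname{conv}}_\eta\leq 3e.$$
Since the left side equals $3e$, the dimensions agree, and Theorem~\ref{thm-lci in schubert} then shows $Y^{\nabla,\operatorname{conv}}_\eta$ is Cohen--Macaulay. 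A Cohen--Macaulay stack is equidimensional, so every irreducible component of $Y^{\nabla,\operatorname{conv}}_\eta$ has dimension $3e$. Each such component is top-dimensional inside $Y^{\nabla,\operatorname{conv}}_{\leq\eta}$, so by Theorem~\ref{thm-dimension bounds} it must be the unique top component $\cC$. Hence $Y^{\nabla,\operatorname{conv}}_\eta$ is irreducible with underlying space $\cC$.

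The remaining and most delicate step is generic reducedness of $Y^{\nabla,\operatorname{conv}}_\eta$ itself, since we only know that $Y^{\nabla,\operatorname{conv}}_{\leq\eta}$ is generically reduced along $\cC$. The key observation is that $Y^{\nabla,\operatorname{conv}}_\eta$ is a closed substack of $Y^{\nabla,\operatorname{conv}}_{\leq\eta}$ with the same underlying space near the generic point of $\cC$. The local ring of $Y^{\nabla,\operatorname{conv}}_\eta$ at that generic point is therefore a quotient of the local ring of $Y^{\nabla,\operatorname{conv}}_{\leq\eta}$, which is a field, and has the same (zero) dimension. A nonzero quotient of a field is the field itself, so $Y^{\nabla,\operatorname{conv}}_\eta$ is generically reduced.

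Every hypothesis of Theorem~\ref{thm-lci in schubert} is now verified. It follows that $Y^{\nabla,\operatorname{conv}}_\eta$ is reduced and that $\cY^{\operatorname{cr,conv}}_\eta\otimes\bF\cong Y^{\nabla,\operatorname{conv}}_\eta$. Both stacks are therefore irreducible and reduced. I expect the only real obstacle to be the bookkeeping of the dimension normalisation, namely matching $3e$ against $\sum_\kappa\dim\operatorname{FL}_{\lambda_\kappa}$ when $f>1$.
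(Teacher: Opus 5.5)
Your proposal is correct and follows the paper's proof. You bound $\dim Y^{\nabla,\operatorname{conv}}_\eta$ above by Theorem~\ref{thm-dimension bounds} and below via the monomorphism of Proposition~\ref{prop-factoristion modulo p}, apply Theorem~\ref{thm-lci in schubert} to get Cohen--Macaulayness, deduce that the support is the unique top-dimensional component (so irreducible and generically reduced), and then apply Theorem~\ref{thm-lci in schubert} again. You also make explicit several points the paper leaves implicit: the reading of $3e$ as $3e$ per $\tau$, the lower bound coming from the monomorphism, and why generic reducedness passes to the closed substack. One small correction, which the paper shares: equidimensionality comes from the local complete intersection presentation inside $Z^{\operatorname{conv}}_{\leq\eta}$, via Krull's height theorem, and not from Cohen--Macaulayness alone.
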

\begin{proof}
    Since $Y^{\nabla,\operatorname{conv}}_{\eta} \subset Y^{\nabla,\operatorname{conv}}_{\leq \eta}$ we have $\operatorname{dim}Y^{\nabla,\operatorname{conv}}_\eta \leq 3e$. By Theorem~\ref{thm-lci in schubert}, $Y^{\nabla,\operatorname{conv}}_\eta$ is Cohen--Macaulay of dimension $3e$; hence equidimensional. Therefore $Y^{\nabla,\operatorname{conv}}_\eta \subset Y^{\nabla,\operatorname{conv}}_{\leq \eta}$ is supported on the top-dimensional component of $Y^{\nabla,\operatorname{conv}}_{\leq \eta}$. In particular,  $Y^{\nabla,\operatorname{conv}}_{ \eta}$ is irreducible and generically reduced. Applying Theorem~\ref{thm-lci in schubert} again shows $Y^{\nabla,\operatorname{conv}}_{ \eta}$ is reduced and that Proposition~\ref{prop-factoristion modulo p} induces an isomorphism $Y^{\nabla,\operatorname{conv}}_{\eta} \cong \cY^{\operatorname{cr,conv}}_\eta \otimes_{\cO} \bF$.
\end{proof}

For the whole of this section we set $G = \operatorname{GL}_3$. We point out, however, that various elements of our analysis go through without this restriction. Since we also work entirely over $\bF$ we use similar notation to that in Section~\ref{sec-loci in aff grass}. Specifically, we write $L^{+}G := L^{+,(\kappa)}G \otimes_{\cO} \bF$ and, for a coweight $\mu = (\mu_1\geq \ldots \geq \mu_d)$, we set $\operatorname{Gr}_{\leq \mu} := \operatorname{Gr}_{\leq \mu}^{(\kappa)} \otimes_{\cO} \bF$ and $\operatorname{Gr}_\mu := \operatorname{Gr}_{\mu}^{(\kappa)} \otimes_{\cO} \bF$. For $N \geq 1$ set $\cK_N \subset L^+G$ equal to the congruence subgroups of matrices $\equiv 1$ modulo $u^N$. Set $\mathfrak{g} := \operatorname{Lie} G$ and $\mathfrak{g}[[u]] := \operatorname{Lie} L^+G$. Finally, we continue to write $\mathfrak{g}[u]_{< e} \subset \mathfrak{g}[[u]]$ for the affine space of polynomials in $\mathfrak{g}$ of degree $< e$.

\subsection{Strategy}\label{sec-strategy}
To prove Theorem~\ref{thm-dimension bounds} set $h =2$. Note that, in this case, $\eta_\kappa^* = \eta_\kappa = (2,1,0)$ for each $\kappa \in \cJ$. Consider the morphism  
\begin{equation}\label{eq-morphism forgetting convolution}
Y^{\nabla,\operatorname{conv}}_{\leq \eta} \rightarrow Y^{\nabla}_{\leq h}
\end{equation}
forgetting the convolution structure. The (reduced) image of this morphism admits a natural stratification indexed by tuples $\mu = (\mu_\tau)_{\tau \in \cJ_0}$ with
$$
\mu_\tau = (\mu_{\tau,1} \geq \mu_{\tau,2} \geq \mu_{\tau,3}) \leq \sum_{\kappa= \kappa(i,\tau)} \eta_\kappa = (2e,e,0)
$$
For such $\mu$ let $Y^\nabla(\mu) \subset Y^{\nabla}_{\leq h}$ denote the locally closed substack whose closed points consist of $\fM \in Y^{\nabla}_{\leq h}$ admitting a basis $\beta^0$ so that $\varphi_{\fM}( \beta^0 \otimes 1) = \beta^0 X$ for a matrix $X$ with $\tau$-th part $X_\tau \in L^+G u^{\mu_\tau} L^+G $. Then the image of \eqref{eq-morphism forgetting convolution} has underlying reduced substack equal to the union of all such $Y^\nabla(\mu)$. Our first step is to bound the dimension of these strata. For this the following notation is useful:

\begin{notation}\label{not-roots}
    Viewing $\mu_\tau$ as a cocharacter of the diagonal torus in $G = \operatorname{GL}_3$ and $\alpha,\beta$ as the simple positive roots  (relative to the upper triangular Borel) with $\gamma = \alpha+\beta$, allows us to write
$$
\mu_{\tau, *} = \langle *,\mu_\tau \rangle
$$
for $* \in \lbrace \alpha,\beta,\gamma \rbrace$ and $\langle-,-\rangle$ the standard evaluation pairing of characters and cocharacters.
\end{notation}

\begin{defn}   For each $\tau \in \cJ_0$ write $\mu_\tau = e(2,1,0) + n_\tau(-1,1,0) + m_\tau(0,-1,1)$ with $m_{\tau},n_\tau \geq 0$. We say $\mu_{\tau}$ is \emph{unbalanced} if one of the following holds:
    \begin{itemize}
        \item $\mu_{\tau,\alpha} = e - 2n_\tau +m_\tau > e$,
        \item $\mu_{\tau,\beta} = e + n_\tau -2m_\tau > e$,
        \item $\mu_{\tau,\gamma} = 2e - n_\tau -m_\tau < e$.
    \end{itemize}
Otherwise, we say that $\mu_{\tau}$ is \emph{balanced}.  
\end{defn}

In Section~\ref{sec-dim without conv} we prove the following by an explicit computation on an open cover.

\begin{thm}\label{thm-bounding dimension without convolution}
    For each $\tau \in \cJ_0$ write $\mu_\tau = e(2,1,0) + n_\tau(-1,1,0) + m_\tau(0,-1,1)$ with $m_{\tau},n_\tau \geq 0$ as above, and recall $h=2$. Then 
    $$
    \operatorname{dim} Y^\nabla(\mu) \leq \sum_{\tau \in \cJ_0} (3e - n_\tau - m_\tau) 
    $$
    with the inequality strict if there exists $\tau \in \cJ_0$ so that $\mu_\tau$ is unbalanced.

    Furthermore, if $\mu_{\tau} = (2e,e,0)$ for all $\tau \in \cJ_0$ then $Y^\nabla(\mu)$ contains a unique irreducible component of dimension $\sum_{\tau \in \cJ_0} 3e$, and this component is generically reduced.
\end{thm}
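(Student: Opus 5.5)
We want to bound $\dim Y^\nabla(\mu)$ by passing to explicit coordinates. After choosing a basis $\beta^0$ of $\fM$, the stack $Y^\nabla_{\leq h}$ is the quotient of the space of pairs $(X,\cN)$ by $L^+G$, acting by $\varphi$-conjugation on $X$ and compatibly on $\cN$. Here $X=(X_\tau)_\tau$ with $X_\tau\in L^+G u^{\mu_\tau}L^+G$, and $\cN=(\cN_\tau)_\tau\in \prod_\tau u\mathfrak{g}[u]_{<e}$ records $N_0$. The conditions $(\mathbf{A}_e)$ and $(\mathbf{C})$ become the relation
\[
u^e\bigl(X\varphi(\cN)X^{-1}-c(u)\tfrac{d}{du}(X)X^{-1}\bigr)\equiv c(u)\cN \bmod u^{e+1},
\]
taken with the appropriate shift in $\tau$, together with Griffiths transversality.

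Because $\varphi$ is contracting on $L^+G$ (the Pappas--Rapoport/Lang-torsor trick of Proposition~\ref{prop-cm and dim}), the quotient by $\varphi$-conjugation behaves like $L^+G\backslash(L^+Gu^{\mu}L^+G)/\cK_N$ up to a smooth torsor. So $\dim Y^\nabla(\mu)$ is the dimension of the locus of $(X,\cN)$ in $\operatorname{Gr}_\mu$-type coordinates satisfying the relation, minus $\dim L^+G/\cK_N$. Concretely, I would write $X_\tau = A\,u^{\mu_\tau}B$ with $A$ ranging over $L^+G/(L^+G\cap u^{\mu}L^+Gu^{-\mu})$, a space of dimension $\dim \operatorname{Gr}_{\mu_\tau}$. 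For $\mu_\tau=(2e,e,0)$ this dimension is $4e$. The relation then amounts to a linear system in $\cN_\tau$, which contributes at most $9e$ unknowns minus the number of independent equations.

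The key step is to count solutions $\cN$ for fixed $X$. Since $u^e\varphi(\cN)\equiv 0$ modulo large powers of $u$, the left-hand side only sees $X\varphi(\cN)X^{-1}$ through the entries of $u^{\mu}$ with negative conjugation exponents. The relation therefore essentially forces $c(u)\cN \equiv -u^e c(u)\,\tfrac{d}{du}(X)X^{-1}$ plus correction terms supported on root spaces $\alpha$ with $e>\mu_{\tau,\alpha}$ (or $\mu_{\tau,\gamma}<e$ for the negative root direction). This is exactly where unbalancedness enters. For balanced $\mu$, $\cN$ is determined by $X$ up to a controlled number of free parameters. Combining these gives $\dim\le 4e-\tfrac12(\dim\operatorname{Gr}_{(2e,e,0)}-\dim\operatorname{Gr}_{\mu_\tau})+(\text{constraint from } \tfrac{d}{du}(X)X^{-1}\in u\mathfrak g \bmod u^{e+1})$. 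Here $\tfrac{d}{du}(X)X^{-1}$ must be integral to order $e$, which cuts the $4e$-dimensional orbit down to $3e-n_\tau-m_\tau$. I would compute this on an open cover of $L^+Gu^{\mu}L^+G$ given by Bruhat cells $I w u^{\mu}L^+G$ for the Iwahori $I$, using explicit unipotent coordinates in each cell. The $\tfrac d{du}$-integrality conditions then become vanishing of the coefficients of $u^{k}$ for $k\not\equiv 0 \pmod p$ in certain entries, and one counts them. The hypothesis $p\ge5$ is needed so that these constraints are independent. In the unbalanced case, an extra independent equation appears from the root space where $\mu_{\tau,\alpha}>e$, giving strict inequality.

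For $\mu_\tau=(2e,e,0)$, I would exhibit the open cell $Iu^{\mu}L^+G$ (with $w=1$). On this cell the equations are triangular and the solution set is an affine space of dimension $3e$, which is smooth and hence generically reduced. The other cells have dimension $<3e$ by the same count, which gives uniqueness of the top component. The main obstacle is the cell-by-cell bookkeeping showing the integrality conditions on $\tfrac{d}{du}(X)X^{-1}$ are independent, especially on the non-open Bruhat cells. I would first try to reduce this to the three $\mathrm{SL}_2$-root subgroups via a factorization $X=U^-\,u^\mu\,U^+$.
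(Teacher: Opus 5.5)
\paragraph{There is a genuine gap.} Your setup matches the paper's first steps (Lemma~\ref{lem-equations}, Lemma~\ref{lem-open covers}, Corollary~\ref{cor-open cover}): choose bases, write the explicit relation between $X$ and $\cN$, and use the Pappas--Rapoport trick to replace the $\varphi$-conjugation quotient by a $\cK_N$-quotient. But the dimension estimate, which is the whole content of the theorem, is not supplied, and your sketch puts the difficulty in the wrong place.

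You treat $\varphi(\cN)$ as a correction confined to certain root spaces. You then take the constraints on $X$ to be the coefficients of $u^k$, $k\not\equiv 0 \pmod p$, in $u^e\partial(X)X^{-1}$. Write $X_\tau=h_\tau u^{\mu_\tau}B_\tau$ with $B_\tau$ lower unipotent with entries $x_\tau,y_\tau,z_\tau$. The $\gamma$-entry of the relation in $u$-degree $i$ sets $Y_{\tau,\gamma,i-(\mu_{\tau,\gamma}-e)}$ equal to $iz_{\tau,i}+\sum_{k+\ell=i}\ell x_{\tau,k}y_{\tau,\ell}$ plus a $\varphi(\cN)$-term involving $Y_{\tau\circ\varphi,\cdot,j}$ with $j\le i/p$.
\begin{itemize}
\item For $p\nmid i$ these equations are harmless: they just solve for $z_{\tau,i}$.
\item The genuine constraints are at $i=np$ with $1\le n\le t_\tau=\lfloor(\mu_{\tau,\gamma}-e)/(p-1)\rfloor$. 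There $np\,z_{\tau,np}=0$, and what remains is the quadric $\sum_{k+\ell=np}\ell x_{\tau,k}y_{\tau,\ell}$ plus a surviving term $F_{\gamma}Y_{\tau\circ\varphi,\gamma,n}$ that couples neighbouring embeddings.
\end{itemize}
Your description records no condition in these degrees. Dropping them gives a bound too large by $t_\tau$, which is positive as soon as $\mu_{\tau,\gamma}\ge e+p-1$ (e.g.\ for $(2e,e,0)$ with $e\ge p-1$).

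The heart of the paper's proof is showing that these $t_\tau$ equations cut out codimension $t_\tau$ uniformly in $h_\tau$. This includes the locus where the coefficient of $Y_{\tau\circ\varphi,\gamma,n}$ degenerates and they become honest quadratic constraints on $X$. The paper first degenerates to leading terms (Lemma~\ref{lemma:leading}) with the $Y_{\tau,\gamma,\cdot}$ in degree $0$, so only the quadrics survive. It then degenerates again to a congruence $C_\tau D_\tau\equiv G_\tau H_\tau \bmod Z^{t_\tau}$ and applies Lemma~\ref{lem:dimPoly}. This step, not independence of prime-to-$p$ conditions, is where $p\ge 5$ is used.

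Strictness in the unbalanced case is likewise not ``one extra independent equation''. After the substitutions, the count of free variables can exceed $\mu_{\tau,\alpha}+\mu_{\tau,\beta}+e$. The paper needs an analogous codimension estimate together with a separate numerical inequality valid for $e\ge p\ge 5$.

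\paragraph{The top-dimensional part also fails as stated.} For $\mu_\tau=(2e,e,0)$ the equations on the $w=1$ chart are not triangular. For $n\le e/(p-1)$ they read $Y_{\tau,\gamma,np-e}=F_{\gamma,\tau}Y_{\tau\circ\varphi,\gamma,n}+(\text{lower terms})$. When $(p-1)\mid e$, the index $n=e/(p-1)$ gives a cyclic linear system around $\cJ_0$. So the chart is smooth of dimension $3e$ only over the open locus where every $F_{\gamma,\tau}\neq 0$ and $\prod_\tau F_{\gamma,\tau}\neq 1$ (Proposition~\ref{prop-top dimensional}); it is not an affine space. Uniqueness and generic reducedness of the top component therefore again rest on the general bound to control the complement.

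There are also bookkeeping problems:
\begin{itemize}
\item The cells $Iwu^{\mu}L^+G$ stratify $L^+Gu^{\mu}L^+G$; they do not form an open cover.
\item In $X=Au^{\mu}B$ the polar part of $\partial(X)X^{-1}$ is $\operatorname{Ad}(Au^{\mu})(\partial(B)B^{-1})$. So the integrality conditions fall on $B$, not on the Schubert coordinates $A$ you place on the left. The paper puts the finite-dimensional coordinates $B_\tau\in J_\tau$ on the right and lets $\cK_N$ act on $h_\tau$ on the left.
\item The correct count per $\tau$ is $\dim\operatorname{Gr}_{\mu_\tau}=2\mu_{\tau,\gamma}$ minus $\mu_{\tau,\gamma}-e$ conditions, not $4e-\tfrac12(\cdots)$ minus a constraint.
\end{itemize}
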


 Granting Theorem~\ref{thm-bounding dimension without convolution} we can prove Theorem~\ref{thm-dimension bounds} by bounding the fibres of \eqref{eq-morphism forgetting convolution} over each $Y^\nabla(\mu)$. Specifically, it suffices to show that
\begin{equation}\label{eq-basic convolution bound}
\sum_{\tau \in \cJ_0} n_\tau + m_\tau
\end{equation}
gives an upper bound for the dimension of the fibres of \eqref{eq-morphism forgetting convolution} over $Y^\nabla(\mu)$, with the bound strict on a dense open subset of $Y^\nabla(\mu)$ whenever each $\mu_\tau$ is balanced and $\mu_\tau \neq (2e,e,0)$ for each $\tau \in \cJ_0$. In fact, the bound in \eqref{eq-basic convolution bound} arises automatically because $\sum_{\tau \in \cJ_0} n_\tau + m_\tau$ is precisely the dimension of the locus
\begin{equation}\label{eq-convolution fibres}
m^{-1}_{\leq \eta_\bullet}(\fM) := 
\left\{ \fM_\bullet \middle|\ 
\begin{array}{l}
 \Psi(\fM_{i,\tau},\beta_{i-1}) \in \operatorname{Gr}_{\leq (2,1,0)} \text{ for any choice of }\\
\text{$\fS_A$-bases $\beta_\bullet$, and any $\tau\in \cJ_0$ and $1 \leq i \leq e$.} 
\end{array}
\right\}
\end{equation}
whenever $(\fM,N_0) \in Y^\nabla(\mu)$. Indeed, $m^{-1}_{\leq \eta_\bullet}(\fM)$ is a product over $\tau \in \cJ_0$ of  fibres of the convolution morphism in the affine Grassmannians, and the dimension of the latter are described in e.g.\ \cite[\S 2.1]{Haines06}. Thus, Theorem~\ref{thm-dimension bounds} follows from:

\begin{thm}\label{thm-dim bounds for fibres of convolution}
    Assume that $\mu_{\tau}$ is balanced for all $\tau \in \cJ_0$ and there exists $\tau \in \cJ_0$ with $\mu_\tau \neq (2e,e,0)$ and set $h=2$. Assume additionally that $e \equiv 0$ modulo $3$. Then there exists an open substack $U \subset Y^\nabla(\mu)$ such that
    \begin{itemize}
        \item The complement $Y_{\leq h}^\nabla(\mu) \setminus U$ has dimension $< \sum_{\tau \in \cJ_0} (3e - n_\tau - m_\tau)$.
        \item For each closed point $(\fM,N_0) \in U$ and each top-dimensional irreducible component $C \subset m^{-1}_{\leq \eta_\bullet}(\fM)$, there exists $\fM_\bullet \in C$ with $(\fM,N_0,\fM_\bullet) \not\in Y^{\nabla,\operatorname{conv}}_{\leq \eta}$.
    \end{itemize}
\end{thm}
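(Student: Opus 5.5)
The plan is to reduce everything to a single embedding $\tau$ and a single relevant step $i$ of the convolution, and to show that the condition $(\mathbf{A}_i)$ fails at a suitably chosen point of each top-dimensional component of the convolution fibre.

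First, the fibre $m^{-1}_{\leq\eta_\bullet}(\fM)$ is a product over $\tau\in\cJ_0$ of fibres of the convolution morphism
\[
\mathrm{conv}\colon \operatorname{Gr}_{\leq (2,1,0)}\widetilde{\times}\cdots\widetilde{\times}\operatorname{Gr}_{\leq(2,1,0)} \longrightarrow \operatorname{Gr}_{\leq(2e,e,0)}
\]
over the point of $\operatorname{Gr}_{\mu_\tau}$ determined by $\fM_{e,\tau}\subset\varphi^*\fM_\tau$. By \cite{MV} and \cite[\S2.1]{Haines06}, its top-dimensional components are indexed by Mirković--Vilonen data. Concretely, each is the closure of a locus where the successive relative positions $\fM_{i}\subset\fM_{i-1}$ form a fixed sequence of coweights $\nu_1,\dots,\nu_e\in W\cdot(2,1,0)\cup\{(1,1,1)\}$ whose partial sums trace a path in the semi-infinite orbits ending at $\mu_\tau$. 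Fix $\tau$ with $\mu_\tau\neq(2e,e,0)$, so that $n_\tau+m_\tau>0$. Each top component then contains at least one step $i$ with $\nu_i=(1,1,1)$, or a step where the lattice is non-generic relative to the earlier flag. I would choose $\fM_\bullet\in C$ to be a torus-fixed point of this type, or one close to it in the MV chart, so that $\fM_{i-1}$ and $\fM_i$ have explicit bases.

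Second, I compute $u^{i-1}N_0^\varphi$ on $\fM_{i-1,\tau}$ in these bases, using the explicit form of $N_0^\varphi$. Write $\varphi_\fM(\beta^0\otimes1)=\beta^0X$ with $X\in L^+Gu^{\mu_\tau}L^+G$, and take $N_0$ given by $\cN$ satisfying the equation from the introduction. Then $N_0^\varphi$ acts on $\varphi^*\fM$ through $\varphi(\cN)+c(u)u\frac{d}{du}$, and its action is conjugated by the change of basis $g_{i-1}$ to $\fM_{i-1}$.

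The open set $U$ is defined by requiring that the leading coefficients of $\cN$ be generic. Specifically, certain entries of $\cN$ of $u$-degree about $e/3$ should be nonzero; this is where $e\equiv0\pmod 3$ enters, since it makes the relevant degree an integer. The first bullet, that the complement of $U$ has smaller dimension, should follow from the explicit open-cover analysis in the proof of Theorem~\ref{thm-bounding dimension without convolution}. That analysis parametrises $Y^\nabla(\mu)$ by $(X,\cN)$, and vanishing of a prescribed coefficient is a nontrivial closed condition there. On $U$, I then show that $u^{i-1}N_0^\varphi$ does not stabilise $\fM_i$, so $(\mathbf{A}_i)$ fails. Since $(\mathbf{B}_i)\Rightarrow(\mathbf{A}_i)$ by Lemma~\ref{lem-props of closed conditions}(1), this gives $(\fM,N_0,\fM_\bullet)\notin Y^{\nabla,\operatorname{conv}}_{\leq\eta}$. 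For the $(1,1,1)$-type steps, I would instead test $(\mathbf{B}_i)$ directly through the single Plücker equation of Example~\ref{exam}, where the term $y_{0,1,2}c(u)|_{u=0}$ or an $n_{j,j}$-term should be forced to be nonzero.

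The main obstacle is the case analysis. Balancedness must guarantee a ``bad'' step $i$ with $i\leq e$ where the relevant entry of $u^{i-1}\varphi(\cN)$ has $u$-valuation small enough to escape $\fM_i$. For unbalanced $\mu_\tau$ such a step may fail to exist, which is presumably why those $\mu$ are handled separately by the strict inequality in Theorem~\ref{thm-bounding dimension without convolution}. I would first treat the two extreme families, $m_\tau=0$ and $n_\tau=0$, where the MV cycles are simplest, and then try to reduce the general case to these via the $\alpha/\beta$ symmetry. The hypothesis $p\geq5$ should be used to ensure that $c(u)|_{u=0}$ and the small integer eigenvalues arising are nonzero mod $p$.
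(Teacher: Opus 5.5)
Your outline matches the paper's: work at one $\tau$, use the MV description of $m^{-1}_{\leq\eta_\bullet}(\fM)$, and exhibit in each top-dimensional component a point violating a defining condition. But the mechanism that produces the violation is missing, and the substitutes you suggest do not work. First, $Y^{\nabla,\operatorname{conv}}_{\leq\eta}$ imposes only $(\mathbf{B}_1)$, $(\mathbf{A}_i)$ for $2\leq i\leq e$, and $(\mathbf{C})$. So showing that $(\mathbf{B}_i)$ fails for some $i\geq 2$ proves nothing. Moreover, at $\fM_{i,\tau}=u\fM_{i-1,\tau}$ with $i\geq 2$, the section of Example~\ref{exam} evaluated on $v_{\cE}=y_3\wedge y_4\wedge y_5$ sees only the $y_{0,1,2}$-coefficient, which is $0$; so $(\mathbf{B}_i)$ holds there anyway, and such points genuinely occur (cf.\ the proof of Theorem~\ref{thm-connect}). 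Second, at torus-fixed points the transition matrices between the $\gamma_{i,\tau}$ are of the form $u^{\lambda}$, so passing between levels only rescales each off-diagonal entry of $u^iN_0^\varphi$ by a power of $u$. Control of the $\alpha$- and $\beta$-entries at level $e$ then detects a failure (of $(\mathbf{A}_{k+1})$) in only two of the six ways the dominant path $\mu^{(i)}_\tau$ can leave $(2i,i,0)$. For the other four, including $\mu^{(k+1)}_\tau-\mu^{(k)}_\tau=(1,1,1)$, the paper uses Proposition~\ref{prop-MV}. At the last $k$ with $\mu^{(k)}_\tau=(2k,k,0)$, the component contains points with $\gamma_{k,\tau}=\gamma_{k+1,\tau}\,g\,u^{\mu^{(k)}_\tau-\mu^{(k+1)}_\tau}$, where $g$ ranges over $U$ (or $U_{\alpha,-1}$, $U_{\beta,-1}$). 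A suitable $g$ moves an $\alpha$- or $\beta$-entry into the $(1,3)$ position, and conjugating by $u^{\nu_k}$ lowers its valuation to $0$, so $(\mathbf{A}_k)$ fails. A Plücker equation is used only for $\mu^{(1)}_\tau=(1,1,1)$, via $(\mathbf{B}_1)$ and Lemma~\ref{lem-props of closed conditions}(2).

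The open set $U$ and the role of $e\equiv 0\pmod 3$ are also misidentified. The needed genericity (Definition~\ref{def-generic}) concerns $u^eN_0^\varphi$ in the basis $\gamma_{e,\tau}=\gamma_{0,\tau}u^{\mu^*_\tau}$. Via $(\mathbf{C})$ and the chart $\bN^\nabla_\mu$, this is $c_\tau(u)$ times an upper triangular matrix with entries $-Y_{\tau,\beta}$, $-Y_{\tau,\gamma}$, $-Y_{\tau,\alpha}$ in positions $(1,2)$, $(1,3)$, $(2,3)$. One asks that $Y_{\tau,\alpha}$ and $Y_{\tau,\beta}$ attain the minimal valuations $e+1-\mu_{\tau,\alpha}$ and $e+1-\mu_{\tau,\beta}$ allowed by \eqref{eq-defining Nlambda}; these degrees depend on $\mu_\tau$, not on $e/3$. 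Nor does ``a nontrivial closed condition'' give the first bullet, since a top-dimensional component of $\bN^\nabla_\mu$ could lie in the vanishing locus. The paper shows (Lemma~\ref{lem-free coodinates}) that after a Type I substitution these coefficients are $x_{\tau,1}$ and $y_{\tau,1}$. These are free variables in the degeneration of Proposition~\ref{prop:bounded} when $\mu_{\tau,\alpha},\mu_{\tau,\beta}\geq 2$ and $p\geq 5$. If $\mu_{\tau,\alpha}=1$, then $Y_{\tau,\alpha,e}$ vanishes identically on $\bN^\nabla_\mu$, since $x_\tau$ is constant and the $\phi$-term is divisible by $u^p$; so this genericity is unavailable. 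That is where $e\equiv 0\pmod 3$ enters. With $e=3r$, a balanced $\mu_\tau$ with $\mu_{\tau,\alpha}\leq 1$ or $\mu_{\tau,\beta}\leq 1$ must be $(4r,4r,r)$ or $(5r,2r,2r)$, i.e.\ $\mu_{\tau,\alpha}=0$ or $\mu_{\tau,\beta}=0$. These two boundary cases need a separate genericity involving $Y_{\tau,\gamma}$ (Lemma~\ref{lem-free coodinates}(2)) and a separate failure computation at level $e-1$ with $g=1$. Your proposal has no counterpart of this case split, so as written it breaks exactly when $\mu_{\tau,\alpha}\leq 1$ or $\mu_{\tau,\beta}\leq 1$.
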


This is proven in Section~\ref{sec-conv bounds}. The central ingredient is that the top-dimensional irreducible components of $m^{-1}_{\leq \eta_\bullet}(\fM)$ have an explicit (and, in our specific setting, very simple) description in terms of Mirkovic--Villonen cycles. The assumption $e \equiv 0$ modulo $3$ is likely unnecessary, but allows some difficult edge cases to be avoided.

\begin{proof}[Proof of Theorem~\ref{thm-dimension bounds} granting Theorem~\ref{thm-bounding dimension without convolution} and Theorem~\ref{thm-dim bounds for fibres of convolution}]
    Theorem~\ref{thm-bounding dimension without convolution} and Theorem~\ref{thm-dim bounds for fibres of convolution} together imply that the preimage of $Y^\nabla(\mu)$ under \eqref{eq-morphism forgetting convolution} has dimension $< 3e$ except when $\mu_\tau = (2e,e,0)$ for each $\tau$. Since \eqref{eq-morphism forgetting convolution} is an isomorphism over $Y^\nabla(\mu)$ when $\mu_\tau = (2e,e,0)$ for each $\tau$ it follows from the last part of Theorem~\ref{thm-bounding dimension without convolution} that $Y^\nabla_{\leq \eta}$ has a unique top-dimensional irreducible component of dimension $3e$, which is furthermore generically reduced.
\end{proof}

\subsection{Dimension bounds without convolution}\label{sec-dim without conv}

\subsubsection{Explicit coordinates}\label{sec-explicit equations}

We begin by temporarily dropping the assumptions $d=3$ and $h =2$ and giving an explicit description of $Y^{\nabla}_{\leq h}$ in terms of matrix equations. Suppose $\fM$ is a Breuil--Kisin module of height $\leq h$ over $A$ with a choice of $\fS_A$-basis $\beta^0$ and an endomorphism $N_0$ of $\fM/ u^{e+1}\fM$. Then there are matrices $X,\cN$ such that 
$$
\varphi_\fM(\beta_0) = \beta^0 X,\qquad N_0(\beta^0) \equiv \beta^0 \cN \mod u^{e+1}\fM
$$
for $\beta_0 := \beta^0 \otimes 1$, which is an $\fS_A$-basis of $\varphi^*\fM$. If $\beta_e := \varphi_{\fM}^{-1}(E(u)^h\beta^0)$, which is an $\fS_A$-basis of $\fM_e := \varphi_{\fM}^{-1}(E(u)^h\fM)$ then $\beta_e X_e = \beta_0  $ for $X_e = X E(u)^{-h}$. Thus
\begin{equation}\label{eq-phi^*N_0 of beta_e}
    u^eN_0^{\varphi}(\beta_e) \equiv \beta_e \bigg[ u^e X_e \varphi(\cN) X_e^{-1} - c(u) u^e\partial(X_e) X_e^{-1} \bigg] \mod u^{ep+1}\varphi^*\fM
\end{equation}
where $\partial = u\frac{d}{du}$. On the other hand, if $N$ is the derivation of $\fM$ over $u^e\partial$ with $N(\beta^0) = \beta^0 \cN$ then
\begin{equation}\label{eq-phi o N o phi^{-1}}
\varphi_{\fM} \circ c(u)N \circ \varphi_{\fM}^{-1}(\beta_e) = \beta_e \bigg[ c(u)\cN + c(u)u^e\partial(E(u)^h) E(u)^{-h} \bigg]
\end{equation}
Substituting $X = X_e E(u)^h$ into \eqref{eq-phi^*N_0 of beta_e} and then taking $\tau$-th parts therefore shows that $(\fM,N_0) \in Y^{\nabla}_{\leq h}(A)$ if and only if, 
\begin{equation}\label{eq-second explicit equation}
	u^e \bigg[ X_\tau \phi(\cN_{\tau \circ \varphi}) X_\tau^{-1} - c_\tau(u)\partial(X_\tau) X_\tau^{-1}  \bigg] \equiv  c_\tau(u)\cN_\tau  \mod u^{e+1} \mathfrak{g}[[u]]
\end{equation}
for each $\tau \in \cJ_0$, and for $\phi$ the $\bF$-linear endomorphism of $\mathfrak{g}[[u]]$ given by $u \mapsto u^p$. Here we use that the $\tau$-th part of $\varphi(\cN)$ equals $\phi(\cN_{\tau \circ \varphi})$. In particular, this gives

\begin{lemma}\label{lem-equations}
    Let $\widetilde{Y^\nabla(\mu)}$ denote the $\bF$-scheme with $A$-points classifying $(\fM,N_0) \in Y^{\nabla}(\mu)(A)$ together with an $\fS_A$-basis $\beta^0$ of $\fM$. Then there is a closed immersion
    $$
    \widetilde{Y^\nabla(\mu)}\rightarrow \prod_{\tau \in \cJ_0} \left( L^+G u^{\mu_\tau} L^+G \times u \mathfrak{g}[u]_{< e} \right)
    $$
    with image cut out by \eqref{eq-second explicit equation}.
\end{lemma}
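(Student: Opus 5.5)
The plan is to unwind the definitions in the explicit coordinates just introduced. A point of $\widetilde{Y^\nabla(\mu)}$ is a triple $(\fM,N_0,\beta^0)$. Such a triple determines the matrix $X$ with $\varphi_\fM(\beta_0)=\beta^0 X$ and the matrix $\cN$ with $N_0(\beta^0)\equiv \beta^0\cN$ modulo $u^{e+1}$. Since $N_0\equiv 0$ modulo $u$ and is only defined modulo $u^{e+1}$, we have $\cN\in u\mathfrak{g}[u]_{<e}$ after choosing the unique polynomial representative of degree $\le e$. By definition of $Y^\nabla(\mu)$, the $\tau$-th part $X_\tau$ lies in $L^+Gu^{\mu_\tau}L^+G$. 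The map is then $(\fM,N_0,\beta^0)\mapsto (X_\tau,\cN_\tau)_{\tau}$.

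Conversely, a pair $(X,\cN)$ recovers $(\fM,N_0,\beta^0)$. One takes $\fM=\fS_A^d$ with standard basis, Frobenius given by $X$, and $N_0$ given by $\cN$. Height $\le h$ holds automatically because $\mu_\tau\le (eh,\dots)$ lies in $[0,eh]$; equivalently $E(u)^h X^{-1}$ is integral, using that $E(u)\equiv u^e$ modulo $p$. Hence the map is a monomorphism. It identifies $\widetilde{Y^\nabla(\mu)}$ with the subfunctor of $\prod_\tau(L^+Gu^{\mu_\tau}L^+G\times u\mathfrak{g}[u]_{<e})$ on which the two conditions of Definition~\ref{def-BK+monodromy stack} hold.

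The main step is to check that these two conditions together are equivalent to \eqref{eq-second explicit equation}. I would use the computations \eqref{eq-phi^*N_0 of beta_e} and \eqref{eq-phi o N o phi^{-1}}, expressed in the basis $\beta_e$ of $\fM_e$, with $X_e=XE(u)^{-h}\in \mathrm{GL}_d$ integral over the relevant range.

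\textbf{Commutation condition.} The second bullet of the definition says that the bracket in \eqref{eq-phi^*N_0 of beta_e} is congruent to $c(u)\cN+c(u)u^e\partial(E(u)^h)E(u)^{-h}$ modulo $u^{e+1}$ in $\beta_e$-coordinates. Substituting $X_e=XE(u)^{-h}$ gives
\[
u^e\partial(X_e)X_e^{-1}=u^e\partial(X)X^{-1}-u^e\partial(E(u)^h)E(u)^{-h},
\]
so the $\partial(E(u)^h)$ terms cancel. Conjugation by the scalar $E(u)^{-h}$ does not affect $X_e\varphi(\cN)X_e^{-1}$. Taking $\tau$-parts, and using that the $\tau$-part of $\varphi(\cN)$ is $\phi(\cN_{\tau\circ\varphi})$, yields exactly \eqref{eq-second explicit equation}.

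\textbf{Stability condition.} The first bullet, that $u^{e-1}N_0^\varphi$ stabilises $\fM_e$, says the bracket in \eqref{eq-phi^*N_0 of beta_e} is divisible by $u$. The definition also requires that the expression be integral. Once \eqref{eq-second explicit equation} holds, the bracket is congruent modulo $u^{e+1}$ to $c_\tau\cN_\tau$, which is divisible by $u$. So integrality and divisibility by $u$ follow from the equation, and the first bullet is implied by it.

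\textbf{Closedness.} The locus cut out by \eqref{eq-second explicit equation} is closed. Indeed $X_\tau^{-1}$ has poles of order at most $eh$, and the equation asks for the vanishing of finitely many coefficients. These coefficients are regular functions on the product, since $L^+Gu^{\mu}L^+G$ is a locally closed subscheme of the loop group with bounded poles. Hence the map is a closed immersion onto this locus.

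The main obstacle I expect is the bookkeeping of truncations. Definition~\ref{def-BK+monodromy stack} is stated modulo $u^{e+1}\fM_e$, which is a different lattice from $u^{e+1}\varphi^*\fM$. I would check that, in the basis $\beta_e$, this congruence is exactly the matrix congruence modulo $u^{e+1}\mathfrak{g}[[u]]$. I would also confirm that the error term $u^{ep+1}\varphi^*\fM$ in \eqref{eq-phi^*N_0 of beta_e} lies in $u^{e+1}\fM_e$. This uses $u^{eh}\varphi^*\fM\subset\fM_e$ and $ep+1\ge eh+e+1$, which holds because $h\le p-1$.
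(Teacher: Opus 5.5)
Your proposal is correct and follows the paper's route: the paper also obtains the lemma by writing both conditions of Definition~\ref{def-BK+monodromy stack} in the bases $\beta^0,\beta_e$ via \eqref{eq-phi^*N_0 of beta_e} and \eqref{eq-phi o N o phi^{-1}}, substituting $X_e = XE(u)^{-h}$ so that the $\partial(E(u)^h)E(u)^{-h}$ terms cancel, and passing to $\tau$-th parts. There are two harmless slips: $X_e^{-1}=E(u)^hX^{-1}$ is integral, not $X_e$; and modulo $u^{e+1}$ the bracket in \eqref{eq-phi^*N_0 of beta_e} is congruent to $c(u)\cN + eh\,c(u)u^e$, not just $c(u)\cN$, which is still divisible by $u$, so your stability argument is unaffected.
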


    For later analysis it will be useful to simplify the equations in \eqref{eq-second explicit equation}. For convenience we return to the case $d=3$ (though the same can clearly be done for any $d$) and consider affine spaces $J_{\tau} \subset L^+G$ and $\mathfrak{n} \subset u\mathfrak{g}[u]_{\leq e-1}$ respectively consisting of matrices
$$
B_\tau = \begin{pmatrix}
    1 & 0 & 0 \\ 
    x_\tau & 1 & 0 \\ 
    z_\tau & y_\tau & 1
\end{pmatrix}, \qquad Y_\tau = \begin{pmatrix}
    0 & 0 & 0 \\ Y_{\tau,\alpha} & 0 & 0 \\ Y_{\tau,\gamma} & Y_{\tau,\beta} & 0 
\end{pmatrix}
$$
where
\begin{itemize}
    \item $x_\tau,y_\tau$, and $z_\tau$ are polynomials with degree $< \mu_{\tau,\alpha}$, $\mu_{\tau,\beta}$, and $\mu_{\tau,\gamma}$ respectively.
    \item $Y_{\tau,\alpha}$, $Y_{\tau,\beta}$, and $Y_{\tau,\gamma}$ are each polynomials of degree $\leq e$ with vanishing constant term.
\end{itemize}
We write $x_{\tau,i}$ for the coefficient of $u^i$ in $x_{\tau}$, and likewise with the other entries of these matrices, and interpret these coefficients as coordinates on these affine spaces. More generally, for any $3 \times 3$ matrix over $R(\!(u)\!)$, let $M_{\alpha}$ denote the $(21)$-entry and $M_{\alpha, i}$ denote the coefficient of $u^i$ in $M_{\alpha}$.  Similarly, let $M_{\beta}$ (resp. $M_{\gamma}$) denote the $(32)$ (resp. $(31)$)-entry.

We then consider the locus 
$$
\bN_\mu^\nabla \subset \prod_{\tau \in \cJ_0} \left( L^+G  \times J_\tau \times \mathfrak{n}  \right)
$$
consisting of $(h_\tau,B_\tau,Y_\tau)_{\tau \in \cJ_0}$ satisfying the closed condition
\begin{equation}\label{eq-defining Nlambda}
\operatorname{Ad}( u^{\mu_\tau}) u^e\bigg[\partial(B_\tau) B^{-1}_\tau+  B_\tau \phi\bigg( h_{\tau \circ \varphi}^{-1} d_{\tau \circ \varphi}(u) Y_{\tau \circ \varphi} h_{\tau \circ \varphi}\bigg) B_\tau^{-1}\bigg] = Y_\tau \mod u^{e+1}\mathfrak{g}[[u]]
\end{equation} 
for each $\tau \in \cJ_0$, where $d_\tau(u) \in \bF[[u]]^\times$ denotes the $\tau$-th part of $d(u) = \varphi^{-1}(c(u))^{-1} \in \fS_{\bF}$ for $c(u)$ defined in Section~\ref{sec-setup}. Note that $d_{\tau}(0) \neq 0$. 

\begin{lemma}\label{lem-open covers}
    Fix $g_\tau \in G$ for each $\tau \in \cJ_0$. Then there is an immersion 
    $$
    \bN_\mu^\nabla \rightarrow \prod_{\tau \in \cJ_0} \left( L^+G u^{\mu_\tau} L^+G \times u\mathfrak{g}[u]_{< e} \right)
    $$
    given by $(h_\tau,B_\tau,Y_\tau)_{\tau \in \cJ_0} \mapsto (h_\tau u^{\mu_\tau} B_\tau g_\tau, -h_\tau^{-1} Y_\tau h_\tau)$ which
    identifies $\bN_\mu^\nabla$ with an open subscheme of $\widetilde{Y^\nabla(\mu)}$. Furthermore, these images form an open cover of $\widetilde{Y^\nabla(\mu)}$ as $(g_\tau)_{\tau}$ varies in $G^{\cJ_0}$.
\end{lemma}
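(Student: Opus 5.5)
The plan is to prove the lemma in two stages. The first stage is purely group-theoretic and handles the $X$-coordinate. The second stage substitutes the coordinates into \eqref{eq-second explicit equation} and checks that it becomes \eqref{eq-defining Nlambda}.

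\textbf{Step 1: charts on the double coset.} Fix $\tau$ and write $P_\mu := L^+G \cap u^{-\mu_\tau} L^+G\, u^{\mu_\tau}$. The map $h \mapsto h u^{\mu_\tau}$ identifies $L^+G u^{\mu_\tau} L^+G$ with an $L^+G$-torsor over $P_\mu\backslash L^+G$, and $P_\mu \backslash L^+G$ is an affine bundle over the partial flag variety $G/P^-_{\mu}$.

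I would first check that multiplication $P_\mu \times J_\tau \to L^+G$ is an open immersion onto $P_\mu \cdot J_\tau \cdot B^+ \cdot \cK_1$, where $B^+$ is the upper triangular Borel. Conjugating a lower unipotent matrix by $u^{\mu_\tau}$ multiplies its $\alpha,\beta,\gamma$ entries by $u^{-\mu_{\tau,\alpha}}, u^{-\mu_{\tau,\beta}}, u^{-\mu_{\tau,\gamma}}$. Hence the lower unipotent part of $P_\mu$ consists exactly of entries divisible by $u^{\mu_{\tau,*}}$. The degree bounds on $x_\tau,y_\tau,z_\tau$ then say that $J_\tau$ is a set of representatives for the quotient of the lower unipotent loop group by this subgroup.

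It follows that $(h,B) \mapsto h u^{\mu_\tau} B$ is an isomorphism of $L^+G\times J_\tau$ onto an open subset of $L^+G u^{\mu_\tau}L^+G$. This open subset is the preimage of the big cell. Right translating by $g_\tau\in G$ moves the big cell, and these translates cover $G/P^-_\mu$ by the Bruhat decomposition. Hence the images of $(h,B)\mapsto h u^{\mu_\tau} B g_\tau$ cover $L^+G u^{\mu_\tau} L^+G$ as $g_\tau$ varies.

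\textbf{Step 2: transforming the equation.} Combined with Lemma~\ref{lem-equations}, it then remains to show that the closed condition \eqref{eq-second explicit equation} on $(X_\tau,\cN_\tau)$ becomes \eqref{eq-defining Nlambda} under
\[
X_\tau = h_\tau u^{\mu_\tau} B_\tau g_\tau, \qquad \cN_\tau = -h_\tau^{-1}Y_\tau h_\tau .
\]
Three simplifications enter the computation.
\begin{enumerate}
\item Since $g_\tau$ is constant, $\partial(g_\tau)=0$, and $g_\tau$ cancels from $\partial(X_\tau)X_\tau^{-1}$. It also cancels from the conjugation term $X_\tau\phi(\cN_{\tau\circ\varphi})X_\tau^{-1}$, because there it is conjugated away.
\item Expanding $\partial(X_\tau)X_\tau^{-1}$ produces $\partial(h_\tau)h_\tau^{-1}$, the constant diagonal term $\mu_\tau$ coming from $\partial(u^{\mu_\tau})u^{-\mu_\tau}$, and $\operatorname{Ad}(h_\tau u^{\mu_\tau})\bigl(\partial(B_\tau)B_\tau^{-1}\bigr)$. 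After multiplying by $u^e$, the first two of these are killed modulo $u^{e+1}$ once we conjugate by $h_\tau^{-1}$ and use that $h_\tau\in L^+G$. For the diagonal term this needs care and should be checked.
\item I would rescale by the unit $c_\tau(u)$. Because $\phi(d_{\tau\circ\varphi}) = c_\tau^{-1}$, this unit is absorbed by the factor $d_{\tau\circ\varphi}(u)$ inside $\phi$.
\end{enumerate}
Conjugating the resulting identity by $h_\tau^{-1}$ should give exactly \eqref{eq-defining Nlambda}, up to the sign convention in $\cN_\tau = -h^{-1}Yh$.

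\textbf{Expected main obstacle.} The hard point is that $Y_\tau$ is constrained to be strictly lower triangular, and its $\alpha,\beta,\gamma$ entries have degree $\le e$. So I must show that, on the chart, \eqref{eq-second explicit equation} forces $h_\tau\cN_\tau h_\tau^{-1}$ to have vanishing diagonal and upper triangular parts, and that the map back is a closed, hence locally closed, immersion.

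The approach I would try first is a root-by-root analysis of the left-hand side. Conjugation by $\operatorname{Ad}(u^{\mu_\tau})$ multiplies the diagonal and upper entries by nonnegative powers of $u$, and the factor $u^e$ in front then forces these components to vanish modulo $u^{e+1}$. This requires the degree bounds on $B_\tau$ and that $\phi$ raises $u$-adic valuations by a factor of $p$. Consequently the non-lower-triangular components of $\cN$ are forced to vanish, and the lower-triangular components are read off directly as $Y_\tau$.

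Once this is established, the map is an isomorphism of $\bN_\mu^\nabla$ onto the preimage of the open chart from Step 1. That gives both the open immersion statement and, by Step 1, the covering statement.
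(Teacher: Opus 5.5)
Your route is the paper's: open charts $L^+G u^{\mu_\tau}J_\tau g_\tau$ of the double coset, substitution into \eqref{eq-second explicit equation} using $\partial(g_\tau)=0$, and an $\operatorname{Ad}(u^{\mu_\tau})$-valuation argument for the shape of $Y_\tau$. Step~1 correctly spells out what the paper only recalls, with one slip. The image of $P_\mu\times J_\tau$ is $P_\mu J_\tau$ itself, the preimage of the big cell. The set $P_\mu J_\tau B^+\cK_1$ you wrote is all of $L^+G$ when $\mu_\tau$ is regular. Your handling of $c_\tau$ via $\phi(d_{\tau\circ\varphi})=c_\tau^{-1}$ is also fine.

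However, two of the simplifications you assert in Step~2 are false. If you carry out the computation, it does not land on \eqref{eq-defining Nlambda} with the map as stated. The paper's one-line proof asserts the same equivalence, so its formulas need the same adjustment. The chart and covering statements are unaffected.

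\textbf{First, $g_\tau$ is not conjugated away.} We have $X_\tau\phi(\cN_{\tau\circ\varphi})X_\tau^{-1}=h_\tau u^{\mu_\tau}B_\tau\bigl(g_\tau\phi(\cN_{\tau\circ\varphi})g_\tau^{-1}\bigr)B_\tau^{-1}u^{-\mu_\tau}h_\tau^{-1}$. Write $\cN_\tau=-h_\tau\widehat{Y}_\tau h_\tau^{-1}$; this convention, not $-h_\tau^{-1}Y_\tau h_\tau$, is what puts $\widehat{Y}_\tau$ on the right after you conjugate by $h_\tau^{-1}$. Also write $\mu_\tau$ for $\operatorname{diag}(\mu_{\tau,1},\mu_{\tau,2},\mu_{\tau,3})$. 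Then the $\tau$-equation becomes
\[
u^e\operatorname{Ad}(u^{\mu_\tau})\bigl[\partial(B_\tau)B_\tau^{-1}+B_\tau\,\phi\bigl(\operatorname{Ad}(g_\tau h_{\tau\circ\varphi})(d_{\tau\circ\varphi}\widehat{Y}_{\tau\circ\varphi})\bigr)B_\tau^{-1}\bigr]+u^e\mu_\tau\equiv\widehat{Y}_\tau \bmod u^{e+1}\mathfrak{g}[[u]].
\]
Here $h_\tau$ has dropped out since $u^eh_\tau^{-1}\partial(h_\tau)\equiv 0$. So $h_{\tau\circ\varphi}$ enters the whole system only through $g_\tau h_{\tau\circ\varphi}$. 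To recover \eqref{eq-defining Nlambda} you must take $(g_\tau h_{\tau\circ\varphi})^{-1}$ as the new coordinate in place of $h_{\tau\circ\varphi}$.

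\textbf{Second, the diagonal term $u^e\mu_\tau$ is not killed modulo $u^{e+1}$,} unless every $\mu_{\tau,i}\equiv 0 \bmod p$. The factor $u^e$ only kills terms of positive valuation. So your root-by-root argument does dispose of the strictly upper-triangular part, where $\operatorname{Ad}(u^{\mu_\tau})$ gives non-negative powers and the $\phi$-term has valuation $\geq p$. But it forces the diagonal of $\widehat{Y}_\tau$ to be $u^e\mu_\tau$, not $0$. For example, with $e=1$ and $\mu_\tau=(2,1,0)$ one finds $\cN_\tau=-u\,h_\tau(0)\operatorname{diag}(2,1,0)h_\tau(0)^{-1}$, which is not nilpotent.

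The fix is to set $\widehat{Y}_\tau=Y_\tau+u^e\mu_\tau$ with $Y_\tau\in\mathfrak{n}$. This shift does not disturb the other equations: $u^e\operatorname{Ad}(u^{\mu_\tau})$ applied to anything of valuation $\geq ep$ has valuation $\geq e+ep-\mu_{\tau,\gamma}\geq e(p-1)\geq e+1$.

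With these two modifications, your argument gives an isomorphism of $\bN_\mu^\nabla$ onto the preimage in $\widetilde{Y^\nabla(\mu)}$ of the charts $\prod_\tau L^+Gu^{\mu_\tau}J_\tau g_\tau$. That is all the later arguments use.
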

\begin{proof}
    Recall that $L^+Gu^{\mu_\tau} J_\tau g_\tau \subset L^+Gu^{\mu_\tau} L^+G$ is open and forms an open cover as $g_\tau$ runs over the Weyl group of $G$. In particular, the same holds for $g_\tau$ running over $G$. Consequently, it suffices to show that \eqref{eq-second explicit equation} and \eqref{eq-defining Nlambda} are equivalent after setting $X_\tau = h_\tau u^{\mu_\tau} B_\tau g_\tau$ and $\cN_\tau = - h_\tau^{-1} Y_\tau h_\tau$, which is a straightforward manipulation using that $\partial(g_\tau) =0$ whenever $g_\tau \in G$. Lastly, one observes that the dominance of $\mu_\tau$ forces any solution to \eqref{eq-defining Nlambda} with $Y_\tau \in u\mathfrak{g}[u]_{\leq e}$ to be nilpotent lower triangular.  
\end{proof}

Notice that if $N>>0$ then there is a left action of $\prod_{\tau \in \cJ_0} \cK_N$ on $\bN_\mu^\nabla$ given by $(k_\tau) \cdot (h_\tau, B_\tau,Y_\tau) = (k_\tau h_\tau, B_\tau,Y_\tau)$.   
\begin{cor}\label{cor-open cover}
    There is a cover $\lbrace U_{g} \rbrace$ indexed by $g = (g_\tau)_{\tau \in \cJ_0} \in \prod_{\tau \in \cJ_0} G(\overline{\bF})$ of $Y^\nabla(\mu)$ by open substacks such that, for each $g = (g_\tau)_{\tau \in \cJ_0}$ and $N>>0$,
    $$
    \left(\prod_{\tau \in \cJ_0} \cK_N \right) \backslash \bN_\mu^\nabla
    $$
    can be realised as an $\prod_{\tau \in \cJ_0} L^+G /\cK_N$-torsor over $U_g$.
\end{cor}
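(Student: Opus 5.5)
The plan is to deduce Corollary~\ref{cor-open cover} directly from Lemma~\ref{lem-open covers}, together with the standard fact that $L^+G$ acts on $L^+Gu^{\mu_\tau}L^+G$ through a finite-type quotient $L^+G/\cK_N$ for $N\gg 0$.

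\textbf{Step 1: defining the cover.} For $g=(g_\tau)$, let $\widetilde{U}_g\subset\widetilde{Y^\nabla(\mu)}$ be the open image of the immersion of Lemma~\ref{lem-open covers}. Let $U_g\subset Y^\nabla(\mu)$ be its image under the projection $\widetilde{Y^\nabla(\mu)}\to Y^\nabla(\mu)$. This projection forgets $\beta^0$ and is a $\prod_\tau L^+G$-torsor, with $L^+G$ acting by change of basis.

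\textbf{Step 2: invariance of the image.} I need $\widetilde{U}_g$ to be stable under this change-of-basis action, so that $U_g$ is open and $\widetilde U_g$ is its full preimage. Changing $\beta^0$ by $k=(k_\tau)$ sends $X_\tau\mapsto k_\tau^{-1}X_\tau\,\phi(k_{\tau\circ\varphi})$ and $\cN_\tau\mapsto k_\tau^{-1}\cN_\tau k_\tau+(\text{dlog term})$. The left multiplication by $k_\tau^{-1}$ is absorbed into $h_\tau$, since $X_\tau=h_\tau u^{\mu_\tau}B_\tau g_\tau$.

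The right multiplication by $\phi(k)$ is the part that needs care. Because $\mu_\tau$ is bounded by $(2e,e,0)$, there is an $N$ such that $u^{\mu_\tau}B_\tau g_\tau\,\phi(\cK_{N'})\subset\cK_N\,u^{\mu_\tau}B_\tau g_\tau$ for $N'$ large relative to $N$. Thus the right action, when restricted to $\cK_{N'}$, can be converted into a left action on $h_\tau$.

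To get a genuine torsor, I will instead use the structure of the statement: consider the quotient $\left(\prod_\tau\cK_N\right)\backslash\bN^\nabla_\mu$, where $\cK_N$ acts on $h_\tau$ by left multiplication. I will then construct a new action of $\prod_\tau L^+G/\cK_N$ on this quotient, following the Pappas--Rapoport trick already used in the proof of Proposition~\ref{prop-cm and dim}: $k$ acts by
\[
(h_\tau,B_\tau,Y_\tau)\mapsto \bigl(k_\tau h_\tau\,\phi(\cdots),\,B_\tau,\,Y_\tau\bigr),
\]
suitably conjugated. Under this action, the map of Lemma~\ref{lem-open covers} becomes equivariant for the change-of-basis action modulo $\cK_N$. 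Concretely, $h_\tau$ is only relevant modulo $\cK_N$ for describing $X_\tau$ up to the action of $\phi(\cK_{N'})$. Equation \eqref{eq-defining Nlambda} involves $h_{\tau\circ\varphi}$ only through $\phi(\cdot)$ modulo $u^{e+1}$, so it too depends only on $h$ modulo $\cK_N$ when $N>e$.

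\textbf{Step 3: torsor property.} Freeness and transitivity on the fibres over $U_g$ follow from two facts. First, $\beta^0$ modulo $\cK_N$ is determined by $\fM$ up to the group action. Second, the decomposition $X_\tau=h_\tau u^{\mu_\tau}B_\tau g_\tau$ is unique on the open cell, with $B_\tau\in J_\tau$ and $h_\tau$ determined modulo the stabiliser, which lies in $\cK_N$ for appropriate $N$. Smoothness and local triviality come from the corresponding statement for $\widetilde{Y^\nabla(\mu)}\to Y^\nabla(\mu)$.

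\textbf{Step 4: covering.} By Lemma~\ref{lem-open covers}, the $\widetilde{U}_g$ cover $\widetilde{Y^\nabla(\mu)}$ as $g$ varies over $G^{\cJ_0}$, so the $U_g$ cover $Y^\nabla(\mu)$.

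The main obstacle is Step 2: verifying that the twisted $L^+G/\cK_N$-action is well defined and preserves \eqref{eq-defining Nlambda}. This requires tracking how $\phi(k_{\tau\circ\varphi})$ interacts with $u^{\mu_\tau}B_\tau$ and re-normalising $B_\tau$ back into $J_\tau$.
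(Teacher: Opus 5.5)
There is a genuine gap, and it sits exactly where you flag the main obstacle. Step~2 requires $\widetilde U_g$ to be stable under the change-of-basis action of $\prod_\tau L^+G$, and Step~3 asserts transitivity on the fibres over $U_g$. Both fail.

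Changing $\beta^0$ by $k$ sends $X_\tau=h_\tau u^{\mu_\tau}B_\tau g_\tau$ to $k_\tau^{-1}h_\tau u^{\mu_\tau}B_\tau g_\tau\,\phi(k_{\tau\circ\varphi})$. This lies in $L^+Gu^{\mu_\tau}J_\tau g_\tau$ if and only if $B_\tau g_\tau\phi(k_{\tau\circ\varphi})g_\tau^{-1}\in (L^+G\cap u^{-\mu_\tau}L^+Gu^{\mu_\tau})J_\tau$. Take $\mu_\tau=(2e,e,0)$. Then the right-hand side is exactly the set of elements of $L^+G$ whose reduction mod $u$ lies in the big cell $B^+U^-$ ($B^+$ upper triangular, $U^-$ lower unipotent). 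So the condition becomes $B_\tau(0)g_\tau k_{\tau\circ\varphi}(0)g_\tau^{-1}\in B^+U^-$, which fails for $k_{\tau\circ\varphi}(0)$ outside a translate of the big cell.

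Hence the fibres of $(\prod_\tau\cK_N)\backslash\bN^\nabla_\mu\to U_g$ are in general proper open subsets of $\prod_\tau L^+G/\cK_N$. Your ``new action'' fixes $B_\tau$ and only alters $h_\tau$, so it cannot model the residual change of basis. That action genuinely moves $B_\tau$, and ``re-normalising $B_\tau$ back into $J_\tau$'' is possible only on this proper open subset.

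The paper never needs the chart to be $L^+G$-stable. It runs the Pappas--Rapoport argument (as in Proposition~\ref{prop-cm and dim}) on all of $\widetilde{Y^\nabla(\mu)}$, at a single level $N$. For $N\gg 0$ and $k\in\prod_\tau\cK_N$, the equation $k'^{-1}X\phi(k')=kX$ has a unique solution $k'\in\prod_\tau\cK_N$. The reason is that (suppressing $\tau$-indices) $k'\mapsto X\phi(k')X^{-1}k^{-1}$ is a $u$-adic contraction of $\prod_\tau\cK_N$, since $X^{-1}$ has poles of order at most $2e$.

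So left multiplication on $X$ and change of basis by $\prod_\tau\cK_N$ have the same orbits. This gives $(\prod_\tau\cK_N)\backslash\widetilde{Y^\nabla(\mu)}\cong\widetilde{Y^\nabla(\mu)}/(\prod_\tau\cK_N)$, which is a $\prod_\tau L^+G/\cK_N$-torsor over $Y^\nabla(\mu)$ because $\widetilde{Y^\nabla(\mu)}$ is a $\prod_\tau L^+G$-torsor. Under Lemma~\ref{lem-open covers}, left multiplication on $X_\tau$ is $h_\tau\mapsto k_\tau h_\tau$. So each $(\prod_\tau\cK_N)\backslash\bN^\nabla_\mu$ is an open subscheme of this torsor, and these opens cover it as $g$ varies.

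Your inclusion at two different levels $N'$ and $N$ only sends right $\cK_{N'}$-orbits into left $\cK_N$-orbits. It is the same-level identification of the two quotients that produces the torsor.

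Be aware of what this argument actually yields. It exhibits $(\prod_\tau\cK_N)\backslash\bN^\nabla_\mu$ as an open piece of a torsor over $Y^\nabla(\mu)$, smooth over its image $U_g$ with nonempty open fibres. It does not give a torsor over $U_g$ on the nose, which by the computation above should not be expected. The weaker form is all that the later dimension count uses.
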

\begin{proof}
    This follows from the same standard argument employed in Proposition~\ref{prop-cm and dim}. Specifically, as in e.g.\ \cite[9.7]{B24} one shows that, for  sufficiently $N$, there is an isomorphism of quotient stacks
    $$
      \left( \prod_{\tau \in \cJ_0} \cK_N \right) \backslash  \widetilde{Y_{\leq h}^\nabla(\mu) } \cong \widetilde{Y_{\leq h}^\nabla(\mu)}/ \left( \prod_{\tau \in \cJ_0} \cK_N \right)
    $$
    where the right hand action is given by $ \prod_{\tau \in \cJ_0} \cK_N$ operating on the choice of basis $\beta^0$ and the left hand action identifies, under Lemma~\ref{lem-open covers}, with the left multiplication action on the $L^+G u^{\mu_\tau} L^+G$-th factor. Tracing through the substitutions just made shows this coincides with the left multiplication action on $\bN_\mu^\nabla$. Clearly, $\left( \prod_{\tau \in \cJ_0} \cK_N \right) \backslash  \widetilde{Y_{\leq h}^\nabla(\mu) }$ is a $\prod_{\tau \in c\J_0} L^+G / 
    \cK_N$-torsor over $Y_{\leq h}^\nabla(\mu)$ so this finishes the proof.
\end{proof}

\subsubsection{Some tools for bounding dimension}

Let $\F[\un{X}]$ be a polynomial ring with a $\Gm$-action scaling the variables.  For $f \in \F[\un{X}]$, the leading term $\mathrm{lead}(f)$ of $f$  is the sum of terms with highest weight for the $\Gm$-action.   

\begin{lemma} \label{lemma:leading}
Let $I = (f_1, f_2, \ldots, f_r) \subset \F[\un{X}]$.   If $J = (\mathrm{lead}(f_1), \mathrm{lead}(f_2), \ldots, \mathrm{lead}(f_r))$,  then 
\[
\dim \F[\un{X}]/J \geq \dim \F[\un{X}]/I.
\]
\end{lemma}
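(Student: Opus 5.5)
This is the standard Gröbner degeneration argument. I will realise $\F[\un{X}]/J$ as the special fibre of a flat family over $\bA^1$ whose generic fibre is $\F[\un{X}]/I$, and then use upper semicontinuity of fibre dimension.

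\emph{Setup.} Let $w_j \in \bZ$ be the weight of the variable $X_j$, so that $t \in \Gm$ acts by $X_j \mapsto t^{w_j}X_j$. By reindexing the grading (replacing $t$ by $t^{-1}$ if the convention requires), I arrange that for any $f$ with top weight $m$,
\[
\tilde f := t^{m} f(t^{-w_1}X_1, \ldots) \in \F[t][\un{X}]
\]
has specialisation $\tilde f|_{t=0} = \mathrm{lead}(f)$. This holds because $\tilde f = \mathrm{lead}(f) + t(\cdots)$: the monomials of weight $m-k$ acquire the factor $t^{k}$. For $c \neq 0$, the specialisation $\tilde f|_{t=c}$ is $c^{m}$ times the pullback of $f$ under the automorphism $X_j \mapsto c^{-w_j}X_j$.

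\emph{The family.} Let $\tilde I \subset \F[t][\un{X}]$ be the ideal generated by the $\tilde f_i$. I will replace it by its saturation $\tilde I^{\mathrm{sat}} = \tilde I : t^\infty$, so that $R := \F[t][\un{X}]/\tilde I^{\mathrm{sat}}$ is $t$-torsion free, hence flat over $\F[t]$. The three facts I need are the following.
\begin{enumerate}
\item Over $\F[t,t^{-1}]$, the ideal $\tilde I$ is the pullback of $I[t,t^{-1}]$ under the automorphism $X_j \mapsto t^{-w_j}X_j$. Hence the generic fibre $R\otimes \F(t)$ is isomorphic to $(\F[\un X]/I)\otimes\F(t)$, and its dimension is $\dim \F[\un X]/I$.
\item The ideal $\tilde I^{\mathrm{sat}}|_{t=0}$ contains $\tilde I|_{t=0} \supseteq J$. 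Hence $R/tR$ is a quotient of $\F[\un X]/J$, and so $\dim R/tR \le \dim \F[\un X]/J$.
\item By flatness, every irreducible component of $\operatorname{Spec} R$ dominates $\bA^1_t$.
\end{enumerate}

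\emph{Conclusion.} Take an irreducible component $Z$ of $\operatorname{Spec}R$ whose generic fibre has dimension $n = \dim \F[\un X]/I$; then $\dim Z = n+1$. Since $Z$ is flat over the one-dimensional base $\bA^1$, its special fibre $Z\cap\{t=0\}$ is nonempty, and it has dimension at least $n$ by Krull's principal ideal theorem, as it is cut out of $Z$ by the single equation $t$, which is a nonzerodivisor on $Z$. Combining this with (2) gives
\[
\dim \F[\un X]/J \;\ge\; \dim R/tR \;\ge\; n .
\]

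The main point requiring care is choosing the sign convention for the $t$-twist so that $t=0$ really picks out the highest-weight part, together with the saturation step that ensures flatness. Once these are in place, the rest is formal.

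One degenerate case remains. If $I$ is the unit ideal, then $\F[\un X]/I = 0$ and the inequality is vacuous, interpreting the dimension of the zero ring as $-\infty$. Otherwise the generic fibre is nonempty, so a component $Z$ as above exists and the argument goes through.
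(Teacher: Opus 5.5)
You take the same route as the paper, degenerating via $t^{k_i}f_i(t^{-1}\un X)$ and comparing the fibres over $t=1$ and $t=0$. However, there is a genuine gap in your conclusion: the claim that, because $Z$ is flat over $\bA^1$, its special fibre $Z\cap\{t=0\}$ is nonempty. Flat morphisms of finite type are open, not surjective. For example, $\F[t,X]/(tX-1)\cong\F[t,t^{-1}]$ is torsion-free over $\F[t]$ but has empty fibre at $t=0$.

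This is not a removable technicality, because you allow arbitrary $w_j\in\bZ$, and for such weights the lemma is false. Take one variable $X$ of weight $-1$ and $f=1-X$. The constant term has the highest weight, so $\mathrm{lead}(f)=1$ and $J$ is the unit ideal, whereas $\F[X]/(1-X)\cong\F$ has dimension $0$. Your construction produces exactly $\tilde f=1-tX$ and the ring above. Steps (1)--(3) all hold in this example, so the failure is located precisely at the nonemptiness claim. Your closing paragraph on the degenerate case also misses this: the problem is $J$ becoming the unit ideal while $I$ is not.

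The missing input is that the weights are nonnegative. This holds in every application in the paper, and the paper's one-line assertion $\dim X_0\ge\dim X_1$ implicitly relies on it. Assume all $w_j\ge 0$, and put $t$ in degree $1$ and $X_j$ in degree $w_j$. Each $\tilde f_i$ is then homogeneous of degree $m_i\ge 0$. Hence $\tilde I$, its saturation, and the minimal primes of $R$ are homogeneous, and $\cO(Z)$ is a nonnegatively graded domain in which $t$ is nonzero of degree $1$. Then $t\,\cO(Z)\subseteq\cO(Z)_{\ge 1}$ does not contain $1$, so $t$ is a nonzero nonunit on $Z$, and your Krull argument gives $\dim(Z\cap\{t=0\})=\dim Z-1=n$. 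Geometrically, $Z$ is stable under $s\cdot(t,\un X)=(st,s^{w}\un X)$, and for nonnegative weights $\lim_{s\to 0}$ of any point of $Z$ exists and lies over $t=0$.

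A smaller point: ``$t$-torsion free, hence flat'' is not valid in general, since $\F[t]/(t-1)$ is $t$-torsion free but not flat over $\F[t]$. Flatness is true here because $R\subseteq R[1/t]\cong(\F[\un X]/I)\otimes_{\F}\F[t,t^{-1}]$, which is free over $\F[t,t^{-1}]$. In any case, all you need is that $t$ is a nonzerodivisor on $R$. Then the components of $\operatorname{Spec}R$ are the closures of those of $\operatorname{Spec}R[1/t]$, so some component $Z$ has dimension $n+1$.
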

\begin{proof}
Let $k_i$ be the weight of the leading term of $f_i$. 
Consider the family defined by the ideal
\[
J_t = ( t^{k_i} f_i(t^{-1} \un{X})) \subset \F[\un{X}, t] 
\]
This defines a family $X_t = V(J_t) \rightarrow \mathbb{A}^1_{\F}$.   Note that the $\Gm$-action identifies the fibers over $t\neq 0$.  Thus, $\dim X_1 = \dim \F[\un{X}]/I$ is the same as the generic fiber dimension.   Since $X_0 = \Spec  \F[\un{X}]/J$, we have $\dim X_0 \geq \dim X_1$.  
\end{proof}

Let $\mathbb{P}_n$ denote the affine space of polynomials of degree $\leq n$ over $\F$.  

\begin{lemma} \label{lem:dimPoly} Let $r,s$ be positive integers. Let $t \leq r +s +1$.  Consider the space $M$ of polynomials $A, C \in \mathbb{P}_r$ and $B, D \in \mathbb{P}_s$ in variable $Z$ such that $AB = CD \mod Z^t$ in $\mathbb{P}_{r+s}$.   Then the codimension of $M$  in $\mathbb{P}_r^2 \times \mathbb{P}_s^2$ is greater than or equal to $\min\{2r +2, 2s +2, t\}$ 
\end{lemma}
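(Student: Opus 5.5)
The plan is to treat the condition as a family of linear conditions on $(B,D)$ parametrised by $(A,C)$, and to bound the codimension by stratifying the $(A,C)$-space. For fixed $(A,C)\in\mathbb{P}_r^2$, consider the linear map
$$
L_{A,C}\colon \mathbb{P}_s^2\rightarrow \mathbb{P}_{t-1},\qquad (B,D)\mapsto AB-CD \bmod Z^t .
$$
The fibre of $M\rightarrow\mathbb{P}_r^2$ over $(A,C)$ is $\ker L_{A,C}$, so it has codimension $\operatorname{rk}L_{A,C}$ in $\mathbb{P}_s^2$. If $S_k\subset\mathbb{P}_r^2$ is the locus where $\operatorname{rk}L_{A,C}=k$, then
$$
\operatorname{codim}(M)\geq \min_k\bigl(\operatorname{codim}S_k+k\bigr).
$$
It therefore suffices to show that the locus where $\operatorname{rk}L_{A,C}$ is smaller than $\min\{2s+2,t\}$ by $\delta$ has codimension at least $\delta$. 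The symmetric statement, with the roles of $(A,C)$ and $(B,D)$ interchanged, will handle the case where $2r+2$ is the binding term of the minimum.

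To compute $\ker L_{A,C}$, I would rewrite the condition as an honest polynomial identity
$$
AB-CD=Z^tE,\qquad E\in\mathbb{P}_{r+s-t},
$$
which is possible since $t\leq r+s+1$. This is a kernel of a Sylvester-type map $(B,D,E)\mapsto AB-CD-Z^tE$ into $\mathbb{P}_{r+s}$. First I would factor out the common $Z$-adic order: write $a=\min(\operatorname{ord}_Z A,\operatorname{ord}_Z C)$, and note that the locus with $a\geq a_0$ has codimension $2a_0$ in $\mathbb{P}_r^2$. Dividing by $Z^a$ reduces to the case where one of $A,C$, say $A$, is a unit in $\F[[Z]]$, at the cost of replacing $t$ by $t-a$. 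In that case $B\equiv A^{-1}CD \bmod Z^{t-a}$ determines the low coefficients of $B$. The remaining constraint is that the truncated power series $A^{-1}CD \bmod Z^{t-a}$ has degree $\leq s$ (when $t-a>s+1$), which is a Pad\'e-type condition on $D$ and cuts down $D$ further. Second, I would stratify by the degree $g$ of $\gcd(A,C)$, or more robustly by the defect of the associated Hankel/Sylvester matrix built from the coefficients of $A^{-1}C$. The kernel of the Pad\'e system has dimension equal to the rank defect of a Hankel matrix, and the locus where a generic-ish Hankel matrix drops rank by $\delta$ has codimension at least $\delta$, since Hankel rank-drop loci are determinantal of expected codimension. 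Adding these contributions ($2a$ from the order stratum and the Hankel codimension estimate) should give
$$
\operatorname{codim}(M)\geq\min\{2r+2,\,2s+2,\,t\}.
$$
The degenerate strata where $A$ or $C$ vanishes identically cost $r+1$ each and are absorbed by the $2r+2$ term. The symmetric roles of $(A,C)$ and $(B,D)$ account for the $2s+2$ versus $2r+2$ asymmetry.

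The main obstacle is the middle step: the naive count only gives a codimension of $\min(t-a,s+1)$ for the $B$-fibre, which is too weak when $a=0$. The real content is the Pad\'e/Hankel rank estimate, namely that requiring $A^{-1}CD$ to be a polynomial of degree $\leq s$ modulo $Z^{t}$ imposes its full $t-s-1$ conditions away from a locus of matching codimension. My first attempt would be to prove this via a Hankel determinantal stratification. If that is too messy, the fallback is the degeneration of Lemma~\ref{lemma:leading} using a suitable $\Gm$-weighting on coefficients, which reduces to a monomial, hence combinatorial, ideal whose dimension can be counted directly.
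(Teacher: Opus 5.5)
\textbf{There is a genuine gap.} Your opening bookkeeping is fine. The bound $\operatorname{codim} M\geq\min_k(\operatorname{codim}S_k+k)$ is correct, as is the case split according to which of $2r+2$, $2s+2$ is smaller, and so is removing the common $Z$-adic order $a$ of $(A,C)$ at cost $2a$. But the whole content of the lemma then sits in the step you flag as the main obstacle, and the justification you give for it does not work.

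The Toeplitz/Hankel matrix governing $\ker L_{A,C}$ has as entries the coefficients of $F=C'/A' \bmod Z^{t-a}$. These are not free coordinates on $\mathbb{P}_r^2$. They are pulled back along $(A',C')\mapsto C'/A'$, a map whose fibre dimension jumps exactly where $\gcd(A',C')$ or the degrees of $A',C'$ drop, which is also where the rank drops.

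Appealing to determinantal loci of ``expected codimension'' goes the wrong way. For a pulled-back determinantal locus, the expected codimension is only an \emph{upper} bound on the codimension of each component (Eagon--Northcott). Even for a generic Hankel matrix, the rank loci are in general not of the expected codimension $\delta(|m-n|+\delta)$. So the inequality $\operatorname{codim}\{\operatorname{rk}L_{A,C}\leq\min\{2s+2,t\}-\delta\}\geq\delta$ is asserted rather than proved. Making it rigorous would need a stratum-by-stratum analysis of the Pad\'e data of $C'/A'$, by order, degrees and $\gcd$.

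Your monomial fallback is also not yet an argument. The leading monomials form a bipartite graph between coefficients of $(A,C)$ and of $(B,D)$. You would have to exhibit a weighting whose leading monomials, one per equation, contain a matching of size $\min\{2r+2,2s+2,t\}$, and you have not done so.

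\textbf{How the paper avoids this.} The paper does not use Pad\'e theory at all.
\begin{enumerate}
\item In the exact case $t=r+s+1$ the congruence is the identity $AB=CD$. On the stratum where the degrees of $(A,B,C,D)$ are fixed, the map $(A,B,C,D)\mapsto(A,B,c_m)$ is quasi-finite by unique factorisation. Indeed $C$ is $c_m$ times one of finitely many monic divisors of $AB$, and then $D=AB/C$. So each stratum has dimension at most $r+s+3$, i.e.\ codimension at least $t$.
\item The loci where one of the pairs $(A,C)$, $(A,D)$, $(B,C)$, $(B,D)$ vanishes have codimension at least $\min\{2r+2,2s+2\}$.
\item For $t\leq r+s$ it degenerates via Lemma~\ref{lemma:leading}, giving the top coefficients of $B,D$ weight $0$ and everything else positive weight. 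The leading terms are then exactly the equations of $M^{(r,s-1)}$, with $b_s,d_s$ free. Hence $\dim M^{(r,s)}\leq\dim M^{(r,s-1)}+2$, so the codimension does not decrease.
\item Repeatedly lowering the larger of $r,s$ until $r+s+1=t$ gives the stated bound.
\end{enumerate}
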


\begin{proof}
First, consider the case where $r + s + 1= t$, where we have equality $AB = CD$.  We can stratify the space by the degree of the polynomials so let $M_{k, \ell, m, n} \subset M$ be the locally closed subspace where $\deg A = k, \deg B = \ell, \deg C = m$ and $\deg D = n$.   The condition requires that $k + \ell = m +n$ for the strata to be non-empty.  Consider the map 
\[
M_{k, \ell, m, n} \ra \bA^{k} \times \bA^{\ell} \times \Gm
\]
which sends $(A, B, C, D)$ to $(A, B, c_m)$ where $c_m$ is highest degree coefficient of $C$. The map is finite because with $AB$ fixed, there are only finitely many choices for $C$ which then determines $D$ uniquely.   Thus, $\dim M_{k, \ell, m, n} \leq (k+1) + (\ell + 1)  + 1 \leq r  + s + 3$ and so 
\[
\codim  M_{k, \ell, m, n} \geq (2r + 2) + (2s + 2) - (r+s +3) = r+ s +1 = t
\]
The complement of the $M_{k, \ell, m, n}$ is the locus where one of the pairs $(A, C), (A, D), (B, C)$ or $(B, D)$ are identically zero. These are easily seen to have codimension $\geq \min\{2 r+2, 2s+2\}$.  

For the general case $t \leq r +s +1$, fix $t$ and let $M^{(r,s)}$ be the corresponding space.   Applying Lemma \ref{lemma:leading} giving the leading coefficient of $B, D$ weight 0 and the remaining variables weight $>1$, we see that 
\[
\dim M^{(r,s)} \leq \dim M^{(r, s-1)} + 2.
\]
Similarly, $  \dim M^{(r,s)} \leq \dim M^{(r-1, s)} + 2$.  Applying this procedure inductively (always to the larger of $r$ and $s$ so that the degrees are positive), we eventually arrive at the case where $r +s +1 = t$.  

\end{proof}

\subsubsection{Enumerating the equations defining $\bN_\mu^\nabla$}

In view of Corollary~\ref{cor-open cover} it suffices to control the dimension of $\bN_\mu^\nabla$ by analysing the fibres of the projection 
$$
\bN_\mu^\nabla \rightarrow \prod_{\tau \in \cJ_0} L^+G, \qquad (h_\tau,B_\tau,Y_\tau)_{\tau\in J_0}\mapsto (h_\tau)_{\tau\in J_0}
$$
Fix \((h_\tau)_{\tau\in \cJ_0}\). The corresponding fibre is a closed subscheme of
\(\prod_{\tau\in \cJ_0}(J_\tau\times\mathfrak n)\), an affine space of dimension
\(\sum_{\tau\in \cJ_0}(\mu_{\tau,\alpha}+\mu_{\tau,\beta}+\mu_{\tau,\gamma}+3e)\), and is cut out by the
equations in \eqref{eq-defining Nlambda} for each \(\tau\in \cJ_0\). To make these equations explicit, fix \(\tau\in \cJ_0\) and \(\delta\in\{\alpha,\beta,\gamma\}\), and take the
\(\delta\)-entry of~(5.7). After multiplying by \(u^{\mu_{\tau,\delta}-e}\), the condition becomes
$$
u^{\mu_{\tau,\delta}-e} Y_{\tau,\delta} \equiv \bigg[  B_\tau \phi\bigg( h_{\tau \circ \varphi}^{-1} d_{\tau \circ \varphi}(u) Y_{\tau \circ \varphi} h_{\tau \circ \varphi}\bigg) B_\tau^{-1} + \partial(B_\tau) B_\tau^{-1} \bigg]_{\delta} \mod u^{\mu_{\tau,\delta}+1}\bF[[u]]
$$
Equivalently, comparing coefficients of \(u^i\) yields a family of equations
\begin{equation}\label{eq-main delta and u^i equation}
Y_{\tau,\delta,i - (\mu_{\tau,\delta} -e)} = \bigg[  B_\tau \phi\bigg( h_{\tau \circ \varphi}^{-1} d_{\tau \circ \varphi}(u) Y_{\tau \circ \varphi} h_{\tau \circ \varphi}\bigg) B_\tau^{-1} \bigg]_{\delta, i} + \begin{cases}
    ix_{\tau,i} & \text{ if $\delta = \alpha$} \\
    iy_{\tau,i} & \text{ if $\delta = \beta$} \\
    iz_{\tau,i} + \sum_{k+\ell =i} \ell x_{\tau,k}y_{\tau,\ell}& \text{ if $\delta = \gamma$}
\end{cases}
\end{equation}
indexed by $\min\lbrace 1,1 + \mu_{\tau,\delta}-e \rbrace \leq i \leq \mu_{\tau,\delta}$ (as usual, coordinates of $u$-adic degree below those allowed in the definition of $\bN_\mu^\nabla$ are
understood to be zero).  In particular, the number of equations in~\eqref{eq-main delta and u^i equation} immediately gives the lower bound
$$
 \sum_{\tau \in \cJ_0}  \sum_{\delta \in \lbrace\alpha,\beta,\gamma \rbrace}\operatorname{min}\lbrace e,\mu_{\tau,\delta}\rbrace 
$$
on the dimension of any non-empty fibre. We expect this bound to be sharp (and prove it in the balanced case in Proposition~\ref{prop:bounded}). When at least one
\(\mu_\tau\) is unbalanced we will instead establish an intermediate upper bound, which suffices for our purposes. We begin with the following observations:
\begin{itemize}
    \item \textbf{Type I substitutions (of index $i$, $\delta$, and $\tau$):} If $i > \frac{p(\mu_{\tau,\delta}-e)}{p-1}$ then \eqref{eq-main delta and u^i equation} expresses $Y_{\tau,\delta,i-(\mu_{\tau,\delta}-e)}$ in terms of $x_{\tau,j},y_{\tau,j},z_{\tau,j}$ with $j \leq i$ and $Y_{\tau \circ \varphi,\delta',j}$ with $j + (\mu_{\tau,\delta}-e) < i$. Indeed, if $Y_{\tau \circ \varphi,\delta',j}$ appears on the right hand side of \eqref{eq-main delta and u^i equation} then $j \leq i/p < i - (\mu_{\tau,\delta} -e)$.
    \item \textbf{Type II substitutions (of index $i$, $\delta$, and $\tau$):} When $1 \leq i \leq \mu_{\tau,\delta}$ and is prime to $p$ then \eqref{eq-main delta and u^i equation} with $\delta= \alpha,\beta$ allows $x_{\tau,i}$ and $y_{\tau,i}$ to be expressed in terms of $Y_{\tau,\delta,i - (\mu_{\tau,\delta} -e)}$, $Y_{\tau \circ \varphi,\delta',j}$ with $j \leq i/p$, and $x_{\tau,j},y_{\tau,j},z_{\tau,j}$ with $j < i$. Similarly, if $\delta = \gamma$ then $z_{\tau,i}$ can be expressed in terms of $x_{\tau,i},y_{\tau,i}$, $Y_{\tau,\delta,i - (\mu_{\tau,\delta} -e)}$, $Y_{\tau \circ \varphi,\delta',j}$ with $j \leq i/p$, and $x_{\tau,j},y_{\tau,j},z_{\tau,j}$ with $j < i$.
    \end{itemize}
    In particular, if for each $\delta$ and $\tau$, we are given subsets
    $$
    S_{\tau,\delta,\operatorname{I}} \subset \lbrace i \in \bZ \mid \tfrac{p(\mu_{\tau,\delta} -e)}{p-1} < i \leq \mu_{\tau,\delta} \rbrace,\qquad S_{\tau,\delta,\operatorname{II}} \subset \lbrace i \in \bZ \mid 1 \leq i \leq \mu_{\tau,\delta}, i \not\equiv 0 \mod p \rbrace 
    $$
    with $S_{\tau,\delta,\operatorname{I}}\cap S_{\tau,\delta,\operatorname{II}} = \emptyset$ then repeatedly applying Type I substitutions of index $i,\delta$, and $\tau$ whenever $i \in S_{\tau,\delta,\operatorname{I}}$ and Type II substitutions of index $i,\delta$, and $\tau$ whenever $i \in S_{\tau,\delta,\operatorname{II}}$ allow the complete elimination of the appropriate variables from the equations in \eqref{eq-main delta and u^i equation}. The following lemma provides the key control we need over this process:

\begin{lemma}\label{lem-sub and grade}
    For any fixed $S_{\tau,\delta,\operatorname{I}}$ and $S_{\tau,\delta,\operatorname{II}}$ the resulting expressions for any substituted $Y_{\tau,\delta, i-(\mu{\tau,\delta}-e)}$ (if $i \in S_{\tau,\delta,\operatorname{I}}$) and any substituted  $x_{\tau,i}$, $y_{\tau,i}$ or $z_{\tau,i}$ (if $i \in S_{\tau,\delta,\operatorname{II}}$) have total $u$-adic degree $\leq i$ in the remaining variables (the $u$-adic grading being that placing $x_{\tau,i},y_{\tau,i},z_{\tau,i}$ and $Y_{\tau,\delta,i}$ in degree $i$).
\end{lemma}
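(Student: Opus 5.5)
The plan is an induction on the order in which substitutions are performed, tracking a weighted degree. We give every coordinate a $u$-adic weight: $x_{\tau,j},y_{\tau,j},z_{\tau,j}$ and $Y_{\tau,\delta,j}$ get weight $j$, for every $\tau$. The goal is to show that each right-hand side of \eqref{eq-main delta and u^i equation} at index $i$ is a polynomial of weighted degree $\leq i$. In fact it should be \emph{quasi-homogeneous up to lower order}: every monomial has weight $\leq i$. Granting this, a substitution replaces a variable of weight $j$ by an expression of weight $\leq j$. Composing such substitutions never raises weight, so the lemma will follow once we check the starting expressions and the well-foundedness of the substitution order.

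\textbf{Step 1 (weights in the basic equation).} Examine $\bigl[B_\tau\,\phi\bigl(h^{-1}_{\tau\circ\varphi}d_{\tau\circ\varphi}(u)Y_{\tau\circ\varphi}h_{\tau\circ\varphi}\bigr)B_\tau^{-1}\bigr]_{\delta,i}$.

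The entries of $B_\tau$ and $B_\tau^{-1}$ are polynomials in $u$. The coefficient of $u^k$ in such an entry is a polynomial in the $x,y,z$ coordinates of total weight exactly $k$. For example, $(B^{-1})_\gamma=xy-z$, and its $u^k$-coefficient is $\sum_{a+b=k}x_ay_b-z_k$.

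The matrix $h_{\tau\circ\varphi}^{-1}d(u)Y h_{\tau\circ\varphi}$ has $h$ and $d$ fixed (they are constants in the fibre). So the coefficient of $u^m$ in it is a linear combination of $Y_{\tau\circ\varphi,\delta',j}$ with $j\leq m$, because $h$ and $d$ only contribute nonnegative $u$-powers. Applying $\phi$ places these at $u^{pm}$, where their weight $j\leq m\leq pm$.

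Multiplying the three factors and extracting the $u^i$-coefficient therefore gives monomials of weight at most the sum of the $u$-exponents, i.e.\ $\leq i$. The $\partial(B)B^{-1}$ terms listed in \eqref{eq-main delta and u^i equation} have weight exactly $i$. The left-hand variable $Y_{\tau,\delta,i-(\mu_{\tau,\delta}-e)}$ has weight $i-(\mu_{\tau,\delta}-e)$.

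\textbf{Step 2 (single substitutions).}

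For Type I, we solve the equation for the left-hand $Y$. Its expression has weight $\leq i$, as required by the statement, which measures Type I substitutions by the index $i$.

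For Type II with $i$ prime to $p$, we solve the $\alpha$ (resp.\ $\beta$) equation for $x_{\tau,i}$ (resp.\ $y_{\tau,i}$) by dividing by $i$. We must check that the other occurrences of weight-$i$ variables in the equation do not involve $x_{\tau,i}$ itself. A priori $x_{\tau,i}$ could appear in $B\phi(\cdots)B^{-1}$ multiplied by a $u^0$-coefficient of $\phi(\cdots)$. However, $Y\equiv 0 \bmod u$ forces the $\phi$-term to be divisible by $u^p$, so every $B$-coefficient appearing there has index $\leq i-p<i$.

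For $\gamma$, we solve for $z_{\tau,i}$. The other terms, $Y_{\tau,\gamma,\cdot}$, $\sum_{k+\ell=i} \ell x_k y_\ell$ and $x_{\tau,i},y_{\tau,i}$, all have weight $\leq i$. Each resulting expression has weight $\leq i$ in the remaining variables plus $Y_{\tau,\delta,i-(\mu-e)}$, which has weight $\leq i$.

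\textbf{Step 3 (iteration and termination).} This is the main obstacle. We order the substituted variables by index and show the substitution graph is acyclic. For Type I, the right side involves only indices $<i$ among the $Y$'s, using $j\leq i/p<i-(\mu_{\tau,\delta}-e)$, and indices $\leq i$ among $x,y,z$. For Type II, the right side involves only $x,y,z$ of index $<i$ (or $x_i,y_i$ for $z_i$) and $Y$'s of index $\leq i/p$ or the left-hand $Y$.

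The delicate case is a mutual dependence at the same index $i$: Type I at $(\tau,\delta,i)$ uses $x_{\tau,i}$, while Type II at $(\tau,\delta,i)$ uses the same $Y$. This is excluded by the hypothesis $S_{\tau,\delta,\mathrm{I}}\cap S_{\tau,\delta,\mathrm{II}}=\emptyset$. Across different $\tau$ the dependence passes through $\tau\circ\varphi$, but only with index dropping by a factor of $p$, so there is no cycle. We therefore argue by strong induction on the pair (index, type), with Type II at index $i$ processed before Type I at index $i$. Since weight is nonincreasing under each substitution (Step 2), all final expressions have weight $\leq i$.
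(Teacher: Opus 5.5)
There is a genuine gap in Step 3. You close the argument with ``weight is nonincreasing under each substitution'', but your own Step 2 contradicts this for Type I. The variable $Y_{\tau,\delta,i-(\mu_{\tau,\delta}-e)}$ has weight $i-(\mu_{\tau,\delta}-e)$, yet it is replaced by an expression of weight up to $i$. That is a strict increase whenever $\mu_{\tau,\delta}>e$, which is exactly where Type I substitutions are later used (e.g.\ $S_{\tau,\gamma,\mathrm{I}}$ when $\mu_{\tau,\gamma}>e$); the paper remarks on this right after the statement.

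Such a $Y$ then reappears in the $\phi$-term of another equation. Take a monomial $M\cdot Y_{\tau\circ\varphi,\delta',j}$ in the $u^i$-coefficient of $B_\tau\phi(\cdots)B_\tau^{-1}$, with $M$ a monomial in the $x_\tau,y_\tau,z_\tau$. Suppose $i':=j+(\mu_{\tau\circ\varphi,\delta'}-e)\in S_{\tau\circ\varphi,\delta',\mathrm{I}}$, and grant inductively that the expression for $Y_{\tau\circ\varphi,\delta',j}$ has weight $\le i'$. Your bound $\mathrm{wt}(M)+j\le i$ then only gives weight $\le i+(\mu_{\tau\circ\varphi,\delta'}-e)$, which can exceed $i$. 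So ``composition never raises weight'' cannot close the induction.

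The missing estimate is the one the paper's proof turns on, and it needs two ingredients.
\begin{itemize}
\item You discarded information in Step 1. $Y_{\tau\circ\varphi,\delta',j}$ comes from a $u^m$-coefficient with $m\ge j$, which $\phi$ moves to $u^{pm}$. Hence $\mathrm{wt}(M)\le i-pm\le i-pj$, so the monomial carries a deficit of at least $(p-1)j$.
\item The lower bound in the definition of $S_{\mathrm{I}}$ gives $i'>\frac{p(\mu_{\tau\circ\varphi,\delta'}-e)}{p-1}$, i.e.\ $(p-1)j>\mu_{\tau\circ\varphi,\delta'}-e$.
\end{itemize}
Combining the two,
\[
\mathrm{wt}\le i-pj+j+(\mu_{\tau\circ\varphi,\delta'}-e)<i ,
\]
so the weight gained by the Type I substitution is absorbed by the deficit. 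If instead $i'\notin S_{\tau\circ\varphi,\delta',\mathrm{I}}$, the $Y$ survives and the monomial has weight $\le i-(p-1)j\le i$.

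The same two inequalities also give the correct well-foundedness. If $i'\ge i$, then $i'\ge pj$, which forces $(p-1)j\le \mu_{\tau\circ\varphi,\delta'}-e$, so $i'\notin S_{\tau\circ\varphi,\delta',\mathrm{I}}$. Thus every Type I-substituted $Y$ on a right-hand side has index $i'<i$. Your acyclicity argument instead compares the subscript $j$ with $i-(\mu_{\tau,\delta}-e)$. That does not control the ordering of indices when $\mu_{\tau\circ\varphi,\delta'}\neq\mu_{\tau,\delta}$.

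The correct induction runs on the index $i$, with $\alpha,\beta$ before $\gamma$ at equal $i$. Its hypothesis must be that a Type I expression at index $i$ has weight $\le i$, not that it has at most the weight of the variable it replaces.

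Separately, your Step 2 assertion that $Y_{\tau,\delta,i-(\mu_{\tau,\delta}-e)}$ has weight $\le i$ silently uses $\mu_{\tau,\delta}\ge e$. This is harmless, since Type II substitutions are only made when $\mu_{\tau,\delta}>e$.
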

In particular, note that Type I substitutions can increase the $u$-adic grading if $\mu_{\tau,\delta}-e >0$.
\begin{proof}
We argue by induction on $i$ and the length of the root $\delta$. Thus, we assume that if $j \in S_{\tau',\delta',\operatorname{I}}$ then $Y_{\tau',\delta',j - (\mu_{\tau',\delta'}-e)}$ has an expression in the remaining variables of $u$-adic degree $\leq j$ whenever $j <i$ or whenever $j =i$, $\delta = \gamma$, and $\delta' \in \lbrace \alpha,\beta \rbrace$. Similarly, for $j \in S_{\tau',\delta',\operatorname{II}}$.

Now suppose $i \in S_{\tau,\delta,\operatorname{I}}$ and consider the expression for $Y_{\tau,\delta,i-(\mu_{\tau,\delta}-e)}$ given by \eqref{eq-main delta and u^i equation}. If $\delta = \alpha$ or $\beta$ then the rightmost term of \eqref{eq-main delta and u^i equation} has an expression of degree $\leq i$ in terms of the remaining variables because $i \not\in S_{\tau,\delta,\operatorname{II}}$. The same is true if $\delta =\gamma$ by the inductive hypothesis. It therefore suffices to prove the same is true of any monomial appearing in the square bracketed term of \eqref{eq-main delta and u^i equation}. Such a monomial is a product of $Y_{\tau \circ \varphi,\delta',j}$ with $1\leq j \leq i/p$ and a second monomial in the $x_{\tau,k},y_{\tau,k},z_{\tau,k}$ of total $u$-adic degree $i - jp< i$. In particular, each $k <i$ and so the inductive hypothesis ensures this second monomial has an expression of total degree $\leq i-jp$ in the remaining variables. To handle the term $Y_{\tau \circ \varphi,\delta',j}$ note that if $j +\mu_{\tau \circ \varphi,\delta'} -e \geq i$ then $j +\mu_{\tau \circ \varphi,\delta'} -e \geq pj$. Thus, $j \leq \frac{\mu_{\tau\circ \varphi,\delta'} -e}{p-1}$ and so $j + (\mu_{\tau\circ \varphi,\delta'} -e) \not\in S_{\tau \circ \varphi,\delta',\operatorname{I}}$. We conclude, in this case, that the entire monomial has an expression of degree $\leq i$ in terms of the remaining variables as required. If $j + (\mu_{\tau\circ \varphi,\delta'} -e) \in S_{\tau \circ \varphi,\delta',\operatorname{I}}$ then we just showed $j +\mu_{\tau \circ \varphi,\delta'} -e < i$. The inductive hypothesis therefore gives an expression of degree $\leq j + \mu_{\tau\circ \varphi,\delta'} -e$ for $Y_{\tau \circ \varphi,\delta',j}$ in the remaining variables. Consequently, the whole monomial has such an expression of total degree 
$$
\leq i-pj + j + \mu_{\tau\circ \varphi,\delta'} -e < i
$$
where the right inequality uses that $j + (\mu_{\tau\circ \varphi,\delta'} -e) \in S_{\tau \circ \varphi,\delta',\operatorname{I}}$ which implies $j > \frac{\mu_{\tau\circ \varphi,\delta'}-e}{p-1}$. Completely identical calculations handle the case $i \in S_{\tau,\delta,\operatorname{II}}$.
\end{proof}

\subsubsection{Dimension bounds via degeneration}

Recall that $\mu_\tau$ is balanced if $\mu_{\tau, \alpha}, \mu_{\tau, \beta} \leq e$, and $\mu_{\tau,\gamma} \geq e$ for all $\tau \in \cJ_0$.

\begin{prop} \label{prop:bounded}  The fibres of the natural projection $\mathbb{N}^{\nabla}_{\mu} \rightarrow \prod_{\tau \in \cJ_0} L^+G$ have dimension $\leq \sum_{\tau \in \cJ_0} (\mu_{\tau, \alpha} + \mu_{\tau, \beta} + e) = \sum_{\tau \in \cJ_0} ( 3e -(n_\tau+m_\tau))$ with equality occurring if and only if $\mu_\tau$ is balanced for each $\tau \in \cJ_0$.
\end{prop}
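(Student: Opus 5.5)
The plan is to compare a lower bound obtained by counting equations with an upper bound obtained by eliminating variables and degenerating, working one fibre at a time.

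Fix $(h_\tau)_{\tau \in \cJ_0}$. The fibre is a closed subscheme of the affine space $\prod_\tau (J_\tau \times \mathfrak{n})$, which has dimension $\sum_\tau(\mu_{\tau,\alpha}+\mu_{\tau,\beta}+\mu_{\tau,\gamma}+3e)$. It is cut out by the equations \eqref{eq-main delta and u^i equation}. For fixed $\tau,\delta$ these are indexed by $\min\lbrace 1,1+\mu_{\tau,\delta}-e\rbrace \leq i\leq \mu_{\tau,\delta}$, so there are $\max\lbrace e,\mu_{\tau,\delta}\rbrace$ of them.

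The fibre is non-empty: the point $B_\tau=1$, $Y_\tau=0$ satisfies \eqref{eq-defining Nlambda}. Krull's principal ideal theorem then gives every component dimension at least $\sum_\tau\sum_\delta \min\lbrace e,\mu_{\tau,\delta}\rbrace$. Since $\mu_{\tau,\gamma}=\mu_{\tau,\alpha}+\mu_{\tau,\beta}$, this lower bound equals $\sum_\tau(\mu_{\tau,\alpha}+\mu_{\tau,\beta}+e)$ exactly when every $\mu_\tau$ is balanced. If some $\mu_\tau$ is unbalanced it is strictly smaller; for instance $\mu_{\tau,\alpha}>e$ gives $e+\mu_{\tau,\beta}+\min\lbrace e,\mu_{\tau,\gamma}\rbrace = 2e+\mu_{\tau,\beta}$. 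Granting the upper bound below, this settles the balanced case, including equality. In the unbalanced case it remains to prove the upper bound strictly.

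\textbf{Upper bound, balanced case.} Here $\mu_{\tau,\alpha}-e\leq 0$ and $\mu_{\tau,\beta}-e\leq 0$, so every index $i$ is admissible for a Type I substitution when $\delta\in\lbrace\alpha,\beta\rbrace$.
\begin{enumerate}
\item Take $S_{\tau,\alpha,\operatorname{I}}$ and $S_{\tau,\beta,\operatorname{I}}$ to be all indices. This eliminates every $Y_{\tau,\alpha,j}$ and $Y_{\tau,\beta,j}$ for $1\leq j\leq e$, leaving $x_\tau,y_\tau$ free.
\item For $\delta=\gamma$, use Type I substitutions for $i>\tfrac{p(\mu_{\tau,\gamma}-e)}{p-1}$.
\item For the remaining $1\leq i\leq \tfrac{p(\mu_{\tau,\gamma}-e)}{p-1}$, use Type II substitutions eliminating $z_{\tau,i}$ whenever $p\nmid i$.
\end{enumerate}
Lemma~\ref{lem-sub and grade} guarantees the substituted expressions have $u$-adic degree $\leq i$. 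I then apply Lemma~\ref{lemma:leading} with the $\bG_m$-weight given by $u$-adic degree.

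The leftover equations are those of index $i\equiv 0 \bmod p$ below the Type I threshold, together with those of index $i\leq 0$. After passing to leading terms, I expect these to become triangular in the remaining $Y_{\tau,\gamma,\bullet}$ and $z_{\tau,pk}$ variables. That would show each equation cuts the dimension by one, giving $\dim\leq \sum_\tau(\mu_{\tau,\alpha}+\mu_{\tau,\beta}+e)$. One also has to account for the coupling across $\tau\circ\varphi$; the Frobenius $\phi$ raises degrees by a factor of $p$, so this coupling should disappear in leading terms.

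\textbf{Upper bound, unbalanced case.} Now some $\mu_{\tau,\delta}-e$ has the wrong sign, and Type I substitutions are only available above $\tfrac{p(\mu_{\tau,\delta}-e)}{p-1}$. After degenerating, the residual low-index equations for that $\delta$ no longer eliminate a variable each. Instead they take the form of a product identity $AB\equiv CD \bmod u^t$ between truncated polynomials. These come from the $\gamma$-entry of $B_\tau(\cdots)B_\tau^{-1}$, pairing $x$ or $y$ with $Y_{\tau\circ\varphi}$-type terms. I would then apply Lemma~\ref{lem:dimPoly} to get a codimension of at least $\min\lbrace 2r+2,2s+2,t\rbrace$. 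The aim is to check that this gains strictly more than the number of free variables that survive beyond the target count, which yields the strict inequality.

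I expect this unbalanced analysis to be the main obstacle. It requires identifying exactly which residual equations remain after the substitutions and putting their leading forms into the $AB=CD$ shape with the right degree parameters $r,s,t$. This must be done separately for the three unbalanced conditions ($\mu_{\tau,\alpha}>e$, $\mu_{\tau,\beta}>e$, $\mu_{\tau,\gamma}<e$), and it is where $p\geq 5$ may be needed.
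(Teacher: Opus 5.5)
Your lower bound is correct: the fibre is non-empty via $B_\tau=1$, $Y_\tau=0$, and Krull applied to the $\max\{e,\mu_{\tau,\delta}\}$ equations gives it. Once an upper bound is in hand, this is what yields equality in the balanced case.

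\textbf{Gap in the balanced upper bound.} The real gap is the upper bound when $\mu_{\tau,\gamma}>e$, which you treat as routine but which is the heart of the proof. The leftover equations are those of \eqref{eq-main delta and u^i equation} with $\delta=\gamma$ and $i=np$, $1\leq n\leq t_\tau:=\lfloor\frac{\mu_{\tau,\gamma}-e}{p-1}\rfloor$; there are none of index $\leq 0$. In these equations:
\begin{itemize}
\item the term $iz_{\tau,i}=np\,z_{\tau,np}$ vanishes in characteristic $p$, so no $z_{\tau,pk}$ ever occurs linearly;
\item for the $u$-adic grading you chose, the left-hand $Y_{\tau,\gamma,np-(\mu_{\tau,\gamma}-e)}$ has degree $<np$, and so does every monomial of the bracket term (by the estimate in the proof of Lemma~\ref{lem-sub and grade}).
\end{itemize}
So the leading forms are exactly the bilinear quadrics $\sum_{k+\ell=np}\ell\,x_{\tau,k}y_{\tau,\ell}$. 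They contain none of the variables you expect them to be triangular in. The disappearance of the $\tau\circ\varphi$ coupling that you note is precisely the disappearance of the only available $Y$-pivot.

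What must actually be shown is that these $t_\tau$ quadrics have codimension $t_\tau$ in $(x_\tau,y_\tau)$-space. The paper does this as follows:
\begin{itemize}
\item a second degeneration puts weight $1$ on $x_{\tau,k}$ with $k\equiv p-1,p-2$ and on $y_{\tau,\ell}$ with $\ell\equiv 1,2 \bmod p$;
\item this rewrites the leading forms as a congruence $C_\tau D_\tau\equiv G_\tau H_\tau \bmod Z^{t_\tau}$ of polynomials;
\item Lemma~\ref{lem:dimPoly} then applies, together with the estimate $2\lfloor x/p\rfloor\geq\lfloor x/(p-1)\rfloor$ for $x\geq p$, which is where $p\geq 5$ enters.
\end{itemize}
If instead you avoid degenerating and solve for $Y_{\tau\circ\varphi,\gamma,n}$ directly, you need its coefficient $F_{\gamma,\tau}$ to be nonzero; this is a function of $h_{\tau\circ\varphi}(0)$ and $x_{\tau,0},y_{\tau,0},z_{\tau,0}$. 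That is the open-locus argument of Proposition~\ref{prop-top dimensional}, and it does not bound every fibre. On the hypersurface $F_{\gamma,\tau}=0$ you gain only one codimension. Meanwhile the equations with $np\leq\mu_{\tau,\gamma}-e$, which have no left-hand $Y$, lose their only pivot and reduce to constraints governed by the same quadrics.

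\textbf{Unbalanced case.} Your sketch misidentifies where the $AB\equiv CD$ structure comes from. It arises from the quadric $\sum\ell\,x_{\tau,k}y_{\tau,\ell}$ produced by $\partial(B_\tau)B_\tau^{-1}$, not from $x$ or $y$ paired with $Y_{\tau\circ\varphi}$-terms; those have lower degree and die in the leading forms. The sub-case $\mu_{\tau,\gamma}<e$ is immediate: only Type I substitutions occur, leaving $\mu_{\tau,\alpha}+\mu_{\tau,\beta}+\mu_{\tau,\gamma}$ variables. For $\mu_{\tau,\alpha}>e$ the paper proceeds as follows:
\begin{itemize}
\item restrict the Type II substitutions for $\alpha$ to $i\leq\mu_{\tau,\alpha}-e$, so that the $x_{\tau,k}$ with $k>\mu_{\tau,\alpha}-e$ stay free and the quadric summed over those $k$ survives as the leading form;
\item count $2e+\mu_{\tau,\beta}+t_\tau+\lfloor\frac{\mu_{\tau,\alpha}-e}{p-1}\rfloor+\lfloor\frac{\mu_{\tau,\alpha}-e}{p}\rfloor$ surviving variables;
\item dispose of $\mu_{\tau,\beta}\leq 2$ and of $e<p$ by direct inequalities;
\item otherwise combine Lemma~\ref{lem:dimPoly}, with the $x$-polynomials starting at degree $\lceil\frac{\mu_{\tau,\alpha}-e-p+3}{p}\rceil$, with an explicit floor-function estimate.
\end{itemize}
None of this appears in your proposal. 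As it stands, neither the balanced upper bound nor the strict unbalanced bound is established.
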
 
\begin{proof}
    We apply the Type I and II substitutions from the previous section as follows: If $\mu_{\tau,\delta} \leq e$ we take 
        $$
    S_{\tau,\delta,\operatorname{I}} = \lbrace 1 + \mu_{\tau,\delta}-e,\ldots, \mu_{\tau,\delta} \rbrace, \qquad S_{\tau,\delta,\operatorname{II}} =  \emptyset
    $$
    In other words, we eliminate all of the $Y_{\tau,\delta,i}$ coordinates.
    If $\mu_{\tau,\gamma} > e$ we take 
       $$
    S_{\tau,\gamma,\operatorname{I}} = \lbrace pi \in \bZ \mid \tfrac{p(\mu_{\tau,\gamma} -e)}{p-1} < pi \leq \mu_{\tau,\gamma} \rbrace,\qquad S_{\tau,\gamma,\operatorname{II}} = \lbrace i \in \bZ \mid 1 \leq i \leq \mu_{\tau,\gamma}, i \not\equiv 0 \mod p \rbrace 
    $$
    Thus, we eliminate each of the $z_{\tau,i}$ with $i$ prime to $p$, as well as some of the $Y_{\tau,\gamma,i}$. If $\mu_{\tau,\alpha} > e$ we take
    $$
    S_{\tau,\alpha,\operatorname{I}} = \lbrace pi \in \bZ \mid \tfrac{p(\mu_{\tau,\alpha} -e)}{p-1} < pi \leq \mu_{\tau,\alpha} \rbrace,\qquad S_{\tau,\alpha,\operatorname{II}} = \lbrace i \in \bZ \mid 1 \leq i \leq \mu_{\tau,\alpha}-e, i \not\equiv 0 \mod p \rbrace 
    $$
    and similarly if $\alpha$ and $\beta$ are interchanged. These substitutions are similar to those made when $\mu_{\tau,\gamma} >e$, but we are careful not to eliminate all the $x_{\tau,i}$. As we will see below, maintaining some of the $x_{\tau,i}$ as free variables will help make use of the unused $\delta = \gamma$ equations for $i = np$ with $n \leq \frac{\mu_{\tau,\gamma}-e}{p-1}$.
    
    Making these substitutions produces an affine space, and we write $D_\tau$ for the number of variables over $\tau$ (we will enumerate this number precisely momentarily). To prove the proposition we are going to show that the equations on this affine space arising from \eqref{eq-main delta and u^i equation} when $\delta = \gamma$ and $i = np$ for $1 \leq n \leq t_\tau := \lfloor \frac{\mu_{\tau,\gamma}-e}{p-1} \rfloor$ (which are unused in all the above substitutions) produce a high enough codimension. 

    To achieve this we simplify these equations using a degeneration. Specifically, we equip our affine space with a ``new-grading'' which places the unsubstituted $x_{\tau,i},y_{\tau,i}$ and $z_{\tau,i}$ in degree $i$ and the unsubstituted $Y_{\tau,\gamma,i}$ in degree $0$. Then:

    \begin{claim}
        Assume $\mu_{\tau,\alpha} \geq \mu_{\tau,\beta}$. With the above new grading the leading terms, in the sense of Lemma~\ref{lemma:leading}, of the equations from \eqref{eq-main delta and u^i equation} with $\delta = \gamma$ and $i =np$ for $1 \leq n \leq t_\tau$ are given by
        \begin{equation} \label{eq:quadric}
\sum_{\substack{k + \ell = np \\  k > \mu_{\tau,\alpha}-e
}}  \ell x_{\tau, k} y_{\tau, \ell} = 0
\end{equation}
for each $\tau \in \cJ_0$. If $\mu_{\tau,\alpha} < \mu_{\tau,\beta}$ then the same holds but with the sum running over $\ell > \mu_{\tau,\beta} - e$.
    \end{claim}
    \begin{proof}[Proof of Claim]
        Notice that if $\mu_{\tau,\gamma} \leq e$ then $t_\tau = 0$ and the claim is vacuous. Notice also that if $\mu_{\tau,\gamma} > e$ then the claim makes sense because the $y_{\tau,\ell}$ and $x_{\tau,k}$ for $k > \mu_{\tau,\alpha} -e$ are unsubstituted variables---indeed $k \not\in S_{\tau,\alpha,\operatorname{II}}$ while the assumption $\mu_{\tau,\alpha} \geq \mu_{\tau,\beta}$ means $\mu_{\tau,\beta} \leq e$ and so $k \not\in S_{\tau,\beta,\operatorname{II}}$.

        To prove the claim first note that the left hand term of \eqref{eq-main delta and u^i equation} is unsubstituted when $i = np$ with $1 \leq n \leq t_\tau$. It therefore has new degree $0$ and does not contribute towards the leading term. Also, the proof of Lemma~\ref{lem-sub and grade} shows that if $\mu_{\tau,\alpha} - e >0$ then any Type II substitution for $x_{\tau,i}$ writes this variable as a degree $<i$ expression in the unsubstituted terms (rather than $\leq i$). In particular, we see that if $k \leq \mu_{\tau,\alpha} -e$ then the $\ell x_{\tau,k} y_{\tau,\ell}$ appearing in the right hand term of \eqref{eq-main delta and u^i equation} with $\delta =\gamma$  has new degree $<i=np$ and does not contribute to the leading term.
        
        Consequently, we just have to show that expanding the square bracket term in \eqref{eq-main delta and u^i equation} into monomials and performing the appropriate Type I and II substitutions gives an expression in terms of the unsubstituted variables with degree $< i = np$ for the new grading. This relies on the estimates from Lemma~\ref{lem-sub and grade}.

        First, any such monomial is a product of $Y_{\tau \circ \varphi,\delta',j}$ with $1\leq j \leq i/p$ and a second monomial in the $x_{\tau,k},y_{\tau,k},z_{\tau,k}$ of total $u$-adic degree $i - jp< i$. Lemma~\ref{lem-sub and grade} ensures this second monomial has an expression of total degree $\leq i-jp$ for the $u$-adic grading in the remaining variables, and hence for the new grading also. If $Y_{\tau \circ \varphi,\delta',j}$ is unsolved for then it has degree $0$ in the new grading and so the whole monomial has degree $<i$ as required. We can therefore assume $j + (\mu_{\tau\circ \varphi,\delta'} -e) \in S_{\tau\circ \varphi,\delta',\operatorname{I}}$ and, by Lemma~\ref{lem-sub and grade}, write $Y_{\tau \circ \varphi,\delta',j}$ as an expression of $u$-adic degree (and hence also of new degree) $\leq j + (\mu_{\tau\circ \varphi,\delta'} -e)$. The whole monomial therefore has new degree
        $$
\leq i-pj + j + \mu_{\tau\circ \varphi,\delta'} -e < i
$$
where, the rightmost inequality follows as it did in the last part of the proof of Lemma~\ref{lem-sub and grade}. This finishes the proof. 
    \end{proof}

    Notice that, in contrast to the equations in \eqref{eq-main delta and u^i equation}, the locus in \eqref{eq:quadric} only consists of variables above a single $\tau \in \cJ_0$. If $\Delta_\tau$ denotes the codimension of the locus above $\tau$ then, in view of Lemma~\ref{lemma:leading}, it suffices to show
    \begin{equation}\label{eq-goal inequality}
    D_\tau - \Delta_\tau \leq \mu_{\tau,\alpha}+\mu_{\tau,\beta} + e
    \end{equation}
    with the inequality strict whenever $\mu_{\tau,\delta} < e$ or $\mu_{\tau,\alpha} >e $ or $\mu_{\tau,\beta} >e$. For this it is convenient to work case-by-case:
    
    \emph{Case 1: The easy case} If $\mu_{\tau,\delta} \leq e$ then only Type I substitutions are made, and these eliminate each of the $Y_{\tau,\delta,i}$'s. Thus, the $\tau$-coordinates of the affine space described are the $x_{\tau,i}$, $y_{\tau,i}$, and $z_{\tau,i}$ and $D_\tau = \mu_{\tau,\alpha}+ \mu_{\tau,\beta} + \mu_{\tau,\gamma}$. Since $\Delta_\tau = 0$ the inequality in \eqref{eq-goal inequality}, and its strict refinement, are immediate.
    
    \emph{Case 2: The strictly balanced case} If $\mu_{\tau,\delta} > e$ and $\mu_{\tau,\alpha},\mu_{\tau,\beta} \leq e$ then the only equations unused by the Type I and II substitutions are those in \eqref{eq-main delta and u^i equation} with $\delta = \gamma$ and $i = np$ with $1 \leq n \leq t_\tau$. Therefore, $D_\tau = \mu_{\tau,\alpha} + \mu_{\tau,\beta} + e + t_\tau$ and we need to show that $\Delta_\tau = t_\tau$. 
    
    Without loss of generality, assume $\mu_{\tau, \alpha} \geq \mu_{\tau, \beta}$.  If $t_{\tau} > 0$, then $\mu_{\tau, \gamma} = \mu_{\tau,\alpha} +\mu_{\tau,\beta} \geq e + p-1$. Using the balanced condition we deduce $e \geq \mu_{\tau,\beta} \geq e+p-1 -\mu_{\tau,\alpha} \geq p-1$. If $\mu_{\tau,\beta} = p-1$ then, since $\mu_{\tau,\alpha} \leq e$,  $t_{\tau} = \lfloor 1 + \frac{\mu_{\tau,\alpha} -e}{p-1}\rfloor \leq 1$, in which case the lemma is clear. We therefore assume $\mu_{\tau,\beta} \geq p$, and so $\mu_{\tau,\alpha} \geq p$ also.

We want to replace the equations in question with equations as in Lemma~\ref{lem:dimPoly}. For this, weight the variables by giving $x_{\tau, k}$ weight 1 if $k \equiv p-1, p-2 \mod p$ and 0 otherwise and $y_{\tau, \ell}$ weight 1 if $\ell \equiv 1, 2 \mod p$ and 0 otherwise. The leading term ideal, in the sense of Lemma \ref{lemma:leading}, is generated by the equations
$$
\sum_{0 \leq k \leq n-1}  \bigg( x_{\tau, p-1 + kp} y_{\tau, 1 + (n-k-1)p} + 2 x_{\tau, p-2 + kp} y_{\tau, 2 + (n-k-1)p}\bigg) = 0
$$
for $1 \leq n \leq t_\tau$. These can be repackaged as a congruence  $C_\tau D_\tau = G_\tau H_\tau$ modulo $Z^{t_\tau}$ of polynomials in a variable $Z$, where 
$$
C_\tau = \sum_{k =0}^{r_\tau} y_{\tau, 1 + kp} Z^k, \quad G_\tau = \sum_{k=0}^{r_\tau} 2 y_{\tau, 2 + kp} Z^k, \quad D_\tau = \sum_{k=0}^{s_\tau } x_{\tau,  kp + p-1} Z^k, \quad H_\tau = \sum_{k=0}^{s_\tau } - x_{\tau, kp + p - 2} Z^k
$$
for $r_\tau,s_\tau$ chosen as large as possible so that the coefficients of these polynomials exist as variables. This means that $p s_\tau + p-1 \leq \mu_{\tau,\alpha}-1$ and $2 + p r_\tau  \leq \mu_{\tau,\beta}-1$. Hence, $s_\tau = \lfloor \frac{\mu_{\tau,\alpha}}{p}\rfloor -1$ and $r_\tau = \lfloor \frac{ \mu_{\tau, \beta} - 3}{p} \rfloor$, both of which are $\geq 0$ by the estimates in the first paragraph. 

By Lemma~\ref{lemma:leading} one just needs to show this locus has codimension $t_\tau$. Lemma \ref{lem:dimPoly} asserts this locus has codimension $\operatorname{min}\lbrace 2r_\tau +2,2s_\tau +2, t_\tau \rbrace$. Since $p \geq 5$ we have $r_\tau \geq \lfloor \frac{\mu_{\tau,\beta}}{p} \rfloor -1$. Also $t_\tau \leq \lfloor \frac{\mu_{\tau,\beta}}{p-1} \rfloor$. We will therefore be done if $2\lfloor \frac{x}{p} \rfloor \geq \lfloor \frac{ x}{p-1} \rfloor$ whenever $x \geq p$. But this is clear---if $x = l+pk$ with $0 \leq l \leq p-1$ then $x = l + (p-1)k +k$ and so
$\lfloor \tfrac{ x}{p-1}\rfloor  \leq 1 +k + \lfloor \tfrac{k}{p-1} \rfloor  \leq 2k$
where the last inequality uses that $k \geq 1$ (since $x \geq p$) and is deduced by separating the cases $k=1$ and $k \geq 2$. 

\emph{Case 3: The main unbalanced case} Finally, suppose $\mu_{\tau,\alpha} >e$. Since $\mu_{\tau,\gamma} = \mu_{\tau,\alpha} + \mu_{\tau,\beta} < 2e$  this forces $\mu_{\tau,\beta} <e$. Note that the case where instead $\mu_{\tau,\beta} >e$ is handled in exactly the same way. 

In this case the Type I and II substitutions leave unused the equations from \eqref{eq-main delta and u^i equation} with $\delta = \gamma$ and $i = np$ with $1 \leq n \leq t_\tau$. Also unused are the equations with $\delta = \alpha$ and $i > \mu_{\tau,\alpha} -e$ and $i$ prime to $p$. It follows that 
$$
        D_\tau = e+ \Big\lfloor \frac{\mu_{\tau,\alpha} -e}{p-1} \Big\rfloor + \Big\lfloor \frac{\mu_{\tau,\alpha} -e}{p} \Big\rfloor + \mu_{\tau,\beta} + e + \ t_\tau 
$$
If $\mu_{\tau,\beta} \leq 2$, then $D_\tau$ is already $< \mu_{\tau,\alpha}+\mu_{\tau,\beta} + e$; indeed $\mu_{\tau,\gamma} \leq \mu_{\tau,\alpha}+2$, and so the claim follows from the inequality 
  \[
x > \Big\lfloor \frac{x}{p} \Big\rfloor  +  \Big\lfloor \frac{x}{p-1} \Big\rfloor +  \Big\lfloor \frac{x + 2}{p-1} \Big\rfloor , \qquad x \geq 1
\]
specialised to $x = \mu_{\tau,\alpha} -e$. This inequality holds whenever $p\geq 5$ (to see this check $x=1$ directly and when $x \geq 2$ use the bound $\lfloor r \rfloor \leq r$ for any rational $r$).  Similarly, if $p >e$ then, since $\mu_{\tau,\gamma} <2e$, we have $ \mu_{\tau,\alpha} -e < \mu_{\tau,\gamma}-e \leq p-2$, and so $D_\tau = 2e + \mu_{\tau,\beta}$ and there is nothing to prove. 

Our goal is therefore to show that if $\mu_{\tau,\beta} > 2$ and $e \geq p$ then $
    \Delta_\tau > t_{\tau} + e+ \lfloor \tfrac{\mu_{\tau,\alpha} -e}{p-1} \rfloor + \lfloor \tfrac{\mu_{\tau,\alpha} -e}{p} \rfloor - \mu_{\tau,\alpha}
    $. Just as in the balanced case we do this by introducing a grading by placing $y_{\tau,\ell}$ in degree $1$ if $\ell \equiv 1,2$ modulo $p$ and in degree $0$ otherwise, and placing $x_{\tau,i}$ in degree $1$ if $i \equiv p-1,p-2$ modulo $p$ and $i > \mu_{\tau,\alpha}-e$ and in degree $0$ otherwise.  The leading term ideal, in the sense of Lemma \ref{lemma:leading}, is then generated by equations
$$
\sum_{d_\tau \leq k \leq n-1 }  \bigg( x_{\tau, p-1 + kp} y_{\tau, 1 + (n-k-1)p} + 2 x_{\tau, p-2 + kp} y_{\tau, 2 + (n-k-1)p}\bigg) = 0
$$
where $d_\tau = \lceil \frac{\mu_{\tau,\alpha}-e-p+3}{p} \rceil$ and $1 \leq n \leq t_\tau$. Note, we require $d_\tau \leq k$ to ensure $p-2+kp >\mu_{\tau,\alpha}-e$. As in the proof of the balanced case, these equations can be repackaged as a congruence  $C_\tau D_\tau = G_\tau H_\tau$ modulo $Z^{t_\tau}$ of polynomials in a variable $Z$, where 
$$
C_\tau = \sum_{k =0}^{r_\tau} y_{\tau, 1 + kp} Z^k, \quad G_\tau = \sum_{k=0}^{r_\tau} 2 y_{\tau, 2 + kp} Z^k, \quad D_\tau = \sum_{k=d_\tau}^{s_\tau } x_{\tau,  kp + p-1} Z^k, \quad H_\tau = \sum_{k=d_\tau}^{s_\tau } - x_{\tau, kp + p - 2} Z^k
$$
for $s_\tau = \lfloor \frac{\mu_{\tau,\alpha}}{p}\rfloor -1$ and $r_\tau = \lfloor \frac{ \mu_{\tau, \beta} - 3}{p} \rfloor$. Note $r_\tau \geq 0$ since $\mu_{\tau,\beta} >2$ while $s_\tau \geq 0$ since $\mu_{\tau,\alpha} >e \geq p$. Combining Lemma~\ref{lemma:leading} and Lemma~\ref{lem:dimPoly} therefore gives $\Delta_\tau \geq \operatorname{min}\lbrace 2r_\tau + 2, 2(s_\tau - d_\tau) +2, t_\tau - d_\tau \rbrace$. The following claim (specialised to $y =\mu_{\tau,\beta}$ and $\mu_{\tau,\alpha} = e + x$) therefore finishes the proof. 
\end{proof}

\begin{claim}
 Let $x, y$ be integers less than $e$ with $y \geq 3$ and $x \geq 1$.  Assume $e \geq p \geq 5$.  If $\Delta = \min\{ 2 \lfloor \frac{y - 3}{p} \rfloor + 2, 2\lfloor \frac{x+e}{p} \rfloor - 2 \lceil \frac{ x+3 - p}{p} \rceil + 2,     \lfloor \frac{x+ y}{p-1} \rfloor -  \lceil \frac{ x+3 - p}{p} \rceil \}$, then 
\[
x - \Big\lfloor \frac{x}{p} \Big\rfloor - \Big\lfloor \frac{x}{p-1} \Big\rfloor - \Big\lfloor \frac{x+ y}{p-1} \Big\rfloor + \Delta > 0
\]
\end{claim}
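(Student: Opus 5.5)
The claim is an elementary inequality, so the plan is to bound each of the three quantities inside the minimum separately. In each case I will show that substituting that quantity for $\Delta$ already makes the left-hand side positive. The first step is to rewrite the recurring ceiling: since $\lceil a/p\rceil = \lfloor (a+p-1)/p\rfloor$, we have
\[
\Big\lceil \tfrac{x+3-p}{p}\Big\rceil = \Big\lfloor \tfrac{x+2}{p}\Big\rfloor \leq \Big\lfloor \tfrac{x}{p}\Big\rfloor + 1 .
\]
The second step is to split off the $y$-dependence using $\lfloor (x+y)/(p-1)\rfloor \leq \lfloor x/(p-1)\rfloor + \lfloor y/(p-1)\rfloor + 1$. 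This leaves the ``$x$-part'' $x - 2\lfloor x/p\rfloor - 2\lfloor x/(p-1)\rfloor$ plus error terms, and I would handle it with the crude bounds $\lfloor r\rfloor \leq r$ together with $p \geq 5$, so that each floor is at most $x/4$.

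\emph{Third term.} Here $\Delta$ contributes $\lfloor (x+y)/(p-1)\rfloor - \lfloor (x+2)/p\rfloor$. The $\lfloor (x+y)/(p-1)\rfloor$ terms cancel exactly, and the required inequality becomes
\[
x > \Big\lfloor \tfrac{x}{p}\Big\rfloor + \Big\lfloor \tfrac{x}{p-1}\Big\rfloor + \Big\lfloor \tfrac{x+2}{p}\Big\rfloor .
\]
For $x \leq p-3$ the right side is $0$, and $x = p-2$ is checked directly (right side $1$). For larger $x$ the right side is at most $x/p + x/(p-1) + (x+2)/p < x$ once $p \geq 5$ and $x \geq 3$. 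This is the same type of check as in the $\mu_{\tau,\beta}\le 2$ subcase of Case 3.

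\emph{First term.} Write $y - 3 = pk + l$ with $0 \leq l \leq p-1$, so this term equals $2k+2$. Then bound
\[
\Big\lfloor \tfrac{x+y}{p-1}\Big\rfloor \leq \Big\lfloor \tfrac{x}{p-1}\Big\rfloor + \Big\lfloor \tfrac{y}{p-1}\Big\rfloor + 1, \qquad \Big\lfloor \tfrac{y}{p-1}\Big\rfloor \leq \Big\lfloor \tfrac{pk+p+2}{p-1}\Big\rfloor \leq k + 1 + \Big\lfloor \tfrac{k+3}{p-1}\Big\rfloor .
\]
For $p \geq 5$ the last quantity is at most $2k+1$, with the exceptions $k = 0$ and small $l$ treated by hand. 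The total then reduces to $x - \lfloor x/p\rfloor - 2\lfloor x/(p-1)\rfloor$ minus a bounded constant. To avoid losing the $+1$ when $x$ is small, I would not split the floor crudely there: instead I would check $x \leq p-2$ directly, where $\lfloor x/p\rfloor = \lfloor x/(p-1)\rfloor = 0$ and the inequality reads $x + 2k + 2 > \lfloor (x+y)/(p-1)\rfloor$, which holds comfortably.

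\emph{Second term.} This term is at least $2\lfloor (x+e)/p\rfloor - 2\lfloor (x+2)/p\rfloor + 2 \geq 2\lfloor (e-2)/p\rfloor + 2$, up to adjusting the split of floors. This is the only place the hypotheses $e \geq p$ and $y < e$ enter. Using $y \leq e-1$, the $y$-dependent loss $\lfloor (x+y)/(p-1)\rfloor - \lfloor x/(p-1)\rfloor$ is at most about $(e-1)/(p-1) + 1$. This is dominated by $2\lfloor e/p\rfloor + x - 2\lfloor x/p\rfloor - \lfloor x/(p-1)\rfloor$, because $2e/p$ beats $e/(p-1)$ for $p \geq 5$ and $e \geq p$.

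The main obstacle is bookkeeping the floor and ceiling rounding errors in the first and second terms when $x$, $y$ or $e/p$ is small, since there the $+1$ slacks are tight. I would first try the uniform estimates above. Where they fail, I would enumerate the finitely many residue patterns: $x \leq p-2$, $k = 0$, and $\lfloor e/p\rfloor = 1$.
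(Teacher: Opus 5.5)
Your proposal is correct and follows the paper's proof. Like the paper, you bound $E+A$, $E+B$, $E+C$ separately, using $\lfloor \frac{x+y}{p-1}\rfloor \le \lfloor \frac{x}{p-1}\rfloor + \lfloor \frac{y}{p-1}\rfloor + 1$ together with $\lfloor \frac{y}{p-1}\rfloor \le 2\lfloor \frac{y-3}{p}\rfloor + 1$, the positivity of $x-\lfloor \frac{x}{p}\rfloor - 2\lfloor \frac{x}{p-1}\rfloor$ for $p\ge 5$, and $y<e$, $e\ge p$ only in the second term. Your identity $\lceil \frac{x+3-p}{p}\rceil=\lfloor \frac{x+2}{p}\rfloor$ is a harmless sharpening of the paper's $\lceil r\rceil\le r+1$.

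In the second term, though, replace the heuristic ``$2e/p$ beats $e/(p-1)$'' by the paper's exact bound. Write $e=bp+r$ and use $\lfloor \frac{x+y}{p-1}\rfloor-\lfloor \frac{x}{p-1}\rfloor\le\lceil \frac{e-1}{p-1}\rceil\le 2b\le 2\lfloor \frac{e-2}{p}\rfloor+2$. The cruder comparison of $\lfloor\frac{e-1}{p-1}\rfloor+1$ with $2\lfloor e/p\rfloor$ only gives $\ge 0$ at $e=2p-1$, so your flagged case $\lfloor e/p\rfloor=1$ does need this care. By contrast, your anticipated exceptions in the first term ($k=0$, $x\le p-2$) never occur: the uniform estimate already works there.
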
  
\begin{proof}[Proof of Claim]
Write $E := x - \lfloor \frac{x}{p} \rfloor - \lfloor \frac{x}{p-1} \rfloor - \lfloor \frac{x+ y}{p-1} \rfloor$ and 
$$
A :=  2 \Big\lfloor \frac{y - 3}{p} \Big\rfloor + 2, \quad B:=   2\Big\lfloor \frac{x+e}{p} \Big\rfloor - 2 \Big\lceil \frac{ x+3 - p}{p} \Big\rceil + 2, \quad C :=     \Big\lfloor \frac{x+ y}{p-1} \Big\rfloor -  \Big\lceil \frac{ x+3 - p}{p} \Big\rceil 
$$
It suffices to show $E+A,E+B$, and $E+C$ are all $>0$. First, look at 
$$
E + C = x- \Big\lfloor \frac{x}{p} \Big\rfloor -\Big\lfloor \frac{x}{p-1} \Big\rfloor - \Big\lceil \frac{x+3-p}{p} \Big\rceil
$$ 
If $x =1$ this is clearly $>0$. If $x \geq 2$ use the bounds $\lfloor r \rfloor \leq r$ and $\lceil r \rceil \leq  r+1$ then 
$$
E + C  \geq x\left( 1 - \frac{2}{p} - \frac{1}{p-1} \right) - \frac{3}{p} \geq \frac{7x}{20} - \frac{3}{5} >0
$$
where the second inequality uses that $p \geq 5$. Second, since $\lfloor \frac{x+y}{p-1}\rfloor \leq \lfloor\frac{x}{p-1} \rfloor + \lfloor\frac{y}{p-1} \rfloor +1 $ we deduce that
$$
E + A = \underbrace{x - \Big\lfloor \frac{x}{p} \Big\rfloor -2\Big\lfloor \frac{x}{p-1} \Big\rfloor}_{> 0 \text{ for  $x \geq 1$}} -  \Big\lfloor \frac{y}{p-1} \Big\rfloor +\underbrace{2  \Big\lfloor \frac{y-3}{p} \Big\rfloor +1 }_{\geq  \lfloor \frac{y}{p-1} \rfloor \text{for $y \geq 3$}}
$$
where the first inequality follows from the bound $\lfloor r \rfloor \leq r$ and the fact $p \geq 5$ and for the second one notes that $y -3 \leq p(k+1)$ for $k = \lfloor \frac{y-3}{p}\rfloor$ and $ (2k+2)(p-1) - (k+1)p -3 =(k+1)(p-2) -3  \geq 0$ since $p-2 \geq 3$. It follows that $E + A >0$. Finally, since $y <e$ we have
$$
E + B \geq \underbrace{x - \Big\lfloor \frac{x}{p} \Big\rfloor - 2\Big\lfloor \frac{x}{p-1} \Big\rfloor}_{>0 \text{ as above}} -  \left(\underbrace{ \Big\lfloor \frac{x+ e-1}{p-1} \Big\rfloor - \Big\lfloor\frac{x}{p-1} \Big\rfloor}_{\leq \lceil \frac{e-1}{p-1}\rceil} \right) + 2\left( \underbrace{ \Big\lfloor \frac{x+e}{p} \Big\rfloor - \Big\lfloor \frac{x+3 -p}{p} \Big\rfloor +1}_{\geq \lfloor \frac{e+p-2}{p} \rfloor} \right)
$$
If we write $e = bp +r$ with $0 \leq r \leq p-1$ then $\lfloor \frac{e+p-2}{p} \rfloor \geq b$, while $\lceil\frac{e-1}{p-1} \rceil = b + \lceil \frac{b + r -1}{p-1} \rceil \leq 2b$ since $\frac{b+r-1}{p-1} \leq 1 + \frac{b-1}{p-1} \leq b$. It follows that $E+B >0$ and we are done.
\end{proof}

\subsubsection{Top-dimensional components} Here we give more precise control on $\bN_\mu^\nabla$ in the case $\mu_\tau = (2e,e,0)$ for each $\tau \in \cJ_0$.

\begin{prop}\label{prop-top dimensional}
    Suppose $\mu_\tau = (2e,e,0)$ for each $\tau \in \cJ_0$. Then there is a dense open locus in $\prod_{\tau \in \cJ_0} L^+G$ over which the fibres of the projection $\bN_\mu^\nabla \rightarrow \prod_{\tau \in \cJ_0} L^+G$ contain a dense open subscheme which is irreducible and of dimension $\sum_{\tau \in \cJ_0} 3e = \sum_{\tau \in \cJ_0} (\mu_{\tau,\alpha} + \mu_{\tau,\beta} + e)$.
\end{prop}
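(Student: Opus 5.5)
The plan is to reuse the substitution scheme from the proof of Proposition~\ref{prop:bounded} and then show that the remaining equations can be solved triangularly on a dense open set. When $\mu_\tau=(2e,e,0)$ we have $\mu_{\tau,\alpha}=\mu_{\tau,\beta}=e$ and $\mu_{\tau,\gamma}=2e$, so every $\tau$ falls under Case~2 (strictly balanced). Making the Type~I and Type~II substitutions chosen there does the following:
\begin{itemize}
\item it eliminates all $Y_{\tau,\alpha,i}$ and $Y_{\tau,\beta,i}$;
\item it eliminates the $z_{\tau,i}$ with $p\nmid i$;
\item it eliminates the $Y_{\tau,\gamma,j}$ with $j+e\in S_{\tau,\gamma,\operatorname{I}}$.
\end{itemize}
This presents each fibre over a fixed $(h_\tau)$ as a closed subscheme of an affine space $\bA^{\sum_\tau(3e+t_\tau)}$, where $t_\tau=\lfloor e/(p-1)\rfloor$. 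The fibre is cut out by the $t_\tau$ unused equations \eqref{eq-main delta and u^i equation} with $\delta=\gamma$ and $i=np$ for $1\le n\le t_\tau$. Since every component already has dimension $\geq\sum_\tau 3e$ by the equation count, and the dimension is at most $\sum_\tau 3e$ by Proposition~\ref{prop:bounded}, it remains to exhibit a dense open subscheme of each fibre that is irreducible of that dimension.

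For each $n$, equation $(\gamma,np)$ contains the term $\sum_{k+\ell=np}\ell\,x_{\tau,k}y_{\tau,\ell}$, and I would solve it for a single variable.
\begin{itemize}
\item If $np-1<e$, I solve for $x_{\tau,np-1}$, whose coefficient is $y_{\tau,1}$.
\item Otherwise (possible only when $np\le e+e/(p-1)$), I solve for $y_{\tau,np-1}$, which exists because $np-1<2e$ forces $np-1\le e-1$ once one uses $\mu_{\tau,\beta}=e$. If that index is too large I instead take the pair $(k,\ell)=(np-e+1,e-1)$ or similar, with both indices in range and $p\nmid\ell$, and solve for $y_{\tau,\ell}$ with coefficient $\ell x_{\tau,k}$.
\end{itemize}
The key point is triangularity. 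By Lemma~\ref{lem-sub and grade} and the claim in the proof of Proposition~\ref{prop:bounded}, every other monomial in equation $(\gamma,np)$ has $u$-adic degree $\le np$ in the remaining variables. So the chosen variable (of degree $np-k$ or $np-1$) can only appear multiplied by the fixed low-degree coefficient variable, or in monomials of strictly lower total degree. Moreover, the chosen variables for different $n$ are distinct, and each one appears only in equations of index $\geq np$. Ordering the equations by $n$, one solves successively on the open locus where the relevant coefficient variables ($y_{\tau,1}$, or the chosen $x_{\tau,k}$) are nonzero. This exhibits that open locus as the graph of a morphism over an open subset of affine space of dimension $\sum_\tau 3e$, hence irreducible and reduced of the right dimension.

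Two issues remain.
\begin{itemize}
\item \emph{Dependence on $h$.} The coefficient of the chosen variable might a priori also pick up contributions through $\phi(h^{-1}_{\tau\circ\varphi}d\,Y h_{\tau\circ\varphi})$. I would check that these involve only lower-degree variables times constants depending on $h\bmod u$ and $d_{\tau}(0)\neq0$. Any condition on $h$ needed to keep the leading coefficient a nonzero function is then open and nonempty, which produces the dense open locus in $\prod_\tau L^+G$.
\item \emph{Density.} The complement of the open set (where $y_{\tau,1}=0$ or $x_{\tau,k}=0$) must have dimension $<\sum_\tau 3e$. For this I would rerun the degeneration argument of Proposition~\ref{prop:bounded}, Case~2, with these variables set to zero. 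In the $C_\tau D_\tau=G_\tau H_\tau$ model of Lemma~\ref{lem:dimPoly}, imposing an extra linear condition lowers the dimension of the ambient space by one while the codimension bound $t_\tau$ persists, provided $p\ge5$ keeps $\min\{2r_\tau+2,2s_\tau+2\}\geq t_\tau$.
\end{itemize}
I expect this last codimension bookkeeping, together with verifying that the chosen variable genuinely does not reappear non-linearly after the Type~I substitutions, to be the main obstacle.
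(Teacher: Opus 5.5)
\textbf{There is a genuine gap.} It concerns the equations with $e<np\le pe/(p-1)$.

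Your treatment of the equations with $np\le e$ is fine. There $x_{\tau,np-1}$ is an unsubstituted variable. It enters the equation of index $(\gamma,np)$ only through $y_{\tau,1}x_{\tau,np-1}$: the term $B_\tau\phi(\cdots)B_\tau^{-1}$ and the substituted $z$'s and $Y$'s only involve indices $\le np-p$, resp.\ $\le n$. It also does not occur at lower levels.

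The trouble is the range $e<np\le pe/(p-1)$, which is nonempty for many $e$ (e.g.\ $p=5$, $e=24$ gives $n=5,6$).
\begin{itemize}
\item In this range $y_{\tau,np-1}$ does not exist, since $y_\tau$ has degree $<\mu_{\tau,\beta}=e\le np-1$. The justification you give for its existence is wrong.
\item Your fallback $(k,\ell)=(np-e+1,e-1)$ uses the same variable $y_{\tau,e-1}$ for every $n$ in this range, contradicting your distinctness claim.
\item No cleverer choice of a single $x$- or $y$-variable per equation repairs this. Take $p=5$, $e=24$. Every $x_{\tau,k}$ or $y_{\tau,\ell}$ occurring with nonzero coefficient in $\sum_{k+\ell=30}\ell\, x_{\tau,k}y_{\tau,\ell}$ has index $m\in[7,23]$ with $5\nmid m$. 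Each such variable also occurs with nonzero coefficient in the level-$25$ equation, paired with index $25-m\in[2,18]$. So no variable chosen for level $30$ avoids level $25$, and your claim that each chosen variable only occurs at levels $\ge n$ fails.
\end{itemize}
The triangular structure your whole argument rests on is therefore unavailable there. You would need a genuine (block) Jacobian computation. That computation would also have to track how these variables re-enter through $B_\tau\phi(\cdots)B_\tau^{-1}$, since at level $30$ this involves $B_\tau$-coefficients up to index $25$. The proposal does not provide this.

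\textbf{How the paper avoids this.} It eliminates different variables: the equation $(\gamma,np)$ at $\tau$ is solved for the free variable $Y_{\tau\circ\varphi,\gamma,n}$.
\begin{itemize}
\item $Y_{\tau\circ\varphi,\gamma,n}$ enters through $\bigl(X_0\phi(d_{\tau\circ\varphi}(0)Y_{\tau\circ\varphi})X_0^{-1}\bigr)_{\gamma,np}$, where $X_0=B_\tau\phi(h_{\tau\circ\varphi}^{-1})$ mod $u$.
\item Its coefficient $F_{\gamma,\tau}$ depends only on $h_{\tau\circ\varphi}(0)$ and $B_\tau(0)$. It is a unit on an open set, using a Gauss decomposition of $h_{\tau\circ\varphi}(0)$ and $d_{\tau\circ\varphi}(0)\neq 0$.
\item After the Type I/II substitutions, everything else in the equation involves only free $x,y,z$-coordinates and $Y_{\tau',\gamma,n'}$ with $n'<n$.
\item For $n<e/(p-1)$ the left side $Y_{\tau,\gamma,np-e}$ has index $<n$, so the system is triangular in $n$.
\item At $n=e/(p-1)$ (when $(p-1)\mid e$) one gets a cyclic linear system around the embeddings $\tau$. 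It is solvable where $\prod_\tau F_{\gamma,\tau}\neq 1$.
\end{itemize}
This is where the restriction to a dense open locus of $\prod_\tau L^+G$ in the statement comes from. In your scheme $h$ plays no role at all, which signals that you are not using the variables that carry the needed nondegeneracy.
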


\begin{proof}
We begin as in Proposition~\ref{prop:bounded}, making the same Type I and II substitutions. These eliminate the $Y_{\tau,\alpha}, Y_{\tau,\beta}$, the $z_{\tau, i}$ for $i$ prime to $p$, and the $Y_{\tau, \gamma, np - e}$ for $n > \frac{e}{p-1}$.  The remaining equations are then of the form
\begin{equation} \label{egamma}
Y_{\tau, \gamma, np - e} = \sum_{k + \ell = np}  \ell x_{\tau, k} y_{\tau, \ell} +  (B_\tau \phi(h_{\tau \circ \varphi}^{-1} d_{\tau \circ \varphi}(u) Y_{\tau \circ \varphi}h_{\tau \circ \varphi}) B_\tau^{-1})_{\gamma, np}. 
\end{equation}
where $n \leq \frac{e}{p-1}$. We prove the proposition by identifying an open locus of $(h_\tau)_{\tau \in \cJ_0} \in \prod_{\tau \in \cJ_0} L^+G$ on which \eqref{egamma} can be used to eliminate the $Y_{\tau,\gamma,n}$ for each $n \leq \frac{e}{p-1}$. 

To proceed, set $X_0 = B_\tau \phi(h_{\tau \circ \varphi}^{-1})$ mod $u$ and look at the right hand term in \eqref{egamma}. We claim that, after making the substitutions as above, the difference 
\begin{equation}\label{eq-difference}
(B_\tau \phi(h_{\tau \circ \varphi}^{-1} d_{\tau \circ \varphi}(u) Y_{\tau \circ \varphi}h_{\tau \circ \varphi}) B_\tau^{-1})_{\gamma, np} - \bigg( X_0 \phi \bigg( d_{\tau \circ \varphi}(0) Y_{\tau \circ \varphi} \bigg) X_0^{-1}\bigg) _{\gamma, np}
\end{equation}
can be expressed entirely in terms of the free variables $x_{\tau,i},y_{\tau,i}$ and $z_{\tau,ip}$ and $Y_{\tau',\gamma,n'}$ with $n' < n$. The Type I substitutions are not relevant here because if $Y_{\tau \circ \varphi,\delta,j}$ appears in \eqref{eq-difference} then  $j < n$ and so $j+e < \frac{e}{p-1}+e = \frac{pe}{p-1}$. For the Type II substitutions, note that if $z_{\tau,i}$ appears in \eqref{eq-difference} then $i \leq (n-1)p$. Thus, the Type II substitutions only introduce $Y_{\tau \circ \varphi,\delta,n'}$'s with $n' \leq i/p < n$ and $Y_{\tau,\gamma,i -e}$. But $i \leq np - p \leq n+e -p$ so $i - e < n$.

Next, we examine right hand term of \eqref{eq-difference} and write  
$$
\begin{aligned}
\bigg( X_0 \phi \bigg( d_{\tau \circ \varphi}(0) Y_{\tau \circ \varphi} \bigg) X_0^{-1}\bigg) _{\gamma, np} = F_\gamma  Y_{\tau \circ \varphi, \gamma, n}   +  G_{\alpha} Y_{\tau \circ \varphi, \alpha, n} + G_{\beta}Y_{\tau \circ \varphi, \beta, n}
\end{aligned}
$$
where $F_\gamma,G_\alpha$, and $G_\beta$ are expressions in the entries of $h_{\tau \circ \varphi}(0)$ and $B_\tau(0)$ which are independent of $n$. Notice that, by the same argument as in the previous paragraph, making the Type I and II substitutions expresses each of $G_{\alpha} Y_{\tau \circ \varphi, \alpha, n}$ and $G_{\beta}Y_{\tau \circ \varphi, \beta, n}$ entirely in terms of the free variables $x_{\tau,i},y_{\tau,i}$ and $z_{\tau,ip}$ and $Y_{\tau',\gamma,n'}$ with $n' < n$. It remains to analyse the term $F_\gamma$. For this we first assume $(h_\tau)_{\tau \in \cJ_0} \in \prod_{\tau\in \cJ_0} L^+G$ lies in the open locus where there exists a Gauss factorisation $h_\tau \mod u = a_\tau t_\tau b_\tau$ with $a_{\tau} \in U$ and $b_{\tau} \in U^{-}$ respectively in the upper and lower triangular unipotent subgroups, and $t_\tau \in T$ inside the diagonal torus. Then 
$$
F_{\gamma,\tau} = d_{\tau \circ \varphi}(0) t_{\tau \circ \varphi,\gamma} + F(a_{\tau \circ \varphi}, b_{\tau \circ \varphi}, t_{\tau \circ \varphi}, x_{\tau, 0}, y_{\tau, 0}, z_{\tau,0})
$$
with $F(a_{\tau \circ \varphi}, b_{\tau \circ \varphi}, t_{\tau \circ \varphi}, x_{\tau, 0}, y_{\tau, 0}, z_{\tau,0})$ of degree $\geq 2$ in the variables $a_{\tau \circ \varphi}, b_{\tau \circ \varphi}$ and $t_{\tau \circ \varphi}$. Since $d_{\tau \circ \varphi}(0) t_{\tau \circ \varphi,\gamma} \in \bF^\times$ we can shrink the open locus of $(h_\tau)_{\tau\in \cJ_0} \in \prod_{\tau \in \cJ_0} L^+G$ so that $F_{\gamma, \tau}= F_{0,\tau} + F_1(x_{\tau, 0}, y_{\tau, 0}, z_{\tau,0})$ with $F_{0,\tau} \neq 0$ and $\prod_{\tau \in \cJ_0} F_{0,\tau} \neq 1$. 

Finally, we return to the task described in the first paragraph of the proof and consider the fibres of $\bN_{\mu}^\nabla \rightarrow \prod_{\tau \in \cJ_0} L^+G$ over the open locus just defined. Note that $x_{\tau, 0}, y_{\tau, 0}, z_{\tau,0}$ are free coordinates on this fibre and so there is a dense open locus where $F_{\gamma,\tau} \neq 0$ and $\prod_{\tau \in \cJ_0} F_{\gamma,\tau} \neq 1$. The equations \eqref{egamma} can therefore be rewritten as 
$$
Y_{\tau, \gamma, np - e} = F_{\gamma,\tau }Y_{\tau \circ \varphi,\gamma, n} + G(x_{\tau,i},y_{\tau,i},z_{\tau,ip}, Y_{\tau',\gamma,j}; j <n)
$$
For $n< \frac{e}{p-1}$ we have $np -e < n$ so these equations allow the elimination of $Y_{\tau \circ \varphi,\gamma,n}$ in terms of the $x_{\tau,i},y_{\tau,i},z_{\tau,ip}$, and the unsolved for $Y_{\tau,\gamma,j}$ with $j > \frac{e}{p-1}$. If $n = \frac{e}{p-1}$ (so $e$ is divisible by $p-1$) then $np-e =n$. Iterating these equations then gives
$$
Y_{\tau,\gamma,n} = Y_{\tau,\gamma,n} \prod_{\tau' \in \cJ_0} F_{\gamma,\tau'} + G_\tau(x_{\tau,i},y_{\tau,i},z_{\tau,ip}, Y_{\tau',\gamma,j}; j <n)
$$
Since $\prod_{\tau' \in \cJ_0} F_{\gamma,\tau'} \neq 1$ we can likewise solve for $Y_{\tau,\gamma,\frac{e}{p-1}}$. In conclusion, there is an open locus of $\prod_{\tau \in \cJ_0} L^+G$ over which the fibres of $\bN^\nabla_\mu \rightarrow  \prod_{\tau \in \cJ_0} L^+G$ admits a dense open locus which is smooth of dimension $\sum_{\tau \in \cJ_0} 3e$.
\end{proof}
\subsubsection{Finishing the proof of Theorem~\ref{thm-bounding dimension without convolution}}

For $N>>0$, Proposition~\ref{prop:bounded} gives
$$
    \operatorname{dim} \left(\prod_{\tau \in \cJ_0} \cK_N \right) \backslash \bN_\mu^\nabla \leq \sum_{\tau \in \cJ_0} \left( 3e - n_\tau - m_\tau + \operatorname{dim} \cK_N \backslash L^+G \right)
    $$
with the inequality strict when $\mu$ is unbalanced. Corollary~\ref{cor-open cover} then implies $\left(\prod_{\tau \in \cJ_0} \cK_N \right) \backslash  \widetilde{Y^\nabla(\mu)}$ has a cover by open subschemes whose dimension has the same upper bound. Each open subscheme is furthermore an $\prod_{\tau\in \cJ_0} \cK_N \backslash L^+G$-torsor over an open substack of $Y^\nabla(\mu)$. It follows that $Y_{\leq h}^\nabla(\mu)$ has an open cover by substacks of dimension $\leq \sum_{\tau \in \cJ_0} \left( 3e - n_\tau - m_\tau \right)$ with the inequality strict when $\mu$ is unbalanced.

It remains to consider the case where $\mu_\tau = (2e,e,0)$ for each $\tau \in \cJ_0$. Proposition~\ref{prop-top dimensional} shows that each open in the above cover of $Y_{\leq h}^\nabla(\mu)$ has a unique irreducible component of dimension $\sum_{\tau \in \cJ_0} 3e$, but this does not imply the same holds for $Y_{\leq h}^\nabla(\mu)$. Instead, we note that Lemma~\ref{lem-open covers} produces a surjective morphism
$$
\prod_{\tau \in \cJ_0} G \times  \left(\prod_{\tau \in \cJ_0} \cK_N \right) \backslash \bN_\mu^\nabla \rightarrow \left(\prod_{\tau \in \cJ_0} \cK_N \right) \backslash  \widetilde{Y^\nabla(\mu)}
$$
via $(g_\tau,h_\tau,B_\tau,Y_\tau) \rightarrow (h_\tau u^{\mu_\tau} B_\tau g_\tau, -h_\tau^{-1} Y_\tau h_\tau)$ whose restriction to $\lbrace g \rbrace \times \left(\prod_{\tau \in \cJ_0} \cK_N \right) \backslash \bN_\mu^\nabla$ is an open immersion for any $g \in \prod_{\tau\in \cJ_0} G(\bF)$. Proposition~\ref{prop-top dimensional} asserts the source of this surjection has a unique top dimensional irreducible component. If the target has two top-dimensional irreducible components  then one can choose top-dimensional open subsets $U_1,U_2$ of the target which are disjoint. Continuity implies the preimage of $U_1$ contains a non-empty open subset $\prod_{\tau \in \cJ_0} U_\tau  \times  U^\nabla$ with $U_\tau \subset G$ open and $U^\nabla \subset \left(\prod_{\tau \in \cJ_0} \cK_N \right) \backslash \bN_\mu^\nabla$ open and top-dimensional. Hence, the preimage of $U_1$ is dense in the unique top-dimensional component of the source. The same is true for the preimage of $U_2$, so these preimages intersect. Since the morphism is surjective, this contradicts the assumption that $U_1 \cap U_2 = \emptyset$. 

\subsection{Bounding the fibres of convolution}\label{sec-conv bounds}

\subsubsection{The main argument}\label{sec-conv main arugment}

Now we prove Theorem~\ref{thm-dim bounds for fibres of convolution}. Let $h =2$.  Thus, consider an $\bF$-valued point $(\fM,N_0) \in Y^\nabla(\mu)$ and fix $\tau \in \cJ_0$ with $\mu_\tau$ balanced and $\neq (2e,e,0)$. Recall $m^{-1}_{\leq \eta_\bullet}(\fM)$ then classifies certain sequences
$$
\fM_\bullet: \fM_e \subset \fM_{e-1} \subset \ldots \subset\fM_0
$$
where $\fM_e := \varphi_{\fM}^{-1}(u^{2e}\fM)$ and $\fM_0 := \varphi^*\fM$. Since $(\fM,N_0) \in Y^\nabla(\mu)$ we can choose $\fS_{\bF}$-bases $\gamma_e,\gamma_0$ of $\fM_e$ and $\fM_0$ respectively, so that 
$$
\gamma_{e,\tau} = \gamma_{0,\tau} u^{\mu_\tau^{*}}
$$
where, as in Proposition~\ref{prop-in schubert variety}, $\mu_\tau^* = (\mu_{1,\tau}^* \geq \ldots \geq \mu_{d,\tau}^*)$ with $\mu_{\ell,\tau}^* = h - \mu_{d-\ell,\tau}^*$. Note, we do not require that $\gamma_0$ and $\gamma_e$ are related via the Frobenius on $\fM$ in any specific way. Note also that, since $\mu_\tau \leq (2e,e,0)$ and $\mu^*_\tau = (2e,2e,2e) - w_0(\mu_\tau)$ for $w_0 \in W$ the longest element, we have $\mu^*_\tau \leq (2e,e,0)$. 

For each irreducible component $C \subset m^{-1}_{\leq \eta_\bullet}(\fM)$ there are additionally dominant $\mu_\tau^{(i)} \leq (2i,i,0)$ for $1\leq i \leq e$ such that for generic $\fM_\bullet \in C$ one has $\fM_{i,\tau}$ generated by $\gamma_{0,\tau} g_i u^{\mu_\tau^{(i)}}$ for some $g_i \in L^+G$. In particular $\mu^{(e)} = \mu^*$. In Section~\ref{sec-MV cycles} we will give more control on the possible $g_i$ which can appear. Specifically, we prove:

\begin{prop}\label{prop-MV}
    Fix an irreducible component $C \subset m^{-1}_{\leq \eta_\bullet}(\fM)$ and choose $k \geq 1$. Then there exists $\fM_\bullet \in C$ with $\fM_{i,\tau}$ generated by the inductively defined
    $$
    \gamma_{i,\tau} = \begin{cases}
        \gamma_{i+1,\tau} u^{\mu_\tau^{(i)} - \mu_\tau^{(i+1)}} & \text{if $k+1\leq i \leq e$,} \\
        \gamma_{i+1,\tau} g u^{\mu_{\tau}^{(i)} - \mu_\tau^{(i+1)}} & \text{if $i = k$} \\
    \end{cases}
    $$
    where $g$ can be any element in
    \begin{itemize}
        \item the upper triangular unipotent $U\subset G$ if $\mu_{\tau}^{(k+1)} - \mu_\tau^{(k)} \neq (1,1,1)$.
        \item $U_{\alpha,-1} = \bigg\{ \left( \begin{smallmatrix}
            1 & \frac{x}{u} & \frac{y}{u} \\ 0 & 1 & 0 \\ 0 & 0 & 1 
        \end{smallmatrix} \right)\mid x,y\in \bF \bigg\}$ or $U_{\beta,-1} = \bigg\{ \left( \begin{smallmatrix}
            1 & 0 & \frac{y}{u} \\ 0 & 1 & \frac{x}{u} \\ 0 & 0 & 1 
        \end{smallmatrix} \right)\mid x,y\in \bF \bigg\}$ (the choice depending upon the irreducible component $C$) if $\mu_{\tau}^{(k+1)} - \mu_\tau^{(k)} = (1,1,1)$.
    \end{itemize}
\end{prop}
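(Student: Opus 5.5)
We will prove Proposition~\ref{prop-MV} by describing the top-dimensional irreducible components of the convolution fibre $m^{-1}_{\leq \eta_\bullet}(\fM)$ in terms of Mirković--Vilonen cycles, following \cite{MV} and \cite[\S2.1]{Haines06}. Since everything happens over a single $\tau$ and the relevant lattices only involve the $\tau$-th parts, we work in the affine Grassmannian $\operatorname{Gr}$ for $G=\operatorname{GL}_3$ over $\bF$. After trivialising $\fM_{0,\tau}$ by $\gamma_{0,\tau}$, the fibre becomes the fibre of the iterated convolution map
\[
\operatorname{Gr}_{\leq (2,1,0)}\widetilde{\times}\cdots\widetilde{\times}\operatorname{Gr}_{\leq(2,1,0)}\to \operatorname{Gr}_{\leq (2e,e,0)}
\]
over the point $u^{\mu_\tau^*}$. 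We induct downwards on the chain, working from $\fM_e$ towards $\fM_0$.

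The first step is to record the generic relative positions. On a dense open subset of $C$, the successive quotients are in fixed relative positions. These positions are given by the $\mu_\tau^{(i)}$ together with the steps $\mu^{(i+1)}_\tau-\mu^{(i)}_\tau$, and each step is a weight of the representation $V_{(2,1,0)}$ of the dual group: a permutation of $(2,1,0)$ or $(1,1,1)$. By semismallness, each top-dimensional component corresponds to a chain of such steps, that is, to a path in the crystal (a Littelmann path or MV-polytope chain). Moreover, the component is the closure of an iterated fibration. Its fibre at stage $i$ is the intersection $\operatorname{Gr}_{\leq(2,1,0)}\cap S_{\nu}$ of a semi-infinite orbit $S_\nu=U(\bF((u)))u^\nu$, translated by the previous lattice, with $\nu=\mu^{(i)}-\mu^{(i+1)}$. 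Equivalently, the component is the closure of the set of chains with
\[
\gamma_{i,\tau}\in\gamma_{i+1,\tau}\,U(\bF((u)))\,u^{\mu^{(i)}-\mu^{(i+1)}}
\]
lying in the prescribed MV cycle at each stage.

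The second step is to fix the lattices above level $k$. Taking $g=1$ at every stage $i\geq k+1$ gives the torus-fixed chain $\gamma_{i,\tau}=\gamma_{i+1,\tau}u^{\mu^{(i)}-\mu^{(i+1)}}$. This chain lies in $C$ because each MV cycle contains its torus-fixed point $u^\nu$.

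The third step allows a nontrivial $g$ at stage $k$, with all other stages kept fixed. It suffices to show that the MV cycle $\overline{\operatorname{Gr}_{(2,1,0)}\cap S_\nu}$ for a single step contains $g\,u^\nu$ for all $g$ in the stated group. When $\nu$ is a permutation $w(2,1,0)$, the intersection $\operatorname{Gr}_{(2,1,0)}\cap S_{w\lambda}$ contains the orbit $U\cdot u^{w\lambda}$ of the constant unipotent group; checking this is a direct lattice computation, since $U\subset L^+G$ preserves $\operatorname{Gr}_{(2,1,0)}$. The points $g u^\nu$ then lie in $C$, because we may deform freely in the stage-$k$ fibre while the outer stages stay fixed.

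When $\nu=(1,1,1)$, the two MV cycles of weight zero in $\operatorname{Gr}_{\leq(2,1,0)}$ (the quasi-minuscule case) are the closures of $U_{\alpha,-1}\cdot u^{(1,1,1)}$ and $U_{\beta,-1}\cdot u^{(1,1,1)}$. Each is $2$-dimensional, and together they match the multiplicity $2$ of the zero weight. We verify this by computing the lattices $g u^{(1,1,1)}$ for $g\in U_{\alpha,-1}$ and checking both that they lie in $\operatorname{Gr}_{(2,1,0)}$ and that they lie in $S_0$.

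The main obstacle is justifying that we may set all later stages equal to their torus-fixed points while keeping an arbitrary $g$ at stage $k$ inside the same component $C$, given that the convolution fibre is only an iterated fibration generically. We will address this by using the $T\ltimes U(\bF((u)))$-equivariance of the description of $C$ as an iterated closure, combined with the fact that the constant $U$ commutes appropriately with $u^{\nu}$. If this fails for a given stage, the fallback is to take limits along a cocharacter contracting the higher stages to their fixed points, while using that $C$ is closed and $T$-stable.
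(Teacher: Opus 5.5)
Your Step~3 is fine and is essentially Lemma~\ref{lem-MV calc}, up to a central twist: with $\nu=\mu_\tau^{(i)}-\mu_\tau^{(i+1)}$ the relevant cell is $L^+Gu^{-(2,1,0)}L^+G$, not $\operatorname{Gr}_{(2,1,0)}$. The gap is the step you yourself flag as the main obstacle, and neither remedy closes it. Steps~2 and~3 presuppose that the ``iterated fibration with MV-cycle fibres'' picture of $C$ still holds over the degenerate point where every stage $i\ge k+1$ is torus-fixed. Concretely, you need two things:
(a) this partial chain extends to a point of $C$ at all;
(b) the stage-$k$ lattices of such points fill out a whole translated MV cycle, not a proper closed subset of one.
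Knowing $C$ is the closure of a fibration defined on a dense open set gives no control over its fibre at a boundary point.

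The equivariance remedy fails because neither $U(\bF((u)))$ nor even the constant $U$ acts on the fibre. A generic $g\in U$ fixes $\fM_{0,\tau}$ but moves $\fM_{e,\tau}=\gamma_{0,\tau}u^{\mu^*_\tau}\bF[[u]]^3$, since $u^{-\mu^*_\tau}gu^{\mu^*_\tau}\notin L^+G$ unless $\mu^*_\tau$ is central. Only $L^+G\cap u^{\mu^*_\tau}L^+Gu^{-\mu^*_\tau}$ preserves $C$. The limit remedy fails because $\lim_{t\to0}\chi(t)\cdot x$ is always $\chi$-fixed. A cocharacter of $T$ strong enough to contract the $U(\bF((u)))$-coordinates of the higher stages also contracts the constant $g$ at stage $k$. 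So closedness and $T$-stability of $C$ only give you degenerate ($T$-fixed) points, not $\gamma_{k+1,\tau}gu^{\nu}$ for arbitrary $g$.

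What is missing is the argument the paper uses in place of a global fibration statement. Let $C(k+1)\subset C$ be the closed locus where $\fM_{i,\tau}=\gamma_{0,\tau}u^{\mu_\tau^{(i)}}\bF[[u]]^3$ for all $i\ge k+1$.
\begin{itemize}
\item By descending induction, $C(k+1)$ is nonempty, irreducible, and of dimension $\sum_{\tau'\neq\tau}(n_{\tau'}+m_{\tau'})+\langle\rho,(2(k+1),k+1,0)-\mu^{(k+1)}_\tau\rangle$. The induction uses $L^+G$-equivariance: near the torus-fixed point, the map $\fM_\bullet\mapsto\fM_{k+1,\tau}$ on $C(k+2)$ is a trivial fibration with fibre $C(k+1)$.
\item The proper map $C(k+1)\to$ \eqref{eq-fibre fibre}, $\fM_\bullet\mapsto\fM_{k,\tau}$, taken with $\nu=\mu^{(k+1)}_\tau$, $\mu=\mu^{(k)}_\tau$, $\lambda=(2,1,0)$, has irreducible closed image. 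Bounding its fibres by semismallness, the image has dimension at least $\langle\rho,(2,1,0)+\mu^{(k)}_\tau-\mu^{(k+1)}_\tau\rangle$.
\item The locus \eqref{eq-fibre fibre} is equidimensional of exactly that dimension \cite{Haines06}, so the image is a full irreducible component.
\item Anderson's theorem \cite[Theorem 8]{And03} then identifies this component with a $u^{\nu}$-translate of an MV cycle. Semismallness alone does not give this identification, and you never invoke Anderson.
\end{itemize}
Your Step~3 computation then produces the required $g$. The inductive nonemptiness replaces your claim (a), and the dimension count replaces your claim (b).
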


Under certain assumptions on $N_0$ (it should be sufficiently indivisible by $u$) we will be able to choose $g$ as in the Proposition~\ref{prop-MV} so that $(\fM,N_0,\fM_\bullet)$ does not satisfy condition $(\mathbf{A}_{k+1})$ from Definition~\ref{def-imposing mod p conditions}. The following definition makes this precise:

\begin{defn}\label{def-generic}
    Say $N_0$ is  $i$-generic relative to an $\bF[[u]]$-basis $\gamma_{i,\tau}$ of $\fM_{i,\tau}$ if $u^i N_0^{\varphi}(\gamma_{i,\tau}) = \gamma_{i,\tau} Y^{(i)}$ with
    $$
    Y^{(i)} =  \begin{pmatrix}
       u^i h_1 &  Y_{\alpha}^{(i)} & Y^{(i)}_\gamma \\ 
       0 & u^i h_2 & Y_{\beta}^{(i)} \\
       0 & 0 & u^{i} h_3
    \end{pmatrix}
    $$
    for $Y_{\alpha}^{(i)}$ having $u$-adic valuation $i + 1 - \langle \alpha,\mu_\tau^{(i)}\rangle $ and $Y_{\beta}^{(i)}$  having $u$-adic valuation $i + 1 - \langle \beta,\mu_\tau^{(i)}\rangle $.
\end{defn}

The basic calculation is:
\begin{lemma}\label{lem-compute failure of monodromy}
    Suppose  $1 \leq k < e$ is such that $\mu_\tau^{(k)}= (2k,k,0)$ and $\mu_\tau^{(k+1)} \neq (2(k+1),k+1,0)$. If $N_0$ is $e$-generic then $g$ can be chosen as in Proposition~\ref{prop-MV} so that  either $u^{k+1}N_0^{\varphi}(\fM_{k+1,\tau}) \not\subset u \fM_{k+1,\tau}$ or $u^{k}N_0^{\varphi}(\fM_{k,\tau}) \not\subset u \fM_{k,\tau}$. 
\end{lemma}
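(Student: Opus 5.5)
We argue by direct computation in the bases of Proposition~\ref{prop-MV}. The first step is to record how the monodromy transforms. With $\gamma_{i,\tau}$ as in Proposition~\ref{prop-MV} for this $k$, the bases for $i \geq k+1$ are obtained from $\gamma_{e,\tau}$ by multiplying by diagonal powers of $u$ only. Since $\partial = u\frac{d}{du}$ acts on $u^{\nu}$ by the diagonal matrix $\nu$, the matrix of $u^iN_0^{\varphi}$ in the basis $\gamma_{i,\tau}$, for $i\ge k+1$, is
$$
u^{i-e}\operatorname{Ad}(u^{-(\mu^{(i)}_\tau-\mu^*_\tau)})\,Y^{(e)} + u^{i}c(u)\,\operatorname{diag}(\mu^{(i)}_\tau-\mu^*_\tau).
$$
By $e$-genericity, its $\alpha$- and $\beta$-entries therefore have exact valuations $i+1-\langle\alpha,\mu^{(i)}_\tau\rangle$ and $i+1-\langle\beta,\mu^{(i)}_\tau\rangle$. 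In other words, $N_0$ is automatically $i$-generic relative to $\gamma_{i,\tau}$ for every $i\ge k+1$. We would first dispose of the case where one of these valuations is $\le 0$: then $(\mathbf{A}_{k+1})$ in the form $u^{k+1}N_0^\varphi(\fM_{k+1,\tau})\subset u\fM_{k+1,\tau}$ already fails for every $g$. So we may assume $\langle\alpha,\mu^{(k+1)}_\tau\rangle,\langle\beta,\mu^{(k+1)}_\tau\rangle\le k+1$. The entry $Y^{(k+1)}_\gamma$ needs no valuation control at this stage, although its valuation could matter later.

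The second step passes to $\gamma_{k,\tau} = \gamma_{k+1,\tau}\, g\, u^{\nu}$ with $\nu = \mu^{(k)}_\tau-\mu^{(k+1)}_\tau = (2k,k,0)-\mu^{(k+1)}_\tau$. Because $\mu^{(k+1)}_\tau \le (2k+2,k+1,0)$ is dominant, differs from it, and satisfies $(2k,k,0)\le\mu^{(k+1)}_\tau$ in the sense required for the convolution step, $-\nu$ is one of the weights of $\operatorname{Gr}_{\le(2,1,0)}$ other than $(2,1,0)$. In the quasi-minuscule case $-\nu$ is then a permutation of $(2,1,0)$ other than the identity, or $(1,1,1)$. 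Write $Y' = g^{-1}Y^{(k+1)}g + u^{k+1}c(u)\, g^{-1}\partial(g)$, using $\partial(g)=0$ when $g\in U$. In the basis $\gamma_{k,\tau}$ the operator $u^kN_0^\varphi$ is then $u^{-1}\operatorname{Ad}(u^{-\nu})Y' + u^kc(u)\nu$. We compute the relevant entries case by case. The aim is to choose $g$ so that some off-diagonal entry acquires valuation $\le 0$ after twisting by $u^{-\nu}$ and dividing by $u$, or so that a below-diagonal entry appears that is not divisible by $u$. This would violate $u^kN_0^\varphi(\fM_{k,\tau})\subset u\fM_{k,\tau}$.

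The case $g\in U$ works as follows. Conjugating the upper-triangular $Y^{(k+1)}$ by $g$ mixes in the diagonal entries $u^{k+1}h_j$ and the $Y_\alpha, Y_\beta$ entries. The twist $\operatorname{Ad}(u^{-\nu})$ multiplies the $(a,b)$ entry by $u^{\nu_b-\nu_a}$, and this exponent is negative for the positive roots where $\nu$ fails to be antidominant in the right direction. Since $Y^{(k+1)}_\alpha$ and $Y^{(k+1)}_\beta$ have exact valuations, a suitable choice of the corresponding coordinate of $g$ leaves a leading term that cannot be cancelled. For example, when $\nu=(-2,-1,0)+(1,-1,0)$-type permutations occur, the $\alpha$-entry receives $u^{-1-(\nu_1-\nu_2)}Y_\alpha$, and this has valuation $k+1-\langle\alpha,\mu^{(k+1)}_\tau\rangle+\langle\alpha,\nu\rangle-1 = 1-k$. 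Here we use $\langle\alpha,\mu^{(k)}_\tau\rangle=k$; the value is $\le 0$ when $k\ge1$, which gives the failure.

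The case $\nu=-(1,1,1)$ with $g\in U_{\alpha,-1}$ or $U_{\beta,-1}$ is where $\partial(g)=-(g-1)$ contributes. Here $u^{-\nu}$ is central, so the only source of failure is the pole of $g$. The entry $(g^{-1}Y^{(k+1)}g)$ acquires terms like $\frac{x}{u}(u^{k+1}h_2-u^{k+1}h_1) + \frac{x}{u}Y_\alpha$-type contributions in the $\gamma$ entry, and below-diagonal terms vanish. We would check that for generic $x$ the term $\frac{x}{u}Y^{(k+1)}_\beta$ (respectively $\frac{x}{u}Y_\alpha$), which has valuation $k+1-\langle\beta,\mu^{(k+1)}_\tau\rangle-1$, is $\le k$ after division by $u$. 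That would break integrality, or otherwise push the failure back to index $k+1$.

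We expect this last step to be the main obstacle: arranging that the valuation bookkeeping in the $(1,1,1)$ case really produces a non-cancelling leading coefficient, since the $\frac1u$ terms and the $c(u)\partial(g)$ term interact. We would try first to isolate the coefficient of the lowest power of $u$ in the relevant entry as a nonzero linear function of $x$, using the leading unit coefficient of $Y^{(k+1)}_\alpha$ or $Y^{(k+1)}_\beta$ supplied by genericity.
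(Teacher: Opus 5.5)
Your reduction steps agree with the paper's. The formula for $u^iN_0^{\varphi}$ in the basis $\gamma_{i,\tau}$ for $i\geq k+1$ correctly gives $(k+1)$-genericity. This disposes of $\mu^{(k+1)}_\tau-\mu^{(k)}_\tau\in\{(2,0,1),(1,2,0)\}$ by failure at level $k+1$.

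The gap is in the three cases that must be handled with $g\in U$, namely $\mu^{(k+1)}_\tau-\mu^{(k)}_\tau\in\{(1,0,2),(0,2,1),(0,1,2)\}$. Your worked example looks for the failure in the $\alpha$-entry. But the exact valuations of $Y^{(k+1)}_\alpha,Y^{(k+1)}_\beta$ can never produce a failure in the $\alpha$- or $\beta$-entries at level $k$:
\begin{itemize}
\item For $g\in U$ with $(1,2)$-entry $a$, the $\alpha$-entry at level $k$ is $u^{-1-\langle\alpha,\nu\rangle}\bigl(Y^{(k+1)}_\alpha+a\,u^{k+1}(h_1-h_2)\bigr)$.
\item Since $v_u(Y^{(k+1)}_\alpha)=k+2-\langle\alpha,\mu^{(k+1)}_\tau\rangle$, the $Y_\alpha$-term has valuation exactly $k+1-\langle\alpha,\mu^{(k)}_\tau\rangle=1$. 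You later write $k+1$ instead of $k+2$ here, and you also flip the sign of $\langle\alpha,\nu\rangle$.
\item This is just $k$-genericity. The same holds for $\beta$, because $\langle\beta,\mu^{(k)}_\tau\rangle=k$ as well.
\end{itemize}
So your value $1-k$ is an arithmetic error. Moreover, the $\nu$ in your example has $-\nu=(1,2,0)$, a case you had already disposed of at level $k+1$.

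The missing idea is that the failure must be forced in the $(1,3)$-entry. This is the only entry into which conjugation by $g$ feeds $Y^{(k+1)}_\alpha$ or $Y^{(k+1)}_\beta$. For $g=\left(\begin{smallmatrix}1&a&b\\0&1&c\\0&0&1\end{smallmatrix}\right)\in U$ one finds
\[
(g^{-1}Y^{(k+1)}g)_{13}=Y^{(k+1)}_\gamma+cY^{(k+1)}_\alpha-aY^{(k+1)}_\beta+u^{k+1}\bigl(bh_1-ach_2+(ac-b)h_3\bigr).
\]
So for generic $g$ this entry has valuation at most $\min\{v_u(Y^{(k+1)}_\alpha),v_u(Y^{(k+1)}_\beta)\}$, which is $1,1,3$ in the three cases. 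Dividing by $u$ and applying $\operatorname{Ad}(u^{-\nu})$, which multiplies the $(1,3)$-entry by $u^{\nu_3-\nu_1}=u^{-1},u^{-1},u^{-2}$, leaves valuation $\leq 0$. This is exactly the paper's argument.

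In the $(1,1,1)$ case you do target the right entry, but the valuations are again off by one. In fact $v_u(Y^{(k+1)}_\alpha)=v_u(Y^{(k+1)}_\beta)=2$, and what you need is valuation $\leq 0$ after dividing by $u$, not $\leq k$. The obstacle you anticipate there is harmless. The term $u^{k+1}c(u)g^{-1}\partial(g)$ contributes to the $(1,3)$-entry only $-u^{k}c(u)y$, which does not involve $x$. So the constant coefficient of the $(1,3)$-entry of $Y^{(k)}$ is an affine function of $x$ with nonzero slope:
\begin{itemize}
\item $-Y^{(k+1)}_{\beta,2}$ for $g\in U_{\alpha,-1}$;
\item $Y^{(k+1)}_{\alpha,2}$ for $g\in U_{\beta,-1}$.
\end{itemize}
Hence generic $x$ gives failure, unless that entry already has negative valuation, in which case it fails anyway.
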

In particular, if $N_0$ is $e$-generic and there exists $k\geq 1$ with $\mu_\tau^{(k)} = (2k,k,0)$ then $(\fM,N_0,\fM_\bullet)$ does not satisfy one of $(\mathbf{A}_{k+1})$ or $(\mathbf{A}_k)$ from Definition~\ref{def-imposing mod p conditions}, and so $(\fM,N_0,\fM_\bullet) \not\in Y^{\nabla,\operatorname{conv}}_{\leq \eta}$.
\begin{proof}
    For each $i$ set $\nu_i = \mu_{\tau}^{(i+1)} - \mu_\tau^{(i)} \leq (2,1,0)$ and write $\gamma_{i,\tau} = \gamma_{i+1,\tau} h u^{-\nu_i}$ with $h = 1$ unless $i=k$, in which case $h = g$ for $g$ as in Proposition~\ref{prop-MV} to be chosen shortly. If $u^i N_0^{\varphi}(\gamma_{i,\tau}) = \gamma_{i,\tau} Y^{(i)}$ then
    \begin{equation}\label{eq-computation}
     Y^{(i)} = u^{-1}\bigg[ u^{\nu_i}  h^{-1} Y^{(i+1)} h u^{-\nu_i} + c_\tau(u) u^{i+1} \bigg(  u^{\nu_i} h^{-1} \partial(h)u^{-\nu_i} + u^{\nu_i} \partial(u^{-\nu_i}) \bigg) \bigg].
    \end{equation}
    An immediate consequence of \eqref{eq-computation} when $h=1$ is that $i+1$-genericity of $N_0$ implies $i$-genericity. Indeed, $Y_{\alpha}^{(i)} = u^{\langle \alpha,\nu_i\rangle-1} Y_{\alpha}^{(i+1)}$, and likewise with $\alpha$ replaced by $\beta$. Applying this inductively gives $k+1$-genericity of $N_0$.

    The choice of $k$ gives:
    $$
    \mu_\tau^{(k+1)} \in (2k,k,0) + \lbrace (2,0,1),(1,2,0),(1,0,2),(0,2,1),(0,1,2),(1,1,1)\rbrace
    $$
    The $k+1$-genericity of $N_0$ immediately handles the first two possibilities. Indeed, $\langle \alpha, \mu_\tau^{(k+1)} \rangle = k+2$ in the first case and and $\langle \beta, \mu_\tau^{(k+1)} \rangle = k+2$ in the second. Thus, $Y^{(k+1)}$ has an entry with $u$-adic valuation $0$ and so $u^{k+1}N_0^{\varphi}(\fM_{k+1,\tau}) \not\subset u \fM_{k+1,\tau}$.

    For the remaining cases we have to force failure at the $k$-th level by suitably choosing $g$ as in Proposition~\ref{prop-MV}. First, suppose $\mu_\tau^{(k+1)} = (2k+1,2k,2), (2k,k+2,1)$, or $(2k,k+1,2)$. Since $g$ can be any element in $U$ it can be chosen so that the upper right entry of $u^{-1}g^{-1} Y^{(k+1)} g$ 
    has $u$-adic valuation
    $$
    \operatorname{min}\lbrace v_u(Y_\alpha^{(k+1)}), v_u(Y_\beta^{(k+1)}) \rbrace - 1
    $$
    Accordingly, this top right entry has $u$-adic valuation $1,1$, or $2$. Conjugating this matrix by $u^{\nu_k}$ drops this $u$-adic valuation by $1,1$ or $2$. Plugging this into \eqref{eq-computation} with $i =k$ shows $Y^{(k)}$ has top right entry with $u$-adic valuation $0$. Thus, $u^{k}N_0^{\varphi}(\fM_{k,\tau}) \not\subset u \fM_{k,\tau}$.

    If instead $\mu_\tau^{(k+1)} = (2k+1,k+1,1)$ then $g$ can be chosen in Proposition~\ref{prop-MV} so that the top right entry of $u^{-1}g^{-1} Y^{(k+1)} g$ has $u$-adic valuation 
    $$
    \leq \operatorname{max}\lbrace v_u(Y_\alpha^{(k+1)}), v_u(Y_\beta^{(k+1)}) \rbrace - 2
    $$
    By $k+1$-genericity of $N_0$ this maximum is $0$. Putting this into \eqref{eq-computation} and noting that $\partial(g) = g^{-1}$ again shows $Y^{(k)}$ has top right entry with $u$-adic valuation $0$. Thus, $u^{k}N_0^{\varphi}(\fM_{k,\tau}) \not\subset u \fM_{k,\tau}$.
\end{proof}

Notice that if $\mu_\tau^{(1)} = (1,1,1)$ and $\fM_\bullet \in C$ then, in view of part (2) of Lemma~\ref{lem-props of closed conditions}, $(\fM,N_0,\fM_\bullet)$ does not satisfy $(\mathbf{B}_1)$ from Definition~\ref{def-imposing mod p conditions}. Thus, the following proposition (which we prove in Section~\ref{sec-genericity}) completes the proof of Theorem~\ref{thm-dim bounds for fibres of convolution} in most cases:

\begin{prop}\label{prop-generic locus}
    Suppose that $\mu_{\tau,\alpha} \geq 2$ and $\mu_{\tau,\beta} \geq 2$. Then there exists an open substack $U \subset Y^\nabla(\mu)$ with complement of dimension $< \sum_{\tau' \in \cJ_0} (3e-n_{\tau'} - m_{\tau'})$ such that, for each $\bF$-point $(\fM,N_0) \in U$, there exists $\fS_{\bF}$-bases $\gamma_e$ and $\gamma_0$ of $\fM_e$ and $\fM_0$ such that $\gamma_{e,\tau} = \gamma_{0,\tau} u^{\mu_\tau^{(e)}}$ and such $N_0$ is $e$-generic relative to $\gamma_{e,\tau}$.
\end{prop}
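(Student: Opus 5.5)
Work in the explicit open charts of Lemma~\ref{lem-open covers} and Corollary~\ref{cor-open cover}. Fix $g=(g_\tau)$ and a point $(h_\tau,B_\tau,Y_\tau)_\tau\in\bN_\mu^\nabla$; it corresponds to $X_\tau=h_\tau u^{\mu_\tau}B_\tau g_\tau$ and $\cN_\tau=-h_\tau^{-1}Y_\tau h_\tau$.

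\textbf{Choosing the bases.} With respect to $\beta_0$ we have $\fM_e$ generated by $\beta_0 X_e^{-1}$ with $X_e=XE(u)^{-2}\equiv Xu^{-2e}$. Hence the $\tau$-part of $\fM_e$ is generated by $\beta_{0,\tau}g_\tau^{-1}B_\tau^{-1}u^{-\mu_\tau}h_\tau^{-1}u^{2e}$. Set $\gamma_{0,\tau}:=\beta_{0,\tau}g_\tau^{-1}B_\tau^{-1}w_0$ and $\gamma_{e,\tau}:=\beta_{0,\tau}u^{2e}X_\tau^{-1}h_\tau w_0$, choosing the other $\tau'$ arbitrarily. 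Then $\gamma_{e,\tau}=\gamma_{0,\tau}u^{\mu^*_\tau}$, since $w_0u^{(2e,2e,2e)-\mu_\tau}w_0=u^{\mu_\tau^*}$. The matrix of $u^eN_0^\varphi$ on $\gamma_{e,\tau}$ is then, by \eqref{eq-phi^*N_0 of beta_e} and the second bullet of Definition~\ref{def-BK+monodromy stack}, obtained from $c(u)\cN_\tau$ modulo $u^{e+1}$ by conjugation. Concretely it is $w_0^{-1}h_\tau^{-1}(c(u)\cN_\tau)h_\tau w_0$, up to the correction $c(u)u^e\partial(E^2)E^{-2}$, which is scalar and $\equiv 0$ modulo $u^{e}$ once $u^e$ is factored out, so it only affects the diagonal. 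Since $h_\tau^{-1}\cN_\tau h_\tau=-Y_\tau$ up to this conjugation, $w_0$ turns the strictly lower triangular $Y_\tau$ into a strictly upper triangular matrix. Its off-diagonal entries are $Y_{\tau,\beta}$ in position $\alpha$, $Y_{\tau,\alpha}$ in position $\beta$, and $Y_{\tau,\gamma}$ in the corner, each multiplied by a unit $c_\tau(u)$. Because $\langle\alpha,\mu^*_\tau\rangle=\mu_{\tau,\beta}$ and $\langle\beta,\mu_\tau^*\rangle=\mu_{\tau,\alpha}$, the matrix is upper triangular with diagonal in $u^e\bF[[u]]$, as Definition~\ref{def-generic} requires. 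So $e$-genericity reduces to two conditions on the chart: $v_u(Y_{\tau,\beta})=e+1-\mu_{\tau,\beta}$ and $v_u(Y_{\tau,\alpha})=e+1-\mu_{\tau,\alpha}$. This uses $\mu_{\tau,\alpha},\mu_{\tau,\beta}\le e$, i.e.\ balancedness; otherwise one needs the analogous statement with the appropriate shift. I would double-check the sign and indexing conventions here, since the permutation by $w_0$ must match the convention of $\mu^*$.

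\textbf{The bad locus.} For $\delta\in\{\alpha,\beta\}$ the conditions say that the coefficient $Y_{\tau,\delta,e+1-\mu_{\tau,\delta}}$ is nonzero. The complement is therefore cut out by the vanishing of at most two coordinates on $\bN_\mu^\nabla$ (note $e+1-\mu_{\tau,\delta}\ge 1$). By the Type I substitution with $i=1$ in \eqref{eq-main delta and u^i equation}, this coordinate equals $x_{\tau,1}$ (resp.\ $y_{\tau,1}$) plus terms coming from $Y_{\tau\circ\varphi}$ in $u$-degree $\ge p$. Those terms vanish at $u$-degree $1$, so the coordinate is exactly $x_{\tau,1}$ (resp.\ $y_{\tau,1}$). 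The hypothesis $\mu_{\tau,\delta}\ge2$ guarantees these variables exist, since $x_\tau$ has degree $<\mu_{\tau,\alpha}$.

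\textbf{Dimension count.} In the substitution scheme of Proposition~\ref{prop:bounded}, both variables survive as free coordinates: they are never substituted in Case~2, and in Case~1 nothing is substituted. Imposing $x_{\tau,1}=0$ (or $y_{\tau,1}=0$) on the leading-term degeneration should lower the fibre dimension by one. The step I expect to be the main obstacle is making this rigorous in the strictly balanced case, since the quadrics \eqref{eq:quadric} involve $x_{\tau,k}$ for small $k$. My plan is to rerun Lemma~\ref{lemma:leading} with the extra linear equation added to the ideal. The quadric analysis in Case~2 only uses $x_{\tau,k}$ with $k\equiv p-1,p-2 \pmod p$ and $y_{\tau,\ell}$ with $\ell\equiv1,2 \pmod p$. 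For $p\ge5$ the index $1$ avoids $p-1,p-2$, so $x_{\tau,1}$ is untouched. The index $\ell=1$ does enter the $y$-polynomial $C_\tau$, but I would shift that case's weighting to use $\ell\equiv2,3 \pmod p$ instead, so that a hyperplane in an extra free variable drops the dimension strictly. This shows the bad locus in each fibre over $\prod L^+G$ has dimension $<\sum(3e-n_{\tau'}-m_{\tau'})$. Finally, pass to $Y^\nabla(\mu)$ via the torsor of Corollary~\ref{cor-open cover}, and define $U$ as the union over charts of the loci where the two coordinates are nonzero, exactly as in the proof of Theorem~\ref{thm-bounding dimension without convolution}.
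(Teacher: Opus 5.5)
Your proposal is correct and follows the paper's own argument: the same bases $\gamma_{0,\tau}=\beta_{0,\tau}B_\tau^{-1}w_0$ and $\gamma_{e,\tau}=\beta_{e,\tau}h_\tau w_0$, the same reduction of $e$-genericity to the nonvanishing of $Y_{\tau,\alpha,e+1-\mu_{\tau,\alpha}}=x_{\tau,1}$ and $Y_{\tau,\beta,e+1-\mu_{\tau,\beta}}=y_{\tau,1}$ via the Type~I substitution at $i=1$, and the same degeneration from the strictly balanced case of Proposition~\ref{prop:bounded} to see that these are free variables. The differences are minor. You take the union of good loci over all charts $g$, whereas the paper uses only the chart $g_\tau=1$ and invokes Theorem~\ref{thm-bounding dimension without convolution} to control the complement of that chart. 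For $y_{\tau,1}$ you shift the residue classes in the final weighting, whereas the paper interchanges the roles of $x$ and $y$.
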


We will show that, under the assumption that $e \equiv 0$ modulo $3$, there are only two balanced $\mu_\tau$ not covered by Proposition~\ref{prop-generic locus}, namely $(\frac{5e}{3},\frac{2e}{3},\frac{2e}{3})$ and $(\frac{4e}{3},\frac{4e}{3},\frac{e}{3})$. For these we use a slightly different notion of genericity and then (a much simpler) variant of Lemma~\ref{lem-compute failure of monodromy}. This is done in Section~\ref{sec-boundary cases}.

\subsubsection{MV cycles and irreducible components}\label{sec-MV cycles}

Here we prove Proposition~\ref{prop-MV}. We begin by recalling some background regarding fibres of convolution in the affine Grassmannian. Recall $U \subset G$ denotes the upper triangular unipotent, and let $LU: A \mapsto U(A((u)))$ denote the loop group of $U$. Let $\mu$ be a dominant cocharacter and $\delta$ any cocharacter. Recall that an \emph{MV-cycle} inside $\operatorname{Gr}_{\leq \mu}$ of type $\delta$ is an irreducible component of the scheme theoretic image of the map
$$
L^+G u^\mu L^+G \cap v^\delta LU \rightarrow \operatorname{Gr}_{\leq \mu}
$$
given by the action map on the base point in $\operatorname{Gr}_{\leq h}$.

Next, fix dominant cocharacters $\lambda,\nu,\mu$ with $\nu \leq \lambda+\mu$ and consider the locus 
    \begin{equation}\label{eq-fibre fibre}
    \lbrace \cE \in \operatorname{Gr}_{\leq \mu} \mid \cE_\nu \subset \cE \text{ defines a point of $\operatorname{Gr}_{\leq \lambda}$ for any trivialisation of $\cE$} \rbrace
    \end{equation}
    where $\cE_\nu \in \operatorname{Gr}_{\leq h}$ is as in Definition~\ref{defn-schubert vars}. It follows from \cite[Theorem 1.3]{Haines06} that this locus is equidimensional of dimension $\langle \rho,\mu + \lambda  - \nu \rangle$ where $\rho$ is half the sum of the positive roots.
Then, \cite[Theorem 8]{And03} says that each irreducible component in \eqref{eq-fibre fibre} is a translate by $-\nu$ of an MV-cycle inside $\operatorname{Gr}_{\leq - w_0(\lambda)}$ of type $\mu - \nu$.  More precisely, each irreducible component is  the scheme theoretic image of an irreducible component under the map
    \begin{equation}\label{eq-MV}
     u^\nu L^+G u^{-\lambda} L^+G \cap   u^{\mu} LU \rightarrow \operatorname{Gr}_{\leq \mu}.
    \end{equation}

    \begin{rmk}
        Not all such MV cycles generally appear as components of \eqref{eq-fibre fibre}. Indeed, the number of MV cycles in \eqref{eq-MV} coincides with the $\nu - \mu$ weight space inside of the highest weight representation $V(\lambda)$, while the number of irreducible components in \eqref{eq-fibre fibre} equals the multiplicity of $V(\nu)$ inside $V(\lambda) \otimes V(\mu)$, see e.g.\ \cite[Theorem 5.3.21]{Zhuinto}. As explained in \cite[Theorem 10]{And03} the latter number is always $\leq$ the former.  However, we will shortly specialise to the quasi-miniscule $\lambda = (2,1,0)$ and for such coweights these numbers coincide, by e.g.\ \cite[Lemme 10.2]{ngopolo}.
    \end{rmk}
    \begin{lemma}\label{lem-MV calc}
        If $\lambda = (2,1,0)$ and $\nu - \mu \leq \lambda$ as above, then each irreducible component 
        $$
         L^+G u^{-\lambda} L^+G \cap u^{\mu-\nu} LU 
        $$
        contains $U  \cdot u^{\mu - \nu}$ where $U$ is:
        \begin{itemize}
        \item the upper triangular unipotent $U\subset G$ if $\nu -\mu \neq (1,1,1)$. 
        \item an open subset of $U_{\alpha,-1} = \bigg\{ \left( \begin{smallmatrix}
            1 & \frac{x}{u} & \frac{y}{u} \\ 0 & 1 & 0 \\ 0 & 0 & 1 
        \end{smallmatrix} \right)\mid x,y\in \bF \bigg\}$ or $U_{\beta,-1} = \bigg\{ \left( \begin{smallmatrix}
            1 & 0 & \frac{y}{u} \\ 0 & 1 & \frac{x}{u} \\ 0 & 0 & 1 
        \end{smallmatrix} \right)\mid x,y\in \bF \bigg\}$ (the choice depending upon the irreducible component $C$) if $\nu -\mu = (1,1,1)$.
    \end{itemize}
    \end{lemma}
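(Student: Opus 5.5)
Throughout, write $\delta := \mu - \nu$, so the condition $\nu - \mu \leq \lambda$ says $-\delta \leq (2,1,0)$. The plan is to make the MV-cycle description explicit, using the fact that $\lambda=(2,1,0)$ is quasi-minuscule. The components of $L^+G u^{-\lambda} L^+G \cap u^{\delta} LU$ are the MV cycles in $\operatorname{Gr}_{\leq -w_0(\lambda)} = \operatorname{Gr}_{\leq (2,1,0)}$ of type $\delta$. These are classified by weights of the adjoint-type representation $V(2,1,0)$ (the adjoint of $\mathfrak{sl}_3$ twisted by the determinant). Since $\lambda$ is quasi-minuscule, the relevant $-\delta$ are either:
\begin{itemize}
\item a Weyl conjugate of $(2,1,0)$, i.e.\ an extremal weight; or
\item the zero weight $(1,1,1)$, which has multiplicity $2$.
\end{itemize}
Accordingly, I split into these two cases.

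\textbf{Extremal case} ($\nu - \mu \neq (1,1,1)$). The weight space is one-dimensional, so there is a unique MV cycle, namely the closure of the semi-infinite orbit intersection $LU\,u^{\delta}\cap \operatorname{Gr}_{(2,1,0)}$. I would show $U\cdot u^{\delta}$ lies in it directly. The constant group $U\subset L^+G$ normalises $LU$, so $U u^{\delta} \subset u^{\delta}\,(u^{-\delta}Uu^{\delta}) \subset u^{\delta}LU$. Moreover $u^{\delta}$ is a $W$-translate of $u^{-\lambda}$, and $u^{\delta}\in L^+G u^{-\lambda}L^+G$, so left multiplication by $U\subset L^+G$ preserves the double coset. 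Hence $U u^{\delta}$ lies in the intersection, and it is irreducible (as $U$ is connected) and contains the point $u^{\delta}$. Since the intersection is irreducible in this case, the containment follows; no dimension count is needed.

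\textbf{Zero-weight case} ($\delta = -(1,1,1)$). Here there are two components, each of dimension $\langle \rho, \lambda\rangle = 2$. After translating by the central element $u^{(1,1,1)}$, the question becomes describing $\operatorname{Gr}_{\leq(1,0,-1)}\cap LU\cdot e$ near the base point. I would exhibit the two families explicitly:
\[
U_{\alpha,-1}\,u^{\delta} \quad\text{and}\quad U_{\beta,-1}\,u^{\delta}.
\]
First, check by a direct $3\times3$ computation that for $g\in U_{\alpha,-1}$, the product $g\,u^{(-1,-1,-1)}$ lies in $L^+G u^{-\lambda}L^+G$. Because $g$ has a single row with poles of order $1$, its elementary divisors are $(1,0,-1)$ generically, and the same holds for $U_{\beta,-1}$ with a single column. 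Second, note that each family is $2$-dimensional and irreducible. Third, show the two families differ: their generic points have different lattice types. For $U_{\alpha,-1}$ the lattice contains $e_2,e_3$ (a codimension-one condition in the first row), while for $U_{\beta,-1}$ one has $e_1\in$ the lattice. Since they have the right dimension and are distinct, their closures are the two components. This requires the whole intersection to have exactly these two components, which follows from the weight multiplicity $2$ (the remark on \cite{ngopolo}, \cite{And03}).

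Finally, the statement only asks for an \emph{open} subset of $U_{\alpha,-1}$ or $U_{\beta,-1}$. This absorbs the locus where $x$ or $y$ vanishes: there the elementary divisors drop and the point may lie in both components or in a smaller stratum.

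The main obstacle is the zero-weight case. There one must check that the generic points of the two families land in the correct double coset and are not in each other's closure. This is a routine but fiddly explicit lattice computation; I would do it by hand, tracking the lattices spanned by the columns of $g u^{\delta}$.
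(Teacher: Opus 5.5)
Your proof works, and in the extremal case it is the paper's argument: $Uu^{\mu-\nu}$ lies in the intersection because $U$ normalises $LU$ and preserves the double coset, and the intersection is irreducible. The routes differ in how irreducibility and the component count are obtained.

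\textbf{The paper's route.} It uses the factorisation $LU=\cU_0\cdot L^+U$ to replace $u^{\mu-\nu}LU$ by the finite-dimensional family $u^{\mu-\nu}\cU_0$. It then computes the whole intersection from integrality and the $2\times 2$-minor condition. For $\nu-\mu=(1,1,1)$, write $M=\left(\begin{smallmatrix}1&x/u&y/u\\0&1&z/u\\0&0&1\end{smallmatrix}\right)$. The closure condition is exactly $xz=0$, so the two components are visibly the planes $z=0$ (this is $U_{\alpha,-1}$) and $x=0$ (this is $U_{\beta,-1}$). In the extremal cases the same computation gives an affine space.

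\textbf{Your route.} You import from Mirkovi\'c--Vilonen theory that the intersection is equidimensional, with as many components as the weight multiplicity ($1$ or $2$). It then suffices to exhibit that many distinct irreducible subvarieties of the right dimension. This is legitimate, and the paper's remark before the lemma records the same count. It saves computing the intersection, at the cost of resting on that external theorem, whereas the paper's computation is self-contained. One notational slip: $-w_0(2,1,0)=(0,-1,-2)$, not $(2,1,0)$. Your later translation to $\operatorname{Gr}_{\leq(1,0,-1)}$ uses the correct value.

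\textbf{The distinctness check is wrong as written.} With the paper's convention, $g$ gives the point $\Lambda_g=g\,\bF[[u]]^3$ (the column span). For $g\in U_{\alpha,-1}$, a general element of $\Lambda_g$ is $(a+(bx+cy)/u)e_1+be_2+ce_3$ with $a,b,c\in\bF[[u]]$. Hence $e_2\in\Lambda_g$ iff $x=0$, and $e_3\in\Lambda_g$ iff $y=0$. So generically $\Lambda_g$ contains neither, while it always contains $e_1$, the first column of $g$. It is the row span of $g$ that contains $e_2,e_3$. Since $e_1$ also lies in $\Lambda_g$ for every $g\in U_{\beta,-1}$, your invariant does not separate the families.

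A correct invariant is $e_2\in\Lambda_g$: this holds on all of $U_{\beta,-1}$ and fails on $U_{\alpha,-1}$ away from $x=0$. Alternatively, note that $\cU_0\to\operatorname{Gr}$, $M\mapsto Mu^{-(1,1,1)}L^+G$, is injective, and $U_{\alpha,-1}\cap U_{\beta,-1}$ is only a line.

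\textbf{The degenerate locus.} The elementary divisors drop only at $x=y=0$; that point lies in the smaller stratum, and it is what the word ``open'' removes. The line where the $(1,2)$-entry of $g\in U_{\alpha,-1}$ (resp.\ the $(2,3)$-entry of $g\in U_{\beta,-1}$) vanishes still lies in $L^+Gu^{-\lambda}L^+G$. It is simply the common line of the two components.

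With these corrections your argument goes through.
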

\begin{proof}
    This follows from \cite[Lemme 7.4]{ngopolo} and \cite[Corollaire 7.5]{ngopolo}, but it can also be deduced directly as follows. Since there is a factorisation $LU =\cU_0 \cdot L^+U$ for $\cU_0 \subset U[u^{-1}]$ the kernel of $u^{-1} \mapsto 1$, it suffices to compute 
    $$
     L^+G u^{-\lambda} L^+G \cap u^{\mu-\nu} \cU_0
    $$
    where $\nu - \mu$ runs over those cocharacters $\leq (2,1,0)$ which can easily be done on a case-by-case basis. We give two examples:

If $\nu - \mu = (1,1,1)$, then $u^{-1} M \in  \overline{L^+G u^{-(2,1,0)} L^+G}$ if and only if $u M \in \overline{L^+G u^{(2,1,0)} L^+G} $. This implies $u M$ is integral and so
\[
M = \begin{pmatrix}
       1 & xu^{-1} & yu^{-1} \\
        0 & 1 & zu^{-1} \\
        0 & 0 & 1
    \end{pmatrix}.
\]
Furthermore, the $2 x 2$-minors of $uM$ must be divisible by $u$.  This forces $xz =0$.  This gives the two components.  

When $\nu - \mu \in W \cdot (2,1,0)$, then clearly $U_0  \cdot u^{\mu - \nu} \subset L^+G u^{-\mu} L^+G$ so it suffices to show it is irreducible. If $\nu - \mu = (1,2,0)$, then  $u^{(-1,-2, 0)} M \in  L^+G u^{-(2,1,0)} L^+G$ if and only if $u^{(1,0,2)} M \in L^+G u^{(2,1,0)} L^+G$. Since $u^{(1,0,2)} M$ is integral, 
\[
M = \begin{pmatrix}
       1 & xu^{-1} & yu^{-1} \\
        0 & 1 & 0 \\
        0 & 0 & 1
    \end{pmatrix}.
\]
The minor condition is then automatic and so this is irreducible.   
\end{proof}

\begin{proof}[Proof of Proposition~\ref{prop-MV}]
    Consider the closed locus $C(k+1) \subset C$ consisting of $\fM_\bullet$ with $\fM_{i,\tau}$ generated by $\gamma_{0,\tau} u^{\mu_{\tau}^{(i)}}$ for $k+1 \leq i \leq e$.
    Then $\fM_\bullet \mapsto \fM_{k,\tau}$ defines a morphism 
    $$
    C(k+1) \rightarrow \operatorname{Gr}_{\leq \mu_\tau^{(k)}}
    $$
    which factors through the locus \eqref{eq-fibre fibre} with $\nu = \mu_{\tau}^{(k+1)}$, $\mu = \mu_\tau^{(k)}$, and $\lambda = (2,1,0)$. We claim this factorisation surjects onto a union of irreducible components in \eqref{eq-fibre fibre}. This proves the proposition because it produces $\fM_\bullet \in C(k+1)$ with $\fM_{k,\tau}$ generated by $\gamma_{0,\tau} u^{\mu_\tau^{(k+1)}} g u^{\mu_\tau^{(k)} - \mu_\tau^{(k+1)}} = \gamma_{k+1,\tau} g u^{\mu_\tau^{(k)}}$ with $g$ as described in Lemma~\ref{lem-MV calc}.
    
    Since $C(k+1) \rightarrow \operatorname{Gr}_{\leq \mu_\tau^{(k)}}$ is proper, it suffices to show $C(k+1)$ is irreducible and that the image of $C(k+1) \rightarrow \operatorname{Gr}_{\leq \mu_\tau^{(k)}}$ has dimension $\langle \rho, (2,1,0) + \mu_\tau^{(k)} - \mu_\tau^{(k+1)} \rangle$. The basic observation is that on an open neighbourhood of $\cE_{\mu_\tau^{(k+2)}}$ the morphism $C(k+2) \rightarrow \operatorname{Gr}_{\leq \mu_\tau^{(k+2)}}$ is a trivial fibration with fibre $C(k+1)$. Thus, if the claim holds for $C(k+2)$ and $\operatorname{dim}C(k+2)$ equals
    \begin{equation}\label{eq-formula for dimension}
    \sum_{\tau' \neq \tau} (n_{\tau'}+m_{\tau'}) + \langle \rho, (2i,i,0) - \mu_\tau^{(i)} \rangle
    \end{equation}
    when $i=k+2$, then $C(k+1)$ is irreducible and \eqref{eq-formula for dimension} specialised to $i=k+1$ gives $\operatorname{dim}C(k+1)$. Since \eqref{eq-formula for dimension} with $i=k$ is an upper bound for $\operatorname{dim}C(k)$ we conclude that the image of $C(k+1) \rightarrow \operatorname{Gr}_{\leq \mu_\tau^{(k)}}$ has dimension $\langle \rho, (2,1,0) + \mu_\tau^{(k)} - \mu_\tau^{(k+1)} \rangle$ as required.
\end{proof}
\subsubsection{Genericity of $N_0$}\label{sec-genericity}

Here we prove Proposition~\ref{prop-generic locus}. Clearly, the locus in $Y^\nabla(\mu)$ where there exists $\gamma_e,\gamma_0$ as in Proposition~\ref{prop-generic locus} is locally closed. To prove the proposition it therefore suffices to show that the pullback of this locus to 
$$
      \left( \prod_{\tau \in \cJ_0} \cK_N \right) \backslash  \widetilde{Y_{\leq h}^\nabla(\mu) } \cong \widetilde{Y_{\leq h}^\nabla(\mu)}/ \left( \prod_{\tau \in \cJ_0} \cK_N \right)
    $$
    with $N>>0$ (the isomorphism being as described in Corollary~\ref{cor-open cover}) has dimension $\operatorname{dim} Y^\nabla(\mu) + \sum_{\tau \in \cJ_0} \operatorname{dim} L^+G/\cK_N$, with complement of strictly smaller dimension. We do this by producing an open substack of the left hand quotient which lies inside this pullback, and has complement of strictly smaller dimension.

    For this, let $\widetilde{U}$ denote the image of the open immersion $\bN_\mu^\nabla \rightarrow \widetilde{Y^\nabla(\mu)}$
from Lemma~\ref{lem-open covers} in which one takes $w_\tau =1$ for each $\tau \in \cJ_0$. This consists of $(\fM,N_0,\beta^0) \in \widetilde{Y^\nabla(\mu)}$ so that, if $\beta_0 = \beta^0 \otimes 1$ is an $\fS_{\bF}$-basis of $\fM_0$, then
$$
\varphi_{\fM}(\beta_0) = \beta^0 X, \qquad N_0(\beta^0) \equiv \beta^0 \cN \mod u^{e+1}
$$
with $X_\tau = h_\tau u^{\mu_\tau} B_\tau$ and $\cN_\tau = -h_\tau Y_\tau h_\tau^{-1}$
for $(h_\tau,B_\tau,Y_\tau) \in \bN_\mu^\nabla$. If $w_0 \in W$ is the longest element then $\gamma_{0,\tau} := \beta_{0,\tau} B_\tau^{-1} w_0$ is an $\bF[[u]]$-basis of $\fM_{0,\tau}$ and
$$
\gamma_{e,\tau} := \beta_{e,\tau} h_\tau w_0 = \beta_{0,\tau} E(u)^h X^{-1}_\tau h_\tau w_0= \beta_0 B_\tau^{-1} w_0 u^{\mu_\tau^{(e)}} = \gamma_{0,\tau} u^{\mu_\tau^{(e)}}
$$
for the dominant $\mu_\tau^{(e)} = (2e,2e,2e) - w_0\mu_\tau$. Using \eqref{eq-phi o N o phi^{-1}} and \eqref{eq-phi^*N_0 of beta_e} one computes that
$$
u^e N_0^{\varphi}(\gamma_{e,\tau})
\equiv \gamma_{e,\tau}c_\tau(u)\begin{pmatrix}
     eh u^e & -Y_{\tau,\beta} & -Y_{\tau,\gamma} \\ 0 & eh u^e & -Y_{\tau,\alpha} \\ 
     0 & 0 & eh u^e
\end{pmatrix} 
$$
modulo $u^{e+1}\fM_e$. From the equations \eqref{eq-defining Nlambda} defining $\bN_\mu^\nabla$ we see that $Y_{\tau,\alpha}$ has $u$-adic valuation 
$$
\geq e+1 - \mu_{\tau,\alpha} = e+1 - \langle \beta, \mu_\tau^{(e)} \rangle
$$
(since $w_0(\beta) = -\alpha)$, and similarly with $\alpha$ and $\beta$ interchanged. Thus, $N_0$ is $e$-generic with respect to the basis $\gamma_{e,\tau}$ if and only if this is an equality.

\begin{lemma}\label{lem-free coodinates}
    Recall $\mu_\tau$ is balanced.
\begin{enumerate}
    \item If $\mu_{\tau,\alpha} \geq 2$ then the locus in $\bN_\mu^\nabla$ where  $Y_{\tau,\alpha,e+1 - \mu_{\tau,\alpha}} = 0$ has codimension $>0$.
    \item If $\mu_{\tau,\alpha} =0$ then the locus in $\bN_\mu^\nabla$ where  $Y_{\tau,\gamma,e+1 - \mu_{\tau,\gamma}} = 0$ has codimension $>0$.
\end{enumerate}
     Both (1) and (2) also hold with $\alpha$ replaced by $\beta$.
\end{lemma}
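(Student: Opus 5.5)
The plan is to reuse the coordinate system from the proof of Proposition~\ref{prop:bounded}. After the Type I and II substitutions, $\bN_\mu^\nabla$ is fibred over $\prod_{\tau}L^+G$. Each fibre, after the degeneration of Lemma~\ref{lemma:leading}, has dimension exactly $\sum_\tau(\mu_{\tau,\alpha}+\mu_{\tau,\beta}+e)$, since we are in the balanced case. So it suffices to show that the vanishing of the given coordinate is not identically true on any top-dimensional component of the fibre over a dense open set of $(h_\tau)$. Equivalently, the coordinate is a nonzero function on each such component. The approach is to express $Y_{\tau,\alpha,e+1-\mu_{\tau,\alpha}}$ (respectively $Y_{\tau,\gamma,e+1-\mu_{\tau,\gamma}}$) via \eqref{eq-main delta and u^i equation} at the index $i=1$. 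There the left hand side is exactly the coordinate in question, because $i-(\mu_{\tau,\delta}-e)=e+1-\mu_{\tau,\delta}$.

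For part (1), take $\delta=\alpha$ and $i=1$. Since $\mu_{\tau,\alpha}\le e$ in the balanced case, this $Y$-coordinate was eliminated by a Type I substitution. The equation reads
\[
Y_{\tau,\alpha,e+1-\mu_{\tau,\alpha}} = \bigl[B_\tau\phi(\cdots)B_\tau^{-1}\bigr]_{\alpha,1} + x_{\tau,1}.
\]
The bracketed term only involves $Y_{\tau\circ\varphi,\delta',j}$ with $pj\le 1$. There are none, since $j\ge1$, so it vanishes. Hence the coordinate equals $x_{\tau,1}$, which exists as a variable precisely because $\mu_{\tau,\alpha}\ge 2$. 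It then remains to check that $x_{\tau,1}$ stays a free coordinate after all substitutions and does not enter the leading-term equations \eqref{eq:quadric} or the $C_\tau D_\tau=G_\tau H_\tau$ system. In those, the $x$-variables appearing have index $\equiv p-1,p-2\bmod p$ and index $\ge p-2\ge 3$, so $x_{\tau,1}$ is absent for $p\ge5$. Thus on the degenerate model the codimension computed by Lemma~\ref{lem:dimPoly} is unchanged and $x_{\tau,1}$ is a free direction, so $\{x_{\tau,1}=0\}$ drops dimension by one. Transferring this back from the degeneration needs care, since Lemma~\ref{lemma:leading} only bounds dimensions from above. I would apply it to the ideal with the extra generator $x_{\tau,1}$ adjoined. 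That generator is its own leading term in a grading where it has positive weight, and this gives $\dim\{x_{\tau,1}=0\}\le(\text{top dimension})-1$.

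For part (2), $\mu_{\tau,\alpha}=0$ forces $x_\tau=0$, and balancedness gives $\mu_{\tau,\gamma}=\mu_{\tau,\beta}\ge e$ with $\mu_{\tau,\beta}\le e$. Hence $\mu_{\tau,\gamma}=\mu_{\tau,\beta}=e$, and the coordinate is $Y_{\tau,\gamma,1}$. Taking $\delta=\gamma$ and $i=1$, the same vanishing of the Frobenius term gives $Y_{\tau,\gamma,1}=z_{\tau,1}+(\text{terms }\ell x_{\tau,k}y_{\tau,\ell})$, and the latter terms are zero. Now $z_{\tau,1}$ is a free coordinate: the index $1$ is prime to $p$, but since $\mu_{\tau,\gamma}=e$ we are in the easy case where only Type I substitutions are made, so $z_{\tau,1}$ was not eliminated. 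There are also no $\gamma$-equations left over, because $t_\tau=0$. The same degeneration argument as in part (1) finishes.

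The $\beta$-versions follow by the symmetry interchanging $x$ and $y$; in part (1) the relevant term is $y_{\tau,1}$. The main obstacle is the bookkeeping that the chosen coordinate really is untouched by the substitutions and absent from the leading-term equations in every sub-case, especially when some other $\tau'$ is in Case 2. I would handle this by noting that the equations are separated by $\tau$ after degeneration, and that index $1$ is never a multiple of $p$ nor $\equiv p-1,p-2 \pmod p$ when $p\ge5$.
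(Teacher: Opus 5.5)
Your proposal is correct and follows the paper's proof essentially verbatim. The Type I substitution at $i=1$ identifies the coordinate with $x_{\tau,1}$ (resp.\ $z_{\tau,1}$ in the easy case), which is a free variable after degenerating. Your device of adjoining $x_{\tau,1}$ to the ideal before applying Lemma~\ref{lemma:leading} just makes explicit the step the paper summarises as ``which gives the desired bound''.

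One small correction: $x_{\tau,1}$ does occur in the quadrics \eqref{eq:quadric}, for example via the term $(p-1)x_{\tau,1}y_{\tau,p-1}$. It is only the second, mod-$p$ weighted degeneration to the $C_\tau D_\tau = G_\tau H_\tau$ system that removes it, and this is exactly where $p\geq 5$ enters, as in the paper.
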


\begin{proof}
Recall the analysis in the proof of Proposition \ref{prop:bounded}. If $\mu_{\tau,\alpha} \geq 1$ we have $1 \geq 1 + \mu_{\tau,\alpha}-e$ and  so the Type I substitution at $i=1 \in S_{\tau,\alpha,\operatorname{I}}$ identifies $Y_{\tau, \alpha, e +1 -\mu_{\tau, \alpha}} = x_{\tau, 1}$. Thus, we need to bound the locus where $x_{\tau,1} = 0$.  As in loc.\ cit., we can do this after degenerating to the locus described in the Claim, then further degenerating as in: \emph{The strictly balanced case} of loc.\ cit. Since $\mu_{\tau,\alpha} \geq 2$ we have $x_{\tau,1}$ appearing as a free variable in the last degeneration (since $p\geq 5$) which gives the desired bound.  Note the same argument goes through with $\alpha$ replaced by $\beta$ and $x_{\tau,k}$ and $y_{\tau,l}$ interchanged.

If instead $\mu_{\tau,\alpha} = 0$ then $\mu_{\tau,\gamma} = \mu_{\tau,\beta}$ and so, since $\mu_\tau$ is balanced, $\mu_{\tau,\gamma} = \mu_{\tau,\beta} = e$. Then $i = 1 \in S_{\tau,\gamma,\operatorname{I}}$ and this Type I substitution expresses $Y_{\tau,\gamma,1} = z_{\tau,1} + x_{\tau,1}y_{\tau,0} + x_{\tau,0}y_{\tau,1}$. As explained in: \emph{The easy case} from the proof of Proposition \ref{prop:bounded}, the $z_{\tau,i},x_{\tau,i}$, and $y_{\tau,i}$ are all free variables, so we are done.
\end{proof}

\begin{proof}[Proof of Proposition~\ref{prop-generic locus}]
    Consider the open locus $\widetilde{U}_0 \subset \widetilde{U}$ where $Y_{\tau,\alpha,e+1- \mu_{\tau,\alpha}} \neq 0$ and $Y_{\tau,\beta,e+1- \mu_{\tau,\beta}} \neq 0$. Clearly, the left action of $\prod_{\tau \in \cJ_0} \cK_N$ on $\widetilde{U}$ stabilises  $\widetilde{U}_0$ and so we obtain an open substack of  $\left( \prod_{\tau \in \cJ_0} \cK_N \right) \backslash  \widetilde{U}$. By Lemma~\ref{lem-free coodinates}, it has complement of strictly smaller dimension. By Theorem~\ref{thm-bounding dimension without convolution}, the same is therefore true of its complement in $\left( \prod_{\tau \in \cJ_0} \cK_N \right) \backslash  \widetilde{Y_{\leq h}^\nabla(\mu) }$. Since it lies inside the pullback described at the start of Section~\ref{sec-genericity} we are done.
\end{proof}

\subsubsection{Boundary cases}\label{sec-boundary cases}

To finish the proof of Theorem~\ref{thm-dim bounds for fibres of convolution} we must consider those $\mu_\tau$'s not covered by Proposition~\ref{prop-generic locus}. The following  simple computation is where the assumption $e \equiv 0$ modulo $3$ appear:

\begin{lemma}
    Suppose $r \in \bZ$ is such that $3r = e$. If $\mu_\tau$ is balanced and  $\mu_{\tau,\alpha} \leq 1$ then $\mu_{\tau} = (4r,4r,r)$. If instead $\mu_{\tau,\beta} \leq 1$ then $\mu_\tau = (5r,2r,2r)$.
\end{lemma}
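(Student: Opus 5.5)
The statement is elementary arithmetic on the parametrisation already fixed. Write $\mu_\tau = e(2,1,0) + n_\tau(-1,1,0) + m_\tau(0,-1,1)$ with $n_\tau,m_\tau \geq 0$, so that
$$
\mu_{\tau,\alpha} = e - 2n_\tau + m_\tau,\qquad \mu_{\tau,\beta} = e + n_\tau - 2m_\tau,\qquad \mu_{\tau,\gamma} = 2e - n_\tau - m_\tau .
$$
The balanced condition says $\mu_{\tau,\alpha}\leq e$, $\mu_{\tau,\beta}\leq e$ and $\mu_{\tau,\gamma}\geq e$. In terms of $n_\tau,m_\tau$ this reads $m_\tau \leq 2n_\tau$, $n_\tau \leq 2m_\tau$ and $n_\tau + m_\tau \leq e$.

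First treat the case $\mu_{\tau,\alpha}\leq 1$, i.e.\ $e - 2n_\tau + m_\tau \leq 1$, so $2n_\tau \geq e + m_\tau - 1$. Combining this with $n_\tau \leq 2m_\tau$ gives $e + m_\tau - 1 \leq 4m_\tau$, hence $3m_\tau \geq e - 1 = 3r-1$. Integrality then forces $m_\tau \geq r$. Combining instead with $n_\tau + m_\tau \leq e$ gives $2(e - m_\tau) \geq e + m_\tau - 1$, hence $3m_\tau \leq e+1$, and so $m_\tau \leq r$. Thus $m_\tau = r$, and then $2n_\tau \geq 3r + r - 1$ together with $n_\tau \leq 2m_\tau = 2r$ forces $n_\tau = 2r$. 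Substituting gives $\mu_\tau = (6r,3r,0) + 2r(-1,1,0) + r(0,-1,1) = (4r,4r,r)$.

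The case $\mu_{\tau,\beta}\leq 1$ is symmetric under interchanging $n_\tau$ and $m_\tau$, since the three balanced inequalities are symmetric in this swap. It gives $n_\tau = r$ and $m_\tau = 2r$, hence $\mu_\tau = (6r,3r,0) + r(-1,1,0) + 2r(0,-1,1) = (5r,2r,2r)$.

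There is no genuine obstacle. The only point needing care is that the bound $3m_\tau \geq 3r-1$ upgrades to $m_\tau\geq r$ by integrality, and that $3m_\tau\leq 3r+1$ similarly gives $m_\tau\leq r$. This is exactly where the hypothesis $3\mid e$ is used: without it the two bounds would pin down $m_\tau$ only up to a neighbouring integer, and more boundary cases would survive. Finally I would check that these two values are indeed balanced, which holds since $\mu_{\tau,\alpha}=0$, $\mu_{\tau,\beta}=3r=e$ and $\mu_{\tau,\gamma}=3r=e$ in the first case (and symmetrically in the second). This confirms they are the only exceptions to Proposition~\ref{prop-generic locus}.
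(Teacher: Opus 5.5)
Your computation is correct, and it is the elementary check the paper has in mind; the paper states this lemma without a written proof. The balanced inequalities $m_\tau\le 2n_\tau$, $n_\tau\le 2m_\tau$, $n_\tau+m_\tau\le e$ together with $\mu_{\tau,\alpha}\le 1$ force $e-1\le 3m_\tau\le e+1$, hence $m_\tau=r$ and $n_\tau=2r$, and the $\beta$ case follows by the $n_\tau\leftrightarrow m_\tau$ symmetry.

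One correction to your closing remark. Since $[e-1,e+1]$ always contains exactly one multiple of $3$, $m_\tau$ and then $n_\tau$ are uniquely determined for every $e$, so no extra boundary cases appear without $3\mid e$. What $3\mid e$ actually buys is that this unique solution has $\mu_{\tau,\alpha}=0$; for $e\not\equiv 0 \bmod 3$ one gets $\mu_{\tau,\alpha}=1$ instead. The paper needs $\mu_{\tau,\alpha}\neq 1$ so that part (2) of Lemma~\ref{lem-free coodinates} can be applied.
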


In particular, this rules out the possibility that $\mu_{\tau,\alpha} =1$ or $\mu_{\tau,\beta} =1$, and so part (2) of Lemma~\ref{lem-free coodinates} can be applied. Specifically, we proceed as in Section~\ref{sec-conv main arugment} but work with the following variant of Definition~\ref{def-generic}:

\begin{defn}
    Say $N_0$ is $(i,\alpha)$-generic relative to an $\bF[[u]]$-basis $\gamma_{i,\tau}$ of $\fM_{i,\tau}$ if $u^i N_0^{\varphi}(\gamma_{i,\tau}) = \gamma_{i,\tau} Y^{(i)}$ with
    $$
    Y^{(i)} =  \begin{pmatrix}
       u^i h_1 &  Y_{\alpha}^{(i)} & Y^{(i)}_\gamma \\ 
       0 & u^i h_2 & Y_{\beta}^{(i)} \\
       0 & 0 & u^{i} h_3
    \end{pmatrix}
    $$
    for $Y_{\alpha}^{(i)}$ having $u$-adic valuation $i + 1 - \langle \alpha,\mu_\tau^{(i)}\rangle $ and $Y_{\gamma}^{(i)}$  having $u$-adic valuation $i + 1 - \langle \gamma,\mu_\tau^{(i)}\rangle $. Likewise, define $(i,\beta)$-genericity by interchanging $\alpha$ and $\beta$.
\end{defn}

We then have the following (much simpler analogue) of Lemma~\ref{lem-compute failure of monodromy}:

\begin{lemma}\label{lem-compute failure of monodromy}
    Assume $\mu_{\tau,\alpha} =0$ and apply Proposition~\ref{prop-MV} for any $k \leq e-1$ and $g =1$. If $N_0$ is $(e,\beta)$-generic then $u^{e-1}N_0^{\varphi}(\fM_{e-1,\tau}) \not\subset u \fM_{e-1,\tau}$. Likewise, the same holds if $\alpha$ and $\beta$ are interchanged.
\end{lemma}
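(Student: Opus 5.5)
We work with the recursion \eqref{eq-computation}, applied with $h=1$ at every level; this is allowed because we take $g=1$ in Proposition~\ref{prop-MV}. We first pin down the cocharacters. Since $\mu_{\tau,\alpha}=0$ and $\mu_\tau$ is balanced, we have $\mu_\tau=(4r,4r,r)$. Then $\mu_\tau^{(e)}=\mu_\tau^*=(2e,2e,2e)-w_0\mu_\tau=(5r,2r,2r)$, which gives
$$\langle\alpha,\mu^{(e)}_\tau\rangle=e,\qquad \langle\beta,\mu^{(e)}_\tau\rangle=0,\qquad \langle\gamma,\mu^{(e)}_\tau\rangle=e.$$
By Definition~\ref{def-generic}, $(e,\beta)$-genericity then means the following: the entry $Y^{(e)}_\beta$ has $u$-adic valuation $e+1$, so it vanishes modulo $u^{e+1}$ and contributes nothing, while $Y^{(e)}_\gamma$ has valuation exactly $1$. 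The $\alpha$-entry is only bounded below by $1$.

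Next we pass from level $e$ to level $e-1$. Set $\nu=\nu_{e-1}=\mu^{(e)}_\tau-\mu^{(e-1)}_\tau$, with $\mu^{(e-1)}_\tau\le(2(e-1),e-1,0)$ dominant and $\nu\le(2,1,0)$. Formula \eqref{eq-computation} with $h=1$ gives
$$Y^{(e-1)}_\delta=u^{\langle\delta,\nu\rangle-1}Y^{(e)}_\delta\quad\text{for }\delta\in\{\alpha,\beta,\gamma\}.$$
There is also a diagonal contribution $c_\tau(u)u^{e}\,u^\nu\partial(u^{-\nu})$, which is divisible by $u^{e}$ and therefore harmless. The $\gamma$-entry of $Y^{(e-1)}$ then has valuation $1+\langle\gamma,\nu\rangle-1=\langle\gamma,\nu\rangle$. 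So the claim $u^{e-1}N_0^\varphi(\fM_{e-1,\tau})\not\subset u\fM_{e-1,\tau}$ reduces to showing $\langle\gamma,\nu\rangle=0$, that is, $\nu_1=\nu_3$. We must also check that this entry is not killed by the truncation modulo $u^{e}$; this is automatic because its valuation is $0<e$.

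The key step is to show $\nu_1=\nu_3$, which is a combinatorial condition on $\nu$. The constraints are that $\nu$ lies in the Weyl orbit closure of $(2,1,0)$, so $\nu$ is a permutation of $(2,1,0)$ or equals $(1,1,1)$, and that $\mu^{(e-1)}_\tau=(5r-\nu_1,2r-\nu_2,2r-\nu_3)$ is dominant. Dominance forces $\nu_2\le\nu_3$. Among permutations of $(2,1,0)$ this leaves only $(2,0,1)$, $(1,0,2)$ and $(0,1,2)$, together with $(1,1,1)$. For $\nu=(1,1,1)$ the conclusion holds. The other three cases have $\nu_1\neq\nu_3$, so we must exclude them using the bound $\mu^{(e-1)}_\tau\le(2e-2,e-1,0)$. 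For $(0,1,2)$ the first entry is $5r>2e-2=6r-2$ whenever $r>2$, so this case is excluded for $r>2$. I expect the remaining cases $(2,0,1)$ and $(1,0,2)$ not to be excluded by dominance alone, and this is the main obstacle. The first fallback is to choose $k$ smaller and run the induction of \eqref{eq-computation} down to the first level $k$ where $\nu_k$ has $\nu_1=\nu_3$. Such a level exists because the total drop in $\langle\gamma,\cdot\rangle$ from level $e$ to level $0$ is $e$, while each step lowers $\langle\gamma,\cdot\rangle$ by $\langle\gamma,\nu_i\rangle\le 2$, and the $\beta$-valuation, recorded by $\langle\beta,\nu_i\rangle$, must stay compatible. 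If this fails, the second fallback is to use the $\alpha$-entry instead: for $\nu=(2,0,1)$ we have $\langle\alpha,\nu\rangle=2$, and a lower bound on the valuation of $Y^{(e)}_\alpha$ then yields the failure directly.

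The symmetric statement, with $\alpha$ and $\beta$ interchanged, follows by the same argument with $\mu_\tau=(5r,2r,2r)$ and $\mu^{(e)}_\tau=(4r,4r,r)$.
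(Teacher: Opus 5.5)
Your setup is right: $\mu^{(e)}_\tau=(5r,2r,2r)$, the recursion \eqref{eq-computation} with $h=1$, $Y^{(e-1)}_\gamma$ of valuation exactly $\langle\gamma,\nu\rangle$, and the list of four possible $\nu$. The case analysis then goes wrong in two places.

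\textbf{First: the reduction is too strict.} You reduce to $\langle\gamma,\nu\rangle=0$. But non-containment in $u\fM_{e-1,\tau}$ only requires some entry of $Y^{(e-1)}$ to have valuation $\le 0$. A negative valuation is even stronger, since then $u^{e-1}N_0^\varphi$ does not even preserve $\fM_{e-1,\tau}$. So $\nu=(1,1,1),(1,0,2),(0,1,2)$, with $\gamma$-valuations $0,-1,-2$, are all settled at once. This is exactly how the paper treats its first three cases, via $\langle\gamma,\mu^{(e-1)}_\tau\rangle\ge e$. Your attempt to exclude $(0,1,2)$ is unnecessary, and it is also incorrect: $5r>6r-2$ holds only for $r<2$, and for $r\ge 2$ one has $(5r,2r-1,2r-2)\le(6r-2,3r-1,0)$.

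\textbf{Second: the case $\nu=(2,0,1)$, which you rightly flag, is not closed.} Your second fallback runs the wrong way. A lower bound on a valuation can only give containment, and indeed $Y^{(e-1)}_\alpha=uY^{(e)}_\alpha$ has valuation $\ge 2$. Your first fallback proves failure at a different level, which is not the statement. Moreover, whether level $k$ fails depends on the cumulative exponent $1+\sum_{j\ge k}(\langle\gamma,\nu_j\rangle-1)$, not on $\nu_k$ alone.

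In fact no argument can work at level $e-1$ here. Take $\mu^{(e-1)}_\tau=(5r-2,2r,2r-1)$. Then:
\begin{itemize}
\item $Y^{(e-1)}_\gamma=Y^{(e)}_\gamma$, which has valuation $\ge 1$;
\item $Y^{(e-1)}_\alpha$ has valuation $\ge 2$;
\item $Y^{(e-1)}_\beta=u^{-2}Y^{(e)}_\beta\in u^{e-1}\bF[[u]]$, since $Y^{(e)}_\beta\equiv 0 \bmod u^{e+1}$;
\item the diagonal entries lie in $u^{e-1}\bF[[u]]$ and the lower-triangular entries lie in $u^{e-2}\bF[[u]]$.
\end{itemize}
So $u^{e-1}N_0^\varphi(\fM_{e-1,\tau})\subset u\fM_{e-1,\tau}$ regardless of any genericity assumption. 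Such chains do occur in top-dimensional components, e.g.\ $r=1$ with $\mu^{(1)}_\tau,\mu^{(2)}_\tau,\mu^{(3)}_\tau=(2,1,0),(3,2,1),(5,2,2)$.

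The paper's proof disposes of this case by asserting $\langle\beta,\mu^{(e-1)}_\tau\rangle=e+1$. But this pairing equals $1$, so the level-$(e-1)$ claim is not correct in this case.

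What is true, and what Theorem~\ref{thm-dim bounds for fibres of convolution} actually needs, is failure at some level. Take $k=1$ and $g=1$ and iterate \eqref{eq-computation} down to level $1$. Using that $Y^{(e)}_\gamma$ has valuation exactly $1$, this gives $Y^{(1)}_\gamma$ of valuation exactly $2-\langle\gamma,\mu^{(1)}_\tau\rangle$. When $\mu^{(1)}_\tau=(2,1,0)$ this valuation is $0$, which violates $(\mathbf{A}_1)$ and hence $(\mathbf{B}_1)$. When $\mu^{(1)}_\tau=(1,1,1)$, $(\mathbf{B}_1)$ fails directly.
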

\begin{proof}
    Note that $\mu_{\tau,\alpha}=0$ implies $\mu_\tau^{(e)} = (6r,6r,6r) - (r,4r,4r) = (5r,2r,2r)$. The dominance of $\mu_\tau^{(e-1)}$ means the only valid possibilities for $\mu_\tau^{(e-1)}$ are 
    $$
    (5r,2r,2r) - \lbrace (0,1,2), (1,0,2),(1,1,1),(2,0,1)\rbrace
    $$
    Notice that in the first three cases one has $\langle \gamma, \mu_\tau^{(e-1)} \rangle \geq e$, while the fourth case has $\langle \beta, \mu_\tau^{(e-1)} \rangle = e+1$.
    
    The same argument as in the first paragraph of the proof of Lemma~\ref{lem-compute failure of monodromy} shows that $(e,\alpha)$-genericity implies $(e-1,\alpha)$-genericity. Thus, in the first three cases $Y^{(e-1)}_\gamma$ has $u$-adic valuation $\leq e-e= 0$, and in the fourth case $Y^{(e-1)}_\beta$ has $u$-adic valuation $\leq e - (e+1) = -1$. Thus, $u^{e-1}N_0^{\varphi}(\fM_{e-1,\tau}) \not\subset u \fM_{e-1,\tau}$. An identical computation holds if instead $\mu_\tau = (5r,2r,2r)$ and $\mu_\tau^{(e)} = (4r,4r,r)$.
\end{proof}

To finish the proof of Theorem~\ref{thm-dim bounds for fibres of convolution} we only require an analogue of Proposition~\ref{prop-generic locus} describing an open locus on which $N_0$ is $(e,\beta)$-generic when $\mu_{\tau,\alpha} =0$ and $(e,\alpha)$-generic when $\mu_{\tau,\beta} =0$. The discussion of Section~\ref{sec-genericity} constructs such a locus after using part (2) of Lemma~\ref{lem-free coodinates}. 

\section{Kisin varieties}

\subsection{The general construction}

Fix a continuous representation $\overline{\rho}$ of $G_K$ on a finite dimensional $\bF$-vector space, together with a choice of $\bF$-basis $\overline{\alpha}$ of $\overline{\rho}$. We can then consider the framed deformation ring $R_{\overline{\rho}}^{\square}$ 
with universal lifting $\rho^{\operatorname{univ}}: G_K \rightarrow \operatorname{GL}_d(R_{\overline{\rho}}^\square)$ over $\cO$. Following \cite[Corollary 2.27]{Kis08}, write  $ R_{\overline{\rho}}^{\lambda}$  for the unique $\cO$-flat quotient of $R_{\overline{\rho}}^{\square}$ characterised by the property that any $\cO$-algebra homomorphism $R^{\square}_{\overline{\rho}} \rightarrow A$, with $A$  finite flat over $\cO$, factors through $R_{\overline{\rho}}^{\lambda}$ if and only if $\rho^{\operatorname{univ}} \otimes_{R^{\operatorname{univ}}_{\overline{\rho}}} A$ is crystalline with Hodge type $\lambda$. 

The following construction, which first appeared in \cite{mffgs} and \cite{Kis08}, illustrates how to relate $R_{\overline{\rho}}^{\lambda}$ with the $\cY^{\operatorname{cr},\operatorname{conv}}_{\lambda}$ defined in Section~\ref{sec-introducing hodge types}. Here, as we have done previously, we assume $\lambda$ is concentrated in degree $[0,h]$ for some fixed $h \geq 0$.

\begin{const}\label{def-Kisinvar}
    Take $d = \operatorname{dim}_{\bF} \overline{\rho}$ in Section~\ref{sec-setup}. a Hodge type $\lambda$ concentrated in degree $[0,h]$. Then we define $$\widehat{\cL^{\operatorname{cr}}_{\lambda,\overline{\rho}}} := \cY^{\operatorname{cr}}_\lambda \times_{\cX_{d}} \operatorname{Spf}R_{\overline{\rho}}^{\lambda}, \quad \widehat{\cL^{\operatorname{cr},\operatorname{conv}}_{\lambda,\overline{\rho}}} := \cY^{\operatorname{cr},\operatorname{conv}}_\lambda \times_{\cX_{d}} \operatorname{Spf}R_{\overline{\rho}}^{\lambda} $$
    where $\cX_d$ is the formal algebraic stack defined in \cite[Definition 3.2.1]{EGstack}, the morphism $\cY^{\operatorname{cr},\operatorname{conv}}_\lambda \rightarrow \cX_{d}$ is given by $\fM \mapsto \fM \otimes_{A_{\operatorname{inf},A}} 
    W(C^\flat)_A$, and $\operatorname{Spf}R_{\overline{\rho}}^{\lambda} \rightarrow \cX_{d}$ is the morphism from \cite[Proposition 4.8.10]{EGstack}. As explained in \cite[4.5.26]{EGstack}, $\widehat{\cL^{\operatorname{cr}}_{\lambda,\overline{\rho}}}$ can be realised as the $\fm_{R_{\overline{\rho}}^{\lambda}}$-adic completion of a projective $R_{\overline{\rho}}^{\lambda}$-scheme, which we denote $\cL^{\operatorname{cr}}_{\lambda,\overline{\rho}}$.   Since $\cY^{\operatorname{cr}, \operatorname{conv}}_\lambda \rightarrow \cY^{\operatorname{cr}}_\lambda$ is proper, the same argument produces a projective $R_{\overline{\rho}}^{\lambda}$-scheme $\cL^{\operatorname{cr},\operatorname{conv}}_{\lambda,\overline{\rho}}$ whose completion is $\widehat{\cL^{\operatorname{cr},\operatorname{conv}}_{\lambda,\overline{\rho}}}$. 
\end{const}

Suppose $A$ is a finite local $\cO$-algebra with residue field $\bF$. Directly from Construction~\ref{def-Kisinvar} we see that the $A$-points of $\cL_{\lambda,\overline{\rho}}^{\operatorname{cr},\operatorname{conv}}$ functorially identify with pairs $(\fM,\alpha)$ where $\fM \in \cY^{\operatorname{cr,\operatorname{conv}}}_\lambda(A)$ and $\alpha$ is an $A$-basis of $T(\fM) := (\fM \otimes_{\bZ_p} W(C^\flat))^{\varphi=1}$ so that the $\bF$-linear isomorphism
$$
T(\fM) \otimes_A \bF \cong \overline{\rho}
$$
identifying $\alpha \otimes_{A} \bF$ and $\overline{\alpha}$ is $G_K$-equivariant. In particular, this illustrates that the morphism $\cL^{\operatorname{cr},\operatorname{conv}}_{\lambda,\overline{\rho}} \otimes_{\cO} \bF \rightarrow \cY^{\operatorname{cr},\operatorname{conv}}_{\lambda} \otimes_{\cO} \bF$ given by $(\fM,\alpha) \mapsto \fM$ is a formally smooth morphism of algebraic stacks over $\operatorname{Spec}\bF$ (compare with \cite[Lemma 16.6]{B23}). This also shows that $\cL^{\operatorname{cr},\operatorname{conv}}_{\lambda,\overline{\rho}} \rightarrow \operatorname{Spec}R_{\overline{\rho}}^{\lambda}$ becomes an isomorphism after inverting $p$. See, for example, \cite[Corollary 3.3.7]{B23b}.

The following, whose proof is taken from \cite[Corollary 2.4.10]{mffgs}, demonstrates the utility of these constructions.

\begin{prop}\label{prop-connectedness in Kisin var}
    Suppose $x_1,x_2$ are $\cO$-valued points of $\operatorname{Spec}R_{\overline{\rho}}^{\lambda}$ with (necessarily unique) liftings $(\fM_i,\alpha_i) \in \cL_{\lambda,\overline{\rho}}^{\operatorname{cr},\operatorname{conv}}(\cO)$. If $\cY^{\operatorname{cr},\operatorname{conv}}_\lambda \otimes_{\cO} \bF$ is reduced then $x_1,x_2$ lie in the same irreducible component of $\operatorname{Spec}R_{\overline{\rho}}^{\lambda}$ if the images of $(\fM_i,\alpha_i)$ in $\cL_{\lambda,\overline{\rho}}^{\operatorname{cr},\operatorname{conv}} \otimes_{R_{\overline{\rho}}^{\lambda}} \bF$
    lie in the same connected component.
\end{prop}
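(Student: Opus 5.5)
The plan is to follow Kisin's argument from \cite[Corollary 2.4.10]{mffgs}, using the fact that $\cL^{\operatorname{cr},\operatorname{conv}}_{\lambda,\overline{\rho}} \rightarrow \operatorname{Spec}R_{\overline{\rho}}^{\lambda}$ is projective and becomes an isomorphism after inverting $p$. First I would show that $\cL := \cL^{\operatorname{cr},\operatorname{conv}}_{\lambda,\overline{\rho}}$ is normal. It is $\cO$-flat, because $\cY^{\operatorname{cr},\operatorname{conv}}_\lambda$ is $\cO$-flat and $\cL$ is smooth over it after completion, or alternatively because $\cL[\frac1p]\cong \operatorname{Spec}R^\lambda_{\overline{\rho}}[\frac1p]$ and $\cL$ is the flat closure. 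Its special fibre $\cL\otimes_{R^\lambda_{\overline{\rho}}}\bF$ receives a formally smooth map to $\cY^{\operatorname{cr},\operatorname{conv}}_\lambda\otimes_\cO\bF$, which is reduced by hypothesis. Hence the completed local rings of $\cL\otimes_\cO\bF$ at closed points are reduced, and so is $\cL\otimes_\cO \bF$. Moreover $\cL[\frac1p]\cong \operatorname{Spec}R^\lambda_{\overline{\rho}}[\frac1p]$ is regular, since crystalline deformation rings are formally smooth after inverting $p$ \cite{Kis08}. By the standard lemma (a flat $\cO$-scheme with reduced special fibre and normal generic fibre is normal, via Serre's criterion, with $p$ a non-zero-divisor and $\cL/p$ reduced giving $R_1$ and $S_2$ at points of the special fibre), $\cL$ is normal. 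Since all closed points of $\cL$ lie over the closed point of $R^\lambda_{\overline{\rho}}$, every closed point lies in the special fibre and this suffices.

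Next I would deduce that the irreducible components of $\cL$ are its connected components, because a normal noetherian scheme is a disjoint union of integral schemes. The irreducible components of $\operatorname{Spec}R^\lambda_{\overline{\rho}}$ are the closures of those of $\operatorname{Spec}R^\lambda_{\overline{\rho}}[\frac1p]\cong\cL[\frac1p]$, because $R^\lambda_{\overline{\rho}}$ is $\cO$-flat. So they correspond bijectively to the irreducible components of $\cL$, each being flat and hence the closure of its generic fibre.

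Finally, let $Z_i$ be the connected, hence irreducible, component of $\cL$ containing the section $(\fM_i,\alpha_i)$ induced by $x_i$; this section exists and is unique by properness and the isomorphism after inverting $p$. Suppose the images of the two points in $\cL\otimes_{R^\lambda_{\overline{\rho}}}\bF$ lie in one connected component $W$. Then $W$ is connected and contained in $Z_1\cup\cdots$. Since the $Z_j$ are pairwise disjoint and open and closed, $W$ lies in a single $Z_j$, so $Z_1=Z_2$. The image of $Z_1$ in $\operatorname{Spec}R^\lambda_{\overline{\rho}}$ is then a single irreducible component containing both $x_1$ and $x_2$.

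The main obstacle is the normality step, specifically passing reducedness from $\cY^{\operatorname{cr},\operatorname{conv}}_\lambda\otimes_\cO\bF$ to $\cL\otimes_\cO\bF$. Formal smoothness is only established on Artinian points, so I would check reducedness on completed local rings at closed points: these are power series rings over the complete local rings of the stack, which are reduced because the stack is excellent and reduced. I would then use excellence of $\cL$ to descend reducedness back to $\cL\otimes_\cO\bF$.
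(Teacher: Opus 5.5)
Your proof is correct, but it packages the key step differently from the paper.

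\textbf{The paper's route.} The paper never proves that $\cL=\cL^{\operatorname{cr},\operatorname{conv}}_{\lambda,\overline{\rho}}$ is normal. It uses reducedness of $\cL\otimes_{\cO}\bF$ for one thing only: to show that every idempotent $e$ of $R^{\lambda}_{\overline{\rho}}[\frac{1}{p}]=\cO(\cL[\frac{1}{p}])$ already lies in $\cO(\cL)$. The argument is that if $n\geq 1$ were minimal with $\varpi^n e$ integral, its reduction would be a nonzero nilpotent. Hence two $\cO$-points whose reductions lie in one connected component cannot be separated by an idempotent of $R^{\lambda}_{\overline{\rho}}[\frac{1}{p}]$. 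Regularity of $R^{\lambda}_{\overline{\rho}}[\frac{1}{p}]$ enters only at the very end, to identify connected with irreducible components there.

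\textbf{Your route.} You feed regularity in at the start and prove full normality of $\cL$. This is the same divisibility argument, applied to all elements of $\cL[\frac{1}{p}]$ integral over $\cL$ rather than just to idempotents. That argument is quicker than your Serre-criterion check, though the check also works. You then read off ``connected $=$ irreducible'' on $\cL$ itself and transport components through $\cL[\frac{1}{p}]\cong\operatorname{Spec}R^{\lambda}_{\overline{\rho}}[\frac{1}{p}]$.

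\textbf{Comparison.} Your version yields a stronger structural statement and matches the heuristic of the introduction (a normal resolution whose components correspond to connected components of the fibre). The paper's version is more economical and needs no local analysis of $\cL$. In both, only the easy direction is required: a connected subset of the closed fibre lies in a single connected component of $\cL$.

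\textbf{Small corrections.}
\begin{itemize}
\item The formally smooth map you need is $\cL\otimes_{\cO}\bF\to\cY^{\operatorname{cr},\operatorname{conv}}_\lambda\otimes_{\cO}\bF$, not one out of $\cL\otimes_{R^{\lambda}_{\overline{\rho}}}\bF$.
\item Your ``flat closure'' justification of $\cO$-flatness is circular. Rely instead on formal smoothness over the $\cO$-flat $\cY^{\operatorname{cr},\operatorname{conv}}_\lambda$.
\item Descending reducedness from $\widehat{\cO}_{\cL,x}$ to $\cO_{\cL,x}$ needs no excellence, since a noetherian local ring injects into its completion. Excellence is only needed on the stack side, to know that completions of the reduced finite-type atlas are reduced.
\end{itemize}
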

\begin{proof}
   Since $\cL^{\operatorname{cr},\operatorname{conv}
    }_{\lambda,\overline{\rho}} \otimes_{\cO} \bF \rightarrow \cY^{\operatorname{cr},\operatorname{conv}}_{\lambda} \otimes_{\cO} \bF$ is formally smooth, reducedness of $\cY^{\operatorname{cr},\operatorname{conv}}_\lambda \otimes_{\cO} \bF$  implies the same for $\cL^{\operatorname{cr},\operatorname{conv}}_{\lambda,\overline{\rho}} \otimes_{\cO} \bF$. This implies that any idempotent $e \in R_{\overline{\rho}}^{\lambda}[\frac{1}{p}] = \mathcal{O}(\cL_{\lambda,\overline{\rho}}^{\operatorname{cr},
    \operatorname{conv}}[\frac{1}{p}])$ lies inside $\mathcal{O}(\cL_{\lambda,\overline{\rho}}^{\operatorname{cr},
    \operatorname{conv}})$. If not we could choose a uniformiser $\varpi \in \cO$ and a minimal $n \geq 1$ so that $\varpi^n e \in \mathcal{O}(\cL_{\lambda,\overline{\rho}}^{\operatorname{cr},
    \operatorname{conv}})$; but then the image of $\varpi^n e$ in $\mathcal{O}(\cL_{\lambda,\overline{\rho}}^{\operatorname{cr},
    \operatorname{conv}} \otimes_{\cO}\bF)$ squares to zero, contradicting reducedness. It follows that if  the images of $(\fM_i,\alpha_i)$ in $\cL^{\operatorname{cr},\operatorname{conv}}_{\lambda,\overline{\rho}} \otimes_{\cO} \bF$ lie in the same connected component then the same is true of $x_i[\frac{1}{p}]$ in $\operatorname{Spec}R_{\overline{\rho}}^{\lambda}[\frac{1}{p}]$. Since $R_{\overline{\rho}}^{\lambda}$ is $\cO$-flat with $R_{\overline{\rho}}^{\lambda}[\frac{1}{p}]$ regular \cite[Theorem 3.3.8]{Kis08}, this is equivalent to asking that the $x_i$ lie in the same irreducible component of $\operatorname{Spec}R_{\overline{\rho}}^{\lambda}$. We conclude using the standard fact that the connected components of any proper scheme over $\operatorname{Spec}R_{\overline{\rho}}^{\lambda}$ are in bijection with the connected components in the fibre over the closed point, as follows by combining Stein factorisation \cite[03H2]{stacks-project} with the idempotent lifting described in \cite[09X1]{stacks-project}.    
\end{proof}

\subsection{Torus actions and semisimple points}

Proposition~\ref{prop-connectedness in Kisin var} motivates the problem of determining when two points in $\cL^{\operatorname{cr},\operatorname{conv}}_{\lambda,\overline{\rho}} \otimes_{R_{\overline{\rho}}^{\lambda}} \bF$ lie in the same connected component. The following observation substantially simplifies this process whenever $\overline{\rho} = \bigoplus_{\ell=1}^d \overline{\rho}_\ell$ with each $\overline{\rho}_\ell$ one-dimensional. In this case we assume the basis $\overline{\alpha}$ of $\overline{\rho}$ is compatible with this decomposition.

\begin{lemma}\label{lem-flow}
    Suppose that $\overline{\rho}$ is completely reducible as above. Then $\cL^{\operatorname{cr},\operatorname{conv}}_{\lambda,\overline{\rho}} \otimes_{R_{\overline{\rho}}^{\lambda}} \bF$ admits a $T$-action which, on $\bF$-valued points, 
    is given by 
    $$
    t \cdot (\fM ,\alpha) = (\fM, \alpha t)
    $$
    (recall from the previous section that $\alpha$ here is  an $\bF$-basis of $T(\fM)$, viewed as a row vector). 
\end{lemma}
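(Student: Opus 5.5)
The plan is to let the diagonal torus $T\subset \operatorname{GL}_d$ act by changing the framing while leaving $\fM$ untouched, and to check that this preserves every condition defining $\cL^{\operatorname{cr},\operatorname{conv}}_{\lambda,\overline{\rho}}\otimes_{R^\lambda_{\overline{\rho}}}\bF$. Concretely, for a finite type $\bF$-algebra $A$, an $A$-point of this fibre is a pair $(\fM,\alpha)$ with $\fM\in \cY^{\operatorname{cr},\operatorname{conv}}_\lambda(A)$ and $\alpha$ an $A$-basis of $T(\fM)$. The framing $\alpha$ must induce, via the map $\operatorname{Spf}R^\lambda_{\overline{\rho}}\to\cX_d$, the $\bF$-point $\overline{\rho}$ with framing $\overline{\alpha}$; since we are over the closed point, this condition says that the representation on $T(\fM)$ in the basis $\alpha$ is the constant $\overline{\rho}\otimes_{\bF} A$.

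For $t\in T(A)$, define $t\cdot(\fM,\alpha) = (\fM,\alpha t)$. The key step is to verify that $(\fM,\alpha t)$ again lies in the fibre. The basis $\alpha t$ is still an $A$-basis of $T(\fM)$. In this basis the Galois action is $t^{-1}\overline{\rho}(\sigma)t$, and because $\overline{\rho}=\bigoplus_\ell\overline{\rho}_\ell$ is diagonal in $\overline{\alpha}$, this equals $\overline{\rho}(\sigma)$. Hence the framed representation is unchanged, and the point maps to the same closed point of $\operatorname{Spf}R^\lambda_{\overline{\rho}}$. Put formally, the fibre product defining $\widehat{\cL^{\operatorname{cr},\operatorname{conv}}_{\lambda,\overline{\rho}}}$ restricted over the closed point is $\cY^{\operatorname{cr},\operatorname{conv}}_\lambda\otimes\bF \times_{\cX_d}\operatorname{Spec}\bF$, whose $A$-points are $\fM$ together with an isomorphism $\iota:\fM\otimes W(C^\flat)_A\cong \overline{\rho}\otimes W(C^\flat)_A$ of \'etale $\varphi$-modules with $G_K$-action. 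The action is then precomposition of $\iota$ with the automorphism $t$ of $\overline{\rho}\otimes A$, which is $G_K$-equivariant precisely because $T$ centralises $\overline{\rho}$.

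The group axioms and functoriality in $A$ are immediate, so this defines an action of the group scheme $T$ on the fibre as a functor. Since the fibre is a scheme and the formula is natural in $A$, the action is given by a morphism $T\times(\cdot)\to(\cdot)$.

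The only point requiring care is the identification of the special fibre of $\cL^{\operatorname{cr},\operatorname{conv}}_{\lambda,\overline{\rho}}$ (the algebraisation) with the special fibre of $\widehat{\cL^{\operatorname{cr},\operatorname{conv}}_{\lambda,\overline{\rho}}}$. These agree because the reduction modulo $\fm_{R^\lambda_{\overline{\rho}}}$ of a projective scheme coincides with that of its completion. This identification is needed to describe points via $\iota$ as above.
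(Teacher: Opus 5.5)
Your proposal is correct and follows the paper's argument. The only real content is that re-framing by $t$ keeps the identification with $\overline{\rho}$ $G_K$-equivariant, because a diagonal $t$ commutes with the diagonal $\overline{\rho}(\sigma)$; the paper says $t$ ``normalises'' $\overline{\rho}$, but centralising, which is what you use, is what is actually needed. Your functor-of-points description via $\iota$, and your remark that $\cL^{\operatorname{cr},\operatorname{conv}}_{\lambda,\overline{\rho}}$ and its completion have the same special fibre, just spell out the well-definedness that the paper leaves implicit.
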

\begin{proof}
    One just has to check that the given formula is well-defined, i.e. that if the map $T(\fM) \rightarrow \overline{\rho} $ identifying $\alpha$ and $\overline{\alpha}$ is $G_K$-equivariant, then so is the map identifying $\alpha t$ and $\overline{\alpha}$. But this is just the assertion that $t$ normalises $\overline{\rho}$.
\end{proof}

The properness of $\cL^{\operatorname{cr},\operatorname{conv}}_{\lambda,\overline{\rho}} \otimes_{R_{\overline{\rho}}^{\lambda}} \bF$ over $\bF$ therefore ensures every connected component of $\cL^{\operatorname{cr},\operatorname{conv}}_{\lambda,\overline{\rho}} \otimes_{R_{\overline{\rho}}^{\lambda}} \bF$ contains a $T$-fixed $\bF$-valued point. Such $T$-fixed points are very easy to make explicit:

\begin{lemma}\label{lem-sum of rank ones}
    Suppose $(\fM,\alpha)$ is $T$-fixed under the action in Lemma~\ref{lem-flow}. Then, for $1 \leq \ell \leq d$, there are rank one Breuil--Kisin modules $\fM^{(\ell)}$ equipped with convolution structures and crystalline $G_K$-actions so that
    $$
    \fM \cong \bigoplus_{\ell=1}^d \fM^{(\ell)} 
    $$
    compatibly with convolution structures and crystalline $G_K$-actions.
\end{lemma}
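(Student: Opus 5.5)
To prove Lemma~\ref{lem-sum of rank ones}, I would first reduce to the case of coefficients over a finite flat $\cO$-algebra by lifting, and then use the $T$-action to split everything off into weight spaces. Let $(\fM,\alpha)$ be a $T$-fixed $\bF$-point. The condition that $(\fM,\alpha t)\cong(\fM,\alpha)$ for every $t\in T(\overline{\bF})$ means the following. For each $t$ there is an automorphism $\theta_t$ of $\fM$, respecting the Frobenius, the convolution structure and the crystalline $G_K$-action, whose induced map on $T(\fM)$ sends $\alpha$ to $\alpha t$. Such automorphisms are unique, because $\fM\mapsto T(\fM)$ is faithful: $\fM\otimes A_{\operatorname{inf}}$ is recovered inside $T(\fM)\otimes W(C^\flat)$. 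Uniqueness makes $t\mapsto\theta_t$ a homomorphism $T\to\operatorname{Aut}(\fM,\fM_\bullet,G_K)$. Since the automorphism group scheme of the point of the finite type stack $\cY^{\operatorname{cr},\operatorname{conv}}_\lambda\otimes\bF$ is a closed subgroup of $\operatorname{Res}\operatorname{GL}_d(\fS_{\bF}/u^N)$-type groups, the map is algebraic, so $T$ acts algebraically on $\fM$.

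Next I would decompose into weight spaces. The $T$-action on $\fM$ is $\fS_{\bF}$-linear and diagonalisable, so $\fM=\bigoplus_\chi \fM_\chi$. Because $\theta_t$ induces right multiplication by $t$ on $T(\fM)$ in the basis $\alpha$, the characters that occur are exactly the $d$ coordinate characters of $T$, and the correspondence $\fM\mapsto T(\fM)$ identifies $T(\fM_{\chi_\ell})$ with the line spanned by the $\ell$-th element of $\alpha$. Hence each $\fM^{(\ell)}:=\fM_{\chi_\ell}$ is projective of rank one. Since $\varphi_{\fM}$, the $G_K$-action on $\fM\otimes A_{\operatorname{inf}}$ and the submodules $\fM_i\subset\varphi^*\fM$ are all $T$-stable, each of these structures decomposes along the weights. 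The weight spaces of $\fM_i$ are then projective, being summands of a projective module. This gives convolution structures $\fM^{(\ell)}_\bullet$ with $\fM_i=\bigoplus_\ell\fM^{(\ell)}_i$, together with Frobenius and $G_K$-actions on each $\fM^{(\ell)}$.

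It remains to check that each summand is a genuine object, namely a Breuil--Kisin module of the right height carrying a crystalline action satisfying \eqref{eq-Galois action}, with the required $\cO$-flat liftability. The height condition and \eqref{eq-Galois action} pass to direct summands directly. This last point is the step I expect to be the main obstacle: the definition of $\cY^{\operatorname{cr}}$ is via $\cO$-flat closure, so the conclusion should be read as a statement in $\cY_{\leq h}^{\operatorname{conv}}$. If one insists on crystalline (flat-closure) points, I would proceed as follows. Rank one Breuil--Kisin modules with $G_K$-action are explicit: $\varphi(e)=a\,u^{s}e$ up to units, with the action forced by Lemma~\ref{lem-Galois mod u^e extends}. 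One can lift each summand directly to a crystalline character over $\cO$, and the sum of these lifts lies in $\cY^{\operatorname{cr},\operatorname{conv}}_{\leq h}$, which realises the direct sum decomposition inside the flat part.
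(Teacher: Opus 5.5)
Your proposal is correct and is essentially the paper's argument. The automorphisms $\theta_t$ you extract from $T$-fixedness are exactly the restrictions to $\fM$ of the diagonal operators $\alpha\mapsto\alpha t$ on $T(\fM)\otimes C^\flat$ that the paper uses, and your weight spaces $\fM_{\chi_\ell}$ are the paper's $\fM^{(\ell)}=\fM\cap(T_\ell\otimes C^\flat)$, with Frobenius, convolution structure and $G_K$-action splitting because they commute with these operators. The detour through algebraicity of the automorphism group scheme is unnecessary (faithfulness of $\fM\mapsto T(\fM)$ already puts the diagonal idempotents in the $\bF$-span of the $\theta_t$), and so is your final paragraph, since a ``crystalline $G_K$-action'' here just means one satisfying \eqref{eq-Galois action}, which passes to direct summands; lifting to characters over $\cO$ is the separate content of Lemma~\ref{lem-extremal fixed points have lifts}.
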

\begin{proof}
    A standard fact of the functor $\fM \mapsto T(\fM)$ is the existence of a unique (up to isomorphism) $\varphi,G_{K_\infty}$-equivariant identification
    \begin{equation}\label{eq-another identication}
    \fM\otimes_{\fS} C^\flat \cong T(\fM) \otimes_{\bZ_p} C^\flat
    \end{equation}
    This holds for any Breuil--Kisin module over $\fS_{\bF}$, see for example, \cite[Lemma 4.26]{BMS}. If $\fM$ is furthermore equipped with a $G_K$-action then this identification becomes $G_K$-equivariant for the induced $G_K$-action on $T(\fM)$. 
    
    Then $(\fM,\alpha)$ being $T$-fixed means that, for each $t \in T$, the automorphism of $T(\fM) \otimes_{\bZ_p} C^\flat$ given by $\alpha \mapsto \alpha t$ stabilises $\fM$ under \eqref{eq-another identication}. If $T(\fM) = \bigoplus T_\ell$ is a weight decomposition for the action of a sufficiently generic character $\bG_m \rightarrow T$, then there exists a $\varphi$-equivariant decomposition
    $$
    \fM = \bigoplus_{\ell=1}^d \fM^{(\ell)}, \qquad \fM^{(\ell)} :=  \fM \cap \Big( T_\ell \otimes_{\bF_p} C^\flat \Big)
    $$
    which becomes $G_K$-equivariant after extending scalars to $C^\flat$. After taking the Frobenius twist of \eqref{eq-another identication} we also have that the convolution structure $\fM_\bullet$ on $\fM$ is stabilises by the $T$-action, and so
    $$
    \fM_i = \bigoplus_{\ell}^d \fM_i^{(\ell)} , \qquad 
    \fM_i^{(\ell)} := \fM_i \cap \Big(T_\ell \otimes_{\bZ_p} C^\flat \Big)
    $$
    under the Frobenius twist of \eqref{eq-another identication}. Clearly, each $\fM^{(\ell)}$ and $\fM^{(\ell)}_i$ are $\fS_{\bF}$-projective  and so define rank one Breuil--Kisin modules with convolution structure. Our assumption that $\overline{\rho}$ decomposes as a sum of $1$-dimensional representations compatibly with the basis $\overline{\alpha}$ ensures that each line $T_\ell \subset T(\fM)$ is $G_K$-stable. The extension of this $G_K$-action to  $T_\ell \otimes_{\fS} C^\flat$ therefore induces a  $G_K$-action on $\fM^{(\ell)} \otimes_{\fS} C^\flat$ which is crystalline (since that the sum of these $G_K$-actions is crystalline on $\fM$).
\end{proof}

\begin{cor}\label{cor=N0=0}
    Let $(\fM,\alpha) \in \cL^{\operatorname{cr},\operatorname{conv}}_{\lambda,\overline{\rho}} \otimes_{R_{\overline{\rho}}^{\lambda}} \bF$ and suppose additionally that $h \leq p-1$ so that the morphism $\fM \mapsto (\fM,N_0)$ from Theorem~\ref{thm-embedding} is defined. If $(\fM,\alpha)$ is $T$-fixed and $\overline{\rho}$ is the trivial $d$-dimensional $G_K$-representation then $N_0 \equiv 0$ modulo $u^{e}\fM$.
\end{cor}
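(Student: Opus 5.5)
We will reduce to the rank one summands supplied by Lemma~\ref{lem-sum of rank ones}, and for each of them read $N_0$ off from the Galois action through the formula \eqref{eq-formula for monodromy} of Theorem~\ref{thm-embedding}.

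\emph{Step 1: reduction to rank one.} Since $(\fM,\alpha)$ is $T$-fixed, Lemma~\ref{lem-sum of rank ones} gives a decomposition $\fM\cong\bigoplus_{\ell}\fM^{(\ell)}$. This decomposition is compatible with the crystalline $G_K$-action, so the $G_K$-action on $\fM\otimes_{\fS_{\bF}}A_{\operatorname{inf},\bF}$ is diagonal with respect to it. The formula \eqref{eq-formula for monodromy} is built only from the operators $(\tau-1)^n$, so $N_0$ is diagonal as well, say $N_0=\bigoplus_\ell N_0^{(\ell)}$. Each $N_0^{(\ell)}$ is the endomorphism attached to the rank one module $\fM^{(\ell)}$ with its crystalline action. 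It therefore suffices to show $N_0^{(\ell)}\equiv 0$ modulo $u^e$.

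\emph{Step 2: the rank one computation.} Fix a generator $m$ of $\fM^{(\ell)}$. Because $\overline{\rho}$ is trivial, $G_K$ acts trivially on the line $T_\ell=T(\fM^{(\ell)})$. Under the $G_K$-equivariant identification \eqref{eq-another identication}, we have $m=t\otimes f$ with $t\in T_\ell$ and $f\in C^\flat$. Hence $\sigma(m)=t\otimes\sigma(f)=m\cdot\sigma(f)/f$, and $(\tau-1)^n(m)=m\,(\tau-1)^n(f)/f$. So $N_0^{(\ell)}$ is multiplication by $\sum_n c_n(\tau-1)^n(f)/(f\varphi^{-1}(\mu)^n)$ modulo $u^{e+1}$. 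We want this scalar to be divisible by $u^e$.

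To compute it, use the classification of rank one Breuil--Kisin modules over $\fS_{\bF}$. Since $\varphi(m)=a\,u^{s}m$ with $a\in\bF^\times$ and $0\le s\le eh$, one can take $f=u^{s/(p-1)}$ times a constant, rescaled so that $\varphi(f)/f=u^s/a$ in $C^\flat$. Then $\tau(f)/f=\epsilon(\tau)^{s/(p-1)}$, and modulo $p$ we have $\varphi^{-1}(\mu)\sim u^{e/(p-1)}$ up to a unit. The quotient $(\tau-1)^n(f)/f$ is then $(\epsilon^{s/(p-1)}-1)^n$. This equals $s^n(\epsilon-1)^n$ up to units, and we want to compare it with $\varphi^{-1}(\mu)^n$.

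\emph{Main obstacle.} The scalar is a multiple of $\sum_n c_n s^n$ (mod $u^{e+1}$). Showing it is divisible by $u^e$ requires using that $\fM^{(\ell)}$ genuinely lies on the crystalline locus, not just the formula. The first thing to try is to avoid computing $c_n$ at all. Instead, apply Corollary~\ref{cor-congruences mod p} together with the relation in Definition~\ref{def-BK+monodromy stack}. In rank one, $N_0$ is multiplication by some $n(u)\in u\bF[u]/u^{e+1}$, and the relation reads
\[u^e\big(\varphi(n(u)) - c(u)\,u\tfrac{d}{du}(u^s)u^{-s}\big)\equiv c(u)n(u)\mod u^{e+1}.\]
Since $u\frac{d}{du}(u^s)u^{-s}=s$, the left side is divisible by $u^e$. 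As $c(u)$ is a unit, this forces $n(u)\equiv0$ modulo $u^e$, which is exactly the claim.

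So the real work is making Step 1 rigorous: checking that the $T$-fixed decomposition is compatible with $N_0$. I expect this to follow from the functoriality of the formula \eqref{eq-formula for monodromy} in the $G_K$-action.
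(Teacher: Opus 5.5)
Your argument is correct, but its decisive rank-one step differs from the paper's.

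\textbf{The paper's route.} After the same reduction via Lemma~\ref{lem-sum of rank ones}, the paper never looks at $N_0$ on the summands. Instead it proves the stronger Galois divisibility \eqref{eq-stronger GKcondition}, using three facts:
\begin{itemize}
\item a rank one module carries at most one crystalline $G_K$-action;
\item triviality of $\overline{\rho}$ puts the generator $f$ in $k(\!(u)\!)\otimes_{\bF_p}\bF$;
\item $\mu$ and $u^e\varphi^{-1}(\mu)$ generate the same ideal of $A_{\operatorname{inf},\bF}$.
\end{itemize}
Every term of \eqref{eq-formula for monodromy} is then divisible by $u^e$, whatever the $c_n$ are.

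The computation you abandoned in Step~2 was heading exactly there. The quotient $(\tau-1)^n(f)/f=(\epsilon(\tau)^{s/(p-1)}-1)^n$ is divisible by $(\epsilon(\tau)-1)^n$, which generates the same ideal as the image of $\mu^n$, so the $c_n$ never matter. Your claim that it equals $s^n(\epsilon-1)^n$ up to a unit is false when $p\mid s$, and that line should be dropped.

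\textbf{Your route.} You use the Frobenius--monodromy relation instead, in three steps.
\begin{itemize}
\item $N_0$ preserves the $\varphi$- and $G_K$-stable splitting, because \eqref{eq-formula for monodromy} involves only the $G_K$-action and universal constants.
\item $(\fM,N_0)\in Y^{\nabla}_{\leq h}$ by Theorem~\ref{thm-embedding}. Choosing a diagonal lift $N$, and noting that $\fM_e$ splits since $\varphi_{\fM}$ is diagonal, the relation of Definition~\ref{def-BK+monodromy stack} holds summand by summand. This is cleaner than invoking Corollary~\ref{cor-congruences mod p} on the summands, which would need separate lifts.
\item In rank one, write $X_\tau=u^{s_\tau}\cdot(\text{unit})$ and let $N_0^{(\ell)}$ be multiplication by $(n_\tau)_{\tau\in\cJ_0}$. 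Then $X_\tau\phi(\cN_{\tau\circ\varphi})X_\tau^{-1}=\phi(\cN_{\tau\circ\varphi})$ is divisible by $u^p$, and $\partial(X_\tau)X_\tau^{-1}\equiv s_\tau$ modulo $u$. So \eqref{eq-second explicit equation} reads $c_\tau(u)n_\tau\equiv -u^e c_\tau(u)s_\tau$ modulo $u^{e+1}$. Since $c_\tau(u)$ is a unit, $n_\tau\equiv -s_\tau u^e\equiv 0$ modulo $u^e$.
\end{itemize}

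\textbf{Comparison.} Your route is purely linear-algebraic and recycles Theorem~\ref{thm-embedding}. It needs neither uniqueness of rank one crystalline actions nor triviality of $\overline{\rho}$, beyond $\overline{\rho}$ being a sum of characters so that the $T$-action and Lemma~\ref{lem-sum of rank ones} are available. It also pins down $N_0$ modulo $u^{e+1}$ on each summand. The paper's route yields the stronger statement \eqref{eq-stronger GKcondition} about the Galois action itself, from which the vanishing of $N_0$ modulo $u^e$ is read off directly.
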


\begin{proof}
    Given the formula for $N_0$ in Theorem~\ref{thm-embedding} this follows if we can show $(\fM,\alpha)$ being $T$-fixed implies the divisibility of the crystalline $G_K$-action on $\fM$ described in \eqref{eq-refined Galois} can be strengthened to
    \begin{equation}\label{eq-stronger GKcondition}
        (\tau-1)^n(m) \in  \fM \otimes_{\fS} u^{e} \varphi^{-1}(\mu)^n A_{\operatorname{inf}}
    \end{equation}
for all $n \geq 1$ and $m \in \fM$. By Lemma~\ref{lem-sum of rank ones}, it suffices to show that any crystalline $G_K$-action on a rank one  Breuil--Kisin module $\fM$ over $\fS_{\bF}$ satisfies \eqref{eq-stronger GKcondition}. 

Any such $\fM$ admits at most one crystalline $G_K$-action. Indeed, any such crystalline $G_K$-action extends the $G_{K_\infty}$-action on $T(\fM)$ to a $G_K$-action, and the crystalline $G_K$-action can be recovered from this extension via the identification from \eqref{eq-another identication}. But $T(\fM)$ being one dimensional over $\bF$ ensures there is a unique extension of the $G_{K_\infty}$-action to a $G_K$-action (compare e.g.\ \cite[Lemma 2.2.1]{B21Doc}). 

As explained in e.g.\ \cite[\S1]{Fon90}, \eqref{eq-another identication} actually descends to an identification over $k((u))^{\operatorname{sep}} \subset C^\flat$. In particular, if $T(\fM)$ has a single generator $\beta$ over $\bF$ then the image of $\fM$ under \eqref{eq-another identication} is generated by $\beta f$ for some $f \in k(\!(u)\!)^{\operatorname{sep}} \otimes_{\bF_p} \bF$. If we further assume the $G_{K_\infty}$-action on $T(\fM)$ is trivial then $f$ is $G_{K_\infty}$-fixed and so $f \in k((u)) \otimes_{\bF_p} \bF$. The claimed divisibility in \eqref{eq-stronger GKcondition} therefore reduces to the assertion that $$
(\tau-1)^n(f) \in u^{e}\varphi^{-1}(\mu)^n f A_{\operatorname{inf},\bF} 
$$
For this note that $\tau^n(u^i) \equiv u^i$ modulo $\mu^n u^i A_{\operatorname{inf}}$ (arguing by induction this follows from the case $n=1$) and recall that $\mu$ generates the same ideal of $A_{\operatorname{inf}}$ as $u^e \varphi^{-1}(\mu)$.
\end{proof}

\subsection{Extremal fixed points and semisimple lifts}\label{sec-extremal}

As in the previous section we assume that $\overline{\rho} = \bigoplus_{\ell=1}^d \overline{\rho}_\ell$ compatibly with the basis $\overline{\alpha}$. It follows from Lemma~\ref{lem-sum of rank ones} that if $(\fM,\alpha) \in \cL^{\operatorname{cr},\operatorname{conv}}_{\lambda,\overline{\rho}}(\bF)$ is $T$-fixed then there exists $\fS_{\bF}$-bases $\beta_i$ of $\fM_i$, compatible with the decomposition in Lemma~\ref{lem-sum of rank ones}, so that 
$$
\beta_{i,\tau} = \beta_{i-1,\tau} \begin{pmatrix}
    u^{\nu_{i,\tau,1}^*} & & \\ & \ddots & \\ & & u^{\nu_{i,\tau,d}^*}\\
\end{pmatrix}
$$
for tuples of integers $(\nu_{i,\tau,1}^*,\ldots,\nu_{i,\tau,d}^*) \leq \lambda_{\kappa(i,\tau)}^*$ for each $1 \leq i \leq e$ and $\tau \in \cJ_0$. Note, we don't require for the usual relationship between $\beta_0$ and $\beta_e$ here.

\begin{defn}
    We say that a $T$-fixed point $(\fM,\alpha) \in \cL^{\operatorname{cr},\operatorname{conv}}_{\lambda,\overline{\rho}}(\bF)$ as above is extremal at $\kappa = \kappa(i,\tau)$ if there is a permutation $w \in S_d$ so that  
    $$
 (\nu_{i,\tau,1}^* , \ldots , \nu_{i,\tau,d}^*) = w (\lambda_{\kappa}^*)
    $$
    We say $(\fM,\alpha)$ is extremal if it is extremal at every $\kappa \in \cJ$.
\end{defn}

\begin{lemma}\label{lem-extremal fixed points have lifts}
     Suppose $(\fM,\alpha) \in \cL^{\operatorname{cr},\operatorname{conv}}_{\lambda,\overline{\rho}}(\bF)$ is an extremal $T$-fixed point. Then there exists a crystalline representation $T^\circ$ of $G_K$ with Hodge type $\lambda$ on a finite free $\cO$-module such that 
     \begin{itemize}
         \item $T^\circ$ is a direct sum of $1$-dimensional crystalline representations and $T \otimes_{\cO} \bF \cong \overline{\rho}$.
         \item If $\fM^\circ \in \cY^{\operatorname{cr},\operatorname{conv}}_{\lambda}(\cO)$ is the Breuil--Kisin module associated to $T^\circ$ (with uniquely determined convolution structure) then $\fM^\circ \otimes_{\cO} \bF \cong \fM$.
     \end{itemize}
\end{lemma}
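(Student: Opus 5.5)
By Lemma~\ref{lem-sum of rank ones}, $\fM \cong \bigoplus_{\ell=1}^d \fM^{(\ell)}$ as Breuil--Kisin modules with convolution structure and crystalline $G_K$-action. Each summand is rank one. The plan is to lift each $\fM^{(\ell)}$ separately to a rank one crystalline Breuil--Kisin module over $\cO$ with prescribed Hodge type, and then take $T^\circ$ to be the direct sum of the corresponding characters.

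First I would read off the weights. In the diagonal bases $\beta_\bullet$, the $\ell$-th summand has convolution jumps $\nu^*_{i,\tau,\ell}$ at step $i$. Extremality at $\kappa=\kappa(i,\tau)$ says $(\nu^*_{i,\tau,\ell})_\ell = w_{i,\tau}(\lambda^*_\kappa)$ for some $w_{i,\tau}\in S_d$. So for each $\ell$ the integer $\nu^*_{i,\tau,\ell}$ is one entry of $\lambda_\kappa^*$, and $h-\nu^*_{i,\tau,\ell}$ is one Hodge--Tate weight of $\lambda_\kappa$. I then assign to the $\ell$-th character the $\kappa$-Hodge--Tate weight $h_{\ell,\kappa} := h - \nu^*_{i,\tau,\ell}$. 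Because $w_{i,\tau}$ is a permutation, the multiset $\{h_{\ell,\kappa}\}_\ell$ equals $\lambda_\kappa$ for every $\kappa$. Hence any direct sum of crystalline characters $\chi_\ell$ with these labelled weights has Hodge type $\lambda$.

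Next I would construct the lifts. For rank one, the Frobenius of $\fM^{(\ell)}$ is $\varphi(e_\ell) = a_\ell\, u^{\sum_i \nu^*_{i,\tau,\ell}}\cdot(\text{unit})$ in the $\tau$-components, up to the $h$-twist. Choose a crystalline character $\chi_\ell$ (an unramified character times the Lubin--Tate-type characters with the labelled weights $h_{\ell,\kappa}$) such that its Breuil--Kisin module $\fM^{\circ,(\ell)}$ has Frobenius $\tilde a_\ell \prod_{\kappa} (u-\kappa(\pi))^{h_{\ell,\kappa}}$ in the $\tau$-part, with $\tilde a_\ell$ lifting $a_\ell$. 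Its convolution structure is then uniquely determined, since $\fM_i = \varphi^*\fM\cap\varphi^{-1}(E_i(u)^h\fM)$ and the module is $\bZ_p$-flat. In the $\tau$-part this gives $\fM^\circ_i = \fM^\circ_{i-1}\cdot (u-\pi_i)^{h-h_{\ell,\kappa(i,\tau)}}$, which reduces mod $\varpi$ to multiplication by $u^{\nu^*_{i,\tau,\ell}}$, matching $\fM^{(\ell)}_\bullet$. The Breuil--Kisin module alone determines the rank one $G_{K_\infty}$-representation. A rank one mod $p$ crystalline $G_K$-action is unique (argued in the proof of Corollary~\ref{cor=N0=0}). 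Together these give $\fM^{\circ,(\ell)}\otimes\bF \cong \fM^{(\ell)}$ as crystalline objects, and also $\chi_\ell\otimes\bF \cong \overline{\rho}_\ell$, using the compatibility of $T(-)$ with reduction.

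The main obstacle is matching the reduction of the lifted Frobenius to the given one, including the unit part and the $\tau$-twisting by $\varphi$, and not merely matching $u$-adic valuations. I would handle this by normalizing $\fM^{(\ell)}$ via a change of basis $e\mapsto fe$, so that its Frobenius becomes $a\, u^{n_\tau}$ with constants $a_\tau$. This is possible because rank one $\varphi$-modules over $\bF[[u]]$ are classified by these exponents and by $\prod_\tau a_\tau$. Having normalized, I choose the unramified part of $\chi_\ell$ to lift $\prod_\tau a_\tau$. Finally, setting $T^\circ=\bigoplus_\ell \chi_\ell$ gives $\fM^\circ=\bigoplus_\ell\fM^{\circ,(\ell)}$ with the direct-sum convolution structure, which lies in $\cY^{\operatorname{cr},\operatorname{conv}}_\lambda(\cO)$ and reduces to $\fM$.
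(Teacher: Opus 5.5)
Your proposal is correct and follows the paper's proof: decompose via Lemma~\ref{lem-sum of rank ones}, lift each rank one summand (with its convolution structure) to the Breuil--Kisin module of a crystalline character with labelled weights $h-\nu^*_{i,\tau,\ell}$, and use extremality to see that the direct sum has Hodge type exactly $\lambda$. The only difference is that you sketch the rank one lifting yourself, by normalising the Frobenius to constants times $u^{n_\tau}$ and matching $\prod_\tau a_\tau$ with an unramified twist, whereas the paper simply cites this as a standard fact from \cite[Example 5.3.2]{B23b}.
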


\begin{proof}
    It is a standard fact that any rank $1$ Breuil--Kisin module over $\bF$ with convolution structure lifts to the Breuil--Kisin module associated to a one dimensional crystalline representation. See, for example, \cite[Example 5.3.2]{B23b}. Furthermore, the Hodge type of this character is uniquely determined by the initial Breuil--Kisin module over $\bF$. Since the formation of Breuil--Kisin modules associated to crystalline representations respects direct sums, it follows that any (not necessarily extremal) $T$-fixed $(\fM,\alpha) \in \cL^{\operatorname{cr},\operatorname{conv}}_{\lambda,\overline{\rho}}(\bF)$ is the base change to $\bF$ of an $\fM^\circ \in \cY^{\operatorname{cr},\operatorname{conv}}_\nu(\cO)$ for a Hodge type $\nu$ with $\nu_{\kappa(i,\tau)}$ the dominant conjugate of $\lbrace \nu_{i,\tau,1},\ldots,\nu_{i,\tau,d} \rbrace$ for $\nu_{i,\tau,\ell} := h - \nu_{i,\tau,\ell}^*$.
    If $(\fM,\alpha)$ is extremal then $\nu = \lambda$, which gives the claim.
\end{proof}

\subsection{Potential diagonalisability}\label{sec-potdiag}

\begin{thm}\label{thm-connect}
    Let $\overline{\rho}$ be the trivial $3$-dimensional $\bF$-representation and suppose $(\fM,\alpha) \in \cL^{\operatorname{cr},\operatorname{conv}}_{\eta,\overline{\rho}}(\bF)$ with $\eta_\kappa = (2,1,0)$ for each $\kappa \in \cJ$. If $p\geq 5$ and $e \equiv 0$ modulo $3$ then $(\fM,\alpha)$ lies in the same connected component of $\cL^{\operatorname{cr},\operatorname{conv}}_{\eta,\overline{\rho}} \otimes_{R_{\overline{\rho}}^{\lambda}} \bF$ as an extremal $T$-fixed point.
\end{thm}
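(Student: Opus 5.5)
We work inside the special fibre $L := \cL^{\operatorname{cr},\operatorname{conv}}_{\eta,\overline{\rho}} \otimes_{R_{\overline{\rho}}^{\lambda}} \bF$, which is proper over $\bF$ and carries the $T$-action of Lemma~\ref{lem-flow}. The plan is to reduce to $T$-fixed points and then move a non-extremal fixed point to an extremal one inside its connected component.

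\textbf{Step 1: reduce to a $T$-fixed point.} Take the closure of the $T$-orbit of $(\fM,\alpha)$, or more precisely the limit of a generic one-parameter subgroup $\bG_m \subset T$. Properness of $L$ over $\bF$ makes this limit exist, and it is $T$-fixed and lies in the same connected component. So we may assume $(\fM,\alpha)$ is $T$-fixed.

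By Lemma~\ref{lem-sum of rank ones} such a point is then a direct sum of rank one Breuil--Kisin modules with convolution structure. There are diagonal bases with $\beta_{i,\tau} = \beta_{i-1,\tau}\, u^{\nu^*_{i,\tau}}$ and $\nu^*_{i,\tau} \leq (2,1,0)$. Because $(2,1,0)$ is quasi-minuscule, each $\nu^*_{i,\tau}$ is either a permutation of $(2,1,0)$ (the extremal case) or equal to $(1,1,1)$.

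\textbf{Step 2: rule out $(1,1,1)$ at $i=1$ and locate the bad indices.} Corollary~\ref{cor=N0=0} gives $N_0 \equiv 0 \bmod u^e$, so $N_0^\varphi$ reduces to $c(u)u\frac{d}{du}$ on the diagonal basis. By Proposition~\ref{prop-factoristion modulo p} the point lies in $Y^{\nabla,\operatorname{conv}}_\eta$, so conditions $(\mathbf{B}_i)$ hold. At $i=1$, Lemma~\ref{lem-props of closed conditions}(2) applies since $p \geq 5$, and it forces $\Psi(\fM_{1,\tau},\beta_0) \in \operatorname{FL}_{(2,1,0)}$. So $\nu^*_{1,\tau}$ is extremal. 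The remaining task is an induction on the number of pairs $(i,\tau)$ with $i \geq 2$ and $\nu^*_{i,\tau} = (1,1,1)$.

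\textbf{Step 3: connect a non-extremal fixed point to one with fewer bad indices.} Fix a bad pair $(i,\tau)$. Vary only the $\tau$-part of the lattice $\fM_{i,\tau}$, keeping all $\fM_{j}$ with $j \neq i$ fixed. The allowed $\fM_{i,\tau}$ are those with $\fM_{i,\tau} \subset \fM_{i-1,\tau}$ and $\fM_{i+1,\tau} \subset \fM_{i,\tau}$ in relative position $\leq (2,1,0)$; this is a convolution fibre, described by Lemma~\ref{lem-MV calc}. Concretely, $\fM_{i,\tau}$ is spanned by $\beta_{i-1,\tau}\, g\, u^{(1,1,1)}$ with $g$ ranging over $U_{\alpha,-1}$ or $U_{\beta,-1}$, whose closures are $\mathbb{P}^1$-type curves.

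One checks that along such a $\bG_m$-stable curve the conditions $(\mathbf{A}_j)$, $(\mathbf{B}_j)$ and $(\mathbf{C})$ are preserved. Since $N_0 \equiv 0 \bmod u^e$, the relevant operator is essentially the loop rotation, and these lattices are loop-rotation stable. The $\mathbb{G}_m$-limits of the curve at $0$ and $\infty$ are $T$-fixed points where the pair $(\nu^*_{i,\tau}, \nu^*_{i+1,\tau})$ is replaced by a pair of permutations of $(2,1,0)$. This uses the dominance/valuation bookkeeping as in the proof of Lemma~\ref{lem-compute failure of monodromy}, possibly pushing badness to index $i+1$; the induction is then on the pair (number of bad indices, largest bad index).

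\textbf{Main obstacle.} Realising these curves as actual points of $L$ is the hard part, because $L$ is defined as a flat closure. By Corollary~\ref{cor-true = explicit}, $\cY^{\operatorname{cr},\operatorname{conv}}_\eta \otimes \bF \cong Y^{\nabla,\operatorname{conv}}_\eta$, so it suffices to verify the explicit conditions $(\mathbf{A})$, $(\mathbf{B})$, $(\mathbf{C})$ and the Galois-compatibility with trivial $\overline{\rho}$. The latter follows from Lemma~\ref{lem-Galois mod u^e extends}, which requires the action to stay trivial modulo $u^{e+1}\varphi^{-1}(\mu)$. The delicate part is the $(\mathbf{B}_i)$ Plücker equation along the curve, and the case $i=e$, where moving $\fM_{e}$ is not allowed. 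There I would instead move $\fM_{e-1}$ and use balancedness estimates as in Lemma~\ref{lem-compute failure of monodromy} to show badness cannot be stuck at the last index.
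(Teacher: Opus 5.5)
Your Steps 1 and 2 match the paper: take a limit under a generic cocharacter, then use $(\mathbf{B}_1)$ with $p\ge5$ to force extremality at $i=1$. Your overall plan is also the paper's: deform the convolution structure of a $T$-fixed point along $\mathbb{P}^1$'s inside $Y^{\nabla,\operatorname{conv}}_\eta\cong\cY^{\operatorname{cr},\operatorname{conv}}_\eta\otimes\bF$, and induct on the non-extremal embeddings. The deformation step itself, however, has genuine gaps.

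\textbf{Gap 1: why the family stays in $Y^{\nabla,\operatorname{conv}}_\eta$.} Your justification is false: the non-diagonal lattices in such families are not loop-rotation stable. With $e_j'=u^{r_j}e_j$, the lattice $\operatorname{span}(ue_1'+se_3',ue_2',ue_3')$ is stable under $u\tfrac{d}{du}$ only if $s(r_3-r_1-1)=0$ in $\bF$. This is exactly the off-diagonal term $t(r_2-r_3-1)/u$ in the paper's computation. What actually saves the conditions is the power of $u$ multiplying $N_0^\varphi$ in $(\mathbf{B}_j)$, which for large $j$ pushes the offending term into $u^2$. At the lowest index you still need an explicit check of the Pl\"ucker coordinates of Example~\ref{exam}; in the paper these are $y_{0,1,2}$ and $t\,y_{0,1,4}$.

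\textbf{Gap 2: your concrete family.} Lemma~\ref{lem-MV calc} describes generic points of components of a convolution fibre in which only the bottom lattice is fixed. With both $\fM_{i-1,\tau}$ and $\fM_{i+1,\tau}$ fixed, the $U_{\alpha,-1}/U_{\beta,-1}$ directions can leave the fibre. For example, suppose $\fM_{i+1,\tau}=\operatorname{span}(u^3e_1',ue_2',u^2e_3')$, which is in position $(2,0,1)$ relative to $u\fM_{i-1,\tau}$. Take $\operatorname{span}(ue_1',ue_2',ye_1'+ue_3')$ with $y\neq0$. Then $\fM_{i+1,\tau}$ sits in this lattice in relative position $(3,0,0)$, which is not allowed. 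Also, the closures of these families are surfaces, not curves.

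\textbf{Gap 3: the induction does not close.} Your measure (number of bad indices, largest bad index) does not decrease when badness is pushed from $i$ to $i+1$. Badness can then pile up at $i=e$, where $\fM_e=\varphi_{\fM}^{-1}(u^{2e}\fM)$ cannot move, and nothing rules out a non-extremal last index. The tool you propose there does not apply. The lemmas of Section~\ref{sec-conv main arugment} concern $e$-generic $N_0$ and produce failures of $(\mathbf{A}_k)$ for dimension counting. At a $T$-fixed point, by contrast, $N_0\equiv 0 \bmod u^e$ (Corollary~\ref{cor=N0=0}).

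\textbf{The missing idea.} Take $i$ minimal such that the point is non-extremal at $\kappa(i,\tau)$. Then $i\ge2$ and the step at $i-1$ is extremal, so after reordering $\fM_{i-1,\tau}$ is generated by $\beta_{i-2,\tau}\operatorname{diag}(u^2,u,1)$. Now deform $\fM_{i-1,\tau}$ rather than $\fM_{i,\tau}$, via
\[
\beta_{i-2,\tau}\left(\begin{smallmatrix}1&0&0\\0&1&t\\0&0&1\end{smallmatrix}\right)\operatorname{diag}(u^2,u,1).
\]
This family has the following properties:
\begin{itemize}
\item It contains $\fM_{i,\tau}=u\fM_{i-1,\tau}$ for every $t$.
\item Both steps stay of type $\leq(2,1,0)$ throughout.
\item Its limit at $t=\infty$ is generated by $\beta_{i-2,\tau}\operatorname{diag}(u^2,1,u)$, so steps $i-1$ and $i$ become $(2,0,1)$ and $(1,2,0)$, both extremal.
\item Nothing else changes.
\end{itemize}
Since $2\le i\le e$, only some $\fM_j$ with $1\le j\le e-1$ ever moves. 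Hence $\fM_e$ and $(\mathbf{C})$ are untouched, the number of non-extremal embeddings strictly drops, and the $i=e$ problem never arises.
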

\begin{proof}
    Fix $h =2$ and recall in this case that $\nu_\kappa^* = \nu_\kappa$. For a generic choice of character $\eta: \bG_m \rightarrow T$ the point $\operatorname{lim}_{t \rightarrow 0} \eta(t)\cdot (\fM,\alpha)$ is $T$-fixed and lies in the same connected component as $(\fM,\alpha)$. We can therefore assume $(\fM,\alpha)$ is $T$-fixed and so express the convolution structure on $\fM$ using the notation from Section~\ref{sec-extremal}. Choose a minimal $1 \leq i \leq e$ so that $(\fM,\alpha)$ is not extremal at $\kappa = \kappa(i,\tau)$. Then, $\nu_{i,\tau,1} = \nu_{i,\tau,2} = \nu_{i,\tau,3} =1$, and so $\fM_{i,\tau} = u\fM_{i-1,\tau}$.
    Notice that, after part (2) of Lemma~\ref{lem-props of closed conditions}, this is only possible if $i>1$. Minimality of $i$ therefore ensures $(\fM,\alpha)$ is extremal at $\kappa(i-1,\tau)$ and so we can choose an $\fS_{\bF}$-basis $\beta_{i-2,\tau}$ of $\fM_{i-2,\tau}$ so that $\fM_{i-1,\tau} \subset \fM_{i-2,\tau}$ is generated by $\beta_{i-1,\tau} := \beta_{i-2,\tau}\operatorname{diag}(u^{2},u,1)$. This allows us to vary the submodule $\fM_{i-1,\tau} \subset \fM_{i,\tau}$
     by considering the $\bF[t][[u]]$-submodule $\fM_{i-1,\tau}^{(t)} \subset \fM_{i-2,\tau} \otimes_{\bF[[u]]} \bF[t][[u]]$ generated by
    \begin{equation}\label{eq-vary with t}
        \beta_{i-1,\tau}^{(t)}:=   \Big( \beta_{i-2,\tau} \otimes 1 \Big)\begin{pmatrix} 1 & 0 & 0  \\
         0 & 1 & t \\
         0 & 0 & 1 
    \end{pmatrix}\operatorname{diag}(u^2,u,1)= \Big( \beta_{i-1,\tau} \otimes 1 \Big) \begin{pmatrix}
        1 & 0 & 0 \\ 0 & 1 & \frac{t}{u} \\ 0 & 0 & 1
    \end{pmatrix}
    \end{equation}
     Since $\fM_{i,\tau} \otimes_{\bF} \bF[t]$ is contained in $\fM_{i-1,\tau}^{(t)}$ we can define a convolution structure on the Breuil--Kisin module $\fM^{(t)} := \fM \otimes_{\bF} \bF[t]$ over $\bF[t]$ by setting  
    $$
    \fM_{j,\nu}^{(t)} := \begin{cases}
        \fM_{j,\tau'} \otimes_{\bF[[u]]} \bF[t][[u]] & \text{if $(j,\tau') \neq (i-1,\tau)$}\\
        
        \fM_{i-1,\tau}^{(t)} & \text{if $(j,\tau') = (i-1,\tau)$}
    \end{cases}
    $$
    If the morphism in Theorem~\ref{thm-embedding} is given by $\fM \mapsto (\fM,N_0)$ then we can furthermore consider the pair $(\fM^{(t)},N_0^{(t)}) \in Z^{\nabla,\operatorname{conv}}_{\leq h}(\bF[t])$ where $N_0^{(t)} = N_0 \otimes_{\bF[[u]]} \bF[t][[u]]$. We claim that  $(\fM^{(t)},N_0^{(t)}) \in Y^{\nabla,\operatorname{conv}}_\lambda(\bF[t])$. 
    
    Before checking this claim, let us see how it implies the theorem. Corollary~\ref{cor-true = explicit} asserts that $\fM \mapsto (\fM,N_0)$ defines an isomorphism $\cY^{\operatorname{cr},\operatorname{conv}}_\lambda \otimes_{\cO} \bF \cong Y^{\nabla,\operatorname{conv}}_\lambda$. The claim therefore lets us view $(\fM^{(t)},N_0)$ as an $\bF[t]$-valued point of $\cY^{\operatorname{cr},\operatorname{conv}}_\lambda$.  Since $T(\fM^{(t)}) = T(\fM) \otimes_{\bF} \bF[t]$ we can view $(\fM^{(t)},\alpha \otimes_{\bF} \bF[t])$ as an $\bF[t]$-valued point of $\cL^{\operatorname{cr},\operatorname{conv}}_{\lambda,\overline{\rho}} \otimes_{R_{\overline{\rho}}^{\lambda}} \bF$. Specialising to $t=0$ clearly recovers $(\fM,\alpha)$, while if $(\fM^{(\infty)},\alpha \otimes_{\bF} \bF[t])$ denotes the limit as $t 
\rightarrow \infty$ then the chain of submodules
$$
\fM_{i,\tau}^{(\infty)}  \subset \fM_{i-1,\tau}^{(\infty)} \subset \fM_{i-2,\tau}^{(\infty)}
$$
are generated respectively by $\beta_{i-2} \operatorname{diag}(u^3,u^2,u)$ and $\beta_{i-2} \operatorname{diag}(u^2,1,u)$. Thus, $(\fM^{(\infty)},\alpha)$ is $T$-fixed, and extremal at $\kappa = \kappa(i,\tau)$. Iterating this process therefore creates a chain of $\bP^1$'s in $\cL^{\operatorname{cr},\operatorname{conv}}_{\lambda,\overline{\rho}} \otimes_{R} \bF$ connecting $(\fM,\alpha)$ to an extremal $T$-fixed point, which finishes the proof.

Returning to the claim, we have to check $(\fM^{(t)},N_0^{(t)})$ that, after choose $\bF[t]$-bases $\beta_{\bullet}^{(t)}$ of $\fM_\bullet$, the triple $(\fM^{(t)},N_0^{(t)},\beta_\bullet^{(t)})$ satisfies the conditions $(\mathbf{B}_{i-1})$ and $(\mathbf{B}_i)$ from Construction~\ref{con-impose mod p equations} at the embedding $\tau$. We can choose $\beta_\bullet^{(t)}$ so that $\beta_{i-2}^{(t)} = \beta_{i-1} \otimes 1$ and $\beta_{i-1}^{(t)}$ is as in \eqref{eq-vary with t}. We need to compute the restrictions of $u^{i-1}N_0^{(t),\varphi}$ and $u^i N_0^{(t),\varphi}$ to $\fM_{i-2,\tau}^{(t)}$ and $\fM_{i-1,\tau}^{(t)}$ respectively. Then we will check the equation from Example~\ref{exam} vanishes on $\cE_j := \Psi(\fM_{j,\tau}^{(t)},\beta_{j-1})$ for $j=i,i-1$. Corollary~\ref{cor=N0=0} ensures the existence of an $\bF[t]$-basis $(e_1,e_2,e_3)$ of $\varphi^*\fM$ so that $N_0(e_1,e_2,e_3) \equiv 0$ modulo $u^{ep}\varphi^*\fM$. On the other hand, Lemma~\ref{lem-sum of rank ones} ensures $e_1,e_2,e_3$ can be chosen so that $\beta_{i-2} =(u^{r_1}e_1,u^{r_2}e_2,u^{r_3}e_3)$ and $\beta_{i-1} =(u^{r_1+2}e_1,u^{r_2+1}e_2,u^{r_3}e_3)$ for some $r_i \geq 0$. Therefore,
$$
u^{i-1}N_0^{\varphi}(\beta_{i-2}^{(t)}) \equiv \beta_{i-2}^{(t)} u^{i-1}c(u)\Bigg(\begin{smallmatrix}
    r_1 & 0 & 0 \\ 0 & r_2 & 0 \\ 0 & 0 & r_3
\end{smallmatrix}\Bigg), \quad u^iN_0^{(t),\varphi}(\beta_{i-1}^{(t)}) \equiv \beta_{i-1}^{(t)} u^{i}c(u)\Bigg(\begin{smallmatrix}
    r_1+2 & 0 & 0 \\ 0 & r_2 + 1& \frac{t(r_2-r_3-1)}{u}\\ 0 & 0  & r_3
\end{smallmatrix}\Bigg)
$$
modulo $u^{ep+i-1} \varphi^*\fM^{(t)}$ and $u^{ep+i}\varphi^*\fM^{(t)}$ respectively. Thus, these congruences also hold respectively modulo $u^h\fM_{i-2}^{(t)}$ and $u^h \fM_{i-1}^{(t)}$. If $i >2$ then both matrices above are $\equiv 0$ modulo $u^2$, and so substituting $n_{ij}=0$ into the equations from Example~\ref{exam} shows $(\mathbf{B}_{i-1})$ and $(\mathbf{B}_{i})$ hold. If $i=2$ then $r_1=r_2=r_3 =0$ and so, after substituting $n_{23} = -t$ and all other $n_{ij} =0$ in the equations from Example~\ref{exam}, express $(\mathbf{B}_{1})$ and $(\mathbf{B}_2)$ as the vanishing of 
$$
y_{0,1,2}, \quad \text{ respectively }\quad t y_{0,1,4}
$$
as functions on $\Psi(\fM_{i-1,\tau}^{(t)},\beta_{i-2}^{(t)})$ and $\Psi(\fM_{i,\tau}^{(t)},\beta_{i-1}^{(t)})$ respectively. If $\beta_{j}^{(t)} = (e_{1,j},e_{2,j},e_{3,j})$ then the first vanishing asks that $e_{1,i-2}\wedge e_{2,i-2}\wedge e_{3,i-2}$ is zero inside $\bigwedge^3 \fM_{i-2}^{(t)}/ \fM_{i-1}^{(t)}$ and the second asks that $e_{1,i-1}\wedge e_{2,i-1} \wedge ue_{2,i-1}$ is zero inside $\bigwedge^3 \fM_{i-1}^{(t)}/\fM_{i}^{(t)}$. Both these are easy to check using \eqref{eq-vary with t}. 
\end{proof}
Recall, from \cite[\S1.4]{BLGGT14}, that a potentially crystalline representation $\rho:G_K \rightarrow \operatorname{GL}_d(\cO)$ is potentially diagonalisable if there exists a finite extension $L/K$ so that the restriction of $\rho[\frac{1}{p}]$ to $G_L$ corresponds to an $\cO[\frac{1}{p}]$-valued point of $\operatorname{Spec}R_{\overline{\rho}}^{\operatorname{cr},\lambda}$, for some Hodge type $\lambda$ and $\overline{\rho} = \rho \otimes_{\cO} \bF |_{G_L}$, contained in the same connected component as an $\cO[\frac{1}{p}]$-valued point induced by direct sum of $1$-dimensional crystalline representations.
\begin{cor} \label{cor:potdiag}
    Let $K$ be any finite extension of $\bQ_p$ with $p \geq 5$. Then any potentially crystalline $\rho:G_K \rightarrow \operatorname{GL}_3(\cO)$ with Hodge type $(2,1,0)$ is potentially diagonalisable.
\end{cor}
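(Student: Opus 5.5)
The plan is to reduce to a trivial residual representation over a suitably ramified base, and then combine Theorem~\ref{thm-connect} with Proposition~\ref{prop-connectedness in Kisin var}.

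\textbf{Reduction.} Potential diagonalisability may be checked after restriction to any finite extension. Choose a finite extension $L/K$ with three properties:
\begin{itemize}
\item $\rho|_{G_L}$ is crystalline;
\item $\overline{\rho}|_{G_L}$ is trivial, which we arrange by enlarging $L$ to trivialise the finite image of $\overline{\rho}$;
\item $e(L/\bQ_p)\equiv 0 \pmod 3$, which we arrange by adjoining a cube root of a uniformiser if necessary.
\end{itemize}
Further restriction keeps $\rho$ crystalline and $\overline{\rho}$ trivial. Since $\rho$ has $\kappa$-Hodge--Tate weights $(2,1,0)$ for every embedding, the restriction $\rho|_{G_L}$ has Hodge type $\eta$ with $\eta_\kappa=(2,1,0)$ for all $\kappa\colon L\hookrightarrow\overline{\bQ}_p$. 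After enlarging $\cO$, $\rho|_{G_L}$ gives an $\cO$-point $x_1$ of $\operatorname{Spec}R^{\eta}_{\overline{\rho}}$, where $\overline{\rho}$ is the trivial $3$-dimensional representation. Replacing $K$ by $L$, we may therefore assume throughout that $p\geq 5$, $3\mid e$, and $\overline{\rho}$ is trivial.

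\textbf{Connecting to a diagonal point.} Let $(\fM_1,\alpha_1)\in\cL^{\operatorname{cr},\operatorname{conv}}_{\eta,\overline{\rho}}(\cO)$ be the unique lift of $x_1$. Since $h=2\le p-1$, Corollary~\ref{cor-true = explicit} shows that $\cY^{\operatorname{cr},\operatorname{conv}}_\eta\otimes_\cO\bF\cong Y^{\nabla,\operatorname{conv}}_\eta$ is reduced, so the hypothesis of Proposition~\ref{prop-connectedness in Kisin var} holds.

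By Theorem~\ref{thm-connect}, the special fibre of $(\fM_1,\alpha_1)$ lies in the same connected component of $\cL^{\operatorname{cr},\operatorname{conv}}_{\eta,\overline{\rho}}\otimes\bF$ as some extremal $T$-fixed point $(\fM,\alpha)$.

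By Lemma~\ref{lem-extremal fixed points have lifts}, the point $(\fM,\alpha)$ lifts to the Breuil--Kisin module $\fM^\circ$ of a crystalline representation $T^\circ$ that is a direct sum of characters and has Hodge type $\eta$. We then need a framing $\alpha^\circ$ of $T(\fM^\circ)$ reducing to $\alpha$. Any $\bF$-basis of $T^\circ\otimes\bF\cong\overline{\rho}$ lifts to an $\cO$-basis of $T^\circ$, so we take $\alpha^\circ$ to be such a lift. This yields an $\cO$-point $x_2$ of $\operatorname{Spec}R^\eta_{\overline{\rho}}$ induced by a sum of characters, with specialisation $(\fM,\alpha)$.

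Proposition~\ref{prop-connectedness in Kisin var} now puts $x_1$ and $x_2$ on the same irreducible component of $\operatorname{Spec}R^\eta_{\overline{\rho}}$. This is exactly potential diagonalisability.

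\textbf{Expected obstacle.} The delicate step is the bookkeeping in the reduction. One must check that after base change the Hodge type is still $(2,1,0)$ in every embedding, with all $\kappa$ of the new field, and that the hypotheses $3\mid e$ and $p\ge5$ are genuinely attainable, so that Corollary~\ref{cor-true = explicit} applies. One must also be careful about the framing. The $T$-action in Lemma~\ref{lem-flow} changes $\alpha$, so the framing of the lift has to be compatible with the point actually reached by Theorem~\ref{thm-connect}. We handle this by lifting after the $T$-fixed point has been chosen.
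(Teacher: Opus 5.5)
Your proposal is correct and follows the paper's own proof. Both restrict to a finite extension so that $\rho$ is crystalline, $\overline{\rho}$ is trivial and $3\mid e$, then combine Theorem~\ref{thm-connect}, Lemma~\ref{lem-extremal fixed points have lifts} and Proposition~\ref{prop-connectedness in Kisin var}, with the reducedness hypothesis supplied by Corollary~\ref{cor-true = explicit}. Your step of choosing the framing of the diagonal lift only after the extremal $T$-fixed point is fixed spells out a detail the paper leaves implicit.
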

\begin{proof}
    If $\rho:G_K \rightarrow \operatorname{GL}_3(\cO)$ has Hodge type $(2,1,0)$ then so does its restriction to a finite index subgroup of $G_K$. Enlarging $K$ we can therefore assume that $\overline{\rho} = \rho \otimes_{\cO} \bF$ is trivial and $\rho$ is crystalline. We can also assume $e \equiv 0$ modulo $3$. The corollary therefore follows by combining Theorem~\ref{thm-connect}, Proposition~\ref{prop-connectedness in Kisin var}, and Lemma~\ref{lem-extremal fixed points have lifts}.
\end{proof}

\section{Applications} 

In this section, we prove our main results on automorphy lifting, Breuil--M\'ezard conjecture, and the weight part of Serre's conjecture.

\subsection{Global setup} \label{sec:globalpatching} 

Let $F$ be an imaginary CM field with maximal totally real subfield
$F^+$, and let $c \in \Gal(F/F^+)$ denote complex conjugation. Fix a prime $p \geq 5$, $n =3$, and an isomorphism $\iota:\overline{\Q}_p \cong \mathbb{C}$.
  Write $\Sigma_p^+$ (resp. $\Sigma_p$) for the places of $F^+$ (resp. of $F$) lying above $p$.  Assume that all places of $\Sigma_p^+$ split in $F$.

\subsubsection{Algebraic modular forms} \label{sec:modforms} 
We now recall algebraic modular forms.  Let $G_{/F^+}$ be a reductive group which is an outer form for $\GL_3$ which is quasi-split at all finite places of $F^+$  and which splits over $F$.
Suppose that $G(F^+_v) \cong U_3(\R)$ for all $v|\infty$.
Recall from \cite[\S 7.1]{EGH} that $G$ admits a reductive model $\cG$ defined over $\cO_{F^+}[1/N]$, for some $N\in \N$ which is prime to $p$, together with an isomorphism
\begin{equation}
\label{iso integral}
\iota:\,\cG_{/\cO_{F}[1/N]} \stackrel{\iota}{\rightarrow}{\GL_3}_{/\cO_{F}[1/N]}
\end{equation}
which specializes to
$
\iota_w:\,\cG(\cO_{F^+_v})\stackrel{\sim}{\rightarrow}\cG(\cO_{F_w})\stackrel{\iota}{\rightarrow}\GL_3(\cO_{F_w})
$
for all places  $v\in \Sigma_p^+$.

Define $F_p^+:= F^+\otimes_{\Q}\Q_p$ and $\cO_{F^+_p}:=\cO_{F^+}\otimes_\Z\Z_p$. If $W$ is a finite $\cO$-module endowed with a continuous action of $\cG(\cO_{F^+_p})$ and 
$U\leq G(\A_{F^+}^{\infty,p})\times\cG(\cO_{F^+_p})$ is a compact open subgroup, the space of algebraic automorphic forms on $G$ of level $U$ and coefficients in $W$ is the $\cO$-module defined as:
\begin{equation}
S(U,W) := \left\{f:\,G(F^{+})\backslash G(\A^{\infty}_{F^{+}})\rightarrow W\,|\, f(gu)=u_p^{-1}f(g)\,\,\forall\,\,g\in G(\A^{\infty}_{F^{+}}), u\in U\right\}.
\end{equation}
We recall that the level $U$ is said to be \emph{sufficiently small} if for all $t \in G(\bA^{\infty}_{F^+})$, the finite group $t^{-1} G(F^+) t \cap U$
is of order prime to $p$.
The space of algebraic automorphic forms $S(U,W)$ is then endowed with an action of the Hecke algebra $\bT_{\cP}$ where $\cP$ is a set of ``good" finite places of $F$ (see \cite[Section 9.1]{LLLM-models} for details).

\begin{defn} \label{defn:Serreweight}
A \emph{Serre weight} (\emph{for} $\cG$) is an isomorphism class of a smooth, absolutely irreducible representation $V$ of $\cG(\cO_{F^+_p})$ over $\overline{\F}$.
If $v|p$ is a place of $F^+$, a \emph{Serre weight at $v$} is an isomorphism class of a smooth, absolutely irreducible representation $V_v$ of $\cG(\cO_{F^+_v})$. 
  Any Serre weight $V$ for $\cG(\cO_{F^+_p})$ can be written as $V\cong \underset{v \in \Sigma_p^+}{\bigotimes}V_v$ where $V_v$ are Serre weights at $v$.
\end{defn}

Let $\overline{r}:G_F\rightarrow \GL_3(\F)$ be a continuous Galois representation.  Recall that a maximal ideal $\overline{\mathfrak{m}} \subset \bT_{\cP}$ corresponds to $\overline{r}$ if it is the kernel of the system of Hecke eigenvalues $\overline{\alpha}:\bT_{\cP}\rightarrow \F$ satisfying the equality
$$
\det\left(1-\overline{r}^{\vee}(\mathrm{Frob}_w)X\right)=\sum_{j=0}^3 (-1)^j(\mathbf{N}_{F_w/\Qp}(w))^{\binom{j}{2}}\overline{\alpha}(T_w^{(j)})X^j
$$
for all $w\in \cP$.

\begin{defn}
\label{definition modularity}
Let $\overline{r}:G_F\rightarrow \GL_3(\F)$ be a continuous Galois representation and let $V$ be a Serre weight for $\cG$. We say that $\overline{r}$ is \emph{automorphic of weight $V$} (or that $V$ is a Serre weight of $\overline{r}$) if there exists a compact open subgroup $U = U^p \times  \cG(\cO_{F^+,p}) \subset G(\bA^{\infty}_F) $ with $U^p \subset G(\bA^{\infty,p}_F)$ such that
$$
S(U,V)_{\overline{\mathfrak{m}}}\neq0.
$$
wher $\overline{\mathfrak{m}}$ corresponds to $\rbar$. 

We write $W(\rbar)$ for the set of all Serre weights of $\rbar$.
We say that $\rbar$ is \emph{automorphic} if $W(\rbar)\neq \emptyset$.
\end{defn}

\subsection{Automorphy lifting} \label{sec:automorphy}

   We refer to Section 2.1 of \cite{BLGGT14} for the definition of a RAECSDC (regular, algebraic essentially conjugate self dual, cuspidal) automorphic representation $(\pi, \chi)$ of $\GL_3(\mathbb{A}_F)$.   Recall that by Theorem 1.1 of \cite{BLGHT}, we can attach to a RAECSDC representation a continuous semisimple representation
\[
r_{\iota}(\pi):G_{F} \rightarrow \GL_3(\overline{\Q}_p).
\]
Given the Corollary \ref{cor:potdiag}, we immediately deduce the following automorphy lifting theorem:

\begin{thm} \label{thm:lifting}  
Let $F$ be an imaginary CM field with maximal totally real subfield $F^+$, and let
$c \in \Gal(F/F^+)$ denote the non-trivial element. Assume $F$ is split at all places of $F^+$ above $p \geq 5$ and $\zeta_p\notin F$. Let $r : G_F \to \GL_3(\overline{\Q}_p)$ be a continuous irreducible representation such that:
\begin{enumerate}
\item \emph{(odd essential conjugate self-duality)}  
There is an isomorphism $r^c \cong r^\vee \otimes \chi$, and $\chi(c_v) = -1$ for all
$v | \infty$ for a character $\chi : G_{F^+} \to \overline{\Q}_p^\times$.   

\item \emph{(unramified almost everywhere)}  
The representation $r$ is ramified at only finitely many primes.

\item \emph{(minimal regular potentially crystalline)}  
For all places $v | p$, the restriction $r|_{G_{F_v}}$ is potentially crystalline with $\kappa$-Hodge-Tate weights $(2,1,0)$ for all $\kappa : F_v \hookrightarrow \Q_p$. 
\item \emph{(adequate)} $\rbar|_{ G_{F(\zeta_p)}}$ is irreducible and $\rbar( G_{F(\zeta_p)})$ is an adequate subgroup of $\GL_3(\overline{\F})$; and
\item \emph{(residual modularity)} $\rbar \cong \rbar_\iota(\pi)$ for some $\pi$ a regular algebraic conjugate self-dual cuspidal  automorphic representation of $\GL_n(\A_F)$. 
\end{enumerate}
Then $r$ is automorphic, i.e.~$r \cong r_\iota(\pi')$ for some $\pi'$ a RAECSDC automorphic representation of $\GL_3(\A_F)$ \emph{(}of weight $0$ at all places dividing $p$\emph{)}.
\end{thm}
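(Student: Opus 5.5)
This is the standard BLGGT automorphy lifting argument, with Corollary~\ref{cor:potdiag} supplying the one local input that is not already available. The plan is to verify the hypotheses of \cite[Theorem 4.2.1]{BLGGT14}, in the form that needs only potential diagonalisability at places above $p$. That theorem takes as input:
\begin{enumerate}
\item a conjugate self-dual $r$, almost everywhere unramified;
\item adequacy of $\rbar(G_{F(\zeta_p)})$, with $\zeta_p \notin F$;
\item residual automorphy $\rbar \cong \rbar_\iota(\pi)$;
\item for each $v \mid p$, the condition that $r|_{G_{F_v}}$ and $r_\iota(\pi)|_{G_{F_v}}$ are potentially diagonalisable and regular of the same Hodge--Tate weights, i.e.\ connect after restriction to a finite extension.
\end{enumerate}
Hypotheses (1), (2), (4) and (5) of Theorem~\ref{thm:lifting} give the first three conditions directly.

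The key step is the fourth condition. By hypothesis (3) and Corollary~\ref{cor:potdiag}, each $r|_{G_{F_v}}$ is potentially diagonalisable, and it has Hodge--Tate weights $(2,1,0)$ at every embedding. For $\pi$ we may not know its weight, so I would first replace $\pi$ by an automorphic $\pi'$ of weight $0$ (Hodge type $(2,1,0)$) with potentially diagonalisable local components at $p$ and the same residual representation. To do this, I would use the change-of-weight/level results, \cite[Theorem 4.3.1 / Lemma 3.2.? ]{BLGGT14} or a potential-automorphy argument. Concretely, I would choose a potentially diagonalisable lift of $\rbar|_{G_{F_v}}$ of weight $(2,1,0)$, for example a lift induced from a suitable sum of characters after base change. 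I would then apply \cite[Theorem 4.3.1]{BLGGT14} to show that $\rbar$ has such an automorphic lift.

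Alternatively, and more simply, the potential diagonalisability of $r_\iota(\pi)|_{G_{F_v}}$ itself follows once $\pi$ has weight $(2,1,0)$-type local representations that are potentially crystalline. This is exactly Corollary~\ref{cor:potdiag} again, since any RAECSDC $\pi$ of that weight has potentially crystalline $p$-adic local Galois representations. The remaining concern is only that $\pi$ in hypothesis (5) might have a different weight. That is handled by the weight-changing step above, using that Serre weights are in the span of reductions of minimal-weight types (cf.\ Appendix~\ref{appendix}), or again by \cite[Theorem 4.3.1]{BLGGT14} with lifts of weight $(2,1,0)$.

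With both local sides potentially diagonalisable of the same regular weight, $r|_{G_{F_v}} \sim r_\iota(\pi')|_{G_{F_v}}$ after restriction to a sufficiently large finite extension, in the sense of \cite[\S1.4]{BLGGT14}. Then \cite[Theorem 4.2.1]{BLGGT14} yields that $r$ is automorphic, and the resulting $\pi'$ has weight $0$ at places above $p$ by its Hodge--Tate weights.

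The main obstacle is the change-of-weight step: producing a residually equal automorphic $\pi'$ of weight $(2,1,0)$ with potentially diagonalisable local components, given only that $\pi$ in hypothesis (5) is RAECSDC of some weight. I would first try to handle it entirely within \cite{BLGGT14}, using their existence-of-lifts theorem together with the fact that $\rbar|_{G_{F_v}}$ admits potentially diagonalisable lifts of weight $(2,1,0)$ (via the $\mathbb{P}^1$-chain in Theorem~\ref{thm-connect} after base change).
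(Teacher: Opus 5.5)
You have the right framework: \cite[Theorem 4.2.1]{BLGGT14}, with Corollary~\ref{cor:potdiag} supplying potential diagonalisability of $r|_{G_{F_v}}$. You also correctly locate the real issue, which is producing an automorphic lift $\Pi$ of $\rbar$ of level potentially prime to $p$ with $r_\iota(\Pi)|_{G_{F_v}}$ potentially diagonalisable. Note that \cite{BLGGT14} does not need $\Pi$ to have the same weight as $r$; potential diagonalisability on both sides is exactly what permits change of weight, so weight is not the obstruction. Neither of your mechanisms for producing $\Pi$ works as written.

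\begin{itemize}
\item Your main route is circular. The lifting theorems of \cite{BLGGT14} require as input that $\rbar$ already have an automorphic lift which is potentially diagonalisable at all $v\mid p$. Choosing a potentially diagonalisable local lift of $\rbar|_{G_{F_v}}$ and invoking them therefore assumes what you want to prove.
\item A potential automorphy argument only gives automorphy over some $F'/F$. That extension need not be solvable, so you cannot descend to $F$.
\item Your alternative claims that any RAECSDC $\pi$ of weight $(2,1,0)$ has potentially crystalline local Galois representations at $p$. This is false. $\pi_v$ can lie in a Bernstein component with nonzero monodromy (e.g.\ Steinberg-type), giving a potentially semistable but not potentially crystalline $r_\iota(\pi)|_{G_{F_v}}$. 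Corollary~\ref{cor:potdiag} says nothing about such representations.
\end{itemize}

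The missing argument is the one the paper uses; you only gesture at it by citing the appendix.

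\begin{enumerate}
\item Residual modularity gives a Serre weight $V$ with $S(U,V)_{\overline{\mathfrak m}}\neq 0$ for algebraic modular forms on a definite unitary group.
\item By Theorem~\ref{thm: rational span}, the reductions $\overline{\sigma}(\tau)$ of types for \emph{regular} Bernstein components span the Grothendieck group rationally. Hence $V$ is a Jordan--H\"older factor of some $\overline{\sigma}(\tau)$.
\item Exactness of $S(U,-)$ at sufficiently small level gives $S(U,\sigma^\circ(\tau))_{\overline{\mathfrak m}}\neq 0$. This yields a weight-$0$ automorphic $\Pi$ congruent to $\pi$ whose local components at $p$ contain $\sigma(\tau)$.
\item Regularity of $\tau$ forces the monodromy of the Weil--Deligne parameter to vanish. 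So $r_\iota(\Pi)|_{G_{F_v}}$ is potentially crystalline of Hodge type $(2,1,0)$, and in particular $\Pi$ has level potentially prime to $p$.
\item Only now does Corollary~\ref{cor:potdiag} give potential diagonalisability, and \cite[Theorem 4.2.1]{BLGGT14} applies.
\end{enumerate}

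The point your proposal misses is that the types must be for regular components. That single choice both fixes the weight and guarantees potential crystallinity.
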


\begin{proof}
This essentially follows from Theorem 4.2.1 of \cite{BLGGT14} with the improvement in Theorem A.1.4 of \cite{BLGG13} related to point (4).  The main difference is that we don't assume $r$ is ordinarily automorphic, and so instead need to show that $\bar r$ satisfies the second condition in (3) of the theorem in \cite{BLGGT14}: namely $\rbar \cong \rbar_\iota(\Pi)$ such that $\Pi$ has level potentially prime to $p$ and $r_\iota(\Pi)|_{G_{F_v}}$ is potentially diagonalizable for all $v | p$. 

To do this, recall that by assumption we have 
\[S(U,V)_{\overline{\mathfrak{m}}}\neq0,\]
for a maximal ideal $\mathfrak{m}\subset \mathbb{T}_{\mathcal{P}}$ corresponding to $\rbar$ and some Serre weight $V$ of $\cG$. 
By Theorem \ref{thm: rational span}, there exists a (regular) $K$-type $\sigma(\tau)$  such that $V$ is a Jordan--H\"older factor of $\overline{\sigma}(\tau)$. 
Let $\sigma^{\circ}(\tau)$ be an $\cO$-lattice in $\sigma(\tau)$.  


\[S(U,\sigma^{\circ}(\tau))_{\overline{\mathfrak{m}}}/\varpi=S(U,\sigma^{\circ}(\tau)/\varpi)_{\overline{\mathfrak{m}}}\neq0.\]
But any system of Hecke eigenvalues supported on $S(U,\sigma(\tau))_{\overline{\mathfrak{m}}}$ gives rise to a RAECSDC automorphic representation $\Pi$ of $\GL_n(\A_F)$ of weight $0$ congruent to $\pi$. By Corollary \ref{cor:potdiag}, $r_\iota(\Pi)|_{G_{F_v}}$  is potentially diagonalizable for all $v | p$. 



\end{proof}


\subsection{Breuil--M\'ezard conjecture}


Fix $K/\Qp$ finite.  We recall the geometric and versal Breuil--M\'ezard conjectures following \cite{LLLM-models} which in turn follows \cite{EGstack, EG14}.    Let $\cX_{3}$ denote the EG stack of $3$-dimensional $p$-adic representations of $G_K$.

Let $\Z[\cX_{3,\mathrm{red}}]$ denote the free abelian group on the irreducible components $\cC_\sigma$ of $\cX_{3,\mathrm{red}}$ parametrized by Serre weights $\sigma$ for $\GL_3(\cO_K)$.
We call elements of $\Z[\cX_{3,\mathrm{red}}]$ \emph{cycles} and, for a Serre weight $\sigma$, call $\cC_\sigma\in \Z[\cX_{3,\mathrm{red}}]$ an \emph{irreducible cycle}.
(One might normally call these \emph{top-dimensional} cycles among cycles of varying dimension, but since we only consider top-dimensional cycles, we omit this adjective.)
A cycle is \emph{effective} if its coefficients are nonnegative.
Let $K_0(\Rep_{\F}(\GL_3(\cO_K)))$ be the Grothendieck group of finite length $\F[\GL_3(\cO_K)]$-modules, or equivalently the free abelian group generated by Serre weights for $\GL_3(\cO_K)$.
If $W$ is a finite length $\F[\GL_3(\cO_K)]$-module, we write $[W] = \sum_\sigma [W:\sigma][\sigma]$ for its image in $K_0(\Rep_{\F}(\GL_3(\cO_K)))$ where $[W:\sigma]$ denotes the multiplicity of a Serre weight $\sigma$ as a Jordan--H\"older factor of $W$.
If $V$ is a finite length $E[\GL_3(\cO_K)]$-module, the class $[\ovl{V}^\circ]$ of the mod $\varpi$ reduction of a $\GL_3(\cO_K)$-stable $\cO$-lattice $V^\circ \subset V$ is independent of the choice of the lattice, and hence we simply denote it by $[\ovl{V}]$.
We then also denote $[\ovl{V}^\circ:\sigma]$ by $[\ovl{V}:\sigma]$.

Recall that an inertial type $\tau$ for $K$ is a representation $I_K \rightarrow \GL_3(E)$  with finite image which extends to a representation of the Weil group of $K$. In what follows, we use $\lambda$ to denote the Hodge type (as opposed to Hodge--Tate weights); this is the reason for the shift by $\eta = (2,1,0)$ in various formulas below.  
Given a pair $(\lambda,\tau)$ where $\lambda\in (\mathbb{Z}^3)^{\cJ}$ is a dominant weight and $\tau$ is an inertial type for $K$,  let $\cX_{\lambda+\eta}^{\tau}$  be the potentially crystalline stack $\cX_{\lambda+\eta}^{\tau}$ parametrizing potentially crystalline representations with Hodge type $\lambda$ and inertial type $\tau$. 
Let $\cZ_{\lambda,\tau}$ denote the cycle 
\[
\sum_\sigma \mu_\sigma(\cX_{\lambda+\eta,\F}^{\tau}) \cC_\sigma
\]
in $\Z[\cX_{3,\mathrm{red}}]$ where $\mu_\sigma(\cX_{\lambda+\eta, \F}^{\tau})$ denotes the multiplicity of $\cC_\sigma$ as an irreducible component of $\cX_{\lambda+\eta,\F}^{\tau}$. 

Given an inertial type $\tau$ for $K$, there is a finite-dimensional smooth representation $\sigma(\tau)$ of $\GL_3(\cO_K)$ over $E$ associated to $\tau$ by the ``inertial local Langlands correspondence'' (see \cite[Theorem 3.7]{CEGS+16} for a characterization). The following conjecture is based on a geometric version of a conjecture of Breuil--M\'ezard (\cite{BM02}, \cite[Conjecture 8.2.2]{EGstack}, \cite[8.1.1]{LLLM-models}).

\begin{conj}[Geometric Breuil--M\'ezard conjecture]\label{conj:S-BM}  Let $\cS$ be a collection of pairs $(\lambda, \tau)$.  
For each Serre weight $\sigma$ for $\GL_3(\cO_K)$, there exists an effective cycle $\cZ_\sigma \in \Z[\cX_{3,\mathrm{red}}]$ such that for all $(\lambda,\tau) \in \cS$, we have
\begin{equation} \label{eqn:BMgeneral}
\cZ_{\lambda,\tau} = \sum_{\sigma} [\ovl{\sigma}(\lambda,\tau):\sigma] \cZ_\sigma.
\end{equation}
\end{conj}

The full conjecture predicts that one can take $\cS$ to be all pairs. 
 In this paper, we will restrict attention to the collection $\cS_{0} = \{(0, \tau) \mid \tau  \text{ is an inertial type for } K \}$. 

\begin{thm} \label{thm:BMconjeta} Assume $p \geq 5$. Then the geometric Breuil--M\'ezard  Conjecture \ref{conj:S-BM} holds for the collection $\cS_0$.  That is, there exist unique cycles $\cZ_\sigma \in \Z[\cX_{3,\mathrm{red}}]$ such that for all $\tau$, we have
\begin{equation} \label{eqn:BMeta}
\cZ_{0,\tau} = \sum_{\sigma} [\ovl{\sigma}(\tau):\sigma] \cZ_\sigma.
\end{equation}
Moreover, the cycles $\cZ_{\sigma}$ are effective.   
\end{thm}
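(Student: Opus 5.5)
We follow the strategy of \cite{gee-kisin, EG14}: combine Taylor--Wiles--Kisin patching with the potential diagonalizability of Corollary~\ref{cor:potdiag}. Uniqueness is a linear-algebra statement and comes first. By Theorem~\ref{thm: rational span} of the appendix, the classes $[\ovl{\sigma}(\tau)]$ span $K_0(\Rep_{\F}(\GL_3(\cO_K)))\otimes\Q$. Hence the system \eqref{eqn:BMeta} determines each $\cZ_\sigma$ rationally, and therefore integrally.

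For existence we globalize. Fix an $\overline{\F}$-point $\rhobar$ lying on a generic part of each component $\cC_{\sigma'}$. Using the potentially diagonalizable automorphy and globalization results of \cite{BLGGT14} (Theorem~\ref{thm:lifting} and its inputs), realize $\rhobar$ as $\rbar|_{G_{F_v}}$ for an automorphic $\rbar$ satisfying the Taylor--Wiles hypotheses, with $F_v\cong K$ at every $v|p$. Patching $S(U,\sigma^\circ)$ for lattices $\sigma^\circ$ in $\GL_3(\cO_K)$-representations gives a patching functor $M_\infty$ over $R_\infty=R^{\square}_{\rhobar}\widehat\otimes R^p[[x_1,\dots,x_g]]$. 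It has two properties. First, $M_\infty(\sigma^\circ(\tau))$ is maximal Cohen--Macaulay over $R_\infty(\tau)$, the ring built from $R^{0,\tau}_{\rhobar}$. Second, $\sigma\mapsto M_\infty(\sigma)$ is exact and additive on Grothendieck groups.

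The key step is to show that $M_\infty(\sigma^\circ(\tau))$ has full support on $\Spec R_\infty(\tau)$, that is, it is supported on every irreducible component. This is the main obstacle. It is exactly where Corollary~\ref{cor:potdiag} enters: every point of $R^{0,\tau}_{\rhobar}$ is potentially diagonalizable. The argument of \cite[\S4]{BLGGT14} (see also \cite{gee-kisin}) uses base change to a solvable extension to move each component into the automorphic locus, so every component carries automorphic points. The supports then coincide, so the cycle of $M_\infty(\sigma^\circ(\tau))\otimes\F$ is $\mu\cdot\cZ_{0,\tau}$ pulled back along the formally smooth map to $\cX_3$. Here $\mu$ is a multiplicity independent of $\tau$, which we normalize away by choosing $\rbar$ with multiplicity one (or by dividing by the generic rank).

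Define $\cZ_\sigma$ locally at $\rhobar$ as the cycle of $M_\infty(\sigma)$. Additivity then gives \eqref{eqn:BMeta} in the versal ring at $\rhobar$, and effectivity is automatic. These cycles are independent of the globalization: by the uniqueness argument above, they are forced by the linear system.

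It remains to pass from versal rings to $\Z[\cX_{3,\red}]$. Since $\cZ_\sigma$ is determined by the $\cZ_{0,\tau}$, it suffices to know its multiplicity on each $\cC_{\sigma'}$, and this can be read off at one generic point $\rhobar\in\cC_{\sigma'}$, as in \cite[\S8]{EGstack}. Running the construction for each $\sigma'$ therefore yields global effective cycles satisfying \eqref{eqn:BMeta}.
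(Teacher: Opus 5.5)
Your proposal is essentially the paper's proof. The shared steps are: uniqueness from the rational spanning in Theorem~\ref{thm: rational span}; globalization and patching; full support of $M_\infty(\sigma^\circ(\tau))$ on $R_\infty(\tau)$ via Corollary~\ref{cor:potdiag} and \cite[Theorem 4.4.1]{BLGGT14}; effective local cycles from exactness of $M_\infty$; and the passage from versal rings at generic points to $\Z[\cX_{3,\mathrm{red}}]$ via \cite[\S 8]{EGstack}.

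You gloss over some bookkeeping. The paper globalizes with \cite[Corollary A.7]{EG14} together with \cite[Theorem 6.4.4]{EGstack}, not Theorem~\ref{thm:lifting}. That globalization generally has several places above $p$, so $R_\infty$ contains one copy of $R^{\square}_{\rhobar}$ for each of them. The paper therefore fixes a place $v_0$, tensors with fixed lattices $\sigma^\circ(\tau_v)/\varpi$ at the other places, and descends through the injective pullback $j_\infty$. Also, the rank is automatically $1$ because the patching functor is minimal \cite[Lemma 4.1.18]{CEGS+16}, so no normalization by a multiplicity is needed.
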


\begin{rmk} Not much is known about these cycles but we can extract some information from previous work. 
\begin{enumerate} 
\item In general, the support of $\cZ_{\sigma}$ includes $\cC_{\sigma}$ (in fact, appearing with multiplicity one).  This follows from the compatibility of $\cZ_\sigma$ with patching and the existence of ordinary globalizations for ordinary Serre weights (for example, as constructed in the proof of \cite[Proposition 2.5.7]{LL-CM}).
\item If $K$ is unramified and $\sigma$ is generic, then $\cZ_{\sigma} = \cC_{\sigma}$ (see Proposition 3.6.1 \cite{LLLM-lattice}).
\item  For $K$ unramified, the second two authors with Daniel Le and Stefano Morra have on-going work to fully determine the cycles $\cZ_{\sigma}$ and hence to make explicit the weight part of Serre's conjecture (see Remark \ref{rmk:BMC}.) 
\end{enumerate}
\end{rmk}

 Recall from \cite[\S 3.3]{GHS} that a collection $\cS$ of pairs $(\lambda, \tau)$ is a \emph{Breuil--M\'ezard system} if the map 
\begin{align*}
\Z[\cS] &\ra K_0(\Rep_{\F}(\GL_3(\cO_K))\\
(\lambda,\tau) &\mapsto [\ovl{\sigma}(\lambda,\tau)]
\end{align*}
has finite cokernel.  
The uniqueness assertion in Theorem \ref{thm:BMconjeta} follows from the fact that $\cS_0$ is a  \emph{Breuil--M\'ezard system} (which in proved in Theorem \ref{thm: rational span}). Indeed, one is forced to take
\[\cZ_\sigma=\sum a_\tau \cZ_{0,\tau}\]
where $a_\tau\in \Q$ are such that 
\[[\sigma]=\sum a_\tau [\overline{\sigma}(\tau)].\]

Our proof of Theorem \ref{thm:BMconjeta} will proceed by patching and passing to versal rings for $\cX_3$.  As explained in \cite[Proposition 8.2.1]{LLLM-models}, taking versal rings for $\cX_3$ recovers the original Breuil--M\'ezard conjecture.  We summarize this discussion (see \emph{loc. cit} for details).  Let $\rhobar:G_K \ra \GL_3(\F)$ and let $\rhobar$ denote the corresponding $\F$-point of $\cX_{3}$. Fix a versal ring $R_{\rhobar}^{\mathrm{ver}}$ for $\cX_3$ at $\rhobar$.  The fiber product $\Spf R_{\rhobar}^{\mathrm{ver}} \times_{\cX_3} \cX_{3,\mathrm{red}}$ is a closed formal subscheme of $\Spf \, R_{\rhobar}^{\mathrm{ver}}$, which we denote by $\Spf \, R_{\rhobar}^{\mathrm{alg}}$. Consider
 \[
i_{\rhobar}: \Spec R_{\rhobar}^{\mathrm{alg}}\ra \cX_{3,\mathrm{red}}.
\]
The map $i_{\rhobar}$ induces a map from the set of irreducible components of $\Spec R_{\rhobar}^{\mathrm{alg}}$ to the set of irreducible components $\cX_{3,\mathrm{red}}$.

Denote by $\Z[\Spec R_{\rhobar}^{\mathrm{alg}}]$ the free abelian group generated by irreducible components of $\Spec R_{\rhobar}^{\mathrm{alg}}$.  The map $i_{\rhobar}$ induces a pullback map $i_{\rhobar}^*: \Z[\cX_{3,\mathrm{red}}] \ra \Z[\Spec R_{\rhobar}^{\mathrm{alg}}]$ obtained by formal completion along components of $\cX_{3,\mathrm{red}}$.   Let $\cZ_{\lambda,\tau}(\rhobar)$ denote the cycle $i_{\rhobar}^*(\cZ_{\lambda,\tau})\in \Z[\Spec R_{\rhobar}^{\mathrm{alg}}]$.  

\begin{conj}[Versal Breuil--M\'ezard conjecture]\label{conj:v-S-BM} Let $\cS$ be a collection of pairs $(\lambda, \tau)$.  
For each Serre weight $\sigma$ for $\GL_3(\cO_K)$, there exist effective cycles $\cZ_\sigma(\rhobar)$ in $\Spec R_{\rhobar}^{\mathrm{alg}}$ such that for all $(\lambda,\tau) \in \cS$, we have
\begin{equation} \label{eq:BMeqn3}
\cZ_{\lambda,\tau}(\rhobar) = \sum_{\sigma} [\ovl{\sigma}(\lambda,\tau):\sigma] \cZ_\sigma(\rhobar).
\end{equation}
\end{conj}
The conjecture doesn't depend on the choice of versal ring (Remark 8.1.6 in \cite{LLLM-models}), so in what follows we will make the universal lifting ring $R^{\square}_{\rhobar}$ our default choice.

Conjecture \ref{conj:S-BM} for a collection $\cS$ clearly implies the versal conjecture for $\cS$ so Theorem \ref{thm:BMconjeta} implies Conjecture \ref{conj:v-S-BM}. 
The converse is also true if one knows the versal conjecture for enough $\rhobar$: 
\begin{prop}  Let $\cP$ be a collection of $\F$-points of $\cX_{3}$. Let $\cS$ be a Breuil--M\'ezard system.   For every Serre weight $\sigma$, assume that $\cP$ contains a $\rhobar \in \cC_{\sigma}$ such that $\cC_{\sigma}$ is smooth at $\rhobar$ and $\rhobar$ does not lie on any other components. If Conjecture \ref{conj:v-S-BM}  for the collection $\cS$ holds with effective cycles $\cZ_\sigma(\rhobar)$ for all $\rhobar$  in $\cP$, then there exists cycles $\cZ_{\sigma} \in \Z[\cX_{3,\mathrm{red}}]$ such that Conjecture \ref{conj:S-BM} holds for $\cS$.  
\end{prop}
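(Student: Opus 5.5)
We pass from the versal conjectures at the points of $\cP$ to a global cycle on $\cX_{3,\mathrm{red}}$ by working one irreducible component at a time, following \cite[\S 8]{LLLM-models}. The first step is to write each $\cZ_{\lambda,\tau}$ as $\sum_{\sigma'} \mu_{\sigma'}(\cX^{\tau}_{\lambda+\eta,\F})\cC_{\sigma'}$. We then record what pullback does at a chosen point. Fix, for each $\sigma'$, the point $\rhobar_{\sigma'} \in \cP$ provided by the hypothesis. It lies on $\cC_{\sigma'}$ and on no other component, and $\cC_{\sigma'}$ is smooth there. Hence $i_{\rhobar_{\sigma'}}^*$ sends $\cC_{\sigma'}$ to a single irreducible component $C_{\sigma'}$ of $\Spec R^{\mathrm{alg}}_{\rhobar_{\sigma'}}$, the completion of a smooth irreducible stack being irreducible. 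It also kills every $\cC_{\sigma''}$ with $\sigma''\neq \sigma'$. Consequently the coefficient of $C_{\sigma'}$ in $\cZ_{\lambda,\tau}(\rhobar_{\sigma'})$ equals $\mu_{\sigma'}(\cX^\tau_{\lambda+\eta,\F})$. This uses that multiplicities are preserved under flat versal maps, which is standard.

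Next we define the candidate cycles. Since $\cS$ is a Breuil--M\'ezard system, for each Serre weight $\sigma$ we can write $N[\sigma] = \sum_{(\lambda,\tau)\in\cS} a_{\lambda,\tau}[\ovl{\sigma}(\lambda,\tau)]$ with $N\geq 1$ and $a_{\lambda,\tau}\in\Z$. We set $\cZ_\sigma := \sum_{\sigma'} c_{\sigma,\sigma'}\cC_{\sigma'}$, where $c_{\sigma,\sigma'}$ is the multiplicity of $C_{\sigma'}$ in $\cZ_\sigma(\rhobar_{\sigma'})$. These coefficients are nonnegative because the versal cycles are effective.

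To check \eqref{eqn:BMgeneral}, we compare coefficients of $\cC_{\sigma'}$ for each $\sigma'$. The left side has coefficient $\mu_{\sigma'}(\cX^\tau_{\lambda+\eta,\F})$, which by the first paragraph is the $C_{\sigma'}$-coefficient of $\cZ_{\lambda,\tau}(\rhobar_{\sigma'})$. By \eqref{eq:BMeqn3} at $\rhobar_{\sigma'}$, this equals $\sum_\sigma[\ovl{\sigma}(\lambda,\tau):\sigma]\,c_{\sigma,\sigma'}$, which is exactly the coefficient on the right side. So the identity holds componentwise, and the Breuil--M\'ezard system assumption is not even needed for existence.

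The step we expect to be the main obstacle is the identification of $i^*_{\rhobar}\cC_{\sigma'}$ with a single reduced component appearing with multiplicity one. It requires care about formal completion of a smooth locus of $\cX_{3,\mathrm{red}}$ along $\rhobar$, and we will handle it by citing \cite[Proposition 8.2.1]{LLLM-models} and \cite[\S 8]{EGstack}.
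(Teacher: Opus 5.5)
Your argument is correct and is essentially the content of \cite[Remark 8.3.7]{EGstack}, which the paper simply cites. Reading off the coefficient of each $\cC_{\sigma'}$ at a single point $\rhobar_{\sigma'}$ lying only on $\cC_{\sigma'}$ already gives existence and effectivity, so your unused presentation $N[\sigma]=\sum a_{\lambda,\tau}[\ovl{\sigma}(\lambda,\tau)]$ can be dropped. The multiplicity-one point you flag is immediate: near $\rhobar_{\sigma'}$ the stack $\cX_{3,\mathrm{red}}$ coincides with $\cC_{\sigma'}$ and is smooth, so $R^{\mathrm{alg}}_{\rhobar_{\sigma'}}$ is formally smooth over $\F$, hence a domain, and therefore $i^*_{\rhobar_{\sigma'}}\cC_{\sigma'}=[\Spec R^{\mathrm{alg}}_{\rhobar_{\sigma'}}]$.
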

\begin{proof} The equivalence of the two conjectures is proved in Remark 8.3.7 in \cite{EGstack} and the discussion before. 
\end{proof}

\begin{thm} \label{thm:versalBMconj} Let $\rhobar \in \cX_3(\F)$ and $p \geq 5$.  Conjecture \ref{conj:v-S-BM} holds
 for $S_{0}$.
\end{thm}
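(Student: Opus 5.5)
The plan is the Kisin/Gee--Kisin patching strategy, run in the versal setting. The role usually played by potential diagonalisability (\cite{gee-kisin}) is here played by Corollary~\ref{cor:potdiag}, and a Breuil--M\'ezard system is supplied by Theorem~\ref{thm: rational span} in the appendix.

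\textbf{Step 1: a suitable globalisation.} Fix $\rhobar$. Choose a CM field $F$ and a global $\rbar$ that realise $\rhobar$ at every place above $p$. We require $\rbar$ to be automorphic, adequate, and to satisfy the hypotheses of Section~\ref{sec:globalpatching}. Such an $\rbar$ is produced by the potential automorphy results of \cite{BLGGT14} (e.g.\ \cite[Theorem 3.1.2]{BLGGT14}, in the form used in \cite{EG14, GHS}). Potential diagonalisability of every minimal-weight potentially crystalline lift, which is Corollary~\ref{cor:potdiag}, lets us arrange the following: for every inertial type $\tau$, $\rbar$ has a modular lift of weight $0$ and type $\tau$ on every component of $\Spec R^{0,\tau}_{\rhobar}$. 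This is \cite[Theorem A.4.1]{BLGG13}/\cite[Theorem 4.3.1]{BLGGT14} applied componentwise.

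\textbf{Step 2: Taylor--Wiles patching.} Run Taylor--Wiles--Kisin patching for the definite unitary groups of Section~\ref{sec:modforms}, as in \cite[\S5]{EG14} or \cite[\S 8]{LLLM-models}. This produces a patched module $M_\infty$ over $R_\infty = R^{\square}_{\rhobar}\widehat{\otimes} R^{\mathrm{p}}[[x_1,\dots,x_g]]$. It is exact in the $\GL_3(\cO_K)$-representation, and $M_\infty(\sigma^\circ(\tau))$ is maximal Cohen--Macaulay over $R_\infty^{0,\tau}$. Since $R^{0,\tau}_{\rhobar}[1/p]$ is regular and Step~1 gives modular points on every component, $M_\infty(\sigma^\circ(\tau))$ has full support. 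By the standard argument using locally freeness of rank one at smooth points and multiplicity one (Diamond/Taylor, adapted as in \cite[Lemma 4.3.9]{GK14}), its cycle equals $\cZ_{0,\tau}(\rhobar)$ on the components. Reducing mod $\varpi$ gives
\[ \cZ(M_\infty(\ovl{\sigma}^\circ(\tau))) = \cZ_{0,\tau}(\rhobar)\otimes \F. \]

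\textbf{Step 3: defining the cycles.} Set $\cZ_\sigma(\rhobar) := \cZ(M_\infty(\sigma))$ after removing the auxiliary framing variables. These cycles are effective by construction. Exactness of $M_\infty$ then gives \eqref{eq:BMeqn3} for all $(0,\tau)\in\cS_0$. It remains to show $\cZ_\sigma(\rhobar)$ is independent of the globalisation. Here Theorem~\ref{thm: rational span} does the work: because $\cS_0$ is a Breuil--M\'ezard system, $\cZ_\sigma(\rhobar)$ is forced to equal $\sum a_\tau \cZ_{0,\tau}(\rhobar)$ for any rational $a_\tau$ with $[\sigma]=\sum a_\tau[\ovl{\sigma}(\tau)]$, and the right-hand side depends only on $\rhobar$.

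\textbf{Main obstacle.} The key difficulty is Step~1: ensuring that every irreducible component of every $\Spec R^{0,\tau}_{\rhobar}$ carries a modular point. This is exactly where potential diagonalisability is used. The argument needs \cite{BLGGT14}'s change-of-weight/type and the fact that potential diagonalisability is preserved under passage to the base-changed fields used in the globalisation. Corollary~\ref{cor:potdiag} requires no assumption on ramification, so it covers arbitrary $K$ and $\tau$, including wildly ramified types.
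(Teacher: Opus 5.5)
Your proposal is correct and follows the paper's proof. The paper likewise globalises via \cite{EG14}, patches, uses Corollary~\ref{cor:potdiag} with \cite[Theorem 4.4.1]{BLGGT14} to get modular points on every component so that $M_\infty(\sigma^\circ(\tau))$ has full support on $R_\infty(\tau)$, and then combines Theorem~\ref{thm: rational span} with exactness of $M_\infty$. The only difference is bookkeeping in Step 3: the paper defines $\cZ_\sigma(\rhobar):=\sum a_{\sigma,\tau}\cZ_{0,\tau}(\rhobar)$ and gets effectivity and the equations by applying the injective pullback $j_\infty$, which is exactly what justifies your ``removing the auxiliary variables'' --- so the rational-span argument is really needed to show the patched cycle descends, not for independence of the globalisation, which the versal statement does not require.
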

\begin{proof}

We follow the geometric version of the strategy of Gee--Kisin \cite{gee-kisin} formulated in \cite{EG14}.  First, we globalize $\rhobar$.  Since $p \geq 5$, there is an imaginary CM field $F$ and suitable globalization $\rbar:G_{F} \rightarrow \GL_3(\F)$ as in Corollary A.7 in \cite{EG14} (noting that Conjecture A.3 in \emph{loc. cit.} is true by Theorem 6.4.4 in \cite{EGstack}).  In particular we have a global setup as in Section \ref{sec:globalpatching} such that:
\begin{itemize}
\item $\rbar$ is unramified away from $p$;
\item  letting $\Sigma^+_p$ denote places of $F^+$ over $p$, for each $v \in \Sigma^+_p$ there is a place $\tld{v}$ of $F$  lying over $v$ (which we fix) such that $F_{\tld{v}} \cong K$ and $\rbar|_{G_{F_{\tld{v}}}} \cong \rhobar$;
\item $\rbar$ is automorphic.
\end{itemize}
Let $(\lambda, \tau) = (\lambda_v, \tau_v)_{v \in \Sigma_p}$ so that each $\lambda_v \in (\mathbb{Z}^n)^{\Hom(F_{\tld{v}}, \overline{\Q}_p)}$ is a Hodge type and $\tau_v$ an inertial type. Let $\sigma(\lambda,\tau)$ be the finitely generated $E[\GL_3(\cO_{p})]$-module $\otimes_{v} (V(\lambda_v) \otimes \sigma(\tau_v))$ where $V(\lambda_v)$ is the (dual) Weyl module with highest weight $\lambda_v$. 
Let  $R_{\rhobar}^{\lambda_v+ \eta,\tau_v}$ denote the reduced $\cO$-flat quotient of the lifting ring $R_{\rhobar}^{\square}$ such that its $E'$-points are exactly the potentially crystalline representations $\rho: G_L \ra \GL_n(E')$ of Hodge--Tate weights $\lambda_v + \eta$ and inertial type $\tau_v$ for $E'/E$ finite.  Note that $R_{\rhobar}^{\lambda_v+ \eta,\tau_v}$  is the pullback of $\cX^{\tau_v}_{\lambda_v + \eta}$ to  $R_{\rhobar}^{\square}$.  

Set 
\[R_{\rhobar} := \widehat{\bigotimes}_{v|p,\cO} R_{\rhobar}^{\square}\]
\[R_{\rhobar}^{\lambda+\eta,\tau} := \widehat{\bigotimes}_{v|p,\cO} R_{\rhobar_{v}}^{\lambda_{v}+\eta,\tau_{v}}.
\]
 
In this global setup, the patching argument in \cite[\S 2]{CEGS+16} produces a minimal weak patching functor in the sense of \cite[Definition 6.1.1]{LLLM-models}. Let $\Rep_{\cO}(\GL_3(\cO_{F^+_p}))$ denote the category of topological $\cO[\GL_3(\cO_{F^+_p})]$-modules which are finitely generated over $\cO$. There is a formally smooth complete local Noetherian equidimensional flat $\cO$-algebra $R^{\mathrm{aux}}$ with residue field $\F$, and a covariant exact functor
 \[M_\infty:\Rep_{\cO}(\GL_3(\cO_{F^+_p}))\ra \mathrm{Mod}(R_{\infty})\]
where $R_\infty=  R^{\mathrm{aux}}\widehat{\otimes}_{\cO} R_{\rhobar}$, such that
\begin{enumerate}
\item (\cite[Lemma 4.1.18]{CEGS+16}) Let $\sigma^\circ(\lambda,\tau)$ be an $\cO$-lattice in $\sigma(\lambda,\tau)$. Then $M_\infty(\sigma^\circ(\lambda,\tau))$ is supported on $R_{\infty}(\lambda,\tau):=R_\infty\otimes_{R_{\rhobar}}R_{\rhobar}^{\lambda+\eta,\tau}$, and is maximal Cohen-Macaulay over it. Furthermore, $M_\infty(\sigma^\circ(\lambda,\tau))[\frac{1}{p}]$ is locally free of rank $1$ over its support.
\item For all $\sigma \in \JH(\ovl{\sigma}(\lambda,\tau))$, $M_\infty(\sigma)$ is a maximal Cohen--Macaulay module over $R_\infty(\lambda, \tau)/\varpi$ (or is $0$).
\end{enumerate}
In what follows, we will only consider the case $\lambda=0$, and hence suppress it from the notations (e.g we abbreviate $R_\infty(0,\tau)=R_\infty(\tau)$). Let $d$ (resp. $d_v$) be the common relative dimension over $\cO$ of $R_\infty(\tau)$ (resp. $R_{\rhobar}^{\eta,\tau_v}$). 

Let $\tau=(\tau_v)$ be such that $R_\infty(\tau)\neq 0$. 
By Corollary \ref{cor:potdiag}, any closed point of $\Spec\, R_\infty(\tau)[\frac{1}{p}]$ is potentially diagonalizable. Hence for each irreducible component $\cC$ of $\Spec\, R_\infty(\tau)$, we can apply \cite[Theorem 4.4.1]{BLGGT14} with the set $S = \Sigma^+_p$ to produce an automorphic lift $r$ (with respect to our current global setting) of $\rbar$ such that $(r_{\tld{v}})_{v \in \Sigma^+_p}$ induces a point in $\cC$. Since the support of $M_\infty(\sigma^\circ(\tau))$ is a union of irreducible components of $R_\infty(\tau)$ by the maximal Cohen-Macaulay property, we conclude that its support is all of $R_\infty(\tau)$. It follows that $M_\infty(\sigma^\circ(\tau))[\frac{1}{p}]$ is locally free of rank $1$ over $R_\infty(\tau)[\frac{1}{p}]$.
This implies 
\begin{equation} \label{BMproof1}
Z(R_{\infty}(\tau)/\varpi) = Z(M_\infty(\sigma^\circ(\tau)/\varpi)),
\end{equation}
where for $R_\infty$-module $M$ which is supported on $R_\infty(\tau)/\varpi$ for some $\tau$, we let $Z(M)$ denote its associated $d$-dimensional cycle. 

We now fix a place $v_0 \in \Sigma^+_p$. For each $v \in \Sigma^+_p \setminus \{v_0\}$, we fix a choice of inertial type $\tau_v$ such that $R^{\eta,\tau_v}_{\rhobar}\neq 0$ as well as a choice of $\cO$-lattice $\sigma^\circ(\tau_v)$. We have an injective pullback map
\[
j_{\infty}:Z_{d_v}[\Spec R^{\eta, \tau_{v_0}}_{\rhobar}/\varpi]\hookrightarrow
Z_d[\Spec(R_{\infty}/\varpi)]
\] 
induced by tensoring with $R^{\mathrm{aux}} \widehat{\otimes} (\widehat{\otimes}_{v \in \Sigma^+_p \setminus \{ v_0 \}}  R^{\eta,\tau_v}_{\rhobar}/ \varpi)$. Clearly 
\begin{equation} \label{BMproof2}
j_{\infty}(\cZ_{0,\tau_{v_0}}(\rhobar))= j_{\infty} Z(R^{\eta,\tau_v}_{\rhobar}/\varpi) = Z(R_\infty(\tau)/\varpi)
\end{equation}
Now for each Serre weight $\sigma_{v_0}$ for $\GL_3(\cO_K)$, we choose a presentation
\[[\sigma_{v_0}]=\sum a_{\sigma,\tau} [\overline{\sigma}(\tau_{v_0})]\]
in $K_0(\F[\GL_3(\cO_K)])$ and set 
\[\cZ_{\sigma_{v_0}}(\rhobar)=\sum a_{\sigma,\tau} \cZ_{0,\tau_{v_0}}(\rhobar),\]
which is a priori a possibly rational combination of cycles.
Combining equations \eqref{BMproof1}, \eqref{BMproof2}, the exactness of $M_\infty$ and the additivity of cycles in short exact sequences, we deduce
\[j_{\infty}(\cZ_{\sigma_{v_0}}(\rhobar))=Z(M_{\infty}(\sigma_{v_0} \otimes (\otimes_{v \in \Sigma^+_p \setminus \{v_0\}} \sigma^{\circ}(\tau_v)/\varpi))))\]
In particular this shows $\cZ_{\sigma_{v_0}}(\rhobar)$ are effective cycles.
Finally, the system of equations \eqref{eq:BMeqn3} with $\lambda=0$ hold for $\cZ_{\sigma_{v_0}}(\rhobar)$  since they hold after applying the injective map $j_{\infty}$.

\end{proof}



%
%

\subsection{Weight part of Serre's conjecture} \label{sec:WPSC}

Let's return to the global setup with $F/F^+$ as in Section \ref{sec:globalpatching}.  Let $\overline{r}:G_F\rightarrow \GL_3(\F)$ be a continuous Galois representation.   The existence of the cycles $\cZ_{\sigma}$  in Theorem \ref{thm:BMconjeta} allows us to unconditionally state and prove the weight part of Serre's conjecture.   (This connection to the Breuil--M\'ezard conjecture in this generality first appears in \cite{GHS}.)

Continue to assume $p \geq 5$. Let $\rbar_p$ be the collection $\rbar_v := \rbar|_{G_{F_{\widetilde{v}}}}$. For each $v$ and a Serre weight $\sigma_v$ for $G(k_v)$, let $Z_{\sigma_v}$ be the effective Breuil--M\'ezard cycle from Theorem \ref{thm:BMconjeta}.   

\begin{defn} (Definition 3.2.7 \cite{GHS})  Define
\[
W^{\mathrm{BM}}(\rbar_v) = \{ \sigma_v \mid \rbar_v \text{ lies in the support of } Z_{\sigma_v} \}.
\] 
\end{defn}

\begin{thm} \label{thm:WPSC} $($Weight part of Serre's conjecture$)$  Assume $p \geq 5$. Suppose that
\begin{enumerate}
\item if $\rbar$ is ramified at a place $\tld{v}$ of $F$, then $\tld{v}|_{F^+}$ is split in $F$;

\item $\rbar: G_{F(\zeta_p)} \ra \GL_3(\F)$ is an adequate subgroup and $\zeta_p\notin \ovl{F}^{\ker\ad\rbar}$; and
\item $\rbar$ is automorphic in the sense of Definition \ref{definition modularity}. 
\end{enumerate}
Then
\[
\sigma = \otimes_{v } \sigma_v \in W(\rbar) \text{ if and only if }  \sigma_v \in W^{\mathrm{BM}}(\rbar_v).
\]
In particular, the modular weights of $\rbar$ only depend on the local components above $p$. 
\end{thm}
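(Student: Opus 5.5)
The plan is to run the patching argument of Theorem~\ref{thm:versalBMconj} in the global setting of $\rbar$ itself, rather than in an auxiliary globalisation, and to read off both implications from supports of patched modules. Under hypotheses (1)--(3), the Taylor--Wiles--Kisin patching of \cite{CEGS+16} produces a minimal weak patching functor $M_\infty$ over $R_\infty = R^{\mathrm{aux}}\widehat{\otimes}_{\cO}\widehat{\bigotimes}_{v} R^{\square}_{\rbar_v}$. Its defining property is that
\[
M_\infty(\sigma)\neq 0 \quad\Longleftrightarrow\quad S(U,\sigma^\vee)_{\overline{\mathfrak m}}\neq 0 ,
\]
i.e.\ nonvanishing of $M_\infty(\sigma)$ is equivalent to $\sigma\in W(\rbar)$, up to the usual dual convention. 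This equivalence holds because $M_\infty(\sigma)$ reduced modulo the patching variables recovers the localised space of algebraic modular forms. So it suffices to show that $M_\infty(\otimes_v\sigma_v)\neq 0$ exactly when $\rbar_v\in \operatorname{supp} \cZ_{\sigma_v}$ for every $v$.

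First, as in the proof of Theorem~\ref{thm:versalBMconj}, I will use Corollary~\ref{cor:potdiag} and \cite[Theorem 4.4.1]{BLGGT14} to get full support. Since $\rbar$ is automorphic, every component of $\Spec R_\infty(\tau)$ contains an automorphic point, for every tuple of types $\tau=(\tau_v)$ with $R_\infty(\tau)\neq 0$. Combined with the maximal Cohen--Macaulay property, this gives
\[
Z(M_\infty(\sigma^\circ(\tau)/\varpi)) = Z(R_\infty(\tau)/\varpi) = j\Big(\textstyle\prod_v \cZ_{0,\tau_v}(\rbar_v)\Big),
\]
where $j$ is the injective pullback of cycles along $\Spec R_\infty/\varpi \to \prod_v \Spec R^{\square}_{\rbar_v}/\varpi$. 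Next I will write $[\sigma_v]=\sum_\tau a_{\sigma_v,\tau}[\ovl{\sigma}(\tau)]$, which is possible by Theorem~\ref{thm: rational span}. Exactness of $M_\infty$ and additivity of cycles, now applied at all places simultaneously (a product version of the argument in Theorem~\ref{thm:versalBMconj}), then give
\[
Z(M_\infty(\otimes_v\sigma_v)) = j\Big(\textstyle\prod_v \cZ_{\sigma_v}(\rbar_v)\Big), \qquad \cZ_{\sigma_v}(\rbar_v)=i^*_{\rbar_v}\cZ_{\sigma_v}.
\]
Here I need that the local cycles $\cZ_{\sigma_v}(\rbar_v)$ of Theorem~\ref{thm:versalBMconj} agree with the pullbacks of the global cycles of Theorem~\ref{thm:BMconjeta}. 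This follows from the uniqueness in Theorem~\ref{thm:BMconjeta}, since both are the same rational combination of the $\cZ_{0,\tau}$.

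To conclude, recall that $M_\infty(\sigma)$ is maximal Cohen--Macaulay over $R_\infty(\tau)/\varpi$ or zero. It is therefore nonzero if and only if its top-dimensional cycle is nonzero. The cycles $\cZ_{\sigma_v}$ are effective, so the product cycle is nonzero if and only if each factor $i^*_{\rbar_v}\cZ_{\sigma_v}$ is nonzero. That happens precisely when $\rbar_v$ lies on some component $\cC_{\sigma'}$ occurring in $\cZ_{\sigma_v}$, i.e.\ when $\rbar_v$ lies in the support of $\cZ_{\sigma_v}$. This gives both implications at once, and shows that $W(\rbar)$ depends only on the $\rbar_v$.

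The main obstacle is the setup, not the algebra. First, the patching functor must be built directly for $\rbar$: hypotheses (1)--(2) have to supply Taylor--Wiles primes, and ramified places must be handled by split-place minimal level. Second, nonvanishing of $M_\infty(\sigma)$ must match $S(U,\sigma)_{\overline{\mathfrak m}}\neq 0$ for some $U^p$, keeping track of the duality convention between $\sigma$ and $\sigma^\vee$. I would import both from \cite[\S 2]{CEGS+16} and \cite[\S 6]{LLLM-models}, and check that Corollary~\ref{cor:potdiag} (which needs $p\ge5$) suffices for the BLGGT input at every component.
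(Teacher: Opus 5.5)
Your proposal is correct and follows essentially the same route as the paper. You patch so that $M_\infty(\sigma)\neq 0$ detects $\sigma\in W(\rbar)$, then use Corollary~\ref{cor:potdiag} together with Theorem 4.4.1 of BLGGT to get full support of $M_\infty(\sigma^\circ(\tau))$ on $R_\infty(\tau)$. From there you identify the cycle of $M_\infty(\otimes_v\sigma_v)$ with the pullback of $\prod_v\cZ_{\sigma_v}$ and read off nonvanishing.

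Your extra details only spell out what the paper's one-line ``we furthermore learn'' compresses. These are the expansion of $[\otimes_v\sigma_v]$ as a product over places, and the use of maximal Cohen--Macaulayness plus effectivity of the $\cZ_{\sigma_v}$ to turn nonvanishing of the cycle into the local support condition. Like the paper, you import rather than prove the level and duality bookkeeping of the patching functor.
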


\begin{proof}
We patch in this setup as in  \cite[Appendix A]{LLLM-models} with the property that for any Serre weight $\sigma$ for $\cG$,
\[
M_\infty(\sigma)\neq 0
\quad\Longleftrightarrow\quad
\rbar \text{ is automorphic of weight } \sigma.
\]
In particular, $\sigma\in W(\bar r)$ if and only if $M_\infty(\sigma)\neq 0$.  

By the same argument as in the proof of Theorem \ref{thm:BMconjeta} using \cite[Theorem 4.4.1]{BLGGT14}, for all types $\tau$, $M_{\infty}(\tau)$ has full support on  $\Spec R_{\infty}(\tau)$.  For $\sigma = \otimes_{v} \sigma_v$, we furthermore learn that the support cycle of $M_{\infty}(\sigma)$ is the pullback of   $\prod_{v \in \Sigma_p^+} \cZ_{\sigma_v}$ along $\Spec R_{\infty} \rightarrow \prod_{v \in \Sigma^+_p} \cX_{3, F_{\tld{v}}}$.  

\end{proof}

\begin{rmk}  Theorem \ref{thm:WPSC} is not restricted to the case of algebraic modular forms.  The same argument goes through in any situation where one can perform Taylor--Wiles patching.  For example, we could take a unitary group $G$ which has signature $(2,1)$ at some subset of the infinite places and compact at the others and patch the middle cohomology.  The main technical subtlety in this setting would be the exactness of the patching.  This could be arranged by assuming some genericity assumption at an auxiliary place and applying torsion vanishing results but we don't pursue this here. 
\end{rmk}


\appendix
\section{Reduction of some $\GL_n(\cO_F)$-types (by Andrea Dotto)} \label{appendix}

We fix a $p$-adic field~$F$ with ring of integers~$\cO_F$, residue field~$k_F$, and uniformizer~$\varpi_F$.
We let $\lbar F$ be an algebraic closure of~$F$, we write~$F_n$ for the unramified extension of~$F$ in~$\lbar F$ of degree~$n$, 
and we fix an $F$-linear embedding of~$F_n$ in the matrix algebra~$M_n(F)$, by choosing an $F$-basis of~$F_n$.
Finally, we choose a $p$-adic coefficient field~$E/\bQ_p$ with ring of integers~$\cO$, residue field~$\bF$, and uniformizer~$\varpi$, 
and we assume that $E$ contains $\bQ_p(\zeta_p)$, and that~$k_{F_n}$ embeds in~$\F$. 
The aim of this appendix is to prove a result concerning the reduction mod~$\varpi$ of certain smooth $\cO[\GL_n(\cO_F)]$-representations, whose definition we recall next.{\let\thefootnote\relax\footnote{I 
was supported by a Royal Society University Research Fellowship during the writing of this appendix.
I am grateful to Shaun Stevens for comments on a draft.}}

\begin{defn}\label{defn:types}
Let~$\fs$ be a Bernstein component of the category of smooth~$\lbar E[\GL_n(F)]$-modules, let $J \subset \GL_n(F)$ be a compact open subgroup, and let~$\kappa$ be an irreducible smooth
$E[J]$-module.
\begin{enumerate}
\item $(J, \kappa)$ is a \emph{type} for~$\fs$ if, for all irreducible smooth $\lbar E[\GL_n(F)]$-modules~$\pi$, we have $\Hom_{J}(\kappa, \pi) \ne 0$ if and only if $\pi \in \fs$.
\item $\fs$ is \emph{regular} if, for all irreducible $\pi \in \fs$, the monodromy operator on the Langlands parameter $\rec_{\lbar E}(\pi)$ is equal to zero.
\item We define $\cT \subset K_0(\GL_n(\cO_F), E)$ as the subgroup spanned by irreducible representations~$\tau$ such that $(\GL_n(\cO_F), \tau \otimes_E \lbar E)$ 
is a type for a regular Bernstein component of
$\lbar E[\GL_n(F)]$.
\end{enumerate}
\end{defn}

Note that if~$\tau \in \cT$ is a type for the regular Bernstein component~$\fs$, and $\rho : \Gal(\lbar F/F) \to \GL_n(E)$
is a potentially semistable Galois representation, such that $\operatorname{WD}(\rho)^{\text{F-ss}}$
is the Langlands parameter of an irreducible $\pi \in \fs$,
then~$\rho$ is potentially crystalline, by Definition~\ref{defn:types}~(2).
We will prove the following result.

\begin{thm}\label{thm: rational span}
Assume that~$|k_F| \geq n+1$.
Then the semisimplified mod~$\varpi$ reduction map
\[
r_\varpi: \cT \otimes_\bZ \bQ \to K_0(\GL_n(k_F), \F) \otimes_\bZ \bQ
\]
is surjective.
\end{thm}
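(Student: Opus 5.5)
We will reduce the statement to one about the finite group $\GL_n(k_F)$, and then prove that one by induction along parabolic induction. Every irreducible $\F$-representation of $\GL_n(\cO_F)$ factors through $\GL_n(k_F)$. So it suffices to show that $r_\varpi(\cT)\otimes\bQ$ contains the class of every irreducible mod $p$ representation of $\GL_n(k_F)$.

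The basic supply of elements of $\cT$ will be inflations of irreducible cuspidal $E$-representations of $\GL_m(k_F)$, together with their parabolic inductions. Concretely, let $M=\prod \GL_{n_i}$ be a standard Levi and $\theta=\otimes \theta_i$ a cuspidal representation of $M(k_F)$ attached to regular elliptic characters of $k_{F_{n_i}}^\times$. Then the irreducible constituents of $\Ind_{P(k_F)}^{\GL_n(k_F)}\theta$, inflated to $\GL_n(\cO_F)$, are types for depth-zero Bernstein components. These components are regular as long as no two $\theta_i$ with equal $n_i$ differ by an unramified twist producing a segment, i.e.\ as long as the corresponding inertial Langlands parameter has no monodromy. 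This is the usual Schneider--Zink / depth-zero case of the inertial correspondence (cf.\ \cite{CEGS+16}), and we will cite it for the type property.

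The hypothesis $|k_F|\geq n+1$ enters here. It guarantees that there are enough distinct characters to choose the inducing data \emph{regular}, meaning pairwise non-conjugate characters, so that the full induction $\Ind\theta$ is itself irreducible and is a type for a regular component. In particular every principal series $\Ind_B^{\GL_n}(\chi_1\otimes\cdots\otimes\chi_n)$ with distinct $\chi_i$ lies in $\cT$, since $|k_F^\times|=|k_F|-1\geq n$ allows $n$ distinct characters.

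Next we compute reductions. The reduction mod $\varpi$ of $\Ind\theta$ is $\Ind\bar\theta$. Since characters of order prime to $p$ reduce bijectively, reductions of regular data realize every tuple of mod $p$ characters up to the constraint of regularity. Writing $K_0(\GL_n(k_F),\F)$ via Brauer characters, we would like the $\bQ$-span of Brauer characters of the reductions of Deligne--Lusztig-type inductions $R_T^G\theta$, with $\theta$ in general position, to span all class functions on $p$-regular classes. The irreducible $\F$-representations are classified by restriction to semisimple classes, so they are determined by Brauer characters on semisimple elements.

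The main obstacle is the span statement itself. The generalized characters $R_T^G\theta$ restricted to semisimple ($p$-regular) elements span the space of $p$-regular class functions, because of the character formula $R_T^G\theta(s)=\frac{1}{|C(s)^\circ|}\sum_{g: s^g\in T}\theta(s^g)\cdot\varepsilon\cdots$ together with orthogonality in $\theta$. The difficulty is that we may only use $\theta$ in general position. We will argue that the character formula is linear in $\theta$, and that for fixed $T$ the general-position $\theta$ already span the whole space of $W(T)$-invariant functions on $T_{p'}$, at least rationally. To see this, note that characters of the prime-to-$p$ group $T$ are plentiful, and $|k_F|\geq n+1$ ensures that the non-general-position $\theta$ form a proper union of "subtori kernels". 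We can then take differences to express them via general-position ones, which is a Vandermonde/inclusion--exclusion argument.

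Finally, every $R_T^G\theta$ with $\theta$ in general position is $\pm$ an irreducible cuspidal-induced representation of the kind shown above to lie in $\cT$. Combining this with the spanning statement gives surjectivity after tensoring with $\bQ$.
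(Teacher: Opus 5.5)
There is a genuine gap, in exactly the step you flag as the main obstacle: depth-zero types with inducing data in general position cannot give surjectivity.

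Your single-torus claim is false. The characters of the finite abelian group $T$ are linearly independent. Hence the $W(T)$-orbit sums over distinct orbits form a basis of the $W(T)$-invariant functions on $T$. The general-position orbit sums therefore span a subspace whose codimension is the number of non-free orbits, which is positive (e.g.\ the orbit of the trivial character). No Vandermonde or inclusion--exclusion manipulation can produce the missing orbit sums.

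Combining different tori does not rescue the argument either. Take $n=2$ and $q=|k_F|\geq 3$. The irreducible representations of $\GL_2(\cO_F)$ trivial on $1+\varpi_FM_2(\cO_F)$ that are types for regular components are:
\begin{enumerate}
\item the $\binom{q-1}{2}$ principal series $\Ind_B^{\GL_2(k_F)}(\chi_1\otimes\chi_2)$ with $\chi_1\neq\chi_2$;
\item the $\binom{q}{2}$ cuspidal representations.
\end{enumerate}
Their reductions span a space of dimension at most $(q-1)^2$. But $K_0(\GL_2(k_F),\F)\otimes_\bZ\bQ$ has dimension $q(q-1)$, the number of semisimple conjugacy classes. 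The missing classes are those coming from non-general-position data, e.g.\ $r_\varpi(\Ind_B^{\GL_2(k_F)}(\chi\otimes\chi))$.

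The idea you are missing is to leave depth zero.
\begin{enumerate}
\item The paper takes an arbitrary character $\chi_\cP$ of $T_\cP=\prod_i k_{F_{n_i}}^\times$, with no general-position condition.
\item It extends $\chi_\cP$ to $T_\cP^\circ=T_\cP\ltimes K_1$, where $K_1=1+\varpi_FM_n(\cO_F)$, using the positive-depth character $1+\varpi_FA\mapsto\psi\trace(u_\cP A)$ on $K_1$.
\item Here $u_\cP$ is the Teichm\"uller lift of a tuple of elements $\lbar u_{n_i}\in k_{F_{n_i}}^\times$ that have trivial Galois stabilizer and are pairwise non-conjugate. This is where $|k_F|\geq n+1$ enters: it is needed to choose such elements, not characters.
\item An intertwining computation, together with Bushnell--Kutzko covers, shows $(T_\cP^\circ,\chi_\cP^\circ)$ is a type for a regular Bernstein component, for every $\chi_\cP$.
\item Since $K_1$ is pro-$p$, the reduction of $\Ind_{T_\cP^\circ}^{\GL_n(\cO_F)}\chi_\cP^\circ$ equals $r_\varpi(\Ind_{T_\cP}^{\GL_n(k_F)}\chi_\cP)$.
\end{enumerate}
It then suffices that the classes $r_\varpi(\Ind_T^{\GL_n(k_F)}\chi)$, over all pairs $(T,\chi)$, span $K_0(\GL_n(k_F),\F)\otimes_\bZ\bQ$. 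This follows from Lusztig--Srinivasan, together with $R_T^{G}(\chi)\otimes\St=\pm\Ind_T^{\GL_n(k_F)}\chi$ and the fact that $\St$ is nonzero on semisimple classes. Alternatively, an Artin-type induction argument gives the same spanning statement.
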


We will prove Theorem~\ref{thm: rational span} by exhibiting a spanning set of the right-hand side, and showing that it is contained in the image of~$r_\varpi$, by explicitly constructing
elements of~$\cT$ providing a preimage. We begin by constructing the spanning set.

\begin{lemma}\label{lem: rational spanning set}
Let~$\cS$ be a set of representatives of the $\GL_n(k_F)$-conjugacy classes of pairs~$(T, \chi)$ consisting of the group $T \subset \GL_n(k_F)$ of $k_F$-points of a rational maximal torus,
and an $E$-character $\chi : T \to \lbar E^\times$.
Then $\{r_\varpi (\Ind_T^{\GL_n(k_F)}\chi ): (T, \chi) \in \cS\}$ is a spanning set of $K_0(\GL_n(k_F), \F) \otimes_\bZ \bQ$.
\end{lemma}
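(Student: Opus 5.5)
The plan is to use Brauer characters. Since $E$ contains enough roots of unity, the Brauer character map identifies $K_0(\GL_n(k_F),\F)\otimes_\bZ \bQ$ with the space of $\bQ$-valued (after extending scalars, $\lbar{\bQ}_p$-valued) class functions on the $p$-regular elements of $G=\GL_n(k_F)$. Moreover, $r_\varpi$ sends the class of a characteristic zero representation $V$ to the class whose Brauer character is the restriction of the character of $V$ to $p$-regular elements. It therefore suffices to show that the restrictions to $p$-regular (that is, semisimple) elements of the characters of $\Ind_T^{G}\chi$, for $(T,\chi)\in\cS$, span the space of class functions on semisimple conjugacy classes. It is harmless to work with $\lbar E$-coefficients and descend, because the span over $\bQ$ after tensoring with $\lbar E$ detects surjectivity of the rational map; one needs a Galois-averaging remark here.

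The key input is the standard fact that every semisimple element $s\in G$ lies in some rational maximal torus $T$. More precisely, its centralizer $C_G(s)$ is a product of groups $\GL_{m_i}(k_{F_{d_i}})$ and contains a maximal torus. For a semisimple $s$, the induced character is
\[
\Ind_T^G\chi(s)=\frac{1}{|T|}\sum_{g\in G,\ g^{-1}sg\in T}\chi(g^{-1}sg).
\]
Fix a torus $T$. As $\chi$ ranges over all characters of $T$, the functions $t\mapsto \sum_{w}\chi(wtw^{-1})$ span the functions on $T$ that are invariant under $N_G(T)/T$. Using orthogonality of characters of $T$, and taking suitable linear combinations of the induced characters over $\chi$, I will produce, for each $t\in T$, the function $f_{T,t}$ that is supported on the $G$-conjugacy class of $t$ intersected with $T$ and is weighted by the counts of conjugates lying in $T$. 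Concretely, $\sum_\chi \overline{\chi(t)}\,\Ind_T^G\chi(s)$ equals $\frac{|T|}{|T|}\cdot\#\{g: g^{-1}sg=t\}$ up to normalization. This is a nonzero multiple of the indicator function of the $G$-class of $t$, restricted to semisimple classes.

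Since every semisimple class meets some rational torus, these indicator functions for $p$-regular classes all lie in the $\lbar E$-span of the Brauer characters of the $\Ind_T^G\chi$. These indicators span all class functions on $p$-regular elements, which proves that the set spans. The step requiring the most care is the bookkeeping passage from $\lbar E$-spans back to $\bQ$-spans. I would handle it by noting that the Galois group permutes the set $\{\Ind_T^G\chi\}$ (by acting on $\chi$), and that the Brauer character pairing identifies $K_0\otimes\bQ$ with Galois-stable rational class functions, so a $\bQ$-span statement follows from averaging. The fact that all tori are covered, with no gaps, rests on the semisimple-elements-in-tori fact, which I will cite from Deligne--Lusztig theory.
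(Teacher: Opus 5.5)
Your proof is correct, but it follows a different route from the one the paper writes out. Your argument is the Artin-induction-style proof that the paper only mentions in its first sentence: every $p$-regular class of $G=\GL_n(k_F)$ meets a rational torus, and one then argues as for Artin's theorem.

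The proof the paper actually gives goes through Deligne--Lusztig theory:
\begin{enumerate}
\item the $R_T^{\GL_n}(\chi)$ span $K_0(G,E)\otimes_\bZ\bQ$ by Lusztig--Srinivasan, so their reductions span $K_0(G,\F)\otimes_\bZ\bQ$;
\item tensoring with $r_\varpi(\St)$ is invertible on $K_0(G,\F)\otimes_\bZ\bQ$, because $\St$ does not vanish on semisimple classes;
\item $R_T^{\GL_n}(\chi)\otimes\St=\pm\Ind_T^G\chi$ by Deligne--Lusztig.
\end{enumerate}

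Your route uses only the following:
\begin{enumerate}
\item Frobenius's formula together with orthogonality on $T$, which give $\sum_\chi\lbar{\chi(t)}\,\Ind_T^G\chi(s)=|C_G(t)|$ when $s$ is $G$-conjugate to $t$, and $0$ otherwise;
\item the existence of a rational torus through each semisimple element;
\item the Brauer-character identification of $K_0(G,\F)\otimes_\bZ\lbar E$ with $\lbar E$-valued class functions on $p$-regular elements. Strictly, this identification holds only after tensoring with $\lbar E$, not with $\bQ$-valued functions, but that is the only place you use it.
\end{enumerate}
This makes your argument elementary and self-contained. It also works verbatim for any finite group and any family of abelian subgroups whose conjugates meet every $p$-regular class. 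The Deligne--Lusztig route relies on deeper citations, but it places the lemma within the theory of uniform functions and the Steinberg twist. As the paper remarks, it even identifies the image of $-\otimes r_\varpi(\St)$ with the Grothendieck group of projective modules.

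Finally, the descent step you flag as delicate needs no Galois averaging. The classes $r_\varpi(\Ind_T^G\chi)$ already lie in the $\bQ$-vector space $V=K_0(G,\F)\otimes_\bZ\bQ$. If $W\subseteq V$ denotes their $\bQ$-span, then their $\lbar E$-span is $W\otimes_\bQ\lbar E$. So $W\otimes_\bQ\lbar E=V\otimes_\bQ\lbar E$ forces $W=V$ by comparing dimensions.
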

\begin{proof}
Since every $p$-regular (i.e.\ semisimple) conjugacy class in~$\GL_n(k_F)$ intersects the $k_F$-points of some rational maximal torus, this lemma can be proved by the same arguments
as for Artin's theorem on induction from cyclic subgroups. Here we offer an alternative argument based on some properties of Deligne--Lusztig characters.

Given $(T, \chi) \in \cS$, we write $\chi \mapsto R_T^{\GL_n}(\chi)$ for the Deligne--Lusztig induction map on virtual characters.
Then, by~\cite[Thm.\ 3.2]{LusztigSrinivasan}, $K_0(\GL_n(k_F), E) \otimes_\bZ \bQ$ is $\bQ$-spanned by $R_T^{\GL_n}(\chi)$,
as~$(T, \chi)$ runs through~$\cS$. 
On the other hand, the Steinberg character of~$\GL_n(k_F)$, denoted~$\St$, is nonzero on all semisimple conjugacy classes of~$\GL_n(k_F)$
(see e.g.\ \cite[Prop.\ 3.1, Rem.\ 3.2]{Lusztigdivisibility}).
Hence a consideration of Brauer characters shows that the operation of tensoring with~$r_{\varpi}(\St)$ induces an isomorphism
\[
- \otimes_\bF r_{\varpi}(\St): K_0(\GL_n(k_F), \bF) \otimes_\bZ \bQ \to K_0(\GL_n(k_F), \bF) \otimes_\bZ \bQ,
\]
and so $r_{\varpi}\bigl( R_{T}^{\GL_n}(\chi) \otimes_E \St \bigr)$ forms a spanning set of $K_0(\GL_n(k_F), \bF) \otimes_\bZ \bQ$ over~$\bQ$.
(In fact, the image of $K_0(\GL_n(k_F), \bF)$ under $- \otimes_\bR r_\varpi(\St)$ is the Grothendieck group of projective $\F[\GL_n(k_F)]$-modules, by~\cite[Thm.\ 1.1]{Lusztigdivisibility}.)
Finally, by~\cite[Prop.\ 7.3]{DeligneLusztig}, the virtual character $R_{T}^{\GL_n}(\chi) \otimes_E \St$ coincides, up to sign, with $\Ind_T^{\GL_n(k_F)}(\chi)$: more precisely,
\[
(-1)^{\sigma(G)-\sigma(T)}(R_{T}^{\GL_n}(\chi) \otimes_E \St) = \Ind_T^{\GL_n(k_F)}(\chi),
\]
where~$\sigma(-)$ denotes the rational rank of a group.
The lemma is an immediate consequence of these considerations. 
\end{proof}

Recall that the $\GL_n(k_F)$-conjugacy classes of rational maximal tori 
in~$\GL_n(k_F)$
are in bijection with partitions~$\cP$ of~$n$, the class corresponding to~$\cP = n_1 \geq n_2 \geq \cdots \geq n_t$ being represented by
\[
T_\cP \coloneqq \prod_{i=1}^t k_{F_{n_i}}^\times.
\] 
Then~$\cS$ will consist of the pairs~$(T_\cP, \chi_\cP)$, as~$\chi_\cP$ runs through $E$-valued characters of~$T_\cP$, and~$\cP$ runs through partitions of~$n$.

Let~$T_\cP^\circ$ be the preimage of~$T_\cP$ under the mod~$\varpi_F$ reduction map $\GL_n(\cO_F) \to \GL_n(k_F)$.
Note that the surjection $T_\cP^\circ \to T_\cP$ is split by the image of the Teichm\"uller lift $[k_{F_{n_i}}^\times]$ 
under our fixed embedding $F_{n_i}^\times \to \GL_{n_i}(F)$,
and so we have a semidirect decomposition
\[
T_\cP^\circ = T_\cP \ltimes (1+\varpi_FM_n(\cO_F)).
\]
By Lemma~\ref{lem: rational spanning set}, to prove Theorem~\ref{thm: rational span} it suffices to show that, for all $(T_\cP, \chi_\cP) \in \cS$,
there exists a character $\chi_\cP^\circ : T_\cP^\circ \to E^\times$ such that $\chi_\cP^\circ |_{T_\cP} = \chi_\cP$,  
and $(T_\cP^\circ, \chi_\cP^\circ)$ is a type for a regular Bernstein component.
Indeed, since~$1+\varpi_F M_n(\cO_F)$ is a pro-$p$ group, we have
\[
r_{\varpi}(\Ind_{T_\cP}^{\GL_n(k_F)}\chi_\cP) = r_{\varpi}(\Ind_{T_\cP^\circ}^{\GL_n(\cO_F)}\chi_\cP^\circ),
\]
and so we conclude that $r_{\varpi}(\Ind_{T_\cP}^{\GL_n(k_F)}\chi_\cP)$ is contained in~$r_{\varpi}(\cT)$, since $(\GL_n(\cO_F), \Ind_{T_\cP^\circ}^{\GL_n(\cO_F)}\chi_\cP^\circ)$ 
is a type for a regular Bernstein component.

\subsection*{Construction of types}
The characters~$\chi_\cP^\circ$ we seek to construct arise as $\GL_n(F)$-covers of Bushnell--Kutzko types 
for cuspidal Bernstein components, of small conductor, of Levi subgroups of $\GL_n(F)$. %
As such, their study goes back at least to~\cite{HoweGLn}, but some of their properties do not seem to be readily available in the literature
in the form we require.
For this reason, we provide details of the construction, although it is surely well-known to experts (our arguments are adapted from~\cite{CarayolGL_n, Rochetypes}).

\begin{defn}\label{defn:properties of sequences}
For any partition~$\cP = n_1 \geq n_2 \geq \cdots \geq n_t$ of~$n$, 
we will consider sequences~$\lbar u_\cP$ of $\lbar u_{i} \in k_{F_{n_i}}^\times, 1 \leq i \leq t,$ with the following two properties:  
\begin{enumerate}
\item $\lbar u_{n_i}$ has trivial stabilizer in $\Gal(k_{F_{n_i}}/k_F)$, and
\item if~$i \ne j$ and~$n_i = n_j$, then~$\lbar u_{n_i}$ and~$\lbar u_{n_j}$ are not conjugate under~$\Gal(k_{F_{n_i}}/k_F)$.
\end{enumerate}
\end{defn}

The following lemma implies the existence of such sequences for any partition~$\cP$.
(This is the only place in this appendix where the assumption that $|k_F|\geq n+1$ is used.)

\begin{lemma}\label{lem:selection lemma}
Assume that $|k_F| \geq n+1$, and let~$\cP$ be a partition of~$n$.
Then there exists a sequence~$\lbar u_\cP$ with properties~(1) and~(2) in Definition~\ref{defn:properties of sequences}.
\end{lemma}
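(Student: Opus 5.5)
The plan is a counting argument, handling each block size $m$ of the partition separately. Write $q = |k_F|$. For a fixed $m$, suppose $m$ occurs exactly $a_m$ times among the parts $n_1,\dots,n_t$. We need $a_m$ elements of $k_{F_m}^\times$, each with trivial stabilizer in $\Gal(k_{F_m}/k_F)$, lying in pairwise distinct Galois orbits. An element of $k_{F_m}$ has trivial stabilizer exactly when it generates $k_{F_m}$ over $k_F$, i.e.\ when its minimal polynomial over $k_F$ has degree $m$. Since $0$ is never such a generator unless $m=1$, the relevant Galois orbits of units are in bijection with monic irreducible polynomials of degree $m$ over $k_F$, excluding the polynomial $X$ when $m=1$. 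Choices made for different values of $m$ do not interact, because condition (2) only compares parts of equal size. So it suffices to show, for each $m$, that the number $N_m$ of such orbits satisfies $N_m \geq a_m$. Note that $a_m \leq n/m$ always holds.

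First I will treat $m=1$. Here $N_1 = q-1$, the number of elements of $k_F^\times$, and $a_1 \leq n \leq q-1$ by the hypothesis $q \geq n+1$. This is the only case where the hypothesis is tight.

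Next I will treat $m \geq 2$. The number of monic irreducible polynomials of degree $m$ over $\bF_q$ is $\frac{1}{m}\sum_{d\mid m}\mu(d)q^{m/d}$, and this is at least $\frac{1}{m}\bigl(q^m - \sum_{j<m} q^j\bigr) \geq \frac{1}{m}\bigl(q^m - \frac{q^m-1}{q-1}\bigr)$. For $q \geq 3$ this exceeds $\frac{q^m}{2m}$. For $q=2$, which forces $n=1$ and hence no parts with $m\geq 2$, there is nothing to check. We therefore need $\frac{q^m}{2m} \geq \frac{n}{m}$, i.e.\ $q^m \geq 2n$. This follows from $q \geq n+1$ and $m\geq 2$, since $q^2 \geq (n+1)^2 \geq 2n$. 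For small $q$ one can also verify the bound directly; for instance, with $q=3$ and $m=2$ there are $3$ irreducible quadratics and $a_2 \leq n/2 \leq 1$.

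Finally, for each $m$ I pick $a_m$ distinct irreducible polynomials of degree $m$, excluding $X$. For each one I choose a root in $k_{F_m}^\times$ and assign these roots to the parts $n_i = m$. Distinct irreducible polynomials have disjoint root sets, so their roots lie in distinct Galois orbits, giving condition (2). Each root has degree exactly $m$ over $k_F$, so its stabilizer in $\Gal(k_{F_m}/k_F)$ is trivial, giving condition (1). The only step I expect to need care is the counting estimate for $m\geq 2$ with small $q$, but the slack there is large.
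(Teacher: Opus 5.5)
Your proof is correct and follows essentially the paper's argument: both count the Galois orbits of primitive elements of $k_{F_m}^\times$ over $k_F$ via the M\"obius formula for monic irreducible polynomials of degree $m$ (discarding $X$ when $m=1$), and compare this count, separately for each part size $m$, with its multiplicity $a_m \le n/m$. The only difference is the final estimate: the paper chains down to $q-1 \ge n \ge m a_m$, whereas for $m\ge 2$ you use $q^m \ge 2n$. Your route also avoids a harmless off-by-one in the paper's intermediate inequality $q^{m}-\sum_{d\mid m,\, d\neq m}q^{d}-1 \ge (q-1)q^{m-1}$, which fails at $m=2$.
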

\begin{proof}
Writing $q := |k_F|$, the number of Galois orbits on the set of nonzero primitive elements of $k_{F_{n_i}}/k_F$ is
\begin{equation}\label{eqn:Mobius inversion}
-\delta_{n_i = 1}+n_i^{-1}\sum_{d | n_i} \mu(d)q^{n_i/d}.
\end{equation}
Let~$m$ be the number of times that~$n_i$ occurs in~$\cP$.
We need to show that~\eqref{eqn:Mobius inversion} is at least~$m$. %
This follows from the inequalities
\[
-1+\sum_{d | n_i} \mu(d)q^{n_i/d} \geq \bigl(q^{n_i} - \sum_{d|n_i, d \ne n_i}q^d -1 \bigr) \geq (q-1)q^{n_i-1} \geq q -1 \geq n \geq n_i m. \qedhere
\]
\end{proof}

Given~$u_\cP$ as in Definition~\ref{defn:properties of sequences},
we write $u_{n_i}$ for the Teichm\"uller lift of~$\lbar u_{n_i}$ in~$\cO_{F_{n_i}}^\times$, 
and we regard~$u_{n_i}$ as an element of~$\GL_{n_i}(\cO_F)$ via our fixed embedding $F_{n_i} \to M_{n_i}(F)$. 
We then define
\begin{equation}\label{eqn:lifted sequence}
u_\cP \coloneq (u_{n_1}, \ldots, u_{n_t}) \in %
\GL_{n_1}(F) \times \cdots \times \GL_{n_t}(F).
\end{equation}

To construct our types, we follow a standard procedure of fixing 
an additive character $\psi : F \to \lbar E^\times$ which is trivial on~$\varpi_F \cO_F$ but not on~$\cO_F$. 
Then there is a bilinear pairing
\[
M_n(F) \times M_n(F) \to \lbar E^\times, (A, B) \mapsto \psi \trace(AB),
\]
which identifies $M_n(F)$ with its dual group of smooth characters $M_n(F) \to \lbar E^\times$. %
The annihilator of $\fp_A^i \coloneq \varpi_F^i M_n(\cO_F)$ with respect to this pairing is $\fp_A^{-i+1}$. %
Hence the function
\[
u_\cP^\circ : K_1 \to \lbar E^\times, 1+\varpi_F A \mapsto \psi \trace (u_\cP A)
\]
is a character of $K_1 \coloneq 1+ \varpi_F M_n(\cO_F)$. 
Since $E$ contains the $p$-th roots of unity, it follows that $u_\cP^\circ$ is valued in~$E$.
Since $u_{n_i} \in F_{n_i}^\times$, $u_\cP^\circ$ is invariant under conjugation by $T_\cP$, and so can be extended to a character of $T_\cP^\circ = T_\cP \ltimes K_1$.
We define $(\chi_\cP, u_\cP)^\circ$ to be the unique extension of~$u_\cP^\circ$ to~$T_\cP^\circ$ such that $(\chi_\cP, u_\cP)^\circ |_{T_\cP} = \chi_\cP$.

\begin{lemma}\label{lem:intertwining computation}
    Let $\lbar u_\cP, \lbar v_\cP$ satisfy properties~(1) and~(2) in Definition~\ref{defn:properties of sequences}. 
    \begin{enumerate}
    \item 
    Assume that $u_\cP^\circ, {v}_\cP^\circ$ intertwine in~$\GL_n(F)$. Then there exists a permutation~$\nu \in S_t$ such that 
    $\lbar u_{n_j}$ and~$\lbar v_{n_{\nu(j)}}$ are $\Gal(k_{F_{n_j}}/k_F)$-conjugate for all~$1 \leq j \leq t$.
    \item Let $\chi_\cP : T_\cP \to E^\times$ be a character. 
    Then the $\GL_n(F)$-intertwining set of $(T_\cP^\circ, (\chi_{\cP}, u_\cP)^\circ)$ is contained in $K_1 \bigl(\prod_{i=1}^t F_{n_i}^\times \bigr) K_1$.
    \end{enumerate}
\end{lemma}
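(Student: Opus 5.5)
Both parts are handled by the standard Carayol--Howe intertwining computation for characters of $K_1$ given by a ``level zero'' element. The basic reformulation is this. The characters $u_\cP^\circ$ and $v_\cP^\circ$ of $K_1$ intertwine via $g\in \GL_n(F)$ if and only if $\psi\trace(u_\cP A)$ and $\psi\trace(v_\cP\, g^{-1}Ag)$ agree on $K_1\cap gK_1g^{-1}$. By the duality $\fp_A^i \leftrightarrow \fp_A^{-i+1}$ under $(A,B)\mapsto \psi\trace(AB)$, this becomes the coset condition
\[
g v_\cP g^{-1} \in u_\cP + \fp_A^{0} + g\fp_A^{0}g^{-1}.
\]
Here I identify characters of $K_1\cap gK_1g^{-1}$ with elements of $M_n(F)$ modulo the annihilator $M_n(\cO_F)+gM_n(\cO_F)g^{-1}$.

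Using the Cartan decomposition $\GL_n(F)=\GL_n(\cO_F)\, \varpi_F^{\lambda}\, \GL_n(\cO_F)$, I first reduce part (1) to the case $g\in \GL_n(\cO_F)$. The goal is to show that one may replace $g$ by $k\varpi_F^\lambda k'$ and that the condition forces a conjugacy of reductions. In the case $g\in\GL_n(\cO_F)$, reducing mod $\varpi_F$ gives $\lbar g\,\lbar v_\cP\,\lbar g^{-1} = \lbar u_\cP$ in $M_n(k_F)$. By property (1), the characteristic polynomial of $\lbar u_{n_i}$ is the irreducible minimal polynomial of a generator of $k_{F_{n_i}}$. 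So conjugacy of the semisimple elements $\lbar u_\cP$ and $\lbar v_\cP$ means their characteristic polynomials agree, and by unique factorization into irreducibles the multisets of Galois orbits coincide. That is exactly the permutation $\nu$. For general $g=k\varpi_F^\lambda k'$, I write the condition in block form relative to $\lambda$. The blocks where $\lambda$ is constant give the same congruence on a Levi, and the off-block entries give an additional nilpotent-type constraint that, in the reduction, says $\lbar u_\cP$ and $\lbar v_\cP$ (suitably conjugated by $\lbar k,\lbar k'$) stabilize a common flag and agree on its graded pieces. Since the characteristic polynomial is multiplicative along a flag, the characteristic polynomials still agree, and the conclusion follows as before.

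For (2), I take $g$ intertwining $(\chi_\cP,u_\cP)^\circ$; restricting to $K_1$, $g$ intertwines $u_\cP^\circ$ with itself. I then run the argument above with $v_\cP=u_\cP$ to get $g u_\cP g^{-1}\in u_\cP+\fp_A^0+g\fp_A^0g^{-1}$. The standard approach (Carayol) is to show that the map $x\mapsto xu_\cP - u_\cP x$ on $M_n(\cO_F)$ has kernel equal to the centralizer $\prod_i M_{n_i}$-embedded copy of $\prod_i \cO_{F_{n_i}}$, and is surjective modulo $\varpi_F$ onto the complement. This holds because the centralizer of $\lbar u_\cP$ in $M_n(k_F)$ is $\prod_i k_{F_{n_i}}$, which uses properties (1) and (2): distinct blocks have no common eigenvalues, and each block is regular elliptic. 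A successive-approximation argument on $K_1 g K_1$ then shows $g\in K_1\bigl(\prod_i F_{n_i}^\times\bigr)K_1$. The factor $\varpi_F^\lambda$ is absorbed because $\varpi_F\in F_{n_i}^\times$ on each block; non-scalar-on-blocks $\lambda$ are ruled out by the flag argument, since the graded pieces of an invariant flag would have to split off isotypic blocks.

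I expect the main obstacle to be the non-integral Cartan part: showing that an intertwiner with nontrivial $\lambda$ still forces the reductions to share a flag compatible with the block structure. I would handle it via lattice chains, translating the condition into stability of the lattice $g\cO_F^n$ under $u_\cP+(\text{small})$, so that the lattice is an $\cO_F[\lbar u_\cP]$-module up to $\varpi_F$ and hence a product of $\cO_{F_{n_i}}$-lattices.
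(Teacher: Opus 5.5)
Your part (1) works, by a different route from the paper's (Cartan decomposition plus a mod-$\varpi_F$ block computation), once two slips are fixed.

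First, the characters are $1+x\mapsto\psi\trace(\varpi_F^{-1}u_\cP x)$ for $x\in\fp_A$. So the coset condition reads $gv_\cP g^{-1}\in u_\cP+\fp_A+g\fp_Ag^{-1}$. With $\fp_A^0$ in place of $\fp_A$, the condition is vacuous for $g\in\GL_n(\cO_F)$ and would not give $\overline{g}\,\lbar v_\cP\,\overline{g}^{-1}=\lbar u_\cP$.

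Second, for $g=k\varpi_F^\lambda k'$ with $\lambda$ dominant, the condition says that $\overline{k^{-1}u_\cP k}$ is block lower triangular and $\overline{k'v_\cP k'^{-1}}$ is block upper triangular, with blocks given by the level sets of $\lambda$ and with equal diagonal blocks. These are opposite parabolics, not a common flag. Multiplicativity of characteristic polynomials still shows that $\lbar u_\cP$ and $\lbar v_\cP$ have the same characteristic polynomial, and property (1) plus unique factorisation then gives $\nu$.

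The genuine gap is in (2), at the step you flag yourself. The flag argument does restrict $\lambda$: its level sets must have dimensions that are sums of the $n_i$. But that is not where the content lies. You must show that the particular $k,k'$ force $k\varpi_F^\lambda k'\in K_1\bigl(\prod_iF_{n_i}^\times\bigr)K_1$, and \emph{successive approximation on $K_1gK_1$} does not do this. Invertibility of $\operatorname{ad}(u_\cP)$ on a complement of $\cL=\prod_i\cO_{F_{n_i}}$ lets you $K_1$-conjugate a perturbation $u_\cP-\varpi_FA$ into $\cL$; it does not move $g$ within its double coset. Your lattice variant yields only $g\in K_1\bigl(\prod_iF_{n_i}^\times\bigr)\GL_n(\cO_F)$, because $g\cO_F^n$ forgets right multiplication by $\GL_n(\cO_F)$, and $gh$ need not intertwine for $h\in\GL_n(\cO_F)$.

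The paper closes this gap as follows.
\begin{enumerate}
\item Write the coset condition as an exact identity $u_\cP-\varpi_FA_u=g^{-1}(v_\cP-\varpi_FA_v)g$ with $A_u,A_v\in M_n(\cO_F)$.
\item Conjugate both sides by some $k_u,k_v\in K_1$ into $\prod_i\cO_{F_{n_i}}^\times$. This is where the $\operatorname{ad}(u_\cP)$ statement is actually used.
\item Then $\tilde g=k_v^{-1}gk_u$ exactly conjugates two elements with squarefree characteristic polynomial whose isotypic pieces are the blocks $F^{\oplus n_j}$. So $\tilde g$ permutes the blocks, which gives $\nu$, and $\nu$ is trivial when $v_\cP=u_\cP$ by property (2).
\item On each block, $\tilde g$ normalises $F_{n_j}$ and acts trivially on the residue field, hence lies in $F_{n_j}^\times$.
\end{enumerate}
This treats all $g$, and both parts, at once, with no Cartan decomposition.

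To rescue your route instead, you must use the opposite triangularity essentially.
\begin{enumerate}
\item By (1) and (2), the diagonal blocks have pairwise coprime characteristic polynomials. So you can conjugate $\overline{k^{-1}u_\cP k}$ into the Levi by the lower unipotent radical $N^-$, and $\overline{k'u_\cP k'^{-1}}$ by the upper one $N^+$.
\item Absorb lifts of these unipotents into $K_1$ using $\varpi_F^{-\lambda}N^-(\cO_F)\varpi_F^{\lambda}\subset K_1$ and $\varpi_F^{\lambda}N^+(\cO_F)\varpi_F^{-\lambda}\subset K_1$.
\item The adjusted $\overline{kk'}$ then centralises $\lbar u_\cP$, so it lies in $\prod_ik_{F_{n_i}}^\times$.
\item Also $\overline{k}$ carries the coordinate subspaces attached to the level sets of $\lambda$ onto sums of blocks. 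A permutation matrix then puts $g$ in $K_1tzK_1$ with $t\in\prod_iF_{n_i}^\times$ block-scalar and $z\in\prod_i\cO_{F_{n_i}}^\times$.
\end{enumerate}
No successive approximation is needed on this route.
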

\begin{proof}
Let~$g \in \GL_n(F)$, and assume that~$g$ intertwines~$u_\cP^\circ$ and~$v_\cP^\circ$, i.e.\ that the Hecke module
\begin{equation}\label{eqn:Hecke module}
\Hom_{\GL_n(F)}(\cInd_{K_1}^{\GL_n(F)}u_\cP^\circ, \cInd_{K_1}^{\GL_n(F)}v_\cP^\circ)
\end{equation}
has an element supported on $K_1gK_1$.
Then $u_\cP^\circ$ and~$\ad(g)^* v_\cP^\circ$
have the same restriction to $K_1 \cap g^{-1}K_1 g$.
Hence, for all~$x \in K_1 \cap g^{-1}K_1 g$, the equality
\[
\psi \trace(\varpi_F^{-1}u_\cP(x-1)) = \psi \trace(\varpi_F^{-1}v_\cP(gxg^{-1}-1))
\]
holds.
Rearranging, we find that %
\[
\psi \trace ((g^{-1}v_\cP g-u_\cP)\varpi_F^{-1}(x-1)) = 0,
\]
and so $g^{-1}v_\cP g-u_\cP$ is contained in the annihilator of $M_n(\cO_F) \cap g^{-1}M_n(\cO_F) g$, which is $\fp_A + g^{-1}\fp_A g$.
So there exist~$A_u, A_v \in M_n(\cO_F)$ such that
\[
u_\cP-\varpi_F A_u = g^{-1}(v_\cP-\varpi_F A_v)g.
\] 
By Lemma~\ref{lem:conjugacy II} below, there exist~$k_u, k_v \in K_1$ such that 
$\tld u_\cP \coloneq k_u^{-1}(u_\cP-\varpi_F A_u)k_u$ and $\tld v_\cP \coloneq k_v^{-1}(v_\cP-\varpi_F A_v)k_v$
are contained in
$\prod_{j=1}^t\cO_{F_{n_j}}^\times$.
Since $\tld u_\cP$ is congruent modulo $\varpi_F$ to $u_\cP$,
the reduction modulo $\varpi_F$ of 
its $j$-th component
$\tld u_{n_j}$ is $\lbar u_{n_j}$.
Similarly, the reduction modulo~$\varpi_F$ of~$\tld v_{n_j}$ is $\lbar v_{n_j}$.
Hence~$\tld u_{n_j}$, resp.\ $\tld v_{n_j}$ acts on the $j$-th direct summand in the decomposition $F^{\oplus n} = \bigoplus_{j=1}^t F^{\oplus n_j}$
by an endomorphism 
whose characteristic polynomial lifts the characteristic polynomial of $\lbar u_{n_j}$, resp.\ $\lbar v_{n_j}$.
Bearing in mind Definition~\ref{defn:properties of sequences}, we thus see that $\tld u_\cP, \tld v_\cP$ have squarefree characteristic polynomial.
Setting $\tld g \coloneq k_2^{-1}gk_1$, which conjugates~$\tld u_\cP$ to~$\tld v_\cP$ (and so sends eigenspaces of~$\tld u_\cP$ to eigenspaces of~$\tld v_\cP$),
it thus follows that there exists $\nu \in S_t$ such that $\tld g F^{\oplus n_j} = F^{\oplus n_{\nu(j)}}$ for all~$1 \leq j \leq t$.
In turn, this equality implies that $\tld u_{n_j}$ and~$\tld v_{n_{\nu(j)}}$ have the same characteristic polynomial; hence the same is true of 
$\lbar u_{n_j}$ and~$\lbar v_{n_{\nu(j)}}$, which are therefore Galois conjugates.
This concludes the proof of part~(1).

We now prove part~(2). It suffices to prove that if~$g$ intertwines $u_\cP^\circ$ with itself, then $g \in K_1 \bigl(\prod_{i=1}^t F_{n_i}^\times \bigr) K_1$.
To do so, we specialize the discussion above to $\lbar v_\cP \coloneq \lbar u_\cP$, using the same notation.
Definition~\ref{defn:properties of sequences}~(2) implies that necessarily $\nu = 1$. 
It follows that $\tld g \in \prod_{i=1}^t \GL_{n_i}(F)$, and furthermore $\tld g_j \tld u_{n_j} \tld g_j^{-1} = \tld v_{n_j}$, for all~$1 \leq j \leq t$.
Since $F_{n_j} = F[\tld u_{n_j}] = F[\tld v_{n_j}]$, this implies that~$\tld g_j$ normalizes~$F_{n_j}$ in~$\GL_{n_j}(F)$, 
and the resulting Galois automorphism of~$F_{n_j}$ is the identity, since $\tld u_{n_j}$ and~$\tld v_{n_j}$
are the same modulo~$\varpi_F$ (they both coincide with $\lbar u_{n_j} = \lbar v_{n_j}$).
Hence $\tld g_j \in F_{n_i}^\times$, which concludes the proof of part~(2), since $g \in K_1 \tld g K_1$.
\end{proof}

\begin{lemma}\label{lem:conjugacy II}
Let~$\cP$ be a partition of~$n$, let~$\lbar u_\cP$ be a sequence as in Definition~\ref{defn:properties of sequences}, and 
let~$u \coloneq u_\cP$ be as in~\eqref{eqn:lifted sequence}.
Then, for all $A \in M_n(\cO_F)$, there exists $k \in K_1$ such that
\[
k^{-1}(u-\varpi_F A)k \in \prod_{i=1}^{t}\cO_{F_{n_i}}^\times.
\]
\end{lemma}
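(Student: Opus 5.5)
We will construct $k$ by successive approximation, using the fact that $u$ is a semisimple element of $M_n(\cO_F)$ whose reduction $\lbar u$ has pairwise distinct eigenvalues (over $\lbar k_F$). Set $L := \prod_{i=1}^t F_{n_i} \subset M_n(F)$, embedded block-diagonally; this is the centralizer of $u$ in $M_n(F)$ and $\cO_L = \prod_i \cO_{F_{n_i}}$ is the centralizer of $u$ in $M_n(\cO_F)$. The key linear-algebra input is the decomposition
\[
M_n(\cO_F) = \cO_L \oplus \operatorname{im}(\ad u), \qquad \operatorname{im}(\ad u) = [u, M_n(\cO_F)],
\]
holding integrally. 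To justify it, reduce mod $\varpi_F$: by properties (1) and (2) of Definition~\ref{defn:properties of sequences}, the characteristic polynomial of $\lbar u$ is squarefree, so $\lbar u$ is regular semisimple in $M_n(k_F)$. Hence $\ad \lbar u$ is semisimple on $M_n(k_F)$ with kernel the centralizer $k_L := \prod_i k_{F_{n_i}}$ (of dimension $n$) and image a complement of it. Lifting, $\ad u$ restricted to the $\cO_F$-submodule $W := [u, M_n(\cO_F)]$ is injective with $M_n(\cO_F) = \cO_L \oplus W$, and $\ad u : W \to W$ is an isomorphism (its reduction mod $\varpi_F$ is invertible on $\lbar W$, and by Nakayama it is surjective, hence bijective on this free module).

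With this in hand, run an inductive argument. Suppose that after conjugating by some $k_m \in K_1$ we have
\[
k_m^{-1}(u - \varpi_F A)k_m = u + \varpi_F B_m + \varpi_F^{m} C_m,
\]
where $B_m \in \cO_L$ and $C_m \in W$. The case $m=1$ follows from the decomposition applied to $-A$. Choose $X \in W$ with $[u, X] = -C_m$, which we can do because $\ad u$ is bijective on $W$. Conjugating by $1 + \varpi_F^{m} X \in K_1$ then changes the element by
\[
\varpi_F^{m}[u, X] + \varpi_F^{m+1}(\text{integral terms}) = -\varpi_F^m C_m + O(\varpi_F^{m+1}).
\]
Note that the cross terms involving $\varpi_F B_m$ are also $O(\varpi_F^{m+1})$. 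Decomposing the new error via $M_n(\cO_F) = \cO_L \oplus W$ and absorbing its $\cO_L$-part into $B_{m+1}$ gives the statement for $m+1$. The product $k = \prod_m (1 + \varpi_F^{m} X_m)$ converges in $K_1$, since $K_1$ is complete. In the limit we get $k^{-1}(u - \varpi_F A)k = u + \varpi_F B \in \cO_L$, where $B = \lim B_m$. This element is a unit in each factor $\cO_{F_{n_i}}$ because its reduction $\lbar u_{n_i}$ is nonzero, so it lies in $\prod_i \cO_{F_{n_i}}^\times$, as required.

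The main point to check carefully is the integral decomposition, namely that $\ad u$ is invertible on an $\cO_F$-complement of $\cO_L$. This is exactly where regularity of $\lbar u$, coming from Definition~\ref{defn:properties of sequences}, is used. One must also confirm that the centralizer of $u$ in $M_n(\cO_F)$ is precisely $\cO_L$ rather than just an order in $L$. For this, observe that $\cO_F[u]$ already equals $\cO_L$: each $u_{n_i}$ is a Teichm\"uller lift generating $k_{F_{n_i}}$ over $k_F$, and distinct blocks are separated because the $\lbar u_{n_i}$ have coprime minimal polynomials, so Hensel/CRT idempotents lie in $\cO_F[u]$.
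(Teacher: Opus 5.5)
Your proof is correct and follows the paper's argument essentially step for step. The paper also uses the $[u,-]$-stable decomposition $M_n(\cO_F)=\cL\oplus\cL^\perp$, with $[u,-]$ invertible on $\cL^\perp$, kills the $\cL^\perp$-part of the error at level $\varpi_F^r$ by conjugating with $1+\varpi_F^rM^\perp$, and passes to the limit in $K_1$; your Nakayama argument for the integral decomposition fills in a point the paper simply asserts as a consequence of Definition~\ref{defn:properties of sequences}.
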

\begin{proof}
Since $u-\varpi_F A$ is invertible in~$M_n(\cO_F)$,
it suffices to find~$k \in K_1$ such that 
\[
k^{-1}(u-\varpi_F A)k \in \cL \coloneq \prod_{i=1}^{t}\cO_{F_{n_i}}.
\]
We show that, for all~$r \geq 1$ and $u_r \in u + \varpi_F M_n(\cO_F)$ such that $u_r \in \cL + \varpi_F^r M_n(\cO_F)$,
there exists~$k_r \in K_r$ such that $u_{r+1} \coloneq k_r^{-1}u_r k_r$ is contained in $\cL + \varpi_F^{r+1}M_n(\cO_F)$.
Assuming this claim, we let~$u_1 \coloneq u - \varpi_F A$, and we choose~$k_r$ and~$u_r$ recursively; 
note that~$u_{r+1}$ is automatically contained in $u + \varpi_FM_n(\cO_F)$ if so is~$u_r$, so the recursion is possible.
Then, setting $k \coloneq \prod_{r \geq 1} k_r = \lim_{s \to \infty}\prod_{r=1}^sk_r$, we conclude that
$k^{-1}u_1k = \lim_{s \to \infty}u_{s+1}$ is contained in $\cL$, as desired.

Definition~\ref{defn:properties of sequences} implies that $\cL$ is the kernel of the Lie bracket $[u, -] : M_n(\cO_F) \to M_n(\cO_F)$,
and has a $[u, -]$-stable complement~$\cL^\perp$
on which $[u, -]$ acts invertibly.
So our assumption on~$u_r$ implies that there exist $L \in \cL$ and $L^\perp \in \cL^\perp$ such that
\[
u_r = u + \varpi_F L + \varpi_F^r L^\perp.
\]
Let~$M^{\perp} \in \cL^\perp$ be such that $[u, M^{\perp}] = L^{\perp}$. 
Let $k_r^{-1} \coloneq 1+\varpi_F^rM^\perp$.
Then, modulo $\varpi_F^{r+1}M_n(\cO_F)$, we have
\begin{align*}
k_r^{-1}u_rk_r \equiv u_r + \varpi_F^r[M^\perp, u_r] \equiv u_r + \varpi_F^r[M^\perp, u] &\equiv (u+\varpi_F L + \varpi_F^{r}[u, M^\perp]) + \varpi_F^r[M^\perp, u]\\
&\equiv u+\varpi_F L,  
\end{align*}
and since $u + \varpi_FL \in \cL$, we see that $k_r^{-1}u_rk_r \in \cL + \varpi_F^{r+1}M_n(\cO_F)$, as desired.
\end{proof}

The following proposition completes our construction of types, and so concludes the proof of Theorem~\ref{thm: rational span}.

\begin{prop}\label{prop:construction of types}
Let~$\cP = n_1 \geq n_2 \geq \cdots \geq n_t$ be a partition of~$n$, let $\lbar u_\cP$ be as in Definition~\ref{defn:properties of sequences}, and let $\chi_\cP : T_\cP \to E^\times$ be a character.
Then $(T_\cP^\circ, (\chi_\cP, u_\cP)^\circ)$ is a type for a regular Bernstein component of $\lbar E[\GL_n(F)]$.
\end{prop}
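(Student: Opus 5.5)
The plan is to show that $(T_\cP^\circ, \kappa)$, with $\kappa \coloneq (\chi_\cP,u_\cP)^\circ$, is a $\GL_n(F)$-cover, in the sense of Bushnell--Kutzko, of a type for a cuspidal Bernstein component of the Levi subgroup $L \coloneq \prod_{i=1}^t \GL_{n_i}(F)$. Regularity will be checked on the Galois side at the end.

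\textbf{Step 1 (cuspidal types on the blocks).} For each block, consider the pair $(F_{n_i}^\times K_1^{(i)}, \kappa_i)$, where $K_1^{(i)} = 1+\varpi_F M_{n_i}(\cO_F)$. Here $\kappa_i$ restricts to $\psi\trace(u_{n_i}\,\cdot)$ on $K_1^{(i)}$, to $\chi_{n_i}$ on $k_{F_{n_i}}^\times$, and is extended by any chosen value on $\varpi_F$.
\begin{itemize}
\item By property~(1) of Definition~\ref{defn:properties of sequences}, $u_{n_i}$ generates the field $F_{n_i}$, so it is a minimal element of level zero. The corresponding simple stratum $[\mathfrak{A}_i, 1, 0, \varpi_F^{-1}u_{n_i}]$ is therefore simple of depth one.
\item Lemma~\ref{lem:intertwining computation}(2), in the case $t=1$, bounds the intertwining by $F_{n_i}^\times K_1^{(i)}$.
\item Hence $\cInd_{F_{n_i}^\times K_1^{(i)}}^{\GL_{n_i}(F)} \kappa_i$ is irreducible and supercuspidal. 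This is the standard Carayol argument~\cite{CarayolGL_n}.
\item Restricting to $\cO_{F_{n_i}}^\times K_1^{(i)} = T_{(n_i)}^\circ$ then gives a type for the inertial class of that supercuspidal.
\end{itemize}
Taking the tensor product over the blocks gives a type $(J_L,\kappa_L)$ for a cuspidal component $\fs_L$ of $L$, where $J_L \coloneq \prod_i T^\circ_{(n_i)}$.

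\textbf{Step 2 (the cover).} Observe that $T_\cP^\circ = J_L \cdot (K_1\cap U^-)(K_1\cap U)$ for the unipotent radicals $U^{\pm}$ of opposite parabolics with Levi $L$. Observe also that $\kappa$ is trivial on both $K_1 \cap U^{\pm}$: the pairing $\psi\trace(u_\cP \varpi_F^{-1}(x-1))$ vanishes there, because $u_\cP$ is block diagonal and $x-1$ is block off-diagonal. So $(T_\cP^\circ,\kappa)$ is decomposed over $(J_L,\kappa_L)$.

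The remaining condition in Bushnell--Kutzko's criterion is the existence of a strongly positive invertible element of the Hecke algebra supported on a central element of $L$. I plan to verify this with Lemma~\ref{lem:intertwining computation}(2): the intertwining lies in $K_1 L K_1$, and Roche's argument~\cite{Rochetypes} in the depth-zero/level-one setting then yields the cover property. This step is the main obstacle. The obvious candidate element is $\zeta = \operatorname{diag}(\varpi_F^{a_1}I_{n_1},\dots)$ with $a_1>\dots>a_t$, and I need to check that its Hecke operator is invertible. That should follow from the intertwining bound by a support computation of the kind Roche uses in the Iwahori-type case.

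It follows that $T_\cP^\circ$ is a type for the Bernstein component $\fs = [L,\sigma_1\otimes\dots\otimes\sigma_t]$ of $\GL_n(F)$.

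\textbf{Step 3 (regularity).} Let $\pi \in \fs$. Its supercuspidal support consists of the $\sigma_i$ twisted by unramified characters. The local Langlands parameter of each $\sigma_i$ is induced from a character of $W_{F_{n_i}}$ whose restriction to wild inertia is determined by $\lbar u_{n_i}$. By Lemma~\ref{lem:intertwining computation}(1) together with property~(2), the $\sigma_i$ do not become isomorphic after twisting by unramified characters and $|\cdot|^{\pm1}$: their restrictions to inertia differ whenever $i \neq j$. So no segments of length greater than one can form, and every $\pi \in \fs$ is a full irreducible parabolic induction with $\rec(\pi) = \bigoplus \rec(\sigma_i \otimes \chi_i)$ and zero monodromy.

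The sharper point is that $\rec(\sigma_i)\otimes\rec(\sigma_j)^\vee$ must have no unramified twist by $|\cdot|$. This holds because these parameters have distinct restrictions to $I_F$, which is exactly the non-conjugacy in property~(2) (for $n_i = n_j$) or the difference in dimension (for $n_i \neq n_j$).
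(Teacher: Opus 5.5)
Your proposal follows the paper's proof: you obtain cuspidal types on each block from the intertwining bound of Lemma~\ref{lem:intertwining computation}(2) with $t=1$, build a Bushnell--Kutzko cover of the resulting Levi type, and get regularity because the block components are pairwise inertially inequivalent by Lemma~\ref{lem:intertwining computation}(1) and property~(2). The step you flag as the main obstacle is handled in the paper by \cite[Comments~8.2]{BK98}: Lemma~\ref{lem:intertwining computation}(2) already places the support of $\cH(\GL_n(F),(\chi_\cP,u_\cP)^\circ)$ inside $T_\cP^\circ\bigl(\prod_i \GL_{n_i}(F)\bigr)T_\cP^\circ$, which suffices for condition~(iii), so no Roche-style computation is needed.
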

\begin{proof}

Assume first that $t = 1$, i.e.\ that $\cP$ has a single nonzero part.
Accordingly, we will replace the symbol~$\cP$ with~$n$.
Lemma~\ref{lem:intertwining computation} implies that the $\GL_n(F)$-intertwining set of~$(T_n^\circ, (\chi_{n}, u_n)^\circ)$
coincides with $K_1F_n^\times K_1 = F_n^\times K_1$.
Since $F_n^\times K_1$ is compact modulo the centre of~$\GL_n(F)$, this implies %
that the compact induction to~$\GL_n(F)$ of any extension of~$(\chi_n, u_n)^\circ$ to~$F_n^\times K_1$
is irreducible and cuspidal.
Since $F_n^\times K_1 = F^\times T_n^\circ$, we conclude that $(T_n^\circ, (\chi_n, u_n)^\circ)$ is a type for a cuspidal Bernstein component $\fs(\chi_{n}, u_{n})$ of
$\lbar E[\GL_n(F)]$.

Furthermore, we claim that if~$u_n, u_n'$ and~$\chi_n, \chi_n'$ are such that $\fs(\chi_n, u_n) = \fs(\chi'_n, u_{n}')$, 
then $u_n$ and~$u'_n$ are $\Gal(k_{F_n}/k)$-conjugate.
In fact, the equality of Bernstein components implies that there exists an irreducible smooth $\lbar E[\GL_n(F)]$ module such that $\Hom_{K_1}(u_n^\circ, \pi)$ and $\Hom_{K_1}((u_n')^\circ, \pi)$
are both nonzero, and so $\pi$ is a common quotient of $\cInd_{K_1}^{\GL_n(F)}u_n^\circ$ and
$\cInd_{K_1}^{\GL_n(F)}(u'_n)^\circ$.
Since these compact inductions are projective objects of the category of smooth $\lbar E[\GL_n(F)]$-modules, we conclude that
\[
\Hom_{\GL_n(F)}(\cInd_{K_1}^{\GL_n(F)}u_n^\circ, \cInd_{K_1}^{\GL_n(F)}(u_n')^\circ) \ne 0,
\]
and so $u_n^\circ, (u_n')^\circ$ intertwine in~$\GL_n(F)$.
Then the claim is a consequence of Lemma~\ref{lem:intertwining computation}.

We now drop the assumption that~$t = 1$, and we let~$\cP$ be any partition of~$n$. 
We will show that $(T_\cP^\circ, (\chi_\cP, u_\cP)^\circ)$ is a $\GL_n(F)$-cover
of
\[
\left ( \prod_{i=1}^t \GL_{n_i}(F), (\chi_{n_1}, u_{n_1})^\circ \boxtimes \cdots \boxtimes (\chi_{n_t}, u_{n_t})^\circ \right )
\]
in the sense of~\cite[Defn.\ 8.1]{BK98}.
We then deduce from~\cite[Thm.\ 8.3]{BK98} that $(T_\cP^\circ, (\chi_\cP, u_\cP)^\circ)$ a type for the parabolically induced Bernstein component
$\fs(\chi_{n_1}, u_{n_1}) \times \cdots \times \fs(\chi_{n_t}, u_{n_t})$ of~$\lbar E[\GL_n(F)]$. 
Since~$i \ne j$ implies that $u_{i}$ and~$u_{j}$ are not Galois conjugates, and so, by the previous paragraph, $\fs(\chi_{n_i}, u_{n_i}) \ne \fs(\chi_{n_j}, u_{n_j})$,
we furthermore see that this component is regular, which concludes the proof of the proposition.

There remains to check that the three conditions in \cite[Defn.\ 8.1]{BK98} are met by $(T_\cP^\circ, (\chi_\cP, u_\cP)^\circ)$.
But the first two are true by construction, and the third condition is a consequence of~\cite[Comments~8.2]{BK98}, since Lemma~\ref{lem:intertwining computation}
implies that (using notation from \cite[Comments~8.2]{BK98})
we have $\cH(G, (\chi_\cP, u_\cP)^\circ)_{\prod_{i=1}^t \GL_{n_i}(F)} = \cH(G, (\chi_\cP, u_\cP)^\circ)$.
\end{proof}

\bibliographystyle{amsalpha}
\bibliography{bibliography.bib}

\end{document}